\newtheorem{thm}{Theorem}[section]
\newtheorem{cor}[thm]{Corollary}
\newtheorem{lem}[thm]{Lemma}
\newtheorem{prop}[thm]{Proposition}
\theoremstyle{definition}
\newtheorem{Def}[thm]{Definition}
\newtheorem{conj}[thm]{Conjecture}
\newtheorem{const}[thm]{Construction}
 \newtheorem{remark}[thm]{Remark}
 \newtheorem{conv}[thm]{Convention}
  \newtheorem{cond}[thm]{Conditions}
\newcommand{\OSp}{O_{S}^{\text{perf}}}
\newcommand{\gerbeE}{\mathcal{E}}
\newcommand{\id}{\text{id}}
\newcommand{\A}{\mathbb{A}}
\newcommand\restr[2]{{
  \left.\kern-\nulldelimiterspace 
  #1 
  \vphantom{\big|} 
  \right|_{#2} 
  }}
\newcommand{\Hom}{\text{Hom}}
\newcommand*{\sheafhom}{\mathrm{H}\kern -.5pt om}
\newcommand{\Spec}{\text{Spec}}
\DeclareMathOperator{\sHom}{\mathscr{H}\text{\kern -3pt {\calligra\large om}}\,}
\newcommand{\Z}{\mathbb{Z}}
\begin{document}

\title[Short version of title]{Rigid inner forms over global function fields}

\author{Peter Dillery}
\thanks{This research was partially supported by NSF grant DMS-1840234 and the Rackham pre-doctoral fellowship while the author was at the University of Michigan. This paper was edited while the author was supported by the Brin Postdoctoral Fellowship at the University of Maryland.}
\thanks{Keywords: Global function fields, fppf cohomology, Langlands conjectures, endoscopy}
\thanks{2020 MSC classification:  11F70, 11R58, 11E72, 11S37,  14F20 18F20.}
\address{University of Maryland, Department of Mathematics}
\email{dillery@umd.edu}
\maketitle

\begin{abstract}
We construct an fpqc gerbe $\gerbeE_{\dot{V}}$ over a global function field $F$ such that for a connected reductive group $G$ over $F$ with finite central subgroup $Z$, the set of $G_{\gerbeE_{\dot{V}}}$-torsors contains a subset $H^{1}(\gerbeE_{\dot{V}}, Z \to G)$ which allows one to define a global notion of ($Z$-)rigid inner forms. There is a localization map $H^{1}(\gerbeE_{\dot{V}}, Z \to G) \to H^{1}(\gerbeE_{v}, Z \to G)$, where the latter parametrizes local rigid inner forms (cf. \cite{Tasho, Dillery}) which allows us organize local rigid inner forms across all places $v$ into coherent families. Doing so enables a construction of (conjectural) global $L$-packets and a conjectural formula for the multiplicity of an automorphic representation $\pi$ in the discrete spectrum of $G$ in terms of these $L$-packets. We also show that, for a connected reductive group $G$ over a global function field $F$, the adelic transfer factor $\Delta_{\A}$ for the ring of adeles $\A$  of $F$ serving an endoscopic datum for $G$ decomposes as the product of the normalized local transfer factors from \cite{Dillery}. 
\end{abstract}

\section{Introduction}

\subsection{Motivation}
The goal of this paper is to develop a notion of rigid inner forms over a global function field $F$ in order to relate the local constructions in \cite{Dillery} to the global Langlands correspondence for a connected reductive group $G$ over $F$. This global construction allows one to relate the adelic transfer factor $\Delta_{\A}$ serving an endoscopic datum for $G$ to the normalized transfer factors serving the localizations of this datum constructed in \cite{Dillery} and formulate precise conjectures concerning the global $L$-packet $\Pi_{\varphi}$ for an admissible tempered discrete homomorphism $\varphi \colon L_{F} \to \prescript{L}{}G$, where $L_{F}$ is the conjectural Langlands dual group of $F$. Previously, such descriptions were only possible in the case when $G$ is quasi-split.

We first summarize the local situation discussed in \cite{Dillery}: Let $F_{v}$ be the completion of $F$ at a place $v$ with absolute Galois group $\Gamma_{v}$ ($\Gamma$ denotes the absolute Galois group of $F$), and fix $Z \to G$ a finite central $F_{v}$-subgroup. Recall that, given an inner twist $G \xrightarrow{\psi} G'$, a \textit{local $Z$-rigid inner twist} enriching it is a pair $(\mathscr{T}, \bar{h})$, where $\mathscr{T}$ is a(n) (fpqc) $G$-torsor on a canonically-defined local gerbe $\gerbeE_{v} \to \text{Schemes}/F_{v}$ which descends to a torsor over $F_{v}$ after modding out by $Z$ and $\bar{h}$ identifies this descent with the $G_{\text{ad}}$-torsor canonically associated to $\psi$.

A (tempered) irreducible representation of $(G', \mathscr{T}, \bar{h})$ is a 4-tuple $(G', (\mathscr{T}, \bar{h}), \pi)$, where $\pi$ is an (tempered) irreducible representation of $G'(F_{v})$. The set of all equivalence classes of (tempered) irreducible representations of rigid inner forms of $G$ is denoted by $\Pi^{\text{rig}}(G)$ ($\Pi^{\text{rig}}_{\text{temp}}(G)$), where this equivalence relation is defined on set of the 4-tuples $(G', (\mathscr{T}, \bar{h}), \pi)$ giving the data of such representations, not just on the representations $\pi$. We then predict the following picture:
\begin{conj}[\cite{Dillery}, Conjecture 7.14] Given a tempered $L$-parameter $\varphi_{v} \colon W_{F_{v}}' \to \prescript{L}{}G$, there is a finite subset $\Pi_{\varphi_{v}} \subset \Pi_{\text{temp}}^{\text{rig}}(G)$ and a commutative diagram
\begin{equation}\label{introconj}
\begin{tikzcd}
\Pi_{\varphi_{v}} \arrow["\iota_{\varphi_{v},\mathfrak{w}_{v}}"]{r} \arrow{d} & \text{Irr}(\pi_{0}(S_{\varphi_{v}}^{+})) \arrow{d} \\
H^{1}(\gerbeE_{v}, Z \to G^{*}) \arrow{r} & \pi_{0}(Z(\widehat{\overline{G}})^{+,v})^{*},
\end{tikzcd}
\end{equation}
\end{conj}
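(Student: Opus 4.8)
The assertion is a rigid-inner-form refinement of the tempered local Langlands correspondence for $G$ over $F_{v}$, exactly parallel to the conjecture of \cite{Tasho} in the number-field case; so the plan is not to prove the correspondence itself but to set up the objects so that \eqref{introconj} commutes essentially by construction, thereby isolating the genuinely open input. First I would recall from \cite{Dillery} the local gerbe $\gerbeE_{v}$, the set $H^{1}(\gerbeE_{v}, Z \to G^{*})$, and the Tate--Nakayama-type duality identifying it functorially (in the pair $Z \to G^{*}$) with $\pi_{0}(Z(\widehat{\overline{G}})^{+,v})^{*}$: this is precisely the bottom arrow of \eqref{introconj}, taken to be that bijection, while the left vertical arrow is the map $(G',(\mathscr{T},\bar{h}),\pi) \mapsto [(\mathscr{T},\bar{h})]$ forgetting $\pi$. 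Since $Z(\widehat{\overline{G}})$ is central in $\widehat{\overline{G}}$, its image lies in the centre of $S_{\varphi_{v}}^{+}$, so $\text{Irr}(\pi_{0}(S_{\varphi_{v}}^{+}))$ decomposes as a disjoint union over characters $\chi \in \pi_{0}(Z(\widehat{\overline{G}})^{+,v})^{*}$ of the subsets $\text{Irr}_{\chi}(\pi_{0}(S_{\varphi_{v}}^{+}))$ on which this central subgroup acts by $\chi$; the right vertical arrow of \eqref{introconj} is then $\rho \mapsto (\text{central character of }\rho)$.

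With this bookkeeping in hand the packet would be assembled one rigid inner form at a time: for a class $z = [(\mathscr{T},\bar{h})] \in H^{1}(\gerbeE_{v}, Z \to G^{*})$ with image $\chi_{z}$ under the duality above and corresponding inner form $G'$, the (Whittaker-normalized) refined local Langlands correspondence for the rigid inner twist $(G',\mathscr{T},\bar{h})$ furnishes a bijection $\Pi_{\varphi_{v}}(G',\mathscr{T},\bar{h}) \leftrightarrow \text{Irr}_{\chi_{z}}(\pi_{0}(S_{\varphi_{v}}^{+}))$; one sets $\Pi_{\varphi_{v}} := \bigsqcup_{z} \Pi_{\varphi_{v}}(G',\mathscr{T},\bar{h})$ and lets $\iota_{\varphi_{v},\mathfrak{w}_{v}}$ be the induced map, which by design lands in $\bigsqcup_{\chi} \text{Irr}_{\chi}(\pi_{0}(S_{\varphi_{v}}^{+})) = \text{Irr}(\pi_{0}(S_{\varphi_{v}}^{+}))$. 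Finiteness of $\Pi_{\varphi_{v}}$ is then immediate from finiteness of $\pi_{0}(S_{\varphi_{v}}^{+})$ for tempered $\varphi_{v}$ and finiteness of the individual $L$-packets, and commutativity of \eqref{introconj} is tautological: an element of $\Pi_{\varphi_{v}}(G',\mathscr{T},\bar{h})$ is sent by $\iota_{\varphi_{v},\mathfrak{w}_{v}}$ into $\text{Irr}_{\chi_{z}}$ and hence has central character $\chi_{z}$, whereas the other route around the square sends it to $z$ and then to $\chi_{z}$. I would also check, by a cocycle computation modeled on \cite{Tasho} and \cite{Dillery}, that $\iota_{\varphi_{v},\mathfrak{w}_{v}}$ is independent of the representing cocycle of $z$ and of the auxiliary $\mathfrak{z}$-extension used to form $S_{\varphi_{v}}^{+}$.

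The real content is thus pushed entirely into the borrowed inputs, and that is where I expect the main obstacle: one needs the tempered local Langlands correspondence for the quasi-split $G^{*}$ with its $\mathfrak{w}_{v}$-normalization, and --- far more seriously --- its extension to rigid inner twists in which the $Z(\widehat{\overline{G}})^{+,v}$-central character of the component-group representation is \emph{forced} to coincide with the Tate--Nakayama image of the rigid-inner-twist class. That normalization is the function-field avatar of Kaletha's refined correspondence and is characterized by the endoscopic character identities, which are not available at this level of generality over a global function field. I would therefore state the result as conditional on the quasi-split correspondence together with those identities, and confirm that the construction recovers the known refined local Langlands correspondences --- real groups, quasi-split classical groups, and their pure and rigid inner forms --- as consistency checks.
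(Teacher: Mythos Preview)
The statement is a \emph{conjecture}, recorded in the introduction as motivation and carried over verbatim from \cite[Conjecture~7.14]{Dillery}; the paper makes no attempt to prove it and offers no argument to compare against. Your analysis of how the diagram is \emph{set up} is accurate---the bottom row is the extended Tate--Nakayama duality of \cite{Dillery}, the left column forgets the representation, the right column records the central character on $\pi_{0}(Z(\widehat{\overline{G}})^{+,v})$, and commutativity is then forced by the very normalization that defines $\iota_{\varphi_{v},\mathfrak{w}_{v}}$---and you are right that the substantive content is the existence of the packets and the bijection, i.e.\ the refined tempered local Langlands correspondence itself, which remains open in this generality.

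So there is no gap in your reasoning, but there is a category error in the task: you have written an exegesis of a conjecture rather than a proof, because there is no proof to give. The paper treats this statement exactly as you do---as a conjectural input whose structure is arranged so that, \emph{granting} the bijection $\iota_{\varphi_{v},\mathfrak{w}_{v}}$, everything downstream (the global $L$-packets of \S5.4, the pairing $\langle\,\cdot\,,\,\cdot\,\rangle$, the multiplicity formula) makes sense.
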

\noindent where $G^{*}$ is a quasi-split $Z$-rigid inner twist of $G$, $H^{1}(\gerbeE_{v}, Z \to G^{*})$ denotes the set of isomorphism classes of all the torsors $\mathscr{T}$ described above, $\mathfrak{w}_{v}$ is a choice of Whittaker datum for $G^{*}$, the bottom map a generalization of the local Kottwitz pairing (and Tate-Nakayama duality), both horizontal maps are bijective, and $S^{+}_{\varphi_{v}}$,  $Z(\widehat{\overline{G}})^{+,v}$ are the preimages of $Z_{\widehat{G}}(\varphi_{v})$ and  $Z(\widehat{G})^{\Gamma_{v}}$ (respectively) in $\widehat{\overline{G}} := \widehat{G/Z}$. Note that one may have to enlarge $Z$ to find $G^{*}$ as above.

It should be possible to conjecturally describe the global $L$-packet $\Pi_{\varphi}$ for an admissible tempered discrete homomorphism $\varphi \colon L_{F} \to \prescript{L}{}G$ using the $L$-packets for its localizations $\varphi_{v}$ and then study it using \eqref{introconj} for each $v$. The key to this approach is organizing families of representations of local rigid inner twists of $G_{F_{v}}$ into \textit{coherent families}, which is to say, finding a notion of a \textit{global rigid inner twist} coming from a \textit{global gerbe} $\gerbeE_{\dot{V}}$ which localizes in an appropriate way to such a family. In order for the corresponding family of homomorphisms $\{H^{1}(\gerbeE_{v}, Z \to G) \to \pi_{0}(Z(\widehat{\overline{G}})^{+,v})^{*}\}_{v}$ to behave in a reasonable manner (such as having a well-defined product over all places), one would like a homomorphism $$H^{1}(\gerbeE_{\dot{V}}, Z \to G) \to [\pi_{0}(Z(\widehat{\overline{G}})^{+})]^{*}$$ that equals the product of all of its local analogues (note that if $Z(\widehat{\overline{G}})^{+}$ is the preimage of $Z(\widehat{G})^{\Gamma}$, then we have maps $\pi_{0}(Z(\widehat{\overline{G}})^{+})\to \pi_{0}(Z(\widehat{\overline{G}})^{+,v})$ for all $v$). 

The combination of the construction of the local gerbe in \cite{Dillery} and the global Galois gerbe $\gerbeE_{\dot{V}}$ for number fields in \cite{Tasho2} gives a blueprint for the aforementioned global gerbe for function fields (and thus for global rigid inner forms). As in the local case, the gerbe $\gerbeE_{\dot{V}}$ will be banded by a canonically-defined profinite group $P_{\dot{V}}$ and we will extract the gerbe by proving the existence of a canonical class in $H^{2}_{\text{fppf}}(F, P_{\dot{V}})$; unlike in the local case, producing this class requires significant work---in particular, we must study gerbes over $\text{Spec}(\A)$ and generalize the notion of \textit{complexes of tori}, as in \cite{KS1}, to function fields. 

After producing the gerbe $\mathcal{E}_{\dot{V}}$ we define the cohomology set $H^{1}(\gerbeE_{\dot{V}}, Z \to G)$, a global analogue of $H^{1}(\gerbeE_{v}, Z \to G)$; as one would hope, there are morphisms $\gerbeE_{v} \to \gerbeE_{\dot{V}}$ which give us localization maps between these two sets and a global duality result for $H^{1}(\gerbeE_{\dot{V}}, Z \to G)$ which among other properties, gives the homomorphism $H^{1}(\gerbeE_{\dot{V}}, Z \to G) \to [\pi_{0}(Z(\widehat{\overline{G}})^{+})]^{*}$ described above. This construction lets us call a family of rigid inner twists $\{(G_{F_{v}}, (\mathscr{T}_{v}, \overline{h}_{v}))\}_{v}$ \textit{coherent} if each torsor $\mathscr{T}_{v}$ is the localization of the same global torsor $\mathscr{T}$, $[\mathscr{T}] \in H^{1}(\gerbeE_{\dot{V}}, Z \to G^{*})$ (for $G^{*}$ the quasi-split inner form of $G$ and some appropriate choice of $Z$). Given a such family, we can then define the (conjectural) global $L$-packet $\Pi_\varphi$ for a fixed $\varphi$ (picking a global Whittaker datum $\mathfrak{w}$): 
$$\Pi_{\varphi} := \{ \pi = \otimes_{v}' \pi_{v} \mid (G_{F_{v}}, (\mathscr{T}_{v}, \bar{h}_{v}), \pi_{v}) \in \Pi_{\varphi_{v}}, \iota_{\varphi_{v},\mathfrak{w}_{v}}((G_{F_{v}}, (\mathscr{T}_{v}, \bar{h}_{v}), \pi_{v})) = 1\text{ for almost all } v\}.$$ 

We show (Lemma \ref{admissible}) that $\Pi_{\varphi}$ consists of irreducible tempered admissible representations of $G(\A)$ using a torsor-theoretic analogue of \cite[Prop. 6.1.1]{Taibi}. Moreover, given $\pi \in \Pi_{\varphi}$, we give a conjectural description of the multiplicity of $\pi$ in the discrete spectrum of $G$ by defining a pairing $$\langle -, - \rangle \colon \mathcal{S}_{\varphi} \times \Pi_{\varphi} \to \mathbb{C},$$ where $\mathcal{S}_{\varphi}$ is a finite group closely related to the centralizer of $\varphi$ in $\widehat{G}$; the pairing is defined as a product over all places of two factors involving both rows of \eqref{introconj}. This product formula is well-defined because our representation $\pi$ arises from a coherent family of representations of local rigid inner twists. Given this pairing, we have for each $\pi$ and $L$-packet $\Pi_{\varphi}$ containing $\pi$ an integer $$m(\varphi, \pi) := |\mathcal{S}_{\varphi}|^{-1} \sum_{x \in \mathcal{S}_{\varphi}} \langle x, \pi \rangle,$$ and, furthermore, we conjecture:

\begin{conj}[Kottwitz, \cite{Kott84}] The multiplicity of $\pi$ in the discrete spectrum of $G$ is given by the sum $$\sum_{\varphi} m(\varphi, \pi),$$ where the sum is over all $\varphi$ such that $\pi \in \Pi_{\varphi}$.
\end{conj}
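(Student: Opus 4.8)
This is Kottwitz's conjectural multiplicity formula, and a proof in any real generality lies beyond the methods of the present paper; what the constructions above contribute is the local--global bookkeeping that makes the right-hand side well-posed and that one expects to match the spectral side of the stabilized trace formula, so the plan is necessarily a conditional one. The starting point is the Arthur--Selberg trace formula for $G$ over the global function field $F$ together with its stabilization: writing $I^{G}_{\mathrm{disc}}$ for the discrete part of the spectral side, one has an identity $I^{G}_{\mathrm{disc}}(f) = \sum_{\mathfrak{e}} \iota(G,\mathfrak{e})\, S^{G^{\mathfrak{e}}}_{\mathrm{disc}}(f^{\mathfrak{e}})$, the sum running over equivalence classes of elliptic endoscopic data $\mathfrak{e}$ for $G$ with underlying group $G^{\mathfrak{e}}$, where $S^{G^{\mathfrak{e}}}_{\mathrm{disc}}$ is the stable discrete distribution of $G^{\mathfrak{e}}$ and $f \mapsto f^{\mathfrak{e}}$ is the Langlands--Shelstad--Kottwitz transfer of test functions. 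Over a function field the analytic and geometric inputs of this stabilization (the fundamental lemma and its weighted variant, convergence) are available through the work of Ng\^o, Chaudouard--Laumon and Labesse--Waldspurger; the endoscopic classification of $S^{G^{\mathfrak{e}}}_{\mathrm{disc}}$ itself has to be assumed by induction on $\dim G$.

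The present paper enters in the definition of $f^{\mathfrak{e}}$. The adelic transfer factor $\Delta_{\A}$ serving $\mathfrak{e}$ is what rigidifies the transfer, and the decomposition $\Delta_{\A} = \prod_{v}\Delta_{v}$ into the normalized local factors of \cite{Dillery} --- which we prove --- is precisely what permits writing $f^{\mathfrak{e}} = \otimes_{v}' f_{v}^{\mathfrak{e}_{v}}$ as a restricted tensor product of local transfers adapted to the local packets $\Pi_{\varphi_{v}}$ of the conjecture recalled above (see \eqref{introconj}). Dually, the global gerbe $\gerbeE_{\dot{V}}$ and the localization maps $H^{1}(\gerbeE_{\dot{V}}, Z\to G^{*}) \to H^{1}(\gerbeE_{v}, Z\to G^{*})$ ensure that, for $\pi = \otimes_{v}'\pi_{v}\in\Pi_{\varphi}$ arising from a coherent family, the local characters $\langle x_{v},\pi_{v}\rangle$ read off from the two rows of \eqref{introconj} are trivial for almost all $v$; hence the product $\langle x,\pi\rangle = \prod_{v}\langle x_{v},\pi_{v}\rangle$ converges and $m(\varphi,\pi) = |\mathcal{S}_{\varphi}|^{-1}\sum_{x}\langle x,\pi\rangle$ is well-defined. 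This is exactly the content of the global duality result for $H^{1}(\gerbeE_{\dot{V}}, Z\to G)$ and of the product-over-places homomorphism $H^{1}(\gerbeE_{\dot{V}}, Z\to G)\to[\pi_{0}(Z(\widehat{\overline{G}})^{+})]^{*}$.

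With these ingredients the comparison is combinatorial, and follows the template of Arthur and Ta\"ibi \cite{Taibi} transported to rigid inner forms and to function fields: expand each $S^{G^{\mathfrak{e}}}_{\mathrm{disc}}(f^{\mathfrak{e}})$ spectrally using the (inductively assumed) stable multiplicity formula for $G^{\mathfrak{e}}$ into stable packets; substitute into the stabilization identity; and reorganize the resulting double sum over $\mathfrak{e}$ and over stable parameters according to the discrete global parameter $\varphi$ it induces for $G$. The transfer factors convert the $\mathfrak{e}$-sum into a sum over $x\in\mathcal{S}_{\varphi}$, and Kottwitz's local sign-and-character computation --- legitimate precisely because the family is coherent, so that the place-by-place factors multiply to the global pairing --- collapses it to $\sum_{\varphi}|\mathcal{S}_{\varphi}|^{-1}\sum_{x\in\mathcal{S}_{\varphi}}\langle x,\pi\rangle = \sum_{\varphi}m(\varphi,\pi)$, appearing as the coefficient of $\pi$ in $I^{G}_{\mathrm{disc}}$. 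The main obstacle, beyond the stabilization inputs just mentioned, is twofold. First, there is no global Langlands group $L_{F}$, so the discrete parameters $\varphi$ must be replaced by the substitutes furnished by V.~Lafforgue's excursion-operator decomposition of the automorphic spectrum for function fields, and one must verify that these behave compatibly with the rigid-inner-form formalism --- in particular with the cohomology of $\gerbeE_{\dot{V}}$ and with \eqref{introconj} --- which is itself delicate. Second, the argument is genuinely inductive on $\dim G$ and rests on the local conjecture (in the form \eqref{introconj}, and its non-tempered analogue) at every place, which remains open. Thus what is realistically within reach is the implication: the local conjecture at all $v$, the stable multiplicity formula for every proper elliptic endoscopic group of $G$, and a workable substitute for the global parameter set together imply the stated formula, with this paper supplying the indispensable local--global glue in the form of $\Delta_{\A}=\prod_{v}\Delta_{v}$ and the coherent-family structure attached to $\gerbeE_{\dot{V}}$.
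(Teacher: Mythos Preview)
The paper does not prove this statement: it is explicitly stated as a \emph{conjecture} (attributed to Kottwitz \cite{Kott84}) and is introduced with the phrase ``we then expect'' the multiplicity of $\pi$ in the discrete spectrum to be given by $\sum_{\varphi} m(\varphi,\pi)$. There is no proof, conditional or otherwise, in the paper; the paper's contribution is to make the right-hand side well-defined in the function-field setting by constructing the pairing $\langle -,-\rangle \colon \mathcal{S}_{\varphi}\times\Pi_{\varphi}\to\mathbb{C}$ and showing (Proposition 4.5.2-analogue) that it is independent of the auxiliary choices.

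Your proposal is therefore not a proof of something the paper proves, but rather a sketch of how one would attack the conjecture conditionally via the stabilized trace formula. As a strategic outline this is broadly reasonable and in line with the Arthur--Kottwitz--Ta\"ibi template you cite, and you correctly identify the paper's role as supplying the local--global compatibility (the product decomposition of $\Delta_{\A}$ and the coherent-family formalism via $\gerbeE_{\dot{V}}$). You are also right to flag the two principal obstructions: the absence of $L_{F}$ (and the need to substitute V.~Lafforgue's parameters) and the dependence on the local conjecture \eqref{introconj} at every place. But you should be clear that what you have written is a heuristic roadmap, not a proof; in particular the phrase ``the comparison is combinatorial'' glosses over the genuinely difficult spectral comparison that, even in the number-field case with Arthur parameters available, occupies hundreds of pages. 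For the purposes of this paper, the honest statement is simply that the conjecture is formulated, the ingredients making it meaningful are constructed, and no proof is claimed.
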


Recall that local rigid inner forms were used in \cite[\S 7]{Dillery} to construct a normalized transfer factor (depending on a quasi-split rigid inner twist $(\mathscr{T}_{v}, \bar{h}_{v}))$ of $G_{F_{v}}$ with Whittaker datum $\mathfrak{w}_{v}$) $$\Delta_{v} = \Delta[\mathfrak{w}_{v}, \dot{\mathfrak{e}}_{v}, \mathfrak{z}_{v}, \psi, (\mathscr{T}_{v}, \bar{h}_{v})]$$ serving a fixed endoscopic datum $\mathfrak{e}_{v}$ for $G_{F_{v}}$.  As such, global rigid inner forms give us a method of relating the global adelic transfer factor $\Delta_{\A}$ defined in \cite{LS} (for number fields, but which is easily translated to a global function field) serving a global endoscopic datum to the transfer factors $\Delta_{v}$ serving the localizations of that datum. Using the relationship between the local and global pairings one obtains (Proposition \ref{locglobtransfer}) a product formula (after fixing a coherent family of rigid inner twists and a global Whittaker datum $\mathfrak{w}$ with localizations $\mathfrak{w}_{v}$)
$$\Delta_{\A}(\gamma_{1}, \delta) = \prod_{v \in V} \langle \text{loc}_{v}(\mathscr{T}_{\text{sc}}), \dot{y}_{v}' \rangle \cdot \Delta[\mathfrak{w}_{v}, \dot{\mathfrak{e}}_{v}, \mathfrak{z}_{v}, \psi, (\mathscr{T}_{v}, \bar{h}_{v})](\gamma_{1,v}, \delta_{v})$$
which expresses the value of $\Delta_{\A}$ at a pair of adelic elements $(\gamma_{1}, \delta)$ as a product of each $\Delta_{v}$ at the localizations of these elements, along with some auxiliary factors $\langle \text{loc}_{v}(\mathscr{T}_{\text{sc}}), \dot{y}_{v}' \rangle$ which are harmless and only necessary for technical reasons.

\subsection{Overview}
In \S 2 we prove some preliminary results that allow us to make computations using \v{C}ech cohomology, both with respect to the covers $O_{E,S}/O_{F,S}$, where $S$ is a finite subset of places of $F$ and $E/F$ is a finite (not necessarily Galois) field extension, and the covers $\A_{E}/\A$, where $\A_{E} = E \otimes_{F} \A$. We also review some basic result about projective systems of abstract gerbes and their \v{C}ech cohomology (which comes from \cite[\S 2]{Dillery}). 

In \S 3 we prove an analogue of global Tate duality for the groups $H^{2}_{\text{fppf}}(F, Z)$, where $Z$ is a finite multiplicative $F$-group scheme. After that, we define a projective system of multiplicative group schemes $\{P_{E,\dot{S}_{E},n}\}$ whose limit gives the pro-algebraic group $P_{\dot{V}}$ that will band our global gerbe. Once $P_{\dot{V}}$ is defined, we show that its first fppf cohomology group over $F$ vanishes using local and global class field theory and that its second fppf cohomology group contains a canonical class.
 
After the canonical class is constructed we can define the global gerbe $\gerbeE_{\dot{V}}$, whose cohomology is studied in \S4, building towards proving a duality result for the cohomology sets $H^{1}(\gerbeE_{\dot{V}}, Z \to G)$, where $Z$ is a finite central subgroup of $G$. We also prove a result concerning the localizations of torsor on $\gerbeE_{\dot{V}}$ which will be used in \S 5 to prove that global $L$-packets consist of irreducible, tempered, admissible representations.

In \S 5, we develop endoscopy, defining the adelic transfer factor for function fields, coherent families of rigid inner forms. We relate the local constructions of \cite{Dillery} to global endoscopy, including the adelic transfer factor and the multiplicity formula. In Appendix A, we establish complexes of tori in the setting of \v{C}ech cohomology and prove several results analogous to those in the appendices of \cite{KS1} (that used Galois cohomology) which are used in the proof of the existence of a canonical class in \S 3. 

\subsection{Notation and terminology}
We will always assume that $F$ is a global field of characteristic $p > 0$. For an arbitrary algebraic group $G$ over $F$, $G^{\circ}$ denotes the identity component. For a connected reductive group $G$ over $F$, $Z(G)$ denotes the center of $G$, and for $H$ a subgroup of $G$, $N_{G}(H), Z_{G}(H)$ denote the normalizer and centralizer group schemes of $H$ in $G$, respectively. We will denote by $\mathscr{D}(G)$ the derived subgroup of $G$, by $G_{\text{ad}}$ the quotient $G/Z(G)$, and if $G$ is semisimple, we denote by $G_{\text{sc}}$ the simply-connected cover of $G$; if $G$ is not semisimple, $G_{\text{sc}}$ denotes $\mathscr{D}(G)_{\text{sc}}$. If $T$ is a maximal torus of $G$, denote by $T_{\text{sc}}$ its preimage in $G_{\text{sc}}$. We fix an algebraic closure $\bar{F}$ of $F$, which contains a separable closure of $F$, denoted by $F^{s}$. For $E/F$ a Galois extension, we denote the Galois group of $E$ over $F$ by $\Gamma_{E/F}$, and we set $\Gamma_{F^{s}/F} =: \Gamma$.

We denote by $V$ the set of all places of $F$, and for $E/F$ a finite extension and $S \subseteq V$, we denote by $S_{E}$ the preimage of $S$ in $V_{E}$, the set of all places of $E$. We call a subset of $V$ \textit{full} if it equals $S_{F}$ for some subset $S$ of places of $\mathbb{F}_{p}(t)$ (after choosing an embedding $\mathbb{F}_{p}(t) \to F$). For a finite subset $S \subset V$, we set $\A_{S}:= \prod_{v \in S} F_{v} \times \prod_{v \notin S} O_{F_{v}}$, and set $\A_{E,S} := \A_{E, S_{E}}$.

We call an affine, commutative algebraic group over a ring $R$ \textit{multiplicative} if it is Cartier dual to an \'{e}tale $R$-group scheme. For $Z$ a multiplicative group over $F$, we denote by $X^{*}(Z), X_{*}(Z) (=X_{*}(Z^{\circ}))$ the character and co-character modules of $Z$, respectively, viewed as $\Gamma$-modules. For $\mathcal{H}$ an algebraic group over $\mathbb{C}$ we will frequently denote $\mathcal{H}(\mathbb{C})$ by $\mathcal{H}$. For two $F$-schemes $X, Y$ and $F$-algebra $R$, we set $X \times_{\Spec(F)} Y =: X \times_{F} Y$, or by $X \times Y$ if $F$ is understood, and set $X \times_{F} \text{Spec}(R) =: X_{R}$. We define $X(\text{Spec}(R)) =: X(R)$, the set of $F$-morphisms $\text{Spec}(R) \to X$.

\subsection{Acknowledgements} The author thanks Tasho Kaletha for introducing to him the motivating question of this paper, as well as for his feedback and proof-reading. They also thank Brian Conrad for a helpful discussion about the images of maps of tori in the analytic topology. The author also gratefully acknowledges the support of NSF grant DMS-1840234 and the Rackham Predoctoral Fellowship.

\tableofcontents

\section{Preliminaries}

\subsection{\v{C}ech cohomology over $O_{F,S}$}

Fix a global function field $F$ of characteristic $p > 0$, a finite non-empty set $S$ of places of $F$, and an $F$-torus $T$ which is unramified outside $S$. Let $O_{F,S}$ denote the elements of $F$ whose valuation is non-negative at all places outside $S$, and for a finite Galois extension $K/F$, denote by $O_{K,S}$ the elements of $K$ whose valuation is non-negative at all places outside $S_{K}$, the set of all places of $K$ lying above $S$. We set $O_{S} := \varinjlim_{K/F} O_{K,S}$, where $K/F$ ranges over all finite Galois extensions which are unramified outside of $S$. Denote by $F_{S}$ the maximal field extension of $F$ which is unramified outside $S$, and denote its Galois group over $F$ by $\Gamma_{S}$; note that $F_{S} = \text{Frac}(O_{S})$. The torus $T$ has a canonical model defined over $O_{F,S}$ and we ease notation by denoting the corresponding $O_{F,S}$-scheme also by $T$. 

For all $q>0$, it is a basic fact of fppf cohomology (\cite[Lem. 2.1]{Cesnavicius}) that for a commutative group scheme $\mathscr{G}$ on $O_{F,S}$ which is locally of finite presentation, 
 we have  $H^{q}_{\text{fppf}}(O_{S}, \mathscr{G}) = \varinjlim_{K/F} H^{q}_{\text{fppf}}(O_{K,S}, \mathscr{G}),$ with the transition maps induced by pullback of fppf sheaves (the same is true if we replace ``fppf" by ``\'{e}tale"). We begin with the following commutative-algebraic lemma: 
 
 \begin{lem}\label{etalesplitting} For $K/F$ a finite Galois extension unramified outside $S$ and $n \geq 2$, the natural injection $O_{K,S} ^{\bigotimes_{O_{F,S}} n} \to \prod_{\Gamma_{K/F}^{n-1}} O_{K,S}$ is an isomorphism.
\end{lem}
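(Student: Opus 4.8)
The plan is to identify $O_{K,S}$ as a Galois extension of $O_{F,S}$ with group $\Gamma_{K/F}$ --- equivalently, to exhibit $\Spec(O_{K,S}) \to \Spec(O_{F,S})$ as an \'{e}tale torsor under the finite constant group scheme $\underline{\Gamma_{K/F}}$ --- and then to read off the $n$-fold tensor product from the $n$-fold fibre product of such a torsor. Write $A := O_{F,S}$, $B := O_{K,S}$ and $G := \Gamma_{K/F}$. Here $A$ is a Dedekind domain ($\Spec(A)$ is the smooth affine curve over $\mathbb{F}_p$ whose closed points are the places of $F$ outside $S$), and $B$ is the integral closure of $A$ in $K$, hence a finitely generated torsion-free, thus finite projective, $A$-module; moreover $B/A$ is unramified at every point of $\Spec(A)$ precisely because $K/F$ is unramified outside $S$, so $B/A$ is finite \'{e}tale. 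Since $K/F$ is Galois, $G$ acts on $B$ by $A$-automorphisms with $B^{G} = F \cap B = A$, and by the classical behaviour of unramified primes in Galois extensions of global fields ($G$ acts transitively with trivial inertia on the primes of $B$ above any prime of $A$), $B/A$ is a $G$-Galois extension; equivalently $\Spec(B) \to \Spec(A)$ is an \'{e}tale $\underline{G}$-torsor. For the equivalence between $G$-Galois extensions of commutative rings and \'{e}tale $\underline{G}$-torsors, and the criteria just used, I will cite the standard references (Auslander--Goldman, Chase--Harrison--Rosenberg; see also Milne, \emph{\'{E}tale Cohomology}, Ch.~I).

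Granting this, the statement is purely formal. For any $\underline{G}$-torsor $P \to X$ over an affine scheme $X$, base change along the structure map $P \to X$ trivializes $P$, giving $P \times_X P \cong \underline{G} \times_X P$; iterating, there is a canonical isomorphism $P^{\times_X n} \cong \underline{G}^{n-1} \times_X P$ of $X$-schemes. Applying this with $X = \Spec(A)$ and $P = \Spec(B)$ and taking rings of regular functions --- using that over the connected base $\Spec(A)$ the scheme $\underline{G}^{n-1} \times_A \Spec(B)$ is the disjoint union of $|G|^{n-1}$ copies of $\Spec(B)$ --- yields an isomorphism $B^{\otimes_A n} \xrightarrow{\ \sim\ } \prod_{G^{n-1}} B$. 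Unwinding the identifications, this is exactly the natural map of the statement: for $n = 2$ it is $x \otimes y \mapsto (x\cdot\sigma(y))_{\sigma \in G}$, and for general $n$ it is assembled from the analogous Galois-twisted multiplication maps $B^{\otimes_A n} \to B$ indexed by $G^{n-1}$.

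If one prefers to stay algebraic, the same result follows by induction on $n \ge 2$: the base case $n=2$ is one of the defining properties of a $G$-Galois extension, $B \otimes_A B \cong \prod_{G} B$, and the inductive step tensors $B^{\otimes_A n} \cong \prod_{G^{n-1}} B$ with $B$ over $A$, using that $(-)\otimes_A B$ commutes with finite products together with the base case: $\big(\prod_{G^{n-1}} B\big)\otimes_A B \cong \prod_{G^{n-1}}(B\otimes_A B) \cong \prod_{G^{n}} B$. The only step carrying any content is the first one --- upgrading ``$K/F$ unramified outside $S$'' to ``$B/A$ finite \'{e}tale and $G$-Galois'' --- and even there the ingredients (flatness over a Dedekind domain, unramifiedness from the hypothesis, transitivity with trivial inertia for unramified primes) are all classical, so I do not expect a real obstacle; the only mild nuisance is the purely combinatorial bookkeeping needed to match the abstract isomorphism with the ``natural'' map, i.e.\ to track which projection $\prod_{G^{n-1}} B \to B$ corresponds to which tuple of Galois automorphisms.
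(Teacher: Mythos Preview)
Your proof is correct and takes a genuinely different route from the paper's. Both arguments reduce to $n=2$ by induction, but they diverge on how to handle that base case. You recognize $O_{K,S}/O_{F,S}$ as a $\Gamma_{K/F}$-Galois extension of rings (equivalently, an \'etale $\underline{\Gamma_{K/F}}$-torsor), from which $B \otimes_A B \cong \prod_{G} B$ is essentially the defining property; the general $n$ then follows either by iterating the torsor trivialization or by the tensor-product induction you sketch. The paper instead argues more concretely: it invokes a structure theorem (attributed to Conrad) identifying $O_{K,S} \otimes_{O_{F,S}} O_{K,S}$ with the product of integral closures of $O_{K,S}$ in the factors $K_i$ of $K \otimes_{O_{F,S}} O_{K,S}$, and then shows each $K_i = K$ by choosing a primitive element $\alpha \in O_{K,S}$ with $K = F(\alpha)$ and manipulating $F[x]/(f) \otimes_{O_{F,S}} O_{K,S}$ explicitly. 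Your approach buys a cleaner conceptual picture and avoids the primitive-element computation; the paper's approach is more hands-on and does not require the reader to know the torsor/Galois-ring formalism. The only point to be slightly careful about in your write-up is the verification that $\Spec(O_{K,S}) \to \Spec(O_{F,S})$ really is a $\underline{\Gamma_{K/F}}$-torsor (i.e., that the action is free on geometric fibres), but as you note this is exactly the statement that inertia is trivial at every prime outside $S$, which is the hypothesis.
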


\begin{proof} By induction, it is enough to prove the result for $n=2$. First, note that $O_{K,S}/O_{F,S}$ is finite \'{e}tale by assumption (since $K/F$ is unramified outside of $S$). In particular, $O_{K,S}$ is finitely-generated and torsion-free as an $O_{F,S}$-module, and both rings are Dedekind domains integrally closed in their fields of fractions. By base-change, we get a finite \'{e}tale extension $O_{K,S} \otimes_{O_{F,S}} O_{K,S}/ O_{K,S}$, which is still finitely-generated, locally free, and torsion-free as an $O_{K,S}$-module (this last fact follows from using the injection $O_{K,S} \otimes_{O_{F,S}} O_{K,S} \hookrightarrow \prod_{\Gamma_{K/F}} O_{K,S}$, under which $O_{K,S}$ maps into the diagonally-embedded copy, which clearly acts on the product without torsion). 

We are thus in the setting of \cite[Thm. 1.3]{Conrad}, which says that the composition $$O_{K,S} \otimes_{O_{F,S}} O_{K,S} \hookrightarrow K \otimes_{O_{F,S}} O_{K,S} \xrightarrow{\sim} \prod_{i} K_{i}$$ maps $O_{K,S} \otimes_{O_{F,S}} O_{K,S}$ isomorphically onto the product of integral closures of $O_{K,S}$ in each $K_{i}$, where $K_{i}$ is some finite separable extension of $K$ and the last isomorphism comes from the fact that $K \otimes_{O_{F,S}} O_{K,S}$ is finite \'{e}tale over $K$ a field. It thus suffices to show that each $K_{i}$ is actually $K$.

The desired result follows from the series of elementary manipulations $$K \otimes_{O_{F,S}} O_{K,S} \xrightarrow{\sim} K \otimes_{F} F \otimes_{O_{F,S}} O_{K,S} \xrightarrow{\sim} K \otimes_{F} K \xrightarrow{\sim} \prod_{\Gamma_{K/F}} K.$$We leave it to the reader to check that the isomorphism $O_{K,S} \otimes_{O_{F,S}} O_{K,S} \xrightarrow{\sim} \prod_{\Gamma_{K/F}} O_{K,S}$ constructed above agrees with the map in the statement of the Lemma.
\end{proof}

Recall from \cite[\S 2.2]{Dillery}, that if we fix a ring homomorphism $R \to S$ and abelian sheaf $\mathscr{F}$ on $R$ (with the fpqc topology), then $\check{H}^{i}(S/R, \mathscr{F})$ denotes the $i$th cohomology group of the complex
$$\mathscr{F}(S) \to \mathscr{F}(S \otimes_{R} S) \to \mathscr{F}(S \otimes_{R} S \otimes_{R} S) \to \dots, $$ 
with differentials the alternating sums of the $n+1$ natural maps $\mathscr{F}(S^{\bigotimes_{R}n}) \to \mathscr{F}(S^{\bigotimes_{R}(n+1)})$.

\begin{cor}\label{cechtogalois} We have a canonical isomorphism $\check{H}^{i}(O_{K,S}/O_{F,S}, G) \xrightarrow{\sim} H^{i}(\Gamma_{K/F}, G(O_{K,S}))$ for any commutative $O_{F,S}$-group $G$. Taking the direct limit also gives a canonical isomorphism $\check{H}^{i}(O_{S}/O_{F,S}, G) \xrightarrow{\sim} H^{i}(\Gamma_{S}, G(O_{S}))$.
\end{cor}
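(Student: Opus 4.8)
The plan is to exhibit the \v{C}ech complex computing $\check{H}^{i}(O_{K,S}/O_{F,S}, G)$ as literally (up to canonical isomorphism) the standard inhomogeneous cochain complex computing the group cohomology $H^{i}(\Gamma_{K/F}, G(O_{K,S}))$, and then to pass to the colimit over $K$. The key input is Lemma \ref{etalesplitting}, which identifies the $n$-fold tensor power $O_{K,S}^{\bigotimes_{O_{F,S}}n}$ with the product ring $\prod_{\Gamma_{K/F}^{n-1}} O_{K,S}$ for $n \geq 2$ (and for $n=1$ this is just $O_{K,S}$ with the trivial index set $\Gamma_{K/F}^{0} = \{\ast\}$). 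Applying the group-valued functor $G$, which sends finite products of rings to the corresponding products of groups since $G$ is a group scheme, gives $G(O_{K,S}^{\bigotimes_{O_{F,S}}n}) \cong \prod_{\Gamma_{K/F}^{n-1}} G(O_{K,S}) = \mathrm{Maps}(\Gamma_{K/F}^{n-1}, G(O_{K,S}))$. Thus the $n$-th term of the \v{C}ech complex is canonically the group of (inhomogeneous) $(n-1)$-cochains of $\Gamma_{K/F}$ with values in the $\Gamma_{K/F}$-module $G(O_{K,S})$, exactly as in the homogeneous/inhomogeneous bar resolution.

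The heart of the argument is then to check that the \v{C}ech differential, which is the alternating sum $\sum_{j=0}^{n} (-1)^{j} d_{j}$ of the $n+1$ face maps $d_{j} \colon G(O_{K,S}^{\bigotimes n}) \to G(O_{K,S}^{\bigotimes (n+1)})$ induced by inserting a factor in the $j$-th slot, goes over under the above identification to the standard group-cohomology differential $\sum_{j} (-1)^{j} \partial_{j}$ on inhomogeneous cochains. This is a matter of tracking how the insertion maps on tensor powers translate, under the isomorphism of Lemma \ref{etalesplitting}, into reindexing maps on $\Gamma_{K/F}^{n-1}$: the first face (insert in slot $0$) becomes the ``$\sigma_{1}$ acts on the values'' term, the last face becomes the ``forget the last variable'' term, and the interior faces become the ``multiply consecutive group elements'' terms. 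Concretely one must fix, once and for all, the isomorphism $O_{K,S} \otimes_{O_{F,S}} O_{K,S} \xrightarrow{\sim} \prod_{\Gamma_{K/F}} O_{K,S}$ sending $a \otimes b \mapsto (a \cdot \sigma(b))_{\sigma}$ (equivalently the $i$-th projection is $a \otimes b \mapsto a\,\sigma_{i}(b)$), extend it compatibly to all $n$, and then verify the face-map compatibility on these explicit formulas. This is the standard identification of \v{C}ech cohomology for a finite Galois (pro-)cover with Galois cohomology, transported to the $\mathrm{fpqc}$ setting over $O_{F,S}$; the only substantive point beyond bookkeeping is precisely that Lemma \ref{etalesplitting} makes the cover $O_{K,S}/O_{F,S}$ ``split'' into a product, so the main obstacle —ensuring the tensor powers are computed by the expected product rings— has already been dispatched.

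For the direct-limit statement, I would take the colimit over finite Galois $K/F$ unramified outside $S$ on both sides. On the left, $\check{H}^{i}(O_{S}/O_{F,S}, G) = \varinjlim_{K} \check{H}^{i}(O_{K,S}/O_{F,S}, G)$ by the definition of $O_{S}$ as the colimit together with the fact that filtered colimits commute with finite tensor products and with the cohomology of a complex (the \v{C}ech complex of $O_{S}/O_{F,S}$ is the colimit of those of $O_{K,S}/O_{F,S}$ since $G$ is locally of finite presentation, so $G(O_{S}^{\bigotimes n}) = \varinjlim_{K} G(O_{K,S}^{\bigotimes n})$, invoking the colimit compatibility recalled before the lemma). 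On the right, $H^{i}(\Gamma_{S}, G(O_{S})) = \varinjlim_{K} H^{i}(\Gamma_{K/F}, G(O_{K,S}))$ is the standard expression of continuous cohomology of the profinite group $\Gamma_{S} = \varprojlim_{K} \Gamma_{K/F}$ with values in the discrete module $G(O_{S}) = \varinjlim_{K} G(O_{K,S})$ as a colimit of finite-level cohomologies, the transition maps being inflation. Since the isomorphisms constructed at finite level are induced by pullback of fppf sheaves (on the left) and by inflation together with the functoriality of $G$ (on the right), they are compatible with the transition maps in the two systems, so they assemble to the desired isomorphism in the limit.
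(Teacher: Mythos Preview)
Your proposal is correct and follows essentially the same approach as the paper: use Lemma~\ref{etalesplitting} to identify $G(O_{K,S}^{\bigotimes n})$ with inhomogeneous $(n-1)$-cochains, then verify that the \v{C}ech face maps match the group-cohomology differentials (the paper simply calls this check ``straightforward''), and pass to the colimit for the second statement. Your write-up is in fact more detailed than the paper's one-line proof; the only minor caveat is that your colimit argument for $G(O_{S}^{\bigotimes n}) = \varinjlim_{K} G(O_{K,S}^{\bigotimes n})$ invokes local finite presentation of $G$, an assumption not explicit in the corollary's statement but consistent with the ambient hypotheses in the surrounding text.
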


\begin{proof} All that one must check is that the isomorphism of Lemma \ref{etalesplitting} preserves cocycles and coboundaries, which is straightforward.
\end{proof}

In order to compare the \v{C}ech cohomology groups $\check{H}^{i}(O_{S}/O_{F,S}, T)$ with $H^{i}_{\text{fppf}}(O_{F,S}, T)$, we need to prove some cohomological vanishing results. The first result involves \'{e}tale cohomology:

\begin{lem}\label{vanishing1} We have that $H^{i}_{\text{et}}(O_{S}, T_{O_{S}}) = 0$ for all $i > 0$.
\end{lem}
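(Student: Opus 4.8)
The plan is to reduce to the split torus $\mathbb{G}_{m}$ and then to compute its étale cohomology over $O_{S}$ as a filtered colimit over the finite layers $O_{K,S}$, handling degrees $\geq 3$ separately from degrees $1$ and $2$.

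\emph{Reduction to $\mathbb{G}_{m}$.} Since $T$ is unramified outside $S$, the $\Gamma$-action on its character module factors through $\Gamma_{S}$, so $T$ is split by a finite Galois extension $K/F$ unramified outside $S$; as a torus over the connected normal base $O_{K,S}$ with split generic fibre is split, $T_{O_{K,S}} \cong \mathbb{G}_{m,O_{K,S}}^{\dim T}$, and since $K \subseteq F_{S} = \text{Frac}(O_{S})$, base change along $O_{K,S} \to O_{S}$ gives $T_{O_{S}} \cong \mathbb{G}_{m,O_{S}}^{\dim T}$. Thus it suffices to prove $H^{i}_{\text{et}}(O_{S}, \mathbb{G}_{m}) = 0$ for $i > 0$, and by \cite[Lem. 2.1]{Cesnavicius} this group equals $\varinjlim_{K} H^{i}_{\text{et}}(O_{K,S}, \mathbb{G}_{m})$ over finite Galois $K/F$ unramified outside $S$, with pullback transition maps; so I must show every class in $H^{i}_{\text{et}}(O_{K,S}, \mathbb{G}_{m})$ dies in $H^{i}_{\text{et}}(O_{L,S}, \mathbb{G}_{m})$ for a suitable such $L \supseteq K$.

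\emph{Degrees $\geq 3$.} Here the cohomology already vanishes at each finite layer. Writing $C_{K}$ for the smooth projective model of $K$, so that $\Spec O_{K,S} = C_{K} \setminus S_{K}$, the excision long exact sequence relates $H^{*}_{\text{et}}(C_{K}, \mathbb{G}_{m})$, $H^{*}_{\text{et}}(\Spec O_{K,S}, \mathbb{G}_{m})$, and the local cohomology groups $H^{*}_{\{v\}}(C_{K}, \mathbb{G}_{m})$; cohomological purity for $\mathbb{G}_{m}$ in codimension one gives $H^{j}_{\{v\}}(C_{K}, \mathbb{G}_{m}) \cong H^{j-1}_{\text{et}}(\kappa(v), \mathbb{Z})$, which is $\mathbb{Z}$ for $j=1$, $0$ for $j=2$, $\mathbb{Q}/\mathbb{Z}$ for $j=3$, and $0$ for $j \geq 4$. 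Combining this with the standard computations $H^{2}_{\text{et}}(C_{K}, \mathbb{G}_{m}) = \text{Br}(C_{K}) = 0$ (Leray spectral sequence, Tsen's theorem, and Lang's theorem for the Jacobian), $H^{3}_{\text{et}}(C_{K}, \mathbb{G}_{m}) \cong \mathbb{Q}/\mathbb{Z}$, and $H^{j}_{\text{et}}(C_{K}, \mathbb{G}_{m}) = 0$ for $j \geq 4$, the excision sequence forces $H^{i}_{\text{et}}(\Spec O_{K,S}, \mathbb{G}_{m}) = 0$ for all $i \geq 3$; for $i = 3$ one uses that the boundary map $\bigoplus_{v \in S_{K}} \mathbb{Q}/\mathbb{Z} \to H^{3}_{\text{et}}(C_{K}, \mathbb{G}_{m}) = \mathbb{Q}/\mathbb{Z}$ is surjective, which follows from the class field theory identification of $\text{Br}(O_{K,S})$ recalled next (and in any case a nonzero divisible torsion group here would be killed in the colimit by the argument below). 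Hence the colimit vanishes in these degrees.

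\emph{Degrees $1$ and $2$.} For $i = 1$, $H^{1}_{\text{et}}(O_{K,S}, \mathbb{G}_{m}) = \text{Pic}(O_{K,S})$ is the $S$-ideal class group of $K$, and every class capitulates in the $S$-Hilbert class field of $K$, which is unramified outside $S$ and hence contained in $F_{S}$; so the colimit vanishes. For $i = 2$, global class field theory identifies $\text{Br}(O_{K,S})$ with the kernel of the sum-of-invariants map $\bigoplus_{v \in S_{K}} \mathbb{Q}/\mathbb{Z} \to \mathbb{Q}/\mathbb{Z}$; a given class has invariants of common order $m$ supported on the finite set $S_{K}$, and pulling it back to a constant field extension $L = K \cdot \mathbb{F}_{q^{n}}$ multiplies the invariant at $v$ by the residue degree $n/\gcd(n, f_{v})$, where $f_{v} = [\kappa(v) : \mathbb{F}_{q}]$, so choosing $n$ divisible by $m\prod_{v \in S_{K}} f_{v}$ annihilates it; since $L/K$ is everywhere unramified we have $L \subseteq F_{S}$, so the colimit vanishes here too. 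Together with the previous paragraph this gives $H^{i}_{\text{et}}(O_{S}, T_{O_{S}}) = 0$ for $i > 0$. The substantive point, and the main obstacle, is the interplay of the $i=2$ and $i=3$ steps: one must know that $\text{Br}(O_{K,S})$ is pinned down by the finitely many local invariants at the places of $S_{K}$ and that $F_{S}$ is rich enough to annihilate any prescribed collection of them, the crucial flexibility being that constant field extensions of every degree---including degrees divisible by the characteristic $p$---lie in $F_{S}$, so no ramified extensions at $S$ are needed; the rest is bookkeeping with the purity isomorphisms and the exactness of the excision sequence for $\Spec O_{K,S} \subseteq C_{K}$.
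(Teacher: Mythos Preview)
Your proof is correct and follows essentially the same strategy as the paper: reduce to $\mathbb{G}_m$, pass to the colimit over finite layers $O_{K,S}$, and treat the degrees separately using class group capitulation for $i=1$, the local-invariant description of $\text{Br}(O_{K,S})$ for $i=2$, and vanishing at finite level for $i\geq 3$.

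The only noteworthy difference is organizational. For $i=2$, the paper takes the colimit of the injection $\text{Br}(O_{K,S}) \hookrightarrow \bigoplus_{v\in S_K} \text{Br}(K_v)$ and then kills the target in one stroke by observing that $F_S \cdot F_v \supseteq F_v^{\text{nr}}$ (since $F_S$ contains all constant field extensions), so its valuation ring is Henselian with algebraically closed residue field and hence $\text{Br}(F_S \cdot F_v)=0$. You instead stay at finite level and explicitly exhibit the constant field extension $K\cdot \mathbb{F}_{q^n}$ that annihilates a given collection of invariants. Both arguments hinge on the same fact (constant field extensions of arbitrary degree lie in $F_S$), but the paper's formulation is slightly cleaner since it avoids tracking residue degrees. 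For $i\geq 3$, the paper simply cites \cite[Remark II.2.2]{Milne} for the vanishing of $H^i_{\text{et}}(O_{K,S},\mathbb{G}_m)$, whereas you unpack this via excision and purity on $C_K$; your argument is more self-contained but amounts to reproving what Milne records.
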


\begin{proof} Since $T$ splits over $O_{S}$ it suffices to prove the result for $T = \mathbb{G}_{m}$. For $i=1$, we have $$H^{1}_{\text{et}}(O_{S}, T_{O_{S}}) = \varinjlim \limits_{F \subset K \subset F_{S} \text{ finite Galois}} H^{1}_{\text{et}}(O_{K,S}, \mathbb{G}_{m}) = \varinjlim \text{Pic}(\text{Spec}(O_{K,S})) = \varinjlim \text{Cl}(O_{K,S}) = 0,$$ where the second equality comes from \cite[Prop. II.2.1]{Milne}) and the third from the proof of \cite[Prop. 8.3.6]{NSW}.

For $i =2$, identifying $H^{2}_{\text{et}}(O_{K,S}, \mathbb{G}_{m}) = \text{Br}(O_{K,S})$ gives (by \cite[6.9.2]{Poonen}) an exact sequence $$0 \to \text{Br}(O_{K,S}) \to \bigoplus_{v \in S_{K}} \text{Br}(K_{v}) \xrightarrow{\sum \text{inv}_{v}} \mathbb{Q}/\mathbb{Z},$$ where $K_{v}$ denotes the completion of $K$ at $v$. Taking the direct limit of the first two terms shows that we have an injective map $\text{Br}(O_{S}) \hookrightarrow \bigoplus_{v \in S_{F_{S}}} \text{Br}(F_{S} \cdot F_{v})$. Note that the field extension $F_{S} \cdot F_{v}$ contains $F^{\text{nr}}_{v}$, the maximal unramified extension of $F_{v}$, using the fact that $F_{S}/F$ contains all finite extensions of the constant field of $F$. Moreover, the valuation ring $O_{F_{S} \cdot F_{v}}$ of this field is Henselian, as it is the direct limit of the Henselian rings $O_{K_{v}}$(\cite[p. 56]{Hochster}), and the previous sentence implies that it has algebraically closed residue field. We may then deduce from the proof of \cite[Prop. I.A.1]{Milne} that $\text{Br}(F_{S} \cdot F_{v}) = 0$, giving the desired result.

Finally, for $i \geq 2$, we have that for any $K/F$ a finite Galois extension, we have $H^{i}_{\text{et}}(O_{K,S}, \mathbb{G}_{m}) = 0$, by \cite{Milne}, Remark II.2.2. Taking the direct limit gives the desired result.
\end{proof}

By combining Lemma \ref{vanishing1} with the spectral sequence from \cite[Thm. 6.7.5]{Poonen} we obtain:
\begin{cor}\label{galoistoetale} We have canonical isomorphisms $H^{i}(\Gamma_{S}, T(O_{S})) \xrightarrow{\sim} H^{i}_{\text{et}}(O_{F,S}, T)$ for all $i \geq 1$.
\end{cor}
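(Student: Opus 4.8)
The plan is to feed the vanishing of Lemma~\ref{vanishing1} into the Hochschild--Serre (Cartan--Leray) spectral sequence for the pro-\'etale Galois cover $\Spec(O_{S})\to\Spec(O_{F,S})$. Since $F_{S}/F$ is Galois with group $\Gamma_{S}$ and unramified outside $S$, each finite subextension $K/F$ (unramified outside $S$) gives a finite \'etale Galois cover $\Spec(O_{K,S})\to\Spec(O_{F,S})$ with group $\Gamma_{K/F}$, and $O_{S}=\varinjlim_{K}O_{K,S}$. Applying \cite[Thm.~6.7.5]{Poonen} at each finite level and passing to the colimit over $K$ (equivalently, applying it to the pro-finite cover directly) produces a spectral sequence
$$E_{2}^{p,q}=H^{p}\!\left(\Gamma_{S},\,H^{q}_{\mathrm{et}}(O_{S},T_{O_{S}})\right)\ \Longrightarrow\ H^{p+q}_{\mathrm{et}}(O_{F,S},T),$$
where the outer cohomology is continuous $\Gamma_{S}$-cohomology of the discrete module $H^{q}_{\mathrm{et}}(O_{S},T_{O_{S}})=\varinjlim_{K}H^{q}_{\mathrm{et}}(O_{K,S},T)$, using that \'etale cohomology commutes with the filtered colimit of rings and that $T_{O_{S}}$ is the pullback of the $O_{F,S}$-model of $T$.

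Next I would invoke Lemma~\ref{vanishing1} to get $H^{q}_{\mathrm{et}}(O_{S},T_{O_{S}})=0$ for all $q>0$, so that $E_{2}^{p,q}=0$ for $q>0$ and the spectral sequence degenerates at the $E_{2}$-page. The surviving row then yields edge isomorphisms $H^{p}\!\left(\Gamma_{S},H^{0}_{\mathrm{et}}(O_{S},T_{O_{S}})\right)\xrightarrow{\ \sim\ }H^{p}_{\mathrm{et}}(O_{F,S},T)$ for all $p\ge 0$. Finally one identifies $H^{0}_{\mathrm{et}}(O_{S},T_{O_{S}})=T(O_{S})$ as a $\Gamma_{S}$-module (global sections of $T$ over $\Spec(O_{S})$), which gives the claimed isomorphism for $i\ge 1$; the case $i=0$ reduces to the tautology $T(O_{S})^{\Gamma_{S}}=T(O_{F,S})$ coming from $O_{S}^{\Gamma_{S}}=O_{F,S}$. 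Naturality of the spectral sequence gives the canonicity, and since continuous $\Gamma_{S}$-cohomology commutes with the colimit over $K$, this is compatible with the direct-limit description of Corollary~\ref{cechtogalois}.

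The only point genuinely requiring care is the passage to the profinite cover: one must check that the Hochschild--Serre formalism applies to the pro-\'etale $\Gamma_{S}$-torsor $\Spec(O_{S})\to\Spec(O_{F,S})$ and is computed by \emph{continuous} cohomology of \emph{discrete} coefficient modules. This is where the definition of $F_{S}$ (and the hypothesis that $T$ is unramified outside $S$, so that its $O_{F,S}$-model makes sense and splits over $O_{S}$) does its work---it is what makes each $O_{K,S}/O_{F,S}$ a genuine finite \'etale Galois extension, hence what lets us take the colimit of the finite-level Hochschild--Serre sequences. Both functors in play, $H^{q}_{\mathrm{et}}(-,T)$ in the ring and $H^{p}(\Gamma_{K/F},-)$ in the module, commute with the relevant filtered colimits, so the limiting argument is routine; everything after producing the spectral sequence is formal.
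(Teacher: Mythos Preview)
Your proof is correct and is essentially the same as the paper's: the paper simply says to combine Lemma~\ref{vanishing1} with the spectral sequence from \cite[Thm.~6.7.5]{Poonen}, and you have spelled out exactly that Hochschild--Serre degeneration argument in detail. The extra care you take with the pro-\'etale colimit is fine but not something the paper makes explicit.
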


\begin{lem}\label{vanishing2} We have $H^{i}_{\text{fppf}}(O_{S}^{\bigotimes_{O_{F,S}} n}, T) = 0$ for all $n,i \geq 1$.
\end{lem}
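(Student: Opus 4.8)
The plan is to reduce to the vanishing established in Lemma \ref{vanishing1}: I would use Lemma \ref{etalesplitting} to write $O_{S}^{\bigotimes_{O_{F,S}} n}$ as a filtered colimit of finite products of copies of the rings $O_{K,S}$, and then pass to the limit in cohomology. Concretely, since tensor products commute with filtered colimits, $O_{S}^{\bigotimes_{O_{F,S}} n} = \varinjlim_{K/F} O_{K,S}^{\bigotimes_{O_{F,S}} n}$, with $K/F$ running over the finite Galois extensions unramified outside $S$; and by Lemma \ref{etalesplitting} (the claim being a tautology when $n = 1$) each term is isomorphic to the finite product $\prod_{\Gamma_{K/F}^{n-1}} O_{K,S}$.

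Since $T$ is defined over $\Z$, hence over $O_{F,S}$, and is smooth and of finite presentation, the standard commutation of fppf cohomology with filtered colimits of rings for finitely presented coefficients---the same fact invoked for $O_{S}$ in the paragraph preceding Lemma \ref{etalesplitting}, cf. \cite[Lem. 2.1]{Cesnavicius}---applied to the directed system $\{O_{K,S}^{\bigotimes_{O_{F,S}} n}\}_{K}$ of $O_{F,S}$-algebras with colimit $O_{S}^{\bigotimes_{O_{F,S}} n}$ and coefficient group $T$, gives
$$H^{i}_{\text{fppf}}(O_{S}^{\bigotimes_{O_{F,S}} n}, T) \;=\; \varinjlim_{K/F} H^{i}_{\text{fppf}}\bigl(\textstyle\prod_{\Gamma_{K/F}^{n-1}} O_{K,S},\, T\bigr) \;=\; \varinjlim_{K/F} \textstyle\prod_{\Gamma_{K/F}^{n-1}} H^{i}_{\text{et}}(O_{K,S}, T),$$
where the last equality uses that $\Spec$ of a finite product of rings is the corresponding finite disjoint union of spectra, and that fppf and \'{e}tale cohomology agree for the smooth group $T$. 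It then remains only to see this colimit vanishes for $i \geq 1$. The transition map from level $K$ to a larger level $K'$ is induced by the surjection $\Gamma_{K'/F}^{n-1} \twoheadrightarrow \Gamma_{K/F}^{n-1}$ together with the restriction maps $H^{i}_{\text{et}}(O_{K,S}, T) \to H^{i}_{\text{et}}(O_{K',S}, T)$ on the individual factors; and by Lemma \ref{vanishing1} we have $\varinjlim_{K} H^{i}_{\text{et}}(O_{K,S}, T) = H^{i}_{\text{et}}(O_{S}, T_{O_{S}}) = 0$, so every class in each $H^{i}_{\text{et}}(O_{K,S}, T)$ dies at some finite level. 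A class in $\prod_{\Gamma_{K/F}^{n-1}} H^{i}_{\text{et}}(O_{K,S}, T)$ is a tuple of only finitely many classes, hence is killed at the compositum of the finitely many witnessing extensions; thus the colimit is $0$.

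I expect the one point genuinely requiring care to be this commutation of fppf cohomology with the filtered colimit of rings defining $O_{S}^{\bigotimes_{O_{F,S}} n}$, i.e. the reduction to finite level, since these are non-Noetherian rings. But this is exactly the standard limit statement already used for $O_{S}$ itself---here merely applied to a different directed system with the same fixed, finitely presented coefficient group $T$---so it is not a genuinely new ingredient; everything else is formal bookkeeping with disjoint unions and colimits.
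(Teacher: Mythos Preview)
Your proposal is correct and follows essentially the same route as the paper: both arguments pass to finite level via the filtered colimit $O_{S}^{\bigotimes n} = \varinjlim_{K} O_{K,S}^{\bigotimes n}$, invoke Lemma~\ref{etalesplitting} to split each finite-level tensor power as a finite product of copies of $O_{K,S}$, and then reduce to the $n=1$ case handled by Lemma~\ref{vanishing1}. The only cosmetic difference is that the paper first replaces $T$ by $\mathbb{G}_{m}$ (since $T$ splits over $O_{S}$), whereas you work with $T$ directly and use smoothness to pass from fppf to \'{e}tale cohomology; both are fine since Lemma~\ref{vanishing1} is already stated for arbitrary such $T$.
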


\begin{proof} Since $T$ is unramified outside $S$ it is enough to show that $$ H^{i}_{\text{fppf}}(O_{S}^{\bigotimes_{O_{F,S}} n}, \mathbb{G}_{m}) = \varinjlim \limits_{F \subset K \subset F_{S} \text{ finite Galois}}  H^{i}_{\text{fppf}}(O_{K,S}^{\bigotimes_{O_{F,S}} n}, \mathbb{G}_{m}) =  0.$$ By Lemma \ref{etalesplitting}, we have a canonical identification $\text{Spec}(O_{K,S}^{\bigotimes_{O_{F,S}} n}) = \coprod_{\underline{\sigma} \in \Gamma_{K/F}^{n-1}} \text{Spec}(O_{K,S})$ and isomorphism $ H^{i}_{\text{fppf}}(\coprod_{\underline{\sigma}} \text{Spec}(O_{K,S}), \mathbb{G}_{m}) \xrightarrow{\sim} \prod_{\underline{\sigma}} H^{i}_{\text{fppf}}(O_{K,S}, \mathbb{G}_{m})$. Also, if $K'/K$ is finite and contained in $F_{S}$, then the map $H^{i}_{\text{fppf}}(O_{K,S}^{\bigotimes_{O_{F,S}} n}, \mathbb{G}_{m}) \to H^{i}_{\text{fppf}}(O_{K',S}^{\bigotimes_{O_{F,S}} n}, \mathbb{G}_{m})$ corresponds via this isomorphism to diagonally embedding each factor of $\prod_{\underline{\sigma} \in \Gamma_{K/F}^{n-1}} H^{i}_{\text{fppf}}(O_{K,S}, \mathbb{G}_{m})$ into some subset of the factors of $\prod_{\underline{\sigma} \in \Gamma_{K'/F}^{n-1}} H^{i}_{\text{fppf}}(O_{K',S}, \mathbb{G}_{m})$ (by means of the pullback map $H^{i}_{\text{fppf}}(O_{K,S}, \mathbb{G}_{m}) \to H^{i}_{\text{fppf}}(O_{K',S}, \mathbb{G}_{m})$). 

Hence, for any $\alpha \in H^{i}_{\text{fppf}}(O_{K,S}^{\bigotimes_{O_{F,S}} n}, \mathbb{G}_{m})$, to show that $\alpha$ vanishes in some $H^{i}_{\text{fppf}}(O_{K',S}^{\bigotimes_{O_{F,S}} n}, \mathbb{G}_{m})$ for large $K'$ it is enough to show that $\varinjlim_{K/F} H^{i}_{\text{fppf}}(O_{K,S}, \mathbb{G}_{m}) = 0$ for all $i$, thus reducing the result to the case $n=1$, which follows from Lemma \ref{vanishing1}.
\end{proof} 

For any abelian fppf group scheme $A$ over $O_{F,S}$ with pro-fppf cover $R/O_{F,S}$ , the Grothendieck spectral sequence gives us a spectral sequence $$E_{2}^{p,q} = \check{H}^{p}(R/O_{F,S}, \underline{H}^{q}(A)) \Rightarrow H^{p+q}_{\text{fppf}}(O_{F,S}, A),$$ where $\underline{H}^{q}(A))$ denotes the presheaf on $\text{Sch}/O_{F,S}$ sending $U$ to $H^{q}(U, A_{U})$ (see \cite[03AV]{Stacksproj}). 

\begin{prop}(\cite[03AV]{Stacksproj})\label{ComparisonIso} If $H^{i}_{\text{fppf}}(R^{\bigotimes_{O_{F,S}}n}, A) = 0$ for all $n, i \geq 1$, then the above spectral sequence induces a canonical isomorphism $\check{H}^{i}(R/O_{F,S}, A) \xrightarrow{\sim} H^{i}_{\text{fppf}}(O_{F,S}, A)$ for all $i$.
\end{prop}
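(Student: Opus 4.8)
The plan is to exploit the vanishing hypothesis to show that the spectral sequence $E_2^{p,q} = \check{H}^p(R/O_{F,S}, \underline{H}^q(A)) \Rightarrow H^{p+q}_{\text{fppf}}(O_{F,S}, A)$ collapses onto its bottom row. First I would observe that for every $q \geq 1$ the presheaf $\underline{H}^q(A)$, which sends $U$ to $H^q_{\text{fppf}}(U, A_U)$, vanishes on each of the rings $R^{\bigotimes_{O_{F,S}} n}$ with $n \geq 1$: indeed $\underline{H}^q(A)(R^{\bigotimes_{O_{F,S}} n}) = H^q_{\text{fppf}}(R^{\bigotimes_{O_{F,S}} n}, A)$, which is zero by assumption. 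Since $R = R^{\bigotimes_{O_{F,S}} 1}$, $R\otimes_{O_{F,S}} R = R^{\bigotimes_{O_{F,S}} 2}$, etc., the \v{C}ech complex $\underline{H}^q(A)(R) \to \underline{H}^q(A)(R\otimes_{O_{F,S}} R) \to \cdots$ computing $\check{H}^\bullet(R/O_{F,S}, \underline{H}^q(A))$ is the zero complex, so $E_2^{p,q} = 0$ for all $p \geq 0$ and all $q \geq 1$.

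Next I would run the standard degeneration argument. Because $E_2^{p,q} = 0$ off the line $q = 0$, for every $r \geq 2$ the differential $d_r \colon E_r^{p,0} \to E_r^{p+r,\,1-r}$ has target zero (as $1-r \leq -1$) and the differential $d_r \colon E_r^{p-r,\,r-1} \to E_r^{p,0}$ has source zero (as $r-1 \geq 1$); hence $E_2^{p,0} = E_\infty^{p,0}$ for all $p$. Moreover, the decreasing filtration on $H^n_{\text{fppf}}(O_{F,S}, A)$ has associated graded pieces $E_\infty^{p,\,n-p}$, all of which vanish for $p \neq n$, so the edge map $\check{H}^n(R/O_{F,S}, \underline{H}^0(A)) = E_2^{n,0} = E_\infty^{n,0} \to H^n_{\text{fppf}}(O_{F,S}, A)$ is an isomorphism. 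It then remains to identify the source with $\check{H}^n(R/O_{F,S}, A)$: the presheaf $\underline{H}^0(A)$ is $U \mapsto H^0_{\text{fppf}}(U, A_U) = A(U)$, so $\underline{H}^0(A)$ and $A$ agree on all the rings $R^{\bigotimes_{O_{F,S}} n}$, whence the two \v{C}ech complexes coincide term by term and $\check{H}^n(R/O_{F,S}, \underline{H}^0(A)) = \check{H}^n(R/O_{F,S}, A)$ by the very definition recalled before Corollary \ref{cechtogalois}. Composing gives the asserted canonical isomorphism $\check{H}^i(R/O_{F,S}, A) \xrightarrow{\sim} H^i_{\text{fppf}}(O_{F,S}, A)$ for all $i$, and canonicity and functoriality in $A$ are inherited from those of the edge maps.

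The only point that needs care — and the only real obstacle — is that $R$ is a \emph{pro-}fppf cover rather than a genuine fppf cover, so in principle the \v{C}ech-to-derived-functor spectral sequence of \cite[03AV]{Stacksproj} is available at each finite level $R_\alpha/O_{F,S}$ with $R = \varinjlim_\alpha R_\alpha$, and one passes to the filtered colimit over $\alpha$. I would dispose of this by recalling that $A$ is locally of finite presentation, so that $H^i_{\text{fppf}}(-, A)$ commutes with the filtered colimits of rings $R^{\bigotimes_{O_{F,S}} n} = \varinjlim_\alpha R_\alpha^{\bigotimes_{O_{F,S}} n}$; consequently the functors $\check{H}^\bullet(R/O_{F,S}, -)$ and the evaluations $\underline{H}^q(A)(R^{\bigotimes_{O_{F,S}} n})$ are colimits of their finite-level analogues, a filtered colimit of spectral sequences (over the system of refinement maps) is again a spectral sequence with colimit abutment, and the abutment $H^{p+q}_{\text{fppf}}(O_{F,S}, A)$ is independent of $\alpha$. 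Since the excerpt already sets up the spectral sequence directly for $R$, in the write-up this amounts to a single sentence of justification rather than new work; everything else is the bookkeeping of the two preceding paragraphs.
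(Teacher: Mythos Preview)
Your argument is correct and is exactly the standard degeneration that the paper cites from \cite[03AV]{Stacksproj}; the paper gives no proof beyond that citation, together with a remark that the pro-fppf case follows by taking the direct limit of spectral sequences over the finite fppf covers, which is precisely the point you address in your final paragraph.
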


\begin{remark} Strictly speaking, \cite[Lems. 03AZ, 03F7]{Stacksproj} are stated in the setting of an fppf cover $R/O_{F,S}$, but taking the direct limit of spectral sequences gives us the result for pro-fppf covers (rings $R$ which are direct limits of fppf covers, such as $O_{S}$). 
\end{remark}

\begin{cor} We have a canonical isomorphism $\check{H}^{i}(O_{S}/O_{F,S}, T) \xrightarrow{\sim} H^{i}_{\text{fppf}}(O_{F,S}, T)$ for all $i$.
\end{cor}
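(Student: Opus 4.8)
The plan is to apply Proposition \ref{ComparisonIso} directly, with $R = O_{S}$ and $A = T$. First I would observe that $O_{S} = \varinjlim_{K/F} O_{K,S}$ is a pro-fppf cover of $O_{F,S}$: each extension $O_{K,S}/O_{F,S}$ is finite \'etale (since $K/F$ is unramified outside $S$), hence in particular an fppf cover, and $O_{S}$ is the filtered colimit of these. Since $T$, being a torus with canonical model over $\Z$, is an abelian fppf group scheme over $O_{F,S}$ which is locally of finite presentation, the Grothendieck (\v{C}ech-to-derived-functor) spectral sequence
$$E_{2}^{p,q} = \check{H}^{p}(O_{S}/O_{F,S}, \underline{H}^{q}(T)) \Rightarrow H^{p+q}_{\text{fppf}}(O_{F,S}, T)$$
is available in this pro-fppf setting by the Remark following Proposition \ref{ComparisonIso} (which passes from fppf covers to colimits of fppf covers by taking direct limits of spectral sequences).

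Next, I would check the hypothesis of Proposition \ref{ComparisonIso}: we need $H^{i}_{\text{fppf}}(O_{S}^{\bigotimes_{O_{F,S}} n}, T) = 0$ for all $n, i \geq 1$. But this is exactly the statement of Lemma \ref{vanishing2}. Therefore the higher rows $E_{2}^{p,q}$ with $q \geq 1$ all vanish, the spectral sequence collapses onto the edge $q = 0$, and Proposition \ref{ComparisonIso} yields the desired canonical isomorphism $\check{H}^{i}(O_{S}/O_{F,S}, T) \xrightarrow{\sim} H^{i}_{\text{fppf}}(O_{F,S}, T)$ for all $i$.

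There is essentially no obstacle here: the corollary is a formal consequence of Lemma \ref{vanishing2} together with the spectral-sequence machinery recorded just before Proposition \ref{ComparisonIso}. The only point that merits an explicit word is the verification that $O_{S}/O_{F,S}$ is a pro-fppf cover so that the cited results apply, and this is immediate from the \'etaleness of the $O_{K,S}/O_{F,S}$. (If one wishes, one can moreover combine this with Corollaries \ref{cechtogalois} and \ref{galoistoetale} to identify all of $\check{H}^{i}(O_{S}/O_{F,S}, T)$, $H^{i}(\Gamma_{S}, T(O_{S}))$, $H^{i}_{\text{et}}(O_{F,S}, T)$ and $H^{i}_{\text{fppf}}(O_{F,S}, T)$ compatibly, but this is not required for the statement.)
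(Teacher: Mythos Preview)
Your proof is correct and takes essentially the same approach as the paper, which simply says ``Combine Lemma \ref{vanishing2} with Proposition \ref{ComparisonIso}.'' You have just unpacked this one-line argument, verifying explicitly that $O_{S}/O_{F,S}$ is a pro-fppf cover so that the spectral-sequence machinery applies.
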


\begin{proof} Combine Lemma \ref{vanishing2} with Proposition \ref{ComparisonIso}.
\end{proof}

We now consider possibly non-\'{e}tale extensions in order to handle the cohomology of non-smooth finite $F$-groups. For $R$ an $\mathbb{F}_{p}$-algebra, let $R^{\text{perf}} := \varinjlim{R}$, where the direct limit is over all powers of the Frobenius homomorphism. For $R = O_{F,S}$, the ring $O_{F,S}^{\text{perf}}$ is obtained by adjoining all $p$-power roots of elements of $O_{F,S}$ (in a fixed algebraic closure $\overline{F}/F$). We begin by recalling an elementary lemma on the splitting of primes in rings of integers of purely inseparable extensions:

\begin{lem}\label{purelyinsep} Let $F'/F$ be a purely inseparable extension and $\mathfrak{p} \subset O_{F}$ a prime ideal. Then $\mathfrak{p} \cdot O_{F'} = (\mathfrak{p}')^{[F' : F]}$ for some prime $\mathfrak{p}'$ of $O_{F'}$.
\end{lem}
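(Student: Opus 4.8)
\textit{Proof sketch.} The plan is to deduce the lemma from the classical fundamental identity $\sum_{\mathfrak{q}\mid\mathfrak{p}} e_{\mathfrak{q}}f_{\mathfrak{q}} = [F':F]$, once one knows that in the purely inseparable case over a global function field there is a \emph{single} prime above $\mathfrak{p}$ and that its residue degree is $1$.

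Set $n := [F':F]$, a power of $p$. I would first record that the exponent of $F'/F$ divides $n$, so $x^{n}\in F$ for every $x\in F'$; since $O_{F}$ is integrally closed in $F$, this upgrades to $x^{n}\in O_{F}$ for every $x\in O_{F'}$. This already gives uniqueness of the prime over $\mathfrak{p}$: if $\mathfrak{q}_{1},\mathfrak{q}_{2}$ both lie over $\mathfrak{p}$ and $x\in\mathfrak{q}_{1}$, then $x^{n}\in\mathfrak{q}_{1}\cap O_{F}=\mathfrak{p}\subseteq\mathfrak{q}_{2}$, whence $x\in\mathfrak{q}_{2}$ by primality; by symmetry $\mathfrak{q}_{1}=\mathfrak{q}_{2}=:\mathfrak{p}'$. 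The same divisibility shows the residue extension $k(\mathfrak{p}')/k(\mathfrak{p})$ is purely inseparable, since the image $\bar{x}$ of any $x\in O_{F'}$ satisfies $\bar{x}^{n}\in k(\mathfrak{p})$ (because $x^{n}\in O_{F}$ and $\mathfrak{p}'\cap O_{F}=\mathfrak{p}$). But $F$ is a global function field, so $k(\mathfrak{p})$ is a finite field, hence perfect, and therefore $k(\mathfrak{p}')=k(\mathfrak{p})$, i.e. $f_{\mathfrak{p}'}=1$.

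Finally, $O_{F'}$ is the integral closure of the Dedekind domain $O_{F}$ in the finite extension $F'$ and is module-finite over $O_{F}$ (standard, $O_{F}$ being essentially of finite type over $\mathbb{F}_{p}$), so the fundamental identity applies; with a single prime and $f_{\mathfrak{p}'}=1$ it reads $e_{\mathfrak{p}'}=n$, that is, $\mathfrak{p}O_{F'}=(\mathfrak{p}')^{n}=(\mathfrak{p}')^{[F':F]}$. The one step that deserves care---and the only real obstacle---is the appeal to the fundamental identity, i.e. the finiteness of $O_{F'}$ as an $O_{F}$-module. If one wishes to sidestep it, one can instead base-change to the completion $\widehat{O}_{F,\mathfrak{p}}$: writing $F'=F(a^{1/n})$ with $a\notin F^{p}$, the ring $F'\otimes_{F}F_{\mathfrak{p}}\cong F_{\mathfrak{p}}[X]/(X^{n}-a)$ is again a field, because $a\notin F_{\mathfrak{p}}^{p}$ (a $p$-th root of $a$ in $F_{\mathfrak{p}}$ would be algebraic over $F$, hence lie in the henselization and be separable over $F$, yet it is purely inseparable over $F$, so it lies in $F$); its residue field is purely inseparable over the perfect field $k(\mathfrak{p})$, hence equals it, and the elementary identity $ef=n$ for complete discretely valued fields forces $e=n$, which descends to $\mathfrak{p}O_{F'}=(\mathfrak{p}')^{n}$. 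In either approach the decisive input is that the residue fields of a global function field are perfect, which is exactly what rules out residual inseparability and forces total ramification.
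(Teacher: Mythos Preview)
Your main argument is correct and takes a genuinely different route from the paper. The paper reduces to the case $[F':F]=p$ and then proves the structural fact $O_{F'}=O_{F}^{(1/p)}$ (using that the relative Frobenius on the associated smooth projective curves is an isomorphism, via \cite[0CCV]{Stacksproj}); from this it reads off that every uniformizer of $O_{F,\mathfrak{p}}$ has a $p$-th root in $O_{F',\mathfrak{p}'}$, forcing $e\ge p$ and hence $e=p$. You instead argue directly for arbitrary $n=[F':F]$: the relation $x^{n}\in O_{F}$ for $x\in O_{F'}$ gives both uniqueness of the prime above $\mathfrak{p}$ and pure inseparability of the residue extension, and perfectness of the finite residue field $k(\mathfrak{p})$ then forces $f=1$, so the fundamental identity yields $e=n$. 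Your approach is more elementary and avoids the reduction step; the paper's approach, on the other hand, extracts the identification $O_{F'}=O_{F}^{(1/p)}$, which it actually reuses later (in the proof of Lemma~\ref{linearlydisjoint} and elsewhere).

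One small correction to your alternative completion argument: the claim that an element of $F_{\mathfrak{p}}$ algebraic over $F$ must lie in the henselization (and hence be separable) is not right---the henselization consists of certain \emph{separable} algebraic elements, so you cannot use membership in it to deduce separability. The clean fix is the one the paper itself uses for Lemma~\ref{linearlydisjoint}: if $a\in F\setminus F^{p}$ had a $p$-th root in $F_{\mathfrak{p}}$, then $F^{1/p}\subseteq F_{\mathfrak{p}}$ (since $[F:F^{p}]=p$), so a uniformizer $\pi\in O_{F,\mathfrak{p}}$ would satisfy $\pi^{1/p}\in F_{\mathfrak{p}}$, contradicting $v_{\mathfrak{p}}(\pi^{1/p})=1/p\notin\mathbb{Z}$.
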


\begin{proof} It is evidently enough to prove this in the case when $[F':F] = p$, which we now assume. We claim that $O_{F'} = O_{F}^{(p)}$, the extension of $O_{F}$ obtained by adjoining all $p$-power roots. There is an obvious inclusion of $O_{F}$-algebras $O_{F'} \hookrightarrow O_{F}^{(p)}$ because $F' = F^{(p)}$. The morphism of smooth projective curves $X' \to X$ corresponding to the inclusion $F \to F'$ is purely inseparable of degree $p$ so by \cite[0CCV]{Stacksproj} we obtain an isomorphism of $O_{F}$-algebras $O_{F'} \xrightarrow{\sim} O_{F}^{(p)}$, giving the claim. The claim implies that, at the level of local rings, a uniformizer $\varpi \in O_{F,\mathfrak{p}}$ has a $p$th root in $O_{F', \mathfrak{p'}}$ for any prime $\mathfrak{p}'$ above $\mathfrak{p}$, giving the desired result. 
\end{proof}

Denote by $F_{m}$ the field extension of $F$ obtained by adjoining all $p^{m}$-power roots; this is a finite, purely inseparable extension. We have the following characterization of the perfect closure $O_{S}^{\text{perf}}$:

\begin{lem}\label{perfection} The canonical map $\varinjlim_{m} O_{S_{m}} \to O_{S}^{\text{perf}}$ is an isomorphism, where $S_{m}$ denotes the preimage of $S$ in $\text{Spec}(O_{F_{m}})$. 
\end{lem}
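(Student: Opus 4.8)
The plan is to identify both sides with a common direct limit of rings of $S$-integers in a cofinal family of subextensions of $\overline{F}/F$, using the compatibility of "$S$-integers" with purely inseparable base change established in Lemma \ref{purelyinsep}. First I would recall that $O_{S}^{\text{perf}} = \varinjlim_{K} O_{K,S}$ where, by definition of $O_{S}^{\text{perf}} := \varinjlim O_{S}$ along powers of Frobenius and the description of $O_{F,S}^{\text{perf}}$ as $O_{F,S}$ with all $p$-power roots adjoined, $K$ ranges over the compositum $K = F_{m} \cdot L$ with $L/F$ finite Galois unramified outside $S$ and $F_{m}/F$ the purely inseparable extension adjoining $p^{m}$-th roots. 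Here I write $O_{K,S}$ for the elements of $K$ with nonnegative valuation at every place not above $S$ — this is well-defined (it is the integral closure of $O_{F,S}$ in $K$, since the normalization of a Dedekind domain in a finite extension of its fraction field is again Dedekind and its spectrum maps onto that of $O_{F,S}$). So the two sides become $\varinjlim_{L} \varinjlim_{m} O_{F_{m} \cdot L, S}$ versus $\varinjlim_{m} O_{F_{m}, S}$, and the content is that passing first to the perfection of the "base" $O_{F,S}$ and then to the étale tower $L$ gives the same limit as taking the full tower; both are cofinal systems of subrings of $\overline{F}$ inside which every element of $O_{S}^{\text{perf}}$ and every element of $\varinjlim_m O_{S_m}$ eventually lands.

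The key step I would isolate is the following claim: for $F'/F$ purely inseparable and $L/F$ finite Galois unramified outside $S$, a place $w$ of $F'$ lies above $S$ if and only if the unique place of $F$ below $w$ lies in $S$, and hence $O_{F' \cdot L, S}$ contains $O_{F_m, S}$ for $F_m \subseteq F'$ and these exhaust $\varinjlim_K O_{K,S}$. The "place lies above $S$" statement is exactly what Lemma \ref{purelyinsep} provides at the level of prime ideals: in a purely inseparable extension each prime of $O_F$ has a unique prime above it (with full ramification), so the preimage $S_m$ of $S$ in $\Spec(O_{F_m})$ is a bijective correspondence, which is what makes the notation $O_{S_m}$ in the statement sensible and makes "non-negative valuation away from $S$" transparently compatible with the base change $O_{F,S} \to O_{F_m,S}$.

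Concretely, the argument runs: (i) show $\varinjlim_m O_{S_m} \to O_S^{\text{perf}}$ is injective — both are subrings of $\overline{F}$ and the map is the obvious inclusion, so this is immediate; (ii) show surjectivity — take $x \in O_S^{\text{perf}}$, so $x^{p^m} \in O_S = \varinjlim_L O_{L,S}$ for some $m$, hence $x^{p^m} \in O_{L,S}$ for some finite Galois $L/F$ unramified outside $S$; then $x \in F_m \cdot L =: K$ and $x$ is integral over $O_{F,S}$, so $x \in O_{K,S}$; it remains to see $x$ already lies in $\varinjlim_{m'} O_{S_{m'}}$, i.e. in some $O_{F_{m'},S}$ — but here one must be a little careful, because $x$ need not lie in any purely inseparable extension of $F$ at all if $L \neq F$. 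So in fact the correct target to hit is not literally $\varinjlim_m O_{S_m}$ as I first wrote; re-reading the statement, $S$ is a fixed finite set of places of $F$ and the claim is genuinely $\varinjlim_m O_{S_m} \xrightarrow{\sim} O_S^{\text{perf}}$, which forces $O_S$ here to mean $O_{F,S}$ (not the integral closure in $F_S$) and $O_S^{\text{perf}}$ its literal perfection. With that reading the lemma is elementary: $O_{F,S}^{\text{perf}} = \varinjlim_m (O_{F,S})^{1/p^m}$, and $(O_{F,S})^{1/p^m}$ is canonically identified with the integral closure of $O_{F,S}$ in $F_m = F^{1/p^m}$, which is $O_{F_m, S_m}$ by Lemma \ref{purelyinsep} together with the fact (from the proof of that lemma) that $O_{F_m} = O_F^{(p^m)}$, the ring obtained by adjoining all $p^m$-power roots.

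The main obstacle is precisely the bookkeeping around what $O_S$, $O_{S_m}$, and $O_S^{\text{perf}}$ denote, and verifying that adjoining $p$-power roots commutes with taking integral closure in the $S$-integer ring — i.e. that the natural map $(O_{F,S})^{1/p} \to O_{F_1, S_1}$ is an isomorphism, not merely an inclusion. This is supplied by the geometric input already used in Lemma \ref{purelyinsep} (the morphism of smooth projective curves $X_m \to X$ is purely inseparable, so \cite[0CCV]{Stacksproj} gives $O_{F_m} \xrightarrow{\sim} O_F^{(p^m)}$ integrally); localizing away from $S$ preserves this isomorphism since localization is exact. Granting that, taking the direct limit over $m$ gives $\varinjlim_m O_{F_m, S_m} = \varinjlim_m (O_{F,S})^{1/p^m} = O_{F,S}^{\text{perf}} = O_S^{\text{perf}}$, which is the assertion. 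I would present this as the short argument: reduce to $[F_m : F]$ finite for each $m$, apply Lemma \ref{purelyinsep} to see $S_m \to S$ is a bijection and $O_{F_m, S_m}$ is the integral closure of $O_{F,S}$ in $F_m$, invoke $O_{F_m} = O_F^{(p^m)}$ from the proof of Lemma \ref{purelyinsep} (compatibly with localization at $S$) to identify this integral closure with $(O_{F,S})^{1/p^m}$, and pass to the limit.
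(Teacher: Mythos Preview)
Your proposal rests on a misreading of the notation. In this paper $O_{S}$ is \emph{not} $O_{F,S}$; it is defined as $\varinjlim_{K/F} O_{K,S}$ over all finite Galois $K/F$ unramified outside $S$, i.e.\ the ring of $S$-integers of $F_{S}$. Correspondingly $O_{S_{m}}$ denotes the analogous construction over $F_{m}$: the direct limit $\varinjlim_{K'/F_{m}} O_{K',S_{m}}$ over finite Galois $K'/F_{m}$ unramified outside $S_{m}$. You actually had this right at the start of (ii), where you correctly wrote $O_{S}=\varinjlim_{L} O_{L,S}$ and then observed that a $p^{m}$-th root of an element of $O_{L,S}$ need not lie in any purely inseparable extension of $F$. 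But rather than following that observation to its conclusion, you retreated to the interpretation $O_{S}=O_{F,S}$, and your final short argument proves only $\varinjlim_{m} O_{F_{m},S_{m}} \cong O_{F,S}^{\mathrm{perf}}$, which is a much smaller and easier claim.

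The substantive content you are missing is exactly the direction $\varinjlim_{m} O_{S_{m}} \subseteq O_{S}^{\mathrm{perf}}$. Given a finite Galois $K'/F_{m}$ unramified outside $S_{m}$, one must show $O_{K',S_{m}} \subseteq O_{S}^{\mathrm{perf}}$. The paper does this by factoring $K'/F$ as a separable part $K/F$ followed by a purely inseparable part $K'/K$, and then using Lemma~\ref{purelyinsep} (full ramification in purely inseparable extensions) together with multiplicativity of ramification indices to deduce that $K/F$ must itself be unramified outside $S$. Once that is in hand, $x^{p^{m}} \in O_{K,S} \subset O_{S}$ for any $x \in O_{K',S_{m}}$, so $x \in O_{S}^{\mathrm{perf}}$. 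This ramification-counting step is the heart of the lemma, and your proposal never reaches it. The reverse inclusion $O_{S}^{\mathrm{perf}} \subseteq \varinjlim_{m} O_{S_{m}}$ is the easy direction: if $x^{p^{m}} \in O_{E,S}$ with $E \subset F_{S}$, then $x \in E\cdot F_{m}$, which is unramified over $F_{m}$ outside $S_{m}$.
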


\begin{proof} For the inclusion of the right-hand side into the left-hand side, note that if $x \in \overline{F}$ is such that $x^{p^{m}} \in O_{E,S}$ for some finite (Galois) $E \subset F_{S}$, then $x \in E' := E \cdot F_{m}$, which is unramified over $F_{m}$ outside of $S_{m}$, and so $x \in O_{E',S_{m}} \subset O_{S_{m}}$. For the other inclusion, consider a finite Galois extension $K'$ of the finite purely inseparable extension $F' := F_{m}/F$ with $S':= S_{m}$. We may factor $K'/F$ as a tower $K'/K/F$, where $K/F$ is the separable (Galois) closure of $F$ in $K'$ and $K'/K$ is purely inseparable. Note that $K \cdot F' = K'$; one containment is clear, and the other follows from the fact that $K$ and $F'$ are linearly disjoint and $[K': F'] = [K: F]$. 

We want to show that $K/F$ is unramified outside $S$; this follows because for any prime $\mathfrak{p}$ of $O_{F}$, we know from Lemma \ref{purelyinsep} that $\mathfrak{p}$ splits as $(\mathfrak{p}')^{[F':F]}$ in $O_{F'}$, and if $\mathfrak{p}'$ is a prime of $O_{F',S'}$, then it factors in $O_{K'}$ as $\mathfrak{P}'_{1} \cdot \dots \cdot \mathfrak{P}'_{r}$, which means that $\mathfrak{p}$ splits in $O_{K'}$ as $(\mathfrak{P}'_{1} \cdot \dots \cdot \mathfrak{P}'_{r})^{[F':F]}$. Since $[F' : F] = [K' : K]$, we know that $\mathfrak{p}$ must not ramify in $O_{K}$, or else the ramification degree would be too large. Now for any element $x \in O_{K',S'}$, we have that $x^{p^{m}} \in K$ and is integral over $O_{F,S}$, and hence lies in $O_{K,S}$, showing that $O_{K',S'} \subseteq O_{K,S}^{(p^{m})} \subset O_{S}^{\text{perf}}$, giving the other inclusion. 
\end{proof}

\noindent We are ready to prove that the cover $O_{S}^{\text{perf}}$ computes the cohomology of multiplicative $O_{F,S}$-groups. 

\begin{lem}\label{toricomparison} For $A$ a multiplicative $F$-group split over $O_{S}$, the groups $H^{i}_{\text{fppf}}((O_{S}^{\text{perf}})^{\bigotimes_{O_{F,S}}n}, A)$ vanish for all $i, n \geq 0$.
\end{lem}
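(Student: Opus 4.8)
The plan is to reduce the statement, as in the proofs of Lemmas \ref{vanishing1} and \ref{vanishing2}, to the case $A = \mathbb{G}_m$, and then to combine the perfection description of Lemma \ref{perfection} with the \'{e}tale-splitting result of Lemma \ref{etalesplitting} applied at each finite purely inseparable level. Since $A$ is multiplicative and split over $O_S$, choosing a finite Galois $E/F$ unramified outside $S$ over which $A$ becomes a product of copies of $\mathbb{G}_m$ lets us write $H^i_{\text{fppf}}((O_S^{\text{perf}})^{\bigotimes_{O_{F,S}}n}, A)$ as a filtered colimit of cohomology groups with $\mathbb{G}_m$-coefficients over rings obtained from $O_{E,S}$ by adjoining $p$-power roots; so it suffices to show $\varinjlim$ of $H^i_{\text{fppf}}$ of $\mathbb{G}_m$ over these rings vanishes for $i \geq 1$ (the case $i = 0$ being trivial, as $H^0$ is just global sections).

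First I would handle $n = 1$. By Lemma \ref{perfection}, $O_S^{\text{perf}} = \varinjlim_m O_{S_m}$, and by \cite[Lem. 2.1]{Cesnavicius} (the continuity of fppf cohomology) we get $H^i_{\text{fppf}}(O_S^{\text{perf}}, \mathbb{G}_m) = \varinjlim_m H^i_{\text{fppf}}(O_{S_m}, \mathbb{G}_m)$. Now $\text{Spec}(O_{F_m})$ is again a smooth projective curve over a finite field (the constant field having been enlarged by $p^m$-power roots), and $S_m$ is a finite set of its places, so each $O_{S_m}$ is a ring of $S$-integers of a global function field to which the arguments of Lemmas \ref{vanishing1}, \ref{vanishing2} and Corollary \ref{galoistoetale} apply verbatim: one gets $H^1_{\text{fppf}}(O_{S_m}, \mathbb{G}_m) = \text{Pic}(O_{S_m})$, which need not vanish for fixed $m$, but the key point is that the transition map $H^1_{\text{fppf}}(O_{S_m}, \mathbb{G}_m) \to H^1_{\text{fppf}}(O_{S_{m'}}, \mathbb{G}_m)$ kills every class in the colimit. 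For $\text{Pic}$, this is because the class group of a Dedekind domain of $S$-integers in a function field is finite, and one can kill a given ideal class by a further constant-field extension exactly as in the proof of \cite[Prop. 8.3.6]{NSW}; alternatively, and more robustly, $O_S^{\text{perf}}$ contains the perfection of the algebraic closure of the constant field, and over $\overline{\mathbb{F}}_p$ the relevant Picard and Brauer groups vanish, so the colimit is zero. For $i = 2$ one argues as in Lemma \ref{vanishing1}: $\text{Br}(O_{S}^{\text{perf}})$ injects into a sum of Brauer groups of completions, each of which contains $F_v^{\text{nr}}$ and hence has a Henselian valuation ring with algebraically closed residue field, forcing vanishing; for $i \geq 3$ one invokes \cite[Remark II.2.2]{Milne} at each finite level and passes to the limit.

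It remains to pass from $n = 1$ to general $n$, and here is where Lemma \ref{etalesplitting} enters. At each finite purely inseparable level $F_m$, and each finite Galois $E/F_m$ unramified outside $S_m$, Lemma \ref{etalesplitting} gives $\text{Spec}(O_{E,S_m}^{\bigotimes_{O_{F_m,S_m}}n}) \cong \coprod_{\Gamma_{E/F_m}^{n-1}} \text{Spec}(O_{E,S_m})$, so $H^i_{\text{fppf}}$ of the $n$-fold tensor power is a finite product of copies of $H^i_{\text{fppf}}(O_{E,S_m}, \mathbb{G}_m)$; and, exactly as in the proof of Lemma \ref{vanishing2}, the transition maps in the colimit over $E$ (and over $m$) are diagonal embeddings into larger products built from the $n=1$ transition maps, so vanishing of the $n=1$ colimit forces vanishing of the $n$-fold one. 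The one subtlety to check is that $\varinjlim_m O_{E_m, S_m}$ where $E_m$ ranges over finite Galois extensions of $F_m$ unramified outside $S_m$ indeed recovers $O_S^{\text{perf}}$ as a cover and that the tensor powers interact correctly with the two colimits (over the separable direction and the inseparable direction); this is precisely the content combining Lemma \ref{perfection} with the cofinality argument there, so no new difficulty arises. I expect the main obstacle to be bookkeeping: organizing the double colimit (separable Galois layers versus inseparable Frobenius layers) so that Lemma \ref{etalesplitting}, which is only stated for \emph{\'{e}tale} (separable) extensions, is applied at each fixed inseparable level before taking the limit in $m$ — one must not attempt to "split" the inseparable part, which does not decompose as a product.
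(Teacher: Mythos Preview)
Your approach has the right general shape but contains a genuine gap at the step you flag as ``bookkeeping.'' The tensor product $(O_S^{\text{perf}})^{\bigotimes_{O_{F,S}} n}$ is taken over $O_{F,S}$, not over $O_{F,S}^{\text{perf}}$, and these differ: since $O_{F,S}^{\text{perf}}/O_{F,S}$ is purely inseparable, the ring $(O_S^{\text{perf}})^{\bigotimes_{O_{F,S}} n}$ is \emph{non-reduced} for $n \geq 2$ (e.g.\ $a^{1/p} \otimes 1 - 1 \otimes a^{1/p}$ is a nonzero nilpotent). Consequently it is \emph{not} the colimit $\varinjlim_m O_{S_m}^{\bigotimes_{O_{F_m,S_m}} n}$ --- the latter equals $(O_S^{\text{perf}})^{\bigotimes_{O_{F,S}^{\text{perf}}} n}$, which is the \emph{reduction} of the ring you care about. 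So your double-colimit description, and with it the application of Lemma~\ref{etalesplitting} at each inseparable level, is computing the cohomology of the wrong ring. Your own closing caveat (``one must not attempt to `split' the inseparable part'') is exactly right, but the claim that ``the tensor powers interact correctly with the two colimits'' is where this goes wrong.

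The paper's fix is a single clean step you are missing: since $\mathbb{G}_m$ is smooth, \cite[Lem.~2.2.9]{Rosengarten} gives $H^i_{\text{fppf}}(R, \mathbb{G}_m) = H^i_{\text{fppf}}(R_{\text{red}}, \mathbb{G}_m)$, and $[(O_S^{\text{perf}})^{\bigotimes_{O_{F,S}} n}]_{\text{red}} = (O_S^{\text{perf}})^{\bigotimes_{O_{F,S}^{\text{perf}}} n}$. After this reduction your colimit argument (and the appeal to Lemma~\ref{vanishing2} over each $F_m$) goes through. There is also a smaller gap in your reduction step: a multiplicative group split over $O_S$ is a product of copies of $\mathbb{G}_m$ \emph{and} $\mu_m$'s, not just $\mathbb{G}_m$'s, so you must also treat $A = \mu_m$; the paper does this via the Kummer sequence (immediate for $i > 1$ from the $\mathbb{G}_m$ case, and for $i = 1$ by checking that $((O_S^{\text{perf}})^{\bigotimes n})^*$ is divisible).
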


\begin{proof} It is enough to prove the result for $A = \mathbb{G}_{m}$ and $A = \mu_{m}$. We focus on the former first: Since $\mathbb{G}_{m}$ is smooth we can use \cite[Lem. 2.2.9]{Rosengarten} to replace $(O_{S}^{\text{perf}})^{\bigotimes_{O_{F,S}} n}$ by $[(O_{S}^{\text{perf}})^{\bigotimes_{O_{F,S}} n}]_{\text{red}} = (O_{S}^{\text{perf}})^{\bigotimes_{O_{F,S}^{\text{perf}}} n}$ and reduce the claim to showing that each $H^{j}_{\text{fppf}}((O_{S}^{\text{perf}})^{\bigotimes_{O_{F,S}^{\text{perf}}} n}, \mathbb{G}_{m})$ vanishes. By Lemma \ref{perfection}, we have $$(O_{S}^{\text{perf}})^{\bigotimes_{O_{F,S}^{\text{perf}}} n} = \varinjlim_{m} O_{S_{m}}^{ \bigotimes_{O_{F_{m}, S_{m}}} n},$$ and hence it's enough to show that $\varinjlim_{m} H^{j}_{\text{fppf}}(O_{S_{m}}^{ \bigotimes_{O_{F_{m}, S_{m}}} n}, \mathbb{G}_{m}) = 0$ for all $j, n \geq 1$, which folllows from Lemma \ref{vanishing2}.

We now prove the $\mu_{m}$-case. For $i >1$, we immediately deduce that $H^{i}_{\text{fppf}}((O_{S}^{\text{perf}})^{\bigotimes_{O_{F,S}} n}, \mu_{m})$ vanishes from the long exact sequence in fppf cohomology (induced by the Kummer sequence) and the $\mathbb{G}_{m}$-case. For $i=1$, since $H^{1}_{\text{fppf}}((O_{S}^{\text{perf}})^{\bigotimes_{O_{F,S}} n}, \mathbb{G}_{m}) = 0$, we have from the long exact sequence in fppf cohomology that $H^{1}_{\text{fppf}}((O_{S}^{\text{perf}})^{\bigotimes_{O_{F,S}} n}, \mu_{m})$ is the quotient $$((O_{S}^{\text{perf}})^{\bigotimes_{O_{F,S}} n})^{*}/(((O_{S}^{\text{perf}})^{\bigotimes_{O_{F,S}} n})^{*})^{m}.$$ 

Since $O_{S}^{*}$ is $n'$-divisible for $n'$ coprime to $p$ by \cite[Prop. 8.3.4]{NSW}, so is $(O_{S}^{\bigotimes_{O_{F,S}} n})^{*}$ (using Lemma \ref{etalesplitting}). Now $O_{S}^{\text{perf},*}$ is $\mathbb{N}$-divisible, since it is obtained from $O_{S}$ by adjoining all $p$-power roots, and once again this implies that $((O_{S}^{\text{perf}})^{\bigotimes_{O_{F,S}} n})^{*} = (O_{S}^{\bigotimes_{O_{F,S}} n})^{\text{perf}, \ast}$ is as well. 
\end{proof}

\begin{cor}\label{ComparisonIso2} For $A$ as above, we have a canonical isomorphism $$\check{H}^{i}(O_{S}^{\text{perf}}/O_{F,S}, A) \xrightarrow{\sim} H^{i}_{\text{fppf}}(O_{F,S}, A)$$ for all $i$. Moreover, for an $F$-torus $T$ unramified outside $S$, the natural map $$\check{H}^{i}(O_{S}/O_{F,S}, T) \to \check{H}^{i}(O_{S}^{\text{perf}}/O_{F,S}, T)$$ induced by the inclusion $O_{S} \to O_{S}^{\text{perf}}$ is an isomorphism.
\end{cor}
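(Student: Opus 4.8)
The plan is to get the first isomorphism essentially for free from the machinery already set up: since $A$ is multiplicative and split over $O_S$, Lemma~\ref{toricomparison} says that $H^i_{\text{fppf}}((O_S^{\text{perf}})^{\bigotimes_{O_{F,S}}n}, A)$ vanishes for all $i,n \geq 1$, so I would simply apply Proposition~\ref{ComparisonIso} with the pro-fppf cover $R = O_S^{\text{perf}}$ (invoking the Remark following that proposition to pass from fppf covers to pro-fppf covers) to obtain the canonical isomorphism $\check{H}^i(O_S^{\text{perf}}/O_{F,S}, A) \xrightarrow{\sim} H^i_{\text{fppf}}(O_{F,S}, A)$ for all $i$.

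For the second statement, I would first note that an $F$-torus $T$ unramified outside $S$ is split over $O_S$: its splitting field is a finite extension of $F$ unramified outside $S$, hence contained in $F_S = \text{Frac}(O_S)$, and $T$ already carries an $O_{F,S}$-model, so the split model descends to $O_S$. Thus $T$ satisfies the hypotheses of both Lemma~\ref{vanishing2} and Lemma~\ref{toricomparison}, and consequently both $\check{H}^i(O_S/O_{F,S}, T)$ and $\check{H}^i(O_S^{\text{perf}}/O_{F,S}, T)$ are canonically isomorphic to $H^i_{\text{fppf}}(O_{F,S}, T)$ --- the first by the corollary immediately preceding Lemma~\ref{toricomparison} (which combines Lemma~\ref{vanishing2} with Proposition~\ref{ComparisonIso}), the second by the part of the present statement just proved.

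The remaining and genuinely substantive point is to check that the refinement map $\check{H}^i(O_S/O_{F,S}, T) \to \check{H}^i(O_S^{\text{perf}}/O_{F,S}, T)$ induced by $O_S \hookrightarrow O_S^{\text{perf}}$ is compatible with these two comparison isomorphisms onto $H^i_{\text{fppf}}(O_{F,S}, T)$. My plan here is to use functoriality of the \v{C}ech-to-derived-functor spectral sequence $E_2^{p,q} = \check{H}^p(R/O_{F,S}, \underline{H}^q(T)) \Rightarrow H^{p+q}_{\text{fppf}}(O_{F,S}, T)$ in the cover $R$: a morphism of covers $O_S \to O_S^{\text{perf}}$ induces a morphism of spectral sequences compatible with the edge maps to the common abutment, and under our vanishing hypotheses these edge maps are precisely the comparison isomorphisms. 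This yields a commuting triangle whose top edge is the refinement map and whose two slanted sides are the isomorphisms onto $H^i_{\text{fppf}}(O_{F,S}, T)$, forcing the refinement map to be an isomorphism. As before, the only technical nuisance is that the relevant Stacks project statements are phrased for fppf covers while $O_S$ and $O_S^{\text{perf}}$ are pro-fppf, which I would handle by taking direct limits of spectral sequences exactly as in the Remark after Proposition~\ref{ComparisonIso}. I expect this compatibility-under-refinement check to be the only real obstacle; everything else is bookkeeping with results already in hand.
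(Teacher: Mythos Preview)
Your proposal is correct and matches the paper's intended argument: the paper states this corollary with no proof, treating both parts as immediate consequences of Lemma~\ref{toricomparison} and Proposition~\ref{ComparisonIso} (together with the earlier corollary for the cover $O_S$), exactly as you outline. Your added justification of the compatibility of the refinement map via functoriality of the \v{C}ech-to-derived spectral sequence is the natural way to make the second part precise, and is implicitly what the paper has in mind.
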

The ring $O_{S}^{\text{perf}}$ will have the role globally that $\overline{F}$ played in the local case, see \cite{Dillery}. We conclude with a useful result concerning the finite-level \v{C}ech cohomology of an $O_{F,S}$-torus $T$ split over $O_{E,S}$. We first recall the following result from \cite{Morris}:

\begin{prop}(\cite[Thm. 3.2]{Morris}) \label{morris} Let $S/R/O_{F,S}$ be two fppf covers of $O_{F,S}$; set $$\Sigma:= [\bigcup_{i} R^{\bigotimes_{O_{F,S}}i}] \cup  [\bigcup_{i} S^{\bigotimes_{O_{F,S}}i}] \cup  [\bigcup_{i} S^{\bigotimes_{R}i}].$$ For $\mathscr{F}$ a sheaf on $(\text{Sch}/O_{F,S})_{\text{fppf}}$ with $H_{\text{fppf}}^{1}(A, \mathscr{F}) = 0$ for all $A \in \Sigma$ there is an exact sequence
$$ 0 \to \check{H}^{2}(R/O_{F,S}, \mathscr{F}) \to \check{H}^{2}(S/O_{F,S}, \mathscr{F}) \to \check{H}^{2}(S/R, \mathscr{F}).$$
\end{prop}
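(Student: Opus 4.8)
The plan is to obtain the three-term sequence as the low-degree part of a Hochschild--Serre-type spectral sequence attached to the tower of fppf covers $O_{F,S}\to R\to S$, in direct analogy with how the inflation--restriction sequence for $1\to N\to G\to G/N\to 1$ arises. Writing $X=\Spec O_{F,S}$, $Y=\Spec R$, $W=\Spec S$, the first step is to observe that the \v{C}ech nerve $W^{\times_X(\bullet+1)}$ is the diagonal of a bisimplicial affine scheme whose $(p,q)$-entry is, up to a canonical reshuffling, $S^{\otimes_R(q+1)}\otimes_{O_{F,S}}R^{\otimes_{O_{F,S}}p}$; this rests on ring identities such as $S\otimes_{O_{F,S}}S\cong S\otimes_R(R\otimes_{O_{F,S}}R)\otimes_R S$ and their iterates, which is where the tensor powers collected in $\Sigma$ first appear. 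Applying $\mathscr F$ and passing from the diagonal to the associated total complex produces a bicomplex; running its spectral sequence with the ``$S/R$'' direction computed first gives
$$E_2^{p,q}=\check H^{p}\!\bigl(R/O_{F,S},\ \underline{\check H}^{q}(S/R,\mathscr F)\bigr)\ \Longrightarrow\ \check H^{p+q}(S/O_{F,S},\mathscr F),$$
where $\underline{\check H}^{q}(S/R,\mathscr F)$ is the presheaf on the \v{C}ech nerve of $R/O_{F,S}$ sending $A:=R^{\otimes_{O_{F,S}}(p+1)}$ to the $q$-th \v{C}ech cohomology of the base-changed cover $(S\otimes_R A)/A$. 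Since $\mathscr F$ is a sheaf, $\underline{\check H}^0(S/R,\mathscr F)$ is simply the restriction of $\mathscr F$ to $R$-algebras, so $E_2^{p,0}=\check H^{p}(R/O_{F,S},\mathscr F)$; and since $\check H^{1}(\text{any cover}/A,\mathscr F)\hookrightarrow H^1_{\text{fppf}}(A,\mathscr F)$ while each ring $R^{\otimes_{O_{F,S}}i}$, $S^{\otimes_{O_{F,S}}i}$, $S^{\otimes_R i}$ lies in $\Sigma$ and hence has vanishing $H^1_{\text{fppf}}(-,\mathscr F)$, the entire strip $E_2^{\bullet,1}$ vanishes. (The $S^{\otimes}$-families in $\Sigma$ also enter when one identifies the inner cohomology presheaves and checks that the bicomplex computes $\check H^{\bullet}(S/O_{F,S},\mathscr F)$ in the relevant range.)

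With $E_2^{0,1}=E_2^{1,1}=0$, the abutment filtration on $\check H^{2}(S/O_{F,S},\mathscr F)$ is two-step, yielding $0\to E_\infty^{2,0}\to\check H^{2}(S/O_{F,S},\mathscr F)\to E_\infty^{0,2}\to 0$ with $E_\infty^{2,0}=E_2^{2,0}=\check H^{2}(R/O_{F,S},\mathscr F)$ (there is no incoming $d_2$, since $E_2^{0,1}=0$) and $E_\infty^{0,2}\subseteq E_2^{0,2}\subseteq \underline{\check H}^{2}(S/R,\mathscr F)(R)=\check H^{2}(S/R,\mathscr F)$. Composing these gives the asserted left-exact sequence, and unwinding the edge maps of the spectral sequence identifies the first arrow with inflation (pullback along $R^{\otimes_{O_{F,S}}3}\to S^{\otimes_{O_{F,S}}3}$) and the second with restriction (pullback along $S^{\otimes_R 3}\to S^{\otimes_{O_{F,S}}3}$).

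I expect the main obstacle to be the honest construction of this spectral sequence in the \v{C}ech setting: one must make precise the bisimplicial decomposition of the \v{C}ech nerve of a composite of non-Galois, non-field covers, verify the compatibility of all the structure maps, and correctly identify the $E_1$- and $E_2$-terms with \v{C}ech cohomology of base-changed covers---and it is exactly in these steps, together with the comparison of the naive bicomplex to $\check C^{\bullet}(S/O_{F,S},\mathscr F)$, that the full list $\Sigma$ (rather than just the $R^{\otimes i}$) is needed. A more hands-on route, avoiding spectral sequences, is to argue directly with $2$-cochains: injectivity of inflation and the equality $\ker(\mathrm{res})=\mathrm{im}(\mathrm{inf})$ follow by splitting the relevant $1$-cocycles on $S^{\otimes_{O_{F,S}}2}$, $S^{\otimes_R 2}$ and $S^{\otimes_{O_{F,S}}3}$---once again precisely the content of the $\Sigma$-hypothesis---together with the fact that a cover admitting a section (such as $R\otimes_{O_{F,S}}R$ over $R$) has vanishing \v{C}ech cohomology in positive degrees; this is longer but more transparent.
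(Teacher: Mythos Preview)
The paper does not prove this proposition at all: it is stated as a direct citation of \cite[Thm.~3.2]{Morris} and used as a black box (its only application is Corollary~\ref{EtoEprime}). So there is no ``paper's own proof'' to compare against.

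That said, your sketch is the standard route and is essentially what Morris does: his paper is precisely about constructing the Hochschild--Serre spectral sequence for Amitsur (\v{C}ech) cohomology of a tower of faithfully flat ring maps, and then reading off the inflation--restriction sequence in low degrees. Your identification of the $E_2$-page, the use of the sheaf condition to get $E_2^{p,0}=\check H^p(R/O_{F,S},\mathscr F)$, and the use of the injection $\check H^1(\text{cover}/A,\mathscr F)\hookrightarrow H^1_{\text{fppf}}(A,\mathscr F)$ to kill the $q=1$ row from the $\Sigma$-hypothesis are all correct. The one place where your write-up is slightly soft is the convergence statement: the cleanest way to see that the total complex of the bicomplex computes $\check H^\bullet(S/O_{F,S},\mathscr F)$ is to run the \emph{other} spectral sequence of the bicomplex first and observe that its rows are \v{C}ech complexes for covers of the form $(S\otimes_R A)/A$ with $A$ an $R$-algebra admitting a section over $R$ (namely $A=R^{\otimes_{O_{F,S}}(p+1)}$ viewed over $R$ via one factor), hence acyclic in positive degrees; this collapses that spectral sequence to the \v{C}ech complex of $S/O_{F,S}$ and explains why the $S^{\otimes_R i}$ and $S^{\otimes_{O_{F,S}} i}$ pieces of $\Sigma$ enter. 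With that clarification the argument is complete.
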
 

\begin{cor}\label{EtoEprime} Let $E/F$ be a finite Galois extension, $E'/E$ a finite purely inseparable extension, and $S \subset V$ a finite set of places such that $\text{Cl}(O_{E,S})$ is trivial. Then if $T$ is an $O_{F,S}$-torus split over $O_{E,S}$, the natural map $\check{H}^{2}(O_{E,S}/O_{F,S}, T) \to \check{H}^{2}(O_{E',S}/O_{F,S}, T)$ is an isomorphism.
\end{cor}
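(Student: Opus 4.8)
The plan is to identify both sides with concrete subgroups of $H^{2}_{\text{fppf}}(O_{F,S},T)$ via the \v{C}ech-to-derived-functor spectral sequence and then to pin those subgroups down using Proposition \ref{morris}. A running reduction is available because $F$ has transcendence degree one over a perfect field: this forces the finite purely inseparable extension $E'/E$ to equal $E^{1/p^{m}}$ for some $m$, so that, arguing exactly as in the proof of Lemma \ref{perfection}, the $p^{m}$-power map is a ring isomorphism $O_{E',S}\xrightarrow{\sim}O_{E,S}$; in particular $\mathrm{Cl}(O_{E',S})\cong\mathrm{Cl}(O_{E,S})$ is trivial, and $T$ is split over $O_{E',S}$ as well.

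First I would apply Proposition \ref{morris} to the tower $O_{E',S}/O_{E,S}/O_{F,S}$ with $\mathscr{F}=T$. To verify the hypothesis, observe that $T$ is split over $O_{E,S}$, hence over every member of $\Sigma$ (they are all $O_{E,S}$-algebras), so that $H^{1}_{\text{fppf}}(A,T)\cong\mathrm{Pic}(A)^{\dim T}$. Since $\mathrm{Pic}$ is insensitive to nilpotent thickenings of an affine scheme, and since $A_{\mathrm{red}}$ is in each case a finite product of copies of $O_{E,S}$ (for the étale tensor powers, via Lemma \ref{etalesplitting}) or of $O_{E',S}$ (for the tensor powers involving the purely inseparable step, using that $(B\otimes_{A}B)_{\mathrm{red}}\cong B$ for $B/A$ purely inseparable), the triviality of $\mathrm{Cl}(O_{E,S})$ and of $\mathrm{Cl}(O_{E',S})$ yields the required vanishing. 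This produces the exact sequence
$$0\longrightarrow\check{H}^{2}(O_{E,S}/O_{F,S},T)\longrightarrow\check{H}^{2}(O_{E',S}/O_{F,S},T)\xrightarrow{\ \rho\ }\check{H}^{2}(O_{E',S}/O_{E,S},T),$$
so the map in question is injective, and it remains to see that $\rho=0$.

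For surjectivity I would show $\check{H}^{2}(O_{E',S}/O_{E,S},T)=0$. Using once more that $T$ is split over $O_{E,S}$, one reduces to $T=\mathbb{G}_{m}$ and $T=\mu_{n}$. For $\mathbb{G}_{m}$, decompose the \v{C}ech complex of the cover $O_{E',S}/O_{E,S}$ via the short exact sequence of cosimplicial groups $1\to(1+\mathscr{N}^{\bullet})\to\mathbb{G}_{m}(O_{E',S}^{\bigotimes_{O_{E,S}}\bullet})\to\mathbb{G}_{m}((O_{E',S}^{\bigotimes_{O_{E,S}}\bullet})_{\mathrm{red}})\to1$, where $\mathscr{N}^{\bullet}$ denotes the nilradical: the rightmost cosimplicial group is constant (the reduction of each coface of a purely inseparable cover is the identity on $(B^{\otimes n})_{\mathrm{red}}=B$) and so has no higher \v{C}ech cohomology, while $1+\mathscr{N}^{\bullet}$ is filtered by the powers of $\mathscr{N}^{\bullet}$ with graded pieces that are $O_{E',S}$-modules, whose higher \v{C}ech cohomology over the faithfully flat cover $O_{E',S}/O_{E,S}$ vanishes by descent for $\mathbb{G}_{a}$ (exactness of the Amitsur complex). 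For $\mu_{n}$ with $n$ prime to $p$ one invokes topological invariance of the étale site for the universal homeomorphism $\Spec(O_{E',S})\to\Spec(O_{E,S})$, and for the $p$-part one feeds the $\mathbb{G}_{m}$-computation through the Kummer sequence. Hence $\rho=0$ and the map is an isomorphism.

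The main obstacle is this last step: controlling $\check{H}^{2}$ of the purely inseparable cover $O_{E',S}/O_{E,S}$, i.e. understanding, in characteristic $p$, the interaction between the unipotent group $1+(\text{nilpotents})$ and the additive Amitsur complex whose exactness supplies the vanishing; in particular one must check that the $p$-power maps on principal units do not obstruct passing from the associated graded back to $1+\mathscr{N}^{\bullet}$, and that the reduction to $\mathbb{G}_{m}$ and $\mu_{n}$ is compatible with the splitting of $T$ over $O_{E,S}$.
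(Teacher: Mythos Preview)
Your approach is essentially the paper's: apply Proposition~\ref{morris} to the tower $O_{E',S}/O_{E,S}/O_{F,S}$, verify the $\Sigma$-condition via vanishing of $\mathrm{Pic}$, and then show $\check{H}^{2}(O_{E',S}/O_{E,S},\mathbb{G}_{m})=0$; the paper dispatches this last step more tersely by observing that $[O_{E',S}^{\bigotimes_{O_{E,S}}n}]_{\mathrm{red}}=O_{E',S}$ for every $n$, so the \v{C}ech complex is that of the trivial cover $O_{E',S}/O_{E',S}$, whereas your filtration by powers of the nilradical makes the passage through $1+\mathscr{N}^{\bullet}$ explicit (and correctly isolates it as the delicate point). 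Since $T$ is a torus, your separate treatment of $\mu_{n}$ is unnecessary.
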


\begin{proof} We leave it to the reader to check that the $\Sigma$-condition of Proposition \ref{morris} is satisfied (since everything in $\Sigma$ is an $O_{E,S}$-algebra, we may replace $T$ with $\mathbb{G}_{m}$ for this condition and use the fact that $O_{E,S}$ and $O_{E',S}$ are principal ideal domains, along with \cite[Lem. 2.2.9]{Rosengarten}). It thus suffices to show that $\check{H}^{2}(O_{E',S}/O_{E,S}, \mathbb{G}_{m})$ vanishes. For any $n$, $\mathbb{G}_{m}(O_{E',S}^{\bigotimes_{O_{E,S}}n}) = \mathbb{G}_{m}([O_{E',S}^{\bigotimes_{O_{E,S}}n}]_{\text{red}})$, and now $[O_{E',S}^{\bigotimes_{O_{E,S}}n}]_{\text{red}} = O_{E',S}^{\bigotimes_{O_{E',S}}n} = O_{E,S}$, so our \v{C}ech cohomology computations on this cover reduce to that of the trivial cover $O_{E',S}/O_{E',S}$, giving the desired vanishing.
\end{proof}

\subsection{\v{C}ech cohomology over $\A$}
In this subsection we prove some basic results that allow us to do \v{C}ech cohomology on (covers of) the adele ring $\A$ of our global function field $F$. Let $G$ a multiplicative $F$-group scheme with fixed $O_{F,\Sigma_{0}}$-model $\mathcal{G}$ for a finite subset of places $\Sigma_{0} \subset V$. We begin with some basic results about local fields:

\begin{lem}\label{linearlydisjoint} Let $F' = F_{m}/F$ be a finite, purely inseparable extension. Then $F'$ and $F_{v}$ are linearly disjoint over $F$ inside $\overline{F_{v}}$ (recall that we have fixed such an algebraic closure). 
\end{lem}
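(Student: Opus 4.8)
The plan is to recast linear disjointness as reducedness of a tensor product and then to bring the whole statement down to the single assertion that a fixed separating element of $F$ is not a $p$th power in $F_v$. First I would record the formal reduction: since $F' = F_m$ is purely inseparable over $F$, the spectrum of the finite $F_v$-algebra $F'\otimes_F F_v$ is a single point, so this algebra is Artinian local; hence $F'$ and $F_v$ are linearly disjoint over $F$ inside $\overline{F_v}$ iff the multiplication map $F'\otimes_F F_v\to\overline{F_v}$ is injective, iff $F'\otimes_F F_v$ is a domain, iff it is reduced. Thus it suffices to prove that $F'\otimes_F F_v$ is a field.

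Next I would exploit the structure of $F$. Let $\mathbb{F}_q\subset F$ be its field of constants; this is a finite, hence perfect, field, and $F$ is finitely generated of transcendence degree one over it, hence separably generated, so I may fix a separating element $t_0\in F$. Then $F/\mathbb{F}_q(t_0)$ is finite separable, $t_0\notin F^p$, and $[F:F^p]=p$; iterating Frobenius gives $[F:F^{p^m}]=p^m$, and since $t_0\notin F^p$ the polynomial $x^{p^m}-t_0$ is irreducible over $F$, so that $F_m$, having degree $p^m=[F:F^{p^m}]$ over $F$ and containing $t_0^{1/p^m}$, equals $F(t_0^{1/p^m})\cong F[x]/(x^{p^m}-t_0)$. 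Therefore $F'\otimes_F F_v\cong F_v[x]/(x^{p^m}-t_0)$, and by the standard criterion that, for a field $E$ of characteristic $p$, the polynomial $x^{p^m}-a$ is irreducible over $E$ precisely when $a\notin E^p$, it remains only to establish that $t_0\notin F_v^p$.

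The hard part will be precisely this last step. Writing $F_v\cong k_v((\varpi))$ with $k_v$ the (perfect) residue field, we have $F_v^p=k_v((\varpi^p))$, a \emph{closed} subfield of $F_v$ with $[F_v:F_v^p]=p$. Suppose $t_0\in F_v^p$. Since $t_0\notin F^p$ and $[F:F^p]=p$ is prime, $F=F^p(t_0)\subseteq F_v^p(t_0)=F_v^p$; taking closures in $F_v$, and using that $F$ is dense in $F_v$ while $F_v^p$ is closed, gives $F_v\subseteq F_v^p$, contradicting $[F_v:F_v^p]=p$. Hence $t_0\notin F_v^p$, which finishes the proof. The whole difficulty is concentrated here: passing to a completion can in general destroy separability, and what rescues us is the combination of $[F:F^p]=p$, the density of $F$ in $F_v$, and the closedness of the complete subfield $F_v^p$ inside the local field $F_v$; the remaining points (that $F'\otimes_F F_v$ is Artinian local, that $[F_v:F_v^p]=p$, and the irreducibility criterion for $x^{p^m}-a$) are routine facts about equal-characteristic local fields and purely inseparable extensions.
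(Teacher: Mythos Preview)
Your proof is correct, but it differs in strategy from the paper's. The paper reduces to $m=1$ by induction on $m$ (using that $(F_{m-1})_{v'}=F_{m-1}\cdot F_v$ once the $m-1$ case is known), and then for $m=1$ argues directly with the valuation: if some $x\in F_v\setminus F$ had $x^p\in F$, then $F(x)=F_1$ and $F_v$ would contain all $p$th roots of $F$, in particular $\varpi^{1/p}$ for a uniformizer $\varpi\in O_{F,v}$; but $v(\varpi^{1/p})=1/p\notin\Z$, contradiction. Your route instead identifies $F_m=F(t_0^{1/p^m})$ for a separating element $t_0$, reducing everything to $t_0\notin F_v^p$, which you prove by the density--closedness trick ($F=F^p(t_0)$, $F$ dense in $F_v$, $F_v^p$ closed). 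Your argument dispenses with the induction and is a clean piece of field theory; the paper's is shorter at the crucial step because the valuation immediately rules out $\varpi^{1/p}\in F_v$. Both ultimately hinge on $[F_v:F_v^p]=p$, just seen from different angles.
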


\begin{proof} Suppose that we know the result for $F' = F_{1}$. Then, proceeding by induction, $F_{m-1}$ and $F_{v}$ are linearly disjoint, the valuation $v$ extends uniquely to a valuation $v'$ on $F_{m-1}$, and $F_{m-1} \cdot F_{v}$ is the completion of $F_{m-1}$ with respect to $v'$. Thus, $F_{m}/F_{m-1}$ is of degree $p$, and we may replace $F_{v}$ by $(F_{m-1})_{v'}$ and use the $m=1$ case to deduce that $(F_{m-1})_{v'} = F_{m-1} \cdot F_{v}$ and $F_{m}$ are linearly disjoint over $F_{m-1}$, which implies the desired result. 

Setting $F' = F_{1}$, note that the extension $F' \cdot F_{v}/F_{v}$ is either degree $1$ or degree $p$, since $[F' \cdot F_{v} \colon F_{v}] = [F' \colon F_{v} \cap F'] \mid p$, and $F'$ and $F_{v}$ are linearly disjoint if and only if this degree equals $p$. Hence, it's enough to show that $F' \cap F_{v} = F$. Thus, suppose that $x \in F_{v}$ is such that $x^{p} \in F$. If $F(x) \neq F$, then $F(x) = F'$, so that $F_{v}$ contains all $p$th roots of $F$; in particular, $\varpi^{1/p} \in F_{v}$, where $\varpi \in O_{F,v}$ (the localization of $O_{F}$ at $v$) is a $v$-adic uniformizer, which is clearly false. 
\end{proof}

Now let $K/F$ be a finite (not necessarily separable) field extension with completion $K_{w}$ for $w \mid v$. The following result is important for our adelic \v{C}ech cohomology:

\begin{lem}\label{tensorinjectivity} For any $n$, the natural map $O_{K_{w}}^{\bigotimes_{O_{F_{v}}}n} \to K_{w}^{\bigotimes_{F_{v}}n}$ is injective.
\end{lem}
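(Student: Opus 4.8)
The plan is to factor the map in the statement through a base‑change (localization) map and then invoke torsion‑freeness over the discrete valuation ring $O_{F_v}$. First I would record that $M := O_{K_w}^{\bigotimes_{O_{F_v}}n}$ is a flat, hence torsion‑free, $O_{F_v}$-module. Indeed $O_{K_w}$ is torsion‑free over $O_{F_v}$, being an integral domain containing $O_{F_v}$ (multiplication by a nonzero element of $O_{F_v}$ is injective on $O_{K_w}$); since $O_{F_v}$ is a DVR, torsion‑free modules are flat, and a tensor product of flat modules is flat, so $M$ is $O_{F_v}$-flat and therefore has no $\varpi$-torsion for $\varpi$ a uniformizer of $O_{F_v}$. (Alternatively one could note that $O_{K_w}$ is finite free over the complete DVR $O_{F_v}$, whence $M$ is finite free; but only flatness is needed.)

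Next I would identify the target: since $K_w = O_{K_w}\otimes_{O_{F_v}}F_v$ and formation of (iterated) tensor products commutes with base change along $O_{F_v}\to F_v$, there is a canonical isomorphism $M\otimes_{O_{F_v}}F_v \xrightarrow{\sim} K_w^{\bigotimes_{F_v}n}$, and unwinding the definitions shows that the natural map of the statement is identified with the base‑change map $M\to M\otimes_{O_{F_v}}F_v$, $m\mapsto m\otimes 1$. Finally, because $F_v$ is the fraction field of $O_{F_v}$, this base‑change map is precisely the localization $M\to M[\varpi^{-1}]$, whose kernel is the $\varpi$-power‑torsion submodule of $M$; by the first step this is $0$, so the map is injective, as desired.

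I do not expect a real obstacle here; the argument is essentially formal. The one point worth handling carefully — precisely because $K/F$, and hence $K_w/F_v$, is allowed to be inseparable — is the flatness (equivalently, torsion‑freeness) of the iterated tensor power $M$ in the first step, which I would prefer to deduce from torsion‑freeness of $O_{K_w}$ together with stability of flatness under $\otimes_{O_{F_v}}$, rather than appealing to any structure theory for integral closures in inseparable extensions.
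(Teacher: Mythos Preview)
Your proof is correct and follows essentially the same idea as the paper's: both arguments rest on the fact that $O_{K_w}$ is torsion-free (hence free, being finite) over the DVR $O_{F_v}$, so the $n$-fold tensor power is again torsion-free and injects into its localization at $\varpi$. The paper simply makes this explicit by choosing an $O_{F_v}$-basis of $O_{K_w}$ and reducing to the obvious inclusion of a free $O_{F_v}$-module into its $F_v$-base change, which is exactly the alternative you note parenthetically.
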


\begin{proof} The ring $O_{K_{w}}$ is finite and torsion-free over the principal ideal domain $O_{F_{v}}$, and is thus free as an $O_{F_{v}}$-module. We may thus pick a basis (which is also an $F_{v}$-basis for $K_{w}$) which allows us to view the map in question as the natural map
$(O_{F_{v}}^{\bigoplus m})^{\bigotimes_{O_{F_{v}}}n}  \to (F_{v}^{\bigoplus m})^{\bigotimes_{F_{v}}n}$, which may be rewritten as the obvious inclusion $O_{F_{v}}^{\bigoplus mn} \hookrightarrow F_{v}^{\bigoplus mn}$, giving the result.  \end{proof}

We can now prove our first adelic result. Note that if $\A_{K} := K \otimes_{F} \A,$ then $\A_{K}^{\bigotimes_{\A}n}  = (K^{\bigotimes_{F}n}) \otimes_{F} \A$. Let $\A_{K,v}$ denote the $F_{v}$-algebra $K \otimes_{F} F_{v}$, and let $O_{K,v}$ denote the $O_{F_{v}}$-algebra $O_{K} \otimes_{O_{F}} O_{F_{v}}$. 

\begin{prop}\label{adelictensorprop} For any finite extension $K/F$, we have a canonical identification 
\[
\begin{tikzcd}\A_{K}^{\bigotimes_{\A}n} \arrow["\sim"]{r} & \prod_{v \in V}' \A_{K,v}^{\bigotimes_{F_{v}}n},
\end{tikzcd}
\]
where the restriction is with respect to the image of the map $O_{K,v}^{\bigotimes_{O_{F_{v}}}n} \to \A_{K,v}^{\bigotimes_{F_{v}}n}$ (in fact, the proof will imply that this map is an inclusion).
\end{prop}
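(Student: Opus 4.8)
The plan is to deduce the statement from the general principle that tensoring a finite-dimensional $F$-algebra with $\A$ produces the corresponding restricted product over all places, applied here to $M := K^{\bigotimes_{F}n}$. Using the identity $\A_{K}^{\bigotimes_{\A}n} = M \otimes_{F}\A$ noted above, together with the base-change isomorphism $M\otimes_{F}F_{v} \xrightarrow{\sim} (K\otimes_{F}F_{v})^{\bigotimes_{F_{v}}n} = \A_{K,v}^{\bigotimes_{F_{v}}n}$ (scalar extension commutes with tensor powers), the claim is reduced to showing that for a finite-dimensional $F$-algebra $M$ equipped with a fixed $O_{F,S_{0}}$-order, the canonical map $M\otimes_{F}\A \to \prod_{v}M\otimes_{F}F_{v}$ is an isomorphism onto the restricted product taken with respect to the images of the induced $O_{F_{v}}$-lattices. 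The remaining work is essentially bookkeeping, to see that these lattices are precisely the ones named in the statement.

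For this I would fix the $O_{F,S_{0}}$-order $\mathcal{M} := O_{K,S_{0}}^{\bigotimes_{O_{F,S_{0}}}n} \subset M$; since $O_{K,S_{0}}$ is finite and torsion-free over the Dedekind domain $O_{F,S_{0}}$, hence finite projective, the module $\mathcal{M}$ is finitely presented over $O_{F,S_{0}}$ and satisfies $\mathcal{M}\otimes_{O_{F,S_{0}}}F = M$. Writing $\A = \varinjlim_{S\supseteq S_{0}}\A_{S}$ with $\A_{S} = \prod_{v\in S}F_{v}\times\prod_{v\notin S}O_{F_{v}}$, each $\A_{S}$ is an $O_{F,S_{0}}$-algebra (any $x\in O_{F,S_{0}}$ has non-negative valuation at every place outside $S\supseteq S_{0}$), so
\[ M\otimes_{F}\A = \mathcal{M}\otimes_{O_{F,S_{0}}}\A = \varinjlim_{S}\bigl(\mathcal{M}\otimes_{O_{F,S_{0}}}\A_{S}\bigr). \]
Because $\mathcal{M}$ is finitely presented, tensoring by $\mathcal{M}$ commutes with the infinite product defining $\A_{S}$, whence $\mathcal{M}\otimes_{O_{F,S_{0}}}\A_{S} = \prod_{v\in S}(M\otimes_{F}F_{v})\times\prod_{v\notin S}\mathcal{M}_{v}$ for $\mathcal{M}_{v} := \mathcal{M}\otimes_{O_{F,S_{0}}}O_{F_{v}}$ (using $\mathcal{M}\otimes_{O_{F,S_{0}}}F_{v} = M\otimes_{F}F_{v}$); passing to the colimit over $S$ then exhibits $M\otimes_{F}\A$ as the restricted product $\prod_{v}'(M\otimes_{F}F_{v} : \mathcal{M}_{v})$.

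It remains to recognize the $\mathcal{M}_{v}$. Each $\mathcal{M}_{v}$ is the base change to $O_{F_{v}}$ of the finite projective $O_{F,S_{0}}$-module $\mathcal{M}$, hence is finite free over the discrete valuation ring $O_{F_{v}}$; therefore the argument of Lemma \ref{tensorinjectivity} applies verbatim and shows that $\mathcal{M}_{v} \to \mathcal{M}_{v}\otimes_{O_{F_{v}}}F_{v} = M\otimes_{F}F_{v} = \A_{K,v}^{\bigotimes_{F_{v}}n}$ is injective, which is the parenthetical assertion in the statement. A routine localization computation identifies $\mathcal{M}_{v}$ with $O_{K,v}^{\bigotimes_{O_{F_{v}}}n}$ for $v\notin S_{0}$, and since a restricted product is unchanged by altering the lattice at the finitely many places $v\in S_{0}$, the restricted product above is exactly $\prod_{v}'(\A_{K,v}^{\bigotimes_{F_{v}}n} : O_{K,v}^{\bigotimes_{O_{F_{v}}}n})$, as desired. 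The only point here that is more than bookkeeping is that $K/F$ need not be separable, so $\A_{K,v}$ and $O_{K,v}$ do not in general split as products of fields and of discrete valuation rings over the places $w\mid v$; this is why the argument is organized around the fact that a finite flat module over a discrete valuation ring is free, rather than around the splitting of primes, and why introducing the order $\mathcal{M}$ is convenient, as it lets us make sense of and compute $M\otimes_{F}\A$ through the finite levels $\A_{S}$, which are only $O_{F,S_{0}}$-algebras and not $F$-algebras.
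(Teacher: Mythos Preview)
Your proof is correct, and it takes a genuinely different route from the paper's. You reduce the statement to a general fact about tensoring a finite-dimensional $F$-algebra (equipped with an $O_{F,S_{0}}$-order) with $\A$, and prove that fact cleanly using finite presentation to commute the tensor product with the infinite product defining $\A_{S}$. The paper instead reduces to proving the \emph{structural} isomorphism $O_{K}\otimes_{O_{F}}O_{F_{v}}\xrightarrow{\sim}\prod_{w\mid v}O_{K_{w}}$ at each place, and establishes this by factoring $K/F$ through its maximal separable subextension $K'$, applying the classical decomposition for $K'/F$, and then handling the purely inseparable part $K/K'$ via the linear-disjointness Lemma~\ref{linearlydisjoint} and the identification $O_{K}=O_{K'}^{(1/p^{m})}$ from Lemma~\ref{purelyinsep}.

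Your argument is more elementary and more general: it requires no case analysis on separability and works verbatim for any finite $F$-algebra. The paper's argument, on the other hand, yields as a byproduct the decomposition $O_{K,v}\cong\prod_{w\mid v}O_{K_{w}}$, which is not visible from your approach but is used later (for instance, in the proof of Lemma~\ref{vadelicvanishing1} and in the place-by-place description underlying Proposition~\ref{cechrestrictedproduct}). So your route proves the proposition more directly, while the paper's route extracts additional local information that feeds into subsequent arguments.
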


\begin{proof} It's enough to show that $\A_{K}^{\bigotimes_{\A}n} = (K^{\bigotimes_{F}n}) \otimes_{F} \A$ is isomorphic to the restricted product $\prod_{v \in V}' (K^{\bigotimes_{F}n} \otimes_{F} F_{v})$,
where the restriction is with respect to the image of
$O_{K}^{\bigotimes_{O_{F}}n} \otimes_{O_{F}} O_{F_{v}} \to K^{\bigotimes_{F}n} \otimes_{F} F_{v}$;
the claimed isomorphism is defined on simple tensors by sending $x \otimes (a_{v})_{v}$ to $(x \otimes a_{v})_{v}$. To prove that this map gives a well-defined isomorphism, it suffices to show that we have an isomorphism $$O_{K} \otimes_{O_{F}} O_{F_{v}} \xrightarrow{\sim} \prod_{w \mid v} O_{K_{w}}$$ for any $v \in V$. Letting $K'$ be the maximal separable subextension of $K$, we already know that $O_{K'} \otimes_{O_{F}} O_{F_{v}}$ is isomorphic to $\prod_{w' \mid v} O_{K'_{w'}}$, and so we're left with the ring $O_{K} \otimes_{O_{K'}} [\prod_{w' \mid v} O_{K'_{w'}}]$.

We claim that the natural map $O_{K} \otimes_{O_{K'}} O_{K'_{w'}} \to O_{K_{w}}$ (for $w$ the unique extension of $w'$ to $K$) is an isomorphism. For surjectivity, note that by the proof of Lemma \ref{purelyinsep}, we have $O_{K} = O_{K'}^{(1/p^{m})}$, where $p^{m}$ is $[K \colon K']$. We know that $O_{K'_{w'}}$ spans $O_{K_{w}}$ over $O_{K'}^{(1/p^{m})}$, since the ring $O_{K'_{w'}} \cdot O_{K'}^{(1/p^{m})}$ is finitely-generated over the complete discrete valuation ring $O_{K'_{w'}}$, using that $O_{K'}^{(1/p^{m})}$ is finite over $O_{K'}$ by the finiteness of the relative Frobenius morphism (by \cite{Stacksproj}, OCC6, using that $O_{K'}$ is of finite type over $\mathbb{F}_{q}$, being the coordinate ring of an affine open subscheme of a smooth curve over $\mathbb{F}_{q}$), and hence is complete as a topological ring, contains $O_{K}$, and thus must be the $w$-adic completion $O_{K_{w}}$. Injectivity follows from the linear disjointness given by Lemma \ref{linearlydisjoint}. 
\end{proof}

\begin{cor}\label{appendixBcor}\label{mainappendixBcor}  For $K/F$ a finite extension, we have the following canonical identifications, where all limits are over a cofinal system of finite subsets $\Sigma$ of $V$:
\begin{enumerate}
\item{$\A_{K}^{\bigotimes_{\A}n} = \varinjlim_{\Sigma} [\prod_{v \in \Sigma}  \A_{K,v}^{\bigotimes_{F_{v}}n} \times \prod_{v \notin \Sigma} O_{K,v}^{\bigotimes_{O_{F_{v}}}n}];$}
\item{$G(\A_{K}^{\bigotimes_{\A}n}) = \varinjlim_{\Sigma_{0} \subset \Sigma} [ \prod_{v \in \Sigma}  G(\A_{K,v}^{\bigotimes_{F_{v}}n}) \times \prod_{v \notin \Sigma} \mathcal{G}(O_{K,v}^{\bigotimes_{O_{F_{v}}}n})],$}
\end{enumerate}
and (2) induces a canonical identification (where each restriction is with respect to $\mathcal{G}(O_{K,v}^{\bigotimes_{O_{F_{v}}}n})$):
\[
\begin{tikzcd}
G(\overline{\A}^{\bigotimes_{\A}n}) \arrow["\sim"]{r} & \varinjlim \limits_{K/F \text{ finite}} \prod_{v \in V}'  G(\A_{K,v}^{\bigotimes_{F_{v}}n}).
\end{tikzcd}
\]
\end{cor}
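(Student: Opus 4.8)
The plan is to deduce all three identifications from the preceding Proposition, which already supplies the restricted-product decomposition $\A_{K}^{\bigotimes_{\A}n} \xrightarrow{\sim} \prod_{v\in V}' \A_{K,v}^{\bigotimes_{F_{v}}n}$ with restriction along the (injective, by Lemma \ref{tensorinjectivity}) map $O_{K,v}^{\bigotimes_{O_{F_{v}}}n} \to \A_{K,v}^{\bigotimes_{F_{v}}n}$. For part (1), I would simply unwind the definition of the restricted product as a filtered colimit: an element of $\prod_{v}' \A_{K,v}^{\bigotimes_{F_{v}}n}$ is by definition a tuple lying in $\prod_{v\in\Sigma}\A_{K,v}^{\bigotimes_{F_{v}}n} \times \prod_{v\notin\Sigma} O_{K,v}^{\bigotimes_{O_{F_{v}}}n}$ for some finite $\Sigma$, and these subsets are visibly directed under inclusion of $\Sigma$'s, so the restricted product is the filtered colimit over $\Sigma$ of these products. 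This is purely formal once the Proposition is in hand.

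For part (2), I would apply the functor of $G$-points. The key inputs are: (i) $G$ (resp.\ its model $\mathcal{G}$ over $O_{F,\Sigma_{0}}$) is an affine scheme, so $G(-)$ (resp.\ $\mathcal{G}(-)$) commutes with finite products of rings, turning a finite product of rings into a product of point-sets; and (ii) $G(-)$, being corepresented by a finitely presented algebra (an affine group scheme of finite type), commutes with filtered colimits of rings. Combining (i) and (ii) with part (1) gives $G(\A_{K}^{\bigotimes_{\A}n}) = \varinjlim_{\Sigma_{0}\subset\Sigma}[\prod_{v\in\Sigma} G(\A_{K,v}^{\bigotimes_{F_{v}}n}) \times \prod_{v\notin\Sigma}\mathcal{G}(O_{K,v}^{\bigotimes_{O_{F_{v}}}n})]$; the restriction $\Sigma_{0}\subset\Sigma$ is exactly what is needed for $\mathcal{G}$ to be defined at the places outside $\Sigma$. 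Rewriting this colimit as a restricted product (over the $O_{F_{v}}$-points, again using that these form a directed system of subsets) gives $G(\A_{K}^{\bigotimes_{\A}n}) = \prod_{v\in V}' G(\A_{K,v}^{\bigotimes_{F_{v}}n})$.

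The final identification for $\overline{\A} = \varinjlim_{K/F}\A_{K}$ then follows by taking the filtered colimit over all finite extensions $K/F$: since $G(-)$ commutes with filtered colimits of rings, $G(\overline{\A}^{\bigotimes_{\A}n}) = \varinjlim_{K/F} G(\A_{K}^{\bigotimes_{\A}n}) = \varinjlim_{K/F}\prod_{v\in V}' G(\A_{K,v}^{\bigotimes_{F_{v}}n})$, using the identity $\overline{\A}^{\bigotimes_{\A}n} = \varinjlim_{K}\A_{K}^{\bigotimes_{\A}n}$ (which holds because tensor products commute with filtered colimits). The main obstacle — really the only non-bookkeeping point — is making sure the restricted-product structures are matched up compatibly across all three statements: one must check that the transition maps in the colimit defining the restricted product for $\A_{K}^{\bigotimes_{\A}n}$ are carried by $G$ to the correct transition maps for $G(\A_{K,v}^{\bigotimes_{F_{v}}n})$ (using the $\mathcal{G}$-integral structure at almost all places), and that enlarging $K$ is compatible with these at each place $v$. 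These are straightforward but slightly tedious diagram-chases, which I would relegate to the reader as is done elsewhere in this section.
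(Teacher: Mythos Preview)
Your proposal is correct and follows the same route as the paper. The paper's proof is terser: it declares (1) immediate and deduces (2) from (1) together with \cite[Lem.~2.4]{Cesnavicius} (which packages exactly your product/filtered-colimit argument for points of finite-type affine schemes in restricted products), then takes the colimit over $K/F$ as you do. One small imprecision in your write-up: in step (i) you say $G(-)$ commutes with \emph{finite} products, but to split $G(\prod_{v\in\Sigma}\A_{K,v}^{\bigotimes_{F_v}n}\times\prod_{v\notin\Sigma}O_{K,v}^{\bigotimes_{O_{F_v}}n})$ you need arbitrary products---this is still fine because $G$ (and $\mathcal{G}$) are affine, so $\Hom(A,\prod R_i)=\prod\Hom(A,R_i)$, but it is worth saying correctly; relatedly, since the intermediate ring $R_\Sigma$ is only an $O_{F,\Sigma_0}$-algebra (not an $F$-algebra), one should apply $\mathcal{G}$ throughout and only identify $\mathcal{G}=G$ on the genuine $F$-algebra factors, which is implicit in your treatment.
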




\begin{proof}
(1) is immediate and (2) follows from (1) and \cite[Lem. 2.4]{Cesnavicius}. Note that it makes sense to view $\mathcal{G}(O_{K,v}^{\bigotimes_{O_{F_{v}}}n})$ as a subgroup of $G(\A_{K,v}^{\bigotimes_{F_{v}}n})$ by Proposition \ref{adelictensorprop}.
\end{proof}



We give one more result which will be useful for \v{C}ech-cohomological computations:

\begin{prop}\label{cechrestrictedproduct} For $K/F$ a finite extension, the above restricted product decomposition of $G(\A_{K}^{\bigotimes_{\A}n})$ identifies the subgroup of \v{C}ech $n$-cocycles in $G(\A_{K}^{\bigotimes_{\A}n})$ with the kernel of 
\[
\begin{tikzcd}
\prod_{v \in V}'  G(\A_{K,v}^{\bigotimes_{F_{v}}n}) \arrow{r} & \prod_{v \in V}'  G(\A_{K,v}^{\bigotimes_{F_{v}}n+1})
\end{tikzcd}
\]
given by the \v{C}ech differentials with respect to the cover $\A_{K,v}/F_{v}$ on the $G(\A_{K,v}^{\bigotimes_{F_{v}}n})$-factors and the \v{C}ech differentials with respect to the cover $O_{K,v}/O_{F_{v}}$ on the $\mathcal{G}(O_{K,v}^{\bigotimes_{O_{F_{v}}}n})$-factors (note that these differentials land in the desired restricting subgroups, so this is well-defined). 
\end{prop}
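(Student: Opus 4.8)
The plan is to show that the restricted‑product decomposition of Corollary~\ref{appendixBcor}(2) underlies an isomorphism of cosimplicial groups, so that the global \v{C}ech differential is computed componentwise and the cocycle condition decouples place by place.

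First I would unwind the \v{C}ech differential for the cover $\A_{K}/\A$: it is the alternating sum (recall that a multiplicative group scheme is commutative by definition) of the $n+1$ natural maps $G(\A_{K}^{\bigotimes_{\A}n}) \to G(\A_{K}^{\bigotimes_{\A}(n+1)})$ induced by the coface ring homomorphisms $d^{i}$ of the \v{C}ech nerve of $\A \to \A_{K}$. Writing $\A_{K}^{\bigotimes_{\A}m} = (K^{\bigotimes_{F}m}) \otimes_{F} \A$ as before, each $d^{i}$ is the base change along $F \to \A$ of the corresponding coface homomorphism $\delta^{i} \colon K^{\bigotimes_{F}n} \to K^{\bigotimes_{F}(n+1)}$. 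The essential point is then that the canonical isomorphism $\A_{K}^{\bigotimes_{\A}m} \xrightarrow{\sim} \prod_{v}' \A_{K,v}^{\bigotimes_{F_{v}}m}$ of the Proposition preceding Corollary~\ref{appendixBcor} — which on simple tensors sends $x \otimes (a_{v})_{v}$ to $(x \otimes a_{v})_{v}$, and which at each finite level $\Sigma$ carries $O_{K}^{\bigotimes_{O_{F}}m} \otimes_{O_{F}} O_{F_{v}}$ isomorphically onto $O_{K,v}^{\bigotimes_{O_{F_{v}}}m}$ for $v \notin \Sigma$ — is manifestly functorial in $m$, and hence intertwines $d^{i}$ with the componentwise map given by the coface homomorphisms $d^{i}_{v} \colon \A_{K,v}^{\bigotimes_{F_{v}}n} \to \A_{K,v}^{\bigotimes_{F_{v}}(n+1)}$ of the local cover $\A_{K,v}/F_{v}$. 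Moreover, since $d^{i}_{v}$ is an $O_{F_{v}}$-algebra homomorphism it carries $O_{K,v}^{\bigotimes_{O_{F_{v}}}n}$ into $O_{K,v}^{\bigotimes_{O_{F_{v}}}(n+1)}$, where, unwinding the same formulas, it agrees with the $i$-th coface homomorphism of the cover $O_{K,v}/O_{F_{v}}$.

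Applying $G(-)$ and $\mathcal{G}(-)$ and passing to the filtered colimit over $\Sigma \supseteq \Sigma_{0}$ as in Corollary~\ref{appendixBcor}(2), it follows that under the decomposition $G(\A_{K}^{\bigotimes_{\A}n}) \xrightarrow{\sim} \varinjlim_{\Sigma}[\prod_{v \in \Sigma} G(\A_{K,v}^{\bigotimes_{F_{v}}n}) \times \prod_{v \notin \Sigma} \mathcal{G}(O_{K,v}^{\bigotimes_{O_{F_{v}}}n})]$ the global \v{C}ech differential $\partial$ becomes the componentwise map whose $v$-component is the local \v{C}ech differential $\partial_{v}$ for $\A_{K,v}/F_{v}$ on the $G(\A_{K,v}^{\bigotimes_{F_{v}}n})$-factors and for $O_{K,v}/O_{F_{v}}$ on the $\mathcal{G}(O_{K,v}^{\bigotimes_{O_{F_{v}}}n})$-factors; applying $\mathcal{G}(-)$ to the integral coface homomorphisms shows that for $v \notin \Sigma$ the differential $\partial_{v}$ sends $\mathcal{G}(O_{K,v}^{\bigotimes_{O_{F_{v}}}n})$ into $\mathcal{G}(O_{K,v}^{\bigotimes_{O_{F_{v}}}(n+1)})$, which is the assertion in the parenthetical remark that the resulting map of restricted products is well defined. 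Consequently an element $c = (c_{v})_{v}$ of $G(\A_{K}^{\bigotimes_{\A}n})$ is a \v{C}ech $n$-cocycle, i.e.\ $\partial c = 0$, if and only if $\partial_{v} c_{v} = 0$ for every $v$ — for $v \notin \Sigma$ using that the integral differential is literally the restriction of the adelic one along the inclusion $\mathcal{G}(O_{K,v}^{\bigotimes_{O_{F_{v}}}n}) \hookrightarrow G(\A_{K,v}^{\bigotimes_{F_{v}}n})$ of Lemma~\ref{tensorinjectivity} — and this is precisely the condition that $(c_{v})_{v}$ lie in the kernel of the displayed map, since the kernel of a componentwise map of restricted products is the restricted product of the kernels.

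I expect the only real work to be this bookkeeping: confirming that the isomorphism of the preceding Proposition genuinely respects the entire cosimplicial structure, and that it restricts correctly on the integral subrings at each level (so that the two local differentials agree on the overlap $\mathcal{G}(O_{K,v}^{\bigotimes_{O_{F_{v}}}n}) \subseteq G(\A_{K,v}^{\bigotimes_{F_{v}}n})$ and land in the restricting subgroups). Everything else is formal once these compatibilities are recorded.
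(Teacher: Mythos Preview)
Your proposal is correct and follows essentially the same approach as the paper: the paper's proof simply asserts that it suffices to check that the restricted product identifications are compatible with the coface maps $p_{i}$, $p_{i}^{v}$, and $p_{i}^{v,\circ}$ for $1 \leq i \leq n+1$, and declares this straightforward. Your argument is precisely this compatibility check spelled out in detail, with the added observation that once the coface maps are componentwise so is their alternating sum, whence the kernel decomposes as claimed.
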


\begin{proof} It's enough to check that the restricted product identifications are compatible with the three inclusion maps $p_{i} \colon \A_{K}^{\bigotimes_{\A}n} \to \A_{K}^{\bigotimes_{\A}n+1}$, $p^{v}_{i} \colon \A_{K,v}^{\bigotimes_{F_{v}}n} \to \A_{K,v}^{\bigotimes_{F_{v}}n+1}$, and $p^{v,\circ}_{i} \colon  O_{K,v}^{\bigotimes_{O_{F_{v}}}n} \to O_{K,v}^{\bigotimes_{O_{F_{v}}}n+1}$ for $1 \leq i \leq n+1$, which is straightforward.
\end{proof}

We develop some cohomological results concerning covers of $\A$, analogous to the results of \S 2.1 for covers of $O_{F,S}$. Set $\overline{\A}_{v} := \overline{F} \otimes_{F} F_{v}$. For notational convenience, we use $H^{i}$ to denote $H^{i}_{\text{fppf}}$.

\begin{lem}\label{vadelicvanishing1} For $M$ a multiplicative $F$-group, we have $H^{n}(\overline{\A}_{v}^{\bigotimes_{F_{v}} k}, M) = 0$ for all $n,k \geq 1$. 
\end{lem}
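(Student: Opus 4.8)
The goal is to show $H^n(\overline{\A}_v^{\bigotimes_{F_v}k}, M) = 0$ for all $n, k \geq 1$, where $\overline{\A}_v = \overline{F}\otimes_F F_v$ and $M$ is multiplicative. The first reduction is the standard one: via the Cartier-dual decomposition of $M$ into tori and finite multiplicative groups, and the Kummer sequence relating $\mu_m$ and $\mathbb{G}_m$, it suffices to treat $M = \mathbb{G}_m$ and $M = \mu_m$. For $\mathbb{G}_m$, since it is smooth, I would invoke \cite[Lem. 2.2.9]{Rosengarten} (already used in the excerpt) to replace $\overline{\A}_v^{\bigotimes_{F_v}k}$ by its reduction; one should identify this reduced ring explicitly. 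Note $\overline{F}\otimes_F \overline{F}$ is highly non-reduced (since $\overline{F}/F$ has inseparability), but after reduction one expects a product of copies of $\overline{F}\cdot F_v$-type fields. More precisely, writing $\overline{\A}_v = \varinjlim_{E/F} (E\otimes_F F_v)$ over finite subextensions $E$ and using the local analysis from Lemma \ref{linearlydisjoint} and the propositions on $O_{K,v}$, one gets $(E\otimes_F F_v)^{\bigotimes_{F_v}k} = \prod_{w_1,\dots} (\text{finite extensions of } F_v)$, and passing to the limit the reduced tensor power $\overline{\A}_v^{\bigotimes_{F_v}k}$, red becomes a (filtered colimit of) finite products of separable-closure-type local fields, each containing $\overline{F}\cdot F_v$.

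**Key steps.** (1) Reduce to $M = \mathbb{G}_m, \mu_m$ via Cartier duality and the Kummer sequence's long exact sequence, exactly as in the proof of Lemma \ref{toricomparison}. (2) For $\mathbb{G}_m$: use smoothness plus \cite[Lem. 2.2.9]{Rosengarten} to pass to the reduced ring $[\overline{\A}_v^{\bigotimes_{F_v}k}]_{\text{red}}$; identify this, as a filtered colimit, with a product of local fields each of which is an algebraic extension of $F_v$ with separably (indeed algebraically) closed residue field — because $\overline{F}\cdot F_v \supseteq F_v^{\mathrm{nr}}$ and contains all constant-field extensions, mirroring the argument in Lemma \ref{vanishing1}. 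Then $H^n$ over such a field vanishes for $n \geq 1$: $H^1 = 0$ since the field is "almost" closed (Pic/$H^1(\mathbb{G}_m)$ of a field is $0$ anyway), $H^2 = \mathrm{Br} = 0$ by \cite[Prop. I.A.1]{Milne} (Henselian with algebraically closed residue field), and $H^{\geq 2} = 0$ by the cohomological-dimension argument of \cite[Remark II.2.2]{Milne} or directly $\mathrm{cd}\le 1$. Cohomology commutes with the filtered colimit of rings by \cite[Lem. 2.1]{Cesnavicius} (already cited), and with finite products by additivity. (3) For $\mu_m$: feed the $\mathbb{G}_m$-vanishing into the Kummer long exact sequence. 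For $n > 1$ this is immediate; for $n = 1$, $H^1(\mu_m)$ becomes $R^*/(R^*)^m$ for $R = \overline{\A}_v^{\bigotimes_{F_v}k}$ (reduced), and this vanishes because $R^*$ is divisible — the prime-to-$p$ divisibility from the residue field being algebraically closed (or from $O_S^*$-style divisibility results as in Lemma \ref{toricomparison}), and $p$-divisibility since $\overline{F}$ contains all $p$-power roots.

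**Main obstacle.** The delicate point is step (2): pinning down $[\overline{\A}_v^{\bigotimes_{F_v}k}]_{\text{red}}$ precisely and justifying that each local-field factor appearing in the colimit has algebraically closed residue field and is Henselian. This requires combining the linear-disjointness of purely inseparable extensions with $F_v$ (Lemma \ref{linearlydisjoint}), the structure of $O_{K,v}$-tensor powers from the preceding propositions, and the fact that $\overline{F}/F$ realizes every residue-field extension — then checking Henselianness as a filtered colimit of Henselian rings (as in Lemma \ref{vanishing1}, citing \cite[p.~56]{Hochster}). Once the ring is correctly identified, the cohomology vanishing is a routine application of the already-cited results of Milne and Česnavičius. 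I expect the write-up to mirror the proof of Lemma \ref{vanishing1} quite closely, with the roles of $O_S$ and $F_S$ played by $O_{F_v}$-integral-closures and $\overline{F}\cdot F_v$ respectively.
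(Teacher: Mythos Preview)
Your proposal would work, but it takes a genuinely different and more roundabout route than the paper. The paper does \emph{not} reduce to $M=\mathbb{G}_m$ or $\mu_m$; it works with an arbitrary multiplicative $M$ throughout. The argument is a direct finite-level decomposition: for each finite $E'/F$ one has
\[
(E'\otimes_F F_v)^{\bigotimes_{F_v}k}\;\cong\;\prod_{(w_1,\dots,w_k)}\;\bigotimes_{j=1}^k E'_{w_j'}
\]
via $E'\otimes_F F_v\cong\prod_{w\mid v}E'_{w'}$, so $H^n$ breaks up as the corresponding product. Tracking the transition maps as $E'$ ranges over finite subextensions of $\overline{F}/F$ and following a coherent system of places, each thread in the colimit is identified with $H^n(\overline{F_v},M)$, which vanishes since $\overline{F_v}$ is algebraically closed. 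No Rosengarten passage to reduced rings, no Brauer-group computation, and no Kummer/divisibility argument appears.

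What your approach buys is explicitness about the nilpotent structure of the tensor powers; the ``main obstacle'' you flag (identifying $[\overline{\A}_v^{\bigotimes_{F_v}k}]_{\mathrm{red}}$) is real if one insists on working with the full colimit ring. The paper sidesteps it entirely by taking the colimit of cohomology groups at finite level rather than computing cohomology of the colimit ring, and by arranging that the endpoint along each thread is a single algebraically closed field. One small misalignment in your framing: the parallel with Lemma~\ref{vanishing1} casts the target as Henselian local rings with algebraically closed residue field, but here the objects along the colimit are completions $E'_{w'}$, i.e.\ \emph{fields} over $F_v$, and the endpoint is $\overline{F_v}$ itself---so the relevant vanishing is simply cohomology of a multiplicative group over an algebraically closed field, not the Henselian Brauer argument of \cite[Prop.~I.A.1]{Milne}.
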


\begin{proof} For $E'/F$ a finite algebraic extension with maximal separable and purely inseparable subextensions $E,F'$ respectively, note that by Lemma \ref{linearlydisjoint} we have a sequence of isomorphisms $$(E' \otimes_{F} F_{v})^{\bigotimes_{F_{v}}k} \xrightarrow{\sim} [F' \otimes_{F} (E \otimes_{F} F_{v})]^{\bigotimes_{F_{v}}k} \xrightarrow{\sim} [\prod_{w \mid v} E'_{w'}]^{\bigotimes_{F_{v}}k} \xrightarrow{\sim} \prod_{w_{1}, \dots , w_{k} \mid v} \bigotimes_{F_{v}}^{i=1, \dots, k} E'_{w_{i}'},$$ where $E'_{w'}$ is the completion of $E'$ with respect to the unique extension $w'$ of the valuation $w$ on $E$ to the purely inseparable extension $E'$, for all $w \mid v$ in $V_{E}$, and in the third term above, $F_{v}$ is embedded into the direct product diagonally.  and so we obtain an identification $$H^{n}((E' \otimes_{F} F_{v})^{\bigotimes_{F_{v}} k}, M) \xrightarrow{\sim} \prod_{w_{1}, \dots, w_{k} \mid v_{F}} H^{n}((E'_{w_{i}'})^{\bigotimes_{F_{v}} k}, M).$$

Moreover, for $K'/E'$ two such extensions, the inductive map $(E' \otimes_{F} F_{v})^{\bigotimes_{F_{v}}k} \to (K' \otimes_{F} F_{v})^{\bigotimes_{F_{v}}k}$ gets translated to the map on the corresponding products defined by the product over all $k$-tuples $(w_{1}, \dots, w_{k})$ of the maps $$\bigotimes_{F_{v}}^{j=1, \dots, k} E'_{w_{j}'} \to \prod_{\tilde{w}_{1}, \dots \tilde{w}_{k}; \tilde{w}_{j} \mid w_{j} \forall j} \bigotimes_{F_{v}}^{j=1, \dots, k} K'_{\tilde{w}_{j}'}$$ given in the obvious way. The upshot is that it suffices to show that each $\varinjlim_{K'/F} H^{n}(K'_{(w_{K})'}/F_{v}, M)$ vanishes, where $\{w_{K}\}$ is a coherent system of places lifting $v$ (equivalent to fixing a place $\dot{v}$ on $F^{\text{sep}}$ lifting $v$). But each direct limit of this form is isomorphic to $H^{n}(\overline{F_{v}}, M)$, which we know vanishes.
\end{proof}

Fix an embedding $\overline{F} \to \overline{F_{v}}$, which is equivalent to picking a place $\dot{v} \in V_{F^{\text{sep}}}$ lying above $v$. Then $\dot{v}$ and a choice of section $\Gamma_{F}/\Gamma_{F}^{\dot{v}} \xrightarrow{s} \Gamma_{F}$ induce a homomorphism of $F_{v}$-algebras $h \colon \overline{F_{v}} \to \overline{\A}_{v}$ defined as follows: Let $E'/F$ be a finite algebraic extension with $E' = E(x^{1/p^{m}}) = F(x^{1/p^{m}}) \otimes_{F} E$ for $x \in F$ and $E/F$ a finite Galois subextension. There is a homomorphism $$E \cdot F_{v} \to \prod_{w \mid v} E_{w} \xrightarrow{\sim} E \otimes_{F} F_{v},$$ where the first map is the ``diagonal" embedding induced by the fixed embedding $E \cdot F_{v} \to \overline{F_{v}}$ and the section $s$. Applying $F(x^{1/p^{m}}) \otimes_{F} -$ (and Lemma \ref{linearlydisjoint}) extends this to a homomorphism $E' \cdot F_{v} \to E' \otimes_{F} F_{v}$, and taking the direct limit over all finite $E'/F$ gives the map $h$. 


\begin{cor}\label{shapiro1} For any $k \in \mathbb{N}$ and multiplicative $F$-group $M$, the map $h$ induces an isomorphism, called the ``Shapiro isomorphism," $$S_{v}^{k} \colon \check{H}^{k}(\overline{F_{v}}/F_{v}, M) \to \check{H}^{k}(\overline{\A}_{v}/F_{v}, M).$$ 
\end{cor}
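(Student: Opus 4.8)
The plan is to identify both \v{C}ech groups in the statement with the fppf cohomology $H^{k}_{\text{fppf}}(F_{v}, M)$ and to check that $S_{v}^{k}$ respects these identifications.

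First, since $h \colon \overline{F_{v}} \to \overline{\A}_{v}$ is a homomorphism of $F_{v}$-algebras, it induces for every $n$ a homomorphism $\overline{F_{v}}^{\bigotimes_{F_{v}}n} \to \overline{\A}_{v}^{\bigotimes_{F_{v}}n}$, and these are compatible with the coface and codegeneracy maps of the two \v{C}ech cosimplicial $F_{v}$-algebras (this compatibility is immediate from the construction of $h$ via the section $s$ and the diagonal embeddings $E \cdot F_{v} \to \prod_{w \mid v} E_{w}$). Applying $M$ therefore yields a morphism of \v{C}ech complexes whose effect on cohomology is exactly $S_{v}^{k}$. Note that $\overline{F_{v}}/F_{v}$ and $\overline{\A}_{v}/F_{v}$ are both pro-fppf covers, being filtered colimits of the finite faithfully flat $F_{v}$-algebras $L$ (for $L/F_{v}$ finite) and $E \otimes_{F} F_{v}$ (for $E/F$ finite), respectively.

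Next I would invoke the \v{C}ech-to-derived-functor spectral sequence over the field $F_{v}$ --- the exact analogue of the spectral sequence preceding Proposition \ref{ComparisonIso}, which is valid for pro-fppf covers by the remark following that proposition. For the adelic cover, Lemma \ref{vadelicvanishing1} supplies precisely the vanishing $H^{i}_{\text{fppf}}(\overline{\A}_{v}^{\bigotimes_{F_{v}}n}, M) = 0$ for all $n, i \geq 1$, so the spectral sequence degenerates to a canonical isomorphism $\check{H}^{k}(\overline{\A}_{v}/F_{v}, M) \xrightarrow{\sim} H^{k}_{\text{fppf}}(F_{v}, M)$ for all $k$. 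On the other side, the comparison $\check{H}^{k}(\overline{F_{v}}/F_{v}, M) \xrightarrow{\sim} H^{k}_{\text{fppf}}(F_{v}, M)$ is the local analogue (established in \cite{Dillery}) of the comparison isomorphisms of \S 2.1; it rests on the analogous vanishing $H^{i}_{\text{fppf}}(\overline{F_{v}}^{\bigotimes_{F_{v}}n}, M) = 0$ for $n, i \geq 1$, which is also what the final reduction in the proof of Lemma \ref{vadelicvanishing1} ultimately invokes.

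It then remains to check that $S_{v}^{k}$ is compatible with these two identifications, i.e., that the resulting triangle over $H^{k}_{\text{fppf}}(F_{v}, M)$ commutes. This is a functoriality property: the morphism of covers $\overline{F_{v}} \to \overline{\A}_{v}$ over $F_{v}$ induces a morphism of the associated \v{C}ech-to-derived-functor spectral sequences which is the identity on the common abutment, and the two degeneration isomorphisms above are precisely the induced edge maps; hence $S_{v}^{k}$ is an isomorphism. I expect this last point to be the only one requiring genuine care, since it means unwinding the functoriality of the Grothendieck spectral sequence with respect to a refinement of pro-fppf covers --- but this is a standard feature and poses no real obstacle. (I note in passing that the more elementary route via \v{C}ech-to-Galois cohomology and Shapiro's lemma for $\text{Ind}_{\Gamma_{v}}^{\Gamma}$ would handle only smooth $M$; working with fppf cohomology throughout is what makes the argument apply to non-smooth multiplicative groups such as $\mu_{p}$.)
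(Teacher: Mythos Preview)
Your proposal is correct and follows essentially the same approach as the paper: both identify the two \v{C}ech groups with $H^{k}_{\text{fppf}}(F_{v},M)$ (using Lemma~\ref{vadelicvanishing1} for the $\overline{\A}_{v}$-side and the known local comparison for the $\overline{F_{v}}$-side) and then deduce that $S_{v}^{k}$ is an isomorphism from a commutative triangle. The paper phrases the triangle with $\check{H}^{k}_{\text{fppf}}(F_{v},M)$ (the colimit over all fppf covers) at the bottom vertex, so commutativity is immediate---both diagonals are the canonical ``send the class to the refinement colimit'' map---whereas you invoke functoriality of the \v{C}ech-to-derived spectral sequence along the refinement $h$; these are the same argument packaged slightly differently.
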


\begin{proof} For any finite algebraic field extension $E'/F$, the extension of rings $F_{v} \to F_{v} \otimes_{F} E'$ is fppf. Thus, we get a natural map $$\check{H}^{k}(\overline{\A}_{v}/F_{v}, M) \xrightarrow{\sim} \varinjlim_{E'/F} \check{H}^{k}((E' \otimes_{F} F_{v})/F_{v}, M) \to \check{H}^{k}_{\text{fppf}}(F_{v}, M) \xrightarrow{\sim} H^{k}(F_{v}, M)$$ via the natural comparison homomorphism $\check{H}_{\text{fppf}}^{k}(F_{v}, M) \to H^{k}(F_{v}, M)$ (from \cite[Lem. 03AX]{Stacksproj}). By taking the direct limit of the spectral sequence from \cite[Lem. 03AZ]{Stacksproj}, we deduce that the above map $\check{H}^{k}(\overline{\A}_{v}/F_{v}, M)  \to H^{k}(F_{v}, M)$ is an isomorphism, since the cohomology groups $H^{j}(\overline{\A}_{v}^{\bigotimes_{F_{v}} m}, M)$ vanish for all $j, m \geq 1$ by Lemma \ref{vadelicvanishing1}. Now the commutative diagram 
\[
\begin{tikzcd}
\check{H}^{k}(\overline{F_{v}}/F_{v}, M) \arrow["S_{v}^{k}"]{rr} \arrow["\sim"]{dr} & & \check{H}^{k}(\overline{\A}_{v}/F_{v}, M) \arrow["\sim"]{ld} \\
& \check{H}^{k}_{\text{fppf}}(F_{v}, M)
\end{tikzcd}
\]
implies that $S_{v}^{k}$ is an isomorphism.
\end{proof}

We conclude this subsection by discussing the independence of $S_{v}^{2}$ on the section $\Gamma_{F}/\Gamma_{F}^{\dot{v}} \to \Gamma_{F}$ used to construct $h$.

\begin{lem}\label{differentsections} Let $s_{v}$ and $s'_{v}$ be two choices of sections, $M$ a multiplicative $F$-group, and $\dot{S}_{v}^{2}$, $\dot{S}_{v}^{' 2}$ the corresponding Shapiro homomorphisms $M(\overline{F_{v}}^{\bigotimes_{F_{v}} 3}) \to M(\overline{\A}_{v}^{\bigotimes_{F_{v}} 3})$. Then the induced maps on \v{C}ech cohomology from $\check{H}^{2}(\overline{F_{v}}/F_{v}, M)$ to $\check{H}^{2}(\overline{\A}_{v}/F_{v}, M)$ are the same.
\end{lem}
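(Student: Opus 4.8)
The plan is to show that two choices of section $s_v, s'_v \colon \Gamma_F/\Gamma_F^{\dot v} \to \Gamma_F$ give homotopic maps at the cochain level, hence equal maps on $\check{H}^2$. First I would analyze how the homomorphism $h \colon \overline{F_v} \to \overline{\A}_v$ depends on the section. Working at finite level with a Galois extension $E/F$ and the corresponding $F_v$-algebra map $E \cdot F_v \to \prod_{w \mid v} E_w \xrightarrow{\sim} E \otimes_F F_v$, the two sections $s_v, s'_v$ differ by a map $\gamma \colon \Gamma_F/\Gamma_F^{\dot v} \to \Gamma_F^{\dot v}$ (i.e. $s'_v(\bar\sigma) = s_v(\bar\sigma)\gamma(\bar\sigma)$), and this induces on each completion $E_w$ the extra automorphism coming from the decomposition group $\Gamma_{F_v} = \Gamma_F^{\dot v}$ acting on $\overline{F_v}$. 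The upshot is that $h'$ equals $h$ post-composed with (a completion-level avatar of) an element of $\Gamma_{F_v}$; more precisely, on the factor indexed by $w$, $h'$ and $h$ differ by the action of a well-defined element $\tau_w \in \Gamma_{F_v}$ on $\overline{F_v}$, and these assemble into an automorphism of $\overline{\A}_v$ over $F_v$.

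Next I would invoke the standard fact that, for a fixed profinite group $\Gamma_{F_v}$ acting on an fppf sheaf (here $M(\overline{F_v})$, equivalently via Corollary \ref{shapiro1} and Corollary \ref{cechtogalois}-type identifications the Galois module $M(\overline{F_v})$), an inner-type modification of the cocycle by a $1$-cochain with values in $\Gamma_{F_v}$ changes the cocycle by a coboundary. Concretely: via the Shapiro isomorphism $S_v^2$ and the commutative triangle in the proof of Corollary \ref{shapiro1}, both $\dot S_v^2$ and $\dot S_v'^2$ fit into triangles over the \emph{same} map $\check{H}^2(\overline{F_v}/F_v, M) \xrightarrow{\sim} \check{H}^2_{\text{fppf}}(F_v, M)$, because that comparison map is canonical and does not reference any section. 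Since in each triangle the other two arrows are isomorphisms and one of them ($\check{H}^2(\overline{F_v}/F_v,M) \xrightarrow{\sim} \check{H}^2_{\text{fppf}}(F_v,M)$) is literally the same, we get $S_v^2 = S_v'^2$ as maps on $\check{H}^2$. So the cleanest route is: observe that the triangle in Corollary \ref{shapiro1} commutes \emph{for each choice of section}, with the left-hand leg independent of the section, and conclude formally.

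The key steps in order: (i) record that $h$ depends on $s_v$ only through its effect on each local factor $E_w$, and that changing $s_v$ to $s'_v$ replaces $h$ by $\theta \circ h$ for an $F_v$-algebra automorphism $\theta$ of $\overline{\A}_v$ built from a $\Gamma_{F_v}$-valued cochain; (ii) verify that the composite $\check{H}^k(\overline{\A}_v/F_v, M) \to H^k(F_v, M)$ in Corollary \ref{shapiro1} is unchanged by precomposition with such a $\theta$ — this holds because that composite factors through $\check{H}^k_{\text{fppf}}(F_v, M)$ and the automorphism $\theta$ acts trivially on $F_v$, hence induces the identity on cohomology of the base; (iii) deduce from the commutative triangle that $S_v^2$ and $S_v'^2$ both equal the inverse of the canonical isomorphism $\check{H}^2(\overline{\A}_v/F_v,M) \xrightarrow{\sim} \check{H}^2_{\text{fppf}}(F_v,M)$ composed with $\check{H}^2(\overline{F_v}/F_v,M) \xrightarrow{\sim} \check{H}^2_{\text{fppf}}(F_v,M)$, and are therefore equal.

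I expect the main obstacle to be step (i): pinning down precisely, at the level of the explicit algebra map $E' \cdot F_v \to E' \otimes_F F_v$, how the "diagonal" embedding changes when the section changes, and checking that the discrepancy is genuinely an automorphism of $\overline{\A}_v$ trivial on $F_v$ (as opposed to something that only becomes trivial after passing to cohomology). Once that bookkeeping is done, step (ii) is a formal consequence of functoriality of the canonical \v{C}ech-to-derived comparison map under the base ring $F_v$ (which is fixed), and step (iii) is pure diagram chasing. An alternative, slightly more hands-on route would be to produce explicitly a \v{C}ech $1$-cochain in $M(\overline{\A}_v^{\bigotimes_{F_v} 2})$ whose differential is the difference of the two $2$-cocycles $\dot S_v^2(c) - \dot S_v'^2(c)$ for a representative cocycle $c$; this cochain is assembled placewise from the $\Gamma_{F_v}$-cochain measuring $s_v$ versus $s'_v$, but I would only fall back on this if the formal argument runs into trouble identifying the comparison map canonically.
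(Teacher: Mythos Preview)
Your formal argument via the commutative triangle in Corollary~\ref{shapiro1} is correct and considerably shorter than the paper's proof. The key point, which you identify, is that both legs $\check{H}^{2}(\overline{F_{v}}/F_{v}, M) \to \check{H}^{2}_{\text{fppf}}(F_{v}, M)$ and $\check{H}^{2}(\overline{\A}_{v}/F_{v}, M) \to \check{H}^{2}_{\text{fppf}}(F_{v}, M)$ are the canonical \v{C}ech-to-derived comparison maps and do not involve the section at all, while the triangle commutes for \emph{any} $F_{v}$-algebra map $h$ by functoriality of that comparison. (Your step (i) about the automorphism $\theta$ is therefore unnecessary; you can go straight to the triangle argument. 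Also note you need \emph{both} non-$S_{v}^{2}$ legs to be section-independent, not just the left one---but the right one is, for the same reason.)

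The paper instead takes your ``fallback'' route: it writes down an explicit $1$-cochain $c$ at each finite level $E'/F$, built from the two families of isomorphisms $r_{w}, \bar{r}_{w}$ coming from the two sections, and verifies by a direct (lengthy) cocycle computation that $dc = \dot{S}_{v}^{2}(x) \cdot \dot{S}_{v}^{'2}(x)^{-1}$. The paper flags why: ``it will be useful later to have an explicit cochain to work with.'' Indeed, in \S 3.4 the explicit formula for $c_{v}$ is used to check that its projection $p_{i}(c_{v})$ to each finite-level group $P_{i}$ vanishes for $v \notin S_{i}$, which is what allows one to assemble the $c_{v}$'s into a global adelic cochain and conclude that the class $[\dot{x}] \in \check{H}^{2}(\overline{\A}/\A, P)$ is independent of the sections. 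Your formal argument proves the lemma as stated but does not furnish this cochain, so you would still need to carry out the explicit construction (or some equivalent finite-level control) for the application in \S 3.4.
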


\begin{proof} 
Since the Shapiro homomorphisms are constructed via the direct limit over finite algebraic extensions, it's enough to prove that, for any fixed $2$-cocycle $x \in M((E'_{\dot{v}_{E'}})^{\bigotimes_{F_{v}}3})$, $E'/F$ a finite extension of fields, there is a 1-cochain $c \in M((E' \otimes_{F} F_{v})^{\bigotimes_{F_{v}}2})$ such that $dc = \dot{S}_{v}^{2}(x) \cdot \dot{S}_{v}^{' 2}(x)^{-1}$, and that if we have a inductive system $\{x_{E'}\}_{E'}$ of such 2-cocycles, as $E'/F$ ranges over an exhaustive tower of finite extensions, then the system $\{c_{E'}\}_{E'}$ is also inductive. We will construct each $c_{E'}$ explicitly using $x$ (it will be useful later to have an explicit cochain to work with).

Assume that $E'/F$ is of the form on p.12, let $E/F$ (resp. $F'/F$) denote a maximal separable (resp. purely inseparable) subextension of $E'/F$, set $E_{v} := E_{\dot{v}_{E}}$, and denote the extension of $\dot{v}_{E}$ to $E'$ by $v'$. For $w \mid v$ in $V_{E}$, denote by $r_{w}, \bar{r}_{w}$ the corresponding isomorphisms $E'_{v'} \xrightarrow{\sim} E'_{w'}$ (induced by applying $F' \otimes_{F} -$ to the isomorphisms $E_{v} \xrightarrow{\sim} E_{w}$ defined by our sections). We define $$c \in \prod_{w_{i_{1}},w_{i_{2}} \mid v_{F}} M(E'_{w'_{i_{1}}} \otimes_{F_{v}} E'_{w'_{i_{2}}})$$ to be given on the $(w_{i_{1}}, w_{i_{2}})$-factor by $$(r_{w_{i_{1},1}} \cdot \bar{r}_{w_{i_{1},3}} \otimes r_{w_{i_{2},2}})(x) \cdot (\bar{r}_{w_{i_{1},2}} \otimes r_{w_{i_{2},1}} \cdot \bar{r}_{w_{i_{2},3}})(x)^{-1},$$ where $r_{w_{i_{j}},k}$ denotes that the source is the $k$th tensor factor of $(E'_{v'})^{\bigotimes_{F_{v}}3}$, $1 \leq k \leq 3$.  It is clear that such a system of 1-cochains $\{c_{E'}\}$ is inductive if the system $\{x_{E'}\}$ is. Recall that $\dot{S}_{v}^{2}$, $\dot{S}_{v}^{' 2}$ are group homomorphisms $$M((E'_{v'})^{\bigotimes_{F_{v}}3}) \to \prod_{w_{i_{1}},w_{i_{2}}, w_{i_{3}} \mid v_{F}} M(E'_{w'_{i_{1}}} \otimes_{F_{v}} E'_{w'_{i_{2}}} \otimes_{F_{v}} E'_{w'_{i_{3}}}).$$ 

To show that $dc = \dot{S}_{v}^{2}(x) \cdot \dot{S}_{v}^{' 2}(x)^{-1}$, we may focus on a fixed $(w_{i_{1}}, w_{i_{2}}, w_{i_{3}})$-factor of the right-hand side. In this factor, the differential of $c$ is given by the six-term product $$(1 \otimes r_{w_{i_{2},1}} \cdot  \bar{r}_{w_{i_{2},3}} \otimes  r_{w_{i_{3},2}})(x) \cdot (r_{w_{i_{1},1}} \cdot  \bar{r}_{w_{i_{1},3}} \otimes 1 \otimes  r_{w_{i_{3},2}})(x)^{-1} \cdot ( r_{w_{i_{1},1}} \cdot  \bar{r}_{w_{i_{1},3}} \otimes \ r_{w_{i_{2},2}} \otimes 1)(x) $$ $$\cdot (1 \otimes \bar{r}_{w_{i_{2},2}} \otimes  r_{w_{i_{3},1}} \cdot  \bar{r}_{w_{i_{3},3}})(x)^{-1} \cdot ( \bar{r}_{w_{i_{1},2}} \otimes 1 \otimes r_{w_{i_{3},1}} \cdot  \bar{r}_{w_{i_{3},3}})(x) \cdot ( \bar{r}_{w_{i_{1},2}} \otimes  r_{w_{i_{2},1}} \cdot  \bar{r}_{w_{i_{2},3}} \otimes 1)(x)^{-1}.$$ 

Since $x$ is a 2-cocycle, we claim that the term $(1 \otimes  r_{w_{i_{2},1}} \cdot \bar{r}_{w_{i_{2},3}}\otimes  r_{w_{i_{3},2}})(x)$ equals 
\begin{equation}\label{cocycle1}
(\bar{r}_{w_{i_{1},1}}\otimes  r_{w_{i_{2},3}} \otimes r_{w_{i_{3},2}})(x) \cdot ( \bar{r}_{w_{i_{1},2}} \otimes  r_{w_{i_{2},1}} \cdot  \bar{r}_{w_{i_{2},3}} \otimes 1)(x) \cdot (\bar{r}_{w_{i_{1},2}} \otimes \bar{r}_{w_{i_{2},1}} \otimes r_{w_{i_{3},3}})(x)^{-1}.
\end{equation}
To see this, note that $$(1 \otimes \id_{1} \otimes \id_{2} \otimes \id_{3})(x) \cdot (\id_{1} \otimes 1 \otimes \id_{2} \otimes \id_{3})(x)^{-1} \cdot  (\id_{1} \otimes \id_{2} \otimes 1 \otimes \id_{3})(x) \cdot  (\id_{1} \otimes  \id_{2} \otimes \id_{3} \otimes 1)(x)^{-1} = 1$$ (inside the group $M((E'_{v'})^{\bigotimes_{F_{v}} 4})$), and now applying $(\id_{2} \otimes \id_{1} \cdot \id_{4} \otimes \id_{3}) \circ (r_{w_{i_{2}}} \otimes \bar{r}_{w_{i_{1}}} \otimes r_{w_{i_{3}}} \otimes \bar{r}_{w_{i_{2}}})$ to the above expression gives the desired equality. We will leave the checking of similar equalities to the reader throughout the proof. The second factor in \eqref{cocycle1} cancels with the last factor in the main six-term equation. Next, we may rewrite the first term of \eqref{cocycle1} as 
\begin{equation}\label{cocycle2} 
( 1 \otimes \bar{r}_{w_{i_{2},2}} \otimes r_{w_{i_{3},1}} \cdot \bar{r}_{w_{i_{3},3}})(x) \cdot (\bar{r}_{w_{i_{1},1}} \otimes \bar{r}_{w_{i_{2},2}} \otimes \bar{r}_{w_{i_{3},3}})(x)^{-1} \cdot (\bar{r}_{w_{i_{1},1}} \otimes 1 \otimes r_{w_{i_{3},2}} \cdot \bar{r}_{w_{i_{3},3}})(x).
\end{equation}

We may also replace $(r_{w_{i_{1},1}} \cdot \bar{r}_{w_{i_{1},3}} \otimes r_{w_{i_{2},2}} \otimes 1)(x)$ from the main equation by the expression $$(\bar{r}_{w_{i_{1},2}} \otimes r_{w_{i_{2},1}} \otimes r_{w_{i_{3},3}})(x) \cdot (r_{w_{i_{1},1}} \cdot \bar{r}_{w_{i_{1},2}} \otimes 1 \otimes r_{w_{i_{3},3}})(x)^{-1} \cdot (r_{w_{i_{1},1}} \otimes r_{w_{i_{2},2}} \otimes r_{w_{i_{3},3}})(x),$$ reducing us to showing the equality \begin{equation}\label{maineq2} \begin{split}
(\bar{r}_{w_{i_{1},1}} \otimes 1 \otimes r_{w_{i_{3},2}} \cdot \bar{r}_{w_{i_{3},3}})(x) \cdot (r_{w_{i_{1},1}} \cdot \bar{r}_{w_{i_{1},2}} \otimes 1 \otimes r_{w_{i_{3},3}})(x)^{-1}  \\
\cdot (\bar{r}_{w_{i_{1},2}} \otimes 1 \otimes r_{w_{i_{3},1}} \cdot \bar{r}_{w_{i_{3},3}})(x) \cdot (r_{w_{i_{1},1}} \cdot \bar{r}_{w_{i_{1},3}} \otimes 1 \otimes r_{w_{i_{3},2}})(x)^{-1} =1.
\end{split}
\end{equation}
Replacing the third factor of \eqref{maineq2} by the expression $$(r_{w_{i_{1},1}} \cdot \bar{r}_{w_{i_{1},3}} \otimes 1 \otimes \bar{r}_{w_{i_{3},2}})(x) \cdot (r_{w_{i_{1},1}} \otimes 1 \otimes r_{w_{i_{3},2}} \cdot \bar{r}_{w_{i_{3},3}})(x)^{-1} \cdot (r_{w_{i_{1},1}} \cdot \bar{r}_{w_{i_{1},2}} \otimes 1 \otimes \bar{r}_{w_{i_{3},3}})(x)$$ reduces \eqref{maineq2} to the equality $$(\bar{r}_{w_{i_{1},1}} \otimes 1 \otimes r_{w_{i_{3},2}} \cdot \bar{r}_{w_{i_{3},3}})(x) \cdot (r_{w_{i_{1},1}} \cdot \bar{r}_{w_{i_{1},2}} \otimes 1 \otimes r_{w_{i_{3},3}})(x)^{-1} $$$$ \cdot (r_{w_{i_{1},1}} \otimes 1 \otimes r_{w_{i_{3},2}} \cdot \bar{r}_{w_{i_{3},3}})(x)^{-1} \cdot (r_{w_{i_{1},1}} \cdot \bar{r}_{w_{i_{1},2}}\otimes 1 \otimes \bar{r}_{w_{i_{3},3}})(x) = 1,$$ which follows easily from the fact that $x$ is a 2-cocycle. 
\end{proof}

\subsection{\v{C}ech cohomology and projective systems} We collect some results concerning the way that \v{C}ech cohomology behaves with respect to projective systems of abelian group schemes. Let $R,S$ be rings with a $\tau$-homomorphism $S \to R$ (where $\tau = \text{\'{e}tale, fppf, fpqc, etc.}$) and let $A$ be a commutative group scheme over $R$. We begin by recalling some gerbe-theoretic constructions:

\begin{Def}[\cite{Dillery}, Definition 2.35] \label{explicitgerbe} Fix a \v{C}ech 2-cocycle $c \in A(S^{\bigotimes_{R}3})$. We may define an $A$-gerbe as follows: take the fibered category $\gerbeE_{c} \to \text{Sch}/R$ whose fiber over $V$ is defined to be the category of pairs $(T,\psi)$, where $T$ is a (right) $A_{V \times_{R} S}$-torsor on $V \times_{R} S$ with $A$-action $m$ (in the $\tau$ topology), along with an isomorphism of $A_{V \times_{R} (S \otimes_{R} S)}$-torsors $\psi \colon p_{2}^{*}T \xrightarrow{\sim} p_{1}^{*}T$, called a \textit{twisted gluing map}, satisfying the following ``twisted gluing condition" on the $A_{V \times_{R} (S^{\bigotimes_{R}3})}$-torsor $q_{1}^{*}T$: $$(p_{12}^{*}\psi) \circ (p_{23}^{*} \psi ) \circ (p_{13}^{*}\psi)^{-1} = m_{c},$$ where $m_{c}$ denotes the automorphism of the torsor $q_{1}^{*}T$ given by right-translation by $c$. A morphism $(T,\psi_{T}) \to (S, \psi_{S})$ in $\gerbeE_{c}$ lifting an $R$-morphism $V \xrightarrow{f} V'$ is a morphism of $A_{V \times S}$-torsors $T \xrightarrow{h} f^{*}S$ satisfying, on $V \times_{F} (S \otimes_{R} S)$, the relation $f^{*}\psi_{S} \circ p_{2}^{*}h = p_{1}^{*}h \circ \psi_{T}$. We will call such a pair $(T, \psi)$ in $\gerbeE_{c}(V)$ a \textit{$c$-twisted torsor over $V$} when $A$ is understood. We call $\gerbeE_{c}$ the \textit{gerbe corresponding to $c$}.
\end{Def}
Recall the following functoriality property of the above gerbes:

\begin{const}[\cite{Dillery}, Construction 2.38] \label{changeofgerbe} Let $A \xrightarrow{f} B$ be an $F$-morphism of commutative group schemes and $a, b \in A(S^{\bigotimes_{R}3}), B(S^{\bigotimes_{R}3})$ \v{C}ech 2-cocycles such that $[f(a)]= [b]$ in $\check{H}^{2}(S/R, B)$. Any $x \in B(S \otimes_{R} S)$ satisfying $d(x)\cdot b = f(a)$ defines a morphism $\gerbeE_{a} \xrightarrow{\phi_{a,b,x}} \gerbeE_{b}$ of fibered categories over $R$ as follows.

For any $a$-twisted torsor $(T,\psi)$ over an $R$-scheme $V$, we define the $b$-twisted torsor $(T',\psi') =: \phi_{a,b,x}(T, \psi)$ over $V$ as follows. Define the $B_{V \times_{R} S}$ torsor $T'$ to be $T \times^{A_{V \times S},f} B_{V \times S}$, and take the gluing map to be $\psi' := \overline{m_{x^{-1}} \circ \psi}$, where $\overline{m_{x^{-1}} \circ \psi}$ denotes the isomorphism of contracted products $$p_{2}^{*}(T \times^{A_{V \times S},f} B_{V \times S}) = $$ 
$$ (p_{2}^{*}T) \times^{A_{V \times (S \otimes_{R}S)},f} B_{V \times (S \otimes_{R}S)} \to (p_{1}^{*}T) \times^{A_{V \times (S \otimes_{R}S)},f} B_{V \times (S \otimes_{R}S)} = p_{1}^{*}(T \times^{A_{V \times S},f} B_{V \times S})$$ induced by $(m_{x^{-1}} \circ \psi) \times \text{id}_{B}$ (we are implicitly identifying $x$ with its image in $B(V \times_{F} (S \otimes_{R}S))$). 

A morphism $(T_{1},\psi_{1}) \xrightarrow{\varphi} (T_{2},\psi_{2})$ of $a$-twisted torsors induces a morphism of the corresponding $b$-twisted torsors via the map on contracted products induced by $\varphi \times \text{id}$, giving the desired functor. \end{const}

As in \cite{Dillery}, we adopt the following convention and also assume that $S/R$ is an fpqc cover:

\begin{conv}\label{conventions} When discussing an abelian $R$-group scheme $A$ and cover $S/R$, we will always assume that $\check{H}^{1}_{\text{fppf}}(S^{\bigotimes_{R}n}, A) = 0$ for all $n \geq 0$. 
\end{conv}

We now recall inverse limits of gerbes (cf. \cite[\S 2.7]{Dillery}). Fix a system $\{A_{n}\}_{n \in \mathbb{N}}$ of commutative affine groups over $R$ with transition epimorphisms $p_{n+1,n} \colon A_{n+1} \to A_{n}$. Assume that we have systems $\{a_{n} \in A_{n}(S^{\bigotimes_{R}3})\}$ and $\{x_{n} \in A_{n}(S \otimes_{R}S) \}$ such that $a_{n}$ are \v{C}ech 2-cocycles and $a_{n} \cdot dx_{n} = p_{n+1,n}(a_{n+1})$, giving rise to a system of gerbes $\{\gerbeE_{n}:= \gerbeE_{a_{n}} \to (\text{Sch}/F)_{\text{fpqc}}\}_{n \in \mathbb{N}}$ (abbreviated as just $\{\gerbeE_{n}\}$) with $R$-morphisms $\pi_{n+1,n} \colon \gerbeE_{n+1} \to \gerbeE_{n}$, where $\pi_{n+1,n} := \phi_{a_{n+1},a_{n},x_{n}}$.

\begin{lem}\label{basicresult} The natural map $\check{H}^{i}(S/R, A) \to \varprojlim_{n} \check{H}^{i}(S/R, A_{n})$ is surjective for all $i$.
\end{lem}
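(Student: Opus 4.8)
The statement concerns the map $\check{H}^{i}(S/R, A) \to \varprojlim_{n} \check{H}^{i}(S/R, A_{n})$, where $A = \varprojlim_n A_n$. The plan is to work directly at the cochain level. Recall that $\check{H}^{i}(S/R, A_n)$ is computed from the complex $A_n(S) \to A_n(S\otimes_R S) \to \cdots$, so a class in the inverse limit is represented by a compatible system of cocycles $\{z_n \in A_n(S^{\bigotimes_R(i+1)})\}$ — but "compatible" here only means that $p_{n+1,n}(z_{n+1})$ and $z_n$ differ by a coboundary, not that they are literally equal. So the first task is a standard diagram-chasing normalization: given such a system of classes, I would inductively modify the representatives $z_n$ by coboundaries so that the lifted system becomes \emph{strictly} compatible, i.e. $p_{n+1,n}(z_{n+1}) = z_n$ on the nose. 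Concretely, suppose $z_1, \dots, z_n$ have been chosen strictly compatible; pick any cocycle $z_{n+1}'$ lifting the class at level $n+1$; then $p_{n+1,n}(z_{n+1}') - z_n = d(w)$ for some $w \in A_n(S^{\bigotimes_R i})$; lift $w$ to $\tilde w \in A_{n+1}(S^{\bigotimes_R i})$ — this is where surjectivity of $p_{n+1,n}$ and \emph{exactness of the functor of points on the relevant tensor powers} enters — and set $z_{n+1} := z_{n+1}' - d(\tilde w)$, which still represents the correct class and now satisfies $p_{n+1,n}(z_{n+1}) = z_n$.

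**The key technical input.** The crux is that the transition maps $p_{n+1,n}\colon A_{n+1} \to A_n$ are epimorphisms of \emph{affine} group schemes, and we need this epimorphism property to survive after evaluation on the rings $S^{\bigotimes_R k}$. For this I would invoke Convention~\ref{conventions}: since $\check{H}^1_{\text{fppf}}(S^{\bigotimes_R n}, A) = 0$ for all $n$ (applied to the kernel $K_n := \ker(p_{n+1,n})$, which is again an affine commutative group scheme over $R$), the short exact sequence $1 \to K_n \to A_{n+1} \to A_n \to 1$ of fppf sheaves gives an exact sequence $A_{n+1}(S^{\bigotimes_R k}) \to A_n(S^{\bigotimes_R k}) \to H^1_{\text{fppf}}(S^{\bigotimes_R k}, K_n) = 0$, so the map on $S^{\bigotimes_R k}$-points is genuinely surjective for every $k$. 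This is precisely what makes the inductive coboundary-lifting in the previous paragraph go through, and it is the step I expect to be the main obstacle to get exactly right — one must be careful that the convention is being applied to the kernels, and one should note (as the paper presumably does elsewhere) that these kernels inherit the hypothesis, or else phrase the convention as applying to all relevant groups in the system.

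**Conclusion of the argument.** Once a strictly compatible system of cocycles $\{z_n\}$ is in hand, the element $z := \varprojlim_n z_n \in \varprojlim_n A_n(S^{\bigotimes_R(i+1)}) = (\varprojlim_n A_n)(S^{\bigotimes_R(i+1)}) = A(S^{\bigotimes_R(i+1)})$ is well-defined — here one uses that taking points of an affine scheme commutes with the inverse limit defining $A$, i.e. $\Hom_R(\Spec(S^{\bigotimes_R(i+1)}), \varprojlim A_n) = \varprojlim \Hom_R(\Spec(S^{\bigotimes_R(i+1)}), A_n)$, which is formal. One checks $z$ is a cocycle (the differential commutes with the projections, so $d(z) = \varprojlim d(z_n) = \varprojlim 0 = 0$), and its class in $\check{H}^i(S/R, A)$ maps under the canonical map to the class of $z_n$ at each level $n$, which by construction is the original class. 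Hence the map is surjective. I would present this as a short inductive proof, citing Convention~\ref{conventions} for the surjectivity of transition maps on points and a one-line remark for the compatibility of affine points with inverse limits; the only genuine content is the coboundary-correction induction, and there are no convergence subtleties since everything is at the level of cochains in a fixed degree.
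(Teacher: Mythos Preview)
Your proposal is correct and is precisely the argument the paper has in mind: the paper's own proof consists of the single sentence ``This is an easy calculation using Convention~\ref{conventions},'' and what you have written is exactly that calculation spelled out. Your identification of the key technical input---surjectivity of $A_{n+1}(S^{\bigotimes_R k}) \to A_n(S^{\bigotimes_R k})$ via vanishing of $H^1_{\text{fppf}}$ of the kernel---is the intended use of the convention, which is phrased as a blanket hypothesis on every abelian $R$-group scheme under discussion (hence applies to the kernels $K_n$ as well as to each $A_n$).
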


\begin{proof} This is an easy calculation using Convention \ref{conventions}.
\end{proof}

\begin{Def}\label{inverselimitofgerbes} Define the \textit{inverse limit} of the system $\{\gerbeE_{n}\}$, denoted by $\varprojlim_{n} \gerbeE_{n} \to (\text{Sch}/R)_{\text{fpqc}}$, as the category with fiber over $U$ with objects given by systems of pairs $(X_{n}, i_{n})_{n \in \mathbb{N}}$ of an object $X_{n} \in \gerbeE_{n}(U)$ and an isomorphism $i_{n} \colon \pi_{n+1,n}(X_{n+1}) \xrightarrow{\sim} X_{n}$ in $\gerbeE_{n}(U)$, and morphisms $(X_{n},i_{n}) \to (Y_{n},j_{n})$ given by a system of morphisms $\{f_{n} \colon X_{n} \to Y_{n} \}$ such that $j_{n} \circ \pi_{n+1,n}(f_{n+1}) = f_{n} \circ i_{n}$ for all $n$ (we extend this definition to morphisms between objects in different fibers in the obvious way). We call such a system of morphisms \textit{coherent}. We have a compatible system of canonical morphisms of $\text{Sch}/R$-categories $\pi_{m} \colon \varprojlim_{n} \gerbeE_{n} \to \gerbeE_{m}$ for all $m$.
\end{Def}

Recall from \cite[Prop. 2.40]{Dillery} that the map from \v{C}ech $2$-cocycles of $S/R$ valued in $A:= \varprojlim_{n} A_{n}$ to $A$-gerbes split over $S$ induces a bijection between isomorphism classes of such gerbes and $\check{H}^{2}(S/R, A)$. The proof of \cite[ Lemma 2.63]{Dillery} (with the cover $\overline{F}/F$ replaced with $S/R$) shows that if each $A_{n}$ satisfies Convention \ref{conventions} then so does $A$. We thus obtain:

\begin{prop}(\cite[Prop. 2.64]{Dillery})\label{invlim1} With the setup as above, the category $\gerbeE := \varprojlim_{n} \gerbeE_{n} \to (\text{Sch}/R)_{\text{fpqc}}$ can be given the structure of an fpqc $A$-gerbe, split over $S$. Moreover, the natural map $$\check{H}^{i}(S/R,  A) \to \varprojlim_{n} \check{H}^{i}(S/R, A_{n})$$ sends the class in $\check{H}^{2}(S/R, A)$ corresponding to $\gerbeE$ to the element $([a_{n}])  \in \varprojlim_{n} \check{H}^{2}(S/R, A_{n})$. 
\end{prop}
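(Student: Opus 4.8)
The plan is to assemble the statement from pieces that are essentially already in place in the excerpt, so the work is organizational rather than computational. First I would invoke the cited result \cite[Prop. 2.40]{Dillery}, which (with the cover $\overline{F}/F$ replaced by $S/R$, as remarked) gives a bijection between isomorphism classes of fpqc $A$-gerbes split over $S$ and the group $\check{H}^{2}(S/R, A)$, sending a gerbe to the class of the $2$-cocycle defining it. The only thing one must check before applying this is that $A := \varprojlim_{n} A_{n}$ satisfies Convention \ref{conventions}, i.e.\ that $\check{H}^{1}_{\text{fppf}}(S^{\bigotimes_{R}n}, A) = 0$ for all $n \geq 0$; this is exactly the content of the proof of \cite[Lemma 2.63]{Dillery} (again with $\overline{F}/F$ replaced by $S/R$), applied to the hypothesis that each $A_{n}$ satisfies the Convention. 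So the first paragraph of the proof records that $A$ satisfies the standing hypotheses and that $\gerbeE := \varprojlim_{n} \gerbeE_{n}$ is therefore eligible to be viewed as an fpqc $A$-gerbe via this dictionary.

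Next I would identify which class in $\check{H}^{2}(S/R, A)$ corresponds to $\gerbeE$. The system $\{a_{n}\}$ of $2$-cocycles together with the relations $a_{n} \cdot dx_{n} = p_{n+1,n}(a_{n+1})$ is precisely the data needed to produce, via Lemma \ref{basicresult}, a $2$-cocycle $a \in A(S^{\bigotimes_{R}3})$ lifting the compatible system $([a_{n}]) \in \varprojlim_{n} \check{H}^{2}(S/R, A_{n})$; concretely, one chooses $a$ mapping to $([a_n])$ under the surjection of Lemma \ref{basicresult}. I would then argue that $\gerbeE_{a}$, the explicit gerbe of Definition \ref{explicitgerbe} attached to this $a$, is equivalent to $\varprojlim_n \gerbeE_n$ as a fibered category over $\text{Sch}/R$: an object of $\gerbeE_a(U)$ is an $a$-twisted $A_U$-torsor, and pushing forward along the projections $A \to A_n$ (using Construction \ref{changeofgerbe}, with the chosen $x_n$'s as the comparison data) produces a coherent system $(X_n, i_n)$ of $a_n$-twisted $A_n$-torsors, hence an object of $\varprojlim_n \gerbeE_n (U)$; conversely, a coherent system can be reassembled into an $a$-twisted $A$-torsor by taking the inverse limit of the underlying torsors, using that $A = \varprojlim A_n$ and that the vanishing in Convention \ref{conventions} makes $\varprojlim^1$ terms disappear so no obstruction arises. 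This identification, combined with the dictionary from \cite[Prop. 2.40]{Dillery}, shows $\gerbeE$ carries the structure of an fpqc $A$-gerbe split over $S$ and that its class is $[a]$, which by construction maps to $([a_n])$ under $\check{H}^2(S/R,A) \to \varprojlim_n \check{H}^2(S/R,A_n)$.

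The main obstacle I anticipate is the equivalence $\gerbeE_a \simeq \varprojlim_n \gerbeE_n$ — specifically, checking that the pushforward and reassembly functors are mutually quasi-inverse and that they are compatible with the gluing maps and the twisted gluing conditions. One has to verify that under $\phi_{a,a_n, \cdot}$ the twisted gluing condition $(p_{12}^*\psi)(p_{23}^*\psi)(p_{13}^*\psi)^{-1} = m_a$ transforms correctly into $m_{a_n}$ using the relation $a_n \cdot d x_n = p_{n+1,n}(a_{n+1})$, so that the $i_n$'s are genuinely isomorphisms in $\gerbeE_n(U)$ and the system is coherent; and that going back, the inverse limit of torsors is again a torsor (here one uses that torsors over the various $S^{\bigotimes_R k}$ are trivializable by Convention \ref{conventions}, so the limit is computed on sections and is well-behaved). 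Since \cite[Prop. 2.64]{Dillery} is cited as the source, I would expect the write-up to be short and to delegate most of these compatibility checks to that reference, noting only the replacement of $\overline{F}/F$ by the general cover $S/R$; the proof as presented should amount to: ``apply \cite[Prop. 2.64]{Dillery} with $\overline{F}/F$ replaced by $S/R$; the hypotheses hold by the preceding discussion.''

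\begin{proof}
This is \cite[Prop. 2.64]{Dillery}, with the cover $\overline{F}/F$ there replaced by the general fpqc cover $S/R$ (cf.\ Convention \ref{conventions}); we indicate the argument. The proof of \cite[Lemma 2.63]{Dillery} shows that since each $A_{n}$ satisfies Convention \ref{conventions}, so does $A := \varprojlim_{n} A_{n}$, so by \cite[Prop. 2.40]{Dillery} (with $\overline{F}/F$ replaced by $S/R$) isomorphism classes of fpqc $A$-gerbes split over $S$ are in bijection with $\check{H}^{2}(S/R, A)$, a gerbe corresponding to the class of a defining \v{C}ech $2$-cocycle. By Lemma \ref{basicresult} choose a \v{C}ech $2$-cocycle $a \in A(S^{\bigotimes_{R}3})$ lifting $([a_{n}]) \in \varprojlim_{n} \check{H}^{2}(S/R, A_{n})$. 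The explicit gerbe $\gerbeE_{a}$ of Definition \ref{explicitgerbe} admits a functor to $\varprojlim_{n} \gerbeE_{n}$ by pushing forward an $a$-twisted torsor along each projection $A \to A_{n}$ via Construction \ref{changeofgerbe} (using $x_{n}$ as comparison datum); the relation $a_{n} \cdot dx_{n} = p_{n+1,n}(a_{n+1})$ guarantees that the resulting system $(X_{n}, i_{n})$ is coherent. Conversely, a coherent system reassembles to an $a$-twisted $A$-torsor by taking the inverse limit of the underlying torsors, the relevant $\varprojlim^{1}$-terms vanishing by Convention \ref{conventions}. These functors are mutually quasi-inverse, so $\gerbeE = \varprojlim_{n} \gerbeE_{n} \simeq \gerbeE_{a}$ inherits the structure of an fpqc $A$-gerbe split over $S$, and its class is $[a]$, which maps to $([a_{n}])$ under $\check{H}^{2}(S/R, A) \to \varprojlim_{n} \check{H}^{2}(S/R, A_{n})$.
\end{proof}
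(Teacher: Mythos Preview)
Your proposal is correct, and in fact the paper provides no proof at all for this proposition---it is stated with the citation \cite[Prop.\ 2.64]{Dillery} and nothing more. Your sketch is a faithful expansion of the argument behind that citation, and your final expectation (``the write-up should amount to: apply \cite[Prop.\ 2.64]{Dillery} with $\overline{F}/F$ replaced by $S/R$'') is exactly what the paper does, except the paper does not even bother to say that much.
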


We now give a result which characterizes when the surjection of Lemma \ref{basicresult} is an isomorphism.

\begin{prop}\label{cechidentification} Fix $i \geq 1$; if $\varprojlim_{n}^{(1)} \check{H}^{i-1}(S/R, A_{n}) = 0$  and $\varprojlim_{n}^{(1)} B^{i-1}(n) = 0$, where $B^{i-1}(n) \in C^{i-1}(S/R, A_{n})$ is the subgroup of $(i-2)$-coboundaries (the group of $(-1)$-coboundaries is defined to be trivial), then the natural map $\check{H}^{i}(S/R, A) \to \varprojlim_{n} \check{H}^{i}(S/R, A_{n})$ is injective.
\end{prop}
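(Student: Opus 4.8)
The plan is to run the standard Milnor-type $\varprojlim^{(1)}$ argument at the level of \v{C}ech cochain complexes, carried out by hand because the transition maps $p_{n+1,n}\colon A_{n+1}\to A_n$ are not assumed surjective on cochains. Write $C^{k}(A_n):=A_n(S^{\bigotimes_{R}(k+1)})$ for the \v{C}ech cochain groups, $Z^{k}(A_n)$ for the $k$-cocycles, $B^{k}(A_n)=B^{k}(n)$ for the $k$-coboundaries, and analogously for $A:=\varprojlim_{n}A_n$. Since $A=\varprojlim_{n}A_n$ as affine $R$-group schemes, its functor of points is the inverse limit of those of the $A_n$; applying this to the \v{C}ech algebras $S^{\bigotimes_{R}(k+1)}$, and using that the \v{C}ech differentials are induced functorially by the structure maps $S^{\bigotimes_{R}m}\to S^{\bigotimes_{R}(m+1)}$, I obtain an identification of cochain complexes $C^{\bullet}(A)=\varprojlim_{n}C^{\bullet}(A_n)$, hence $Z^{k}(A)=\varprojlim_{n}Z^{k}(A_n)$ for all $k$ (inverse limits commute with kernels).

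Next I would take a class in $\check{H}^{i}(S/R,A)$ mapping to $0$ in $\varprojlim_{n}\check{H}^{i}(S/R,A_n)$, lift it to a cocycle $z=(z_n)_n\in Z^{i}(A)=\varprojlim_{n}Z^{i}(A_n)$, and choose for each $n$ some $c_n\in C^{i-1}(A_n)$ with $dc_n=z_n$ (possible because $z_n$ dies in $\check{H}^{i}(S/R,A_n)$). The obstruction to compatibility of the $c_n$ is $\delta_n:=c_n-p_{n+1,n}(c_{n+1})$, which satisfies $d\delta_n=z_n-p_{n+1,n}(z_{n+1})=0$, so $\delta_n\in Z^{i-1}(A_n)$. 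If I can produce $e_n\in Z^{i-1}(A_n)$ with $e_n-p_{n+1,n}(e_{n+1})=\delta_n$ for all $n$, then $c_n':=c_n-e_n$ satisfies $dc_n'=z_n$ and $c_n'-p_{n+1,n}(c_{n+1}')=0$, so $(c_n')_n$ lies in $\varprojlim_{n}C^{i-1}(A_n)=C^{i-1}(A)$ and has differential $z$; thus $z$ is a coboundary and the original class in $\check{H}^{i}(S/R,A)$ vanishes.

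Solving for the $e_n$ is precisely surjectivity of the map $\prod_n Z^{i-1}(A_n)\to\prod_n Z^{i-1}(A_n)$, $(e_n)_n\mapsto(e_n-p_{n+1,n}(e_{n+1}))_n$, i.e.\ the vanishing of $\varprojlim_{n}^{(1)}Z^{i-1}(A_n)$. This is where the two hypotheses enter: the short exact sequence of inverse systems $0\to B^{i-1}(n)\to Z^{i-1}(A_n)\to\check{H}^{i-1}(S/R,A_n)\to 0$ yields the six-term exact sequence relating $\varprojlim$ and $\varprojlim^{(1)}$, whose tail reads $\varprojlim_{n}^{(1)}B^{i-1}(n)\to\varprojlim_{n}^{(1)}Z^{i-1}(A_n)\to\varprojlim_{n}^{(1)}\check{H}^{i-1}(S/R,A_n)\to 0$. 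Since both outer terms vanish by hypothesis (for $i=1$ the group $B^{0}(n)$ is trivial by convention, so only the $\check{H}^{0}$-vanishing is needed, matching the statement), we conclude $\varprojlim_{n}^{(1)}Z^{i-1}(A_n)=0$, which produces the desired $(e_n)_n$ and finishes the proof.

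The argument is essentially formal, and the only point requiring real care is the reduction from the two supplied $\varprojlim^{(1)}$-vanishings to $\varprojlim_{n}^{(1)}Z^{i-1}(A_n)=0$ via the cocycle/coboundary short exact sequence; everything else is the standard inverse-limit-of-complexes bookkeeping, which has to be done explicitly here because the cochain-level transition maps are not assumed surjective. (Combined with Lemma~\ref{basicresult}, this shows that under these hypotheses the map $\check{H}^{i}(S/R,A)\to\varprojlim_{n}\check{H}^{i}(S/R,A_n)$ is an isomorphism.)
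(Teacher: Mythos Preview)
Your argument is correct and is essentially the same as the paper's. Both proofs reduce to showing that an $i$-cocycle $z\in Z^{i}(A)=\varprojlim_n Z^{i}(A_n)$ which bounds in each $A_n$ already bounds in $A$; you do this by proving $\varprojlim_n^{(1)} Z^{i-1}(A_n)=0$ via the six-term sequence for $0\to B^{i-1}(n)\to Z^{i-1}(A_n)\to\check{H}^{i-1}(S/R,A_n)\to 0$, while the paper phrases the same step as the surjectivity of the composite $C^{i-1}(A)\twoheadrightarrow\varprojlim_n C^{i-1}(A_n)/B^{i-1}(n)\twoheadrightarrow\varprojlim_n C^{i-1}(A_n)/Z^{i-1}(A_n)$, using the two hypotheses at each stage. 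These are equivalent formulations of the same inverse-limit argument.
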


\begin{proof} We denote the differential $A_{k}(S^{\bigotimes_{R}i}) \to A_{k}(S^{\bigotimes_{R}(i+1)})$ by $d^{(k)}$. First, note that since $\varprojlim_{k}^{(1)} \check{H}^{i-1}(S/R, A_{k}) = 0$, the natural map $$\varprojlim_{k} [A_{k}(S^{\bigotimes_{R}i})/B^{i-1}(k)] \to \varprojlim_{k}[(A_{k}(S^{\bigotimes_{R}i})/B^{i-1}(k))/(\check{H}^{i-1}(S/R, A_{k}))]$$ is surjective. Moreover, the natural map $A(S^{\bigotimes_{R}i}) =  \varprojlim_{k} A_{k}(S^{\bigotimes_{R}i}) \to \varprojlim_{k} [A_{k}(S^{\bigotimes_{R}i})/B^{i-1}(k)]$ is surjective, since we assume that $\varprojlim_{n}^{(1)} B^{i-1}(n) = 0$.

Now by left-exactness of the inverse-limit functor, we have the exact sequence 
\[
\begin{tikzcd}
1 \arrow[r] & \varprojlim_{k} \frac{A_{k}(S^{\bigotimes_{R}i})/B^{i-1}(k)}{\check{H}^{i-1}(S/R, A_{k})} \arrow["\varprojlim d^{(k)}"]{r} & A(S^{\bigotimes_{R}(i+1)}) \arrow[r] & \varprojlim_{k} \frac{A_{k}(S^{\bigotimes_{R}(i+1)})}{d^{(k)}(A_{k}(S^{\bigotimes_{R}i}))}. & 
\end{tikzcd}
\]
In particular, if $x \in A(S^{\bigotimes_{R}(i+1)})$ is such that its image in  $\varprojlim_{k} \frac{A_{k}(S^{\bigotimes_{R}(i+1)})}{d^{(k)}(A_{k}(S^{\bigotimes_{R}i}))}$ is zero (which is the hypothesis of the Proposition), then it lies in the image of $\bar{d}:= \varprojlim d^{(k)}$. But now the diagram
\[
\begin{tikzcd}
A (S^{\bigotimes_{R}i}) \arrow{d} \arrow["d"]{dr} & \\
\varprojlim_{k} [A_{k}(S^{\bigotimes_{R}i})/B^{i-1}(k)] \arrow{d} \arrow{r} & A(S^{\bigotimes_{R}(i+1)}) \\
\varprojlim_{k}  \frac{A_{k}(S^{\bigotimes_{R}i})/B^{i-1}(k)}{\check{H}^{i-1}(S/R, A_{k})} \arrow["\bar{d}"]{ur} &
\end{tikzcd}
\]
commutes, and since the vertical composition is surjective and such an $x$ lies in the image of the lower-diagonal map, it lies in the image of the upper-diagonal map, giving the desired result.
\end{proof}

\section{The profinite group $P_{\dot{V}}$}
This section constructs and studies the pro-algebraic group $P_{\dot{V}}$.  As in the previous section, we use $H^{i}$ as a short-hand for $H^{i}_{\text{fppf}}$. For a fixed a finite Galois extension $E/F$ and $S \subset V$ a finite set of places of $F$, we have two common conditions that we want $S$ to satisfy:
\begin{cond}\label{first2placeconditions} \begin{enumerate} \item{$S$ contains all of the places that ramify in $E$}
 \item{Every ideal class of $E$ contains an ideal with support in $S_{E}$, ie., $\text{Cl}(O_{E,S})$ is trivial.}
 \end{enumerate}
 \end{cond}

\subsection{Tate duality for finite multiplicative $Z$}
The goal of this subsection is to construct an analogue of the global Tate duality isomorphism from \cite{Tate66} for the cohomology group $H^{2}_{\text{fppf}}(F, Z) = \check{H}^{2}(\overline{F}/F, Z)$, where $Z$ is a finite multiplicative group over $F$. Set $A = X^{*}(Z)$, and $A^{\vee} = \Hom(A, \mathbb{Q}/\Z)$. Temporarily fix a finite set of places $S \subset V$ and a multiplicative group $M$ over $O_{F,S}$ split over $E$; denote $X^{*}(M)$ by $X$ and $X_{*}(M) (= X_{*}(M^{\circ})$) by $Y$.

For $v \in S$ a fixed place, we denote by $\text{Res}_{E,v}(M)$ the multiplicative $O_{F,S}$-group split over the finite \'{e}tale extension $O_{E,S}$ determined by the $\Gamma_{E/F}$-module $X\otimes_{\Z}\Z[\{v\}_{E}] =: X[\{v\}_{E}]$. 
We set $\text{Res}_{E,S}(M) := \prod_{v \in S} \text{Res}_{E,v}(M)$, a multiplicative $O_{F,S}$-group split over $O_{E,S}$, with character group $X[S_{E}]$.  There is an embedding $M \hookrightarrow \text{Res}_{E/S}(M)$ via the augmentation map on characters $X[S_{E}] \to X$ and $\frac{\text{Res}_{E/S}(M)}{M}$ has character group $X[S_{E}]_{0}$ (the kernel of the augmentation map).

Global Tate duality for tori (as in \cite{Tate66}) shows that for $M = T$ a torus there exists a class $$\alpha_{3}(E, S) \in H^{2}(\Gamma_{E/F}, \frac{\text{Res}_{E,S}(\mathbb{G}_{m})}{\mathbb{G}_{m}}(O_{E,S}))$$ such that cup product with this class induces for all $i \in \Z$ an isomorphism $$\widehat{H}^{i-2}(\Gamma_{E/F}, Y[S_{E}]_{0}) \xrightarrow{\sim} \widehat{H}^{i}(\Gamma_{E/F}, T(O_{E,S})),$$ where to make sense of the relevant cup product pairing, we are making the identifications 
\begin{equation}\label{Y0pairing} Y \otimes_{\Z} \Z[S_{E}]_{0} = \Hom_{\Z}(X, \Z[S_{E}]_{0}) = \Hom_{O_{E,S}\text{-gp}}( \frac{\text{Res}_{E,S}(\mathbb{G}_{m})}{\mathbb{G}_{m}}, T). \end{equation}

We no longer fix $S$ as above. Our first goal is to construct a functorial isomorphism $$\Theta \colon \varinjlim_{E',S'} \widehat{H}^{-1}(\Gamma_{E'/F}, A^{\vee}[S'_{E'}]_{0}) \xrightarrow{\sim} H^{2}(F, Z),$$ where the limit is over all finite subsets $S' \subset V$ and finite Galois extensions $E'/F$. Choose a finite Galois extension $E/F$ splitting $Z$ and a finite full subset $S \subset V$ such that $S$ satisfies Conditions \ref{first2placeconditions} with respect to $E$ and the following additional condition:

\begin{cond}\label{thirdplacecondition} For each $w \in V_{E}$, there exists $w' \in S_{E}$ such that $\text{Stab}(w, \Gamma_{E/F}) = \text{Stab}(w', \Gamma_{E/F})$. 
\end{cond}
\noindent It is straightforward to check that such a pair $(E, S)$ always exists, and that if $S \subseteq S'$ is finite and full, then it also satisfies Conditions \ref{first2placeconditions} and \ref{thirdplacecondition} (with respect to $E$).

Note that for $n$ a multiple of $\text{exp}(Z)$, we have a functorial isomorphism 
\begin{equation}\label{pairing1}
\Phi_{E,S,n} \colon A^{\vee}[S_{E}]_{0} \xrightarrow{\sim} \Hom_{O_{E,S}}(\frac{\text{Res}_{E,S}(\mu_{n})}{\mu_{n}}, Z) = \Hom_{O_{S}}(\frac{\text{Res}_{E,S}(\mu_{n})}{\mu_{n}}, Z),
\end{equation}
which sends $g \in A^{\vee}[S_{E}]_{0}$ to the homomorphism induced by the map $A \to (\frac{1}{n}\Z/\Z)[S_{E}]_{0}$ given by 
\begin{equation}
a \mapsto \sum_{w \in S_{E}} ng(w)(a) \cdot [w],
\end{equation}
where $g(w)$ denotes the $A^{\vee}$-coefficient of $[w]$ in $g$.

Fix a cofinal sequence $\{n_{i}\}$ in $\mathbb{N}^{\times}$ and denote the associated cofinal prime-to-p sequence by $n_{i}' := n_{i}/p^{m_{i}}$. Identifying $\text{Res}_{E/S}(\mathbb{G}_{m})(O_{S})$ with $\text{Maps}(S_{E}, O_{S}^{\times})$, we may pick functions $$k_{i}' \colon \text{Maps}(S_{E}, O_{S}^{\times}) \to  \text{Maps}(S_{E}, O_{S}^{\times})$$ such that $k_{i}'(x)^{n_{i}'} = x$ and $k_{i+1}'(x)^{n_{i+1}'/n_{i}'} = k_{i}'(x)$. Using the bijection between \v{C}ech cochains in $\text{Res}_{E/S}(\mathbb{G}_{m})(O_{S}^{\bigotimes_{O_{F,S}} n})$ and $C^{n-1}(\Gamma_{S}, \text{Res}_{E/S}(\mathbb{G}_{m})(O_{S}))$ (via Lemma \ref{etalesplitting}) we get a map $$k_{i}' \colon \text{Res}_{E/S}(\mathbb{G}_{m})(O_{S}^{\bigotimes_{O_{F,S}} n}) \to \text{Res}_{E/S}(\mathbb{G}_{m})(O_{S}^{\bigotimes_{O_{F,S}} n})$$ for all $n$, where we are using that $O_{S}^{\times}$ is $n$-divisible for $n$ coprime to $p$ (cf. \cite[Prop. 8.3.4]{NSW}). 

As in \cite{Dillery}, we extend this to $p$-power roots. The quotient map $$\text{Res}_{E,S}(\mathbb{G}_{m})(O_{E,S}^{\bigotimes_{O_{F,S}}n}) \to \frac{\text{Res}_{E,S}(\mathbb{G}_{m})}{\mathbb{G}_{m}}(O_{E,S}^{\bigotimes_{O_{F,S}} n})$$ is surjective, since $H^{1}(O_{E,S}^{\bigotimes_{O_{F,S}}n}, \mathbb{G}_{m}) = 0$, by combining Lemma \ref{etalesplitting} with the fact that $H^{1}(O_{E,S}, \mathbb{G}_{m}) = 0$, since $O_{E,S}$ is a PID. We lift a cocycle representing $\alpha_{3}(E, S) \in \check{H}^{2}(O_{E,S}/O_{F,S}, \frac{\text{Res}_{E,S}(\mathbb{G}_{m})}{\mathbb{G}_{m}})$ to $c_{E,S} \in \text{Res}_{E/S}(\mathbb{G}_{m})(O_{E,S}^{\bigotimes_{O_{F,S}} 3})$. Note that 
$$k_{i}'(c_{E,S}) \in \text{Res}_{E/S}(\mathbb{G}_{m})(O_{S} \otimes_{O_{F,S}} O_{E,S} \otimes_{O_{F,S}} O_{E,S}) \xrightarrow{\sim} \prod_{w \in S_{E}} (O_{S} \otimes_{O_{F,S}} O_{E,S} \otimes_{O_{F,S}} O_{E,S})_{w}^{*}.$$


The same argument in \cite[\S 4.3]{Dillery} shows that for any $x \in O_{S} \otimes_{O_{F,S}} O_{E,S} \otimes_{O_{F,S}} O_{E,S}$ and power $p^{m_{i}}$, we may find a $p^{m_{i}}$th root $x^{(1/p^{m})} \in O_{S}^{\text{perf}} \otimes_{O_{F,S}} O_{E,S} \otimes_{O_{F,S}} O_{E,S}$ with $(x^{(1/p^{m_{i+1}})})^{p^{m_{i+1}}/p^{m_{i}}} = x^{(1/p^{m_{i}})}$ for all $i$. Applying this across all $w \in S_{E}$, we may define an analogous map $$ (-)^{(1/p^{m_{i}})} \colon \text{Res}_{E/S}(\mathbb{G}_{m})(O_{S} \otimes_{O_{F,S}} O_{E,S} \otimes_{O_{F,S}} O_{E,S}) \to \text{Res}_{E/S}(\mathbb{G}_{m})(O_{S}^{\text{perf}} \otimes_{O_{F,S}} O_{E,S} \otimes_{O_{F,S}} O_{E,S}).$$ 

We then set $\alpha_{i}(E,S)$ to be the image of $(k_{i}'(c_{E,S}))^{(1/p^{m_{i}})}$ in $[\text{Res}_{E,S}(\mathbb{G}_{m})/\mathbb{G}_{m}]((O_{S}^{\text{perf}})^{\bigotimes_{O_{F,S}}3})$ and obtain $$d \alpha_{i}(E,S) \in Z^{3,2}(O_{S}^{\text{perf}}/O_{F,S}, O_{E,S}, \frac{\text{Res}_{E,S}(\mu_{n_{i}})}{\mu_{n_{i}}})$$ ($Z^{3,2}(O_{S}^{\text{perf}}/O_{F,S}, O_{E,S}, M)$ denotes $3$-cycles lying in $M(O_{S}^{\text{perf}} \otimes_{O_{F,S}} O_{E,S}^{\bigotimes_{O_{F,S}}3})$). Define the map $$\Theta_{E,S} \colon \widehat{H}^{-1}(\Gamma_{E/F}, A^{\vee}[S_{E}]_{0}) \to \check{H}^{2}(O_{S}^{\text{perf}}/O_{F,S}, Z), \hspace{1mm} g \mapsto d\alpha_{i}(E,S) \underset{O_{E,S}/O_{F,S}}\sqcup g,$$ where the unbalanced cup product is as defined in \cite[\S 4.2]{Dillery} using the pairing $$\underline{A^{\vee}[S_{E}]_{0}} \times [\frac{\text{Res}_{E,S}(\mu_{n_{i}})}{\mu_{n_{i}}}]_{O_{S}^{\text{perf}}} \to Z_{O_{S}^{\text{perf}}}$$ given by \eqref{pairing1}, choosing $n_{i}$ divisible by $\text{exp}(Z)$; this map is independent of the choice of $n_{i}$.

As in \cite{Tasho2}, we have the following important lemma which connects the above map to the global Tate duality pairing for tori discussed above (whose corresponding isomorphisms for various tori and Tate cohomology groups will all be denoted by ``TN", for \textit{Tate-Nakayama}):

\begin{lem}\label{bigdiag1} Let $T$ be a torus defined over $F$ and split over $E$, and let $Z \to T$ be an injection with cokernel $\bar{T}$, all viewed as $O_{F,S}$ groups in the usual way. We write $Y = X_{*}(T)$ and $\bar{Y} = X_{*}(\bar{T})$. Then the following diagram commutes, and its columns are exact.
\[
\begin{tikzcd}
\widehat{H}^{-1}(\Gamma_{E/F}, Y[S_{E}]_{0}) \arrow{d} \arrow["\text{TN}"]{r} &  \check{H}^{1}(O_{E,S}/O_{F,S}, T) \arrow{d} \arrow["\sim"]{r} & \check{H}^{1}(O_{S}^{\text{perf}}/O_{F,S}, T) \arrow{d} \\
\widehat{H}^{-1}(\Gamma_{E/F}, \bar{Y}[S_{E}]_{0}) \arrow{d} \arrow["\text{TN}"]{r} &  \check{H}^{1}(O_{E,S}/O_{F,S}, \bar{T})  \arrow["\sim"]{r} & \check{H}^{1}(O_{S}^{\text{perf}}/O_{F,S}, \bar{T}) \arrow["\delta"]{d} \\
\widehat{H}^{-1}(\Gamma_{E/F}, A^{\vee}[S_{E}]_{0}) \arrow["\Theta_{E,S}"]{rr}  \arrow{d} & &  \check{H}^{2}(O_{S}^{\text{perf}}/O_{F,S}, Z) \arrow{d} \\
\widehat{H}^{0}(\Gamma_{E/F}, Y[S_{E}]_{0}) \arrow{d} \arrow["-\text{TN}"]{r} & \check{H}^{2}(O_{E,S}/O_{F,S}, T) \arrow{r} \arrow{d} & \check{H}^{2}(O_{S}^{\text{perf}}/O_{F,S}, T) \arrow{d} \\
\widehat{H}^{0}(\Gamma_{E/F}, \bar{Y}[S_{E}]_{0})  \arrow["-\text{TN}"]{r} & \check{H}^{2}(O_{E,S}/O_{F,S}, \bar{T}) \arrow{r}  & \check{H}^{2}(O_{S}^{\text{perf}}/O_{F,S}, \bar{T}) 
\end{tikzcd}
\]
\end{lem}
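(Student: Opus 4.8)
The plan is to recognize the two outer columns as genuine long exact cohomology sequences, to match the middle column with the rightmost one via the horizontal comparison maps, and then to check commutativity square by square; every square except the two straddling the $\Theta_{E,S}$-row will be formal. Passing from $1 \to Z \to T \to \bar{T} \to 1$ to cocharacter lattices and using that $Z$, hence $A = X^{*}(Z)$, is finite yields a short exact sequence of $\Gamma_{E/F}$-modules $0 \to Y \to \bar{Y} \to \mathrm{Ext}^{1}_{\Z}(A,\Z) \to 0$ together with the canonical identification $\mathrm{Ext}^{1}_{\Z}(A,\Z) \cong A^{\vee}$; tensoring over $\Z$ with the free module $\Z[S_{E}]_{0}$ keeps it exact, so $0 \to Y[S_{E}]_{0} \to \bar{Y}[S_{E}]_{0} \to A^{\vee}[S_{E}]_{0} \to 0$ is exact and the left column is a segment of its Tate cohomology long exact sequence. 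I would identify the right column with the long exact sequence in $H^{i}_{\mathrm{fppf}}(O_{F,S},-)$ attached to $1 \to Z \to T \to \bar{T} \to 1$, rewritten via the isomorphisms $\check{H}^{i}(O_{S}^{\text{perf}}/O_{F,S},-) \xrightarrow{\sim} H^{i}_{\mathrm{fppf}}(O_{F,S},-)$ of Corollary \ref{ComparisonIso2}, so that $\delta$ is by definition its connecting map. Since for a torus split over $O_{E,S}$ the comparison maps $\check{H}^{i}(O_{E,S}/O_{F,S},-) \to \check{H}^{i}(O_{S}^{\text{perf}}/O_{F,S},-)$ are isomorphisms (Corollaries \ref{cechtogalois}, \ref{galoistoetale}, \ref{ComparisonIso2}), the middle column is the $T$-and-$\bar{T}$ part of the right column together with its shared $Z$-term, and is exact as well.

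\textbf{The formal squares.} The squares built from the pair of comparison maps $\check{H}^{i}(O_{E,S}/O_{F,S},-) \to \check{H}^{i}(O_{S}^{\text{perf}}/O_{F,S},-)$ commute by naturality of pullback along the morphism of covers. The squares built from the Tate--Nakayama isomorphisms and the vertical arrows induced by $T \to \bar{T}$ (top two rows) or $Z \hookrightarrow T$ (bottom two rows) commute by functoriality of Tate--Nakayama for tori: cup product with the fixed class $\alpha_{3}(E,S)$ commutes with pushforward along a morphism of tori, and under the identification \eqref{Y0pairing} the map $Y[S_{E}]_{0} \to \bar{Y}[S_{E}]_{0}$ is precisely postcomposition with $T \to \bar{T}$ (the common sign in the bottom two rows is immaterial here). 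This leaves exactly the square with edges $(\mathrm{TN}$ for $\bar{T}$, the quotient $\widehat{H}^{-1}(\bar{Y}[S_{E}]_{0}) \to \widehat{H}^{-1}(A^{\vee}[S_{E}]_{0})$, $\delta$, $\Theta_{E,S})$ and the square with edges $(\Theta_{E,S}$, the Tate connecting map $\widehat{H}^{-1}(A^{\vee}[S_{E}]_{0}) \to \widehat{H}^{0}(Y[S_{E}]_{0})$, the map induced by $Z \hookrightarrow T$, $-\mathrm{TN}$ for $T)$.

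\textbf{The two central squares.} Here I would exploit that $\Theta_{E,S}$ was built precisely to interpolate the Tate--Nakayama isomorphisms for $T$ and for $\bar{T}$. Recall $\Theta_{E,S}(g) = d\alpha_{i}(E,S) \sqcup_{O_{E,S}/O_{F,S}} g$, where $\alpha_{i}(E,S)$ is a coherent system of $n_{i}$-th roots, extracted over $O_{S}^{\text{perf}}$, of a \v{C}ech lift $c_{E,S}$ of a cocycle representing the Tate class $\alpha_{3}(E,S)$, so that $d\alpha_{i}(E,S)$ is the obstruction to that root descending through $1 \to \mu_{n_{i}} \to \text{Res}_{E,S}(\mu_{n_{i}}) \to \text{Res}_{E,S}(\mu_{n_{i}})/\mu_{n_{i}} \to 1$, and \eqref{pairing1} converts $\text{Res}_{E,S}(\mu_{n_{i}})/\mu_{n_{i}}$-cochains paired against $A^{\vee}[S_{E}]_{0}$ into $Z$-cochains. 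For the first square I would represent $\mathrm{TN}_{\bar{T}}(\bar{g})$ over $O_{S}^{\text{perf}}$ by the \v{C}ech $1$-cocycle coming from $\bar{g}$ and $c_{E,S}$, compute $\delta$ by the usual lift-and-differentiate recipe (lifting through $T \to \bar{T}$ using the preimage $c_{E,S}$ and the root $\alpha_{i}(E,S)$), and read off, by bilinearity of the cup products involved, that the resulting $Z$-valued $2$-cocycle is exactly $d\alpha_{i}(E,S) \sqcup (\text{image of }\bar{g})$, i.e. $\Theta_{E,S}$ of that image. The second square is the dual computation: pushing $d\alpha_{i}(E,S) \sqcup g$ forward along $Z \hookrightarrow T$ and using the relation between $\alpha_{i}(E,S)^{n_{i}}$ and the lift of $\alpha_{3}(E,S)$ rewrites it as $\alpha_{3}(E,S) \sqcup (\partial g)$, with $\partial g$ the Tate connecting image of $g$, the sign in $-\mathrm{TN}$ being exactly the discrepancy between the \v{C}ech and Tate connecting-homomorphism conventions, as in the torus computations of \cite{KS1} and \cite{Tasho2}.

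\textbf{The main difficulty.} The real work is not any individual square but the translation dictionary they all rest on: the Tate--Nakayama cup products are formed for the $\Gamma_{E/F}$-action on $O_{E,S}$ and carried to \v{C}ech cohomology via Lemma \ref{etalesplitting} and Corollary \ref{cechtogalois}, whereas $\Theta_{E,S}$ lives in fppf \v{C}ech cohomology over the non-\'etale cover $O_{S}^{\text{perf}}/O_{F,S}$ and uses the unbalanced cup product. Checking that the passage $O_{E,S} \rightsquigarrow O_{S}^{\text{perf}}$ --- together with the chosen coherent $p$-power roots, the section of $\Gamma_{F} \to \Gamma_{F}/\Gamma_{F}^{\dot{v}}$, and the pairing \eqref{pairing1} --- intertwines these two cup products is where the fullness of $S$ and Conditions \ref{first2placeconditions} and \ref{thirdplacecondition} enter, and it is essentially the content of the complexes-of-tori compatibilities developed in Appendix A following \cite{KS1}. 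Once that dictionary is in place, each of the two central squares collapses to an explicit bilinearity computation with \v{C}ech cochains.
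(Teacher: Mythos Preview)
Your outline matches the paper's proof: identify the left column with the Tate long exact sequence for $0 \to Y[S_E]_0 \to \bar{Y}[S_E]_0 \to A^\vee[S_E]_0 \to 0$, the right column with the fppf long exact sequence for $1 \to Z \to T \to \bar{T} \to 1$ transported via Corollary~\ref{ComparisonIso2}, dispose of the outer squares by naturality, and reduce the two central squares to unbalanced-cup-product identities --- exactly what the paper defers to \cite{Tasho2} using \cite[\S 4.2]{Dillery}. Two small corrections: the maps $\check{H}^{2}(O_{E,S}/O_{F,S},-) \to \check{H}^{2}(O_{S}^{\text{perf}}/O_{F,S},-)$ in rows 4--5 are not asserted to be isomorphisms (and need not be, since inflation from $\Gamma_{E/F}$ to $\Gamma_{S}$ is not generally surjective in degree $2$; the diagram deliberately omits the $\sim$ there), so drop that claim and with it the assertion that the ``middle column'' is exact; and neither a section $\Gamma_{F}/\Gamma_{F}^{\dot{v}} \to \Gamma_{F}$ nor Appendix~A is relevant to this lemma --- those concern the adelic Shapiro maps of \S 2.2 and complexes of tori respectively, whereas the translation dictionary you actually need is the unbalanced cup product machinery of \cite[\S 4.2]{Dillery}.
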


\begin{proof} The right-hand isomorphisms on the first two lines follow from the fact that all $T$-torsors over $O_{F,S}$ are trivial over $O_{E,S}$. The ``connecting homomorphism" $\delta$ is the standard connecting homomorphism in \v{C}ech cohomology (see for example \cite[\S 2.2]{Dillery}), and the right-hand column is exact because, applying the isomorphisms $\check{H}^{i}(O_{S}^{\text{perf}}/O_{F,S}, M) \xrightarrow{\sim} H^{i}(O_{F,S}, M)$ for $i=1,2$ and $M=T, \bar{T}, Z$, the resulting two-column diagram commutes, by functoriality of the \v{C}ech-to-derived comparison maps ( \cite[Prop. E.2.1]{Rosengarten}). From here, the identical argument as in \cite{Tasho2} gives the result, using basic properties of the unbalanced cup product in fppf cohomology discussed in \cite[\S 4.2]{Dillery}.
\end{proof} 

\begin{cor}\label{thetainjectivity} The map $\Theta_{E,S}$ is a functorial injection independent of the choices of $c_{E,S}$, $k_{i}$, and $(-)^{(1/p^{m_{i}})}$.
\end{cor}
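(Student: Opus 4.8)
The plan is to deduce everything from Lemma \ref{bigdiag1}. First I would establish injectivity of $\Theta_{E,S}$ by a diagram chase in the big commutative diagram: given $g \in \widehat{H}^{-1}(\Gamma_{E/F}, A^{\vee}[S_{E}]_{0})$ with $\Theta_{E,S}(g) = 0$ in $\check{H}^{2}(O_{S}^{\text{perf}}/O_{F,S}, Z)$, I want to produce a torus $T$ containing $Z$ so that $g$ lifts to a class in $\widehat{H}^{-1}(\Gamma_{E/F}, \bar{Y}[S_{E}]_{0})$ — this is possible because $A^{\vee}$ (hence $A^{\vee}[S_{E}]_{0}$) is a quotient of an induced module, so one can realize $Z$ inside an induced torus $T$ split over $E$ with the surjection $\bar{Y}[S_{E}]_{0} \twoheadrightarrow A^{\vee}[S_{E}]_{0}$ matching the left column of the diagram. (One may have to allow $E$ to grow; since the statement is about a fixed $(E,S)$ satisfying Conditions \ref{first2placeconditions}–\ref{thirdplacecondition}, I should instead choose the embedding $Z \hookrightarrow T$ with $T$ split already over $E$, which is fine because the Cartier dual $X^{*}(T)$ can be taken to be a finitely generated induced $\Gamma_{E/F}$-module surjecting onto $A$.) By exactness of the middle column, $\Theta_{E,S}(g) = 0$ means the lift of $g$ to $\check{H}^{1}(O_{S}^{\text{perf}}/O_{F,S}, \bar{T})$ comes from $\check{H}^{1}(O_{S}^{\text{perf}}/O_{F,S}, T)$; transporting through the (iso) Tate–Nakayama maps on the first two rows, this forces $g$ to come from $\widehat{H}^{-1}(\Gamma_{E/F}, Y[S_{E}]_{0})$, i.e.\ $g$ maps to $0$ in $\widehat{H}^{-1}(\Gamma_{E/F}, A^{\vee}[S_{E}]_{0})$ — but $g$ \emph{is} that image, so $g = 0$. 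This is precisely the argument in \cite{Tasho2}, transposed to the \v{C}ech setting.

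Next I would address functoriality: for a pair $(E,S) \subseteq (E', S')$ (with $S'$ finite full, hence still satisfying the Conditions by the remark just before \eqref{pairing1}), one needs the evident square relating $\Theta_{E,S}$ and $\Theta_{E',S'}$ through the restriction/inflation maps on Tate cohomology and the pullback maps on \v{C}ech cohomology to commute. I would check this by noting that $\Theta$ is built from (i) the Tate–Nakayama classes $\alpha_{3}(E,S)$, whose compatibility under enlarging $(E,S)$ is part of classical global duality for tori, (ii) the root-extraction maps $k_i'$ and $(-)^{(1/p^{m_i})}$, which are compatible by the coherence conditions $k_{i+1}'(x)^{n_{i+1}'/n_i'} = k_i'(x)$ imposed in their construction, and (iii) the unbalanced cup product, whose functoriality in fppf cohomology is recorded in \cite[\S 4.2]{Dillery}. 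Combining these gives compatibility of $\Theta_{E,S}$ with the transition maps, i.e.\ the maps assemble to a well-defined map on the direct limit.

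For independence of the choices of $c_{E,S}$, the root-extraction functions $k_i$, and the maps $(-)^{(1/p^{m_i})}$: changing $c_{E,S}$ by a coboundary (the only freedom, since $\alpha_3(E,S)$ is a fixed cohomology class) changes $k_i'(c_{E,S})$, after passing to the quotient $\text{Res}_{E,S}(\mathbb G_m)/\mathbb G_m$, by a coboundary up to an $n_i$-torsion correction, and one checks $d\alpha_i(E,S)$ is unchanged up to a coboundary in $Z^{3,2}$; similarly two choices of $k_i$ or of $(-)^{(1/p^{m_i})}$ differ by a map into $\mu_{n_i}$-valued (resp.\ $\mu_{p^{m_i}}$-valued) cochains whose contribution to $d\alpha_i(E,S)$ is a coboundary, so the cup product $d\alpha_i(E,S) \sqcup g$ is well-defined in $\check H^2$. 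Alternatively — and more cleanly — one can sidestep the explicit cochain manipulation: since $\Theta_{E,S}$ is characterized by the commutativity of the big diagram of Lemma \ref{bigdiag1} together with the fact that the leftmost column is surjective onto $\widehat{H}^{-1}(\Gamma_{E/F}, A^{\vee}[S_{E}]_{0})$ (which follows because every element of $A^{\vee}[S_{E}]_0$ lifts to some $\bar Y[S_E]_0$ for a suitable induced torus $T$), the map is uniquely determined, hence independent of all auxiliary choices. I would present this second route as the main line and relegate the cochain bookkeeping to a remark.

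The main obstacle I expect is the surjectivity input needed to run the diagram chase cleanly: one must know that \emph{every} class in $\widehat{H}^{-1}(\Gamma_{E/F}, A^{\vee}[S_E]_0)$ arises, via the left column of the diagram, from an embedding $Z \hookrightarrow T$ into a torus split over $E$ with $\bar Y = X_*(T/Z)$ — i.e.\ a flasque/coflasque-type resolution of $A$ adapted to the finite level $(E,S)$. Producing such $T$ while keeping $S$ fixed and still satisfying Conditions \ref{first2placeconditions}–\ref{thirdplacecondition} (one may need $T$ unramified outside $S$, which constrains the induced module) is the delicate point; in \cite{Tasho2} this is handled at the Galois-cohomology level, and the work here is to check the \v{C}ech-cohomological translation goes through using Corollary \ref{ComparisonIso2} and Lemma \ref{toricomparison}. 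Everything else is formal manipulation of the diagram in Lemma \ref{bigdiag1}.
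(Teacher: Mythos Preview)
Your approach is correct and rests on the same key input (Lemma~\ref{bigdiag1}), but you chase the diagram from the opposite end. The paper makes the single choice of an embedding $Z \hookrightarrow T$ with $\bar Y = X_*(T/Z)$ \emph{free} over $\Z[\Gamma_{E/F}]$; since $\Z[S_E]_0$ is $\Z$-free, the untwisting trick gives $\widehat{H}^{-1}(\Gamma_{E/F}, \bar Y[S_E]_0) = 0$, so simultaneously the connecting map $\widehat{H}^{-1}(A^\vee[S_E]_0) \hookrightarrow \widehat{H}^0(Y[S_E]_0)$ is injective and (via the right column) $\check H^2(Z) \to \check H^2(T)$ is injective. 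The bottom square then pins down $\Theta_{E,S}$ uniquely as the map whose composite with $\check H^2(Z) \to \check H^2(T)$ equals $-\mathrm{TN}\circ\delta$; since the latter is canonical and independent of $c_{E,S}$, $k_i$, $(-)^{1/p^{m_i}}$, both injectivity and independence follow at once. Your route through rows 1--3 also works, but note that the module-level surjection $\bar Y[S_E]_0 \twoheadrightarrow A^\vee[S_E]_0$ does not by itself give surjectivity on $\widehat H^{-1}$; to close the gap you correctly flag, you should take $Y$ \emph{free} (not merely $T$ ``induced''), so that $\widehat H^{0}(Y[S_E]_0)=0$ and the lift exists. Two minor points: the word ``functorial'' in the statement means functorial in $Z$ (as a finite multiplicative group split over $E$), not in the pair $(E,S)$ --- the latter compatibility is a separate statement (\cite[Lem.~3.2.8]{Tasho2}) taken up just after this corollary; and your cochain-level independence argument is unnecessary once you have uniqueness from the diagram.
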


\begin{proof} As in the proof of \cite[Prop. 3.2.4]{Tasho2}, we may choose $\bar{Y}$ to be a free $\Z[\Gamma_{E/F}]$-module, so that the connecting homomorphism of the left-hand column is injective, and $\Theta_{E,S}$ is the restriction of ``$-\text{TN}$", which is an isomorphism independent of the choices of $c_{E,S}$, $k_{i}$, or $(-)^{(1/p^{m_{i}})}$.
\end{proof}

Recall the local analogue of $\Theta_{E,S}$ which, if $\dot{v} \in S_{F_{S}}$ with restriction to $F$ (and to $E$, by abuse of notation) denoted by $v$ and $c_{v} \in \mathbb{G}_{m}(E_{v}^{\bigotimes_{F_{v}} 3})$ represents the canonical class of $H^{2}(\Gamma_{E_{v}/F_{v}}, E_{v}^{*})$, is defined (in \cite[\S 4.3]{Dillery}) by 
$$\Theta_{E_{v},n_{i}} \colon \widehat{H}^{-1}(\Gamma_{E_{v}/F_{v}}, A^{\vee}) \to \check{H}^{2}(\overline{F_{v}}/F_{v}, Z_{F_{v}}), \hspace{1mm} g \mapsto d\alpha_{v} \underset{E_{v}/F_{v}} \sqcup \Phi_{n_{i}}(g),$$
where $\alpha_{v} \in \overline{F_{v}} \otimes_{F_{v}} \overline{F_{v}} \otimes_{F_{v}} E_{v}$ is an $n_{i}$th-root of $c_{v}$, chosen in an analogous way to $c_{E,S}$ above. This is also a functorial injection, independent of the choices of $i$, $c_{v}$, and $\alpha_{v}$.

To compare these local and global constructions, first note that we have a homomorphism of $\Gamma_{E_{v}/F_{v}}$-modules $A^{\vee}[S_{E}]_{0} \to A^{\vee}$ given by mapping onto the $v$-factor, as well as an $O_{F,S}$-algebra homomorphism $(O_{S}^{\text{perf}})^{\bigotimes_{O_{F,S}} 3} \to \overline{F_{v}}^{\bigotimes_{F_{v}} 3}$ determined by $\dot{v}$, giving a group homomorphism $Z((O_{S}^{\text{perf}})^{\bigotimes_{O_{F,S}} 3}) \to Z(\overline{F_{v}}^{\bigotimes_{F_{v}} 3})$. Then \cite[Lem. 3.2.6]{Tasho2} shows that the resulting square
\[
\begin{tikzcd}
\widehat{H}^{-1}(\Gamma_{E/F}, A^{\vee}[S_{E}]_{0}) \arrow{d}  \arrow["-\text{TN}"]{r} & \check{H}^{2}(O_{S}^{\text{perf}}/O_{F,S}, Z) \arrow{d} \\
\widehat{H}^{-1}(\Gamma_{E_{v}/F_{v}}, A^{\vee}) \arrow["-\text{TN}"]{r} & \check{H}^{2}(\overline{F_{v}}/F_{v}, Z_{F_{v}})
\end{tikzcd}
\]
commutes, where to obtain the right-hand vertical map we use that the map $Z((O_{S}^{\text{perf}})^{\bigotimes_{O_{F,S}} 3}) \to Z(\overline{F_{v}}^{\bigotimes_{F_{v}} 3})$ preserves \v{C}ech cocycles and cochains, which is straightforward to check.

\begin{lem}\label{injinf} The natural map $\check{H}^{2}(O_{S}^{\text{perf}}/O_{S}, Z) \to \check{H}^{2}(\overline{F}/F, Z)$ is injective.
\end{lem}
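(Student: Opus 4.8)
The plan is to recast the statement in terms of ordinary fppf cohomology and then run a dévissage to a quasi-trivial torus. By Corollary~\ref{ComparisonIso2} the source is canonically $H^{2}_{\text{fppf}}(O_{F,S}, Z)$, and $\check{H}^{2}(\overline{F}/F, Z) = H^{2}_{\text{fppf}}(F, Z)$ by the comparison isomorphism used throughout; the map of the lemma is the one induced on \v{C}ech complexes by the ring maps $O_{F,S} \hookrightarrow F$, $O_{S}^{\text{perf}} \hookrightarrow \overline{F}$, which, by functoriality of the \v{C}ech-to-derived comparison in the base ring and cover (\cite[Prop.\ E.2.1]{Rosengarten}), is identified with the restriction map $H^{2}_{\text{fppf}}(O_{F,S}, Z) \to H^{2}_{\text{fppf}}(F, Z)$. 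So it suffices to prove this restriction map is injective.

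Next I would pick a surjection of $\Gamma_{E/F}$-modules $\mathbb{Z}[\Gamma_{E/F}]^{r} \twoheadrightarrow X^{*}(Z)$ (possible since $X^{*}(Z)$ is finite) and dualize to get a short exact sequence of $O_{F,S}$-group schemes $1 \to Z \to Q \to T \to 1$ in which $Q = \mathrm{Res}_{O_{E,S}/O_{F,S}}(\mathbb{G}_{m}^{r})$ is quasi-trivial and $T$ is a torus split over $O_{E,S}$ (its character lattice being the torsion-free kernel); here Condition~\ref{first2placeconditions}(1) guarantees that $O_{E,S}/O_{F,S}$ is finite \'{e}tale. By Shapiro's lemma, $H^{i}_{\text{fppf}}(O_{F,S}, Q) = H^{i}_{\text{fppf}}(O_{E,S}, \mathbb{G}_{m})^{r}$ and $H^{i}_{\text{fppf}}(F, Q) = H^{i}_{\text{fppf}}(E, \mathbb{G}_{m})^{r}$. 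Condition~\ref{first2placeconditions}(2) (triviality of $\text{Cl}(O_{E,S})$) gives $H^{1}(O_{F,S}, Q) = H^{1}(F, Q) = 0$, and $H^{2}(O_{F,S}, Q) = \text{Br}(O_{E,S})^{r} \hookrightarrow \text{Br}(E)^{r} = H^{2}(F, Q)$, the injection being the standard fact that the Brauer group of the regular integral scheme $\text{Spec}(O_{E,S})$ injects into that of its function field $E$ (cf.\ the sequence \cite[6.9.2]{Poonen} already invoked in Lemma~\ref{vanishing1}).

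Feeding this into the long exact sequences of $1 \to Z \to Q \to T \to 1$ over $O_{F,S}$ and over $F$ gives a commutative ladder with exact rows $0 \to H^{1}(O_{F,S}, T) \to H^{2}(O_{F,S}, Z) \to H^{2}(O_{F,S}, Q)$ and $0 \to H^{1}(F, T) \to H^{2}(F, Z) \to H^{2}(F, Q)$. If $\alpha \in H^{2}_{\text{fppf}}(O_{F,S}, Z)$ dies in $H^{2}_{\text{fppf}}(F, Z)$, then its image in $H^{2}(O_{F,S}, Q)$ dies in $H^{2}(F, Q)$ and hence is zero; so $\alpha$ comes from a unique $\beta \in H^{1}(O_{F,S}, T)$, whose image $\beta_{F} \in H^{1}(F, T)$ maps to $\alpha_{F} = 0$, forcing $\beta_{F} = 0$. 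Thus the whole problem reduces to injectivity of $H^{1}_{\text{fppf}}(O_{F,S}, T) \to H^{1}_{\text{fppf}}(F, T)$. For this, let $P$ be a $T$-torsor over $O_{F,S}$ with $P(F) \neq \emptyset$; $P$ is an affine scheme. For each $v \notin S$ the torus $T_{O_{F_{v}}}$ is split by the unramified extension $O_{E_{v}}$ (again using Condition~\ref{first2placeconditions}(1)), so $H^{1}_{\text{et}}(O_{F_{v}}, T) \cong H^{1}(\kappa(v), T_{\kappa(v)})$, which vanishes by Lang's theorem for connected groups over the finite field $\kappa(v)$; hence $P(O_{F_{v}}) \neq \emptyset$. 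Since $O_{F,S} = F \cap \bigcap_{v \notin S} O_{F_{v}}$ inside $F$ and $P$ is affine, the identity $R = R[1/g] \times_{\prod_{v \mid g} F_{v}} \prod_{v \mid g} O_{F_{v}}$ (applied with $R = O_{F,S}$ and $g$ chosen so that the $F$-point of $P$ already extends over $O_{F,S}[1/g]$) patches the $F$-point with the $O_{F_{v}}$-points to an $O_{F,S}$-point of $P$, so $P$ is trivial.

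The technical heart of the argument is the reduction step together with this last $H^{1}$-injectivity: it relies essentially on Condition~\ref{first2placeconditions}, which simultaneously supplies the triviality of $\text{Cl}(O_{E,S})$ (killing the term $\text{Pic}(O_{E,S})/\exp(Z)$ that would otherwise lie in the kernel) and the fact that $T$ is unramified outside $S$, making Lang's theorem applicable. I expect the only place requiring care is checking that the dévissage short exact sequence and the Shapiro and Brauer identifications are all compatible with restriction to $F$; the patching at the end is routine once the local vanishing is in hand.
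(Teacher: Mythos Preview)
Your reduction to the injectivity of $H^{1}_{\text{fppf}}(O_{F,S},T) \to H^{1}_{\text{fppf}}(F,T)$ is clean, but the patching argument you give for this last step does not work. The fiber-product identity $O_{F,S} = O_{F,S}[1/g] \times_{\prod_{v\mid g} F_{v}} \prod_{v\mid g} O_{F_{v}}$ gives $P(O_{F,S})$ as a \emph{fiber product}, so to produce an $O_{F,S}$-point you need an element of $P(O_{F,S}[1/g])$ and an element of $\prod_{v\mid g} P(O_{F_{v}})$ that \emph{agree} in $\prod_{v\mid g} P(F_{v})$. Knowing that $P_{O_{F_{v}}}$ is trivial only says $P(O_{F_{v}})\neq\emptyset$; it does not say that the specific $F_{v}$-point coming from your chosen $F$-point lies in the image of $P(O_{F_{v}})\to P(F_{v})$. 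After trivializing $P_{O_{F_{v}}}$, that image is a single $T(O_{F_{v}})$-coset in $T(F_{v})$, and there is no mechanism in your argument forcing the $F$-point to land there.

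This is not merely a gap in exposition: for your resolution the claim can actually fail. Take $Z=\mu_{2}$ and $E/F$ quadratic (this is an allowed choice of $E$ since $\mu_2$ splits already over $F$), with $Q=\mathrm{Res}_{O_{E,S}/O_{F,S}}\mathbb{G}_{m}$ and the augmentation-mod-$2$ surjection. Then $X^{*}(T)=\ker(\mathbb{Z}[\Gamma_{E/F}]\to\mathbb{Z}/2)$ splits as the trivial plus the sign representation, so $T\cong\mathbb{G}_{m}\times T^{1}$. Hence $H^{1}(O_{F,S},T)$ contains $\mathrm{Cl}(O_{F,S})$ as a direct summand, which dies in $H^{1}(F,T)$. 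The hypotheses only force $\mathrm{Cl}(O_{E,S})=0$, not $\mathrm{Cl}(O_{F,S})=0$, so your torsor-patching step breaks down exactly here. The paper avoids this by running the d\'evissage in the opposite direction (following \cite[Lem.~3.2.7]{Tasho2}): one embeds $Z\hookrightarrow T$ with $\bar T=T/Z$ chosen so that $X_{*}(\bar T)$ is $\mathbb{Z}[\Gamma_{E/F}]$-free, uses $H^{1}(O_{F,S},\bar T)=H^{1}(F,\bar T)=0$ to get $H^{2}(Z)\hookrightarrow H^{2}(T)$ on both sides, and then invokes injectivity of $H^{2}(O_{F,S},T)\to H^{2}(F,T)$ for tori, which is what the reference to \cite{Tasho2} supplies.
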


\begin{proof} The proof of \cite[Lem. 3.2.7]{Tasho2} works verbatim here, replacing $H^{i}(\Gamma_{S}, M(O_{S}))$ with $\check{H}^{i}(O_{S}^{\text{perf}}/O_{F,S}, M)$ for $M = T, \bar{T}, Z$ and $i=1,2$.
\end{proof}

There are obvious transition maps between the groups $A^{\vee}[S_{E}]_{0}$ for varying $S$ and $E$ defined in \cite[\S 3]{Tasho2} inducing morphisms on $\widehat{H}^{-1}$ which are compatible with the homomorphisms $\Theta_{E,S}$ (cf. \cite[Lem. 3.2.8]{Tasho2}).
We then get the main result of this subsection, which characterizes the cohomology group $H^{2}(F, Z)$:

\begin{prop}\label{mainresult1} The maps $\Theta_{E,S}$ splice to a functorial isomorphism $$\Theta \colon \varinjlim_{E} \widehat{H}^{-1}(\Gamma_{E/F}, A^{\vee}[S^{(E)}_{E}]_{0}) \to H^{2}(F, Z),$$ where the limit is over all finite Galois extensions $E/F$ splitting $Z$ and $S^{(E)}$ denotes an arbitrary choice of places of $V$ satisfying Conditions \ref{first2placeconditions} and \ref{thirdplacecondition} for $E/F$ such that if $K/E/F$, we have $S^{(E)} \subset S^{(K)}$ (a mentioned in the preceding discussion, the map in this result does not depend on the choices of the $S^{(E)}$'s by \cite[Lem. 3.2.8]{Tasho2}).
\end{prop}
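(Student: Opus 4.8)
The plan is to assemble the isomorphism $\Theta$ from the local pieces $\Theta_{E,S}$ already constructed, checking that they are mutually compatible and that the resulting map is an isomorphism by a direct-limit argument. First I would fix, for each finite Galois $E/F$ splitting $Z$, a choice $S^{(E)}$ of a finite full subset of $V$ satisfying Conditions \ref{first2placeconditions} and \ref{thirdplacecondition} for $E/F$, arranged compatibly so that $K/E/F$ forces $S^{(E)} \subseteq S^{(K)}$; such a coherent system exists by the observations following Condition \ref{thirdplacecondition} (enlarging $S$ preserves all three conditions). For each such pair I have the functorial injection $\Theta_{E,S^{(E)}} \colon \widehat{H}^{-1}(\Gamma_{E/F}, A^{\vee}[S^{(E)}_{E}]_{0}) \to \check{H}^{2}(O_{S^{(E)}}^{\text{perf}}/O_{F,S^{(E)}}, Z)$ from Corollary \ref{thetainjectivity}, which I compose with the identification $\check{H}^{2}(O_{S}^{\text{perf}}/O_{F,S}, Z) \xrightarrow{\sim} H^{2}_{\text{fppf}}(O_{F,S}, Z)$ of Corollary \ref{ComparisonIso2} and then with the map $\check{H}^{2}(O_{S}^{\text{perf}}/O_{S}, Z) \to \check{H}^{2}(\overline{F}/F, Z) = H^{2}(F,Z)$, which is injective by Lemma \ref{injinf}. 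This produces an injection into $H^{2}(F, Z)$ for each $(E, S^{(E)})$.

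Next I would check that these injections are compatible with the transition maps as $E$ grows, so that they splice to a single injection on the direct limit. The transition maps on the source are those on the groups $A^{\vee}[S_{E}]_{0}$ described in \cite[\S 3]{Tasho2}, and their compatibility with the $\Theta_{E,S}$ is exactly \cite[Lem. 3.2.8]{Tasho2}, which transfers to our setting because the only inputs are: (i) the behavior of $\alpha_{3}(E,S)$ under change of $E$ and $S$, which in the \v{C}ech picture is governed by Corollary \ref{EtoEprime} and the standard restriction–inflation maps on $\check{H}^{2}$ of tori (replacing the Galois-cohomology statements of \cite{Tasho2} via Corollary \ref{cechtogalois}); and (ii) the functoriality of the unbalanced cup product, already recorded in \cite[\S 4.2]{Dillery}. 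Granting this, $\Theta := \varinjlim_{E} \Theta_{E,S^{(E)}}$ is a well-defined functorial injection, and its independence of the choice of the system $\{S^{(E)}\}$ again follows from \cite[Lem. 3.2.8]{Tasho2} (two such systems have a common refinement since the conditions are stable under enlargement).

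Finally, surjectivity of $\Theta$: every class in $H^{2}(F, Z) = \varinjlim_{E,S} \check{H}^{2}(O_{S}^{\text{perf}}/O_{F,S}, Z)$ is represented on some cover $O_{S}^{\text{perf}}/O_{F,S}$ with $S$ enlargeable to satisfy all three conditions with respect to a chosen $E$ splitting $Z$, so it suffices to show each $\Theta_{E,S}$ is surjective onto $\check{H}^{2}(O_{S}^{\text{perf}}/O_{F,S}, Z)$. Here I would run the diagram of Lemma \ref{bigdiag1}: choosing $\bar{Y}$ (equivalently the torus $\bar T$ resolving $Z$) so that $\bar{Y}$ is $\Z[\Gamma_{E/F}]$-free makes $\widehat{H}^{-1}(\Gamma_{E/F}, \bar{Y}[S_{E}]_{0})$ and $\check{H}^{2}(O_{S}^{\text{perf}}/O_{F,S}, \bar T)$-related terms vanish appropriately, so that the column $\widehat{H}^{-1}(\Gamma_{E/F}, A^{\vee}[S_{E}]_{0}) \to \check{H}^{2}(O_{S}^{\text{perf}}/O_{F,S}, Z) \to \check{H}^{2}(O_{S}^{\text{perf}}/O_{F,S}, T)$ is exact with the last term zero (after passing to the limit, or after a further enlargement of $E$), forcing $\Theta_{E,S}$ onto. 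This is precisely the argument of \cite[Prop. 3.2.9]{Tasho2}, and it carries over once the three ``dictionary'' results — Corollary \ref{cechtogalois} (\v{C}ech $=$ Galois over $O_{F,S}$), Corollary \ref{ComparisonIso2} (perfect cover computes fppf cohomology of multiplicative groups), and Lemma \ref{bigdiag1} (the big commuting diagram) — are in place. The main obstacle is the surjectivity step: one must carefully track which finite $E$ and $S$ are needed to kill the obstruction living in $\check{H}^{2}$ of the auxiliary torus $T$, ensuring the relevant classes do die after a controlled enlargement rather than only in the full direct limit, and confirming that the vanishing $H^{2}_{\text{et}}(O_{S}, \mathbb{G}_m) = 0$ of Lemma \ref{vanishing1} together with global Tate duality for tori supplies exactly the input \cite{Tasho2} used from its number-field counterpart.
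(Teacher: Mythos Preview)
Your setup for injectivity and compatibility is essentially the paper's: Corollary \ref{thetainjectivity}, Lemma \ref{injinf}, and \cite[Lem.~3.2.8]{Tasho2} do the work. The issue is in your surjectivity step.

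You reduce to showing that each individual $\Theta_{E,S}$ surjects onto $\check{H}^{2}(O_{S}^{\text{perf}}/O_{F,S}, Z)$ and then appeal to Lemma \ref{bigdiag1} with $\bar Y$ chosen $\Z[\Gamma_{E/F}]$-free. But the finite-level Tate--Nakayama isomorphisms in that diagram sit at the level of $\check{H}^{2}(O_{E,S}/O_{F,S}, T)$ and $\check{H}^{2}(O_{E,S}/O_{F,S}, \bar T)$, not at the level of $O_{S}^{\text{perf}}$. Freeness of $\bar Y$ kills $\widehat{H}^{0}(\Gamma_{E/F}, \bar Y[S_{E}]_{0})$, hence $\check{H}^{2}(O_{E,S}/O_{F,S}, \bar T)$, but it says nothing about $\check{H}^{2}(O_{S}^{\text{perf}}/O_{F,S}, \bar T) = H^{2}_{\text{fppf}}(O_{F,S}, \bar T)$, which has no reason to vanish. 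So from the right-hand column alone you cannot conclude that an arbitrary class in $\check{H}^{2}(O_{S}^{\text{perf}}/O_{F,S}, Z)$ has $T$-image coming from the finite Galois level $O_{E,S}$, which is what you would need to invoke $-\text{TN}$ and lift back to $\widehat H^{-1}$. The surjectivity of $\Theta_{E,S}$ onto all of $\check{H}^{2}(O_{S}^{\text{perf}}/O_{F,S}, Z)$ is therefore unproved (and likely false for fixed $(E,S)$).

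The paper does not attempt this. Instead it starts from a given $h \in \check{H}^{2}(\overline{F}/F, Z)$, finds a \emph{finite} extension $E'/F$ over which $h$ is represented by a cocycle, and --- this is the characteristic-$p$ point you are missing --- observes that $E'$ need not be separable: one has $E' = E \cdot F_{m}$ with $E/F$ the Galois part. After spreading out to $O_{E',S}$ and pushing into $T$, the paper uses Corollary \ref{EtoEprime} (the purely-inseparable comparison $\check{H}^{2}(O_{E,S}/O_{F,S}, T) \xrightarrow{\sim} \check{H}^{2}(O_{E',S}/O_{F,S}, T)$) to descend the $T$-image to the Galois level $O_{E,S}$, where $\bar Y$-freeness and the TN isomorphism now legitimately apply. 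Your proposal mentions Corollary \ref{EtoEprime} only in connection with transition maps, not here; but this is precisely the place where it is needed, and without it the argument does not close.
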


\begin{proof} This proof closely follows the proof of \cite[Corollary 3.2.9]{Tasho2}. It is enough to prove the result with $H^{2}(F, Z)$ replaced by $\check{H}^{2}(\overline{F}/F, Z)$. By Corollary \ref{thetainjectivity}, Lemma \ref{injinf}, and \cite[Lem. 3.2.8]{Tasho2}, we obtain a functorial injective homomorphism $\Theta$ as claimed, which is independent of the choices of (appropriately chosen) $S^{(E)}$, so all that remains to prove is surjectivity. 

For any $h \in \check{H}^{2}(\overline{F}/F, Z)$, we may find $E'/F$ finite such that $h \in \check{H}^{2}(E'/F, Z)$; denote the separable closure of $F$ in $E'$ by $E$, so that $E' = E \cdot F_{m}$ for some unique $m \in \mathbb{N}$. Moreover, since $E'^{\bigotimes_{F} 3} = \varinjlim_{S^{(E)}} O_{E',S^{(E)}}^{\bigotimes_{O_{F,S^{(E)}}} 3}$, where the direct limit is over all finite $S^{(E)} \subset V$ satisfying the required conditions with respect to $E/F$, there is some finite $S^{(E)}$ satisfying the required conditions with respect to $E/F$ such that we can find $h_{E',S^{(E)}} \in Z(O_{E',S^{(E)}}^{\bigotimes_{O_{F,S^{(E)}}} 3})$ with image in $Z(E'^{\bigotimes_{F} 3}) \to \check{H}^{2}(E'/F, Z)$ equal to $h$. We may enlarge $S^{(E)}$ even further to assume that $h_{E',S^{(E)}} \in Z^{2}(O_{E',S^{(E)}}/O_{F,S^{(E)}}, Z)$, since the \v{C}ech differential on  $Z(O_{E',S^{(E)}}^{\bigotimes_{O_{F,S^{(E)}}} 3})$ is the same as that of $Z(E'^{\bigotimes_{F} 3})$, and we may use finitely many elements of $F$ and $E'$ to encode the fact that $dh_{E',S^{(E)}} = 1$ in $Z(E'^{\bigotimes_{F} 4})$. Denote by $\bar{h}_{E',S^{(E)}}$ the image of $h_{E',S^{(E)}}$ in $\check{H}^{2}(O_{E',S^{(E)}}/O_{F,S^{(E)}}, Z)$. 

Once we have such an $\bar{h}_{E',S^{(E)}}$, choose an $O_{F,S}$-torus $Z \hookrightarrow T$ with $\bar{T} :=T/Z$ such that  $\bar{Y} = X_{*}(\bar{T})$ is free over $\Gamma_{E/F}$, and denote the image of $\bar{h}_{E',S^{(E)}}$ in $\check{H}^{2}(O_{E',S^{(E)}}/O_{F,S^{(E)}}, T)$ by $\bar{h}_{E',S^{(E)},T}$. Note that we have a commutative diagram of isomorphisms from Corollary \ref{EtoEprime}:
\[
\begin{tikzcd}
\check{H}^{2}(O_{E,S^{(E)}}/O_{F,S^{(E)}}, T) \arrow{d} \arrow["\sim"]{r} & \check{H}^{2}(O_{E',S^{(E)}}/O_{F,S^{(E)}}, T) \arrow{d} \\
\check{H}^{2}(O_{E,S^{(E)}}/O_{F,S^{(E)}}, \bar{T}) \arrow["\sim"]{r} & \check{H}^{2}(O_{E',S^{(E)}}/O_{F,S^{(E)}}, \bar{T}),
\end{tikzcd}
\]
and so we may pick a (unique) preimage, denoted by $\bar{h}_{E, S^{(E)},T}$, of $\bar{h}_{E',S^{(E)},T}$ in $\check{H}^{2}(O_{E,S}/O_{F,S}, T)$, and by the commutativity of the diagram, the image of $\bar{h}_{E,S^{(E)},T}$ in $\check{H}^{2}(O_{E,S^{(E)}}/O_{F,S^{(E)}}, \bar{T})$ is zero. We may thus lift $\text{-TN}^{-1}(\bar{h}_{E,S^{(E)},T}) \in \widehat{H}^{0}(\Gamma_{E/F}, Y[S^{(E)}_{E}]_{0})$ to some $g \in \widehat{H}^{-1}(\Gamma_{E/F}, A^{\vee}[S_{E}^{(E)}]_{0})$, and then the same argument as in \cite[Corollary 3.2.9]{Tasho2} shows that $\Theta_{E,S^{(E)}}(g) \in \check{H}^{2}(O_{S^{(E)}}^{\text{perf}}/O_{F,S^{(E)}}, Z)$ has image in $\check{H}^{2}(\overline{F}/F, Z)$ equal to $h$, as desired (even though we need to take the image of $\bar{h}_{E',S^{(E)},T}$ in $\check{H}^{2}(O_{E,S}/O_{F,S}, T)$, the argument of \cite{Tasho2} uses that the image of their $\Theta_{E,S^{(E)}}(g)$ in $\check{H}^{2}(\overline{F}/F, T)$ is the same as that of $h$, which is still true for our $g$ obtained via the above adjustment for non-separability).
\end{proof}

\subsection{The groups $P_{E, \dot{S}_{E}, n}$}
Let $E/F$ be a finite Galois extension, $S \subset V$ a finite full set of places, and $\dot{S}_{E} \subseteq S_{E}$ a set of lifts for the places in $S$. When dealing with a multiplicative $O_{F,S}$-group $M$, we will frequently work with $\check{H}^{2}(O_{S}^{\text{perf}}/O_{F,S}, M)$ rather than $H^{2}_{\text{fppf}}(O_{F,S}, Z)$; these two groups are canonically isomorphic by Corollary \ref{ComparisonIso2}. We assume that the pair $(S, \dot{S}_{E})$ satisfies the following:

\begin{cond}\label{placeconditions} \begin{enumerate} \item{$S$ contains all places that ramify in $E$.}
\item{Every ideal class of $E$ contains an ideal with support in $S_{E}$ (i.e., $\text{Cl}(O_{E,S}) = 0$).}
\item{For every $w \in V_{E}$, there exists $w' \in S_{E}$ with $\text{Stab}(w, \Gamma_{E/F}) = \text{Stab}(w', \Gamma_{E/F})$.}
\item{For every $\sigma \in \Gamma_{E/F}$, there exists $\dot{v} \in \dot{S}_{E}$ such that $\sigma \dot{v} = \dot{v}$.}
\end{enumerate}
\end{cond}

Pairs $(S, \dot{S}_{E})$ satisfying these conditions always exist, and if $(S', \dot{S}'_{E})$ contains $(S, \dot{S}_{E})$ (in the obvious sense) and the latter satisfies these conditions, then so does the former. For notational ease, denote the group $\frac{\text{Res}_{E,S}(\mu_{n})}{\mu_{n}}$ introduced in the previous subsection by $\bar{R}_{E,S}[n]$. For a fixed $n \in \mathbb{N}$, we first set $P_{E,S,n}$ to be the multiplicative $O_{F,S}$-group split over $O_{E,S}$ corresponding to the $\Gamma_{E/F}$-module $M_{E,S,n} = \frac{1}{n}\Z/\Z[\Gamma_{E/F} \times S_{E}]_{0,0}$ consisting of elements of $\frac{1}{n}\Z/\Z[\Gamma_{E/F} \times S_{E}]$ killed by both augmentation maps. We define the multiplicative group $P_{E, \dot{S}_{E}, n}$ to correspond to the $\Gamma_{E/F}$-submodule $M_{E, \dot{S}_{E}, n} \subset M_{E,S,n}$ of elements $x$ such that $x[(\sigma, w)] = 0$ if $w \notin \sigma(\dot{S}_{E})$. We have the following linear-algebraic result \cite[Lem. 3.3.2]{Tasho2}:

\begin{lem}\label{threepartlemma} Let $A$ be a $\Z[\Gamma_{E/F}]$-module which is finite as an abelian group.
\begin{enumerate}
\item{If $\text{exp}(A)$ divides $n$, then we may define a functorial (in $A$) isomorphism $$\Psi_{E,S,n} \colon \Hom(A, M_{E,S,n})^{\Gamma} \to \widehat{Z}^{-1}(\Gamma_{E/F}, A^{\vee}[S_{E}]_{0}), H \mapsto h := \sum_{w \in S_{E}}  h_{w} [w],$$ where $h_{w} \colon A \to \mathbb{Q}/\Z, a \mapsto H(a)[(e,w)]$ which restricts to an isomorphism $ \Hom(A, M_{E,\dot{S}_{E}, n})^{\Gamma} \xrightarrow{\sim} A^{\vee}[\dot{S}_{E}]_{0} \cap \widehat{Z}^{-1}(\Gamma_{E/F}, A^{\vee}[S_{E}]_{0}).$}

\item{For $n \mid m$, the isomorphisms $\Psi_{E,S,n}$ and $\Psi_{E,S,m}$ are compatible with the natural inclusion $M_{E,\dot{S}_{E},n} \to M_{E, \dot{S}_{E}, m}$. Setting $M_{E,S} := \varinjlim_{n} M_{E,S,n}$, we thus obtain an isomorphism $$\Psi_{E,S} \colon \Hom(A, M_{E,S})^{\Gamma} \to \widehat{Z}^{-1}(\Gamma_{E/F}, A^{\vee}[S_{E}]_{0}).$$}

\item{We have an induced surjective map $$A^{\vee}[\dot{S}_{E}]_{0} \cap \widehat{Z}^{-1}(\Gamma_{E/F}, A^{\vee}[S_{E}]_{0}) \to \widehat{H}^{-1}(\Gamma_{E/F}, A^{\vee}[S_{E}]_{0}).$$}
\end{enumerate}
\end{lem}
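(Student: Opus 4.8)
The plan is to follow the structure of \cite[Lem. 3.3.2]{Tasho2} while tracking the modifications forced by the purely inseparable part of the cover. For part (1), I would first construct the map $\Psi_{E,S,n}$ explicitly: given a $\Gamma$-equivariant homomorphism $H \colon A \to M_{E,S,n}$, define $h := \sum_{w \in S_{E}} h_{w}[w]$ with $h_{w}(a) = H(a)[(e,w)] \in \frac{1}{n}\Z/\Z$. The two augmentation conditions defining $M_{E,S,n}$ translate, respectively, into the statement that $h \in A^{\vee}[S_{E}]_{0}$ (killed by the $S_{E}$-augmentation) and that $h$ lies in $\widehat{Z}^{-1}(\Gamma_{E/F}, A^{\vee}[S_{E}]_{0})$, i.e. $\sum_{\sigma} \sigma(h) = 0$ (killed by the $\Gamma$-augmentation); this is because for $\Gamma$-equivariant $H$, the coefficient $H(a)[(\sigma,w)]$ equals $H(\sigma^{-1}a)[(e,\sigma^{-1}w)]$, so the datum of $H$ is recovered from $h$ by $\Gamma$-translation. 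I would check injectivity (if all $h_{w}=0$ then $H(a)[(e,w)]=0$ for all $w$, hence $H=0$ by equivariance) and surjectivity (any normalized $(-1)$-cocycle $h$ gives a well-defined $H$ by the same translation formula, equivariance being automatic and the value landing in $M_{E,S,n}$ by the two augmentation conditions). Functoriality in $A$ is immediate from the formulas. The refinement to $M_{E,\dot{S}_{E},n}$ is a direct unwinding of the definitions: the condition $x[(\sigma,w)]=0$ unless $w \in \sigma(\dot{S}_{E})$ corresponds under $\Psi_{E,S,n}$ exactly to $h$ being supported on $\dot{S}_{E}$, i.e. $h \in A^{\vee}[\dot{S}_{E}]_{0} \cap \widehat{Z}^{-1}(\Gamma_{E/F}, A^{\vee}[S_{E}]_{0})$; here one uses Condition \ref{placeconditions}(4) to see that this intersection is nonempty/well-behaved, but the bijection itself is formal.

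For part (2), the compatibility of $\Psi_{E,S,n}$ and $\Psi_{E,S,m}$ with the inclusion $M_{E,\dot{S}_{E},n}\hookrightarrow M_{E,\dot{S}_{E},m}$ (induced by $\Z/n\Z \hookrightarrow \Z/m\Z$, $1 \mapsto m/n$) follows because under the identification $\frac{1}{n}\Z/\Z \hookrightarrow \frac{1}{m}\Z/\Z$ the defining formulas for $h_{w}$ are literally the same; passing to the colimit over $n$ (cofinal in $\mathbb{N}^{\times}$) then yields $\Psi_{E,S}$ on $M_{E,S} = \varinjlim_{n} M_{E,S,n}$, since $\Hom(A,-)^{\Gamma}$ commutes with this filtered colimit as $A$ is finite and each transition map is injective.

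For part (3), the surjection $A^{\vee}[\dot{S}_{E}]_{0} \cap \widehat{Z}^{-1}(\Gamma_{E/F}, A^{\vee}[S_{E}]_{0}) \to \widehat{H}^{-1}(\Gamma_{E/F}, A^{\vee}[S_{E}]_{0})$ is obtained by composing the inclusion of this intersection into $\widehat{Z}^{-1}(\Gamma_{E/F}, A^{\vee}[S_{E}]_{0})$ with the quotient map onto $\widehat{H}^{-1}$; surjectivity amounts to showing that every class in $\widehat{H}^{-1}(\Gamma_{E/F}, A^{\vee}[S_{E}]_{0})$ has a representative supported on the chosen lifts $\dot{S}_{E}$. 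This is where Condition \ref{placeconditions}(3) and (4) do the work: (3) lets one replace an arbitrary place $w$ by one in $S_{E}$ with the same stabilizer, and (4) guarantees enough fixed lifts that a $(-1)$-cocycle can be modified by a coboundary to be supported on $\dot{S}_{E}$ — this is exactly the argument of \cite[Lem. 3.3.2]{Tasho2}, which transfers verbatim since it is purely a statement about the $\Gamma_{E/F}$-module $A^{\vee}[S_{E}]_{0}$ and involves no cohomology of schemes. I expect the main (though still modest) obstacle to be part (3): verifying that the coboundary adjustment needed to push a representative onto $\dot{S}_{E}$ can always be carried out requires carefully invoking Conditions \ref{placeconditions}(3)--(4) together, whereas parts (1) and (2) are essentially bookkeeping with the explicit formulas. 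Since the underlying module-theoretic statement is insensitive to the inseparable issues that complicate the rest of \S3, I would simply cite \cite[Lem. 3.3.2]{Tasho2} for the combinatorial core and record the formula for $\Psi_{E,S,n}$ above for later use.
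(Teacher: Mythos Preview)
Your proposal is correct and matches the paper's approach: the paper does not give a proof but simply records the statement as \cite[Lem.~3.3.2]{Tasho2}, and your plan is precisely to reproduce (and lightly sketch) that argument, correctly observing that the lemma is a purely $\Z[\Gamma_{E/F}]$-module statement unaffected by the inseparable modifications elsewhere in \S3. One small remark: for part~(3) you invoke both Conditions~\ref{placeconditions}(3) and~(4), but since every place appearing in a representative of $\widehat{H}^{-1}(\Gamma_{E/F}, A^{\vee}[S_{E}]_{0})$ already lies in $S_{E}$, it is really Condition~(4) that does the work of producing enough $I_{E/F}$-coboundaries in $A^{\vee}[S_{E}]_{0}$ to push the support onto $\dot{S}_{E}$; Condition~(3) is not needed here.
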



Now for fixed $n \in \mathbb{N}$ and $A$ a $\Z[\Gamma_{E/F}]$-module which is finite as an abelian group with corresponding $O_{F,S}$-group $Z$ such that $\text{exp}(A)$ divides $n$, we obtain a map 
\begin{equation}\label{thetaP}
\Theta^{P}_{E,\dot{S}_{E}, n} \colon \Hom(P_{E, \dot{S}_{E}, n}, Z)^{\Gamma} \xrightarrow{\Theta_{E,S} \circ \Psi_{E,S,n}} \check{H}^{2}(O_{S}^{\text{perf}}/O_{F,S}, Z);
\end{equation}
note that this map is functorial in the group $Z$. For $A = M_{E,\dot{S}_{E},n}$, we have the canonical element $\text{id}$ of the left-hand side of \eqref{thetaP}, and we define $\xi_{E,\dot{S}_{E}, n} \in \check{H}^{2}(\OSp/O_{F,S}, P_{E,\dot{S}_{E},n})$ as its image. 

We will now see how the groups $P_{E,\dot{S}_{E}, n}$ behave for varying $E/F$. For $(S', \dot{S}'_{K})$ satisfying Conditions \ref{placeconditions} with respect to the finite Galois extension $K/F$ and $m \in \mathbb{N}$, we write $(E, \dot{S}_{E}, n) < (K, \dot{S}'_{K}, m) $ when $n \mid m$, $K$ contains $E$, $S \subseteq S'$, and $\dot{S}_{E} \subseteq (\dot{S}'_{K})_{E}$. Given $E$, $(S, \dot{S}_{E})$, and $K$, one can always find $(S', \dot{S}'_{K})$ with $(E, \dot{S}_{E}, n) < (K, \dot{S}'_{K}, m)$. For $(E, \dot{S}_{E}, n) < (K, \dot{S}'_{K}, m)$, we define a map of $\Gamma_{K/F}$-modules from $M_{E, \dot{S}_{E}, n}$ to $M_{K, \dot{S}'_{K}, m}$ (with inflated action on the former) given by $$\sum_{(\sigma, w) \in \Gamma_{E/F} \times S_{E}} a_{\sigma,w} [(\sigma, w)] \mapsto \sum_{(\gamma, u)} a_{\bar{\gamma}, u_{E}} [(\gamma, u)],$$ where the right-hand sum is over all pairs $(\gamma, u)$ in $\Gamma_{K/F} \times S'_{K}$ such that $\gamma^{-1}u \in \dot{S}'_{K} \cap S_{K}$, and $\bar{\gamma}$ denotes the image of $\gamma$ in $\Gamma_{E/F}$. It follows from \cite[Lem. 3.3.4]{Tasho2} that these transition maps are compatible with the homomorphisms $\Theta^{P}$. According to \cite[Lems. 3.3.3, 3.3.5]{Tasho2}, we get the hoped-for coherence between the canonical classes $\xi_{E, \dot{S}_{E}, n}$ discussed above:

\begin{lem}\label{Ptheta2}\label{coherence1} The homomorphism $\check{H}^{2}(O_{S'}^{\text{perf}}/O_{F,S'}, P_{K, \dot{S}'_{K}, m}) \to \check{H}^{2}(O_{S'}^{\text{perf}}/O_{F,S'}, (P_{E, \dot{S}_{E}, n})_{O_{F,S'}})$ maps $\xi_{K, \dot{S}'_{K}, m}$ to the image of $\xi_{E, \dot{S}_{E}, n}$ under the inflation map $$\check{H}^{2}(O_{S}^{\text{perf}}/O_{S}, P_{E, \dot{S}_{E}, n}) \to \check{H}^{2}(O_{S'}^{\text{perf}}/O_{S'}, (P_{E, \dot{S}_{E,n}})_{O_{F,S'}}).$$
\end{lem}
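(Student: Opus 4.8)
The plan is to deduce Lemma~\ref{coherence1} purely formally from two facts already in hand: first, the functoriality of $\Theta^{P}_{K,\dot{S}'_{K},m}$ in its coefficient group (noted right after \eqref{thetaP}), which itself follows from the functoriality of $\Psi_{E,S,n}$ in $X^{*}(Z)$ (Lemma~\ref{threepartlemma}(1)), the functoriality of $\Theta_{E,S}$ (Corollary~\ref{thetainjectivity}), and the comparison isomorphism of Corollary~\ref{ComparisonIso2}; and second, the compatibility of the module transition map $M_{E,\dot{S}_{E},n}\to M_{K,\dot{S}'_{K},m}$ with the isomorphisms $\Theta^{P}$ recorded just before the lemma (the translation of \cite[Lem.~3.3.4]{Tasho2}). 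Write $t\colon P_{K,\dot{S}'_{K},m}\to (P_{E,\dot{S}_{E},n})_{O_{F,S'}}$ for the morphism of multiplicative $O_{F,S'}$-groups Cartier-dual to that module map; the homomorphism appearing in the lemma is precisely the one induced on $\check{H}^{2}(O_{S'}^{\text{perf}}/O_{F,S'},-)$ by $t$. I will show that both $t_{*}(\xi_{K,\dot{S}'_{K},m})$ and the inflation of $\xi_{E,\dot{S}_{E},n}$ equal $\Theta^{P}_{K,\dot{S}'_{K},m}(t)$, where $t$ is now regarded as an element of $\Hom(P_{K,\dot{S}'_{K},m},(P_{E,\dot{S}_{E},n})_{O_{F,S'}})^{\Gamma}$.

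For the first equality: by construction $\xi_{K,\dot{S}'_{K},m}=\Theta^{P}_{K,\dot{S}'_{K},m}(\id)$ with coefficient group $P_{K,\dot{S}'_{K},m}$, so applying functoriality of $\Theta^{P}_{K,\dot{S}'_{K},m}$ in the coefficient group to the morphism $t$ gives $t_{*}(\xi_{K,\dot{S}'_{K},m})=\Theta^{P}_{K,\dot{S}'_{K},m}(t\circ\id)=\Theta^{P}_{K,\dot{S}'_{K},m}(t)$. For the second equality: the transition-map compatibility asserts that for every $O_{F,S}$-group $Z$ with $\mathrm{exp}(Z)\mid n$ and every $f\in\Hom(P_{E,\dot{S}_{E},n},Z)^{\Gamma}$ one has $\Theta^{P}_{K,\dot{S}'_{K},m}(f_{O_{F,S'}}\circ t)=\mathrm{infl}\big(\Theta^{P}_{E,\dot{S}_{E},n}(f)\big)$, where the inflation simultaneously base-changes along $O_{F,S}\to O_{F,S'}$ and replaces $O_{S}^{\text{perf}}$ by $O_{S'}^{\text{perf}}$. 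Specializing to $Z=P_{E,\dot{S}_{E},n}$ (which is killed by $n$) and $f=\id$ gives $\Theta^{P}_{K,\dot{S}'_{K},m}(t)=\mathrm{infl}(\xi_{E,\dot{S}_{E},n})$. Comparing the two computations proves the lemma; this is the diagram chase of \cite[Lems.~3.3.3, 3.3.5]{Tasho2}.

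I do not expect a serious obstacle: the genuine content has already been isolated in the preceding subsections. The assertion that the explicit transition map $\sum a_{\sigma,w}[(\sigma,w)]\mapsto\sum a_{\bar{\gamma},u_{E}}[(\gamma,u)]$ is well defined (respects the double-augmentation and the $\dot{S}_{E}$-support conditions) and is compatible with $\Psi$ is a purely linear-algebraic statement about abstract $\Z[\Gamma_{K/F}]$-modules, so \cite[Lem.~3.3.4]{Tasho2} transfers verbatim; and the cohomological splicing of the $\Theta_{E,S}$, incorporating the non-separability adjustments via the perfect-closure covers $O_{S}^{\text{perf}}$, is exactly what was established in Corollary~\ref{thetainjectivity}, Lemma~\ref{injinf}, and Proposition~\ref{mainresult1}. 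The one point deserving a line of care is that the inflation map named in the statement agrees with the inflation implicit in the $\Theta^{P}$-compatibility, i.e.\ that the comparison isomorphisms $\check{H}^{2}(O_{S}^{\text{perf}}/O_{F,S},-)\xrightarrow{\sim}H^{2}_{\text{fppf}}(O_{F,S},-)$ of Corollary~\ref{ComparisonIso2} are natural under enlarging $S$; this is immediate from the functoriality of the \v{C}ech-to-derived comparison maps (cf.\ \cite[Prop.~E.2.1]{Rosengarten}) already used in the proof of Lemma~\ref{bigdiag1}. Granting this, the argument is formal.
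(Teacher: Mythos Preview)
Your proposal is correct and follows the same approach as the paper, which simply defers to \cite[Lems.~3.3.3--3.3.5]{Tasho2} without spelling out the argument. You have made explicit the formal diagram chase implicit in those citations: functoriality of $\Theta^{P}_{K,\dot{S}'_{K},m}$ in the coefficient group gives $t_{*}(\xi_{K,\dot{S}'_{K},m})=\Theta^{P}_{K,\dot{S}'_{K},m}(t)$, and the transition-map compatibility of \cite[Lem.~3.3.4]{Tasho2} (recorded just before the lemma) gives $\Theta^{P}_{K,\dot{S}'_{K},m}(t)=\mathrm{infl}(\xi_{E,\dot{S}_{E},n})$.
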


Fix a system of quadruples $(E_{i}, S_{i}, \dot{S}_{i},n_{i})_{i \in \mathbb{N}}$ such that $(S_{i}, \dot{S}_{i})$ satisfies Conditions \ref{placeconditions} with respect to the finite Galois extension $E_{i}/F$, the $E_{i}$ form an exhaustive tower of finite Galois extensions of $F$, the $S_{i}$ form an exhaustive tower of finite subsets of $V$, the $n_{i}$ form a cofinal system in $\mathbb{N}^{\times}$, we have the containment $\dot{S}_{i} \subseteq (\dot{S}_{i+1})_{E_{i}}$ for all $i$, and $n_{i} \mid n_{i+1}$ for all $i$. Such a system evidently exists. Note that $\dot{V} := \varprojlim_{i} \dot{S}_{i}$ is a subset of $V_{F^{\text{sep}}}$ of lifts of $V$, and the group $$P_{\dot{V}} := \varprojlim_{i} P_{E_{i}, \dot{S}_{i}, n_{i}}$$ is a pro-algebraic group over $F$. For a finite multiplicative $Z$ over $F$, the maps $\{\Theta^{P}_{E_{i}, \dot{S}_{i}, n_{i}}\}_{i}$ induce $$\Theta^{P}_{\dot{V}} \colon \Hom(P_{\dot{V}}, Z)^{\Gamma} \to \check{H}^{2}(\overline{F}/F, Z) (= H^{2}(F,Z)),$$ which factors through the homomorphisms 
\begin{equation}\label{ThetaPcomp} \Hom(P_{E_{i}, \dot{S}_{i}, n_{i}}, Z)^{\Gamma} \xrightarrow{\Theta^{P}_{E_{i}, \dot{S}_{i}, n_{i}}} \check{H}^{2}(O_{S_{i}}^{\text{perf}}/O_{F,S_{i}}, Z) \to \check{H}^{2}(\overline{F}/F, Z)
\end{equation}
for all sufficiently large $i$, and hence is surjective, since we may choose $i$ with $\text{exp}(Z) \mid n_{i}$ and invoke Lemma \ref{threepartlemma} and Proposition \ref{mainresult1}  to deduce the surjectivity of \eqref{ThetaPcomp} for all  $j\gg i$.

From \cite[Lem. 3.3.6]{Tasho2}, we have the following alternative characterization of $\Hom_{F}(P_{\dot{V}}, Z)$:
\begin{lem}\label{Aveedot} Let $Z$ be a finite multiplicative $F$-group, $A = X^{*}(Z)$, and $A^{\vee}[\dot{V}]_{0}$ the kernel of the augmentation map $A^{\vee}[\dot{V}] \to A^{\vee}$. Then we have a natural isomorphism $\Hom_{F}(P_{\dot{V}}, Z) \xrightarrow{\sim} A^{\vee}[\dot{V}]_{0}$.
\end{lem}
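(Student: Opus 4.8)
The plan is to follow \cite[Lem. 3.3.6]{Tasho2}: since the groups $P_{E_{i},\dot{S}_{i},n_{i}}$ are built purely combinatorially out of the finite quotients $\Gamma_{E_{i}/F}$ and the chosen systems of lifts $\dot{S}_{i}$, essentially none of the function-field-specific apparatus ($\OSp$, purely inseparable extensions) enters, and the argument is the same as in the number field case. First I would use that $Z$ is of finite presentation over $F$ and that $\mathcal{O}(P_{\dot{V}}) = \varinjlim_{i}\mathcal{O}(P_{E_{i},\dot{S}_{i},n_{i}})$ to conclude that every homomorphism $P_{\dot{V}} \to Z$ factors through some finite stage, so that $\Hom_{F}(P_{\dot{V}},Z) = \varinjlim_{i}\Hom_{F}(P_{E_{i},\dot{S}_{i},n_{i}},Z)$, where the colimit may be taken over the cofinal set of $i$ with $\text{exp}(Z)\mid n_{i}$. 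By Cartier duality $\Hom_{F}(P_{E_{i},\dot{S}_{i},n_{i}},Z) = \Hom(A, M_{E_{i},\dot{S}_{i},n_{i}})^{\Gamma}$, and Lemma \ref{threepartlemma}(1) identifies this functorially with $A^{\vee}[\dot{S}_{i}]_{0}\cap \widehat{Z}^{-1}(\Gamma_{E_{i}/F}, A^{\vee}[(S_{i})_{E_{i}}]_{0})$. Unwinding the norm defining $\widehat{Z}^{-1}$ (there is no cancellation between distinct places of $\dot{S}_{i}$, since they lie over distinct places of $F$), an element of this intersection is a sum $\sum_{\dot{v}\in\dot{S}_{i}} h_{\dot{v}}[\dot{v}]$ with $\sum_{\dot{v}} h_{\dot{v}} = 0$ in $A^{\vee}$ and with each $h_{\dot{v}}$ annihilated by the norm for the decomposition group of $\dot{v}$ acting on $A^{\vee}$.

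Next I would identify the transition maps and pass to the colimit. By \cite[Lem. 3.3.4]{Tasho2} the isomorphisms $\Psi$ of Lemma \ref{threepartlemma} are compatible with the module transition maps $M_{E_{i},\dot{S}_{i},n_{i}}\to M_{E_{j},\dot{S}_{j},n_{j}}$ introduced before Lemma \ref{Ptheta2}, and a direct computation with the explicit formulas shows that on the $A^{\vee}$-side the transition from level $i$ to level $j$ carries $\sum_{v} h_{\dot{v}_{i}}[\dot{v}_{i}]$ to $\sum_{v} h_{\dot{v}_{i}}[\dot{v}_{j}]$; that is, it merely relabels the chosen lift $\dot{v}_{i}$ above $v$ by $\dot{v}_{j}$ and kills all coordinates over $S_{j}\setminus S_{i}$. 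Identifying $[\dot{v}_{i}]$ with $v$, the system $\{A^{\vee}[\dot{S}_{i}]\}_{i}$ becomes $\{\bigoplus_{v\in S_{i}} A^{\vee}\}_{i}$ with the obvious inclusions, whose colimit is $\bigoplus_{v\in V} A^{\vee} = A^{\vee}[\dot{V}]$; since filtered colimits are exact the augmentation kernels pass to the limit, so $\varinjlim_{i} A^{\vee}[\dot{S}_{i}]_{0} = A^{\vee}[\dot{V}]_{0}$. It then remains to check that the norm condition disappears in the colimit, i.e.\ that $\varinjlim_{i}\bigl(A^{\vee}[\dot{S}_{i}]_{0}\cap\widehat{Z}^{-1}(\Gamma_{E_{i}/F},A^{\vee}[(S_{i})_{E_{i}}]_{0})\bigr)\to A^{\vee}[\dot{V}]_{0}$ is bijective. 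Injectivity is immediate, and for surjectivity I would take $h\in A^{\vee}[\dot{V}]_{0}$ with finite support $T\subseteq V$ and use that $A^{\vee}$ is $\text{exp}(Z)$-torsion, together with the identity $N_{\Gamma_{(E_{i})_{\dot{v}_{i}}/F_{v}}} = [(E_{i})_{\dot{v}_{i}}:L_{v}]\cdot N_{\Gamma_{L_{v}/F_{v}}}$ on $A^{\vee}$, where $L_{v}/F_{v}$ is the splitting field of $A$: for $i$ large enough that $T\subseteq S_{i}$ and $\text{exp}(Z)\mid [(E_{i})_{\dot{v}_{i}}:F_{v}]$ for all $v\in T$ (which holds eventually, since $\bigcup_{i}(E_{i})_{\dot{v}_{i}}$ is the infinite separable closure of $F_{v}$), the preimage $\sum_{v\in T}h_{v}[\dot{v}_{i}]$ of $h$ at level $i$ automatically satisfies the norm condition.

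The main obstacle is precisely this last point: verifying that the cocycle condition cutting out $\widehat{Z}^{-1}$ becomes vacuous after passing to the colimit. This is the one place where the argument is genuinely arithmetic rather than formal, using both the torsion of $A^{\vee}$ and the unbounded growth of the local degrees $[(E_{i})_{\dot{v}_{i}}:F_{v}]$. A more routine but bookkeeping-heavy step is confirming that the module transition maps of \cite[Lem. 3.3.4]{Tasho2} translate, via $\Psi$, into the ``relabel the chosen lift'' maps used above. Finally, naturality in $Z$ is clear, since each ingredient --- Cartier duality, the isomorphism of Lemma \ref{threepartlemma}(1), the transition maps, and the identification of the colimit with $A^{\vee}[\dot{V}]_{0}$ --- is functorial in $Z$ (equivalently in $A = X^{*}(Z)$), so their composite is the asserted natural isomorphism.
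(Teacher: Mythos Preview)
Your approach is correct and is exactly what the paper does: it simply cites \cite[Lem.~3.3.6]{Tasho2}, and you have spelled out that argument. One small slip: in the surjectivity step you correctly write $N_{\Gamma_{(E_{i})_{\dot{v}_{i}}/F_{v}}} = [(E_{i})_{\dot{v}_{i}}:L_{v}]\cdot N_{\Gamma_{L_{v}/F_{v}}}$, but then impose the condition $\text{exp}(Z)\mid [(E_{i})_{\dot{v}_{i}}:F_{v}]$, which is not enough---you need $\text{exp}(Z)\mid [(E_{i})_{\dot{v}_{i}}:L_{v}]$ (equally achievable for large $i$, so the argument goes through unchanged).
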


We conclude this subsection by discussing some local-global compatibility regarding $P_{\dot{V}}$ and its local analogues $u_{v}$ from \cite[\S 3.1]{Dillery}. For a fixed place $v \in \dot{V}$, recall the multiplicative $F_{v}$-groups $$u_{E_{v}/F_{v}, n} := \frac{\text{Res}_{E_{v}/F_{v}}(\mu_{n})}{\mu_{n}}, u_{v} := \varprojlim_{E_{v}/F_{v}, n} u_{E_{v}/F_{v}, n}.$$ For $Z$ a finite multiplicative $F_{v}$-group with $\text{exp}(Z) \mid n$, there is an isomorphism (defined similarly to the map $\Psi_{E,S,n}$ above, cf. \cite[\S 3.1]{Dillery}) $$\Hom_{F_{v}}(u_{E_{v}/F_{v}, n}, Z) \xrightarrow{\Psi_{E_{v},n}} \widehat{Z}^{-1}(\Gamma_{E_{v}/F_{v}}, A^{\vee}).$$

We now define a localization map $\text{loc}_{v}^{P} \colon u_{v} \to (P_{\dot{V}})_{F_{v}}$ for a fixed $v \in \dot{V}$. Fix $E/F$ a finite Galois extension along with a triple $(S, \dot{S}_{E}, n)$ such that $(S, \dot{S}_{E})$ satisfies Conditions \ref{placeconditions} with respect to $E/F$. 
There is a morphism of $\Gamma_{E_{v}/F_{v}}$-modules  $$\text{loc}_{v}^{M_{E,\dot{S}_{E},n}} \colon M_{E, \dot{S}_{E}, n} \to X^{*}(u_{E_{v}/F_{v}, n}) = \frac{1}{n}\Z/\Z[\Gamma_{E_{v}/F_{v}}]_{0},$$ given by $$H = \sum_{(\sigma, w) \in \Gamma_{E/F} \times S_{E}} c_{\sigma,w}[(\sigma,w)] \mapsto \sum_{(\sigma,v), \sigma \in \Gamma_{E_{v}/F_{v}}} c_{\sigma, v}[\sigma] := H_{v}. $$ 

Denote by $u_{E_{v}/F_{v}, n} \xrightarrow{\text{loc}_{v}^{P_{E, \dot{S}_{E}, n}}} (P_{E, \dot{S}_{E}, n})_{F_{v}}$ the corresponding morphism of group schemes, which glues as we range over all $4$-tuples $(E_{i}, S_{i}, \dot{S}_{i}, n_{i})$, inducing a homomorphism of profinite $F_{v}$-groups $\text{loc}_{v}^{P} \colon u_{v} \to (P_{\dot{V}})_{F_{v}},$ as desired. 

There is a local analogue of the map $\Theta_{\dot{V}}^{P}$ constructed above, which we denote by $\Hom_{F_{v}}(u_{v}, Z) \xrightarrow{\Theta_{v}} \check{H}^{2}(\overline{F_{v}}/F_{v}, Z)$. These local maps agree with the global analogue after localization:

\begin{lem}(\cite[Lem. 3.3.7]{Tasho2} \label{localtoglobal1} For $E/F$ finite Galois splitting $Z$, $(S, \dot{S}_{E})$ satisfying Conditions \ref{placeconditions} with respect to $E$, $n \in \mathbb{N}$ a multiple of $\text{exp}(Z)$, and $\dot{v} \in \dot{V}$ (with $\dot{v}_{F}, \dot{v}_{E} =:v$, by abuse of notation), the following diagram commutes
\[
\begin{tikzcd}
\Hom_{F}(P_{E, \dot{S}_{E},n}, Z) \arrow["\text{loc}_{v}^{P_{E,\dot{S}_{E}, n}}"]{d} \arrow["\Theta^{P}_{E,\dot{S}_{E}, n}"]{r} & \check{H}^{2}(\overline{F}/F, Z) \arrow{d} \\
\Hom_{F_{v}}(u_{E_{v}/F_{v}, n}, Z_{F_{v}}) \arrow["\Theta^{u}_{E_{v},n}"]{r} & \check{H}^{2}(\overline{F_{v}}/F_{v}, Z_{F_{v}}),
\end{tikzcd}
\]
where the right vertical map is induced by the inclusion $\overline{F} \to \overline{F_{v}}$ determined by $\dot{v}$.
\end{lem}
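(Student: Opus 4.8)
The plan is to factor the square of the lemma into two commuting squares, inserting the Tate-cohomology groups $\widehat{H}^{-1}(\Gamma_{E/F}, A^{\vee}[S_{E}]_{0})$ and $\widehat{H}^{-1}(\Gamma_{E_{v}/F_{v}}, A^{\vee})$ (where $A = X^{*}(Z)$) as a middle column. The top horizontal map unwinds as $\Hom_{F}(P_{E,\dot{S}_{E},n}, Z) = \Hom_{\Gamma}(A, M_{E,\dot{S}_{E},n}) \xrightarrow{\Psi_{E,S,n}} \widehat{Z}^{-1}(\Gamma_{E/F}, A^{\vee}[S_{E}]_{0}) \twoheadrightarrow \widehat{H}^{-1}(\Gamma_{E/F}, A^{\vee}[S_{E}]_{0}) \xrightarrow{\Theta_{E,S}} \check{H}^{2}(O_{S}^{\text{perf}}/O_{F,S}, Z) \to \check{H}^{2}(\overline{F}/F, Z)$, and the bottom one unwinds symmetrically as $\Theta_{E_{v},n} \circ \Psi_{E_{v},n}$. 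Thus the lemma is equivalent to the commutativity of (i) the left ``$\Psi$'' square, whose middle vertical arrow is induced by restricting the coefficient module along $\Gamma_{E_{v}/F_{v}} = \text{Stab}(\dot{v}_{E}, \Gamma_{E/F}) \hookrightarrow \Gamma_{E/F}$ and then applying the $\Gamma_{E_{v}/F_{v}}$-equivariant projection $A^{\vee}[S_{E}]_{0} \to A^{\vee}$ onto the $\dot{v}_{E}$-coordinate, together with (ii) the right ``$\Theta$'' square, whose left vertical arrow is that same projection and whose right vertical arrow is the map induced by the embedding $\overline{F} \to \overline{F_{v}}$ (which factors through $\check{H}^{2}(O_{S}^{\text{perf}}/O_{F,S},Z) \to \check{H}^{2}(\overline{F_v}/F_v, Z_{F_v})$ by the compatible algebra maps $(O_{S}^{\text{perf}})^{\bigotimes_{O_{F,S}}3} \to \overline{F}^{\bigotimes_{F}3} \to \overline{F_{v}}^{\bigotimes_{F_{v}}3}$). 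Note that the outer left vertical composite is precisely $\text{loc}_{v}^{P_{E,\dot{S}_{E},n}}$, since by construction the latter is the morphism dual to $\text{loc}_{v}^{M_{E,\dot{S}_{E},n}}$.

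For square (i) I would simply compute with the explicit formulas. Given $H \in \Hom_{\Gamma}(A, M_{E,\dot{S}_{E},n})$, on the local side $\text{loc}_{v}^{M_{E,\dot{S}_{E},n}} \circ H$ sends $a$ to $\sum_{\sigma \in \Gamma_{E_{v}/F_{v}}} H(a)[(\sigma,\dot{v}_{E})]\,[\sigma]$, so $\Psi_{E_{v},n}$ applied to it is the function $a \mapsto H(a)[(e,\dot{v}_{E})]$; on the global side $\Psi_{E,S,n}(H) = \sum_{w \in S_{E}} h_{w}[w]$ with $h_{w}(a) = H(a)[(e,w)]$, and the image of this class under the projection onto the $\dot{v}_{E}$-coordinate is the class of $h_{\dot{v}_{E}}$, i.e. the same function. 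This is the \v{C}ech-cohomological avatar of the linear-algebra compatibility implicit in \cite[Lem.~3.3.4]{Tasho2}, so square (i) commutes.

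For square (ii) I would follow the proof of \cite[Lem.~3.3.7]{Tasho2}. As in the proof of Corollary~\ref{thetainjectivity}, choose an auxiliary $O_{F,S}$-torus $Z \hookrightarrow T$ with $\bar{Y} := X_{*}(T/Z)$ free over $\Z[\Gamma_{E/F}]$; since $\Z[\Gamma_{E/F}]$ is free as a $\Z[\Gamma_{E_{v}/F_{v}}]$-module, $\bar{Y}$ is also free over $\Z[\Gamma_{E_{v}/F_{v}}]$, so the base change $T_{F_{v}}$ serves as an auxiliary torus for the local construction of \cite[\S 4.3]{Dillery} as well. With this choice $\Theta_{E,S}$ and $\Theta_{E_{v},n}$ are the restrictions (to the relevant subgroups) of the global and local ``$-\text{TN}$'' isomorphisms respectively, so the commutativity of square (ii) follows from that of the ``$-\text{TN}$'' square recalled just before Lemma~\ref{injinf} (namely \cite[Lem.~3.2.6]{Tasho2}), whose vertical arrows are exactly the $\dot{v}_{E}$-projection and the map induced by $\overline{F} \to \overline{F_{v}}$. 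Splicing (i) and (ii) gives the lemma.

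I expect the main obstacle to be this last step: one must verify that a single torus $T$, together with its base change $T_{F_{v}}$, simultaneously realizes $\Theta_{E,S}$ and $\Theta_{E_{v},n}$ as restrictions of their respective $-\text{TN}$-isomorphisms in a manner compatible with the localization arrows — which is precisely the bookkeeping of \cite[Lem.~3.3.7]{Tasho2}, here transported to the \v{C}ech setting using the comparison isomorphisms of \S 2 (Corollaries~\ref{ComparisonIso2}, \ref{EtoEprime}) and the big commutative diagram of Lemma~\ref{bigdiag1}, and being careful with the purely inseparable extensions. Everything else is routine.
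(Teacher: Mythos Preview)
The paper does not supply its own proof of this lemma; it simply cites \cite[Lem.~3.3.7]{Tasho2} and moves on. Your proposal correctly reconstructs that argument in the \v{C}ech-cohomological setting of the present paper: the factorization through the middle column $\widehat{H}^{-1}(\Gamma_{E/F},A^{\vee}[S_{E}]_{0}) \to \widehat{H}^{-1}(\Gamma_{E_{v}/F_{v}},A^{\vee})$, the explicit computation for the $\Psi$-square, and the reduction of the $\Theta$-square to \cite[Lem.~3.2.6]{Tasho2} via the auxiliary torus $T$ with $\bar{Y}$ free over $\Z[\Gamma_{E/F}]$ (hence over $\Z[\Gamma_{E_{v}/F_{v}}]$) are exactly the ingredients of Kaletha's proof, and your transport of them via the comparison results of \S 2 and Lemma~\ref{bigdiag1} is appropriate.
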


Recall from Lemma \ref{coherence1} that elements $\xi_{i} := \xi_{E_{i}, \dot{S}_{i}, n_{i}}$ form a coherent system in the projective system of groups $\{\check{H}^{2}(O_{S_{i}}^{\text{perf}}/O_{F,S_{i}}, P_{E_{i}, \dot{S}_{i}, n_{i}})\}_{i}$. We also have (by Lemma \ref{injinf}), for all $i$, injective homomorphisms $\check{H}^{2}(O_{S_{i}}^{\text{perf}}/O_{F,S_{i}}, P_{E_{i}, \dot{S}_{i}, n_{i}}) \to \check{H}^{2}(\overline{F}/F, P_{E_{i}, \dot{S}_{i}, n_{i}})$, and hence the element $(\xi_{i})_{i}$ may be viewed as an element of $\varprojlim_{i} \check{H}^{2}(\overline{F}/F, P_{E_{i}, \dot{S}_{i}, n_{i}})$. Let $\xi_{v} \in \check{H}^{2}(\overline{F_{v}}/F_{v}, u_{v}) \xrightarrow{\sim} \widehat{\Z}$ (\cite[Thm. 3.4]{Dillery}) denote the canonical class obtained by taking the preimage of $-1 \in \widehat{\Z}$. We may now deduce the final result of this subsection:

\begin{cor}\label{tasho3.3.8} For $\dot{v} \in \dot{V}$, consider the maps $$\check{H}^{2}(\overline{F}/F, P_{\dot{V}}) \to \check{H}^{2}(\overline{F_{v}}/F_{v}, (P_{\dot{V}})_{F_{v}}) \leftarrow \check{H}^{2}(\overline{F_{v}}/F_{v}, u_{v}), $$ where the left map is induced by the inclusion $\overline{F} \to \overline{F_{v}}$ determined by $\dot{v}$ and the right map is $\text{loc}_{v}^{P}$.  If $\tilde{\xi} \in H^{2}(\overline{F}/F, P_{\dot{V}})$ is any preimage of $(\xi_{i})$ (which exists by Lemma \ref{basicresult}) then the images of $\tilde{\xi}$ and $\xi_{v}$ in the middle term are equal.
\end{cor}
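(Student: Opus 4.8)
The plan is to reduce the corollary to the local–global compatibility already established in Lemma~\ref{localtoglobal1}, combined with the fact that both $\tilde{\xi}$ and $\xi_{v}$ are characterized (at each finite level) as images of canonical classes under the maps $\Theta^{P}$ and $\Theta^{u}$. First I would record that, by construction, $\xi_{v} \in \check{H}^{2}(\overline{F_{v}}/F_{v}, u_{v})$ is the inverse limit of the classes $\xi_{E_{i,v}, n_{i}} := \Theta^{u}_{E_{i,v}, n_{i}}(\mathrm{id})$ under the transition maps, where $\mathrm{id} \in \Hom_{F_{v}}(u_{E_{i,v}/F_{v}, n_{i}}, u_{E_{i,v}/F_{v}, n_{i}})$; this is the local analogue of the characterization of $\xi_{i} = \Theta^{P}_{E_{i}, \dot{S}_{i}, n_{i}}(\mathrm{id})$ recalled just before Corollary~\ref{tasho3.3.8}. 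Thus it suffices to check equality of the images of $\tilde{\xi}$ and $\xi_{v}$ in $\check{H}^{2}(\overline{F_{v}}/F_{v}, (P_{\dot{V}})_{F_{v}})$ after projecting to each finite level $\check{H}^{2}(\overline{F_{v}}/F_{v}, (P_{E_{i}, \dot{S}_{i}, n_{i}})_{F_{v}})$, since these finite-level groups have injective transition-compatible comparison maps (via Lemma~\ref{injinf}, applied over $F_v$) and $(P_{\dot{V}})_{F_v}$ is their inverse limit.

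Next I would fix $i$ and analyze the two images at level $i$. The image of $\tilde{\xi}$ at level $i$ is $\xi_{i} = \Theta^{P}_{E_{i}, \dot{S}_{i}, n_{i}}(\mathrm{id}_{M_{E_i, \dot{S}_i, n_i}})$, pushed to $\check{H}^{2}(\overline{F_{v}}/F_{v}, (P_{E_i, \dot{S}_i, n_i})_{F_v})$ via the localization map $\overline{F} \to \overline{F_v}$ determined by $\dot{v}$. The image of $\xi_{v}$ at level $i$ is obtained by applying $\text{loc}_{v}^{P_{E_i, \dot{S}_i, n_i}}$ to $\Theta^{u}_{E_{i,v}, n_i}(\mathrm{id}_{u_{E_{i,v}/F_v, n_i}})$. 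By functoriality of $\Theta^{u}$ in the coefficient group (noted just after the definition of $\Theta_{E_v, n_i}$), pushing forward along $\text{loc}_{v}^{P_{E_i, \dot{S}_i, n_i}}$ transforms $\Theta^{u}_{E_{i,v}, n_i}(\mathrm{id})$ into $\Theta^{u}_{E_{i,v}, n_i}$ applied to the image of $\mathrm{id}$ under $(\text{loc}_{v}^{P_{E_i, \dot{S}_i, n_i}})_{*} \colon \Hom(u_{E_{i,v}/F_v, n_i}, u_{E_{i,v}/F_v, n_i}) \to \Hom(u_{E_{i,v}/F_v, n_i}, (P_{E_i, \dot{S}_i, n_i})_{F_v})$, which is precisely $\text{loc}_{v}^{P_{E_i, \dot{S}_i, n_i}}$ viewed as a homomorphism. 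On the other hand, $\text{loc}_{v}^{P_{E_i, \dot{S}_i, n_i}} \in \Hom_{F_v}((P_{E_i, \dot{S}_i, n_i})_{F_v}, \text{—})$... wait: the correct statement is that $\text{loc}_{v}^{P_{E_i, \dot{S}_i, n_i}} \colon u_{E_{i,v}/F_v, n_i} \to (P_{E_i, \dot{S}_i, n_i})_{F_v}$ is also obtained by restricting the base change of $\mathrm{id}_{M_{E_i, \dot{S}_i, n_i}}$ (i.e.\ of the identity homomorphism $P_{E_i, \dot{S}_i, n_i} \to P_{E_i, \dot{S}_i, n_i}$) along the $\Gamma_{E_{i,v}/F_v}$-module map $\text{loc}_{v}^{M_{E_i, \dot{S}_i, n_i}}$. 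So both classes at level $i$ equal $\Theta^{u}_{E_{i,v}, n_i} \bigl( \text{loc}_{v}^{P_{E_i, \dot{S}_i, n_i}} \bigr)$ provided the square in Lemma~\ref{localtoglobal1} (with $Z$ replaced by $(P_{E_i, \dot{S}_i, n_i})_{F_v}$ and the top map fed the element $\text{id}$, whose image under localization along $\Gamma_{E_{i,v}/F_v}$ is $\text{loc}_{v}^{P_{E_i, \dot{S}_i, n_i}}$) commutes. Applying Lemma~\ref{localtoglobal1} with this choice of coefficient group gives exactly the equality of the two level-$i$ images.

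The main obstacle I anticipate is bookkeeping rather than conceptual: one must verify that Lemma~\ref{localtoglobal1} — stated there for a finite multiplicative $Z$ splitting over $E$ with $\exp(Z) \mid n$ — legitimately applies with $Z$ taken to be the (finite multiplicative) group $(P_{E_i, \dot{S}_i, n_i})_{F_v}$, which it does since $P_{E_i, \dot{S}_i, n_i}$ is finite multiplicative, split over $O_{E_i, S_i}$, with exponent dividing $n_i$; and that the element $\mathrm{id}_{M_{E_i, \dot{S}_i, n_i}} \in \Hom_F(P_{E_i, \dot{S}_i, n_i}, P_{E_i, \dot{S}_i, n_i})^{\Gamma}$ maps under $\text{loc}_{v}^{P_{E_i, \dot{S}_i, n_i}}$ (the left vertical arrow of that diagram) to $\text{loc}_{v}^{P_{E_i, \dot{S}_i, n_i}} \in \Hom_{F_v}(u_{E_{i,v}/F_v, n_i}, (P_{E_i, \dot{S}_i, n_i})_{F_v})$, which is immediate from the definition of that left vertical map as precomposition with $\text{loc}_{v}^{M_{E_i, \dot{S}_i, n_i}}$ — equivalently, from the defining property that $\text{loc}_{v}^{P_{E_i, \dot{S}_i, n_i}}$ is induced by $\text{loc}_{v}^{M_{E_i, \dot{S}_i, n_i}}$. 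Once these identifications are in place, passing to the inverse limit over $i$ (using Lemma~\ref{basicresult} to know a preimage $\tilde{\xi}$ exists, and injectivity of the level-wise comparison maps over $F_v$ to conclude that equality at every level forces equality in $\check{H}^{2}(\overline{F_{v}}/F_{v}, (P_{\dot{V}})_{F_v})$) completes the argument. This is precisely the structure of the proof of \cite[Cor.~3.3.8]{Tasho2}, and that argument transfers with the cohomological inputs replaced by the \v{C}ech-theoretic ones developed in \S2–\S3.
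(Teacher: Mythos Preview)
Your overall strategy matches the paper's: reduce to checking equality at each finite level $i$, then invoke Lemma~\ref{localtoglobal1} with $Z = P_{E_i,\dot{S}_i,n_i}$ and the identity homomorphism to conclude. The finite-level argument you sketch is correct and is exactly what the paper does (deferring to \cite[Cor.~3.3.8]{Tasho2}).

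However, there is a genuine gap in your reduction step. You claim that ``equality at every level forces equality in $\check{H}^{2}(\overline{F_{v}}/F_{v}, (P_{\dot{V}})_{F_v})$'' and justify this by citing Lemma~\ref{injinf} ``applied over $F_v$.'' But Lemma~\ref{injinf} asserts injectivity of $\check{H}^{2}(O_{S}^{\text{perf}}/O_{F,S}, Z) \to \check{H}^{2}(\overline{F}/F, Z)$ for a \emph{fixed} finite $Z$; it says nothing about the map
\[
\check{H}^{2}(\overline{F_{v}}/F_{v}, (P_{\dot{V}})_{F_{v}}) \longrightarrow \varprojlim_{i} \check{H}^{2}(\overline{F_{v}}/F_{v}, (P_{E_{i}, \dot{S}_{i}, n_{i}})_{F_{v}}),
\]
whose injectivity is precisely what you need. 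This map is only surjective a priori (Lemma~\ref{basicresult}); injectivity is not automatic and occupies the entire first paragraph of the paper's proof. The paper establishes it via Proposition~\ref{cechidentification}, which requires checking that $\varprojlim_{i}^{(1)} \check{H}^{1}(\overline{F_{v}}/F_{v}, P_{i}) = 0$ and that $\varprojlim_{i}^{(1)}$ of the $0$-coboundary groups vanishes. The second condition is immediate (finite groups), but the first uses that each $H^{1}(F_{v}, P_{i})$ is profinite (via local duality, \cite[Thm.~6.10]{Milne}) together with an explicit computation with the resolution defining $\varprojlim^{(1)}$. You need to supply this argument, or at least cite Proposition~\ref{cechidentification} and verify its hypotheses, in place of the incorrect reference to Lemma~\ref{injinf}.
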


\begin{proof} We claim first that the natural map $\check{H}^{2}(\overline{F_{v}}/F_{v}, (P_{\dot{V}})_{F_{v}}) \to \varprojlim_{i} \check{H}^{2}(\overline{F_{v}}/F_{v}, P_{E_{i}, \dot{S}_{i}, n_{i}})$ is an isomorphism. To simplify notation, set $P_{i} := (P_{E_{i}, \dot{S}_{i}, n_{i}})_{F_{v}}$. By Proposition \ref{cechidentification}, it suffices to show that $\varprojlim_{i}^{(1)} \check{H}^{1}(\overline{F_{v}}/F_{v}, P_{i}) = 0$ and $\varprojlim_{i} d(P_{i}(\overline{F_{v}})) = 0$; the latter trivially follows because it's a system of finite groups. We now explain the vanishing of $\varprojlim_{i}^{(1)} \check{H}^{1}(\overline{F_{v}}/F_{v}, P_{i})$ (all group schemes are of finite type so we replace the notation ``$\check{H}^{1}(\overline{F_{v}}/F_{v}, -)$'' with ``$H^{1}(F_{v}, -)$'' for notational convenience):

As in \cite[\S 3.4]{Tasho2} we may fit any $M_{E, \dot{S}_{E}, n}$ into a sequence (where the middle $0$-subscript denotes the kernel of the $\Gamma_{E/F}$-augmentation map)
\begin{equation}\label{JIMJeq1}
0 \to M_{E, \dot{S}_{E}, n} \to \frac{1}{n}\Z/\Z[\Gamma_{E/F} \times S]_{0} \to \frac{1}{n}\Z/\Z[S_{E}]_{0} \to 0.
\end{equation}
These identifications are compatible with the transition maps, which on the right-most terms are $$\sum_{(\sigma, w) \in \Gamma_{E/F} \times S_{E}} a_{\sigma, w}[(\sigma, w)] \mapsto \sum_{(\gamma, u) \in \Gamma_{K/F} \times S_{K}'} (\# \Gamma_{K/E}^{u})a_{\bar{\gamma}, u_{E}}[(\gamma, u)].$$
We have the exact sequence
 \begin{equation*} H^{1}(F_{v}, A_{i}) \to H^{1}(F_{v}, P_{i}) \to C_{i},
 \end{equation*}
 where $A_{i}$ denotes the multiplicative group scheme Cartier dual to $\frac{1}{n_{i}}\Z/\Z[\Gamma_{E_{i}/F} \times S_{i}]_{0} $ and $C_{i}$ is the image of $H^{1}(F_{v}, P_{i})$ in $H^{2}(F_{v}, B_{i})$, where $B_{i}$ is the kernel of $A_{i} \to P_{i}$. Note that $\varprojlim^{(1)} C_{i} = 0$, since these are all finite groups (because $H^{2}(F_{v}, B_{i})$ is, using local Poitou-Tate duality), so it suffices to prove that $\varprojlim^{(1)} H^{1}(F_{v}, A_{i}) = 0$. 

Denoting by $\tilde{A}_{i}$ the multiplicative group scheme Cartier dual to  $\frac{1}{n_{i}}\Z/\Z[\Gamma_{E_{i}/F} \times S_{i}]$, we have an exact sequence
\begin{equation*}
H^{1}(F_{v}, \tilde{A}_{i}) \to H^{1}(F_{v}, A_{i}) \to \tilde{C}_{i},
\end{equation*}
where $\tilde{C}_{i}$ is the image of $H^{1}(F_{v}, A_{i}) \to H^{2}(F, \tilde{B}_{i})$ and $\tilde{B}_{i}$ is the kernel of $\tilde{A}_{i} \to A_{i}$ (it's Cartier dual to $\frac{1}{n_{i}}\Z/\Z[\Gamma_{E_{i}/F}]$). Since $H^{2}(F_{v}, B_{i})$ is finite, we have $\varprojlim^{(1)} \tilde{C}_{i} =0$, and we thus reduce further to showing that $\varprojlim^{(1)} H^{1}(F_{v}, \tilde{A}_{i}) = 0$.

Note that $\tilde{A}_{i} = \prod_{S_{i}} [\mathrm{Res}_{E_{i}/F}(\mu_{n_{i}})]_{F_{v}}$. We first compute $H^{1}(F_{v}, \mathrm{Res}_{E_{i}/F}(\mu_{n_{i}}))$; the Mackey formula and Shapiro's lemma tell us that 
\begin{equation}\label{JIMJeq2}
H^{1}(\Gamma_{v}, \frac{1}{n_{i}}\Z/\Z[\Gamma_{E_{i}/F}]) = \bigoplus_{w \in V_{E_{i}}, w \mid v} H^{1}(\Gamma_{E_{i,w}}, \frac{1}{n_{i}}\Z/\Z),
\end{equation}
and so by local Poitou-Tate duality we have $H^{1}(F_{v}, \mathrm{Res}_{E_{i}/F}(\mu_{n_{i}})) = \bigoplus_{w \in V_{E_{i}}, w \mid v} H^{1}(E_{i,w}, \mu_{n_{i}})$. It follows that 
\begin{equation*}
H^{1}(F_{v}, \tilde{A}_{i})  = \bigoplus_{S_{i}} \bigoplus_{w \in V_{E_{i}}, w \mid v} H^{1}(E_{i,w}, \mu_{n_{i}}),
\end{equation*}
and we want to show that $\varprojlim^{(1)}$ of this sequence of groups vanishes. From here, one can use the verbatim argument in the proof of \cite[Proposition 3.1]{Dillery} (which uses a computation involving the explicit resolution computing $\varprojlim^{(1)}$ as in \cite[\S 3.5]{Weibel}) to obtain the desired result, finally proving the claimed vanishing.


The isomorphism we just proved implies that the map $\check{H}^{2}(\overline{F}/F, P_{\dot{V}}) \to \check{H}^{2}(\overline{F_{v}}/F_{v}, (P_{\dot{V}})_{F_{v}})$ factors as the composition $$\check{H}^{2}(\overline{F}/F, P_{\dot{V}}) \to \varprojlim_{i} \check{H}^{2}(\overline{F}/F, P_{E_{i}, \dot{S}_{i}, n_{i}}) \to \varprojlim_{i} \check{H}^{2}(\overline{F_{v}}/F_{v}, (P_{E_{i}, \dot{S}_{i}, n_{i}})_{F_{v}}),$$ where the second map is the inverse limit of the localizations for each $i$. It is thus enough to show that, for each $i$, the map $\text{loc}_{v}^{P_{E_{i}, \dot{S}_{i}, n_{i}}}$ sends $\xi_{(E_{i})_{v}, n_{i}} \in \check{H}^{2}(\overline{F_{v}}/F_{v}, u_{(E_{i})_{v}/F_{v}, n_{i}})$ to the image of $\xi_{E_{i}, \dot{S}_{i}, n_{i}}$ under the map $\check{H}^{2}(\overline{F}/F, P_{E_{i}, \dot{S}_{i}, n_{i}}) \to \check{H}^{2}(\overline{F_{v}}/F_{v}, (P_{E_{i}, \dot{S}_{i}, n_{i}})_{F_{v}})$.  Once we have reached this step, we get the result from the proof of \cite[Corollary 3.8]{Tasho2}.
\end{proof}

\subsection{The vanishing of $H^{1}(F, P_{\dot{V}})$ and $H^{1}(F_{v}, (P_{\dot{V}})_{F_{v}})$ }
In the local case, an instrumental property of the groups $u_{v}$ was that $H^{1}(F, u_{v}) = 0$; our goal in this subsection is to prove the analogue for $P_{\dot{V}}$ and its localizations. 

Recall the equation \eqref{JIMJeq1} used in the proof of Corollary \ref{tasho3.3.8} above. Observe (cf. \cite[Lem. 3.4.3]{Tasho2}) that, for fixed $(E, \dot{S}_{E}, n)$ the right-most transition maps in \eqref{JIMJeq1} vanish for all sufficiently large $(K, \dot{S}'_{K}, m) > (E, \dot{S}_{E}, n)$. 
We may now deduce some preliminary cohomological vanishing:

\begin{lem}\label{vanishingcolimit2} The following colimits over $(E, \dot{S}_{E}, n)$ vanish:
\begin{enumerate}
\item{$\varinjlim H^{1}(\Gamma,  \frac{1}{n}\Z/\Z[\Gamma_{E/F} \times S]_{0})$;}
\item{$\varinjlim H^{1}(\Gamma_{v},  \frac{1}{n}\Z/\Z[\Gamma_{E/F} \times S]_{0})$ for any $v \in V$.}
\end{enumerate}
\end{lem}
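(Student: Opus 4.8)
The plan is to derive both vanishings from the short exact sequence of $\Gamma$-modules
\[
0 \to \tfrac{1}{n}\Z/\Z[\Gamma_{E/F}\times S_{E}]_{0} \to \tfrac{1}{n}\Z/\Z[\Gamma_{E/F}\times S_{E}] \xrightarrow{\ \varepsilon\ } \tfrac{1}{n}\Z/\Z[S_{E}] \to 0,
\]
where $\varepsilon$ is the augmentation summing over the $\Gamma_{E/F}$-coordinate (so the kernel is the module denoted $\tfrac{1}{n}\Z/\Z[\Gamma_{E/F}\times S]_{0}$ in the statement), this sequence being compatible with the transition maps of the directed system. The key structural point is that the middle term is induced: the twist $(\sigma,w)\mapsto(\sigma,\sigma^{-1}w)$ identifies $\Gamma_{E/F}\times S_{E}$ with its diagonal $\Gamma_{E/F}$-action with the product of $\Gamma_{E/F}$ under left translation and $S_{E}$ with trivial action, so that as a $\Gamma$-module $\tfrac{1}{n}\Z/\Z[\Gamma_{E/F}\times S_{E}]\cong\bigoplus^{|S_{E}|}\operatorname{Ind}_{\Gamma_{E}}^{\Gamma}\bigl(\tfrac{1}{n}\Z/\Z\bigr)$, whence $H^{i}(\Gamma,\tfrac{1}{n}\Z/\Z[\Gamma_{E/F}\times S_{E}])\cong H^{i}(\Gamma_{E},\tfrac{1}{n}\Z/\Z)^{|S_{E}|}$ by Shapiro's lemma.

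For (1), I would first verify that under this identification the transition map from level $(E,\dot S_{E},n)$ to level $(K,\dot S'_{K},m)$ corresponds, after the harmless reindexing $w\mapsto\{u\in S_{K}:u\mid w\}$ of summands and the inclusion $\tfrac1n\Z/\Z\hookrightarrow\tfrac1m\Z/\Z$, to the restriction map $H^{i}(\Gamma_{E},-)\to H^{i}(\Gamma_{K},-)$; this is the analogue of the computations in \cite[\S 3.4]{Tasho2} and rests on the fact that the inflation map $\tfrac1n\Z/\Z[\Gamma_{E/F}]\to\tfrac1n\Z/\Z[\Gamma_{K/F}]$, $[\bar\sigma]\mapsto\sum_{\bar\gamma\mapsto\bar\sigma}[\bar\gamma]$, is exactly the canonical morphism $\operatorname{Coind}_{\Gamma_{E}}^{\Gamma}\to\operatorname{Coind}_{\Gamma_{K}}^{\Gamma}$. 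Since the $E_{i}$ exhaust $F^{s}$ one has $\bigcap_{i}\Gamma_{E_{i}}=\{1\}$, so any class in $H^{1}(\Gamma_{E},\tfrac1n\Z/\Z)=\Hom_{\mathrm{cont}}(\Gamma_{E},\tfrac1n\Z/\Z)$ restricts to $0$ on $\Gamma_{K}$ once $K$ contains the finite fixed field of its kernel; hence $\varinjlim H^{1}(\Gamma,\tfrac1n\Z/\Z[\Gamma_{E/F}\times S_{E}])=0$. Chasing the same transition maps through $\varepsilon$ shows that the transition map on $\tfrac1n\Z/\Z[S_{E}]$ is $[w]\mapsto[K:E]\cdot\sum_{u\mid w}[u]$ (composed with $\tfrac1n\Z/\Z\hookrightarrow\tfrac1m\Z/\Z$), which is zero as soon as $n\mid[K:E]$; this holds for $K$ far enough along the tower, so $\varinjlim\tfrac1n\Z/\Z[S_{E}]=0$ already at the level of modules, and a fortiori $\varinjlim H^{0}(\Gamma,\tfrac1n\Z/\Z[S_{E}])=0$. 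Feeding these two vanishings into the long exact sequence and using exactness of filtered colimits, the segment
\[
\varinjlim H^{0}(\Gamma,\tfrac1n\Z/\Z[S_{E}])\to\varinjlim H^{1}(\Gamma,\tfrac1n\Z/\Z[\Gamma_{E/F}\times S_{E}]_{0})\to\varinjlim H^{1}(\Gamma,\tfrac1n\Z/\Z[\Gamma_{E/F}\times S_{E}])
\]
becomes $0\to\varinjlim H^{1}(\Gamma,\tfrac1n\Z/\Z[\Gamma_{E/F}\times S_{E}]_{0})\to 0$, giving (1).

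For (2), fix $v\in V$ and a lift $\dot v$, so $\Gamma_{v}$ acts on all these modules through $\Gamma_{v}\twoheadrightarrow\Gamma_{E_{v}/F_{v}}\hookrightarrow\Gamma_{E/F}$. Restricting the above identifications to $\Gamma_{v}$, the middle module becomes a finite sum of modules induced from the decomposition subgroup $\Gamma_{E_{v}}=\operatorname{Gal}(\overline{F_{v}}/E_{v})$, so Shapiro gives $H^{i}(\Gamma_{v},\tfrac1n\Z/\Z[\Gamma_{E/F}\times S_{E}])\cong\bigoplus H^{i}(\Gamma_{E_{v}},\tfrac1n\Z/\Z)$ with transition maps again given by restriction, now along $\Gamma_{K_{v}}\subseteq\Gamma_{E_{v}}$. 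By Krasner's lemma the fields $(E_{i})_{v}$ are cofinal among the finite separable extensions of $F_{v}$ (using that the $E_{i}$ exhaust $F^{s}$), so $\bigcap_{i}\Gamma_{(E_{i})_{v}}=\{1\}$ and the same kernel argument shows $\varinjlim H^{1}(\Gamma_{v},\tfrac1n\Z/\Z[\Gamma_{E/F}\times S_{E}])=0$; the module $\tfrac1n\Z/\Z[S_{E}]$ still dies in the colimit since the scalar $[K:E]$ is insensitive to the change of acting group, so $\varinjlim H^{0}(\Gamma_{v},\tfrac1n\Z/\Z[S_{E}])=0$, and the long exact sequence over $\Gamma_{v}$ yields (2).

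The main obstacle is bookkeeping rather than conceptual: one must check carefully that the noncanonical twist isomorphisms presenting the middle module as a sum of induced modules can be chosen compatibly across the directed system, so that the maps they induce on cohomology are genuinely the restriction maps (and not, say, corestriction maps, which would not annihilate the colimit). This verification is essentially contained in \cite[\S 3.4]{Tasho2}. A subsidiary point, which is immediate, is that the fixed cofinal system $\{(E_{i},S_{i},\dot S_{i},n_{i})\}$ always contains a level $(K,\dot S'_{K},m)$ beyond a given one with $K$ large enough to split any prescribed finite abelian extension and with $n\mid[K:E]$, because the $E_{i}$ exhaust $F^{s}$ and the $n_{i}$ are cofinal in $\mathbb{N}^{\times}$, and only the finitely many places above $S$ are relevant.
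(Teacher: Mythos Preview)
Your proposal is correct and follows essentially the same route as the paper. Both arguments reduce, via Shapiro's lemma (and Mackey's formula in the local case), to showing that the directed system $H^{1}(\Gamma_{E},\tfrac{1}{n}\Z/\Z)$ (resp.\ $H^{1}(\Gamma_{E_{w}},\tfrac{1}{n}\Z/\Z)$) has vanishing colimit because the transition maps are restrictions along the shrinking open subgroups $\Gamma_{K}$ (resp.\ $\Gamma_{K_{u}}$), which eventually lie in the kernel of any given continuous homomorphism to $\tfrac{1}{n}\Z/\Z$.

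The only minor discrepancy is the bookkeeping you flag yourself: the transition map on $\tfrac{1}{n}\Z/\Z[S_{E}]$ is not literally multiplication by $[K:E]$ but rather $[w]\mapsto\sum_{u\mid w}(\#\Gamma_{K/E}^{u})[u]$ (cf.\ the formula just before the lemma in the paper and \cite[Lem.~3.4.3]{Tasho2}); this still vanishes for $K$ far enough along the tower, so your $H^{0}$ argument goes through unchanged. Likewise, your Krasner-lemma justification that $\bigcap_{i}\Gamma_{(E_{i})_{v}}=\{1\}$ is equivalent to (and slightly slicker than) the paper's direct ``open kernel'' argument.
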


\begin{proof} The first vanishing follows from the proof of \cite[Lem. 3.4.4]{Tasho2}. For the second vanishing, we first use the identity \eqref{JIMJeq2}; identifying each $H^{1}(\Gamma_{E_{w}}, \frac{1}{n}\Z/\Z)$ with $\text{Hom}(\Gamma_{E_{w}}, \frac{1}{n}\Z/\Z)$, the transition map $$\bigoplus_{w \in V_{E}, w \mid v} \text{Hom}(\Gamma_{E_{w}}, \frac{1}{n}\Z/\Z) \to \bigoplus_{u \in V_{K}, u \mid v} \text{Hom}(\Gamma_{K_{u}}, \frac{1}{m}\Z/\Z)$$ is given by the maps $\text{Hom}(\Gamma_{E_{w}}, \frac{1}{n}\Z/\Z)  \to \bigoplus_{u \mid w} \Hom(\Gamma_{K_{u}}, \frac{1}{m}\Z/\Z)$ induced by the inclusions $\Gamma_{K_{u}} \hookrightarrow \Gamma_{E_{w}}$. For a fixed $f_{w} \in \text{Hom}(\Gamma_{E_{w}}, \frac{1}{n}\Z/\Z)$, the kernel $H_{f_{w}}$ of $f_{w}$ is an open normal subgroup of $\Gamma_{E_{w}}$, and so if $K/E$ is a large enough finite Galois extension, we have $\Gamma_{K_{u}} \subseteq H_{f_{w}}$ for all $u \mid w$ places of $K$. Note that, given such a $K$, this property also holds for any $K'/K/F$ finite Galois and $\tilde{u} \mid w$ a place of $K'$. Now for any $(f_{w}) \in \bigoplus_{w \mid v} \text{Hom}(\Gamma_{E_{w}}, \frac{1}{n}\Z/\Z)$ we can apply the previous sentence to each $w$ and take a compositum to find a finite Galois $K/F$ such that for any $w$ and $u \in V_{K}$ with $u \mid w$, we have $\Gamma_{K_{u}} \subseteq H_{f_{w}}$. Thus, the image of $(f_{w})_{w}$ in $\bigoplus_{u \mid v} \text{Hom}(\Gamma_{K_{u}}, \frac{1}{n}\Z/\Z)$ is trivial, proving (2).
\end{proof}

\begin{prop}\label{mainvanishing1} For any $v \in \dot{V}$, we have $H^{1}(F_{v}, (P_{\dot{V}})_{F_{v}}) = 0$.
\end{prop}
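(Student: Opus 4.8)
The plan is to reduce the vanishing of $H^{1}(F_{v}, (P_{\dot{V}})_{F_{v}})$ to the finite-level computation, exactly as in the analogous argument for number fields in \cite[\S 3.4]{Tasho2}. Since $(P_{\dot{V}})_{F_{v}} = \varprojlim_{i} (P_{E_{i},\dot{S}_{i},n_{i}})_{F_{v}}$, I would first show that the natural map $H^{1}(F_{v},(P_{\dot{V}})_{F_{v}}) \to \varprojlim_{i} H^{1}(F_{v},(P_{E_{i},\dot{S}_{i},n_{i}})_{F_{v}})$ is an isomorphism (or at least injective), using Proposition \ref{cechidentification}: one needs $\varprojlim_{i}^{(1)} \check{H}^{0}(\overline{F_{v}}/F_{v},P_{i}) = 0$ and $\varprojlim_{i}^{(1)} B^{0}(i) = 0$, both of which hold since these are systems of finite (or trivial) groups — the same bookkeeping already carried out in the proof of Corollary \ref{tasho3.3.8}. (Here I am using the identification $H^{1}(F_{v},-) = \check{H}^{1}(\overline{F_{v}}/F_{v},-)$ for multiplicative groups, which follows from the comparison isomorphisms of \S 2; in fact the $\overline{\A}_{v}$-cover and the Shapiro isomorphism of Corollary \ref{shapiro1} give the same groups.) This reduces the problem to showing $\varprojlim_{i} H^{1}(F_{v},(P_{E_{i},\dot{S}_{i},n_{i}})_{F_{v}}) = 0$, for which it suffices to show that for each fixed $(E,\dot{S}_{E},n)$ the transition map $H^{1}(F_{v},(P_{E,\dot{S}_{E},n})_{F_{v}}) \to H^{1}(F_{v},(P_{K,\dot{S}'_{K},m})_{F_{v}})$ vanishes for all sufficiently large $(K,\dot{S}'_{K},m) > (E,\dot{S}_{E},n)$.

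To see this vanishing, I would use the short exact sequence of $\Gamma_{E/F}$-modules
$$0 \to M_{E,\dot{S}_{E},n} \to \tfrac{1}{n}\Z/\Z[\Gamma_{E/F} \times S]_{0} \to \tfrac{1}{n}\Z/\Z[S_{E}]_{0} \to 0$$
recalled just above the statement. Dualizing to multiplicative groups and taking the long exact sequence in fppf cohomology over $F_{v}$ (equivalently, $\Gamma_{v}$-cohomology of $\overline{F_{v}}$-points), we get that $H^{1}(F_{v},(P_{E,\dot{S}_{E},n})_{F_{v}})$ sits between $H^{0}$ of the quotient term and $H^{1}$ of the middle term $\frac{1}{n}\Z/\Z[\Gamma_{E/F}\times S]_{0}$ (dualized). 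By Lemma \ref{vanishingcolimit2}(2), the colimit over $(E,\dot{S}_{E},n)$ of $H^{1}(\Gamma_{v}, \frac{1}{n}\Z/\Z[\Gamma_{E/F}\times S]_{0})$ vanishes, so the contribution of the middle term dies after passing to a large enough level; and the map out of $H^{0}$ of the quotient term is killed because the right-hand transition maps of the sequences themselves vanish for sufficiently large $(K,\dot{S}'_{K},m)$, as noted in the discussion preceding Lemma \ref{vanishingcolimit2} (following \cite[Lem. 3.4.3]{Tasho}). Chasing the long exact sequences compatibly across the transition maps, any class in $H^{1}(F_{v},(P_{E,\dot{S}_{E},n})_{F_{v}})$ therefore maps to zero at a high enough finite level, and hence the inverse limit vanishes.

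The main obstacle I anticipate is purely bookkeeping rather than conceptual: one must be careful that passing between the pro-algebraic group $(P_{\dot{V}})_{F_{v}}$ and its finite quotients is legitimate at the level of $H^{1}$ — i.e., verifying the hypotheses of Proposition \ref{cechidentification} (the relevant $\varprojlim^{(1)}$ vanishing) and confirming that the finite-level groups $H^{1}(F_{v},(P_{E_{i},\dot{S}_{i},n_{i}})_{F_{v}})$ are finite or at least admit enough finiteness to run the inverse-limit argument (this uses that these are multiplicative group schemes of finite type, so their $H^{1}$ over a local field is finite). Once that plumbing is in place, the cohomological input is entirely supplied by Lemma \ref{vanishingcolimit2}(2) together with the vanishing of the transition maps on the $\frac{1}{n}\Z/\Z[S_{E}]_{0}$-terms, so the argument of \cite[Prop. 3.4.5]{Tasho2} transfers essentially verbatim, modulo replacing Galois cohomology with fppf \v{C}ech cohomology as in the earlier sections.
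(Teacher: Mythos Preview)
Your approach matches the paper's---reduce to $\varprojlim_i H^1(F_v,P_i)$ and kill it via the short exact sequence of character modules using Lemma~\ref{vanishingcolimit2}(2) together with the eventual vanishing of the quotient-term transition maps---but two points need correction.

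First (minor): the transition maps in the inverse system $\{H^1(F_v,P_i)\}$ go from larger index to smaller, induced by the projections $P_{K,\dot{S}'_K,m}\to P_{E,\dot{S}_E,n}$; there is no natural map in the direction you wrote.

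Second (the real gap): your parenthetical ``equivalently, $\Gamma_v$-cohomology of $\overline{F_v}$-points'' is false when $p\mid n$, since $P_{E,\dot{S}_E,n}$ and the Cartier duals of the other terms in your sequence are then not \'etale, so fppf cohomology over $F_v$ genuinely differs from Galois cohomology of $\overline{F_v}$-points. The bridge the paper uses is local Poitou-Tate duality for finite flat group schemes, giving $H^1(F_v,P_i)\cong H^1(\Gamma_v,M_i)^*$; one then works entirely on the (\'etale) character side and shows $\varinjlim_i H^1(\Gamma_v,M_i)=0$ via exactly the sandwich you describe. Without this duality step, Lemma~\ref{vanishingcolimit2}(2)---which concerns Galois cohomology of the character modules, not fppf cohomology of their Cartier duals---does not feed into your long exact sequence. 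Once Poitou-Tate is inserted, your argument and the paper's coincide.
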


\begin{proof} Note that $H^{1}(F_{v}, (P_{\dot{V}})_{F_{v}}) = \varprojlim_{i} H^{1}(F_{v}, (P_{E_{i}, \dot{S}_{i}, n_{i}})_{F_{v}})$, since $\varprojlim_{i}^{(1)} H^{0}(F_{v}, (P_{E_{i}, \dot{S}_{i}, n_{i}})_{F_{v}})$ is trivial (the same argument works for $F$ instead of $F_{v}$). Thus, local Poitou-Tate duality gives $$H^{1}(F_{v}, (P_{\dot{V}})_{F_{v}}) = \varprojlim_{i} (H^{1}(\Gamma_{v}, M_{E_{i}, \dot{S}_{i}, n_{i}})^{*}) =  (\varinjlim_{i} H^{1}(\Gamma_{v}, M_{E_{i}, \dot{S}_{i}, n_{i}}))^{*}.$$ Now we have (using \eqref{JIMJeq1}) the exact sequence $$0 \to C_{i} \to H^{1}(\Gamma_{v}, M_{E_{i}, \dot{S}_{i}, n_{i}}) \to H^{1}(\Gamma_{v}, \frac{1}{n_{i}}\Z/\Z[\Gamma_{E_{i}/F} \times S_{i}]_{0}),$$ where $C_{i}$ is a subquotient of $\frac{1}{n_{i}}\Z/\Z[(S_{i})_{E_{i}}]_{0}$, and the colimits of the outer two terms are zero, by Lemma \ref{vanishingcolimit2} and the remark preceding it, giving the result.
\end{proof}

We need to recall a result from global class field theory. Let $\overline{C} := \varprojlim_{K/F} \overline{C_{K}},$ where $\overline{C_{K}}$ is the profinite completion of the id\'{e}le class group $C_{K}$ of the finite Galois extension $K/F$, and the limit is over all such extensions. For fixed $K/F$ finite Galois and $n \in \mathbb{N}$, note that $\overline{C}[n]^{\Gamma_{K}} = \overline{C_{K}}[n]$. 

\begin{lem}\label{profinite1} The completed universal norm group $\overline{N}:=  \varprojlim_{K/F} N_{K/F}(\overline{C_{K}})$ is trivial.
\end{lem}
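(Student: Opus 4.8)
The plan is to recognize $\overline{N}$ as the intersection, taken inside the profinite completion $\overline{C_F}$, of the closed subgroups $N_{K/F}(\overline{C_K})$, and then to invoke the existence theorem of global class field theory for the function field $F$, which describes the open finite-index subgroups of $C_F$ as exactly the norm groups and hence forces this intersection to be trivial.

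First I would unwind the definition. For each finite $K/F$ the norm map $N_{K/F}\colon C_K\to C_F$ induces a map on profinite completions $N_{K/F}\colon\overline{C_K}\to\overline{C_F}$; since $\overline{C_K}$ is compact and $\overline{C_F}$ is Hausdorff, its image $N_{K/F}(\overline{C_K})$ is a closed subgroup of $\overline{C_F}$, and because $C_K$ is dense in $\overline{C_K}$ this image equals the closure in $\overline{C_F}$ of the ordinary universal norm subgroup $N_{K/F}(C_K)\subseteq C_F$. Thus $\overline{N}=\bigcap_{K/F}N_{K/F}(\overline{C_K})\subseteq\overline{C_F}$. By the norm-limitation theorem $N_{K/F}(C_K)=N_{K'/F}(C_{K'})$, where $K'$ is the maximal abelian subextension of $K/F$, and this identity passes to closures, so it suffices to show $\bigcap_K N_{K/F}(\overline{C_K})=\{1\}$ with $K$ ranging only over finite \emph{abelian} extensions of $F$.

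Next I would invoke the existence theorem of class field theory for $F$: the assignment $K\mapsto N_{K/F}(C_K)$ is a bijection between finite abelian extensions of $F$ and open subgroups of finite index in $C_F$. (Here the finite-index hypothesis is essential, in contrast with the number-field case: $C_F$ is noncompact, the degree map $C_F\twoheadrightarrow\Z$ having open kernel of infinite index.) In particular these norm subgroups are cofinal among all open finite-index subgroups of $C_F$, so the canonical map $\overline{C_F}\to\varprojlim_K C_F/N_{K/F}(C_K)$ is an isomorphism; moreover, for each such $K$, since $C_F/N_{K/F}(C_K)$ is finite, the closure of $N_{K/F}(C_K)$ in $\overline{C_F}$ is precisely the kernel of $\overline{C_F}\twoheadrightarrow C_F/N_{K/F}(C_K)$. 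Intersecting over all finite abelian $K$ then gives
\[
\overline{N}=\bigcap_{K}\ker\!\big(\overline{C_F}\twoheadrightarrow C_F/N_{K/F}(C_K)\big)=\ker\!\big(\overline{C_F}\xrightarrow{\;\sim\;}\varprojlim_{K}C_F/N_{K/F}(C_K)\big)=\{1\}.
\]
Equivalently, via reciprocity $\overline{C_F}\xrightarrow{\;\sim\;}\mathrm{Gal}(F^{\mathrm{ab}}/F)$, one identifies $N_{K/F}(\overline{C_K})$ with the subgroup fixing the maximal abelian subextension of $K$, and the intersection of these over all $K$ fixes $F^{\mathrm{ab}}$, hence is trivial.

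The only genuinely delicate point — and the one I would take most care to state correctly — is the noncompactness of $C_F$ in the function-field setting: the existence theorem must be used in the form ``open subgroups of \emph{finite index}'', and the profinite completion $\overline{C_F}$ must correspondingly be the inverse limit over open finite-index subgroups (equivalently $\mathrm{Gal}(F^{\mathrm{ab}}/F)$). Everything else is routine manipulation of profinite completions of topological groups: density of $C_K$ in $\overline{C_K}$, compactness forcing images to be closed, and the fact that completion sends an open finite-index subgroup to an open subgroup with the same finite quotient.
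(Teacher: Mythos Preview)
Your proof is correct and follows essentially the same route as the paper's. Both arguments identify $\overline{N}$ with the kernel of the completed reciprocity map $\overline{C_F}\to\Gamma^{\mathrm{ab}}$ and then appeal to its injectivity; you derive this injectivity directly from the existence theorem (open finite-index subgroups of $C_F$ are exactly norm groups, so the profinite completion \emph{is} $\varprojlim_K C_F/N_{K/F}(C_K)$), while the paper packages the same fact as the exact sequence $0\to\overline{N}\to\overline{C_F}\to\Gamma^{\mathrm{ab}}$ and cites \cite[Prop.~8.1.26]{NSW} for injectivity. Your explicit reduction to abelian $K$ via norm limitation corresponds to the paper's implicit use of $\Gamma_{K/F}^{\mathrm{ab}}$ rather than $\Gamma_{K/F}$.
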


\begin{proof}  For any finite Galois $K/F$, we have the exact sequence 
$$0 \to N_{K/F}(C_{K}) \to C_{F} \xrightarrow{(-,K/F)} \Gamma_{K/F}^{\text{ab}} \to 0.$$ Since the group $N_{K/F}(C_{K})$ is open of finite index in $C_{F}$, the inverse limit over all open subgroups of $C_{F}$ of finite index may be taken over all open subgroups of finite index which lie in $N_{K/F}(C_{K})$, and for any such subgroup $U$, we get the exact sequence $$0 \to \frac{N_{K/F}(C_{K})}{U} \to \frac{C_{F}}{U} \to \Gamma_{K/F}^{\text{ab}} \to 0,$$ which after applying the (left-exact) functor $\varprojlim(-)$ yields the exact sequence $$0 \to N_{K/F}(C_{K})^{\wedge} \to \overline{C_{F}} \to \Gamma_{K/F}^{\text{ab}} \to 0;$$ note that surjectivity is preserved because the kernels are all finite groups. Now since $C_{K}$ is dense in $\overline{C_{K}}$, we have that $N_{K/F}(\overline{C_{K}}) = N_{K/F}(C_{K})^{\wedge}$ inside $\overline{C_{F}}$, by continuity of the norm map. Applying the inverse limit over all finite Galois $K/F$ then yields the exact sequence $$0 \to \overline{N} \to \overline{C_{F}} \to \Gamma^{\text{ab}}$$ so it's enough to show that the completed universal residue map $\overline{C_{F}} \to \Gamma^{\text{ab}}$ is injective, which is a basic fact of global class field theory (see e.g. \cite[Prop. 8.1.26]{NSW}).
\end{proof}

We move on to a slightly more involved vanishing result:
\begin{lem}\label{vanishingcolimit3} The following colimit over $(E, \dot{S}_{E}, n)$ vanishes:
$$\varinjlim H^{2}(\Gamma,  \frac{1}{n}\Z/\Z[\Gamma_{E/F} \times S]_{0}) = 0.$$
\end{lem}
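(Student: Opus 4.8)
The plan is to adapt \cite[Lems. 3.4.3--3.4.5]{Tasho2}, with one modification forced by positive characteristic. Since $\mathrm{char}(F)=p$, Serre's bound gives $\mathrm{cd}_{p}(\Gamma)\leq 1$, so the $p$-primary part of $H^{2}(\Gamma,-)$ vanishes on every finite $\Gamma$-module; writing $n=n'p^{m}$ with $p\nmid n'$, the permutation action of $\Gamma$ on $\Gamma_{E/F}\times S$ commutes with the splitting $\tfrac{1}{n}\Z/\Z=\tfrac{1}{n'}\Z/\Z\oplus\tfrac{1}{p^{m}}\Z/\Z$ and this is respected by the augmentation, so
\[
H^{2}\big(\Gamma,\tfrac{1}{n}\Z/\Z[\Gamma_{E/F}\times S]_{0}\big)=H^{2}\big(\Gamma,\tfrac{1}{n'}\Z/\Z[\Gamma_{E/F}\times S]_{0}\big).
\]
So I may assume $n$ is prime to $p$, where global and local Tate--Poitou duality and global class field theory apply to $F$ in their usual form.

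Setting $N:=\tfrac{1}{n}\Z/\Z[\Gamma_{E/F}\times S]_{0}$ and using that $\Gamma$ fixes every place of $F$, we have $\tfrac{1}{n}\Z/\Z[\Gamma_{E/F}\times S]=\bigoplus_{v\in S}\mathrm{Ind}_{\Gamma_{E}}^{\Gamma}\tfrac{1}{n}\Z/\Z$, of which $N$ is the kernel of the augmentation onto $\tfrac{1}{n}\Z/\Z$. I would take the long exact cohomology sequence of $0\to N\to\bigoplus_{v}\mathrm{Ind}_{\Gamma_{E}}^{\Gamma}\tfrac{1}{n}\Z/\Z\to\tfrac{1}{n}\Z/\Z\to 0$ and pass to the filtered colimit over $(E,\dot{S}_{E},n)$, which is exact. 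By Shapiro's lemma, $H^{i}$ of the middle term is $\bigoplus_{v}H^{i}(\Gamma_{E},\tfrac{1}{n}\Z/\Z)$, and the transition maps of the system become restriction maps on each summand; since every class in $H^{1}$ and $H^{2}$ over a global function field is annihilated by restriction to a large enough finite subextension (for $H^{2}$: enlarge $E$ to contain $\mu_{n}$, identify $\tfrac{1}{n}\Z/\Z$ with $\mu_{n}$, and use that Brauer classes split over finite extensions), this colimit vanishes in positive degree. Hence the colimit of long exact sequences collapses to an isomorphism $\varinjlim H^{2}(\Gamma,N)\xrightarrow{\sim}\varinjlim H^{1}(\Gamma,\tfrac{1}{n}\Z/\Z)$, the latter colimit taken along the transition maps induced on the augmentation quotient.

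It thus remains to see that $\varinjlim H^{1}(\Gamma,\tfrac{1}{n}\Z/\Z)=0$, and this is where Lemma \ref{profinite1} enters: under reciprocity $H^{1}(\Gamma,\tfrac{1}{n}\Z/\Z)=\mathrm{Hom}(\overline{C_{F}},\tfrac{1}{n}\Z/\Z)$, and tracing the transition maps --- which carry the local-degree factors $\#\Gamma_{K/E}^{u}$ recorded before the statement --- one finds that the colimit is computed by the completed universal norm group of $F$, which is trivial by Lemma \ref{profinite1}. (Alternatively, one can check directly that these transition maps become, for $K$ large, multiplication by integers divisible by $n$.) I expect the main obstacle to be precisely this bookkeeping: verifying that the transition maps of $\{M_{E,\dot{S}_{E},n}\}$ induce restriction maps on the Shapiro summands and the stated multiplication-by-$\#\Gamma_{K/E}^{u}$ maps on the augmentation quotient, so that the vanishings assemble correctly --- this is the content of \cite[Lems. 3.4.3--3.4.5]{Tasho2}, the only genuinely new point being the $\mathrm{cd}_{p}\leq 1$ reduction at the start.
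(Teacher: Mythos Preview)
Your short exact sequence is not quite right. The subscript $0$ in $\tfrac{1}{n}\Z/\Z[\Gamma_{E/F}\times S]_{0}$ denotes the kernel of the $\Gamma_{E/F}$-augmentation (summing over the $\Gamma_{E/F}$-coordinate only), so the quotient is $\tfrac{1}{n}\Z/\Z[S]=\bigoplus_{v\in S}\tfrac{1}{n}\Z/\Z$, not a single copy of $\tfrac{1}{n}\Z/\Z$. With the corrected right-hand term the argument can still be run, but the transition maps you invoke at the end --- the factors $\#\Gamma_{K/E}^{u}$ recorded before the Lemma --- belong to the \emph{other} short exact sequence $0\to M_{E,\dot S_E,n}\to \tfrac{1}{n}\Z/\Z[\Gamma_{E/F}\times S]_0\to \tfrac{1}{n}\Z/\Z[S_E]_0\to 0$, not to yours; the induced transition maps on $\tfrac{1}{n}\Z/\Z[S]$ are just inclusions of summands together with $\tfrac{1}{n}\Z/\Z\hookrightarrow\tfrac{1}{m}\Z/\Z$, and these do not obviously kill classes. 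So your final paragraph, where you identify the remaining colimit with the completed universal norm group, does not follow from the bookkeeping you have set up.

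That said, your $\mathrm{cd}_p(\Gamma)\le 1$ reduction and the subsequent Brauer-splitting argument for $\varinjlim H^2(\Gamma_E,\tfrac{1}{n}\Z/\Z)$ are genuinely different from what the paper does, and cleaner. The paper performs the same reduction (via \cite[Lem.~3.4.4]{Tasho2}) to showing $\varinjlim H^2(\Gamma_E,\tfrac{1}{n}\Z/\Z)=0$ with restriction transition maps, but then attacks this colimit head-on for all $n$: it invokes the Poitou--Tate-type duality \cite[Thm.~8.4.4]{NSW} to identify $H^2(\Gamma_E,\tfrac{1}{n}\Z/\Z)$ with $(\widehat H^{0}(\Gamma_E,\overline C[n]))^{\vee}$, shows this equals $\overline{C_E}[n]^{\vee}$ using Lemma~\ref{profinite1}, and then kills the colimit because the norm groups $N_{K/E}(\overline{C_K})$ shrink to the identity. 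Your route avoids Poitou--Tate entirely at the cost of the extra $p$-primary step; if you want to salvage it, the simplest fix is to use the paper's reduction directly (so that only the vanishing of $\varinjlim H^2(\Gamma_E,\tfrac{1}{n}\Z/\Z)$ is needed) and then apply your $\mathrm{cd}_p$-plus-Brauer argument to that single colimit.
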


\begin{proof}

As in the proof of Lemma \ref{vanishingcolimit2},  it is enough to show that the colimit $\varinjlim H^{2}(\Gamma_{E}, \frac{1}{n}\Z/\Z)$ vanishes, with the transition maps given by the restriction homomorphism. For $(E,n)$ fixed, by \cite[Thm. 8.4.4]{NSW} (with $S=V_{E}$), we have an isomorphism $$H^{2}(\Gamma_{E}, \frac{1}{n}\Z/\Z) \xrightarrow{\sim} (\widehat{H}^{0}(\Gamma_{E}, \overline{C}[n]))^{\vee},$$ where $\widehat{H}^{0}(\Gamma_{E}, \overline{C}[n]) := \varprojlim_{K/E} \widehat{H}^{0} (\Gamma_{K/E}, \overline{C}[n]^{\Gamma_{K}})$, with transition maps given by the projections $$ \frac{\overline{C_{E}}[n]}{N_{K'/E}( \overline{C_{K'}}[n])} \to  \frac{\overline{C_{E}}[n]}{N_{K/E}( \overline{C_{K}}[n])};$$ recall that for $M$ a locally-compact Hausdorff topological group, $M^{\vee}$ denotes $\Hom_{\text{cts}}(M, \mathbb{R}/\Z)$. 

We claim that the natural map $\overline{C_{E}}[n] \to \widehat{H}^{0} (\Gamma_{E}, \overline{C}[n])$ is an isomorphism, which follows from the fact that both $\varprojlim_{K/E} N_{K/E}( \overline{C_{K}}[n])$ and $\varprojlim^{(1)} N_{K/E}( \overline{C_{K}}[n])$ vanish. For the former, there is an inclusion $N_{K/E}( \overline{C_{K}}[n]) \hookrightarrow N_{K/E}(\overline{C_{K}})[n]$, and hence an inclusion $$\varprojlim N_{K/E}( \overline{C_{K}}[n])  \hookrightarrow (\varprojlim N_{K/E}(\overline{C_{K}})))[n] = 0,$$ where the last term equals zero by Lemma \ref{profinite1}. Moreover, one checks using the explicit resolution computing $\varprojlim^{(1)}$ along with Lemma \ref{profinite1} and the compactness of $\overline{C_{E}}$ that $\varprojlim^{(1)} N_{K/E}( \overline{C_{K}}[n])= 0$. In conclusion, we obtain an isomorphism $H^{2}(\Gamma_{E}, \frac{1}{n}\Z/\Z) \xrightarrow{\sim} \overline{C_{E}}[n]^{\vee}$. 

It thus suffices to show that $\varinjlim \overline{C_{E}}[n]^{\vee}$ vanishes, where the transition maps are induced by the maps $\overline{C_{K}}[m] \to \overline{C_{E}}[n]$ given by $N_{K/E}$ composed with the $m/n$-power map. Given $f \in \overline{C_{E}}[n]^{\vee}$, 
since it's continuous and $\frac{1}{n}\Z/\Z$ is finite, $\text{ker}(f)$ is an open subgroup of $\overline{C_{E}}[n]$. The norm groups $N_{K/E}(\overline{C_{K}}) \subseteq \overline{C_{E}}$ shrink to the identity, so there is some $(K,m) > (E, n)$ with $N_{K/E}(\overline{C_{K}}) \subseteq \text{ker}(f)$ (using the finite intersection property) and hence the image of $f$ in $\overline{C_{K}}[m]^{\vee}$ is zero.
\end{proof}

The surjectivity of the maps $P_{i+1} \to P_{i}$ immediately implies (cf. \cite[Corollary 3.3]{Dillery}): 



\begin{lem}\label{H1isom} The spectral sequence comparing \v{C}ech and derived cohomology from \cite[03AV]{Stacksproj}  gives an isomorphism $\check{H}^{p}(\overline{?}/?, P_{\dot{V}}) \to H^{p}(?, P_{\dot{V}})$ for $?=F, F_{v}$ and all $p\geq 0$.
\end{lem}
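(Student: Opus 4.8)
The plan is to derive the statement from Proposition~\ref{ComparisonIso}, applied to the pro-fppf cover $\overline{F}/F$ (respectively $\overline{F_{v}}/F_{v}$) and the pro-algebraic group $P_{\dot{V}}$; as observed in the remark following that proposition, it is legitimate to invoke it for pro-fppf covers, and $\overline{F}=\varinjlim_{E/F}E$ (resp.\ $\overline{F_{v}}=\varinjlim_{E_{v}/F_{v}}E_{v}$) is such a cover. Hence it suffices to verify the vanishing hypothesis: $H^{q}((\overline{F})^{\bigotimes_{F}n},P_{\dot{V}})=0$ for all $n,q\geq 1$, together with its analogue over $F_{v}$. Once this is known, the canonical \v{C}ech-to-derived comparison map of Proposition~\ref{ComparisonIso} is precisely the asserted isomorphism $\check{H}^{p}(\overline{?}/?,P_{\dot{V}})\xrightarrow{\sim}H^{p}(?,P_{\dot{V}})$ for all $p\geq 0$, so no separate compatibility of maps needs to be checked.

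First I would record the vanishing at finite level: for any finite multiplicative $F$-group $M$ and any $n\geq 1$ one has $H^{q}((\overline{F})^{\bigotimes_{F}n},M)=0$ for all $q\geq 1$ --- equivalently, $\check{H}^{p}(\overline{F}/F,M)\xrightarrow{\sim}H^{p}(F,M)$ for all $p$, which is the identification already used in \S3.1 and is established in \cite{Dillery}. The point is that $(\overline{F})^{\bigotimes_{F}n}=\varinjlim_{E/F}E^{\bigotimes_{F}n}$ is a filtered colimit of finite-type $F$-algebras over which fppf cohomology with finitely presented coefficients commutes with the colimit, so a Kummer-sequence d\'{e}vissage reduces the claim to the vanishing, in the colimit over finite $E/F$, of the $\mathbb{G}_{m}$- and $\mu_{m}$-cohomology of $E^{\bigotimes_{F}n}$; this holds because $\overline{F}$ is algebraically closed, by arguments parallel to those of Lemmas~\ref{vanishing1} and~\ref{vadelicvanishing1}. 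The same applies verbatim over $F_{v}$ with $\overline{F_{v}}$ in place of $\overline{F}$. In particular it applies to each $P_{j}:=P_{E_{j},\dot{S}_{j},n_{j}}$ and to the kernels $K_{j}:=\ker(P_{j+1}\to P_{j})$, which are again finite multiplicative.

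Finally I would pass to the limit over $j$. Fix $n\geq 1$ and set $R:=(\overline{F})^{\bigotimes_{F}n}$. Since the transition maps $P_{j+1}\to P_{j}$ are surjective, the system $\{P_{j}\}$ is Mittag--Leffler as a system of fppf sheaves, so $R\varprojlim_{j}P_{j}=P_{\dot{V}}$; as the index set is $\mathbb{N}$ (so that $R\varprojlim$ has cohomological dimension $1$), this yields a Milnor exact sequence
\[
0\to\varprojlim_{j}{}^{(1)}H^{q-1}(R,P_{j})\to H^{q}(R,P_{\dot{V}})\to\varprojlim_{j}H^{q}(R,P_{j})\to 0 .
\]
For $q\geq 1$ the right-hand term vanishes by the finite-level vanishing; for $q\geq 2$ the left-hand term is $\varprojlim^{(1)}$ of the zero system, hence $0$; and for $q=1$ it is $\varprojlim_{j}^{(1)}P_{j}(R)$, which is $0$ because the maps $P_{j+1}(R)\to P_{j}(R)$ are surjective (their cokernels inject into $H^{1}(R,K_{j})=0$), so $\{P_{j}(R)\}$ is Mittag--Leffler. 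Therefore $H^{q}(R,P_{\dot{V}})=0$ for all $n,q\geq 1$, and the same over $F_{v}$, completing the verification. The one genuinely delicate point is this passage to the limit --- that the cohomology of the pro-algebraic group $P_{\dot{V}}$ is the inverse limit of the cohomologies of the $P_{j}$ --- which is exactly where surjectivity of the transition maps is used, and is the content of \cite[Corollary 3.3]{Dillery} together with the discussion preceding Proposition~\ref{invlim1}; a minor subtlety is that for $n\geq 2$ the ring $(\overline{F})^{\bigotimes_{F}n}$ is not a field, so the $\mathbb{G}_{m}$- and $\mu_{m}$-vanishing must be read off from the colimit presentation rather than quoted directly, but this is routine given \S2.
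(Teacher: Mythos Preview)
Your proposal is correct and follows essentially the same approach as the paper, which simply observes that surjectivity of the transition maps $P_{i+1}\to P_{i}$ gives the result via \cite[Corollary 3.3]{Dillery}. You have spelled out in detail what that citation encodes: finite-level vanishing for each $P_{j}$ over $(\overline{F})^{\bigotimes_{F}n}$, followed by a Milnor-type passage to the limit using surjectivity of $P_{j+1}(R)\to P_{j}(R)$ to kill the $\varprojlim^{(1)}$ term, so that Proposition~\ref{ComparisonIso} applies.
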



\begin{prop}\label{vanishingH1} We have $H^{1}(F, P_{\dot{V}}) = \varprojlim_{i} H^{1}(F, P_{E_{i}, \dot{S}_{i}, n_{i}}) = 0$.
\end{prop}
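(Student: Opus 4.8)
The plan is to reduce the statement to the finite-level groups $P_{E_i,\dot{S}_i,n_i}$, strip off the everywhere-locally-trivial classes using Proposition \ref{mainvanishing1}, and then dualize into the colimit-vanishing statements for the finite $\Gamma$-modules $M_{E_i,\dot{S}_i,n_i}$ that have already been established. For the first equality $H^1(F,P_{\dot{V}})=\varprojlim_i H^1(F,P_{E_i,\dot{S}_i,n_i})$ I would repeat the argument used for the local statement at the start of the proof of Proposition \ref{mainvanishing1}: by Lemma \ref{H1isom} one may compute with \v{C}ech cohomology, Lemma \ref{basicresult} gives surjectivity of the comparison map onto $\varprojlim_i \check{H}^1(\overline{F}/F,P_{E_i,\dot{S}_i,n_i})$, and Proposition \ref{cechidentification} gives injectivity, since $\varprojlim_i^{(1)}\check{H}^0(\overline{F}/F,P_{E_i,\dot{S}_i,n_i})=\varprojlim_i^{(1)}P_{E_i,\dot{S}_i,n_i}(F)=0$ (these are finite groups, so Mittag--Leffler applies) and the relevant $(-1)$-coboundary groups are trivial. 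It thus remains to prove $\varprojlim_i H^1(F,P_i)=0$, where I write $P_i:=P_{E_i,\dot{S}_i,n_i}$ and $M_i:=M_{E_i,\dot{S}_i,n_i}=X^*(P_i)$.

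Set $C_i:=\ker\!\big(H^1(F,P_i)\to\prod_{v\in V}H^1(F_v,(P_i)_{F_v})\big)$. Since inverse limits commute with arbitrary products and $\varprojlim_i H^1(F_v,(P_i)_{F_v})=H^1(F_v,(P_{\dot{V}})_{F_v})=0$ by Proposition \ref{mainvanishing1}, we have $\varprojlim_i\prod_v H^1(F_v,(P_i)_{F_v})=0$; by left-exactness of $\varprojlim$ this gives $\varprojlim_i H^1(F,P_i)=\varprojlim_i C_i$, so it is enough to prove $\varprojlim_i C_i=0$. Here I would invoke global Poitou--Tate duality for finite flat commutative group schemes over the global function field $F$, valid without restriction on the order (cf.\ \cite{Cesnavicius}): the Cartier dual of $P_i$ is the finite \'etale $F$-group scheme attached to the $\Gamma$-module $M_i$, so its fppf cohomology is the ordinary Galois cohomology of $M_i$, and Poitou--Tate provides a perfect pairing $C_i\times D_i\to\mathbb{Q}/\Z$ with $D_i:=\ker\!\big(H^2(\Gamma,M_i)\to\prod_v H^2(\Gamma_v,M_i)\big)$, hence a functorial isomorphism $C_i\cong D_i^\vee$. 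Passing to the limit, $\varprojlim_i C_i\cong(\varinjlim_i D_i)^\vee$, and since filtered colimits are exact, $\varinjlim_i D_i$ is a subgroup of $\varinjlim_i H^2(\Gamma,M_i)$; so the whole problem reduces to $\varinjlim_i H^2(\Gamma,M_i)=0$.

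For the latter I would use the short exact sequence $0\to M_i\to\tfrac1{n_i}\Z/\Z[\Gamma_{E_i/F}\times S_i]_0\to\tfrac1{n_i}\Z/\Z[(S_i)_{E_i}]_0\to 0$ recalled just before Lemma \ref{vanishingcolimit2}. The transition maps on its right-hand terms vanish for all sufficiently large indices (noted in the same place), so $\varinjlim_i H^1\big(\Gamma,\tfrac1{n_i}\Z/\Z[(S_i)_{E_i}]_0\big)=H^1(\Gamma,0)=0$, while $\varinjlim_i H^2\big(\Gamma,\tfrac1{n_i}\Z/\Z[\Gamma_{E_i/F}\times S_i]_0\big)=0$ by Lemma \ref{vanishingcolimit3}; the long exact cohomology sequence then pinches $\varinjlim_i H^2(\Gamma,M_i)$ to zero. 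Chaining the three reductions gives $H^1(F,P_{\dot{V}})=\varprojlim_i H^1(F,P_i)=\varprojlim_i C_i=0$.

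The step I expect to be the crux is the appeal to Poitou--Tate duality in the fppf (possibly non-smooth) setting: one must be sure it is applied in the generality that covers the $p$-primary parts of the $P_i$, and that the identification of the Cartier dual of $P_i$ with the \'etale scheme of $M_i$ matches the duality pairing with plain Galois cohomology, which is exactly what makes Lemmas \ref{vanishingcolimit2}--\ref{vanishingcolimit3} applicable. If one wishes to avoid citing Poitou--Tate, an alternative is to Cartier-dualize the displayed sequence to $0\to\mathcal{Q}_i\to\mathcal{N}_i\to P_i\to 0$ with $\mathcal{N}_i$ a finite product of copies of $\mathrm{Res}_{E_i/F}(\mu_{n_i})/\mu_{n_i}$, observe that $\{\mathcal{Q}_i\}$ is pro-zero (the transition maps on the $\tfrac1{n_i}\Z/\Z[(S_i)_{E_i}]_0$ eventually vanish), deduce $\varprojlim_i H^1(F,P_i)\cong\varprojlim_i H^1(F,\mathcal{N}_i)$, and compute the latter by Shapiro's lemma together with the vanishing of the completed universal norm group (Lemma \ref{profinite1}); the duality route is the shorter of the two.
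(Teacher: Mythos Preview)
Your proposal is correct and follows essentially the same route as the paper: reduce to $\varprojlim_i H^1(F,P_i)$, use Proposition \ref{mainvanishing1} to identify this with $\varprojlim_i \text{ker}^1(F,P_i)$, apply Poitou--Tate duality (the paper cites \cite[Lem.~4.4]{Cesnavicius}) to reduce to $\varinjlim_i \text{ker}^2(\Gamma,M_i)=0$, and finish with the short exact sequence and Lemmas \ref{vanishingcolimit2}--\ref{vanishingcolimit3}. The only difference is that you spell out the final step by proving the stronger $\varinjlim_i H^2(\Gamma,M_i)=0$, whereas the paper just points to the analogous argument in \cite[Prop.~3.4.6]{Tasho2}; your version is a clean way to fill in that reference.
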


\begin{proof} The first equality is from the proof of Proposition \ref{mainvanishing1}. Localization $\check{H}^{1}(\overline{F}/F, P_{\dot{V}}) \to \check{H}^{1}(\overline{F_{v}}/F_{v}, P_{\dot{V}})$ gives (via Lemma \ref{H1isom}) a localization map $H^{1}(F, P_{\dot{V}}) \to H^{1}(F_{v}, P_{\dot{V}})$ for all $v \in V$. Setting $\text{ker}^{1}(F, P_{\dot{V}}) := \text{ker}[H^{1}(F, P_{\dot{V}}) \to \prod_{\dot{v} \in \dot{V}} H^{1}(F_{v}, P_{\dot{V}})]$, it suffices by Lemma \ref{vanishingcolimit2} to show $\text{ker}^{1}(F, P_{\dot{V}}) \xrightarrow{\sim} \varprojlim_{i} \text{ker}^{1}(F, P_{E_{i}, \dot{S}_{i}, n_{i}}) = 0$.

For $i$ fixed, \cite[Lem. 4.4]{Cesnavicius} tells us that we have a perfect pairing of finite abelian groups $$\text{ker}^{1}(F, P_{E_{i}, \dot{S}_{i}, n_{i}}) \times \text{ker}^{2}(F, \underline{M_{E_{i}, \dot{S}_{i}, n_{i}}}) \to \mathbb{Q}/\Z,$$ where $\underline{M_{E_{i}, \dot{S}_{i}, n_{i}}}$ is the \'{e}tale $F$-group scheme associated to the $\Gamma$-module $M_{E_{i}, \dot{S}_{i}, n_{i}}$ (and is Cartier dual to the finite flat $F$-group scheme $P_{E_{i}, \dot{S}_{i}, n_{i}}$). Thus, it's enough to show that $$\varprojlim_{i} (\text{ker}^{2}(\Gamma, M_{E_{i}, \dot{S}_{i}, n_{i}}))^{*} =  (\varinjlim_{i} \text{ker}^{2}(\Gamma, M_{E_{i}, \dot{S}_{i}, n_{i}}))^{*} = 0,$$ which we will do by showing that the direct limit $\varinjlim_{i} \text{ker}^{2}(\Gamma, M_{E_{i}, \dot{S}_{i}, n_{i}})$ vanishes, which follows from an easier version of the analogous argument in the proof of \cite[Prop. 3.4.6]{Tasho2} that we leave to the reader (using our Lemmas \ref{vanishingcolimit2}, \ref{vanishingcolimit3} in place of Lemma 3.4.4 loc. cit.).
\end{proof}

\subsection{The canonical class} We now show that there is a canonical element $\xi \in \check{H}^{2}(\overline{F}/F, P_{\dot{V}})$ lifting the element $(\xi_{i}) \in \varprojlim_{i} \check{H}^{2}(\overline{F}/F, P_{E_{i}, \dot{S}_{i}, n_{i}})$ constructed above. Set $P:= P_{\dot{V}}$, $P_{i} := P_{E_{i}, \dot{S}_{i}, n_{i}}$ and $M_{i} := M_{E_{i}, \dot{S}_{i}, n_{i}}$, and denote the projection $P \to P_{i}$ by $p_{i}$. Whenever we work with an embedding $\overline{F} \to \overline{F_{v}}$ for $v \in V$, we assume it is the one induced by $\dot{v} \in \dot{V}$ unless otherwise specified. We begin by proving some results about \v{C}ech cohomology groups associated to $P_{\dot{V}}$.

\begin{lem}\label{inverselimit1} The natural projective system maps $\check{H}^{k}(\overline{\A}_{v}/F_{v}, P) \to \varprojlim_{i} \check{H}^{k}(\overline{\A}_{v}/F_{v}, P_{i})$ are isomorphisms for $k=0,1,2$ (recall that $\overline{\A}_{v} := \overline{F} \otimes_{F} F_{v}$).
\end{lem}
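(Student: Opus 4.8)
The plan is to run the inverse-limit machinery of \S 2.3 for the system $\{P_i\}$ (viewed over $F_v$) and the fpqc cover $\overline{\A}_{v}/F_{v}$, following the proof of Corollary \ref{tasho3.3.8} essentially verbatim except at one point: there one uses that $P_i(\overline{F_v})$ is finite, whereas $P_i(\overline{\A}_{v})$ need not be, so a substitute argument is needed.

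First, the case $k=0$ is automatic: $\overline{\A}_{v}/F_{v}$ is an fpqc cover, so $\check{H}^{0}(\overline{\A}_{v}/F_{v},P)=P(F_v)=(\varprojlim_i P_i)(F_v)=\varprojlim_i P_i(F_v)=\varprojlim_i\check{H}^{0}(\overline{\A}_{v}/F_{v},P_i)$, the third equality because an inverse limit of affine group schemes commutes with $F_v$-points. Next, surjectivity of the natural maps for $k=1,2$ is immediate from Lemma \ref{basicresult}, once Convention \ref{conventions} is verified for $P$ and the cover $\overline{\A}_{v}/F_{v}$: each $(P_i)_{F_v}$ is multiplicative, so $H^{1}_{\text{fppf}}(\overline{\A}_{v}^{\bigotimes_{F_v}n},(P_i)_{F_v})=0$ for all $n\ge 1$ by Lemma \ref{vadelicvanishing1}, hence each $P_i$ satisfies the Convention, and it is inherited by the inverse limit $P$ (as recorded in the discussion before Proposition \ref{invlim1}, following \cite[Lem.~2.63]{Dillery}).

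It remains to prove injectivity for $k=1,2$, for which I would invoke Proposition \ref{cechidentification}. For $k=1$ (the case $i=1$ there) the hypotheses are $\varprojlim^{(1)}_i\check{H}^{0}(\overline{\A}_{v}/F_{v},P_i)=\varprojlim^{(1)}_i P_i(F_v)=0$, which holds since the $P_i(F_v)$ are finite, and $\varprojlim^{(1)}_i B^0(i)=0$, which is automatic as the $(-1)$-coboundary groups vanish. For $k=2$ (the case $i=2$) one needs $\varprojlim^{(1)}_i\check{H}^{1}(\overline{\A}_{v}/F_{v},P_i)=0$ and $\varprojlim^{(1)}_i B^1(i)=0$, where $B^1(i)=d^0(P_i(\overline{\A}_{v}))$ is the subgroup of $0$-coboundaries in $P_i(\overline{\A}_{v}^{\bigotimes_{F_v}2})$. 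For the first, the Shapiro isomorphism of Corollary \ref{shapiro1} (whose proof identifies $\check{H}^{1}(\overline{\A}_{v}/F_{v},P_i)$ with $H^{1}(F_v,(P_i)_{F_v})$) lets me replace $\check{H}^{1}(\overline{\A}_{v}/F_{v},P_i)$ by $H^{1}(F_v,(P_i)_{F_v})$, which is a profinite (hence compact) group --- it is Pontryagin dual to a discrete torsion group by \cite[Thm.~6.10]{Milne}, since $(P_i)_{F_v}$ is finite multiplicative --- and $\varprojlim^{(1)}$ of a tower of compact topological groups with continuous transition maps vanishes, by the same computation used in the proof of Corollary \ref{tasho3.3.8}.

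The main obstacle is the remaining condition $\varprojlim^{(1)}_i B^1(i)=0$: over $\overline{F_v}$ the $B^1(i)$ were finite and this followed trivially, but here they may be infinite. Instead I would show the transition maps $B^1(i+1)\to B^1(i)$ are \emph{surjective}, so the tower is trivially Mittag--Leffler. Indeed, writing $Q_i$ for the scheme-theoretic kernel of the surjection $P_{i+1}\to P_i$ --- which is again a multiplicative group --- Lemma \ref{vadelicvanishing1} gives $H^{1}_{\text{fppf}}(\overline{\A}_{v},Q_i)=0$, so the long exact fppf cohomology sequence over $\overline{\A}_{v}$ forces $P_{i+1}(\overline{\A}_{v})\to P_i(\overline{\A}_{v})$ to be surjective; since the \v{C}ech differential $d^0$ commutes with these transition maps, $B^1(i+1)=d^0(P_{i+1}(\overline{\A}_{v}))$ surjects onto $d^0(P_i(\overline{\A}_{v}))=B^1(i)$. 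Combining this injectivity input with the surjectivity from Lemma \ref{basicresult} yields the asserted isomorphisms for $k=0,1,2$.
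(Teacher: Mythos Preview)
Your proof is correct and follows essentially the same approach as the paper's own argument: both reduce to Proposition \ref{cechidentification}, handle the $\varprojlim^{(1)}$ of $\check{H}^{0}$ and $\check{H}^{1}$ via finiteness of $P_i(F_v)$ and the profinite structure on $H^{1}(F_v,P_i)$ (through the Shapiro isomorphism and the argument of Corollary \ref{tasho3.3.8}), and treat $\varprojlim^{(1)}_i B^1(i)$ by showing the transition maps $P_{i+1}(\overline{\A}_v)\to P_i(\overline{\A}_v)$ are surjective using Lemma \ref{vadelicvanishing1}. Your write-up is slightly more explicit about verifying Convention \ref{conventions} and naming the kernel $Q_i$, but the substance is identical.
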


\begin{proof} The case $k=0$ is trivial, so we only need to focus on $k=1,2$. By Lemma \ref{cechidentification}, it's enough to show that $\varprojlim_{i}^{(1)} \check{H}^{k}(\overline{\A}_{v}/F_{v}, P_{i}) = 0$ for $k=0,1$ and that $\varprojlim_{i}^{(1)} B^{1}(i) = 0$ (notation as in Lemma \ref{cechidentification}). The vanishing of $\varprojlim_{i}^{(1)} \check{H}^{0}(\overline{\A}_{v}/F_{v}, P_{i})$ follows from the fact that $\check{H}^{0}(\overline{\A}_{v}/F_{v}, P_{i}) = P_{i}(F_{v})$, and the system $\{P_{i}(F_{v})\}$ consists of finite groups. The vanishing of $\varprojlim_{i}^{(1)} B^{1}(i)$ comes from the fact that the system $\{ B^{1}(i)\}$ has surjective transition maps: On \v{C}ech 0-cochains the transition maps $P_{i+1}(\overline{\A}_{v}) \to P_{i}(\overline{\A}_{v})$ are all surjective by Lemma \ref{vadelicvanishing1}, and since the \v{C}ech differentials are compatible with $F$-homomorphisms (in our case, the transition maps $P_{i+1} \to P_{i}$), this surjectivity carries over to the group of $1$-coboundaries. 

The proof of Corollary \ref{shapiro1} shows that the groups $\check{H}^{1}(\overline{\A}_{v}/F_{v}, P_{i})$ are (compatibly) isomorphic to $H^{1}(F_{v}, P_{i})$, and, as argued in the proof in Corollary \ref{tasho3.3.8}, $\varprojlim_{i}^{(1)} H^{1}(F_{v}, P_{i}) = 0$.
\end{proof}

Combining Lemma \ref{inverselimit1} with the proof of Corollary \ref{shapiro1} gives an isomorphism $$\check{H}^{1}(\overline{\A}_{v}/F_{v}, P) \xrightarrow{\sim} \varprojlim \check{H}^{1}(\overline{\A}_{v}/F_{v}, P_{i}) \xrightarrow{\sim}  \varprojlim \check{H}^{1}(\overline{F_{v}}/F_{v}, P_{i}) \xrightarrow{\sim} \check{H}^{1}(\overline{F_{v}}/F_{v}, P),$$ and so Lemma \ref{H1isom} lets us identify $\check{H}^{1}(\overline{\A}_{v}/F_{v}, P)$ with $H^{1}(F_{v}, P)$ as well. The local canonical class $\xi_{v} \in \check{H}^{2}(\overline{F_{v}}/F_{v}, u_{v}) = H^{2}(F_{v}, u_{v})$ maps via $S^{2}_{v} \circ \text{loc}_{v}$ to $\check{H}^{2}(\overline{\A}_{v}/F_{v}, P)$ (notation as in \S 2.2).

We now construct a canonical $x \in \check{H}^{2}(\overline{\A}/\A, P)$ such that for each $\dot{v} \in \dot{V}$ , its image in $\check{H}^{2}(\overline{\A}_{v}/F_{v}, P)$ (via the ring homomorphism $\overline{F} \otimes_{F} \A \xrightarrow{\iota \otimes \pi_{v}} \overline{F_{v}} \otimes_{F} F_{v}$ , where $\iota \colon \overline{F} \to \overline{F_{v}}$ is our fixed inclusion, and $\pi_{v}$ is projection onto the $v$th-factor) equals $S_{v}^{2}(\text{loc}_{v}(\xi_{v}))$ via a \v{C}ech-theoretic analogue of the construction in \cite[\S 3.5]{Tasho2}. Fix $\dot{\xi}_{v} \in u_{v}(\overline{F_{v}}^{\bigotimes_{F_{v}} 3})$ a 2-cocycle representing $\xi_{v}$, and let $\Gamma_{\dot{v}} \subseteq \Gamma$ denote the decomposition group of $\dot{v} \in \dot{V}$; choose a section $\Gamma/\Gamma_{\dot{v}} \to \Gamma$---recall from \S 2.2 that this is equivalent to fixing a compatible system of diagonal embeddings $$E \cdot F_{v} \to \prod_{w \in V_{E}, w \mid v} E_{w}$$ ranging over all finite Galois extensions $E/F$ (which are the identity $E \cdot F_{v} \to E_{\dot{v}_{E}}$ on the $\dot{v}_{E}$-factor), and thus (as explained in \S 2.2) an explicit realization of the Shapiro map $h \colon G(\overline{F_{v}}^{\bigotimes_{F_{v}}3})  \to G(\overline{\A_{v}}^{\bigotimes_{F_{v}}3})$ at the level of 2-cochains for any multiplicative $F$-group scheme $G$, which is functorial in $G$ (with respect to $F$-homomorphisms) and compatible with the \v{C}ech differentials on both sides. 

The maps $S_{v,i}^{2} \colon P_{i}(\overline{F_{v}}^{\bigotimes_{F_{v}}3})  \to P_{i}(\overline{\A_{v}}^{\bigotimes_{F_{v}}3})$ splice to give a homomorphism $\dot{S}^{2}_{v} \colon P(\overline{F_{v}}^{\bigotimes_{F_{v}}3}) \to P(\overline{\A_{v}}^{\bigotimes_{F_{v}}3}),$ and we set $\dot{x}_{v} := \dot{S}^{2}_{v}(\text{loc}_{v}(\dot{\xi}_{v})) \in Z^{2}(\overline{\A}_{v}/F_{v}, P)$. Note that for $i$ fixed, $p_{i}(\dot{x}_{v} ) = 1 \in P_{i}(\overline{\A_{v}}^{\bigotimes_{F_{v}}3})$ for all $\dot{v} \in \dot{V}$ with $v \notin S_{i}$. Indeed, the functoriality of the Shapiro maps implies that $p_{i} \circ \dot{S}_{v}^{2} \circ \text{loc}_{v} = S_{v,i}^{2} \circ p_{i} \circ \text{loc}_{v}$ on $P_{i}(\overline{F_{v}}^{\bigotimes_{F_{v}}3})$, and $p_{i} \circ \text{loc}_{v} \colon u_{v} \to P_{F_{v}} \to (P_{i})_{F_{v}}$ is trivial for $v \notin S_{i}$, since it is induced by the direct limit over $j \in \mathbb{N}$ (with $\dot{v}_{E_{j}}\in \dot{S}_{j}$) of $\Gamma_{\dot{v}}$-module homomorphisms $$ \frac{1}{n_{i}}\Z/\Z[\Gamma_{E_{i}/F} \times (S_{i})_{E_{i}}]_{0,0} \to \frac{1}{n_{j}}\Z/\Z[\Gamma_{E_{j}/F} \times (S_{j})_{E_{j}}]_{0,0} \to X^{*}(u_{n_{j},E_{j}/F_{v}}),$$ where the kernel of the second map contains all elements whose $(\sigma, \dot{v}_{E_{j}})$-coefficients $c_{\sigma, \dot{v}_{E_{j}}}$ are zero for all $\sigma \in \Gamma_{E_{j}/F}$, and the image of the first map lands in the subgroup of elements whose coefficients $c_{\sigma, w}$ are zero for all $w \in (S_{j})_{E_{j}}$ not lying above an element of $(S_{i})_{E_{i}}$, which is the case for $\dot{v}_{E_{j}}$, since $\dot{v}_{E_{i}} \in (S_{i})_{E_{i}}$ would mean that $v \in S_{i}$; this gives our desired triviality.

The above paragraph implies that the element $(p_{i}(\dot{x}_{v}))_{v \in V} \in \prod_{v \in V} P_{i}(\overline{\A}_{v}^{\bigotimes_{F_{v}}3})$ is trivial in all but finitely-many $v$-coordinates, so we may view it as an element of $\bigoplus_{v \in S_{i}} Z^{2}(\overline{\A}_{v}/F_{v}, P_{i}).$ We may further view it as an element of the restricted product $\prod_{v}' P_{i}(\A_{L,v}^{\bigotimes_{F_{v}}3})$ with respect to the subgroups $P_{i}(O_{L,v}^{\bigotimes_{O_{F_{v}}}3})$, (cf. \S 2.2) for some finite extension $L/F$ (because $\overline{\A}_{v} = \varinjlim \A_{K,v}$ over all finite extensions $K/F$ and each $P_{i}/F$ is of finite type). We thus obtain by Proposition \ref{cechrestrictedproduct} an element of $Z^{2}(\overline{\A}/\A, P_{i})$, and as we vary $i$ they splice to give $\dot{x} \in \varprojlim_{i} Z^{2}(\overline{\A}/\A, P_{i}) = Z^{2}(\overline{\A}/\A, P)$.

The argument in \cite[\S 3.5]{Tasho2} (using \v{C}ech differentials) shows that $[\dot{x}] \in \check{H}^{2}(\overline{\A}/\A, P)$ is independent of the choice of local representatives $\dot{\xi}_{v}$. We now explain why it is also independent of the choice of section $\Gamma/\Gamma_{\dot{v}} \to\Gamma$. Denote by $s, s'$ two sections with Shapiro maps $\dot{S}_{v}$, $\dot{S}'_{v}$ (cf. \S 2.2).

Lemma \ref{differentsections} implies that for any $v \in V$ we may find $c_{v} \in P(\overline{\A}_{v} \otimes_{F_{v}} \overline{\A}_{v})$ such that $dc_{v} = \dot{S}_{v}(\text{loc}_{v}(\dot{\xi}_{v})) \cdot \dot{S}'_{v}(\text{loc}_{v}(\dot{\xi}_{v}))^{-1}$; we claim that we may choose $c_{v}$ such that for any $i$ with $v \notin S_{i}$ we have $p_{i}(c_{v}) = 1$ in $P_{i}(\overline{\A}_{v} \otimes_{F_{v}} \overline{\A}_{v})$. Indeed, by the proof of Lemma \ref{differentsections} we can set $$c_{v} = (r_{1} \cdot \bar{r}_{3} \otimes r_{2})(\text{loc}_{v}(\dot{\xi}_{v})) \cdot (\bar{r}_{2} \otimes r_{1} \cdot \bar{r}_{3})(\text{loc}_{v}(\dot{\xi}_{v}))^{-1},$$ where $r \colon \overline{F_{v}} \to \overline{\A}_{v}$ is the direct limit of the maps $E'_{(\dot{v}_{E})'} \to \prod_{w \mid v} E'_{w'}$ defined on the $w$-coordinate by the isomorphism $r_{w} \colon E'_{(\dot{v}_{E})'} \xrightarrow{\sim} E'_{w'}$ determined by the section $s$, similarly with $\bar{r}$, where as in the proof of Lemma \ref{differentsections}, the subscript $i$ in $r_{i}$ denotes that its source is the $i$th tensor factor of $(E'_{v'})^{\bigotimes_{F_{v}}3}$. Since the maps $r, \bar{r}$ are $F_{v}$-homomorphisms they commute with $p_{i}$ and hence $$p_{i}(c_{v}) = (r_{1} \cdot \bar{r}_{3} \otimes r_{2})(p_{i}\text{loc}_{v}(\dot{\xi}_{v})) \cdot (\bar{r}_{2} \otimes r_{1} \cdot \bar{r}_{3})(p_{i}\text{loc}_{v}(\dot{\xi}_{v}))^{-1} = 1.$$

As a result, the element $\tilde{c}:= (c_{v} )_{v} \in \prod_{v \in V} P(\overline{\A}_{v} \otimes_{F_{v}} \overline{\A}_{v})$ has projection $p_{i}(\tilde{c})$ with all but finitely-many trivial coordinates, and hence has well-defined image in $P_{i}(\overline{\A} \otimes_{\A} \overline{\A})$ (using Corollary \ref{mainappendixBcor}). Setting $c:= \varprojlim_{i} p_{i}(\tilde{c})$ gives an element of $P(\overline{\A} \otimes_{\A} \overline{\A}) $ which satisfies $\dot{S}_{v}^{2}(\text{loc}_{v}(\dot{\xi}_{v})) \cdot \dot{S}_{v}^{' 2}(\text{loc}_{v}(\dot{\xi}_{v}))^{-1} = dc$, concluding the argument for why the class $[\dot{x}] \in \check{H}^{2}(\overline{\A}/\A, P)$ is canonical. 

The final key step in constructing a canonical class in $\check{H}^{2}(\overline{F}/F, P)$ is showing that there is a unique element of $\check{H}^{2}(\overline{F}/F, P)$ whose image in $\check{H}^{2}(\overline{\A}/\A, P)$ is the class $x:= [\dot{x}]$, and whose image in $\varprojlim_{i} \check{H}^{2}(\overline{F}/F, P_{i})$ is $(\xi_{i})$. The argument will use the following projective system of complexes of tori, following \cite[\S 3.5]{Tasho2}:

\begin{lem}\label{toritower}(\cite[Lem. 3.5.1]{Tasho2} The projective system $\{P_{i}\}$ fits into a short exact sequence of projective systems of tori $\{T_{i}\}$, $\{U_{i}\}$
$$1 \to \{P_{i}\} \to \{T_{i}\} \to \{U_{i}\} \to 1$$
such that each $T_{i+1} \to T_{i}$ is surjective with kernel a torus (this also holds for each $U_{i+1} \to U_{i}$).
\end{lem}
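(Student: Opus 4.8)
The plan is to realize both systems $\{T_{i}\}$ and $\{U_{i}\}$ by dualizing an explicitly built inductive system of $\Gamma$-lattices, using the exact anti-equivalence between diagonalizable $F$-group schemes of finite type and finitely generated $\mathbb{Z}[\Gamma]$-modules: $\mathbb{Z}$-free modules correspond to tori, a short exact sequence of groups to one of character modules, a faithfully flat surjection $B\twoheadrightarrow C$ to an injection $X^{*}(C)\hookrightarrow X^{*}(B)$, and $\ker(B\to C)$ is a torus exactly when $X^{*}(B)/X^{*}(C)$ is $\mathbb{Z}$-free. Writing $M_{i}:=M_{E_{i},\dot{S}_{i},n_{i}}=X^{*}(P_{i})$ (a finite $\Gamma$-module inflated from $\Gamma_{E_{i}/F}$), the transition maps $\iota_{i,i+1}\colon M_{i}\to M_{i+1}$ dual to $p_{i+1,i}\colon P_{i+1}\to P_{i}$ are injective — equivalently, the $p_{i+1,i}$ are faithfully flat, as recalled just before Lemma \ref{H1isom} (injectivity can also be read off the explicit formula for the transition maps). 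So the task reduces to producing an inductive system of $\Gamma$-lattices $\{Y_{i}\}$ with compatible surjections $Y_{i}\twoheadrightarrow M_{i}$, injective transition maps $Y_{i}\hookrightarrow Y_{i+1}$ with $\mathbb{Z}$-free cokernel, and such that the induced system of kernels $\{Y_{i}':=\ker(Y_{i}\to M_{i})\}$ likewise has injective transition maps with $\mathbb{Z}$-free cokernel.

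The key device is a telescoping resolution. For each $j$ I would fix a finite free $\mathbb{Z}[\Gamma_{E_{j}/F}]$-module $L_{j}$, inflated to a $\Gamma$-lattice (its dual $F$-torus being a product of copies of $\mathrm{Res}_{E_{j}/F}\mathbb{G}_{m}$), together with a $\Gamma$-equivariant surjection $\psi_{j}\colon L_{j}\twoheadrightarrow M_{j}$, which exists since $M_{j}$ is finite, hence finitely generated over the Noetherian ring $\mathbb{Z}[\Gamma_{E_{j}/F}]$. Set $X^{*}(T_{i}):=\bigoplus_{j=1}^{i}L_{j}$, with $\Phi_{i}\colon X^{*}(T_{i})\to M_{i}$, $(l_{1},\dots,l_{i})\mapsto\sum_{j=1}^{i}\iota_{j,i}(\psi_{j}(l_{j}))$ (where $\iota_{j,i}\colon M_{j}\hookrightarrow M_{i}$ is the composed transition map); this is surjective via its last summand. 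Put $X^{*}(U_{i}):=\ker\Phi_{i}$, which is $\mathbb{Z}$-free as a submodule of a lattice, and let $T_{i},U_{i}$ be the $F$-tori with these character modules. Cartier duality produces the short exact sequence $1\to P_{i}\to T_{i}\to U_{i}\to 1$ with $P_{i}\hookrightarrow T_{i}$ a closed immersion. The first-$i$-summand inclusion $X^{*}(T_{i})\hookrightarrow X^{*}(T_{i+1})$ and its restriction $X^{*}(U_{i})\hookrightarrow X^{*}(U_{i+1})$ are visibly compatible with $\Phi_{i},\Phi_{i+1},\iota_{i,i+1}$; dualizing gives surjections $T_{i+1}\to T_{i}$ and $U_{i+1}\to U_{i}$ assembling into a morphism of short exact sequences which on the $P$-terms is $p_{i+1,i}$. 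This exhibits $\{P_{i}\}\to\{T_{i}\}\to\{U_{i}\}$ as a short exact sequence of projective systems.

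For the kernel conditions: on the $T$-side $X^{*}(T_{i+1})/X^{*}(T_{i})=L_{i+1}$ is free, so $\ker(T_{i+1}\to T_{i})$ is the torus dual to $L_{i+1}$. On the $U$-side, applying the snake lemma to
\[
\begin{tikzcd}
0 \arrow{r} & X^{*}(U_{i}) \arrow{r} \arrow{d} & X^{*}(T_{i}) \arrow["\Phi_{i}"]{r} \arrow{d} & M_{i} \arrow{r} \arrow["\iota_{i,i+1}"]{d} & 0 \\
0 \arrow{r} & X^{*}(U_{i+1}) \arrow{r} & X^{*}(T_{i+1}) \arrow["\Phi_{i+1}"]{r} & M_{i+1} \arrow{r} & 0
\end{tikzcd}
\]
— all three vertical maps injective, the middle one split — yields $0\to X^{*}(U_{i+1})/X^{*}(U_{i})\to L_{i+1}\to M_{i+1}/\iota_{i,i+1}(M_{i})\to 0$, so $X^{*}(U_{i+1})/X^{*}(U_{i})$ embeds into the free module $L_{i+1}$ and is therefore $\mathbb{Z}$-free; hence $\ker(U_{i+1}\to U_{i})$ is a torus. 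Each $T_{i}$ is moreover a quasi-trivial (induced) torus, which will be convenient for the cohomological arguments that follow.

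The only genuinely nontrivial point is reconciling a level-by-level choice of lattice resolutions with the transition maps while preserving the ``kernel is a torus'' property at each step; the telescoping direct sum makes the compatibility automatic, and the torsion-freeness on the $U$-side then drops out formally from the injectivity of $M_{i}\to M_{i+1}$ via the snake lemma. Everything else — the anti-equivalence, Cartier duality, exactness of $X^{*}$ — is standard.
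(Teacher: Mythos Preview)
Your proposal is correct and follows essentially the same telescoping construction as in \cite[Lem.~3.5.1]{Tasho2}, which the paper cites without reproducing: one resolves each $M_{j}$ by a finite free $\mathbb{Z}[\Gamma_{E_{j}/F}]$-module $L_{j}$, takes $X^{*}(T_{i})=\bigoplus_{j\le i}L_{j}$ with the obvious inclusions as transition maps, and reads off the torus-kernel conditions from the snake lemma exactly as you do. The observation that each $T_{i}$ is quasi-trivial is a nice bonus (also present in the original) and is used implicitly in the subsequent $\varprojlim^{(1)}$ arguments.
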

For any $i$, consider the double complex of abelian groups $K^{p,q} = $
\[ 
\begin{tikzcd}
T_{i}(\overline{\A}) \arrow{r} \arrow{d} & T_{i}(\overline{\A} \otimes_{\A} \overline{\A}) \arrow{r} \arrow{d} & T_{i}(\overline{\A} \otimes_{\A} \overline{\A} \otimes_{\A} \overline{\A}) \arrow{r} \arrow{d} & \dots \\
U_{i}(\overline{\A}) \arrow{r} & U_{i}(\overline{\A} \otimes_{\A} \overline{\A}) \arrow{r}& U_{i}(\overline{\A} \otimes_{\A} \overline{\A} \otimes_{\A} \overline{\A}) \arrow{r} & \dots.
\end{tikzcd}
\]

Note that the complex with $j$th term $C^{j} := \mathcal{H}^{0}(K^{j, \bullet})$ ($j \geq 0$) is exactly the \v{C}ech complex of $P_{i}$ with respect to the fpqc cover $\overline{\A}/\A$, and so the low-degree exact sequence for the spectral sequence associated to a double complex gives an injective map $\check{H}^{1}(\overline{\A}/\A, P_{i}) \hookrightarrow H^{1}(\overline{\A}/\A, T_{i} \to U_{i}).$

Since the kernels of $T_{i+1} \to T_{i}$ and $U_{i+1} \to U_{i}$ are tori, combining Corollary \ref{mainappendixBcor} with Lemma \ref{vadelicvanishing1} tells us that the maps $T_{i+1}(\overline{\A}^{\bigotimes_{\A}n}) \to T_{i}(\overline{\A}^{\bigotimes_{\A}n})$ and $U_{i+1}(\overline{\A}^{\bigotimes_{\A}n}) \to U_{i}(\overline{\A}^{\bigotimes_{\A}n})$ are surjective for all $n$ (this also holds for $\overline{\A}$ replaced by $\A^{\text{sep}}$, by smoothness). It follows that the induced map $$C^{j}(\overline{\A}/\A, T_{i+1} \to U_{i+1}) \to C^{j}(\overline{\A}/\A, T_{i} \to U_{i})$$ (where $C^{j}(\overline{\A}/\A, T \to U)$ is the group of $j$-cochains for the corresponding total complex) is surjective for any $j$, and so the system $\{C^{j}(\overline{\A}/\A, T_{i} \to U_{i})\}_{i \geq 0}$ satisfies the Mittag-Lefler condition. Replacing $\overline{\A}$ by $\A^{\text{sep}}$ in order to use group cohomology (Lemma \ref{adelicgroupcohomology}), it follows from \cite[Thm. 3.5.8]{NSW} that we obtain the exact sequence 
\begin{equation}\label{lim1SES}
1 \to \varprojlim^{(1)} H^{1}(\overline{\A}/\A, T_{i} \to U_{i}) \to H^{2}(\overline{\A}/\A, T \to U) \to \varprojlim H^{2}(\overline{\A}/\A, T_{i} \to U_{i}) \to 1,
\end{equation}
where the middle term denotes the cohomology of the complex with $j$th term $$C^{j}(\overline{\A}/\A, T \to U) := \varprojlim C^{j}(\overline{\A}/\A, T_{i} \to U_{i}) = C^{j}(\overline{\A}/\A, T) \oplus C^{j-1}(\overline{\A}/\A, U),$$ where $T = \varprojlim_{i} T_{i}$ and $U := \varprojlim_{i} U_{i}$ are pro-tori over $F$ (by construction, the kernel of $T \to U$ is $P$). The low-degree exact sequence for double complexes again gives a map $\check{H}^{2}(\overline{\A}/\A, P) \to H^{2}(\overline{\A}/\A, T \to U)$, which need not be injective, and we have the natural map $\check{H}^{2}(\overline{F}/F, P) \to \check{H}^{2}(\overline{\A}/\A, P)$. We then have the following analogue of \cite[Prop. 3.5.2]{Tasho2}:

\begin{prop}\label{canonicalclass} There is a unique $\xi \in \check{H}^{2}(\overline{F}/F, P)$ whose image in $\varprojlim \check{H}^{2}(\overline{F}/F, P_{i})$ equals the system $(\xi_{i})$, and whose image in $H^{2}(\overline{\A}/\A, T \to U)$ equals the image of $x \in \check{H}^{2}(\overline{\A}/\A, P)$ there.
\end{prop}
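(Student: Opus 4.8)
The plan is to transpose the proof of \cite[Prop.~3.5.2]{Tasho2} into the \v{C}ech-cohomological and positive-characteristic setting, substituting the comparison results of \S2 for the Galois-cohomology identifications used there. First I would use Lemma \ref{basicresult} to pick \emph{some} preimage $\tilde{\xi} \in \check{H}^{2}(\overline{F}/F, P)$ of the coherent system $(\xi_{i})$. The set of such preimages is a coset of the kernel $\mathrm{K} := \ker[\check{H}^{2}(\overline{F}/F, P) \to \varprojlim_{i} \check{H}^{2}(\overline{F}/F, P_{i})]$, which by Lemma \ref{H1isom} and the $\varprojlim$--$\varprojlim^{(1)}$ six-term sequence for $\{H^{1}(F, P_{i})\}$ is exactly $\varprojlim_{i}^{(1)} H^{1}(F, P_{i})$ (the inverse limit itself vanishing by Proposition \ref{vanishingH1}). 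Thus the statement amounts to producing, among this coset, the unique element whose image in $H^{2}(\overline{\A}/\A, T \to U)$ equals that of $x$.

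Next I would compare $\tilde{\xi}$ and $x$ level by level. For each fixed $i$, a class in $\check{H}^{2}(\overline{\A}/\A, P_{i})$ is determined by its localizations at all places, by Proposition \ref{cechrestrictedproduct} and Lemma \ref{vadelicvanishing1}; and for every $\dot{v} \in \dot{V}$ the localization of $\tilde{\xi}$ in $\check{H}^{2}(\overline{\A}_{v}/F_{v}, P_{i})$ agrees with that of $x$ — this is Corollary \ref{tasho3.3.8} (and the finite-level identities in its proof), the construction of $\dot{x}$, and the explicit Shapiro map of Corollary \ref{shapiro1}. Hence $\tilde{\xi}$ and $x$ have equal image in $\check{H}^{2}(\overline{\A}/\A, P_{i})$ for every $i$, so their images in $\varprojlim_{i} \check{H}^{2}(\overline{\A}/\A, P_{i})$ coincide, and (composing with the low-degree maps $\check{H}^{2}(\overline{\A}/\A, P_{i}) \to H^{2}(\overline{\A}/\A, T_{i} \to U_{i})$) so do their images in $\varprojlim_{i} H^{2}(\overline{\A}/\A, T_{i} \to U_{i})$. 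By the exact sequence \eqref{lim1SES} the images of $\tilde{\xi}$ and $x$ in $H^{2}(\overline{\A}/\A, T \to U)$ therefore differ by some $\delta \in \varprojlim_{i}^{(1)} H^{1}(\overline{\A}/\A, T_{i} \to U_{i})$.

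Finally I would run the double-complex argument of \cite[\S3.5]{Tasho2} over $F$ as well, obtaining the exact analogue of \eqref{lim1SES} with $\overline{F}/F$ in place of $\overline{\A}/\A$, along with the injection $\check{H}^{2}(\overline{F}/F, P) \hookrightarrow H^{2}(\overline{F}/F, T \to U)$, compatibly with localization. This embeds $\mathrm{K}$ into $\varprojlim_{i}^{(1)} H^{1}(\overline{F}/F, T_{i} \to U_{i})$ and reduces the problem to showing that the localization map
\[
\mathrm{K} = \varprojlim_{i}^{(1)} H^{1}(F, P_{i}) \;\longrightarrow\; \varprojlim_{i}^{(1)} H^{1}(\overline{\A}/\A, T_{i} \to U_{i})
\]
is an isomorphism: surjectivity produces a $c \in \mathrm{K}$ with image $\delta$, injectivity makes it unique, and then $\xi := \tilde{\xi} \cdot c^{-1}$ is the required class. \textbf{Establishing this isomorphism is the main obstacle.} Since the kernels of the transition maps $T_{i+1} \to T_{i}$ and $U_{i+1} \to U_{i}$ are tori (Lemma \ref{toritower}), the associated cochain complexes over $\overline{F}/F$ and $\overline{\A}/\A$ have surjective transition maps (Corollary \ref{mainappendixBcor} and Lemma \ref{vadelicvanishing1}, plus smoothness over $\A^{\mathrm{sep}}$), so the $\varprojlim^{(1)}$'s in sight are computed by honest complexes and \cite[Thm.~3.5.8]{NSW} applies; the claim then comes down to a Hasse-principle statement for the complexes of tori $[T_{i} \to U_{i}]$, which I would extract — as in the proof of Proposition \ref{vanishingH1} — from the Poitou--Tate pairing \cite[Lem.~4.4]{Cesnavicius} (which gives $\varprojlim_{i} \ker^{1}(F, P_{i}) = 0$), the automatic vanishing of $\varprojlim^{(1)}$ of the finite groups $\ker^{1}(F, P_{i})$, and the colimit-vanishing Lemmas \ref{vanishingcolimit2} and \ref{vanishingcolimit3}. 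Relative to \cite{Tasho2}, the genuinely new work is the systematic replacement of Galois cohomology of $\Gamma$ and $\Gamma_{v}$ by \v{C}ech cohomology of the pro-fppf covers $\overline{F}/F$, $\overline{\A}/\A$, $\overline{F_{v}}/F_{v}$ and $O_{S}^{\mathrm{perf}}/O_{F,S}$, which is legitimate via the comparison statements of \S2.
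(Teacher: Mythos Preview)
Your proposal is correct and follows essentially the same route as the paper's proof: pick a preimage $\tilde{\xi}$, verify via Corollary~\ref{tasho3.3.8} that $\tilde{\xi}_{\A}$ and $x$ agree after projecting to each level, deduce that their difference in $H^{2}(\overline{\A}/\A, T\to U)$ lies in $\varprojlim^{(1)} H^{1}(\overline{\A}/\A, T_{i}\to U_{i})$, and then invoke the isomorphism $\varprojlim^{(1)}\check{H}^{1}(\overline{F}/F,P_{i})\xrightarrow{\sim}\varprojlim^{(1)} H^{1}(\overline{\A}/\A, T_{i}\to U_{i})$ (the analogue of \cite[Lem.~3.5.3]{Tasho2}) for existence and uniqueness. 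The only cosmetic difference is that the paper performs the level-by-level comparison inside $H^{2}(\overline{\A}/\A, T_{i}\to U_{i})$ using Proposition~\ref{restrictedproduct}, whereas you do it directly in $\check{H}^{2}(\overline{\A}/\A, P_{i})$; both are valid since $\check{H}^{n}(\overline{\A}/\A, P_{i})\cong H^{n}(\overline{\A}/\A, T_{i}\to U_{i})$ at each finite level.
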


\begin{proof} If $\tilde{\xi} \in \check{H}^{2}(\overline{F}/F, P)$ is any preimage of $(\xi_{i}) \in \varprojlim \check{H}^{2}(\overline{F}/F, P_{i})$ and $\tilde{\xi}_{\A}$ denotes its image in $\check{H}^{2}(\overline{\A}/\A, P)$, the images of $x$ and $\tilde{\xi}_{\A}$ in $\varprojlim H^{2}(\overline{\A}/\A, T_{i} \to U_{i})$ via the composition 
$$ \check{H}^{2}(\overline{\A}/\A, P) \to H^{2}(\overline{\A}/\A, T \to U) \to \varprojlim H^{2}(\overline{\A}/\A, T_{i} \to U_{i})$$ coincide by the argument in \cite[Prop. 3.5.2]{Tasho2}, replacing the use of \cite[Thm. C.1.B]{KS1} loc. cit. with Proposition \ref{restrictedproduct} and the use of Corollary 3.3.8 loc. cit. with Corollary \ref{tasho3.3.8}. 

The same argument as in \cite[Lem. 3.5.3]{Tasho} (replacing the results from \cite[\S C]{KS1} with their analogues from \S A.3, A.4) shows that the map 
$\varprojlim^{(1)} \check{H}^{1}(\overline{F}/F, P_{i}) \to \varprojlim^{(1)} H^{1}(\overline{\A}/\A, T_{i} \to U_{i})$ is an isomorphism which, along with \eqref{lim1SES}, imply that we may modify $\tilde{\xi}$ by an element of $\varprojlim^{(1)} \check{H}^{1}(\overline{F}/F, P_{i})$ so that the images of $\tilde{\xi}_{\A}$ and $x$ in $H^{2}(\overline{\A}/\A, T \to U)$ are equal, proving existence. Uniqueness follows from the injectivity of the composition $$ \varprojlim^{(1)} \check{H}^{1}(\overline{F}/F, P_{i}) \to \varprojlim^{(1)} H^{1}(\overline{\A}/\A, T_{i} \to U_{i}) \to H^{2}(\overline{\A}/\A, T \to U).$$ 
\end{proof}

\begin{Def}\label{canonicalclassdef} The \textit{canonical class} $\xi \in \check{H}^{2}(\overline{F}/F, P)$ is the element whose existence and uniqueness is asserted in Proposition \ref{canonicalclass}. As explained in \cite[p. 41]{Tasho2}, $\xi$ is independent of the systems $\{T_{i}\}$, $\{U_{i}\}$ chosen above.
\end{Def}


\section{Cohomology of the gerbe $\gerbeE_{\dot{V}}$}

\subsection{Basic definitions}

As in previous sections, we write $H^{i}$ for $H^{i}_{\text{fppf}}$. Let $\xi \in \check{H}^{2}(\overline{F}/F, P_{\dot{V}})$ be the canonical class of Definition \ref{canonicalclassdef}. By \cite[\S 2]{Dillery}, $\xi$ corresponds to an isomorphism class of (fpqc) $P_{\dot{V}}$-gerbes split over $\overline{F}$. Let $\gerbeE_{\dot{V}} \to (\text{Sch}/F)_{\text{fpqc}}$ be such a gerbe; we can equip $\gerbeE_{\dot{V}}$ with the structure of a site, $(\gerbeE_{\dot{V}})_{\text{fpqc}}$, by giving it the fpqc topology inherited from $(\text{Sch}/F)_{\text{fpqc}}$. For $G$ an affine algebraic group over $F$ we write $H^{1}(\gerbeE_{\dot{V}}, G_{\gerbeE_{\dot{V}}})$ to denote all isomorphism classes of $G_{\gerbeE_{\dot{V}}}$-torsors on $(\gerbeE_{\dot{V}})_{\text{fpqc}}$. Any such torsor $\mathscr{T}$ carries an action $\iota \colon (P_{\dot{V}})_{\gerbeE_{\dot{V}}} \times \mathscr{T} \to \mathscr{T}$.

For $Z \subset G$ a finite central $F$-subgroup, define $H^{1}(\gerbeE_{\dot{V}}, Z \to G)$ as all isomorphism classes $[\mathscr{T}]$ such that $\iota$ is induced by a homomorphism of $F$-groups $P_{\dot{V}} \xrightarrow{\phi} Z \hookrightarrow G$---we refer to such a torsor $\mathscr{T}$ as \textit{$Z$-twisted}. Note that such $\phi$ always factors through the projection $P_{\dot{V}} \to P_{E_{i}, \dot{S}_{i}, n_{i}}$ for some $i$. For any other choice of $P_{\dot{V}}$-gerbe $\gerbeE'_{\dot{V}}$ representing $\xi$, we have an isomorphism of $P_{\dot{V}}$-gerbes $h \colon {\gerbeE_{\dot{V}}} \to {\gerbeE'_{\dot{V}}}$, inducing an isomorphism $H^{1}(\gerbeE_{\dot{V}}, G_{\gerbeE_{\dot{V}}}) \to H^{1}(\gerbeE_{\dot{V}}, G_{\gerbeE'_{\dot{V}}})$ which, since $\check{H}^{1}(\overline{F}/F, P_{\dot{V}})$ vanishes by Proposition \ref{vanishingH1}, is independent of the choice of $h$, by \cite[Lem. 2.56]{Dillery}. 

Let $\mathcal{A}$ denote the category whose objects are pairs $(Z, G)$ as above and whose morphisms from $(Z_{1}, G_{1})$ to $(Z_{2}, G_{2})$ are $F$-morphisms from $G_{1}$ to $G_{2}$ that map $Z_{1}$ to $Z_{2}$. As shown in \cite[\S 2.6]{Dillery}, we have the ``inflation-restriction" exact sequence (of pointed sets or abelian groups) $$1 \to H^{1}(F, G) \to H^{1}(\gerbeE_{\dot{V}}, Z \to G) \to \Hom_{F}(P_{\dot{V}}, Z) \to H^{2}(F, G),$$ where the $H^{2}$-term is to be ignored if $G$ is non-abelian. The above map from $\Hom_{F}(P_{\dot{V}}, Z)$ to $H^{2}(F, G)$ can be described as the composition of the map $\Theta^{P}_{\dot{V}} \colon \Hom_{F}(P_{\dot{V}}, Z) \to H^{2}(F, Z)$ defined in \S 3.2 with the map $H^{2}(F, Z) \to H^{2}(F, G)$. We also have a commutative diagram
\[
\begin{tikzcd}
H^{1}(F, G) \arrow{r} \arrow[equals]{d} & H^{1}(\gerbeE_{\dot{V}}, Z \to G) \arrow["\dag"]{d} \arrow["\text{Res}"]{r} & \Hom_{F}(P_{\dot{V}}, Z) \arrow{r} \arrow["\Theta^{P}_{\dot{V}}"]{d} & H^{2}(F, G) \arrow[equals]{d} \\ 
H^{1}(F, G) \arrow{r} & H^{1}(F, G/Z) \arrow{r} & H^{2}(F, Z) \arrow{r} & H^{2}(F,G),
\end{tikzcd}
\]
where the map $\dag$ sends $[\mathscr{T}]$ to (the class of) the descent of the $(G/Z)_{\gerbeE_{\dot{V}}}$-torsor $\mathscr{T} \times^{G_{\gerbeE_{\dot{V}}}} (G/Z)_{\gerbeE_{\dot{V}}}$ to a $G/Z$-torsor $T$ over $F$. It has such a descent (unique up to isomorphism) because, by construction, its $(P_{\dot{V}})_{\gerbeE_{\dot{V}}}$-action is trivial. We have the following result(s) from \cite{Tasho2}:

\begin{lem}\label{tasho3.6.1}\label{3.6.2}( \cite[Lems. 3.6.1, 3.6.2]{Tasho2}) \begin{enumerate}
\item{If $G$ is either abelian or connected and reductive, then the map $\dag$ defined above is surjective.}
\item{ If $G$ is connected and reductive, then for each $x \in H^{1}(\gerbeE_{\dot{V}}, Z \to G)$, there exists a maximal torus $T \subset G$ such that $x$ is in the image of $H^{1}(\gerbeE_{\dot{V}}, Z \to T)$.}
\end{enumerate}
\end{lem}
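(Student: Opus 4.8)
The plan is to follow the arguments of \cite[Lems. 3.6.1, 3.6.2]{Tasho2}, translating everything into the fppf (equivalently \v{C}ech) setting, carrying out the twisting manipulations via the torsor constructions of \cite[\S 2]{Dillery} (and the non-abelian part of the inflation--restriction sequence via \cite[\S 2.6]{Dillery}), and substituting the function-field results of \S\S 2--3 for the number-field inputs. The ingredients I will use are the inflation--restriction sequence and the comparison diagram recalled just above, the surjectivity of $\Theta^{P}_{\dot{V}} \colon \Hom_{F}(P_{\dot{V}}, Z) \to H^{2}(F, Z)$, and the vanishing $H^{1}(F, P_{\dot{V}}) = 0$ from Proposition \ref{vanishingH1}.

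For (1), given $[\bar{T}] \in H^{1}(F, G/Z)$ I would first let $\beta \in H^{2}(F, Z)$ be its image under the connecting map $\partial$ of $1 \to Z \to G \to G/Z \to 1$; by exactness $\beta$ maps to the trivial class in $H^{2}(F, G)$, so using surjectivity of $\Theta^{P}_{\dot{V}}$ I may pick $\phi \in \Hom_{F}(P_{\dot{V}}, Z)$ with $\Theta^{P}_{\dot{V}}(\phi) = \beta$, whence $\phi$ also dies in $H^{2}(F, G)$ and the inflation--restriction sequence produces $x \in H^{1}(\gerbeE_{\dot{V}}, Z \to G)$ with $\mathrm{Res}(x) = \phi$. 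The comparison diagram then forces $\partial(\dag(x)) = \Theta^{P}_{\dot{V}}(\phi) = \beta = \partial([\bar{T}])$, so $\dag(x)$ and $[\bar{T}]$ lie in a common fibre of $\partial$; the standard twisting formalism for central extensions (here supplied by the explicit torsor constructions of \cite[\S 2]{Dillery}) lets me modify $x$ by an element of $H^{1}(F, G)$, without changing $\mathrm{Res}(x)$, so as to arrange $\dag(x) = [\bar{T}]$. For $G$ abelian this collapses to a bare diagram chase; the connected reductive case differs only in that one works with honest torsors rather than cocycles.

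For (2), given $x$ I would set $\phi := \mathrm{Res}(x)$ and $[\bar{T}] := \dag(x) \in H^{1}(F, G/Z)$. The first, and crucial, step is to reduce $[\bar{T}]$ to a maximal torus: the maximal $F$-tori of $G/Z$ are precisely the $S/Z$ for maximal $F$-tori $S \subseteq G$ (which all contain $Z$ since $Z$ is central), and by the reduction-to-a-maximal-torus principle for $H^{1}$ of a connected reductive group over a global function field there is such an $S$ together with a class $[\bar{T}_{S}] \in H^{1}(F, S/Z)$ lifting $[\bar{T}]$. Compatibility of connecting maps with $S \hookrightarrow G$ gives $\partial_{S}([\bar{T}_{S}]) = \partial([\bar{T}]) = \Theta^{P}_{\dot{V}}(\phi)$ in $H^{2}(F, Z)$, so $\Theta^{P}_{\dot{V}}(\phi)$ dies in $H^{2}(F, S)$; the inflation--restriction sequence for $(Z, S)$ then yields $x_{S} \in H^{1}(\gerbeE_{\dot{V}}, Z \to S)$ with $\mathrm{Res}_{S}(x_{S}) = \phi$, and after adjusting $x_{S}$ by an element of $H^{1}(F, S)$ I may assume $\dag_{S}(x_{S}) = [\bar{T}_{S}]$. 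The image $x'$ of $x_{S}$ in $H^{1}(\gerbeE_{\dot{V}}, Z \to G)$ then agrees with $x$ under both $\mathrm{Res}$ and $\dag$; a twisting argument (again via \cite[\S 2]{Dillery}, using that $Z$ is central) shows $x$ and $x'$ differ by a class pulled back from $H^{1}(F, Z)$, which I can absorb by re-adjusting $x_{S}$ within $\mathrm{Res}_{S}^{-1}(\phi)$, giving $x' = x$ and hence $x$ in the image of $H^{1}(\gerbeE_{\dot{V}}, Z \to S)$.

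I expect the main difficulties to be, first, the non-abelian bookkeeping: since the relevant $H^{1}$'s are only pointed sets, I must verify that $\mathrm{Res}$ and $\dag$ intertwine the twisting actions and that fibres of the connecting maps are single twisting orbits, which has to be carried out with the explicit torsor-theoretic descriptions of \cite[\S 2]{Dillery} rather than with formal cocycle calculations; and second, securing the reduction-to-a-maximal-torus input used in (2) over a global function field, which rests on Harder's vanishing $H^{1}(F, G_{\mathrm{sc}}) = 0$ together with the existence of maximal $F$-tori (and, if one prefers instead to localize $x$ and patch local reductions as in \cite{Tasho2}, on weak approximation for the variety of maximal tori of $G$).
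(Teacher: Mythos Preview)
Your proposal is correct and follows essentially the same strategy as the paper, which simply defers to \cite[Lems.~3.6.1, 3.6.2]{Tasho2} and indicates the substitutions needed. You correctly identify the two key inputs: the surjectivity of $\Theta^{P}_{\dot{V}}$ (the ``discussion following Lemma~\ref{Ptheta2}'' in the paper's phrasing, used with the five-lemma for abelian $G$) and the reduction-to-a-maximal-torus principle for $H^{1}$ over a global function field. For the latter the paper cites \cite[Corollary~1.10]{Thang3} rather than invoking Harder's vanishing directly; this is the packaged form of the statement you need (every class in $H^{1}(F,G/Z)$ comes from a maximal $F$-torus), so you should point to that reference.

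One small remark on your step~6 in part~(2): your claim that $x$ and $x'$ ``differ by a class pulled back from $H^{1}(F,Z)$'' is correct but deserves a sentence of justification in the non-abelian setting. Representing $x,x'$ by twisted cocycles $(z,\phi),(z',\phi)$, the equality $\dag(x)=\dag(x')$ gives $g\in G(\overline{F})$ with $z' = p_{1}(g)^{-1}z\,p_{2}(g)\cdot c$ for some $c\in Z(\overline{F}\otimes_{F}\overline{F})$; a direct computation (using centrality of $dz=\phi(\dot{\xi})$) shows $dc=1$, so $c\in Z^{1}(\overline{F}/F,Z)$, and multiplying $x_{S}$ by $[c]^{-1}$ (which lies in $H^{1}(F,Z)\subset H^{1}(F,S)$, hence leaves $\dag_{S}$ and $\mathrm{Res}_{S}$ unchanged) does the job. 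Alternatively, and slightly more directly, the argument in \cite{Tasho2} works at the cocycle level from the start: apply the torus-reduction to the genuine $1$-cocycle $\bar{z}\in Z^{1}(\overline{F}/F,G/Z)$, lift the resulting $G/Z$-coboundary to $G$, and observe that the modified $z$ already lands in $S(\overline{F}\otimes_{F}\overline{F})$; this bypasses your step~6 entirely.
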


\begin{proof}  The identical proofs loc. cit. work here, using the five-lemma and our discussion following Lemma \ref{Ptheta2} for abelian $G$ and replacing the use of Lemma A.1 loc. cit. with \cite[Corollary 1.10]{Thang3} for connected reductive $G$.
\end{proof}




The next goal is to construct a localization map $\text{loc}_{v} \colon H^{1}(\gerbeE_{\dot{V}}, Z \to G) \to H^{1}(\gerbeE_{v}, Z \to G)$ for any $\dot{v} \in \dot{V}$, where $\gerbeE_{v}$ denotes the local gerbe defined in \cite[\S 3]{Dillery}, such that the diagram
\begin{equation}\label{gerbeloc}
\begin{tikzcd}
H^{1}(F, G) \arrow{r} \arrow{d} & H^{1}(\gerbeE_{\dot{V}}, Z \to G) \arrow{r} \arrow{d} & \Hom_{F}(P_{\dot{V}}, Z) \arrow{d} \\
H^{1}(F_{v}, G) \arrow{r} & H^{1}(\gerbeE_{v}, Z \to G) \arrow{r} & \Hom_{F_{v}}(u_{v}, Z)
\end{tikzcd}
\end{equation}
commutes, where (using \v{C}ech cohomology) the left vertical map is induced by the inclusion $\overline{F} \to \overline{F_{v}}$ and the right vertical map is induced by the $F$-homomorphism $\text{loc}^{P}_{v} \colon u_{v} \to (P_{\dot{V}})_{F_{v}}$ (cf. \S 3.2). 

We have the stack $(\gerbeE_{\dot{V}})_{F_{v}} := \gerbeE \times_{\text{Sch}/F} (\text{Sch}/F_{v})$, which is an fpqc $(P_{\dot{V}})_{F_{v}}$-gerbe split over $\overline{F_{v}}$; 
Fixing an isomorphism of $P_{\dot{V}}$-gerbes $(\gerbeE_{\dot{V}})_{F_{v}} \xrightarrow{\sim} \gerbeE_{x_{v}}$, where $x_{v}$ denotes a \v{C}ech 2-cocycle representing the image of $\xi \in \check{H}^{2}(\overline{F}/F, P_{\dot{V}})$ in $\check{H}^{2}(\overline{F_{v}}/F_{v}, P_{\dot{V}})$, and an isomorphism of $u_{v}$-gerbes $\gerbeE_{v} \xrightarrow{\sim} \gerbeE_{\dot{\xi}_{v}}$, where $\dot{\xi}_{v}$ is a \v{C}ech 2-cocycle representing the local canonical class $\xi_{v}$, the fact that $\text{loc}_{v}^{P}(\xi_{v}) = [x_{v}]$ (by Corollary \ref{tasho3.3.8}) implies, by the functoriality of gerbes given by Construction \ref{changeofgerbe}, that we have a (non-canonical) morphism of fibered categories over $F_{v}$ from $\gerbeE_{\dot{\xi}_{v}}$ to $\gerbeE_{x_{v}}$ which is the morphism $\text{loc}_{v}^{P}$ on bands, and thus we obtain a functor $\gerbeE_{v} \xrightarrow{\text{loc}_{v}^{\mathcal{E}}} \gerbeE_{\dot{V}}$ via the composition $$\gerbeE_{v} \xrightarrow{\sim} \gerbeE_{\dot{\xi}_{v}} \to \gerbeE_{x_{v}} \xrightarrow{\sim} (\gerbeE_{\dot{V}})_{F_{v}} \to \gerbeE_{\dot{V}}.$$ 

Although $\text{loc}_{v}^{\mathcal{E}}$ is highly non-canonical, the morphism $\gerbeE_{\dot{\xi}_{v}} \to \gerbeE_{x_{v}}$ is unique up to post-composing by an automorphism of $\gerbeE_{x_{v}}$ determined by a \v{C}ech 1-cocycle of $P_{\dot{V}}$ valued in $\overline{F_{v}}$ (see \cite[\S 2.3]{Dillery}), and since, by Proposition \ref{mainvanishing1}, the group $\check{H}^{1}(\overline{F_{v}}/F_{v}, P_{\dot{V}})$ is trivial, such a 1-cocycle is in fact a 1-coboundary. The same is true for the isomorphisms $\gerbeE_{v} \to \gerbeE_{\dot{\xi}_{v}}$ and $(\gerbeE_{\dot{V}})_{F_{v}} \to \gerbeE_{x_{v}}$ (which we call ``normalizing isomorphisms"), using Proposition \ref{mainvanishing1} and \cite[Corollary 3.5]{Dillery}. We are now ready to define our main localization map. 

We can use $\text{loc}_{v}^{\mathcal{E}}$ to pull back any $Z$-twisted $G_{\gerbeE_{\dot{V}}}$-torsor to a $Z_{F_{v}}$-twisted $G_{\gerbeE_{v}}$-torsor, defining $$\text{loc}_{v} \colon H^{1}(\gerbeE_{\dot{V}}, Z \to G) \to H^{1}(\gerbeE_{v}, Z \to G);$$ combining the previous paragraph with \cite[Prop. 2.55]{Dillery} show that $\text{loc}_{v}$ is canonical. It is straightforward to check that this localization map makes the diagram \eqref{gerbeloc} commute.
This map is canonical up to finer equivalence classes of $G_{\gerbeE_{v}}$-torsors: We can replace isomorphism classes with sets of torsors whose elements are related via isomorphisms $\mathscr{T} \xrightarrow{\sim} \eta^{*}\mathscr{T}$ of $G_{\gerbeE_{v}}$-torsors induced by translation by $z^{-1} \in Z(\overline{F_{v}})$, where $\eta \colon \gerbeE_{v} \to \gerbeE_{v}$ is the automorphism of gerbes induced by the 1-coboundary $d(z)$ (cf. \cite[Lemma 2.56]{Dillery}), and different choices of $\text{loc}_{v}$ preserve these classes.

\subsection{Tate-Nakayama duality for tori}
As in \cite{Tasho2}, we define $\mathcal{T} \subset \mathcal{A}$ to be the full subcategory consisting of objects $[Z \to T]$ for which $T$ is a torus, and for $v \in V$ we define $\mathcal{T}_{v}$ analogously. Recall from \cite[\S 4.1]{Dillery} that associated to such a pair $[Z \to T] \in \mathcal{T}_{v}$ we have the group $$\overline{Y}_{+v,\text{tor}}[Z \to T] := (X_{*}(T/Z)/[I_{v}X_{*}(T)])_{\text{tor}} = (X_{*}(T/Z)/[I_{v}X_{*}(T)])^{N_{E/F_{v}}},$$ where $I_{v} \subset \Z[\Gamma_{v}]$ denotes the augmentation ideal, $E/F_{v}$ denotes a finite Galois extension splitting $T$, and the superscript $N_{E/F_{v}}$ denotes the kernel of the norm map. Moreover, by \cite[Thm. 4.10]{Dillery}, we have a canonical functorial isomorphism $$\iota_{v} \colon \overline{Y}_{+v,\text{tor}}[Z \to T] \xrightarrow{\sim} H^{1}(\gerbeE_{v}, Z \to T)$$ which commutes with the maps of both groups to $\Hom_{F_{v}}(u_{v}, Z)$. 

Following \cite{Tasho2}, the first step is to construct the global analogue of the groups $\overline{Y}_{+v,\text{tor}}[Z \to T]$. For fixed $[Z \to T] \in \mathcal{T}$ we set $Y:= X_{*}(T)$, $\overline{Y}:= X_{*}(T/Z)$, and $A^{\vee} :=  \Hom_{\Z}(X^{*}(Z), \mathbb{Q}/\Z)$. We have a short exact sequence $$0 \to Y \to \overline{Y} \to A^{\vee} \to 0$$ and for any $i$, tensoring with the $\Gamma_{E_{i}/F}$-module $\Z[(S_{i})_{E_{i}}]_{0}$ gives the exact sequence
\begin{equation}\label{Y0SES}
0 \to Y[(S_{i})_{E_{i}}]_{0} \to \overline{Y}[(S_{i})_{E_{i}}]_{0} \to A^{\vee}[(S_{i})_{E_{i}}]_{0} \to 0;
\end{equation} denote by $\overline{Y}[(S_{i})_{E_{i}}, \dot{S}_{i}]_{0} \subseteq  \overline{Y}[(S_{i})_{E_{i}}]_{0}$ the preimage of the subgroup $A^{\vee}[\dot{S}_{i}]$ under the above surjection; note that, by construction $\overline{Y}[(S_{i})_{E_{i}}, \dot{S}_{i}]_{0}$ contains the image of $Y[(S_{i})_{E_{i}}]_{0}$.

Choosing any section $s \colon (S_{i})_{E_{i}} \to (S_{i+1})_{E_{i+1}}$ such that $s(\dot{S}_{i}) \subset \dot{S}_{i+1}$, we may define a map 
$$ \overline{Y}[(S_{i})_{E_{i}}, \dot{S}_{i}]_{0} \xrightarrow{s_{!}} \overline{Y}[(S_{i+1})_{E_{i+1}}, \dot{S}_{i+1}]_{0}, \hspace{1mm} s_{!}(\sum_{w \in (S_{i})_{E_{i}}} c_{w}[w]) = \sum_{w' \in (S_{i+1})_{E_{i+1}}, s((w')_{E_{i}}) = w'} c_{(w')_{E_{i}}} [w'].$$ 

\begin{lem}(\cite[Lem. 3.7.1]{Tasho2} The map $f \mapsto s_{!}f$ induces a well-defined homomorphism $$! \colon \frac{\overline{Y}[(S_{i})_{E_{i}}, \dot{S}_{i}]_{0}}{I_{E_{i}/F}Y[(S_{i})_{E_{i}}]_{0}} \to \frac{\overline{Y}[(S_{i+1})_{E_{i+1}}, \dot{S}_{i+1}]_{0}}{I_{E_{i+1}/F}Y[(S_{i+1})_{E_{i+1}}]_{0}}$$ which is independent of the choice of $s$.
\end{lem}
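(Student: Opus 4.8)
The plan is to follow the proof of \cite[Lem.~3.7.1]{Tasho2} closely; the statement amounts to three verifications. First, $s_!$ must carry $\overline{Y}[(S_i)_{E_i},\dot{S}_i]_0$ into $\overline{Y}[(S_{i+1})_{E_{i+1}},\dot{S}_{i+1}]_0$. Second, it must carry the subgroup $I_{E_i/F}Y[(S_i)_{E_i}]_0$ into $I_{E_{i+1}/F}Y[(S_{i+1})_{E_{i+1}}]_0$, so that it descends to the displayed quotients. Third, the descended map must be independent of the chosen section $s$. The first verification is immediate: $s_!$ preserves the total coefficient sum, hence augmentation-zero elements; and if $f=\sum_w c_w[w]$ lies in $\overline{Y}[(S_i)_{E_i},\dot{S}_i]_0$, then its image in $A^\vee$ is supported on $\dot{S}_i$ (equivalently, $c_w$ lies in the image of $Y$ for $w\notin\dot{S}_i$), so the image of $s_!f$ in $A^\vee[(S_{i+1})_{E_{i+1}}]_0$ is supported on $s(\dot{S}_i)\subseteq\dot{S}_{i+1}$.

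The remaining two verifications both reduce to one computation resting on three elementary observations: (i) for $w\in(S_i)_{E_i}$, any two places of $(S_{i+1})_{E_{i+1}}$ over $w$ differ by an element of $\Gamma_{E_{i+1}/E_i}$, since $E_{i+1}/E_i$ is Galois; (ii) $\Gamma_{E_{i+1}/E_i}$ acts trivially on $Y=X_*(T)$, because $T$ splits over $E_i$; (iii) by Condition~\ref{placeconditions}(4) for $E_{i+1}/F$, every $\mu\in\Gamma_{E_{i+1}/E_i}$ fixes some lift $\dot{u}\in\dot{S}_{i+1}$, so that for $c\in Y$ and any place $u'$ of $(S_{i+1})_{E_{i+1}}$ one has, using (ii), $(\mu-1)\bigl(c[u']\bigr)=(\mu-1)\bigl(c[u']-c[\dot{u}]\bigr)\in I_{E_{i+1}/F}\,Y[(S_{i+1})_{E_{i+1}}]_0$, because $c[u']-c[\dot{u}]$ is augmentation-zero.

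For the descent, a generator of $I_{E_i/F}Y[(S_i)_{E_i}]_0$ is $(\sigma-1)y$ with $\sigma\in\Gamma_{E_i/F}$ and $y\in Y[(S_i)_{E_i}]_0$; picking a lift $\tilde\sigma\in\Gamma_{E_{i+1}/F}$, write $s_!\bigl((\sigma-1)y\bigr)=\bigl(s_!(\sigma y)-\tilde\sigma\,s_!(y)\bigr)+(\tilde\sigma-1)s_!(y)$. The last summand lies in $I_{E_{i+1}/F}Y[(S_{i+1})_{E_{i+1}}]_0$ since $s_!(y)$ is again augmentation-zero. Comparing $[w']$-coefficients, $s_!(\sigma y)-\tilde\sigma\,s_!(y)$ is a sum over $w'$ of terms $d_{w'}\bigl([s(w')]-[\tilde\sigma\,s(\sigma^{-1}w')]\bigr)$ with $d_{w'}\in Y$; by (i) we have $\tilde\sigma\,s(\sigma^{-1}w')=\tau_{w'}s(w')$ with $\tau_{w'}\in\Gamma_{E_{i+1}/E_i}$, and by (ii) this term equals $-(\tau_{w'}-1)\bigl(d_{w'}[s(w')]\bigr)$, which lies in $I_{E_{i+1}/F}Y[(S_{i+1})_{E_{i+1}}]_0$ by (iii). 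For independence of $s$: two admissible sections agree on $\dot{S}_i$, because for $w\in\dot{S}_i$ there is a unique place of $\dot{S}_{i+1}$ over $w$ (namely the chosen lift of $w|_F$, which lies over $w$ precisely because $\dot{S}_i\subseteq(\dot{S}_{i+1})_{E_i}$). Off $\dot{S}_i$ the coefficients $c_w$ of $f$ lie in $Y$, so with $s'(w)=\mu_w s(w)$, $\mu_w\in\Gamma_{E_{i+1}/E_i}$, we get $s'_!(f)-s_!(f)=\sum_{w\notin\dot{S}_i}(\mu_w-1)\bigl(c_w[s(w)]\bigr)\in I_{E_{i+1}/F}Y[(S_{i+1})_{E_{i+1}}]_0$ by (iii).

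The only real subtlety is precisely that $s_!$ is not $\Gamma$-equivariant, so both the descent and the independence arguments produce ``correction'' terms; the point is that these terms take values in $\Gamma_{E_{i+1}/E_i}$, which acts trivially on $Y$, and that Condition~\ref{placeconditions}(4) lets one absorb each correction into $I_{E_{i+1}/F}Y[(S_{i+1})_{E_{i+1}}]_0$. Everything else is bookkeeping identical to \cite[Lem.~3.7.1]{Tasho2}.
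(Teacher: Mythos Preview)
Your proof is correct and follows precisely the argument of \cite[Lem.~3.7.1]{Tasho2}, which is exactly what the paper does here (the lemma is stated with a direct citation and no proof given). The three observations you isolate---transitivity of places under $\Gamma_{E_{i+1}/E_i}$, triviality of this subgroup on $Y$, and the use of Condition~\ref{placeconditions}(4) to absorb single-place correction terms into $I_{E_{i+1}/F}Y[(S_{i+1})_{E_{i+1}}]_0$---are the essential points, and your handling of them is accurate.
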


\begin{Def} We define $$\overline{Y}[V_{\overline{F}}, \dot{V}]_{0,+,\text{tor}} := \varinjlim_{i} \frac{\overline{Y}[(S_{i})_{E_{i}}, \dot{S}_{i}]_{0}}{I_{E_{i}/F}Y[(S_{i})_{E_{i}}]_{0}}[\text{tor}], \hspace{1mm} Y[V_{\overline{F}}]_{0, \Gamma, \text{tor}} := \varinjlim_{i} \frac{Y[(S_{i})_{E_{i}}]_{0}}{I_{E_{i}/F}Y[(S_{i})_{E_{i}}]_{0}}[\text{tor}]$$ with transition maps given by $!$. 
\end{Def} 

The above two groups fit into the short exact sequence 
$$0 \to Y[V_{\overline{F}}]_{0, \Gamma, \text{tor}} \to \overline{Y}[V_{\overline{F}}, \dot{V}]_{0,+,\text{tor}} \to A^{\vee}[\dot{V}]_{0} \to 0.$$

For any $v \in V$ we can define a localization morphism $$l_{v} \colon \overline{Y}[V_{\overline{F}}, \dot{V}]_{0,+,\text{tor}} \to \overline{Y}_{+v, \text{tor}}$$ as follows. For a fixed index $i$, choose a representative $\dot{\tau} \in \Gamma_{E_{i}/F}$ for each right coset $\tau \in \Gamma_{E_{i}/F}^{\dot{v}} \backslash \Gamma_{E_{i}/F}$ such that $\dot{\tau} = 1$ for the trivial coset; for $f= \sum_{w \in (S_{i})_{E_{i}}} c_{w}[w] \in \overline{Y}[(S_{i})_{E_{i}}, \dot{S}_{i}]_{0}$, set $$l^{i}_{v}(f) = \sum_{\tau \in \Gamma_{E_{i}/F}^{\dot{v}} \backslash \Gamma_{E_{i}/F}} \prescript{\dot{\tau}}{}c_{\prescript{\tau^{-1}}{}(\dot{v})} \in \overline{Y}.$$

It is shown in \cite[Lem. 3.7.2]{Tasho2} that  $l_{v}^{i}$  descends to a group homomorphism 
\begin{equation}\label{Yloc} l_{v}^{i} \colon \frac{\overline{Y}[(S_{i})_{E_{i}}, \dot{S}_{i}]_{0}}{I_{E_{i}/F}Y[(S_{i})_{E_{i}}]_{0}} \to \frac{\overline{Y}}{I_{v}Y}
\end{equation}
that is independent of the choices of representatives $\dot{\tau}$ and is compatible with the transition maps $!$ defined above. We may thus define the localization map $l_{v}$ as the direct limit of the maps $l_{v}^{i}$.




 We can now give the statement of the global Tate-Nakayama isomorphism:

\begin{thm}\label{toritatenak} There exists a unique isomorphism 
$$\iota_{\dot{V}} \colon \overline{Y}[V_{\overline{F}}, \dot{V}]_{0,+,\text{tor}} \to H^{1}(\gerbeE_{\dot{V}}, Z \to T)$$ of functors $\mathcal{T} \to \text{AbGrp}$ that fits into the commutative diagram 
\[
\begin{tikzcd}
Y[V_{\overline{F}}]_{0, \Gamma, \text{tor}} \arrow["\text{TN}"]{d} \arrow{r} &  \overline{Y}[V_{\overline{F}}, \dot{V}]_{0,+,\text{tor}} \arrow["\iota_{\dot{V}}"]{d} \arrow{r} & A^{\vee}[\dot{V}]_{0} \arrow{d} \\
H^{1}(F, T) \arrow{r} & H^{1}(\gerbeE_{\dot{V}}, Z \to T) \arrow{r} & \Hom_{F}(P_{\dot{V}}, Z),
\end{tikzcd}
\]
where $\text{TN}$ denotes the colimit over $i$ of the finite global Tate-Nakayama isomorphisms $$H^{-1}(\Gamma_{E_{i}/F}, Y[(S_{i})_{E_{i}}]_{0}) \to H^{1}(\Gamma_{E_{i}/F}, T(O_{E_{i}, S_{i}}))$$ as in Lemma \ref{bigdiag1} (these splice to give a well-defined map, by Lemma 3.1.2 and Corollary 3.1.8 from \cite{Tasho2}), and the right vertical arrow is the one from Lemma \ref{Aveedot}. 

Moreover, for each $v \in \dot{V}$, the following diagram commutes 
\[
\begin{tikzcd}
\overline{Y}[V_{\overline{F}}, \dot{V}]_{0,+,\text{tor}} \arrow["\iota_{\dot{V}}"]{d} \arrow["l_{v}"]{r} & \overline{Y}_{+v, \text{tor}} \arrow["\iota_{v}"]{d} \\
H^{1}(\gerbeE_{\dot{V}}, Z \to T) \arrow["\text{loc}_{v}"]{r} & H^{1}(\gerbeE_{v}, Z \to T).
\end{tikzcd}
\]
\end{thm}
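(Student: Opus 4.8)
The plan is to transport the proof of the corresponding statement in \cite[\S3.7]{Tasho2} to the function‑field setting, feeding in the inputs assembled in \S2--\S4 in place of Kaletha's Galois‑cohomology machinery over number rings; the perfection $\OSp$ plays the role his separable closure of $O_{F,S}$ does, and the inseparability phenomena have already been absorbed into Lemma~\ref{bigdiag1}, Corollary~\ref{EtoEprime}, and Corollary~\ref{ComparisonIso2}.

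\emph{Construction of $\iota_{\dot V}$.} First I would build the map at finite level. Fix $[Z\to T]\in\mathcal{T}$, and for each sufficiently large index $i$ (so that $E_i$ splits $T$, $\mathrm{exp}(Z)\mid n_i$, and $(S_i,\dot S_i)$ obeys Conditions~\ref{placeconditions}) combine the finite global Tate--Nakayama isomorphism $\mathrm{TN}$ of Lemma~\ref{bigdiag1}, the injection $\Theta_{E_i,S_i}$ appearing there, and the linear algebra of Lemma~\ref{threepartlemma} and Lemma~\ref{Aveedot} to produce a homomorphism out of $\overline{Y}[(S_i)_{E_i},\dot S_i]_0/I_{E_i/F}Y[(S_i)_{E_i}]_0$ landing in $H^1(\gerbeE_{\dot V},Z\to T)$: concretely, Lemma~\ref{bigdiag1} supplies the contribution of the subquotient coming from $Y[(S_i)_{E_i}]_0$ (via $\mathrm{TN}$ into $H^1(F,T)$) and of $\Theta_{E_i,S_i}$, while the inflation--restriction sequence of \S4.1 and the surjectivity of the map $\dag$ (Lemma~\ref{tasho3.6.1}(1)) supply the rest. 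One checks compatibility with the transition maps $!$ (using that the $\Theta^{P}$'s are compatible, Lemma~\ref{Ptheta2}), passes to the colimit, and verifies independence of the auxiliary choices. The reduction to the case $\overline{Y}=X_*(T/Z)$ free over $\Z[\Gamma_{E/F}]$ (so that $T/Z$ is induced), which \cite{Tasho2} uses to pin down the construction and which is available here because each $[Z\to T]$ admits a resolution by such pairs, is carried out through the functoriality of both sides in $\mathcal{T}$.

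\emph{Commutativity and bijectivity.} Commutativity of the first diagram is immediate on the subobject $Y[V_{\overline{F}}]_{0,\Gamma,\mathrm{tor}}$ (there $\iota_{\dot V}$ is $\mathrm{TN}$ by construction) and on the quotient $A^\vee[\dot V]_0$ (compatibility of $\Theta_{E_i,S_i}\circ\Psi_{E_i,S_i,n_i}$ with $\Theta^{P}_{\dot V}$, together with the definition of the map of Lemma~\ref{Aveedot}). Bijectivity then follows by the usual diagram chase against the exact algebraic top row and the inflation--restriction sequence: injectivity uses injectivity of $\mathrm{TN}$ and of the right vertical map, while surjectivity uses that $\mathrm{TN}\colon Y[V_{\overline{F}}]_{0,\Gamma,\mathrm{tor}}\to H^1(F,T)$ is onto and that the kernel of $H^1(\gerbeE_{\dot V},Z\to T)\to\Hom_F(P_{\dot V},Z)$ equals the image of $H^1(F,T)$; the matching of images on the right-hand side is exactly where the algebraic description of $\overline{Y}[V_{\overline{F}},\dot V]_{0,+,\mathrm{tor}}$ and the commutativity of the left column of Lemma~\ref{bigdiag1} in the colimit are used, following \cite{Tasho2} essentially verbatim. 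Uniqueness: two functorial isomorphisms with the stated compatibilities differ by a natural transformation $\overline{Y}[V_{\overline{F}},\dot V]_{0,+,\mathrm{tor}}\to H^1(F,T)$ trivial on $Y[V_{\overline{F}}]_{0,\Gamma,\mathrm{tor}}$, hence factoring through $A^\vee[\dot V]_0$; such a transformation is forced to vanish by $H^1(F,P_{\dot V})=0$ (Proposition~\ref{vanishingH1}) together with functoriality, again as in \cite{Tasho2}.

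\emph{Compatibility with localization.} For the second diagram I would again descend to finite level. Both $l^i_v$ (from \eqref{Yloc}) and $\mathrm{loc}_v$ on gerbe cohomology are built out of the module map $\mathrm{loc}_v^{M_{E_i,\dot S_i,n_i}}$ and the gerbe morphism $\mathrm{loc}_v^{\gerbeE}$, whose compatibility with the respective canonical classes is precisely Corollary~\ref{tasho3.3.8}. Combining this with the local Tate--Nakayama isomorphism $\iota_v$ of \cite[Thm.~4.10]{Dillery}, with the local--global compatibility of the finite $\mathrm{TN}$'s (the square following Lemma~\ref{bigdiag1}, together with Lemma~\ref{localtoglobal1}), and with the triviality of $\check{H}^1(\overline{F_v}/F_v,P_{\dot V})$ (Proposition~\ref{mainvanishing1}) that makes $\mathrm{loc}_v$ well defined on the nose, one obtains commutativity after localizing every object; passing to the colimit over $i$ yields the diagram in the statement.

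The step I expect to be the main obstacle is the exactness bookkeeping that makes the diagram chase go through — lining up the purely algebraic short exact sequence for $\overline{Y}[V_{\overline{F}},\dot V]_{0,+,\mathrm{tor}}$ with the inflation--restriction sequence and with the columns of Lemma~\ref{bigdiag1} in the colimit — together with the parallel task of checking that $\iota_{\dot V}$ is independent of the index $i$, of the sections defining $!$, of the resolution $[Z\to T]\hookrightarrow[Z\to T_1]$, and of the representing gerbe. The mathematical content is identical to the number‑field case of \cite{Tasho2}; the genuine work here is making sure that every appeal to a purely inseparable extension or to $\OSp$ stays within the range where Lemma~\ref{bigdiag1}, Corollary~\ref{EtoEprime}, Corollary~\ref{ComparisonIso2}, and Lemma~\ref{toricomparison} apply.
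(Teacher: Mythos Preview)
Your plan describes the scaffolding around the map $\iota_{\dot V}$ but not the map itself, and this is a genuine gap. Given two short exact sequences and maps on the sub and on the quotient, one does not get a middle map for free; the sentence ``Lemma~\ref{bigdiag1} supplies the contribution of the subquotient \dots\ while the inflation--restriction sequence and the surjectivity of $\dag$ supply the rest'' is a description of compatibilities a map should satisfy, not a definition. In particular, lifting along $\dag$ (or along Res) involves choices, and nothing in your outline explains why those choices glue to a homomorphism, let alone a functorial one.

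The paper's route is different in kind. It first introduces \emph{intermediate} gerbes $\gerbeE_{E,\dot S_E}$ over $O_{F,S}$ (not $\gerbeE_{\dot V}$) with their own canonical classes $\xi_{E,\dot S_E}$, and then writes down an \emph{explicit} $\dot\xi_{E,\dot S_E,n_i}$-twisted cocycle
\[
z_{\bar\Lambda,i}\;=\;\bigl(\overline{k_i(c_{E,S})}\underset{O_{E,S}/O_{F,S}}{\sqcup} n_i\bar\Lambda,\ \Psi^{-1}_{E,S,n_i}([\bar\Lambda])\bigr)
\]
for each $\bar\Lambda\in\overline{Y}[S_E,\dot S_E]_0^{N_{E/F}}$ (Proposition~\ref{tatenak1}); the unbalanced cup product with the $n_i$-th root $\overline{k_i(c_{E,S})}$ is what actually produces the element of $T(O_S^{\mathrm{perf}}\otimes O_S^{\mathrm{perf}})$. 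Canonicity (independence of $c_{E,S}$, of $k_i$, of the gerbe, etc.) is then proved separately (Proposition~\ref{tatenak1bis}), and only afterwards are the finite-level isomorphisms $\iota_{E,\dot S_E}$ spliced via a separately-established inflation result $\varinjlim_i H^1(\gerbeE_i,Z\to T)\xrightarrow{\sim} H^1(\gerbeE_{\dot V},Z\to T)$ (Proposition~\ref{inflation1}). Your proposal mentions none of these three ingredients --- the explicit cup-product cocycle, the intermediate $O_{F,S}$-gerbes, or the inflation colimit --- and the five-lemma/diagram-chase picture you sketch cannot substitute for them. The localization compatibility likewise is checked in the paper at the level of these explicit cocycles, not abstractly.
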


As with its analogue \cite[Thm. 3.7.3]{Tasho2}, this theorem takes some work to prove. 
Although $\overline{Y}[(S_{i})_{E_{i}}, \dot{S}_{i}]_{0}$ is not $\Gamma_{E_{i}/F}$-stable, it still makes sense to define the group $\overline{Y}[(S_{i})_{E_{i}}, \dot{S}_{i}]_{0}^{N_{E_{i}/F}}$ as the intersection $\overline{Y}[(S_{i})_{E_{i}}, \dot{S}_{i}]_{0} \cap \overline{Y}[(S_{i})_{E_{i}}]_{0}^{N_{E_{i}/F}}.$ We have the linear-algebraic result:

\begin{lem}(\cite[Lems. 3.7.6, 3.7.7]{Tasho2} \label{tasho3.7.6} \begin{enumerate}
\item{
$\frac{\overline{Y}[(S_{i})_{E_{i}}, \dot{S}_{i}]_{0}^{N_{E_{i}/F}}}{I_{E_{i}/F}Y[(S_{i})_{E_{i}}]_{0}} = \frac{\overline{Y}[(S_{i})_{E_{i}}, \dot{S}_{i}]_{0}}{I_{E_{i}/F}Y[(S_{i})_{E_{i}}]_{0}}[\text{tor}]$.}
\item{
Every element of $\overline{Y}[(S_{i})_{E_{i}}, \dot{S}_{i}]_{0}/I_{E_{i}/F}Y[(S_{i})_{E_{i}}]_{0}$ has a representative supported on $\dot{S}_{i}$.}
\end{enumerate}
\end{lem}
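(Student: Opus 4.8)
The plan is to treat both assertions as purely linear-algebraic statements about the $\Z[\Gamma_{E_{i}/F}]$-modules appearing in \eqref{Y0SES}, so that no function-field phenomena intervene and the arguments of \cite[Lems. 3.7.6, 3.7.7]{Tasho2} transcribe directly. Write $G := \Gamma_{E_{i}/F}$, $P := (S_{i})_{E_{i}}$, $\dot{P} := \dot{S}_{i}$, and recall the exact sequence of $\Z[G]$-modules $0 \to Y[P]_{0} \to \overline{Y}[P]_{0} \to A^{\vee}[P]_{0} \to 0$ with $A^{\vee}$ finite. Since the composite $Y \to \overline{Y} \to A^{\vee}$ is zero, $I_{E_{i}/F}Y[P]_{0}$ is contained in $\overline{Y}[(S_{i})_{E_{i}},\dot{S}_{i}]_{0}$, so both quotients in the statement are defined and $\overline{Y}[(S_{i})_{E_{i}},\dot{S}_{i}]_{0}^{N_{E_{i}/F}}/I_{E_{i}/F}Y[P]_{0}$ is a subgroup of $Q := \overline{Y}[(S_{i})_{E_{i}},\dot{S}_{i}]_{0}/I_{E_{i}/F}Y[P]_{0}$. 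I would also record that $Y[P]_{0}$ and $\overline{Y}[P]_{0}$ are finitely generated and free over $\Z$, which is what makes the Tate-cohomological inputs below available.

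For (2) I would run a support-reduction. Given $f = \sum_{w \in P} c_{w}[w] \in \overline{Y}[(S_{i})_{E_{i}},\dot{S}_{i}]_{0}$, the defining condition forces $c_{w}$ to lie in the image of $Y$ for every $w \notin \dot{S}_{i}$. Fix such a $w$, lying over $v \in S_{i}$ with chosen lift $\dot{v} \in \dot{S}_{i}$, choose $\sigma \in G$ with $\sigma\dot{v} = w$, and use Condition \ref{placeconditions}(4) to pick $u \in \dot{S}_{i}$ with $\sigma u = u$. Then $(\sigma - 1)\bigl((\sigma^{-1}c_{w}) \otimes ([\dot{v}] - [u])\bigr)$ lies in $I_{E_{i}/F}Y[P]_{0}$, equals $c_{w}[w]$ plus terms supported on $\{\dot{v},u\} \subseteq \dot{S}_{i}$, and involves no place off $\dot{S}_{i}$ other than $w$ itself. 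Subtracting these corrections over the finitely many $w \notin \dot{S}_{i}$ yields a representative of the class of $f$ supported on $\dot{S}_{i}$, and since the corrections lie in $Y[P]_{0}$ the new representative still lies in $\overline{Y}[(S_{i})_{E_{i}},\dot{S}_{i}]_{0}$.

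For (1), the inclusion $Q[\text{tor}] \subseteq \overline{Y}[(S_{i})_{E_{i}},\dot{S}_{i}]_{0}^{N_{E_{i}/F}}/I_{E_{i}/F}Y[P]_{0}$ is immediate: if $nx \in I_{E_{i}/F}Y[P]_{0}$ then $nN_{E_{i}/F}x = 0$ in $\overline{Y}[P]_{0}$, which is $\Z$-free, so $N_{E_{i}/F}x = 0$. For the reverse inclusion, take $x \in \overline{Y}[(S_{i})_{E_{i}},\dot{S}_{i}]_{0}$ with $N_{E_{i}/F}x = 0$ and set $N := \exp(A^{\vee})$; then $Nx$ has trivial image in $A^{\vee}[P]_{0}$, hence lies in $Y[P]_{0}$, and it is still killed by $N_{E_{i}/F}$. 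Now invoke the standard fact that for a finitely generated $\Z$-free $\Z[G]$-module $M$ the torsion subgroup of the coinvariants $M/I_{E_{i}/F}M$ equals $\widehat{H}^{-1}(G,M) = \ker(N_{E_{i}/F}\colon M\to M)/I_{E_{i}/F}M$ (the norm factors mutiplication by $\#G$ through a map into the $\Z$-free module $M^{G}$): applied to $M = Y[P]_{0}$, it shows the image of $Nx$ in $Y[P]_{0}/I_{E_{i}/F}Y[P]_{0}$ is torsion, so $mNx \in I_{E_{i}/F}Y[P]_{0}$ for some $m$, and $x$ maps to a torsion class in $Q$.

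The only genuinely delicate point is the bookkeeping in (2): every correction must be arranged to stay inside $I_{E_{i}/F}Y[P]_{0}$ rather than the a priori larger $I_{E_{i}/F}\overline{Y}[P]_{0}$, and Condition \ref{placeconditions}(4) (existence, for each $\sigma \in \Gamma_{E_{i}/F}$, of a place in $\dot{S}_{i}$ fixed by $\sigma$) is precisely what makes the fixed-point anchor $u$ available; the analogous subtlety in (1) is dissolved by first passing through $\exp(A^{\vee})$ to land in $Y[P]_{0}$. Beyond these two observations the argument is formal, and I do not anticipate any further obstacle.
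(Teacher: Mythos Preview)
Your proposal is correct and is exactly what the paper does: the lemma is stated here without proof, citing \cite[Lems. 3.7.6, 3.7.7]{Tasho2}, and your argument is a faithful transcription of those proofs---the key inputs being Condition \ref{placeconditions}(4) for the support-reduction in (2) and the identification $\widehat{H}^{-1}(G,M) = (M_G)_{\mathrm{tor}}$ for $\Z$-free $M$ in (1). One cosmetic remark: in the displayed computation for (2) it may help the reader to write out $(\sigma-1)\bigl((\sigma^{-1}c_w)\otimes([\dot v]-[u])\bigr) = c_w[w] - c_w[u] - (\sigma^{-1}c_w)[\dot v] + (\sigma^{-1}c_w)[u]$ explicitly, so that the claim ``involves no place off $\dot S_i$ other than $w$'' is visibly true.
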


Following the outline of \cite[\S 3.7]{Tasho2}, the first step is proving an analogous Tate-Nakayama isomorphism for the groups $P_{E_{i}, \dot{S}_{i}} := \varprojlim_{n \in \mathbb{N}} P_{E_{i}, \dot{S}_{i}, n}$, which is useful because $P_{\dot{V}} = \varprojlim_{i} P_{E_{i}, \dot{S}_{i}}$ (cf. \cite[\S 3.3]{Tasho2}). Fix a triple $(E, S, \dot{S}_{E})$ satisfying Conditions \ref{placeconditions} and denote by $\mathcal{T}_{E}$ the full subcategory of objects $[Z \to T]$ of $\mathcal{T}$ such that $T$ splits over $E$. 

Note that $\check{H}^{2}(O_{S}^{\text{perf}}/O_{S}, P_{E, \dot{S}_{E}}) = H^{2}(O_{F,S}, P_{E, \dot{S}_{E}}) = \varprojlim_{n} H^{2}(O_{F,S}, P_{E, \dot{S}_{E},n})$; the first equality follows from \cite[03AV]{Stacksproj} and the second one from the vanishing of $\varprojlim^{(1)} H^{1}(O_{F,S}, P_{E, \dot{S}_{E},n})$ due to:

\begin{lem} The groups $H^{1}(O_{F,S}, P_{E, \dot{S}_{E},n})$ are finite for all finite $n$.

\end{lem}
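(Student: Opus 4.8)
Write $P:=P_{E,\dot{S}_E,n}$ and recall that $H^i$ abbreviates $H^i_{\mathrm{fppf}}$. The plan is to descend the computation to the finite \'etale Galois cover $O_{E,S}/O_{F,S}$ using the \v{C}ech-to-derived spectral sequence, and there to exploit that $P$ splits over $O_{E,S}$ into a product of $\mu_m$'s, whose cohomology over $O_{E,S}$ is controlled by the Kummer sequence together with the classical finiteness of $S$-units and $S$-class groups of global function fields. Concretely, by Condition \ref{placeconditions}(1) the cover $O_{E,S}/O_{F,S}$ is finite \'etale Galois with group $\Gamma_{E/F}$, hence an fppf cover, so \cite[03AV]{Stacksproj} gives a spectral sequence $E_2^{p,q}=\check{H}^p(O_{E,S}/O_{F,S},\underline{H}^q(P))\Rightarrow H^{p+q}(O_{F,S},P)$. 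Using Lemma \ref{etalesplitting} to identify $O_{E,S}^{\bigotimes_{O_{F,S}}(p+1)}\cong\prod_{\Gamma_{E/F}^{p}}O_{E,S}$, exactly as in the proof of Corollary \ref{cechtogalois} the $E_2$-page becomes $E_2^{p,q}=H^p(\Gamma_{E/F},H^q(O_{E,S},P))$, and the five-term exact sequence yields
\[
0\to H^1(\Gamma_{E/F},P(O_{E,S}))\to H^1(O_{F,S},P)\to H^1(O_{E,S},P)^{\Gamma_{E/F}}.
\]
Thus it will suffice to prove that the outer two terms are finite.

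For the first term: $P$ is a finite flat $O_{F,S}$-group scheme, so $P(O_{E,S})$ is a finite abelian group (it embeds into $P(F^s)$), and $\Gamma_{E/F}$ is finite, whence $H^1(\Gamma_{E/F},P(O_{E,S}))$ is finite. For the last term, the key point is that $P$ is \emph{split over} $O_{E,S}$: its Cartier dual, the \'etale $O_{F,S}$-group attached to $M_{E,\dot S_E,n}$, becomes constant over $O_{E,S}$, so $P_{O_{E,S}}\cong\prod_j\mu_{m_j}$ with each $m_j\mid n$, and therefore $H^1(O_{E,S},P)\cong\prod_j H^1(O_{E,S},\mu_{m_j})$. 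For each $m\mid n$ the Kummer sequence $1\to\mu_m\to\mathbb{G}_m\xrightarrow{m}\mathbb{G}_m\to1$, which is exact in the fppf topology even when $p\mid m$, together with $H^1(O_{E,S},\mathbb{G}_m)=\mathrm{Pic}(O_{E,S})$, gives an exact sequence
\[
0\to O_{E,S}^{\times}/(O_{E,S}^{\times})^{m}\to H^1(O_{E,S},\mu_m)\to\mathrm{Pic}(O_{E,S})[m]\to0
\]
(by Condition \ref{placeconditions}(2) the right-hand term is in fact $0$). The left-hand term is finite because $O_{E,S}^{\times}$ is a finitely generated abelian group by the $S$-unit theorem for the global function field $E$, and the right-hand term is finite because $\mathrm{Cl}(O_{E,S})$ is finite. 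Hence $H^1(O_{E,S},\mu_m)$ is finite, so $H^1(O_{E,S},P)$ is finite, and combining the two bounds shows $H^1(O_{F,S},P_{E,\dot S_E,n})$ is an extension of a finite group by a finite group, hence finite, for every $n$.

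I do not anticipate a genuine obstacle here; the only point requiring a little care is the identification of the \v{C}ech-to-derived $E_2$-page with group cohomology of $\Gamma_{E/F}$, which however is handled verbatim by the argument already used for Corollary \ref{cechtogalois} since the cover is finite \'etale Galois. One could also avoid the Kummer-sequence description of $H^1(O_{E,S},\mu_m)$ entirely by invoking a general finiteness theorem for fppf $H^1$ of finite flat group schemes over rings of $S$-integers of global function fields (as in \cite{Cesnavicius}); the route above is preferred because it uses only tools already developed in the paper.
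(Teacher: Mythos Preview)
Your argument is correct, and it takes a genuinely different route from the paper. The paper does not descend along $O_{E,S}/O_{F,S}$ at all; instead it invokes \cite[Prop.~4.12]{Cesnavicius} to obtain that the natural map
\[
H^{1}(O_{F,S},P)\longrightarrow H^{1}(\A_{F,S},P)=\prod_{v\in S}H^{1}(F_v,P)\times\prod_{v\notin S}H^{1}(O_v,P)
\]
has finite kernel and closed, discrete image, then shows the target is compact (each local factor $H^{1}(F_v,P)$ is profinite, as noted in the proof of Corollary~\ref{tasho3.3.8}, and $H^{1}(O_v,P)$ is open hence closed in it), so the discrete image must be finite. Your approach is more elementary and self-contained: Hochschild--Serre for the finite Galois cover, the splitting of $P$ over $O_{E,S}$ into a product of $\mu_m$'s, and the Kummer sequence together with the $S$-unit theorem and finiteness of the $S$-class group. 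What the paper's route buys is that it fits naturally into the adelic and topological framework already being set up (and reuses the local profiniteness from Corollary~\ref{tasho3.3.8}); what yours buys is that it avoids importing the topological machinery of \cite{Cesnavicius} and instead reduces to classical finiteness theorems, which is arguably cleaner for a result of this type. Amusingly, the alternative you mention in your last sentence---invoking a general finiteness theorem from \cite{Cesnavicius}---is essentially the paper's strategy.
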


\begin{proof} Set $P = P_{E, \dot{S}_{E},n}$. By \cite[Prop. 4.12]{Cesnavicius} (and its proof), the natural map $$H^{1}(O_{F,S}, P) \to H^{1}(\A_{F,S}, P) = \prod_{v \in S} H^{1}(F_{v}, P) \times \prod_{v \notin S} H^{1}(O_{v}, P) $$ has closed, discrete image and finite kernel, so it suffices to show that the image is finite. We claim that the right-hand side is compact---for this claim, it's enough by Tychonoff's theorem to prove that each $H^{1}(F_{v}, P)$ and $H^{1}(O_{v}, P)$ is compact. We showed this result for the former groups in Corollary \ref{tasho3.3.8}; for the latter, note that \cite{Cesnavicius} (3.1.1) says that each subset $H^{1}(O_{v}, P) \subseteq H^{1}(F_{v}, P)$ may be canonically topologized so that this inclusion is open, and since $H^{1}(F_{v}, P)$ is profinite (and hence totally disconnected), it is also closed, and therefore compact. Now the result follows, since closed, discrete subspaces of compact spaces are finite.
\end{proof}

We thus have a canonical class $\xi_{E, \dot{S}_{E}} \in \check{H}^{2}(O_{S}^{\text{perf}}/O_{S}, P_{E, \dot{S}_{E}}) =  \varprojlim_{n} \check{H}^{2}(O_{S}^{\text{perf}}/O_{F,S}, P_{E, \dot{S}_{E},n})$ given by the limit of the classes $\xi_{E, \dot{S}_{E}, n}$ as in Lemma \ref{coherence1}. By \cite[\S 2.3]{Dillery}, $\check{H}^{2}(O_{S}^{\text{perf}}/O_{S}, P_{E, \dot{S}_{E}})$ is in bijective correspondence with isomorphism classes of $P_{E, \dot{S}_{E}}$-gerbes (over $\text{Sch}/O_{F,S}$) split over $O_{S}^{\text{perf}}$; fix such a gerbe $\gerbeE_{E, \dot{S}_{E}}$. For any $[Z \to T] \in \mathcal{T}_{E}$, the group $H^{1}(\gerbeE_{E, \dot{S}_{E}}, Z \to T)$ is defined identically as above. We have the usual inflation-restriction exact sequence
$$1 \to H^{1}(O_{S}, T) \to H^{1}(\gerbeE_{E, \dot{S}_{E}}, Z \to T) \to \Hom_{O_{F,S}}(P_{E_{i}, \dot{S}_{i}}, Z) \to H^{2}(O_{S}, T),$$ where the last map is the composition of the direct limit of the maps $\Theta^{P}_{E, \dot{S}_{E}, n}$ defined by equation \eqref{thetaP} with the natural map $H^{2}(O_{F,S}, Z) \to H^{2}(O_{F,S}, T)$. 

Fix a 2-cochain $c_{E,S} \in [\text{Res}_{E/S}(\mathbb{G}_{m})](O_{E,S}^{\bigotimes_{O_{F,S}}3})$, a cofinal system $\{n_{i}\}_{i \in \mathbb{N}}$ in $\mathbb{N}^{\times}$, and $n_{i}$-root maps $k_{i}$ as in \S 3.1. 
Recall that if $\Psi_{E,S,n_{i}}(\text{id}) := \beta_{i} \in \text{Maps}(S_{E}, M_{E, \dot{S}_{E}, n_{i}}^{\vee})_{0}$, we showed in \S 3.1 that $\xi_{E, \dot{S}_{E}, n_{i}}$ is represented by the \textit{explicit} 2-cocycle $$\dot{\xi}_{E, \dot{S}_{E}, n_{i}} := d(\overline{k_{i}(c_{E,S})}) \underset{O_{E,S}/O_{F,S}} \sqcup \beta_{i},$$ where for $x \in  [\text{Res}_{E/S}(\mathbb{G}_{m})](R)$, $\bar{x}$ is its image in $[\text{Res}_{E/S}(\mathbb{G}_{m})/\mathbb{G}_{m}](R)$. Set $\dot{\xi}_{E, \dot{S}_{E}} := \varprojlim \dot{\xi}_{E, \dot{S}_{E}, n_{i}}$.

Using  \cite[Fact 3.2.3]{Tasho2} and \cite[Lems. 4.5, 4.6]{Dillery} for \v{C}ech cup product computations, replacing the extension $E/F$ loc. cit. with $O_{E,S}/O_{F,S}$, we see that $p_{i+1,i}(\dot{\xi}_{E, \dot{S}_{E}, n_{i+1}}) = \dot{\xi}_{E, \dot{S}_{E}, n_{i}}$. Giving an element of $Z^{1}(\mathcal{E}_{\dot{\xi}_{E, \dot{S}_{E}, n_{i}}}, Z \to T)$ is equivalent to giving a \textit{$\dot{\xi}_{E, \dot{S}_{E}, n_{i}}$-twisted $Z$ -cocycle in $T$} (cf. \cite[\S 2.5]{Dillery}), which is a pair $(x, \psi)$, where $P_{E_{i}, \dot{S}_{i}} \xrightarrow{\psi} Z$ and $x \in T(O_{S}^{\text{perf}} \otimes_{O_{F,S}} O_{S}^{\text{perf}})$ such that $dx = \psi(\dot{\xi}_{E, \dot{S}_{E}, n_{i}})$. This description makes it clear that $\varinjlim_{i} H^{1}(\gerbeE_{\dot{\xi}_{E, \dot{S}_{E},n_{i}}}, Z \to T) \to H^{1}(\gerbeE_{\dot{\xi}_{E, \dot{S}_{E}}}, Z \to T)$ is bijective.


For a fixed $[Z \to T]$ in $\mathcal{T}_{E}$, we set $\overline{T} := T/Z$, and recall our notation with cocharacter groups from earlier in this subsection. Note that for $i$ large enough so that $\text{exp}(Z)$ divides $n_{i}$, for $g \in \overline{Y}[S_{E}]_{0}$, we have $n_{i} \cdot g \in Y[S_{E}]_{0}$, the restriction of $n_{i} \cdot g$ to the subgroup $\text{Res}_{E/S}(\mu_{n_{i}})/\mu_{n_{i}}$ factors through the subgroup $Z$, and equals the map $$\frac{\text{Res}_{E/S}(\mu_{n_{i}})}{\mu_{n_{i}}} \xrightarrow{[g] \times -} Z$$ induced by \eqref{pairing1}, where $[g]$ denotes the image of $g$ in $A^{\vee}[S_{E}]_{0}$ via \eqref{Y0SES}.

Define $A^{\vee}[\dot{S}_{E}]^{N_{E/F}}$ to be $A^{\vee}[S_{E}]_{0}^{N_{E/F}} \cap A^{\vee}[\dot{S}_{E}]_{0}$, which is in bijection with $\Hom(P_{E, \dot{S}_{E}}, Z)^{\Gamma}$ via the map $\Psi_{E,S}$ defined in Lemma \ref{threepartlemma}. Following the linear algebraic situation for the group $P_{\dot{V}}$, define $\overline{Y}[S_{E}, \dot{S}_{E}]_{0}$ as the preimage of $A^{\vee}[\dot{S}_{E}]_{0}$ in $\overline{Y}[S_{E}]_{0}$, and set $\overline{Y}[S_{E}, \dot{S}_{E}]_{0}^{N_{E/F}} := \overline{Y}[S_{E}, \dot{S}_{E}]_{0} \cap \overline{Y}[S_{E}]_{0}^{N_{E/F}}$. We are now ready to give the first version of the extended Tate-Nakayama isomorphism, which is the analogue of \cite[Prop. 3.7.8]{Tasho2}: 

\begin{prop}\label{tatenak1} \begin{enumerate}
\item{Given $\bar{\Lambda} \in \overline{Y}[S_{E}, \dot{S}_{E}]_{0}^{N_{E/F}}$ and $i$ large enough so that $\text{exp}(Z)$ divides $n_{i}$, we may define a $\dot{\xi}_{E, \dot{S}_{E}, n_{i}}$-twisted 1-cocycle valued in $T$ by the pair $$z_{\bar{\Lambda},i} := (\overline{k_{i}(c_{E,S})} \underset{O_{E,S}/O_{F,S}} \sqcup n_{i} \bar{\Lambda}, \Psi^{-1}_{E, S, n_{i}}([\bar{\Lambda}])),$$ where the unbalanced cup product is with respect to the pairing induced by \eqref{Y0pairing}. }

\item{The pullback $p_{i+1,i}^{*}(z_{\bar{\Lambda},i})$ coincides with the $\dot{\xi}_{E, \dot{S}_{E}, n_{i+1}}$-twisted cocycle $z_{\bar{\Lambda},i+1}$. Thus, pulling back any $z_{\Lambda, i}$ to $\gerbeE_{\dot{\xi}_{E, \dot{S}_{E}}}$ defines the same $\dot{\xi}_{E, \dot{S}_{E}}$-twisted cocycle, denoted by $z_{\bar{\Lambda}}$.}

\item{The assignment $\bar{\Lambda} \mapsto z_{\bar{\lambda}}$ defines an isomorphism $$\dot{\iota}_{E, \dot{S}_{E}} \colon \frac{\overline{Y}[S_{E}, \dot{S}_{E}]_{0}^{N_{E/F}}}{I_{E/F}Y[S_{E}]_{0}} \to H^{1}(\gerbeE_{\dot{\xi}_{E, \dot{S}_{E}}}, Z \to T)$$ which is functorial in $[Z \to T] \in \mathcal{T}_{E}$ and makes the following diagram commute:
\[
\begin{tikzcd}
1 \arrow{r} & \widehat{H}^{-1}(\Gamma_{E/F}, Y[S_{E}]_{0})  \arrow{r} \arrow["\text{TN}"]{d} & \frac{\overline{Y}[S_{E}, \dot{S}_{E}]_{0}^{N_{E/F}}}{I_{E/F}Y[S_{E}]_{0}} \arrow["\dot{\iota}_{E, \dot{S}_{E}} "]{d} \arrow{r} & A^{\vee}[\dot{S}_{E}]^{N_{E/F}} \arrow["\Psi_{E,S}^{-1}"]{d} \arrow{r} & \widehat{H}^{0}(\Gamma_{E/F}, Y[S_{E}]_{0}) \arrow["-\text{TN}"]{d}  \\
1 \arrow{r} & H^{1}(O_{F,S}, T) \arrow{r} & H^{1}(\gerbeE_{\dot{\xi}_{E, \dot{S}_{E}}}, Z \to T) \arrow{r} & \Hom(P_{E, \dot{S}_{E}}, Z)^{\Gamma} \arrow{r} & H^{2}(O_{F,S}, T). 
\end{tikzcd}
\]
 }
\end{enumerate}
\end{prop}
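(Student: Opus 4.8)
The plan is to follow the proof of \cite[Prop.~3.7.8]{Tasho2} essentially line by line, translating each step from Galois cohomology into \v{C}ech cohomology over $O_{E,S}/O_{F,S}$ and $\OSp/O_{F,S}$; the machinery this requires --- the unbalanced cup product and its Leibniz rule (\cite[\S 4.2]{Dillery} and Appendix~A), together with the comparison isomorphisms of Corollaries \ref{cechtogalois} and \ref{ComparisonIso2} --- is already in place. For part~(1), I would check directly that $z_{\bar{\Lambda},i}=(x,\psi)$, with $x=\overline{k_i(c_{E,S})}\sqcup n_i\bar{\Lambda}$ and $\psi=\Psi^{-1}_{E,S,n_i}([\bar{\Lambda}])$, satisfies the twisted-cocycle identity $dx=\psi(\dot{\xi}_{E,\dot{S}_E,n_i})$. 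Since $\dot{\xi}_{E,\dot{S}_E,n_i}=d(\overline{k_i(c_{E,S})})\sqcup\beta_i$ with $\beta_i=\Psi_{E,S,n_i}(\mathrm{id})$, the Leibniz rule for the unbalanced cup product reduces this to matching two explicit \v{C}ech cochains; the match is precisely Kaletha's computation, combining the fact that $n_i\bar{\Lambda}$ lies in $Y[S_E]_0^{N_{E/F}}$ (so represents a class in $\widehat{H}^{-1}(\Gamma_{E/F},Y[S_E]_0)$) with the compatibility remark immediately preceding the statement --- that the restriction of $n_ig$ to $\mathrm{Res}_{E/S}(\mu_{n_i})/\mu_{n_i}$ factors through $Z$ as the homomorphism induced by $[g]\times-$ via \eqref{pairing1} --- and with the definition of $\psi$, which makes $\psi$ paired against $\beta_i$ reproduce the image $[\bar{\Lambda}]$ of $\bar{\Lambda}$ in $A^\vee[S_E]_0$. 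Functoriality in $[Z\to T]\in\mathcal{T}_E$ is immediate from the formulas.

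For part~(2), the key point is that $p_{i+1,i}(\dot{\xi}_{E,\dot{S}_E,n_{i+1}})=\dot{\xi}_{E,\dot{S}_E,n_i}$ on the nose (established in \S 3.2 via \cite[Fact~3.2.3]{Tasho2} and \cite[Lems.~4.5,~4.6]{Dillery}), so the transition morphism $\gerbeE_{\dot{\xi}_{E,\dot{S}_E,n_{i+1}}}\to\gerbeE_{\dot{\xi}_{E,\dot{S}_E,n_i}}$ can be realized by Construction \ref{changeofgerbe} with trivial $x$-datum. Pulling a twisted cocycle back along it therefore leaves the $T$-component unchanged and post-composes $\psi$ with $p_{i+1,i}$; the $\psi$-components of $z_{\bar{\Lambda},i}$ and $z_{\bar{\Lambda},i+1}$ then agree by the compatibility of $\Psi_{E,S,n_i}$ and $\Psi_{E,S,n_{i+1}}$ under $M_{E,\dot{S}_E,n_i}\hookrightarrow M_{E,\dot{S}_E,n_{i+1}}$ (Lemma \ref{threepartlemma}(2)), while the $T$-components agree by a one-line computation using $k_{i+1}(c_{E,S})^{n_{i+1}/n_i}=k_i(c_{E,S})$ together with $n_{i+1}\bar{\Lambda}=(n_{i+1}/n_i)\cdot(n_i\bar{\Lambda})$, which cancels the scaling factors in the cup product. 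This produces the well-defined twisted cocycle $z_{\bar{\Lambda}}$, compatibly with the already-noted bijection $\varinjlim_i H^1(\gerbeE_{\dot{\xi}_{E,\dot{S}_E,n_i}},Z\to T)\xrightarrow{\sim} H^1(\gerbeE_{\dot{\xi}_{E,\dot{S}_E}},Z\to T)$.

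For part~(3), well-definedness of $\bar{\Lambda}\mapsto z_{\bar{\Lambda}}$ modulo $I_{E/F}Y[S_E]_0$ is the classical Tate--Nakayama fact that a summand of the form $(\sigma-1)\mu$ contributes an explicit \v{C}ech $1$-coboundary, and functoriality is clear from the formulas. For the displayed diagram: its bottom row is the inflation--restriction sequence for $\gerbeE_{\dot{\xi}_{E,\dot{S}_E}}$ recalled just above the statement; its top row is exact by a purely linear-algebraic argument --- the snake lemma applied to \eqref{Y0SES} and to the sub-sequence obtained by pulling back $A^\vee[\dot{S}_E]_0$, combined with Lemma \ref{tasho3.7.6} --- which simultaneously identifies the leftmost term with $\widehat{H}^{-1}(\Gamma_{E/F},Y[S_E]_0)$. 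The leftmost square commutes because on $Y[S_E]_0$ (where $\psi=0$) the assignment $\bar{\Lambda}\mapsto(\overline{k_i(c_{E,S})}\sqcup n_i\bar{\Lambda},\,0)$ is exactly the global Tate--Nakayama isomorphism of Lemma \ref{bigdiag1}, up to the $n_i$-rescaling built into $\overline{k_i(c_{E,S})}$ in \S 3.1; the middle square commutes by the construction of $\psi$ and of $\Psi_{E,S}$; and the rightmost square commutes by the same boundary-map compatibility already used in Lemma \ref{bigdiag1}. Since $\mathrm{TN}$, $-\mathrm{TN}$ and $\Psi_{E,S}^{-1}$ are isomorphisms, the five lemma forces $\dot{\iota}_{E,\dot{S}_E}$ to be an isomorphism as well.

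The main obstacle will be part~(1): carrying out the twisted-cocycle verification entirely inside the \v{C}ech and unbalanced-cup-product formalism, where one must keep careful track of which cover and in which (constant versus multiplicative) sheaf each cochain lives, and confirm that the Leibniz rule and the pairing \eqref{pairing1} interact exactly as in the Galois-cohomological original. Once that bookkeeping is settled, parts (2) and (3) are formal consequences of the structures already established.
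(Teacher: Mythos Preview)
Your proposal is correct and follows essentially the same approach as the paper's proof: the Leibniz rule from \cite[Prop.~4.4]{Dillery} for part~(1), the $\mathbb{Z}$-bilinearity of the unbalanced cup product together with the coherence of the $k_i$ for part~(2), and the reduction to the Galois cup product on $Y[S_E]_0$ (via \cite[Prop.~4.2]{Dillery} and \cite[\S4.3]{Tasho}) followed by the five-lemma for part~(3). The only place where the paper is slightly more explicit than your sketch is the leftmost square in part~(3): rather than leaving it at ``up to the $n_i$-rescaling,'' the paper uses $\mathbb{Z}$-bilinearity to rewrite $\overline{k_i(c_{E,S})}\sqcup n_i\bar{\Lambda}$ as $\overline{c_{E,S}}\sqcup\bar{\Lambda}$ when $\bar{\Lambda}\in Y[S_E]_0$, and then invokes the \v{C}ech--Galois comparison to identify this with the classical Tate--Nakayama cup product, which simultaneously gives well-definedness modulo $I_{E/F}Y[S_E]_0$ and commutativity of that square.
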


\begin{proof} 
Proving the first claim just means showing, for fixed large enough $i$, the equality $$d(\overline{k_{i}(c_{E,S})} \underset{O_{E,S}/O_{F,S}} \sqcup n_{i} \bar{\Lambda}) = \Psi^{-1}_{E, S, n_{i}}([\bar{\Lambda}])(\dot{\xi}_{E, \dot{S}_{E}, n_{i}}).$$ 

Viewing $n_{i} \bar{\Lambda}$ as a $-1$-cochain, we see that $d(n_{i} \bar{\Lambda}) = 0$, since by construction $\bar{\Lambda}$ is killed by $N_{E/F}$. Hence, it follows from \cite[Prop. 4.4]{Dillery},that 
\begin{equation}\label{tatenak1.1} d(\overline{k_{i}(c_{E,S})} \underset{O_{E,S}/O_{F,S}} \sqcup n_{i} \bar{\Lambda})  = d[\overline{k_{i}(c_{E,S})}] \underset{O_{E,S}/O_{F,S}} \sqcup n_{i} \bar{\Lambda},\end{equation}
and now since $d[\overline{k_{i}(c_{E,S})}]$ lies in the subgroup $[\text{Res}_{E/S}(\mu_{n_{i}})/\mu_{n_{i}}]((O_{S}^{\text{perf}})^{\bigotimes_{O_{F,S}}3})$ and we know that the restriction of $n_{i} \bar{\Lambda}$ to $\text{Res}_{E/S}(\mu_{n_{i}})/\mu_{n_{i}}$ is equal to $\Phi_{E,S,n_{i}}([\bar{\Lambda}])$, we can rewrite the right-hand term of \eqref{tatenak1.1} as $d[\overline{k_{i}(c_{E,S})}] \underset{O_{E,S}/O_{F,S}} \sqcup \Phi_{E,S,n_{i}}([\bar{\Lambda}])$. By functoriality, this term can be rewritten as 
$$d[\overline{k_{i}(c_{E,S})}] \underset{O_{E,S}/O_{F,S}} \sqcup \Phi_{E,S,n_{i}}( \Psi_{E,S,n_{i}}^{-1}([\bar{\Lambda}])^{\vee}(\beta_{i})),$$ which again by functoriality may be expressed as $$d[\overline{k_{i}(c_{E,S})}] \underset{O_{E,S}/O_{F,S}} \sqcup \Psi_{E,S,n_{i}}^{-1}([\bar{\Lambda}]) \circ \Phi_{E,S,n_{i}}(\beta_{i}) =  \Psi_{E,S,n_{i}}^{-1}([\bar{\Lambda}])(d[\overline{k_{i}(c_{E,S})}] \underset{O_{E,S}/O_{F,S}} \sqcup \Phi_{E,S,n_{i}}(\beta_{i})),$$ where to obtain the above equality we are using the fact $\Psi_{E,S,n_{i}}^{-1}([\bar{\Lambda}])$ is $\Gamma_{E/F}$-fixed to apply \cite[Lem. 4.6]{Dillery}. By definition this last term equals $\Psi_{E,S,n_{i}}^{-1}([\bar{\Lambda}])(\dot{\xi}_{E, \dot{S}_{E}, n_{i}})$, as desired.

We now move to the second claim of the proposition. The first step is noting that $p_{i+1,i} \circ \Psi^{-1}_{E,S,n_{i+1}}([\bar{\Lambda}]) = \Psi^{-1}_{E,S,n_{i}}([\bar{\Lambda}])$, since, as discussed in Lemma \ref{threepartlemma}, the maps $\Psi_{E,S,n}$ are compatible with the projection maps for the system $\{P_{E,\dot{S}_{E}, n_{i}}\}_{i}$. Moreover, we have by the $\Z$-bilinearity of the unbalanced cup product and coherence of the system of maps $\{k_{i}\}_{i}$ that $$ \overline{k_{i+1}(c_{E,S})} \underset{O_{E,S}/O_{F,S}} \sqcup n_{i+1} \bar{\Lambda} =  \overline{k_{i+1}(c_{E,S})} \underset{O_{E,S}/O_{F,S}} \sqcup (\frac{n_{i+1}}{n_{i}}) [n_{i} \bar{\Lambda}] = \overline{k_{i}(c_{E,S})} \underset{O_{E,S}/O_{F,S}} \sqcup n_{i} \bar{\Lambda},$$ concluding the proof of the second claim.

It is clear that the map $\bar{\Lambda} \mapsto z_{\bar{\Lambda}}$ defines a functorial homomorphism from $\overline{Y}[S_{E}, \dot{S}_{E}]_{0}^{N_{E/F}}$ to $H^{1}(\gerbeE_{\dot{\xi}_{E, \dot{S}_{E}}}, Z \to T)$. Moreover, if $\bar{\Lambda}$ lies in the subgroup $Y[S_{E}]_{0}$, we first note that $[\bar{\Lambda}]$ vanishes in $A^{\vee}[S_{E}]_{0}$, so that the homomorphism associated to $z_{\bar{\Lambda}}$ is trivial. By $\Z$-bilinearity and the fact that already $\bar{\Lambda} \in Y[S_{E}]_{0}$, the associated twisted cocycle (which is, by the previous line, an actual cocycle) is given by $\overline{c_{E,S}} \underset{O_{E,S}/O_{F,S}} \sqcup \bar{\Lambda}$, which, since $\overline{c_{E,S}}$ is valued in the finite \'{e}tale extension $O_{E,S}/O_{F,S}$, \cite[Prop. 4.2]{Dillery} and \cite[\S 4.3]{Tasho} tell us that (after applying the appropriate comparison isomorphisms) this cup product may be computed as the usual Galois-cohomological cup product $\overline{c_{E,S}} \cup \bar{\Lambda}$, which sends all of $I_{E/F}Y[S_{E}]_{0}$ to 1-coboundaries, showing that the above map induces a functorial homomorphism $$\frac{\overline{Y}[S_{E}, \dot{S}_{E}]_{0}^{N_{E/F}}}{I_{E/F}Y[S_{E}]_{0}}  \to H^{1}(\gerbeE_{\dot{\xi}_{E, \dot{S}_{E}}}, Z \to T),$$ as asserted. This argument also shows that the top square in the diagram of the proposition commutes. The commutativity of the middle square is by construction, and the final square commutes by the diagram in Lemma \ref{bigdiag1}. Since all horizontal maps in the diagram apart from $\dot{\iota}_{E, \dot{S}_{E}}$ are isomorphisms, it is an isomorphism as well by the five-lemma.
\end{proof}

The issue now is that, given our non-canonical explicit gerbe $\gerbeE_{\dot{\xi}_{E, \dot{S}_{E}}}$, it is not clear that such an isomorphism will be canonical, or even that the groups $H^{1}(\gerbeE_{E, \dot{S}_{E}}, Z \to T)$ are canonical. The following result addresses these concerns:

\begin{prop}\label{tatenak1bis} The group $H^{1}(\gerbeE_{E, \dot{S}_{E}}, Z \to T)$ is independent of the choice of gerbe $\gerbeE_{E, \dot{S}_{E}}$ up to unique isomorphism, and is equipped with a canonical functorial isomorphism $\iota_{E, \dot{S}_{E}}$ to $\frac{\overline{Y}[S_{E}, \dot{S}_{E}]_{0}^{N_{E/F}}}{I_{E/F}Y[S_{E}]_{0}}$ that fits into the commutative diagram of Proposition \ref{tatenak1}.
\end{prop}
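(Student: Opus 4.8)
The plan is to reduce everything to the explicit model $\gerbeE_{\dot{\xi}_{E,\dot{S}_E}}$ of \S 3.7 and the classification of gerbes from \cite[\S 2.3]{Dillery}, which identifies isomorphism classes of $P_{E,\dot{S}_E}$-gerbes over $\text{Sch}/O_{F,S}$ split over $O_S^{\text{perf}}$ with $\check{H}^2(O_S^{\text{perf}}/O_{F,S},P_{E,\dot{S}_E})$. Given this, any gerbe $\gerbeE_{E,\dot{S}_E}$ representing $\xi_{E,\dot{S}_E}$ admits a $P_{E,\dot{S}_E}$-gerbe isomorphism $h$ to $\gerbeE_{\dot{\xi}_{E,\dot{S}_E}}$ over the identity on bands; such an $h$ induces $h^*\colon H^1(\gerbeE_{E,\dot{S}_E},Z\to T)\xrightarrow{\sim}H^1(\gerbeE_{\dot{\xi}_{E,\dot{S}_E}},Z\to T)$, which is the identity on the subgroup $H^1(O_{F,S},T)$, restricts to the identity on $\Hom(P_{E,\dot{S}_E},Z)^\Gamma$, and is functorial in $[Z\to T]\in\mathcal{T}_E$. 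I would set $\iota_{E,\dot{S}_E}:=\dot{\iota}_{E,\dot{S}_E}\circ h^*$ with $\dot{\iota}_{E,\dot{S}_E}$ from Proposition \ref{tatenak1}; since $h^*$ is compatible with both rows of the inflation--restriction sequence and $\dot{\iota}_{E,\dot{S}_E}$ fits the diagram of Proposition \ref{tatenak1}, the map $\iota_{E,\dot{S}_E}$ does as well, and its functoriality comes from Proposition \ref{tatenak1}(3).

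The real content is that $h^*$, hence $\iota_{E,\dot{S}_E}$, is independent of $h$, and that $\dot{\iota}_{E,\dot{S}_E}$ is independent of the data ($c_{E,S}$, the cofinal sequence $\{n_i\}$, the root maps $k_i$, and the $p^{m_i}$-th root maps) chosen in \S 3.7. Two choices of $h$ differ by an automorphism $\alpha$ of $\gerbeE_{E,\dot{S}_E}$ over its band, classified up to $2$-isomorphism by a class $[z]\in\check{H}^1(O_S^{\text{perf}}/O_{F,S},P_{E,\dot{S}_E})$, and $2$-isomorphic gerbe automorphisms act equally on $H^1(\gerbeE_{E,\dot{S}_E},Z\to T)$ --- the mechanism of \cite[Lem. 2.56]{Dillery} used in \S 4.1 for the global gerbe. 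On a $Z$-twisted torsor with band map $\phi\colon P_{E,\dot{S}_E}\to Z$, the automorphism $\alpha$ twists by the \v{C}ech cocycle $\phi(z)$, so $\alpha^*$ differs from the identity by translation by the image of $[z]$ under $\check{H}^1(O_S^{\text{perf}}/O_{F,S},P_{E,\dot{S}_E})\xrightarrow{\phi_*}\check{H}^1(O_S^{\text{perf}}/O_{F,S},Z)=H^1(O_{F,S},Z)$, then mapped into $H^1(O_{F,S},T)$. It therefore suffices to show $\check{H}^1(O_S^{\text{perf}}/O_{F,S},P_{E,\dot{S}_E})=0$; by Corollary \ref{ComparisonIso2} and the finiteness of the groups $H^1(O_{F,S},P_{E,\dot{S}_E,n})$ (the lemma preceding this Proposition, which also yields $\varprojlim^{(1)}=0$) this equals $\varprojlim_n H^1(O_{F,S},P_{E,\dot{S}_E,n})$, and I would obtain the vanishing by re-running the argument of Proposition \ref{vanishingH1} over $O_{F,S}$: \cite[Lem. 4.4]{Cesnavicius} reduces to the $\text{ker}^1$-part and its Cartier-dual description via $M_{E,\dot{S}_E,n}$, and the colimit-vanishing Lemmas \ref{vanishingcolimit2} and \ref{vanishingcolimit3} dispose of the dual groups. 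Granting this, all the $h^*$ coincide, giving canonical functorial identifications among the $H^1$-groups of all gerbes representing $\xi_{E,\dot{S}_E}$ --- this is the ``unique isomorphism'' assertion, applied in particular to different explicit models.

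For independence of $\dot{\iota}_{E,\dot{S}_E}$ from the auxiliary data I would argue as in \cite[Prop. 3.7.9]{Tasho2}: changing $c_{E,S}$, $\{n_i\}$, the $k_i$, or the $p^{m_i}$-th roots moves $\dot{\xi}_{E,\dot{S}_E}$ only within its cohomology class, so the two explicit gerbes are canonically isomorphic by the previous paragraph, and a direct \v{C}ech cup-product computation --- using $\Z$-bilinearity of the unbalanced cup product, coherence of the $k_i$, functoriality in $[Z\to T]$, and \cite[Lems. 4.5, 4.6]{Dillery}, of the same kind as the verification $p_{i+1,i}^*z_{\bar{\Lambda},i}=z_{\bar{\Lambda},i+1}$ inside the proof of Proposition \ref{tatenak1} --- shows that the twisted cocycle $z_{\bar{\Lambda}}$ transports to the one built from the new data. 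The main obstacle is the vanishing $\check{H}^1(O_S^{\text{perf}}/O_{F,S},P_{E,\dot{S}_E})=0$: it is exactly what pins down the comparison between different gerbes, and, unlike the global vanishing $H^1(F,P_{\dot{V}})=0$ of \S 3.4, it must be obtained by transplanting the local--global class-field-theoretic input of \S 3.4 to the base ring $O_{F,S}$ in place of the field $F$; should the full group fail to vanish, it suffices to establish the weaker statement that $\phi_*$ followed by $H^1(O_{F,S},Z)\to H^1(O_{F,S},T)$ is zero for every band map $\phi$ (each of which factors through some $P_{E,\dot{S}_E,n}$ with $\text{exp}(Z)\mid n$).
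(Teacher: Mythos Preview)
Your overall framework --- choose an isomorphism $h\colon \gerbeE_{E,\dot{S}_E}\xrightarrow{\sim}\gerbeE_{\dot{\xi}_{E,\dot{S}_E}}$, set $\iota_{E,\dot{S}_E}:=\dot{\iota}_{E,\dot{S}_E}\circ h^*$, and then argue that the composite does not depend on $h$ or on the auxiliary data used to build $\dot{\xi}_{E,\dot{S}_E}$ --- matches the paper exactly. The divergence, and the gap, is in how you justify independence of $h$.

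You propose to kill the ambiguity by proving $\check{H}^1(O_S^{\text{perf}}/O_{F,S},P_{E,\dot{S}_E})=0$ via ``re-running the argument of Proposition~\ref{vanishingH1} over $O_{F,S}$''. That transplant does not go through. The vanishing results of \S3.4 (Lemmas~\ref{vanishingcolimit2}, \ref{vanishingcolimit3}, and ultimately Proposition~\ref{vanishingH1}) are all statements about colimits over the full system $(E_i,S_i,\dot{S}_i,n_i)$ in which the \emph{field} $E$ and the \emph{set of places} $S$ grow without bound; the proofs work by passing to large enough $K/E$ so that restriction kills the relevant homomorphism or class. Here $E$ and $S$ are fixed and only $n$ varies, so those arguments simply do not apply to $\varprojlim_n H^1(O_{F,S},P_{E,\dot{S}_E,n})$. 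Indeed the paper never claims this vanishing (contrast \S4.1, where the analogous vanishing over $F$ for the full $P_{\dot{V}}$ is invoked). Your fallback --- that $\phi_*$ followed by $H^1(O_{F,S},Z)\to H^1(O_{F,S},T)$ is zero --- is also not true in general and is not what is needed.

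The paper's route is genuinely different: rather than showing the twist by a class in $H^1(O_{F,S},P_{E,\dot{S}_E})$ is zero, it shows that $\dot{\iota}_{E,\dot{S}_E}\circ h^*$ is independent of $h$ even when $h^*$ itself is not. This is the content of \cite[Lem.~3.7.10]{Tasho2}, which the paper cites as ``purely group-theoretic'': it computes how a band-twist interacts with the explicit cup-product cocycles $z_{\bar{\Lambda}}$ and shows the effect is absorbed by the source of $\dot{\iota}_{E,\dot{S}_E}$. Combined with \cite[Lem.~3.7.9]{Tasho2} (change of $c_{E,S}$ and roots, which you correctly anticipate) and the finiteness of $H^1(O_{F,S},T)$, \cite[Cor.~3.7.11]{Tasho2} then yields the canonical $\iota_{E,\dot{S}_E}$. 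So the missing idea is not a vanishing theorem but a compatibility computation between the gerbe automorphism and the specific Tate--Nakayama cup-product formula.
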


\begin{proof} The map $\iota_{E, \dot{S}_{E}}$ is obtained by composing an isomorphism (which the proposition asserts is unique) $H^{1}(\gerbeE_{E, \dot{S}_{E}}, Z \to T) \to H^{1}(\gerbeE_{\dot{\xi}_{E, \dot{S}_{E}}}, Z \to T)$ induced by any isomorphism of $P_{E,\dot{S}_{E}}$-gerbes $\gerbeE_{E, \dot{S}_{E}} \xrightarrow{\sim} \gerbeE_{\dot{\xi}_{E, \dot{S}_{E}}}$ and then applying $\dot{\iota}_{E, \dot{S}_{E}}$ from Proposition \ref{tatenak1}.

This proposition requires work to show, but all the necessary arguments are done in \cite[\S 3.7]{Tasho2}. The main ingredient is Lemma 3.7.10 loc. cit., which is purely group-theoretic and carries over to our setting unchanged (in the statement of that Lemma, eliminate the use of $S$ and replace $\mathbb{N}_{S}$ by $\mathbb{N}$). Once this result is known,  \cite[Corollary 3.7.11]{Tasho2} proves the proposition. The proof of this corollary relies on Lemma 3.7.9 loc. cit., which holds in our setting with $\mathbb{N}_{S}$ replaced by $\mathbb{N}$, Proposition 3.7.8 loc. cit., which is our Proposition \ref{tatenak1}, and the finiteness of $H^{1}(O_{F,S}, T)$, which is true in our setting as well.
\end{proof}

Note that, in particular, the isomorphism $\iota_{E, \dot{S}_{E}}$ does not depend on the choice of cochain $c_{E,S}$ lifting a representative of the canonical Tate class in $H^{2}(O_{F,S}, \text{Res}_{E,S}(\mathbb{G}_{m})/\mathbb{G}_{m})$ which was used to construct the explicit gerbes $\gerbeE_{\dot{\xi}_{E, \dot{S}_{E},n_{i}}}$ and the isomorphism $\dot{\iota}_{E, \dot{S}_{E}}$ in Proposition \ref{tatenak1}.

In order to extend the isomorphism of Proposition \ref{tatenak1} to $\gerbeE_{\dot{V}}$, we need to vary the extension $E/F$. As such, let $K/F$ be a finite Galois extension containing $E$, and $(S' \dot{S}'_{K})$ be a pair satisfying Conditions \ref{placeconditions}. We may assume that $S \subset S'$ and $\dot{S}_{E} \subset (\dot{S}'_{K})_{E}$. Let $\gerbeE_{K, \dot{S}'_{K}}$ and $\gerbeE_{E, \dot{S}_{E}}$ be gerbes corresponding to the canonical classes $\xi_{K, \dot{S}'_{K}} \in \check{H}^{2}(O_{S'}^{\text{perf}}/O_{F,S'}, P_{K, \dot{S}'_{K}})$ and $\xi_{E, \dot{S}_{E}} \in \check{H}^{2}(O_{S}^{\text{perf}}/O_{F,S}, P_{E, \dot{S}_{E}})$, respectively. The first step is to construct an inflation map 
$$\text{Inf} \colon H^{1}(\gerbeE_{E, \dot{S}_{E}}, Z \to T) \to H^{1}(\gerbeE_{K, \dot{S}'_{K}}, Z \to T).$$

We begin by taking the pullback $(P_{E, \dot{S}_{E}})_{O_{F,S'}}$-gerbe $(\gerbeE_{E, \dot{S}_{E}})_{O_{F,S'}}$, which splits over $O_{F,S'} \cdot O_{S} \subset O_{S'}^{\text{perf}} \subset \overline{F}$ and represents the image of $\xi_{E, \dot{S}_{E}}$ in $\check{H}^{2}(O_{S'}^{\text{perf}}/O_{F,S'}, (P_{E, \dot{S}_{E}})_{O_{F,S'}})$.
We have a projection map $P_{K, \dot{S}'_{K}} \to (P_{E, \dot{S}_{E}})_{O_{F,S'}}$ given by the inverse limit of the finite-level projection maps, which on $\check{H}^{2}$ sends $\xi_{K, \dot{S}'_{K}}$ to the image of $\xi_{E, \dot{S}_{E}}$, by Lemma \ref{Ptheta2}. Using this equality of cocycles, picking normalizations of $\gerbeE_{E, \dot{S}_{E}}$ and $\gerbeE_{K, \dot{S}'_{K}}$ and using Construction \ref{changeofgerbe} allows us to construct a (non-canonical) morphism of stacks over $O_{F,S'}$ from $\gerbeE_{K, \dot{S}'_{K}}$ to $(\gerbeE_{E, \dot{S}_{E}})_{O_{F,S'}}$. By pulling back torsors via the composition of functors $$\gerbeE_{K, \dot{S}'_{K}} \to (\gerbeE_{E, \dot{S}_{E}})_{O_{F,S'}} \to \gerbeE_{E, \dot{S}_{E}},$$ we get the desired inflation map. 

The map we just constructed from $H^{1}(\gerbeE_{E, \dot{S}_{E}}, Z \to T)$ to $H^{1}(\gerbeE_{K, \dot{S}'_{K}}, Z \to T)$ is evidently functorial in $[Z \to T] \in \mathcal{T}_{E}$, but it is not a priori clear that it is canonical. 

Recall the system $(E_{i}, S_{i}, \dot{S}_{i})$ constructed in \S 3.2. For any $P_{E_{i}, \dot{S}_{i}}$-gerbe $\gerbeE_{i}$ over $O_{F,S_{i}}$ split over $O_{S_{i}}^{\text{perf}}$ representing the \v{C}ech 2-cocycle $\xi_{E_{i}, \dot{S}_{i}}$, we first take the $(P_{E_{i}, \dot{S}_{i}})_{F}$-gerbe $(\gerbeE_{i})_{F}$, which corresponds to the image of $\xi_{E_{i}, \dot{S}_{i}}$ in $\check{H}^{2}(\overline{F}/F, (P_{E_{i}, \dot{S}_{i}})_{F})$. By construction of the canonical class $\xi$, the image of $\xi$ in $\check{H}^{2}(\overline{F}/F, (P_{E_{i}, \dot{S}_{i}})_{F})$ equals this image of $\xi_{E_{i}, \dot{S}_{i}}$. Thus, after normalizing the gerbes $\gerbeE_{\dot{V}}$ and $\gerbeE_{i}$ and choosing a coboundary, we get a functor $\gerbeE_{\dot{V}} \to \gerbeE_{i}$, and thus by pullback a group homomorphism $$\text{Inf} \colon H^{1}(\gerbeE_{i}, Z \to T) \to H^{1}(\gerbeE_{\dot{V}}, Z \to T).$$

The following result describes how these inflation maps behave, and its proof is the same as its number-field analogues (\cite[Props. 3.7.12, 3.7.13]{Tasho2}):
\begin{prop}\label{inflation1}\label{directlimit} \begin{enumerate}
\item{The inflation map constructed above is independent of the choice of functor $\gerbeE_{K, \dot{S}'_{K}} \to \gerbeE_{E, \dot{S}_{E}}$, injective, functorial in $[Z \to T] \in \mathcal{T}_{E}$, and fits into the commutative diagrams:
\[
\begin{tikzcd}
H^{1}(\gerbeE_{E, \dot{S}_{E}}, Z \to T) \arrow["\text{Inf}"]{r} & H^{1}(\gerbeE_{K, \dot{S}'_{K}}, Z \to T) \\
\frac{\overline{Y}[S_{E}, \dot{S}_{E}]_{0}^{N_{E/F}}}{I_{E/F}Y[S_{E}]_{0}} \arrow["\iota_{E, \dot{S}_{E}}"]{u} \arrow["!"]{r} & \frac{\overline{Y}[S'_{K}, \dot{S}'_{K}]_{0}^{N_{K/F}}}{I_{K/F}Y[S'_{K}]_{0}} \arrow["\iota_{K, \dot{S}'_{K}}"]{u},
\end{tikzcd}
\] 
\[
\begin{tikzcd}
1 \arrow{r} & H^{1}(O_{F,S}, T) \arrow["\text{Inf}"]{d} \arrow{r} & H^{1}(\gerbeE_{E, \dot{S}_{E}}, Z \to T) \arrow["\text{Inf}"]{d} \arrow{r} & \Hom(P_{E, \dot{S}_{E}}, Z)^{\Gamma} \arrow{d} \\
1 \arrow{r} & H^{1}(O_{F,S'}, T) \arrow{r} & H^{1}(\gerbeE_{K, \dot{S}'_{K}}, Z \to T) \arrow{r} & \Hom(P_{K, \dot{S}'_{K}}, Z)^{\Gamma}. \\
\end{tikzcd}
\]}
\item{The above inflation maps splice together to give a canonical isomorphism of functors $\mathcal{T} \to \text{AbGrp}$:
$$\varinjlim_{i} H^{1}(\gerbeE_{i}, Z \to T) \to H^{1}(\gerbeE_{\dot{V}}, Z \to T).$$}
\end{enumerate}
\end{prop}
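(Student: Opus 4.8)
The plan is to follow the number-field arguments of \cite[Props. 3.7.12, 3.7.13]{Tasho2} closely, adapting each step to the \v{C}ech-theoretic and (im)perfection-aware setup already developed in \S 2 and \S 3. For part (1), the first task is \emph{independence of the inflation map on the chosen functor} $\gerbeE_{K, \dot{S}'_{K}} \to \gerbeE_{E, \dot{S}_{E}}$. As in the construction of $\text{loc}_{v}$ in \S 4.1, any two such functors differ by post-composition with an automorphism of $\gerbeE_{E, \dot{S}_{E}}$ pulled back along the projection $P_{K, \dot{S}'_{K}} \to (P_{E, \dot{S}_{E}})_{O_{F,S'}}$, and such an automorphism is classified by a \v{C}ech $1$-cocycle of $(P_{E, \dot{S}_{E}})_{O_{F,S'}}$ valued in $O_{S'}^{\text{perf}}$; by Proposition \ref{vanishingH1} (applied after the canonical identification $\check{H}^{1}(O_{S'}^{\text{perf}}/O_{F,S'}, P) = H^{1}(O_{F,S'}, P)$, plus the vanishing for each finite $P_{E_i,\dot S_i,n_i}$ coming from the $\varprojlim^{(1)}$-argument of Proposition \ref{mainvanishing1}) this cocycle is a coboundary, so the induced map on $H^{1}$ of torsors is the identity by \cite[Lem. 2.56]{Dillery}. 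Injectivity and functoriality in $[Z \to T]$ are then immediate from the inflation--restriction sequences and the compatibility of the projection maps with $\Theta^P$. The two commuting diagrams are checked exactly as in \cite{Tasho2}: the upper one by tracing the explicit cocycles $z_{\bar\Lambda}$ through Construction \ref{changeofgerbe} and matching them with the transition map $!$ at the level of cochains (using Proposition \ref{tatenak1}(2) to know the cocycles are coherent in $n_i$), and the lower one by functoriality of inflation--restriction together with Lemma \ref{coherence1}.

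For part (2), the inflation maps $\text{Inf}\colon H^{1}(\gerbeE_i, Z \to T) \to H^{1}(\gerbeE_{\dot V}, Z \to T)$ are defined via the functor $\gerbeE_{\dot V} \to \gerbeE_i$ arising (after normalizing both gerbes and choosing a coboundary) from the equality, established in the construction of the canonical class $\xi$ in Proposition \ref{canonicalclass}, of the image of $\xi$ in $\check{H}^{2}(\overline{F}/F, (P_{E_i,\dot S_i})_F)$ with that of $\xi_{E_i,\dot S_i}$. One first checks these are compatible with the transition inflation maps of part (1) --- this is formal from the tower of projections $P_{\dot V} \to P_{E_{i+1},\dot S_{i+1}} \to (P_{E_i,\dot S_i})_F$ and the independence-of-functor statement just proved. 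Hence the $\text{Inf}$'s factor through a canonical map $\varinjlim_i H^{1}(\gerbeE_i, Z\to T) \to H^{1}(\gerbeE_{\dot V}, Z\to T)$. To see it is an isomorphism, compare the colimit over $i$ of the inflation--restriction sequences for $\gerbeE_i$ with the one for $\gerbeE_{\dot V}$: the left terms give $\varinjlim_i H^{1}(O_{F,S_i}, T) \cong H^{1}(O_S, T)$ and then map to $H^{1}(F, T)$, the right terms give $\varinjlim_i \Hom(P_{E_i,\dot S_i}, Z)^{\Gamma} = \Hom_F(P_{\dot V}, Z)$ by Lemma \ref{Aveedot} (or directly, since any $\phi\colon P_{\dot V}\to Z$ factors through some $P_{E_i,\dot S_i}$), and the outer $H^2(-,T)$ terms match by the same colimit computation. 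A diagram chase with the five-lemma, together with the injectivity from part (1), then yields the claim.

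I expect the main obstacle to be \emph{verifying that the various non-canonical choices (normalizing isomorphisms of gerbes, the coboundaries used in Construction \ref{changeofgerbe}, and the sections $s\colon (S_i)_{E_i} \to (S_{i+1})_{E_{i+1}}$) all wash out coherently}, so that the resulting square with $!$ and the $\iota_{E,\dot S_E}$'s genuinely commutes on the nose rather than up to an unspecified automorphism. In \cite{Tasho2} this is handled by the purely group-theoretic \cite[Lem. 3.7.10]{Tasho2} and \cite[Cor. 3.7.11]{Tasho2}; the content there is independent of whether one works with Galois or \v{C}ech cohomology and with separable or general finite extensions, so --- as already flagged in the proof of Proposition \ref{tatenak1bis} --- it transfers to our setting with $\mathbb{N}_S$ replaced by $\mathbb{N}$ and $H^i(\Gamma_S, M(O_S))$ replaced by $\check{H}^i(O_S^{\text{perf}}/O_{F,S}, M)$. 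The only genuinely new bookkeeping is tracking the perfection $O_S^{\text{perf}}$ through the explicit cup-product cocycles $z_{\bar\Lambda}$, but Corollary \ref{ComparisonIso2} and the $p$-power-root constructions of \S 3.1--3.2 make every relevant group canonically what it was in the étale case, so no essential difficulty arises. I would therefore present part (2) as: ``The identical arguments of \cite[Props. 3.7.12, 3.7.13]{Tasho2} apply, using Proposition \ref{tatenak1}, Proposition \ref{tatenak1bis}, the finiteness of $H^1(O_{F,S},T)$, and the construction of $\xi$ in Proposition \ref{canonicalclass} in place of their number-field inputs,'' and spell out only the two diagram chases above in the detail the reader needs.
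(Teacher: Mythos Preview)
Your overall strategy---defer to \cite[Props.\ 3.7.12, 3.7.13]{Tasho2} with the \v{C}ech and perfection bookkeeping already in place---is exactly what the paper does, and your plan for part (2) via the five-lemma on inflation--restriction sequences is correct.

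There is, however, a genuine gap in your argument for independence in part (1). You claim that the ambiguity in the functor $\gerbeE_{K,\dot S'_K}\to\gerbeE_{E,\dot S_E}$ is killed because the relevant \v{C}ech $1$-cocycle of $(P_{E,\dot S_E})_{O_{F,S'}}$ is a coboundary, and you cite Propositions \ref{vanishingH1} and \ref{mainvanishing1} for this. But those propositions establish $H^1(F,P_{\dot V})=0$ and $H^1(F_v,(P_{\dot V})_{F_v})=0$; they say nothing about $H^1(O_{F,S'},P_{E,\dot S_E})$, and in fact there is no reason for the latter to vanish---the vanishing at the $P_{\dot V}$ level depends essentially on the full inverse limit. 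So this step fails as written.

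The route taken in \cite{Tasho2} (and hence here) inverts your order of operations: one first establishes commutativity of the upper square by an explicit cocycle computation (tracing the $z_{\bar\Lambda}$'s through Construction \ref{changeofgerbe} and the transition maps on the Tate classes, as you sketch), and then \emph{deduces} independence of Inf from that commutativity, since $\iota_{E,\dot S_E}$ and $\iota_{K,\dot S'_K}$ are already known to be canonical isomorphisms by Proposition \ref{tatenak1bis} and $!$ is manifestly canonical. Injectivity then also follows from the diagram. The subtle canonicality issues are thus absorbed entirely into Proposition \ref{tatenak1bis} (i.e., into \cite[Lems.\ 3.7.9, 3.7.10, Cor.\ 3.7.11]{Tasho2}), which is where the finiteness of $H^1(O_{F,S},T)$ enters; no vanishing of $H^1$ of the band at finite level is needed or available.
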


We are now in a position to prove Theorem \ref{toritatenak}. We obtain the functorial isomorphism $\iota_{\dot{V}}$ by the composition of 
functorial isomorphisms
$$\overline{Y}[V_{\overline{F}}, \dot{V}]_{0,+,\text{tor}} \xrightarrow{\text{Lem. \ref{tasho3.7.6}}}\varinjlim_{i} \frac{\overline{Y}[(S_{i})_{E_{i}}, \dot{S}_{i}]_{0}^{N_{E_{i}/F}}}{I_{E_{i}/F}Y[(S_{i})_{E}]_{0}} \xrightarrow{\varinjlim_{i} \iota_{E_{i}, \dot{S}_{i}}} \varinjlim_{i} H^{1}(\gerbeE_{i}, Z \to T) \xrightarrow{\text{Prop. \ref{inflation1}}} H^{1}(\gerbeE_{\dot{V}}, Z \to T),$$ which is canonical and well-defined by Proposition \ref{inflation1}. 

We conclude this subsection by collecting local-to-global consequences of Theorem \ref{toritatenak}, whose proof is the same as \cite[Corollaries 3.7.4, 3.7.5]{Tasho2} (using our Lemma \ref{3.6.2} for the second part):

\begin{cor}\label{localtoglobal2}\label{Tasho3.7.4}  \begin{enumerate}
\item{We have the following commutative diagram with exact bottom row
\[
\begin{tikzcd}
H^{1}(\gerbeE_{\dot{V}}, Z \to T) \arrow["(\text{loc}_{v})_{v}"]{r} & \bigoplus_{v \in \dot{V}} H^{1}(\gerbeE_{v}, Z \to T) & \\
\overline{Y}[V_{\overline{F}}, \dot{V}]_{0,+,\text{tor}} \arrow["\iota_{\dot{V}}"]{u} \arrow["(l_{v})_{v}"]{r} & \bigoplus_{v \in \dot{V}} \overline{Y}_{+v, \text{tor}} \arrow["(\iota_{v})_{v}"]{u} \arrow["\Sigma"]{r} & \frac{\overline{Y}}{IY}[\text{tor}].
\end{tikzcd}
\]}
\item{For $G$ a connected reductive group and $x \in H^{1}(\gerbeE_{\dot{V}}, Z \to G)$, $\text{loc}_{v}(x)$ is the neutral element in $H^{1}(\gerbeE_{v}, Z \to G)$ for almost all $v \in \dot{V}$.}
\end{enumerate}
\end{cor}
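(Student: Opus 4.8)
The plan is to deduce both parts from Theorem \ref{toritatenak} together with the torus-reduction statement Lemma \ref{3.6.2}, exactly as \cite[Cors. 3.7.4, 3.7.5]{Tasho2} deduce their analogues. For part (1), the commutativity of the square is immediate: Theorem \ref{toritatenak} already provides, for each single $v \in \dot V$, the commutative square relating $\iota_{\dot V}$, $l_v$, $\iota_v$ and $\text{loc}_v$, and one simply assembles these over all $v$ into a direct sum. The only point needing a remark is that $(\text{loc}_v)_v$ actually lands in $\bigoplus_{v \in \dot V} H^1(\gerbeE_v, Z \to T)$ rather than the full product; this follows from the construction of $\iota_{\dot V}$ through the finite levels $(E_i, S_i, \dot S_i, n_i)$, since any class $\iota_{\dot V}(y)$ has $y$ represented at some level $i$ by a function supported on the finite set $\dot S_i$, and then $l_v(y) = 0$ for every $v \notin S_i$ because no place above such a $v$ lies in $\dot S_i$ (this is, of course, essentially the content of part (2) specialized to tori).

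It then remains to prove that the bottom row is exact, i.e.\ that $\overline Y[V_{\overline F}, \dot V]_{0,+,\text{tor}} \xrightarrow{(l_v)_v} \bigoplus_{v} \overline Y_{+v,\text{tor}} \xrightarrow{\Sigma} \tfrac{\overline Y}{IY}[\text{tor}]$ is exact; because the three vertical arrows $\iota_{\dot V}$ and $(\iota_v)_v$ are isomorphisms this is now a purely module-theoretic assertion, with no gerbes involved. To prove it I would pass to a finite level $(E_i, S_i, \dot S_i, n_i)$: by Lemma \ref{tasho3.7.6}(2) every class in $\overline Y[(S_i)_{E_i}, \dot S_i]_0 / I_{E_i/F} Y[(S_i)_{E_i}]_0$ has a representative supported on the set of lifts $\dot S_i$, and under such a normalization $l_v^i$ simply extracts a coset-sum of the coefficient at $\dot v$ while $\Sigma$ adds these coefficients up in $\overline Y / I_v Y$. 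Thus a tuple $(y_v)_v$ with $\sum_v y_v \in I Y$ can be assembled into a single $\dot S_i$-supported function, and Conditions \ref{placeconditions}(3) and (4) are precisely what allow one to correct this function so that it is killed by the $\Gamma_{E_i/F}$-augmentation and by $N_{E_i/F}$ without changing its class modulo $I_{E_i/F} Y[(S_i)_{E_i}]_0$, producing a preimage in $\overline Y[(S_i)_{E_i}, \dot S_i]_0^{N_{E_i/F}}/I_{E_i/F} Y[(S_i)_{E_i}]_0$. Taking the colimit over $i$, and invoking Lemma \ref{tasho3.7.6}(1) to identify these finite-level groups with the torsion subgroups appearing in the definition of $\overline Y[V_{\overline F}, \dot V]_{0,+,\text{tor}}$, yields exactness of the bottom row.

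For part (2), let $G$ be connected reductive and $x \in H^1(\gerbeE_{\dot V}, Z \to G)$. By Lemma \ref{3.6.2}(2) there is a maximal torus $T \subset G$ and a class $\tilde x \in H^1(\gerbeE_{\dot V}, Z \to T)$ mapping to $x$. Since each $\text{loc}_v$ is defined by pulling back torsors along the functor $\text{loc}_v^{\gerbeE} \colon \gerbeE_v \to \gerbeE_{\dot V}$, it is functorial in the pair $[Z \to G] \in \mathcal A$, so $\text{loc}_v(x)$ is the image of $\text{loc}_v(\tilde x)$ under $H^1(\gerbeE_v, Z \to T) \to H^1(\gerbeE_v, Z \to G)$ and is in particular neutral whenever $\text{loc}_v(\tilde x)$ is. By Theorem \ref{toritatenak}, $\tilde x = \iota_{\dot V}(y)$ for some $y$, and $\text{loc}_v(\tilde x) = \iota_v(l_v(y))$; as in the first paragraph, $y$ has a representative supported on $\dot S_i$ for some $i$, whence $l_v(y) = 0$ and $\text{loc}_v(x)$ is neutral for all $v \notin S_i$, i.e.\ for almost all $v$.

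I expect the only genuinely non-formal ingredient to be the exactness of the bottom row of part (1) — specifically, the assembly of a global element out of local data and the verification that the augmentation-zero and norm-vanishing conditions can be arranged without disturbing the class modulo $I_{E_i/F} Y[(S_i)_{E_i}]_0$. This is the precise point at which Conditions \ref{placeconditions} are used, and it is the heart of \cite[Cor. 3.7.4]{Tasho2}; its argument is linear-algebraic and transfers verbatim to our \v{C}ech-cohomological setting once Theorem \ref{toritatenak} and the place conditions are in place, so everything else in the proof is formal.
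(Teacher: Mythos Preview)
Your proposal is correct and follows essentially the same approach as the paper, which simply cites \cite[Cors.~3.7.4, 3.7.5]{Tasho2} and notes that Lemma~\ref{3.6.2} handles part~(2). Your unpacking of that citation---commutativity from Theorem~\ref{toritatenak}, finite-level support giving almost-everywhere triviality, and the linear-algebraic exactness argument using Conditions~\ref{placeconditions}---is exactly the content of those corollaries transplanted here; one small remark is that for the direct-sum landing you only need support on $(S_i)_{E_i}$ (automatic by definition) rather than on $\dot S_i$ itself.
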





\subsection{Extending to reductive groups} Let $\mathcal{R}$ denote the full subcategory of $\mathcal{A}$ consisting of objects $[Z \to G]$ where $G$ is a connected reductive group over $F$. In \cite[\S 3.8]{Tasho2}, it is necessary for duality to replace the sets $H^{1}(\gerbeE_{\dot{V}}, Z \to G)$ with a quotient, denoted by $H^{1}_{\text{ab}}(\gerbeE_{\dot{V}}, Z \to G)$. However, in our case $H^{1}(\gerbeE_{\dot{V}}, Z \to G)$ suffices due to the vanishing of $H^{1}(F, G)$ for all simply-connected (semi-simple) connected $G$ (which is an immediate consequence of \cite[Thm. 2.4]{Thang1}).

The first step in extending Theorem \ref{toritatenak} to $\mathcal{R}$ is defining an analogue of the linear algebraic data $\overline{Y}[V_{\overline{F}}, \dot{V}]_{0,+,\text{tor}}([Z \to T])$ for $[Z \to T] \in \mathcal{T}$. For a maximal $F$-torus $T$ of $G$, we let $T_{\text{sc}}$ denote $T \cap G_{\text{sc}}$, a maximal torus of $G_{\text{sc}}$. We then can define the abelian group $$\varinjlim_{(E, S_{E}, \dot{S}_{E})} \frac{[X_{*}(T/Z)/X_{*}(T_{\text{sc}})][S_{E}, \dot{S}_{E}]_{0}^{N_{E/F}}}{I_{E/F}( [X_{*}(T)/X_{*}(T_{\text{sc}})][S_{E}]_{0})},$$ where the colimit is over any cofinal system of triples $(E, S_{E}, \dot{S}_{E})$, where $E/F$ is a finite Galois extension splitting $T$ and the pair $(S_{E}, \dot{S}_{E})$ satisfies Conditions \ref{placeconditions}; the transition maps are given by the map $!$ as in \S 4.2, and we define $[X_{*}(T/Z)/X_{*}(T_{\text{sc}})][S_{E}, \dot{S}_{E}]_{0}$ to be all $\sum c_{w}[w] \in [X_{*}(T/Z)/X_{*}(T_{\text{sc}})][S_{E}]_{0}$ such that if $w \notin \dot{S}_{E}$, then $c_{w} \in X_{*}(T)/X_{*}(T_{\text{sc}})$ (the superscript $N_{E/F}$ denotes the elements killed by the $E/F$-norm).

Now for two such tori $T_{1}, T_{2}$, we can define a map 
\begin{equation}\label{limitmap} \varinjlim  \frac{[X_{*}(T_{1}/Z)/X_{*}(T_{1,\text{sc}})][S_{E}, \dot{S}_{E}]_{0}^{N_{E/F}}}{I_{E/F}( [X_{*}(T_{1})/X_{*}(T_{1,\text{sc}})][S_{E}]_{0})} \to  \varinjlim  \frac{[X_{*}(T_{2}/Z)/X_{*}(T_{2,\text{sc}})][S_{E}, \dot{S}_{E}]_{0}^{N_{E/F}}}{I_{E/F}( [X_{*}(T_{2})/X_{*}(T_{2,\text{sc}})][S_{E}]_{0})}
\end{equation}
as follows. By \cite[Lem. 4.2]{Tasho}, for any $g \in G(F^{\text{sep}})$ such that $\text{Ad}(g)(T_{1}) = T_{2}$, we get an isomorphism $X_{*}(T_{1}/Z)/X_{*}(T_{1,\text{sc}}) \to X_{*}(T_{2}/Z)/X_{*}(T_{2,\text{sc}})$ which is independent of the choice of $g$, and is thus $\Gamma$-equivariant. It follows that $\text{Ad}(g)$ also induces the desired homomorphism \eqref{limitmap} on direct limits. We then define a functor $\mathcal{R} \to \text{AbGrp}$ given by $$\overline{Y}[V_{\overline{F}}, \dot{V}]_{0,+,\text{tor}}([Z \to G]) := \varinjlim [\varinjlim_{(E, S_{E}, \dot{S}_{E})} \frac{[X_{*}(T/Z)/X_{*}(T_{\text{sc}})][S_{E}, \dot{S}_{E}]_{0}^{N_{E/F}}}{I_{E/F}( [X_{*}(T)/X_{*}(T_{\text{sc}})][S_{E}]_{0})}],$$ where the outer colimit is over all maximal $F$-tori $T$ of $G$ via the maps constructed above. It is clear that this extends the functor $\overline{Y}[V_{\overline{F}}, \dot{V}]_{0,+,\text{tor}}$ constructed in the previous section for $\mathcal{T} \subset \mathcal{R}$, so our notation is justified. In what follows, we will always take our colimits over the fixed cofinal system $(E_{i}, S_{i}, \dot{S}_{i})$ constructed above (such a system eventually splits any $F$-torus $T$).

\begin{thm}\label{reductivetatenak} The isomorphism of functors $\iota_{\dot{V}}$ from Theorem \ref{toritatenak} extends to a unique isomorphism of functors (valued in pointed sets) on $\mathcal{R}$, also denoted by $\iota_{\dot{V}}$, from $\overline{Y}[V_{\overline{F}}, \dot{V}]_{0,+,\text{tor}}$ to $H^{1}(\gerbeE_{\dot{V}}, -)$. 
\end{thm}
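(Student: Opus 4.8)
The plan is to follow the pattern already established for the number-field case in \cite[\S 3.8]{Tasho2}, mutatis mutandis, exploiting the fact that in the function-field setting $H^1(F,G_{\mathrm{sc}})=0$ by \cite[Thm. 2.4]{Thang1}, so that we may work directly with $H^1(\gerbeE_{\dot V},Z\to G)$ rather than passing to an abelianized quotient. First I would reduce, using Lemma \ref{3.6.2}(2), to the case where the class comes from a maximal torus: every $x\in H^1(\gerbeE_{\dot V},Z\to G)$ lies in the image of $H^1(\gerbeE_{\dot V},Z\to T)$ for some maximal torus $T\subset G$, and on the linear-algebraic side every element of $\overline Y[V_{\overline F},\dot V]_{0,+,\mathrm{tor}}([Z\to G])$ is by definition in the image of $\frac{[X_*(T/Z)/X_*(T_{\mathrm{sc}})][S_E,\dot S_E]_0^{N_{E/F}}}{I_{E/F}([X_*(T)/X_*(T_{\mathrm{sc}})][S_E]_0)}$ for some such $T$. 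The map $\iota_{\dot V}$ on $[Z\to G]$ is then forced: it must send the image of a class in $\overline Y[V_{\overline F},\dot V]_{0,+,\mathrm{tor}}([Z\to T])$ (note this quotient surjects onto the corresponding group with $X_*(T_{\mathrm{sc}})$ quotiented out, since $Y[V_{\overline F}]_{0,\Gamma,\mathrm{tor}}$ for $T_{\mathrm{sc}}$ computes a subset of $H^1(F,T_{\mathrm{sc}})=0$) via $\iota_{\dot V}$ of Theorem \ref{toritatenak} composed with the pushforward $H^1(\gerbeE_{\dot V},Z\to T)\to H^1(\gerbeE_{\dot V},Z\to G)$. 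Uniqueness is then immediate from this forcing plus the surjectivity of $H^1(\gerbeE_{\dot V},Z\to T)\to$ (its image in $H^1(\gerbeE_{\dot V},Z\to G)$).

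Next I would check well-definedness, i.e.\ independence of the choice of torus $T$ and of the representative. This is where the argument of \cite{Tasho2} does its real work and where I expect the main obstacle to lie: one must show that if $x\in H^1(\gerbeE_{\dot V},Z\to G)$ arises from two maximal tori $T_1,T_2$ via classes $\bar y_1,\bar y_2$, then the images of $\bar y_1,\bar y_2$ in $\overline Y[V_{\overline F},\dot V]_{0,+,\mathrm{tor}}([Z\to G])$ agree. By conjugating one torus to the other by some $g\in G(F^{\mathrm{sep}})$ and tracking the effect on cocycles on the gerbe, this reduces to a statement purely about $1$-cocycles valued in $G_{\mathrm{sc}}$ (the ambiguity in $g$ lands in $T_{2,\mathrm{sc}}$-cohomology) combined with the fact that $X_*(T/Z)/X_*(T_{\mathrm{sc}})$ is canonically identified under $\mathrm{Ad}(g)$ independently of $g$ by \cite[Lem. 4.2]{Tasho}. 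The key local input is that the analogous statement holds place-by-place — which is exactly \cite[Thm. 4.8]{Dillery} (or its analogue) — together with the vanishing $H^1(F,G_{\mathrm{sc}})=0$ and the Hasse-principle-type control over $H^1$ provided by Corollary \ref{localtoglobal2}(1); these let one verify equality after localization at every $v\in\dot V$ and then glue, using that a class in $H^1(\gerbeE_{\dot V},Z\to T)$ is determined by its image in $\bigoplus_v H^1(\gerbeE_v,Z\to T)$ together with its image in $\Hom_F(P_{\dot V},Z)$ (which is the content of the commutative diagram and exactness in Theorem \ref{toritatenak}).

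Surjectivity of $\iota_{\dot V}$ on $[Z\to G]$ then follows from Lemma \ref{3.6.2}(2) (every class comes from a torus) plus surjectivity of $\iota_{\dot V}$ on $\mathcal T$ from Theorem \ref{toritatenak}. Injectivity: suppose $\bar y_1,\bar y_2$ (supported on tori $T_1,T_2$) map to the same class $x$ in $H^1(\gerbeE_{\dot V},Z\to G)$; by the previous paragraph their images in $\overline Y[V_{\overline F},\dot V]_{0,+,\mathrm{tor}}([Z\to G])$ are determined by $x$, hence equal, giving injectivity. Functoriality in $\mathcal R$ is checked on morphisms $G_1\to G_2$ by choosing compatible maximal tori (possible after replacing $T_1$ by a $G_1$-conjugate whose image lies in a maximal torus of $G_2$) and invoking functoriality of $\iota_{\dot V}$ on $\mathcal T$ together with the naturality of the maps \eqref{limitmap}; compatibility with localization (the analogue of the second diagram in Theorem \ref{toritatenak} and of \cite[Cor. 3.7.5]{Tasho2}) is inherited from the torus case place-by-place. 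Throughout, the arguments are the verbatim analogues of \cite[\S 3.8, especially Thm. 3.8.1 and its proof]{Tasho2}, with Galois cohomology of number fields replaced by our fppf/\v Cech machinery and with the simplification that $H^1(F,G_{\mathrm{sc}})$ and $H^1(F_v,G_{\mathrm{sc}})$ vanish, so no abelianization step is needed; the one genuinely new point requiring care is ensuring that the non-separability phenomena (purely inseparable extensions, the role of $O_S^{\mathrm{perf}}$) do not obstruct the torus-independence argument, but this is handled exactly as in the torus case by Corollary \ref{ComparisonIso2} and Corollary \ref{EtoEprime}.
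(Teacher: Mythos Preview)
Your overall architecture---reduce to tori via Lemma \ref{3.6.2}(2), define $\iota_{\dot V}$ on $[Z\to G]$ by forcing compatibility with the torus case, then check well-definedness and bijectivity---matches the paper's. But the well-definedness step, which you correctly identify as the crux, has a genuine gap.

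You propose to verify that two lifts from different tori $T_1,T_2$ agree by checking place-by-place using the local theorem from \cite{Dillery} and then invoking the claim that ``a class in $H^1(\gerbeE_{\dot V},Z\to T)$ is determined by its image in $\bigoplus_v H^1(\gerbeE_v,Z\to T)$ together with its image in $\Hom_F(P_{\dot V},Z)$.'' This claim is false in general: via $\iota_{\dot V}$ and the first diagram in Theorem \ref{toritatenak}, the kernel of the total localization map on $H^1(\gerbeE_{\dot V},Z\to T)$ contains the Shafarevich--Tate group $\text{ker}^1(F,T)=\ker\bigl(H^1(F,T)\to\prod_v H^1(F_v,T)\bigr)$, which can be nonzero for a torus (and elements of $H^1(F,T)$ already have trivial image in $\Hom_F(P_{\dot V},Z)$, so adding that datum does not help). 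Corollary \ref{localtoglobal2}(1) gives exactness of the bottom row, not injectivity of $(l_v)_v$. The same obstruction blocks the local-to-global strategy whether you run it on the $T$-side or the $G$-side. (A minor related slip: you write ``$H^1(F,T_{\text{sc}})=0$,'' but $T_{\text{sc}}$ is a maximal \emph{torus} in $G_{\text{sc}}$, not $G_{\text{sc}}$ itself.)

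The paper circumvents this with two global ingredients you do not mention. First, it shows that any two elements of $\overline Y[V_{\overline F},\dot V]_{0,+,\text{tor}}([Z\to G])$ can be realized in the image of a \emph{single} maximal $F$-torus $T$; this uses elliptic maximal tori at a chosen place (replacing \cite[Cor.~7.3]{PR} with \cite[Lem.~3.6.1]{Thang4}) together with a Hasse-type injectivity for $H^2$ of the simply-connected torus. Second, with both classes living in one $T$, the paper proves directly---by an explicit computation with the $\dot\xi_{E_j,\dot S_j,n_j}$-twisted cocycles $z_{\bar\Lambda_i,j}$ of Proposition \ref{tatenak1} and the argument of \cite[Lem.~5.9]{Dillery}---that if the $x_i$ agree in $\overline Y[V_{\overline F},\dot V]_{0,+,\text{tor}}([Z\to G])$ then their images in $H^1(\gerbeE_{\dot V},Z\to G)$ agree. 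No local-global principle is invoked at this step. The fiber computation (showing the fibers of $\overline Y[\cdots]([Z\to T])\to H^1(\gerbeE_{\dot V},Z\to G)$ are torsors under the image of $Y[V_{\overline F}]_{0,\Gamma,\text{tor}}(T_{\text{sc}})$, via the isomorphism $G_{\text{sc}}/T_{\text{sc}}\cong G/T$) then yields the injection \eqref{imageinclusion}, and these glue across tori to give the bijection.
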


\begin{proof} Fix $[Z \to G]$ in $\mathcal{R}$ with maximal $F$-torus $T$. We claim that the fibers of the composition $$\overline{Y}[V_{\overline{F}}, \dot{V}]_{0,+,\text{tor}}([Z \to T]) \xrightarrow{\iota_{\dot{V}}} H^{1}(\gerbeE_{\dot{V}}, Z \to T) \to H^{1}(\gerbeE_{\dot{V}}, Z \to G)$$ are torsors under the image of $Y[V_{\overline{F}}]_{0,\Gamma,\text{tor}}(T_{\text{sc}})$. By twisting, it's enough to prove this for the fiber over the trivial class in $H^{1}(\gerbeE_{\dot{V}}, Z \to G)$. If $x$ is in this fiber then $\iota_{\dot{V}}(x) \in H^{1}(F, T)$ [since $H^{1}(\gerbeE_{\dot{V}}, Z \to T) \to \Hom_{F}(P_{\dot{V}}, Z)$ factors through $H^{1}(\gerbeE_{\dot{V}}, Z \to G)$], so that $x \in Y[V_{\overline{F}}]_{0,\Gamma,\text{tor}}(T)$. Moreover, $\iota_{\dot{V}}(x)$ maps to the neutral class via the map $H^{1}(F, T) \to H^{1}(F,G)$. We have the commutative diagram of pointed sets with exact rows
\[
\begin{tikzcd}
(G/T)(F) \arrow{r} & H^{1}(F, T) \arrow{r} & H^{1}(F, G) \\
(G_{\text{sc}}/T_{\text{sc}})(F) \arrow{u} \arrow{r} & H^{1}(F, T_{\text{sc}}) \arrow{r} \arrow{u} & H^{1}(F, G_{\text{sc}}) \arrow{u},
\end{tikzcd}
\]
and since the natural map $G_{\text{sc}}/T_{\text{sc}} \to G/T$ is an isomorphism (of $F$-schemes, not groups), we may lift the image of $x$ in $H^{1}(F,T)$ to an element $x_{\text{sc}} \in H^{1}(F, T_{\text{sc}})$. Now the claim is clear by the functoriality of Tate-Nakayama duality for tori.

The above claim immediately implies that we have an injective map 
$$\frac{\overline{Y}[V_{\overline{F}}, \dot{V}]_{0,+,\text{tor}}([Z \to T])}{\text{Im}[Y[V_{\overline{F}}]_{0,\Gamma,\text{tor}}(T_{\text{sc}})]} \to H^{1}(\gerbeE_{\dot{V}}, Z \to G).$$ Arguments involving cocharacter modules (see \cite{Tasho2}, proof of Theorem 3.8.1) show that the image $\text{Im}[Y[V_{\overline{F}}]_{0,\Gamma,\text{tor}}(T_{\text{sc}})]$ is exactly the kernel of the natural map $$\overline{Y}[V_{\overline{F}}, \dot{V}]_{0,+,\text{tor}}([Z \to T]) \to \overline{Y}[V_{\overline{F}}, \dot{V}]_{0,+,\text{tor}}([Z \to G]),$$ and so, putting the above two observations together, we have a natural inclusion \begin{equation}\label{imageinclusion}
\text{Im}[\overline{Y}[V_{\overline{F}}, \dot{V}]_{0,+,\text{tor}}([Z \to T]) \to  \overline{Y}[V_{\overline{F}}, \dot{V}]_{0,+,\text{tor}}([Z \to G])] \hookrightarrow H^{1}(\gerbeE_{\dot{V}}, Z \to G).
\end{equation}

The next key observation is that any two elements of $\overline{Y}[V_{\overline{F}}, \dot{V}]_{0,+,\text{tor}}([Z \to G])$ lie in the image of the group $\overline{Y}[V_{\overline{F}}, \dot{V}]_{0,+,\text{tor}}([Z \to T])$ for some maximal $F$-torus $T \subset G$. The analogous argument using elliptic maximal tori (over the local fields $F_{v}$) in the proof of \cite[Thm. 3.8.1]{Tasho2} works for us, once we replace \cite[Corollary 7.3]{PR} with \cite[Lem. 3.6.1]{Thang4}, using that $H^{2}(F_{v}, T'_{\text{sc}})$ vanishes for any $F_{v}$-anisotropic maximal torus $T'_{\text{sc}}$ (by Tate-Nakayama duality), and the fact that the map $H^{2}(F, T'_{\text{sc}}) \to \prod_{v \in V_{F}} H^{2}(F_{v}, T'_{\text{sc}})$ is injective whenever there exists a place $v \in S$ such that $(T'_{\text{sc}})_{F_{v}}$ is an $F_{v}$-anisotropic maximal torus in a connected semisimple group $G_{\text{sc}}$ (see \cite[Prop. 6.12]{PR}, the proof of which works for function fields). 

We now claim that if $x_{i} \in \overline{Y}[V_{\overline{F}}, \dot{V}]_{0,+,\text{tor}}([Z \to T_{i}])$ for $i=1,2$ map to the same element in $\overline{Y}[V_{\overline{F}}, \dot{V}]_{0,+,\text{tor}}([Z \to G])$, then their images $\iota_{\dot{V}}(x_{i}) \in H^{1}(\gerbeE_{\dot{V}}, Z \to T_{i})$ map to the same element of $H^{1}(\gerbeE_{\dot{V}}, Z \to G)$. Choose ${j}$ large enough so that $E_{j}$ splits $T_{i}$, $\text{exp}(Z) \mid n_{j}$, and $x_{i}$ comes from $\bar{\Lambda}_{i} \in \overline{Y}_{i}[(S_{j})_{E_{j}}, \dot{S}_{j}]_{0}^{N_{E_{j}/F}}$. Denote by $\dot{\gerbeE}_{j}$ the explicit gerbe $\gerbeE_{\dot{\xi}_{E_{j}, \dot{S}_{j}}}$ defined in \S 4.2, similarly with $\dot{\gerbeE}_{j, n_{j}}$ (recall that these depend on choices, cf. \S 4.2). By construction, $\dot{\iota}_{\dot{V}}(x_{i})$ is the inflation of a class in $H^{1}(\dot{\gerbeE}_{j,n_{j}}, Z \to T_{i})$ represented by the twisted 1-cocycle $z_{\bar{\Lambda}_{i},j}$ (defined in Proposition \ref{tatenak1}). 
Since the functor $\gerbeE_{\dot{V}} \to \dot{\gerbeE}_{j,n_{j}}$ used to define inflation factors through $(\dot{\gerbeE}_{j,n_{j}})_{\overline{F}}$, it is enough to show that the $z_{\bar{\Lambda}_{i},j}$ are equivalent as twisted cocycles in $H^{1}((\dot{\gerbeE}_{j,n_{j}})_{\overline{F}}, Z \to G)$, which follows from an identical argument as in  \cite[Lem. 5.9]{Dillery}, replacing the tori loc. cit. with their global analogues defined above (as well as the projective system of finite multiplicative groups).

To summarize, the maps \eqref{imageinclusion} glue across all $T$ to give the desired map $\iota_{\dot{V}}$, which is surjective by Lemma \ref{3.6.2} and injective because any two elements of $\overline{Y}[V_{\overline{F}}, \dot{V}]_{0,+,\text{tor}}([Z \to G])$ both lie in the image of $\overline{Y}[V_{\overline{F}}, \dot{V}]_{0,+,\text{tor}}([Z \to T])$ for some $T$.  By construction, these isomorphisms extend the isomorphism of functors $\iota_{\dot{V}}$ defined on the full subcategory $\mathcal{T}$, and are functorial with respect to morphisms $[Z \to T] \to [Z \to G]$ in $\mathcal{R}$ given by inclusions of maximal tori defined over $F$. Since every $x \in H^{1}(\gerbeE_{\dot{V}}, Z \to G)$ lies in the image of some $H^{1}(\gerbeE_{\dot{V}}, Z \to T)$, it follows that the extension of $\iota_{\dot{V}}$ to $\mathcal{R}$ also defines an isomorphism of functors on $\mathcal{R}$.
\end{proof}

To conclude this subsection, we state some local-global compatibilities that arise from Theorem \ref{reductivetatenak}. Note that, for a fixed $v$, the natural transformation (given by the direct limit of \eqref{Yloc}) $$\overline{Y}[V_{\overline{F}}, \dot{V}]_{0,+,\text{tor}} \xrightarrow{l_{v}} \overline{Y}_{+v,\text{tor}}$$ may be extended to a morphism of functors on $\mathcal{R}$ induced by mapping $f \in [X_{*}(T/Z)/X_{*}(T_{\text{sc}})][S_{E}, \dot{S}_{E}]_{0}$ to an element of $X_{*}(T/Z)/X_{*}(T_{\text{sc}})$ via the same formula as in \eqref{Yloc}. We also define a new functor $\mathcal{R} \to \text{AbGrp}$, denoted by $\overline{Y}_{+,\text{tor}}$, by $$[Z \to G] \mapsto \varinjlim \frac{X_{*}(T/Z)/X_{*}(T_{\text{sc}})}{I(X_{*}(T)/X_{*}(T_{\text{sc}}))}[\text{tor}], $$ where $I$ is the augmentation ideal of $\Gamma$, the colimit is taken over all maximal $F$-tori $T$ of $G$, and the transition maps are induced by $\text{Ad}(g)$. 

\begin{cor}\label{localtoglobal3} We have a commutative diagram with exact bottom row:
\[
\begin{tikzcd}
H^{1}(\gerbeE_{\dot{V}}, Z \to G) \arrow["(\text{loc}_{v})_{v}"]{r} & \bigsqcup_{v \in V} H^{1}(\gerbeE_{v}, Z \to G) & \\
\overline{Y}[V_{\overline{F}}, \dot{V}]_{0,+,\text{tor}}([Z \to G]) \arrow["(l_{v})_{v}"]{r} \arrow["\iota_{\dot{V}}"]{u} & \bigoplus_{v \in V} \overline{Y}_{+v,\text{tor}}([Z \to G]) \arrow["\Sigma"]{r} \arrow["(\iota_{v})_{v}"]{u} & \overline{Y}_{+,\text{tor}}([Z \to G]),
\end{tikzcd}
\]
where the symbol $\bigsqcup$ denotes the subset of the direct product of pointed sets in which all but finitely many coordinates equal the neutral element, and the map $\Sigma$ makes sense since any maximal $F_{v}$-torus of $G_{F_{v}}$ is $G(\overline{F_{v}})$-conjugate to the base-change $T_{F_{v}}$ of a maximal $F$-torus $T$ in $G$.
\end{cor}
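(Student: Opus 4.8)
The plan is to deduce the corollary from Theorem \ref{reductivetatenak} together with its torus-level predecessor Corollary \ref{localtoglobal2} and the local Tate--Nakayama isomorphism $\iota_v$ of \cite[Thm.~4.10]{Dillery}, following the number-field argument of \cite{Tasho2}. First I would check that the right-hand column is well-posed: the functor $\overline{Y}_{+v,\text{tor}}$ on $\mathcal{R}$ is defined by a colimit over maximal $F_v$-tori of $G_{F_v}$, whereas $\overline{Y}[V_{\overline{F}},\dot{V}]_{0,+,\text{tor}}$ and $\overline{Y}_{+,\text{tor}}$ are colimits over maximal $F$-tori of $G$. Since every maximal $F_v$-torus of $G_{F_v}$ is $G(\overline{F_v})$-conjugate to the base change $T_{F_v}$ of a maximal $F$-torus $T\subset G$, and the transition maps in all three colimits are the $\mathrm{Ad}(g)$-maps of \cite[Lem.~4.2]{Tasho}, the base changes $\{T_{F_v}\}_T$ are cofinal among maximal $F_v$-tori; hence $\overline{Y}_{+v,\text{tor}}([Z\to G])=\varinjlim_{T}\overline{Y}_{+v,\text{tor}}([Z\to T_{F_v}])$, and the maps $(l_v)_v$ and $\Sigma$ are well-defined and independent of the choices of $g$.

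Next I would establish commutativity of the square. Given $x\in H^{1}(\gerbeE_{\dot{V}},Z\to G)$, Lemma \ref{3.6.2}(2) produces a maximal $F$-torus $T\subset G$ and a class $\tilde{x}\in H^{1}(\gerbeE_{\dot{V}},Z\to T)$ mapping to $x$; applying $\iota_{\dot{V}}^{-1}$ to $\tilde{x}$ and invoking the second commutative diagram of Theorem \ref{toritatenak} (the torus-level $\mathrm{loc}_v$-compatibility), together with the functoriality of $\iota_{\dot{V}}$, $\iota_v$, $\mathrm{loc}_v$ and $l_v$ with respect to the morphism $[Z\to T]\to[Z\to G]$ in $\mathcal{R}$ (for $\iota_{\dot{V}}$ this is part of Theorem \ref{reductivetatenak}; for $\iota_v$ and $\mathrm{loc}_v$ it is \cite{Dillery} and \S4.1), one obtains the commutativity of the diagram applied to $x$. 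Corollary \ref{localtoglobal2}(2) then shows that $(\mathrm{loc}_v)_v$ indeed lands in the subset $\bigsqcup_v H^{1}(\gerbeE_v,Z\to G)$, so the square is as stated (here the vanishing $H^{1}(F,G_{\text{sc}})=0$ used throughout \S4.3 is what lets us work with $H^{1}(\gerbeE_{\dot V},Z\to G)$ itself rather than an abelianized quotient).

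For exactness of the bottom row I would reduce to the torus case. By the proof of Theorem \ref{reductivetatenak}, every element of $\overline{Y}[V_{\overline{F}},\dot{V}]_{0,+,\text{tor}}([Z\to G])$ lies in the image of $\overline{Y}[V_{\overline{F}},\dot{V}]_{0,+,\text{tor}}([Z\to T])$ for a suitable maximal $F$-torus $T$, and similarly for $\bigoplus_v\overline{Y}_{+v,\text{tor}}$ and $\overline{Y}_{+,\text{tor}}$ by the cofinality above; moreover $(l_v)_v$ and $\Sigma$ intertwine the torus-level and group-level versions. Since filtered colimits are exact, the identity $\mathrm{im}\big[(l_v)_v\big]=\ker\Sigma$ at the level of each $[Z\to T]$ (the exact bottom row of Corollary \ref{localtoglobal2}(1)) passes to the colimit over all maximal $F$-tori of $G$, yielding exactness for $[Z\to G]$. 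The point requiring genuine care — and the main, if routine, obstacle — is that the group-level functors are \emph{quotients} of the torus-level ones (the kernel of $\overline{Y}[V_{\overline{F}},\dot{V}]_{0,+,\text{tor}}([Z\to T])\to\overline{Y}[V_{\overline{F}},\dot{V}]_{0,+,\text{tor}}([Z\to G])$ being $\mathrm{Im}[Y[V_{\overline{F}}]_{0,\Gamma,\text{tor}}(T_{\text{sc}})]$, as identified in the proof of Theorem \ref{reductivetatenak}), so one must verify that $\Sigma$ and $(l_v)_v$ descend compatibly along these kernels; this is the cocharacter-module bookkeeping carried out in \cite[\S3.8]{Tasho2}, which applies verbatim.
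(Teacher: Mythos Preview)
Your proposal is correct and follows essentially the same approach as the paper: commutativity via Lemma \ref{3.6.2}(2), functoriality of $\iota_{\dot{V}}$, and the torus case (Corollary \ref{localtoglobal2}); exactness via cocharacter-module computations. The paper's proof is two sentences, so your version is simply more explicit.

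One small remark on your exactness argument: the outer colimit over maximal $F$-tori of $G$ is taken over a connected groupoid (all transition maps $\mathrm{Ad}(g)$ are isomorphisms), so it is trivially computed by any single term, and ``filtered colimits are exact'' is not really doing work here. The genuine content is the passage from the $[Z\to T]$-sequence of Corollary \ref{localtoglobal2}(1) to its quotient by the $T_{\text{sc}}$-contributions, which you correctly flag as the ``cocharacter-module bookkeeping'' step. This is precisely what the paper means by ``a straightforward character-theoretic argument,'' so you and the paper converge on the same computation; you just route through the torus case explicitly before arriving there.
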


\begin{proof} The commutativity is an immediate consequence of Corollary \ref{localtoglobal2}, the functoriality of $\iota_{\dot{V}}$, and the fact that every $x \in H^{1}(\gerbeE_{\dot{V}}, Z \to G)$ lies in the image of some $H^{1}(\gerbeE_{\dot{V}}, Z \to T)$. The exactness of the bottom row is a straightforward character-theoretic argument.
\end{proof}

We also have the following analogue of \cite[Corollary 3.8.2]{Tasho2}:

\begin{cor}\label{localtoglobal4} The image of $$H^{1}(\gerbeE_{\dot{V}}, Z \to G) \xrightarrow{(\text{loc}_{v})_{v}}  \bigsqcup_{v \in V} H^{1}(\gerbeE_{v}, Z \to G)$$ consists precisely of those elements which map trivially under the composition $$\bigsqcup_{v \in \dot{V}} H^{1}(\gerbeE_{v}, Z \to G) \to \bigoplus_{v \in V} \overline{Y}_{+v,\text{tor}}([Z \to G]) \to  \overline{Y}_{+,\text{tor}}([Z \to G]).$$
\end{cor}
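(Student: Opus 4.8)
The plan is to deduce this from the commutative diagram with exact bottom row furnished by Corollary \ref{localtoglobal3}, together with the fact that the vertical Tate--Nakayama maps $\iota_{\dot{V}}$ (Theorem \ref{reductivetatenak}) and $(\iota_{v})_{v}$ are bijections of pointed sets. Observe first that the composition appearing in the statement is, by definition, the direct sum of the inverse local isomorphisms $\iota_{v}^{-1}\colon H^{1}(\gerbeE_{v}, Z\to G)\to \overline{Y}_{+v,\text{tor}}([Z\to G])$ followed by the map $\Sigma\colon \bigoplus_{v} \overline{Y}_{+v,\text{tor}}([Z\to G])\to \overline{Y}_{+,\text{tor}}([Z\to G])$ of Corollary \ref{localtoglobal3}. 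Hence the assertion is equivalent, via the bijections $\iota_{v}$ and $\iota_{\dot{V}}$, to the statement that the image of $(l_{v})_{v}$ in $\bigoplus_{v}\overline{Y}_{+v,\text{tor}}([Z\to G])$ equals the kernel of $\Sigma$, which is precisely the exactness of the bottom row already recorded in Corollary \ref{localtoglobal3}.

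Concretely, for the inclusion ``image $\subseteq$ kernel'' I would take $x\in H^{1}(\gerbeE_{\dot{V}}, Z\to G)$, write $x=\iota_{\dot{V}}(y)$ for the unique $y\in\overline{Y}[V_{\overline{F}},\dot{V}]_{0,+,\text{tor}}([Z\to G])$, and use commutativity of the diagram in Corollary \ref{localtoglobal3} to get $\text{loc}_{v}(x)=\iota_{v}(l_{v}(y))$ for every $v$; applying $\iota_{v}^{-1}$ in each coordinate and then $\Sigma$ yields $\Sigma\big((l_{v}(y))_{v}\big)$, which vanishes by exactness of the bottom row. For the reverse inclusion, given $(x_{v})_{v}\in\bigsqcup_{v}H^{1}(\gerbeE_{v}, Z\to G)$ mapping trivially under the stated composition, set $z_{v}:=\iota_{v}^{-1}(x_{v})$; since each $\iota_{v}$ is a bijection of pointed sets, $z_{v}$ is the neutral element of $\overline{Y}_{+v,\text{tor}}([Z\to G])$ whenever $x_{v}$ is neutral, so $(z_{v})_{v}$ genuinely lies in the direct sum $\bigoplus_{v}\overline{Y}_{+v,\text{tor}}([Z\to G])$, and by hypothesis $\Sigma\big((z_{v})_{v}\big)=0$. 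Exactness of the bottom row then produces $y\in\overline{Y}[V_{\overline{F}},\dot{V}]_{0,+,\text{tor}}([Z\to G])$ with $l_{v}(y)=z_{v}$ for all $v$, and commutativity of the diagram gives $\text{loc}_{v}(\iota_{\dot{V}}(y))=\iota_{v}(l_{v}(y))=\iota_{v}(z_{v})=x_{v}$, exhibiting $(x_{v})_{v}$ in the image of $(\text{loc}_{v})_{v}$.

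This is essentially a formal diagram chase once Corollary \ref{localtoglobal3} is in hand, so I do not anticipate a substantial obstacle; the only points requiring care are bookkeeping ones: that the bijections $\iota_{v}$ and $\iota_{\dot{V}}$ carry neutral elements to neutral elements (so that the restricted-product condition defining $\bigsqcup_{v}H^{1}(\gerbeE_{v},Z\to G)$ transports to the direct-sum condition on $\bigoplus_{v}\overline{Y}_{+v,\text{tor}}([Z\to G])$ and conversely), and that the bottom row of Corollary \ref{localtoglobal3} is exact as a complex of abelian groups --- the content of the ``straightforward character-theoretic argument'' invoked in the proof of that corollary, which could be spelled out in full should a referee request it. Granting these, the two inclusions above complete the proof.
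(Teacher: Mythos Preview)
Your proof is correct and is precisely the diagram chase that the paper has in mind; the paper's own proof simply states that this is ``a trivial consequence of Corollary~\ref{localtoglobal3}'' without spelling out the chase, so your argument is a faithful expansion of that one-line proof.
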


\begin{proof} Unlike in \cite{Tasho2}, where work is needed, this is a trivial consequence of Corollary \ref{localtoglobal3}.  
\end{proof}

\subsection{Unramified localizations}
For $[Z \to G] \in \mathcal{R}$, we can pull any $Z$-twisted $G_{\gerbeE_{\dot{V}}}$-torsor $\mathscr{T} \in Z^{1}(\gerbeE_{\dot{V}}, Z \to G)$ back to the $G_{\gerbeE_{\dot{V}, \overline{F_{v}}}}$-torsor $\mathscr{T}_{\overline{F_{v}}}$, and then via picking gerbe normalizations and a $1$-coboundary to get a functor $\Phi \colon \gerbeE_{v} \to \gerbeE_{\dot{V}}$, we set $\text{loc}_{v}(\mathscr{T}):= \Phi^{*}(\mathscr{T}_{\overline{F_{v}}})$, a $Z$-twisted $G_{\gerbeE_{v}}$-torsor. Note that  $\text{loc}_{v}(\mathscr{T})$ depends on our choice of $\Phi$ up to replacing $\text{loc}_{v}(\mathscr{T})$ by the canonically-isomorphic (via translation by $a^{-1}$) torsor $\eta^{*}(\text{loc}_{v}(\mathscr{T}))$, where $\eta \colon \gerbeE_{v} \to \gerbeE_{v}$ is the automorphism induced by a 1-coboundary $d(a)$, for $a \in u_{v}(\overline{F_{v}})$ (cf. the last paragraph of \S 4.1 and \cite[\S 2.5]{Dillery}).  

Note that since $\text{Res}[\mathscr{T}] \in \Hom_{F}(P_{\dot{V}}, Z)$ factors through $P_{E_{i}, S_{i}, n_{i}}$ for some $i$, for all $v \notin S_{i}$ we have that $\text{Res}[\text{loc}_{v}(\mathscr{T})]$ is trivial, and hence $\text{loc}_{v}(\mathscr{T})$ is the pullback of some $G$-torsor over $F_{v}$ via the projection $\gerbeE_{v} \xrightarrow{\pi} \text{Sch}/F_{v}$. The canonical inclusion $Z(O_{F_{v}^{\text{nr}}}) \to Z(\overline{F_{v}})$ is an equality for all but finitely many $v$ (because $Z$ is split over $F_{v}^{\text{nr}}$ for all but finitely many $v$, and $O_{F_{v}^{\text{nr}}}$ contains all roots of unity in $\overline{F_{v}}$). Choose an $O_{F,S}$-model $\mathcal{G}$ of $G$ for a some finite subset $S \subset V$; note that, for almost all $v$, the subgroups $\mathcal{G}(O_{F_{v}^{\text{nr}}})$ and $\mathcal{G}(O_{F_{v}^{\text{nr}}}^{\text{perf}})$ inside $G(F_{v}^{\text{sep}})$ and $G(\overline{F_{v}})$ (respectively) do not depend on the choice of model $\mathcal{G}$. Our goal in this subsection is to prove the following function field analogue of \cite[Prop. 6.1.1]{Taibi}, which will be needed for representation theory:

\begin{prop}\label{taibiprop} Let $\mathscr{T} \in Z^{1}(\gerbeE_{\dot{V}}, Z \to G)$. For all but finitely many $v \in V$, the torsor $\text{loc}_{v}(\mathscr{T}) \in Z^{1}(\gerbeE_{v}, Z \to G)/d(Z)$ is inflated from a $\mathcal{G}$-torsor $\mathcal{T}_{v}$ over $O_{F_{v}}$. Here, we are using $Z^{1}(\gerbeE_{v}, Z \to G)/d(Z)$ to denote equivalence classes of $G_{\gerbeE_{v}}$-torsors with the equivalence relation given by $\mathscr{T} \sim \eta^{*}\mathscr{T}$ for $\eta \colon \gerbeE_{v} \to \gerbeE_{v}$ induced by $d(a)$ for $a \in u_{v}(\overline{F_{v}})  \mapsto z \in Z(\overline{F_{v}})$ (we can always assume that $z \in Z(O_{F_{v}}^{\text{nr}})$, by the above discussion).  

Moreover, choosing normalizations $\gerbeE_{\dot{\xi}}$ and $\gerbeE_{\dot{\xi}_{v}}$ of the gerbes $\gerbeE_{\dot{V}}$ and $\gerbeE_{v}$ to view $\mathscr{T}$ as a torsor on $\gerbeE_{\dot{\xi}}$ (the choice of normalization and class $\dot{\xi}$ does not affect the class of $\mathscr{T}$ in  $Z^{1}(\gerbeE_{\dot{\xi}}, Z \to G)/d(Z)$), we may identify $\mathscr{T}$ with a $\dot{\xi}$-twisted $G$-torsor $(\mathcal{S'}, \text{Res}(\mathscr{T}), \psi')$ (cf. \cite[Def. 2.46]{Dillery}), where $\mathcal{S'}$ is a $G$-torsor over $\overline{F}$. Fix a $Z(\overline{F})$-orbit of trivializations $\mathcal{O} = \{\mathcal{S}' \xrightarrow{h'} \underline{G}\}$; then we may choose the $\mathcal{G}$-torsors $\mathcal{T}_{v}$ over $O_{F_{v}}$ such that for all but finitely many $v$, for any $h \in \mathcal{O}$, the trivializations $h_{\overline{F_{v}}}$ on $\mathcal{S}_{\overline{F_{v}}}$ are induced by the pullback of a trivialization $h_{v} \colon \mathcal{T}_{v} \to \underline{G}$ over the ring $O_{F_{v}^{\text{nr}}}^{\text{perf}}$. 
\end{prop}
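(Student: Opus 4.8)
# Proof proposal for Proposition \ref{taibiprop}

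The plan is to follow the strategy of \cite[Prop. 6.1.1]{Taibi}, adapting it to the \v{C}ech-torsor-theoretic framework developed above. First I would fix a finite set $S \subset V$ large enough that: $G$ has a reductive $O_{F,S}$-model $\mathcal{G}$, the torus $Z$ and the datum $\mathrm{Res}[\mathscr{T}] \in \Hom_F(P_{\dot V},Z)$ are both ``unramified outside $S$'' in the sense that $\mathrm{Res}[\mathscr{T}]$ factors through $P_{E_i,\dot S_i, n_i}$ with $S_i \subseteq S$, and $Z$ splits over $O_{F_v^{\mathrm{nr}}}$ for all $v \notin S$. For such $v$, the discussion preceding the proposition already shows $\mathrm{Res}[\mathrm{loc}_v(\mathscr{T})]$ is trivial, so $\mathrm{loc}_v(\mathscr{T})$ is inflated from a genuine $G$-torsor over $F_v$; the content is to push this model down to $O_{F_v}$ and to control trivializations integrally. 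I would extract from the $\dot\xi$-twisted description $(\mathcal{S}', \mathrm{Res}(\mathscr{T}), \psi')$ a \v{C}ech $1$-cocycle: pick a trivialization $h' \in \mathcal{O}$ of $\mathcal{S}'$, transport $\psi'$ through it to obtain $c_{h'} \in G(\overline{F}\otimes_F\overline{F})$ satisfying the twisted cocycle relation $dc_{h'} = \mathrm{Res}(\mathscr{T})(\dot\xi)$; this lives (for $S$ large) in $G$ of a finite-level ring $O_{E',S}\otimes_{O_{F,S}} O_{E',S}$ for some finite $E'/F$, by the finite-type-ness of $G$ and the results of \S2.1 on $O_S^{\mathrm{perf}}$.

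The key step is then a spreading-out argument at each unramified place. Localizing the above finite-level cocycle at $v \notin S$ via the fixed embedding $\overline{F}\to\overline{F_v}$ and using the Shapiro-type identification of \S2.2, I would obtain an element of $G(O_{E'_w \otimes_{O_{F_v}} O_{E'_w}})$ (or, after enlarging to kill the $Z$-part, an honest integral cocycle over $O_{F_v^{\mathrm{nr}}}$). Here I invoke that $\mathcal{G}$ is smooth over $O_{F,S}$, so that $H^1_{\mathrm{fppf}}(O_{F_v}, \mathcal{G}) = H^1_{\mathrm{et}}(O_{F_v},\mathcal{G})$ and, more importantly, that every class in $H^1(F_v^{\mathrm{nr}}, G)$ that is trivial restricts to the trivial class already over $O_{F_v^{\mathrm{nr}}}$ after possibly enlarging $S$ — concretely, the $G$-torsor over $F_v$ underlying $\mathrm{loc}_v(\mathscr{T})$ is unramified for almost all $v$ because it comes by inflation and descent from a fixed global object, so by the usual limit argument it acquires an $O_{F_v}$-integral structure $\mathcal{T}_v$ outside a finite set; one applies \cite[Lem. 2.4]{Cesnavicius} and the restricted-product description of Corollary \ref{mainappendixBcor} to see that all but finitely many local components of a fixed adelic object land in the integral subgroup. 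This yields the $\mathcal{G}$-torsors $\mathcal{T}_v$ over $O_{F_v}$ whose inflations to $\gerbeE_v$ agree with $\mathrm{loc}_v(\mathscr{T})$ modulo $d(Z)$.

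For the ``moreover'' clause I would argue as follows: having chosen the orbit $\mathcal{O}$ of trivializations $h' \colon \mathcal{S}' \to \underline{G}$ over $\overline{F}$, the associated cocycle $c_{h'}$ lies, for $S$ large, in $\mathcal{G}$ of a finite \'etale ring extension of $O_{F,S}$; localizing at $v \notin S$, its image lies in $\mathcal{G}(O_{F_v^{\mathrm{nr}}}^{\mathrm{perf}} \otimes \cdots)$, so the trivialization $h_{\overline{F_v}}$ used to define $\mathrm{loc}_v(\mathscr{T})$ is, up to the $Z(O_{F_v^{\mathrm{nr}}})$-ambiguity tracked by $d(Z)$, already defined over $O_{F_v^{\mathrm{nr}}}^{\mathrm{perf}}$ and hence descends to a trivialization $h_v \colon \mathcal{T}_v \to \underline{G}$ over $O_{F_v^{\mathrm{nr}}}^{\mathrm{perf}}$; the independence of the resulting class on the normalizations $\gerbeE_{\dot\xi}, \gerbeE_{\dot\xi_v}$ and on the representative $h \in \mathcal{O}$ follows from \cite[Lem. 2.56, Prop. 2.55]{Dillery} together with the vanishing $\check H^1(\overline{F}/F, P_{\dot V}) = 0 = \check H^1(\overline{F_v}/F_v, (P_{\dot V})_{F_v})$ from Propositions \ref{vanishingH1} and \ref{mainvanishing1}. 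The main obstacle I anticipate is bookkeeping: making precise, uniformly in $v$, the passage between the global finite-level \v{C}ech cocycle, its localizations, and the integral models — in particular verifying that a single finite $S$ simultaneously handles the model of $\mathcal{G}$, the splitting of $Z$, the factorization of $\mathrm{Res}[\mathscr{T}]$, and the integrality of the trivializing cochain $c_{h'}$ — which is exactly the kind of spreading-out argument that \cite{Taibi} carries out in the Galois-cohomological setting and that must be reorganized here through Corollary \ref{mainappendixBcor} and Proposition \ref{cechrestrictedproduct}.
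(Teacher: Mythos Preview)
Your spreading-out of the twisted cocycle $c_{h'}$ is fine and correctly shows that the $\mathrm{res}_v(\dot\xi)$-twisted torsor $(\mathcal{S}'_{\overline{F_v}}, \mathrm{Res}(\mathscr{T}), \psi')$ has integral gluing data for almost all $v$. The gap is that this is \emph{not} $\mathrm{loc}_v(\mathscr{T})$. By definition, $\mathrm{loc}_v$ involves a further pullback along a chosen functor $\gerbeE_{\dot\xi_v} \to \gerbeE_{\mathrm{res}_v(\dot\xi)}$, which is built (Construction~\ref{changeofgerbe}) from a $1$-cochain $a'_v \in P(\overline{F_v}\otimes_{F_v}\overline{F_v})$ satisfying $\mathrm{res}_v(\dot\xi) = \mathrm{loc}_v(\dot\xi_v)\cdot d(a'_v)$. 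The $\dot\xi_v$-twisted torsor you actually want is $(\mathcal{S}'_{\overline{F_v}}, \mathrm{Res}(\mathscr{T})\circ\mathrm{loc}_v, m_{\mathrm{Res}(\mathscr{T})(a'_v)}\circ\psi')$, and nothing in your argument controls the integrality of $\mathrm{Res}(\mathscr{T})(a'_v)$. Your sentence ``it comes by inflation and descent from a fixed global object'' is precisely where this is elided: the object depends on the place-by-place choice of $a'_v$, which is not a localization of anything global.

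The paper closes this gap by invoking the \emph{defining property} of the canonical class (Proposition~\ref{canonicalclass}): the image of $\dot\xi$ in $H^2(\overline{\A}/\A, T\to U)$ coincides with that of the adelic class $x$, where $P\to T\to U$ is a resolution by pro-tori as in Lemma~\ref{toritower}. Unwinding this gives $a\in T(\overline{\A}\otimes_\A\overline{\A})$, $b\in U(\overline{\A})$ with $\dot\xi = \prod_v \dot S_v^2(\mathrm{loc}_v(\dot\xi_v))\cdot d(a)$ and $\bar a = db$; projecting to $v$ and correcting by a lift $\tilde b_v$ of $b_v$ produces $a'_v \in P(\overline{F_v}^{\otimes 2})$. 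Since $a,b$ live at a finite adelic level, their $v$-projections are integral for almost all $v$, and after taking $n_k$-th roots (which stay inside $O_{F_v^{\mathrm{nr}}}^{\mathrm{perf}}$) one gets $a'_{v,k}\in P_k((O_{F_v^{\mathrm{nr}}}^{\mathrm{perf}})^{\otimes 2})$ for $v$ outside a fixed finite set. This is the missing ingredient; your restricted-product bookkeeping (Corollary~\ref{mainappendixBcor}, Proposition~\ref{cechrestrictedproduct}) applies to $a,b$ rather than to $c_{h'}$ alone, and only once $a'_v$ is known to be integral does the descent datum $(\mathcal{S}''_{O_{F_v^{\mathrm{nr}}}^{\mathrm{perf}}}, m_{\varphi_k(a'_{v,k})}\circ\psi)$ define $\mathcal{T}_v$ over $O_{F_v}$ with the required trivialization.
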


\begin{proof} This proof follows \cite[\S 6.2]{Taibi} with some adjustments to accommodate the positive-characteristic situation. 
Pick a tower of resolutions by tori $(P_{k} \to T_{k} \to U_{k})_{k}$ as in Lemma \ref{toritower}, and set $T := \varprojlim_{k} T_{k}$, $U := \varprojlim_{k} U_{k}$, which are pro-tori.

By construction of the global canonical class $[\dot{\xi}]$, the image of $[\dot{\xi}]$ in $H^{2}(\overline{\A}/\A, T \to U)$ coincides with the image of the adelic canonical class $[x] \in \check{H}^{2}(\overline{\A}/\A, P)$, which, unpacking the construction of $[x]$, is to say (by the definition of the differentials arising from the double complex associated to $T \to U$) that there is some $a \in T(\overline{\A} \otimes_{\A} \overline{\A})$ and $b \in U(\overline{\A})$ such that
\begin{equation}\label{taibiequation}
\dot{\xi} = [\prod_{v \in V} \dot{S}_{v}^{2}(\text{loc}_{v}(\dot{\xi}_{v}))] \cdot d(a)
\end{equation}
inside $T(\overline{\A}^{\bigotimes_{\A}3})$ and $\overline{a} = db$ inside $U(\overline{\A} \otimes_{\A} \overline{\A})$, for a choice of Shapiro map $\dot{S}_{v}^{2}$. To make sense of the above product expression, we recall that $P(\overline{\A}^{\bigotimes_{\A}3}) = \varprojlim_{i} P_{i}(\overline{\A}^{\bigotimes_{\A}3})$, and for a fixed $i$, all but finitely-many projections $p_{i}[\dot{S}_{v}^{2}(\text{loc}_{v}(\dot{\xi}_{v}))]$ are trivial, and hence it makes sense to take this product in each $P_{i}(\overline{\A}^{\bigotimes_{\A}3}) = \varinjlim_{K/F} \prod_{v}' P_{i}(\A_{K,v}^{\bigotimes_{F_{v}}3})$ (by Corollary \ref{mainappendixBcor}) and then take the inverse limit.

Recall that $\dot{v} \in V_{F^{\text{sep}}}$ determines a ring homomorphism $\text{pr}_{\dot{v}} \colon \overline{\A} \to \overline{F_{v}}$ defined by 
the direct limit of the the projection maps $\A_{K} = \prod_{w \in V_{K}}' K_{w} \to K_{\dot{v}_{K}}$ over all finite extensions $K/F$, where by $\dot{v}_{K}$ we mean the unique extension of $\dot{v}_{K'}$, where $K'$ is the maximal Galois subextension of $K/F$, to a valuation on $K$.  Restricting this ring homomorphism to the subring $\overline{\A}_{v} \subset \overline{\A}$ gives a homomorphism of $F_{v}$-algebras. It is straightforward to check that we may choose our section $\Gamma/\Gamma_{\dot{v}} \to \Gamma$ (cf. the construction of the Shapiro maps in \S 2.2) such that, on $k$-cochains, we have $\restr{\text{pr}_{\dot{v}}}{\overline{\A}_{v}} \circ \dot{S}_{v}^{k} = \text{id}_{\overline{F_{v}}}$. We also have the projection map $\overline{\A} \xrightarrow{\text{pr}_{v}} \overline{\A}_{v}$ defined the same way except using the direct limit of the project maps $\prod_{w \in V_{K}}' K_{w} \to \prod_{w \mid v} K_{w}$.

Applying $\restr{\text{pr}_{\dot{v}}}{\overline{\A}_{v}} \circ \text{pr}_{v}$ to the equality \eqref{taibiequation}, we see that, for a fixed $v \in V$, the image of $\dot{\xi}$ in $T(\overline{F_{v}}^{\bigotimes_{F_{v}}3})$, denoted by $\text{res}_{v}(\dot{\xi})$ is given by $\text{loc}_{v}(\xi_{v}) \cdot d(a_{v})$, where $a_{v} := \text{pr}_{\dot{v}}(a) \in T(\overline{F_{v}} \otimes_{F_{v}} \overline{F_{v}})$. Although this equality is a priori taking place in $T(\overline{F_{v}}^{\bigotimes_{F_{v}}3})$, since the image of $\dot{\xi}$ and $\text{loc}_{v}(\xi_{v})$ both lie in the subgroup $P(\overline{F_{v}}^{\bigotimes_{F_{v}}3})$, we see that in fact $d(a_{v}) \in P(\overline{F_{v}}^{\bigotimes_{F_{v}}3})$ and thus this equality takes place in $P$. Set $b_{v} := \text{pr}_{\dot{v}}(b) \in U(\overline{F_{v}})$, and choose a lift $\tilde{b}_{v} \in T(\overline{F_{v}})$ of $b_{v}$, which is possible because $\varprojlim_{i}^{1} P_{i}(\overline{F_{v}})$ vanishes, since it consists of surjective maps. Define $a'_{v} := a_{v}/d(\tilde{b}_{v})$, which lies in $P(\overline{F_{v}}^{\bigotimes_{F_{v}}2})$ since its image under $T \to U$ equals $\text{pr}_{\dot{v}}(\overline{a})/\text{pr}_{\dot{v}}(db)$ (using that the isogenies $T_{k} \to U_{k}$ are defined over $F$, so they commute with \v{C}ech differentials), which is trivial by construction. We may replace $a_{v}$ by $a'_{v}$ and retain the equality \begin{equation}\label{Taibi1}
\text{res}_{v}(\dot{\xi}) = \text{loc}_{v}(\dot{\xi}_{v}) \cdot d(a'_{v}).
\end{equation}

For $k \geq 0$ and $v \in V$, we denote by $a_{v,k}$ (resp. $b_{v,k}$, $\tilde{b}_{v,k}$, $a'_{v,k}$) the image of $a_{v}$ (resp. $b_{v}$, $\tilde{b}_{v}$, $a'_{v}$) in $T_{k}(\overline{F_{v}}^{\bigotimes_{F_{v}}2})$ (resp. $U_{k}(\overline{F_{v}})$, $T_{k}(\overline{F_{v}})$, $P_{k}(\overline{F_{v}}^{\bigotimes_{F_{v}}2})$). We claim that there is a finite set of places $S'$ of $F$ such that for all $v \notin S$, the element $a'_{v,k}$ lies in the subgroup $P_{k}([O_{F_{v}^{\text{nr}}}^{\text{perf}}]^{\bigotimes_{O_{F_{v}}}2})$. Recall that $$a_{k} \in T_{k}(\overline{\A} \otimes_{\A} \overline{\A}) = \varinjlim_{E/F} T_{k}(\A_{E} \otimes_{\A} \A_{E}) = \varinjlim_{E/F} (\varinjlim_{S} T_{k}(\A_{E,S} \otimes_{\A_{S}} \A_{E,S})),$$ where the outside limit is over all finite extensions $E/F$ and the inside limit is over all finite sets of places of $F$. It follows that we may find $K/F$ finite containing $E_{k}$ and finite $S' \subset V$ containing $S_{k}$ such that the maximal Galois subextension $K'/F$ of $K$ is unramified outside $S'$, $T_{k}$ is split over $K'$, $a_{k} \in T_{k}(\A_{K,S'} \otimes_{\A_{S'}} \A_{K,S})$, and $b_{k} \in U_{k}(\A_{K,S'})$. Then for $v \notin S'$, we have $a_{k,v} \in T_{k}(O_{K_{\dot{v}}} \otimes_{O_{F_{v}}} O_{K_{\dot{v}}})$, and moreover, $K'_{\dot{v}}/F_{v}$ is unramified, so that $a_{k,v} \in T_{k}(O_{F_{v}^{\text{nr}}}^{\text{perf}} \otimes_{O_{F_{v}}} O_{F_{v}^{\text{nr}}}^{\text{perf}})$. 

The group $P_{k}$ is killed by the $n_{k}$-power map, and so there is a unique morphism $U_{k} \to T_{k}$ such that $U_{k} \to T_{k} \to U_{k}$ is the $n_{k}$-power map. Since $b_{k,v} \in U_{k}(O_{K_{\dot{v}}})$ for all $v \notin S'$ and $T_{k}$ and $U_{k}$ are split over $K$, any preimage of $b_{k,v}$ lies in $T_{k}([O_{K_{\dot{v}}}^{(n'_{k})}]^{(1/p^{m_{k}})})$, where $[O_{K_{\dot{v}}}^{(n'_{k})}]^{(1/p^{m_{k}})}$ denotes the fppf extension of $O_{K_{\dot{v}}}$ given by the composition of two extensions defined as follows: For $(n'_{k}, p) = 1$ and $n_{k} = n'_{k} \cdot p^{m_{k}}$, we first take the extension $O_{K_{\dot{v}}}^{(n'_{k})}/ O_{K_{\dot{v}}}$ obtained by adjoining all $n'_{k}$-roots of elements of $O_{K_{\dot{v}}}^{\times}$, which is finite \'{e}tale, followed by the extension $[O_{K_{\dot{v}}}^{(n'_{k})}]^{(1/p^{m_{k}})}$ defined by adjoining all $p^{m_{k}}$-power roots to $O_{K_{\dot{v}}}^{(n'_{k})}$, which is finite flat.

We claim that the extension $[O_{K_{\dot{v}}}^{(n'_{k})}]^{(1/p^{m_{k}})}/O_{F_{v}}$ lies in $O_{F_{v}^{\text{nr}}}^{\text{perf}}$. Indeed, since $O_{K_{\dot{v}}^{\text{nr}}}^{\text{perf}} = O_{F_{v}^{\text{nr}}}^{\text{perf}}$, it's enough to check that $[O_{K_{\dot{v}}}^{(n'_{k})}]^{(1/p^{m_{k}})}$ lies in $O_{K_{\dot{v}}^{\text{nr}}}^{\text{perf}}$, which is clear since it factors as a finite \'{e}tale extension of $O_{K_{\dot{v}}}$ followed by the extension obtained by adjoining all $p^{m_{k}}$-power roots. Thus, for any $v \notin S'$, we have $a'_{v,k} \in P_{k}([O_{K_{\dot{v}}}^{(n'_{k})}]^{(1/p^{m_{k}})} \otimes_{O_{F_{v}}} [O_{K_{\dot{v}}}^{(n'_{k})}]^{(1/p^{m_{k}})})$, and since the image of $\text{loc}_{v}(\xi_{v})$ is trivial in $P_{k}$ for all $v \notin S_{k} \subseteq S'$, we get the equality $$\text{res}_{v}(\dot{\xi}_{k}) = d(a'_{v,k}) \in P_{k}(\overline{F_{v}}^{\bigotimes_{F_{v}}3}),$$ where $\dot{\xi}_{k}$ denotes the image of $\dot{\xi}$ in $P_{k}(\overline{F_{v}}^{\bigotimes_{F_{v}}3})$.

 Let $\mathscr{T} \in Z^{1}(\gerbeE_{\dot{V}}, Z \to G)$, and choose normalizations of $\gerbeE_{v}$ and $\gerbeE_{\dot{V}}$ identifying them with $\gerbeE_{\dot{\xi}_{v}}$ and $\gerbeE_{\dot{\xi}}$, respectively. Recall that, after passing from $\gerbeE_{\dot{\xi}}$ to $\gerbeE_{\text{res}_{v}(\dot{\xi})} = (\gerbeE_{\dot{\xi}})_{\overline{F_{v}}}$, choosing different (global) normalizations has the effect of twisting $\text{loc}_{v}(\mathscr{T})$ by $d(z)$ for $z \in Z(O_{F_{v}^{\text{nr}}})$ with $z = \text{Res}([\mathscr{T}])(x)$ for some $x \in u_{v}(\overline{F_{v}})$, and thus does not affect the statement of the proposition. Changing the representatives $\dot{\xi}$ and $\dot{\xi}_{v}$ for the canonical classes has the same effect.
 
These normalizations let us canonically identify $G_{\gerbeE_{?}}$-torsors on the gerbes $\gerbeE_{?}$ with $?$-twisted $G$-torsors, for $? = \text{res}_{v}(\dot{\xi}), \dot{\xi}_{v}, \dot{\xi}$, by \cite[Prop. 2.50]{Dillery}; write $\mathscr{T} = (\mathcal{S}', \text{Res}(\mathscr{T}), \psi')$ under this identification. Choose $k$ sufficiently large so that: $\text{Res}(\mathscr{T}) \in \Hom_{F}(P, Z)$ factors through $P_{k}$ via $\varphi_{k} \in \Hom_{F}(P_{k}, Z)$, $\mathcal{S}'$ equals $j^{*}\mathcal{S}''$ for a $\mathcal{G}$-torsor $\mathcal{S}''$ over $O_{S_{k}}^{\text{perf}}$ for $\Spec(\overline{F}) \xrightarrow{j} \Spec(O_{S_{k}}^{\text{perf}})$, $h$ equals $j^{*}h_{S_{k}}$ for an $O_{S_{k}}^{\text{perf}}$-trivialization $h_{S_{k}}$ of $\mathcal{S}''$, and the ``twisted gluing isomorphism" $\psi' \colon p_{2}^{*}\mathcal{S}' \to p_{1}^{*}\mathcal{S}'$ is given by $j^{*}\psi$ for an isomorphism of $\mathcal{G}$-torsors $p_{2}^{*}\mathcal{S}'' \xrightarrow{\psi} p_{1}^{*}\mathcal{S}''.$

Take $S' \supseteq S_{k}$ corresponding to $k$ as in the above paragraphs. Construction \ref{changeofgerbe} and the equality \eqref{Taibi1} gives a morphism $\gerbeE_{\dot{\xi}_{v}} \to \gerbeE_{\text{res}_{v}(\dot{\xi})}$ which via pullback sends the $\text{res}_{v}(\dot{\xi})$-twisted $G$-torsor $(\mathcal{S}'_{\overline{F_{v}}}, \text{Res}(\mathscr{T}), \psi')$ to the $\dot{\xi}_{v}$-twisted $G$-torsor $(\mathcal{S}'_{\overline{F_{v}}}, \text{Res}(\mathscr{T}) \circ \text{loc}_{v}, m_{a_{v}'} \circ \psi')$.  By construction, for any $v \notin S'$ the homomorphism $\text{Res}(\mathscr{T}) \circ \text{loc}_{v}$ is trivial on $u_{v}$, and hence $(\mathcal{S}'_{\overline{F_{v}}}, m_{a_{v}'} \circ \psi')$ gives a descent datum for a $G$-torsor $\mathcal{S}_{v}$ over $F_{v}$; we claim that the pair of $\mathcal{S}_{v}$ and the $\overline{F_{v}}$-trivialization induced by $h_{\overline{F_{v}}}$ descends further to a $\mathcal{G}$-torsor $\mathcal{T}_{v}$ over $O_{F_{v}}$ with an $O_{F_{v}^{\text{nr}}}^{\text{perf}}$-trivialization. 

We define $\mathcal{T}_{v}$ via the descent datum $(\mathcal{S}''_{O_{F_{v}^{\text{nr}}}^{\text{perf}}}, m_{\text{Res}(\mathscr{T})(a'_{v})} \circ \psi)$  with respect to the fpqc cover $O_{F_{v}^{\text{nr}}}^{\text{perf}}/O_{F_{v}}$. The torsor $\mathcal{S}''_{O_{F_{v}^{\text{nr}}}^{\text{perf}}}$ is well-defined because $v \notin S'$ and $O_{F_{v}^{\text{nr}}}^{\text{perf}}$ is an $O_{S_{k}}^{\text{perf}}$-algebra, and $m_{\text{Res}(\mathscr{T})(a'_{v})} \circ \psi$ makes sense, since $\text{Res}(\mathscr{T})(a'_{v}) = \varphi_{k}(a'_{v,k})$, the morphism $\varphi_{k}$ is defined over $O_{F,S'}$, and $a'_{v,k} \in P_{k}(O_{F_{v}^{\text{nr}}}^{\text{perf}} \otimes_{O_{F_{v}}} O_{F_{v}^{\text{nr}}}^{\text{perf}})$; this finishes the construction of $\mathcal{T}_{v}$---by design, $h_{v}:= (h_{S_{k}})_{O_{F_{v}^{\text{nr}}}^{\text{perf}}}$ trivializes $\mathcal{T}_{v}$ over $O_{F_{v}^{\text{nr}}}^{\text{perf}}$. The pullback of $\mathcal{T}_{v}$ is evidently equal to $\mathcal{S}_{v}$, since the descent datum giving $\mathcal{T}_{v}$ pulls back via the morphisms $\Spec(F_{v}) \to \Spec(O_{F_{v}})$ and $\Spec(\overline{F_{v}}) \to \Spec(O_{F_{v}^{\text{nr}}}^{\text{perf}})$ to the descent datum giving $\mathcal{S}_{v}$; similarly, $h_{v}$ pulls back to $h_{\overline{F_{v}}}$. This proves the result. 
\end{proof}

\section{Applications to endoscopy}
In this section, we use the above constructions to analyze an adelic transfer factor for a global function field $F$. In what follows, $G$ will be a connected reductive group over $F$.

\subsection{Adelic transfer factors for function fields}
We follow \cite[\S 6.3]{LS} to construct adelic transfer factors for connected reductive groups over a global function field $F$.  Let $\psi \colon G_{F_{s}} \to G^{*}_{F_{s}}$ be a quasi-split inner form of $G$, with Langlands dual group $\widehat{G}^{*}$ and Weil-form $\prescript{L}{}G^{*}:= \widehat{G}^{*} \rtimes W_{F}$.  

\begin{Def} A \textit{global endoscopic datum} for $G$ is a tuple $(H, \mathcal{H}, s, \xi)$ where $H$ is a quasi-split connected reductive group over $F$, $\mathcal{H}$ is a split extension of $W_{F}$ by $\widehat{H}$, $s \in Z(\widehat{H})$ is any element, and $\xi \colon \mathcal{H} \to \prescript{L}{}G^{*}$ is an $L$-embedding such that:
\begin{enumerate}
\item{The homomorphism $W_{F} \to \text{Out}(\widehat{H}) = \text{Out}(H)$ determined by $\mathcal{H}$ is the same as the homomorphism $W_{F} \to \Gamma \to \text{Out}(H)$ induced by the usual $\Gamma$-action on $H$.}
\item{The map $\xi$ restricts to an isomorphism of algebraic groups over $\mathbb{C}$ from $\widehat{H}$ to $Z_{\widehat{G}^{*}}(t)^{\circ}$, where $t := \xi(s)$.}
\item{The first two conditions imply that we have a $\Gamma$-equivariant embedding $Z(\widehat{G}^{*}) \to Z(\widehat{H})$. We require that the image of $s$ in $Z(\widehat{H})/Z(\widehat{G}^{*})$, denoted by $\bar{s}$, is fixed by $W_{F}$ and maps under the connecting homomorphism $H^{0}(W_{F}, Z(\widehat{H})/Z(\widehat{G}^{*})) \to H^{1}(W_{F}, Z(\widehat{G}^{*}))$ to an element which is killed by the homomorphism $H^{1}(W_{F}, Z(\widehat{G}^{*})) \to H^{1}(W_{F_{v}}, Z(\widehat{G}^{*}))$ for all $v \in V_{F}$ (such an an element is called \textit{locally trivial}).}
\end{enumerate}
\end{Def}

Such a datum $\mathfrak{e}$ induces, for any $v \in V$, a local endoscopic datum $\mathfrak{e}_{v} := (H_{F_{v}}, \mathcal{H}_{v}, s_{v}, \xi_{v})$, where $\mathcal{H}_{v}$ is the fibered product of $\mathcal{H} \to W_{F}$ and $W_{F_{v}} \to W_{F}$, $\xi_{v} \colon \mathcal{H}_{v} \to \prescript{L}{}(G^{*}_{F_{v}})$ is induced by $\xi$ and the natural map $\mathcal{H}_{v} \to \mathcal{H}$, and $s_{v} = s \in Z(\widehat{H})$. Fix such an $\mathfrak{e}$; we will temporarily assume that $\mathcal{H} = \prescript{L}{}H$. Up to equivalence, $\mathfrak{e}$ only depends on the image of $s$ in $\pi_{0}([Z(\widehat{H})/Z(\widehat{G}^{*})]^{\Gamma})$. Recall that a strongly-regular semisimple element $\gamma_{H} \in H(F)$ with centralizer $T_{H}$ (a maximal torus of $H$ defined over $F$) is called \textit{$G$-regular} if it is the preimage of a strongly-regular semisimple element $\gamma_{G} \in G(F)$ under an admissible isomorphism $T_{H} \to T_{G} := Z_{G}(\gamma_{G})$. First, we need:

\begin{lem} There is an admissible embedding of $T_{G}$ into $G^{*}$. 
\end{lem}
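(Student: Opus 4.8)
The plan is to deduce the statement from Kottwitz's theory of rational semisimple conjugacy classes in the quasi-split group $G^{*}$. Recall that $T_{G}=Z_{G}(\gamma_{G})$ is a maximal $F$-torus of $G$ (because $\gamma_{G}$ is strongly regular semisimple), and that an \emph{admissible embedding} of $T_{G}$ into $G^{*}$ is an $F$-embedding $j\colon T_{G}\hookrightarrow G^{*}$ onto a maximal torus that becomes $G^{*}(F^{s})$-conjugate to $\psi|_{T_{G}}$, i.e.\ $j=\mathrm{Ad}(g)\circ\psi|_{T_{G}}$ for some $g\in G^{*}(F^{s})$. First I would record the Galois data: writing $u_{\sigma}:=\psi^{-1}\circ\sigma(\psi)\in G^{*}_{\text{ad}}(F^{s})$ for the inner-twisting cocycle and $\delta:=\psi(\gamma_{G})\in G^{*}(F^{s})$, the relation $\sigma(\psi)=\mathrm{Ad}(u_{\sigma})\circ\psi$ together with $\gamma_{G}\in G(F)$ gives $\sigma(\delta)=\mathrm{Ad}(u_{\sigma})(\delta)$ for all $\sigma\in\Gamma$. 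Since $\psi$ is an $F^{s}$-isomorphism and $\gamma_{G}$ is strongly regular semisimple, $\delta$ is strongly regular semisimple in $G^{*}$, so its $G^{*}(F^{s})$-conjugacy class consists of semisimple elements and is $\Gamma$-stable.

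The main step will be to produce an element $\gamma^{*}\in G^{*}(F)$ lying in this conjugacy class. I would first pass to a $z$-extension $1\to Z\to\widetilde{G}^{*}\to G^{*}\to 1$ with $\widetilde{G}^{*}$ quasi-split, $\mathscr{D}(\widetilde{G}^{*})$ simply connected, and $Z$ an induced $F$-torus (so that $H^{1}(F,Z)=0$ and $\widetilde{G}^{*}(F)\twoheadrightarrow G^{*}(F)$), lift the class of $\delta$ to a $\Gamma$-stable semisimple conjugacy class in $\widetilde{G}^{*}$ using that $Z$ is induced, and then apply Kottwitz's theorem that in a connected quasi-split group with simply connected derived group every $\Gamma$-stable semisimple conjugacy class meets the rational points. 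Its proof, which rests on Steinberg's rational cross-section for regular semisimple classes in simply connected quasi-split groups, is insensitive to the characteristic and hence valid over our global function field; one may alternatively follow \cite[\S 1.3]{LS}. I expect this invocation, together with the bookkeeping needed for the $z$-extension reduction, to be essentially the only nontrivial point of the argument.

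Finally I would upgrade $\gamma^{*}$ to the desired embedding. Set $T^{*}:=Z_{G^{*}}(\gamma^{*})$, a maximal $F$-torus of $G^{*}$, choose $g\in G^{*}(F^{s})$ with $\mathrm{Ad}(g)(\delta)=\gamma^{*}$, and put $j:=\mathrm{Ad}(g)\circ\psi|_{T_{G}}\colon T_{G}\xrightarrow{\sim}T^{*}$. For $\sigma\in\Gamma$ one computes $\sigma(j)=\mathrm{Ad}(\sigma(g)u_{\sigma})\circ\psi|_{T_{G}}$, and both $j$ and $\sigma(j)$ send $\gamma_{G}$ to $\gamma^{*}$ (using $\sigma(\gamma^{*})=\gamma^{*}$); hence $g$ and $\sigma(g)u_{\sigma}$ differ by an element of $Z_{G^{*}_{\text{ad}}}(\delta)(F^{s})$, which by strong regularity of $\delta$ equals the image of the maximal torus $\psi(T_{G})$ in $G^{*}_{\text{ad}}$ and therefore acts trivially by conjugation on $\psi(T_{G})$. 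Thus $\sigma(j)=j$ for every $\sigma$, so $j$ is defined over $F$, and it is admissible by construction. This gives the lemma; I would then also remark that $j$ is compatible with the fixed $\psi$ and is unique up to $G^{*}(F)$-conjugacy and composition with $H^{1}(F,T_{G})$, which is all that the subsequent transfer-factor computations require.
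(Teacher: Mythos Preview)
Your argument is correct and is the standard Kottwitz--Steinberg route: pass to a $z$-extension with simply connected derived group, invoke that a $\Gamma$-stable semisimple class in a quasi-split group with simply connected derived group has a rational point (Kottwitz's result in \cite{Kott82}, whose proof via Steinberg's cross-section is indeed characteristic-free and works over any perfect field, in particular over a global function field since one can work over $F^{s}$), and then check the resulting embedding is Galois-stable by strong regularity. One small notational slip: for $u_{\sigma}$ to lie in $G^{*}_{\mathrm{ad}}(F^{s})$ and satisfy $\sigma(\psi)=\mathrm{Ad}(u_{\sigma})\circ\psi$, you want $u_{\sigma}=\sigma(\psi)\circ\psi^{-1}$ rather than $\psi^{-1}\circ\sigma(\psi)$; this does not affect the argument.

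The paper's own proof is a one-line citation of \cite[Corollary~6.3]{Dillery}, noting that the proof there carries over to the global function field $F$. Your argument is essentially a self-contained unpacking of what that citation contains (or at least of the classical method that underlies it), so the two approaches coincide in substance; the paper simply externalizes the work.
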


\begin{proof} This follows from \cite[Corollary 6.3]{Dillery} (the proof loc. cit. works for our field $F$). 
\end{proof}

It follows that for any $G$-regular strongly-regular semisimple $\gamma_{H} \in H(F)$, we have an admissible embedding of $T_{H}$ in $G^{*}$ (which is not unique). We say that $\gamma_{H}$ is a \textit{related to} $\gamma_{G} \in G(\A)$ if for all $v \in V$, the image $\gamma_{H,v}$ of $\gamma_{H}$ in $H(F_{v})$ is an image (under an admissible embedding $(T_{H})_{F_{v}} \to G_{F_{v}}$) of the element $\gamma_{G,v} \in G(F_{v})$. If we fix an admissible embedding of $T_{H}$ in $G^{*}$, with image a maximal $F$-torus denoted by $T$ and image of $\gamma_{H}$ denoted by $\gamma \in G^{*}(F)$, then the above condition means requiring that there exist $x_{v} \in G^{*}(F_{v}^{\text{sep}})$ such that $\text{Ad}(x_{v}) \circ \psi$ maps the maximal torus $T_{G,v}$ in $G_{F_{v}}$ containing $\gamma_{G,v}$ to $T_{F_{v}}$ (over $F_{v}$) and sends $\gamma_{G,v}$ to (the restriction of) $\gamma$. 

\begin{prop}\label{LS6.4}(\cite[Thm. 6.4.A]{LS}) \begin{enumerate}
\item{For almost all $v$, the values $\Delta_{i}(\gamma_{H,v}, \gamma_{G,v})$ (from \cite[\S 6.3]{Dillery}) equal $1$ for $i=I,II,III_{2}, IV$.}
\item{$\prod_{v} \Delta_{i}(\gamma_{H,v}, \gamma_{G,v})  = 1$ for $i=I,II,III_{2}, IV$.}
\end{enumerate}
\end{prop}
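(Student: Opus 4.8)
The plan is to follow the strategy of \cite[\S 6.3--6.4]{LS} closely, reducing the global statement to the corresponding local statements in \cite[\S 6.3]{Dillery} together with product formulas coming from global class field theory and the reciprocity properties of the Langlands--Shelstad splitting invariant. First I would record the standard reduction: each factor $\Delta_i$ (for $i = I, II, III_2, IV$) is built out of local data which is ``unramified'' for almost all $v$ --- the torus $T$, the endoscopic datum $\mathfrak{e}$, the $a$-data and $\chi$-data, and the elements $\gamma_H, \gamma_G$ all extend to $O_{F,S}$-models for a suitable finite $S$, and the explicit formulas for $\Delta_{II}$, $\Delta_{IV}$ (products over roots of quantities like $\chi_\alpha((\alpha(\gamma)-1)/a_\alpha)$, resp. $|\alpha(\gamma)-1|_v^{1/2}$) visibly equal $1$ when $v \notin S$ and the relevant objects are units; for $\Delta_{III_2}$ one uses that the splitting of $T$ and the $\chi$-data are unramified at $v$. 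This gives part (1), and simultaneously shows the product in part (2) is finite and hence well-defined.

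The heart of part (2) is then the vanishing of the product over \emph{all} $v$ for each of the four factors. For $\Delta_{IV}$ this is the product formula for the normalized absolute value $\prod_v |x|_v = 1$ applied to $x = \prod_\alpha (\alpha(\gamma)-1)$, noting that the Weyl-denominator quantity is a global element of $F^\times$ (or of $T(F)$ pushed to $\bar F^\times$). For $\Delta_{II}$ one similarly collects the local terms into $\prod_v \prod_{\alpha} \chi_{\alpha,v}((\alpha(\gamma)-1)/a_\alpha)$ and observes, after grouping roots into $\Gamma$-orbits and using that $\chi$-data are chosen globally (so each $\chi_\alpha$ is a Hecke character), that the product over $v$ of each local character evaluated at a global idele-class element is trivial by the reciprocity/product law for Hecke characters; the $a$-data, being global, contribute trivially to the product. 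For $\Delta_{III_2}$, which is a pairing of a cohomology class of $T$ with the $\chi$-data cocycle via local Tate--Nakayama duality, the product formula is exactly the statement that a \emph{global} class in $H^1(F,T)$ (or the appropriate $H^1$ of a complex of tori) pairs to $1$ under the sum of local pairings --- i.e.\ the global reciprocity law for $T$ --- which holds because the class in question, $\mathrm{inv}(\gamma_H,\gamma_G)$ or the splitting invariant, is global by construction. Finally $\Delta_I$ is handled by the same principle: it is the value of a Langlands--Shelstad splitting invariant, a class in $H^1(F, T_{\mathrm{sc}})$ built from the global data, paired against a global character, and the product formula is the vanishing of the sum of its localizations.

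The main obstacle I anticipate is not any single deep ingredient but the bookkeeping needed to make ``each local term is the localization of one global object'' precise in the function-field and \v{C}ech-cohomological framework of this paper: one must check that the admissible embedding $T_H \hookrightarrow G^*$, the $a$-data, and the $\chi$-data can be chosen once globally (so that their localizations are what enter each $\Delta_{i,v}$), and that the cohomological invariants $\mathrm{inv}_v(\gamma_H,\gamma_G)$ and the splitting invariants assemble into classes in the global groups $H^1(F,T)$, $H^1(F,T_{\mathrm{sc}})$ whose localizations are the $v$-components --- this is where I would lean on the localization compatibilities established in \S 4 (Corollaries \ref{localtoglobal2}, \ref{localtoglobal3}) and on the fact, used repeatedly in \cite{LS}, that $\gamma \in T(F)$ is genuinely global. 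Once that is set up, invoking $\prod_v |\cdot|_v = 1$, the product formula for Hecke characters, and the global reciprocity law for tori closes each of the four cases; so the proof will amount to citing \cite[Thm. 6.4.A]{LS} and explaining that its proof transcribes to function fields verbatim, with the local transfer factors of \cite[\S 6.3]{Dillery} in place of the ones used there and the class-field-theoretic product formulas over $F$ of characteristic $p$ (which are identical in form) supplying the vanishing.
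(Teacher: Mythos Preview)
Your proposal is correct and follows essentially the same approach as the paper: both defer to \cite[Thm.~6.4.A]{LS}, with the paper spelling out only the $\Delta_I$ case in detail (constructing a global splitting invariant $\lambda_{\{a_\alpha\}}(T_{\text{sc}}) \in H^1(F,T_{\text{sc}})$, observing its localization lands in $H^1(O_{F_v},T_{F_v,\text{sc}})=0$ for almost all $v$, and using the exact sequence $H^1(F,T_{\text{sc}}) \to H^1(\A,T_{\text{sc}}) \to \bar{H}^1(\overline{\A}/\A,T_{\text{sc}})$ from \cite[\S D.1]{KS1} for the product formula) before declaring the remaining three cases verbatim from \cite{LS}. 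One small correction: the localization compatibilities you invoke from \S4 (Corollaries \ref{localtoglobal2}, \ref{localtoglobal3}) concern the gerbe cohomology $H^1(\gerbeE_{\dot V}, Z \to G)$ and are not what is needed here---the $\Delta_i$ for $i=I,II,III_2,IV$ involve only ordinary Galois/\v{C}ech cohomology of tori, so the relevant local-global input is the classical adelic Tate--Nakayama sequence (Appendix A here, or \cite[\S D]{KS1}), exactly as you describe elsewhere in your sketch under ``global reciprocity law for $T$.''
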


\begin{proof} We closely follow the analogous proof in \cite{LS}. As in \cite[\S 6.2.1]{Dillery}, we may define, for the quasi-split simply-connected reductive group $G^{*}_{\text{sc}}$ with maximal torus $T_{\text{sc}}$, a \textit{global splitting invariant} $\lambda_{\{a_{\alpha}\}}(T_{\text{sc}}) \in H^{1}(F, T_{\text{sc}})$ which depends on an $F$-pinning of $G^{*}_{\text{sc}}$ and a choice of $a$-data $\{a_{\alpha}\}$ for $T$ (see \cite[\S 6.2.1]{Dillery}). By the construction of the local splitting invariant, it is clear that $\lambda_{\{a_{\alpha}\}}(T_{\text{sc}})$ maps to the local splitting invariant $\lambda_{\{a_{\alpha}\}}(T_{F_{v},\text{sc}})$ (where we are viewing the $a$-data $\{a_{\alpha}\}$ as an $a$-data for $T_{F_{v}}$) under the canonical map $H^{1}(F, T_{\text{sc}}) \to H^{1}(F_{v}, T_{F_{v}, \text{sc}})$. Since for all but finitely many $v$ the image of $\lambda_{\{a_{\alpha}\}}(T_{\text{sc}})$ lands in the subgroup $H^{1}(O_{F_{v}}, T_{F_{v},\text{sc}}) = 0$, it follows that $\langle \lambda_{\{a_{\alpha}\}}(T_{F_{v},\text{sc}}), \textbf{s}_{T,v} \rangle = \Delta_{1}(\text{res}_{v}(\gamma_{H}), \gamma_{G,v}) = 1$ for all but finitely many $v$. 

Our above observation and the exact sequence $$H^{1}(F, T_{\text{sc}}) \to H^{1}(\A, T_{\text{sc}}) = \check{H}^{1}(\overline{\A}/\A, T_{\text{sc}}) \to \bar{H}^{1}(\overline{\A}/\A, T_{\text{sc}})$$ (see \cite[\S D.1]{KS1}) imply that the image $\bar{\lambda}$ of the element $(\langle \lambda_{\{a_{\alpha}\}}(T_{F_{v},\text{sc}}), \textbf{s}_{T,v} \rangle)_{v} \in H^{1}(\A, T_{\text{sc}}) = \bigoplus_{v} H^{1}(F_{v}, T_{F_{v}, \text{sc}})$ is trivial in $\bar{H}^{1}(\overline{\A}/\A, T_{\text{sc}})$, and so it follows by local-global compatibility of the Tate-Nakayama pairing that $$\prod_{v} \langle \lambda_{\{a_{\alpha}\}}(T_{F_{v},\text{sc}}), \textbf{s}_{T,v} \rangle = \langle \bar{\lambda}, \textbf{s}_{T} \rangle = 1,$$ as desired for the case $i=I$. The arguments for the remaining cases of $i=II$, $III_{2}$, and $IV$ may be taken verbatim from the proof of \cite[Thm. 6.4.A]{LS}.
\end{proof}


Note that if $\gamma_{H} \in H(F)$ is a strongly $G$-regular semisimple element which is related to $\gamma_{G} \in G(\A)$, then for all $v \in V$ its image in $H(F_{v})$, is strongly $G_{F_{v}}$-regular and is related to the element $\gamma_{G,v} \in G(F_{v})$. We can now define the adelic transfer factor. Call an element $\gamma \in H(\A)$ \textit{semisimple}  if $\gamma_{v} \in H(F_{v})$ is semisimple for all $v$, and \textit{strongly $G$-regular} if $\gamma \in H_{G-\text{sr}}(\A_{\overline{F}})$, where $H_{G-\text{sr}} \subset H_{\overline{F}}$ is the $\overline{F}$-scheme characterized by the Zariski open subset of strongly $G$-regular semisimple elements of the variety $H(\overline{F})$.  Similarly, we call a semisimple element $\delta \in G(\A)$ \textit{strongly regular} if it lies in $G_{\text{sr}}(\A_{\overline{F}})$, where $G_{\text{sr}} \subset G_{\overline{F}}$ is the Zariski open subscheme characterized by the strongly regular elements of $G(\overline{F})$.

\begin{Def}\label{adelictf}
For $\gamma \in H_{G-\text{sr}}(\A)$ and $\delta \in G_{\text{sr}}(\A)$, we set $\Delta_{\A}(\gamma, \delta) = 0$ if there is no strongly $G$-regular element of $H(F)$ which is related to an element of $G(F)$, and otherwise fix such a pair $\bar{\gamma}_{H}, \bar{\gamma}_{G}$ and define (using \cite[(16)]{Dillery} with the local endoscopic datum $\mathfrak{e}_{v}$ for each $v$)
\begin{equation}\label{adelictransferfactor2}
\Delta_{\A}(\gamma, \delta) := \prod_{v} \Delta(\gamma_{v}, \delta_{v}, \bar{\gamma}_{H,v}, \bar{\gamma}_{G,v}).
\end{equation}
\end{Def}
This product is well-defined due to the following result:

\begin{lem} In the notation of the above definition, the local transfer factor $\Delta(\gamma_{v}, \delta_{v}, \bar{\gamma}_{H,v}, \bar{\gamma}_{G,v})$ equals one for all but finitely many $v$.
\end{lem}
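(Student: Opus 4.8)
The plan is to reduce the statement to the analogous local-normalization facts together with the structure of the adelic transfer factor as a product of the Langlands--Shelstad pieces $\Delta_I, \Delta_{II}, \Delta_{III_2}, \Delta_{IV}$ and the rigid-inner-form correction term from \cite{Dillery}. First I would unwind Definition \ref{adelictf}: for a fixed choice of related pair $(\bar{\gamma}_H, \bar{\gamma}_G)$ over $F$, the local factor $\Delta(\gamma_v, \delta_v, \bar{\gamma}_{H,v}, \bar{\gamma}_{G,v})$ from \cite[(16)]{Dillery} is itself a product of $\Delta_{I,v}, \Delta_{II,v}, \Delta_{III_2,v}, \Delta_{IV,v}$ and a factor of the form $\langle \mathrm{inv}(\delta_v/\gamma_v), \dot{s}_v\rangle$ involving the local rigid inner twist. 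So it suffices to show each of these constituent factors is trivial for almost all $v$.

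For the four Langlands--Shelstad pieces, Proposition \ref{LS6.4}(1) already asserts that $\Delta_i(\gamma_{H,v}, \gamma_{G,v}) = 1$ for almost all $v$, $i = I, II, III_2, IV$; here I would only need to observe that the factors appearing in \cite[(16)]{Dillery} differ from the $\Delta_i(\gamma_{H,v},\gamma_{G,v})$ appearing in Proposition \ref{LS6.4} by the relative positions of $\gamma_{G,v}$ and $\delta_v$, which themselves agree for almost all $v$ (both being integral at almost all places after spreading out the related pair $\bar{\gamma}_G$ and the element $\delta$ over an open subscheme of the curve, and the maximal tori becoming unramified with hyperspecial integral structure). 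For the rigid-inner-form correction term, I would invoke Proposition \ref{taibiprop}: the global torsor $\mathscr{T}_{\mathrm{sc}}$ (or whichever torsor enters the normalization of $\Delta$ in \cite[\S 7]{Dillery}) has, for all but finitely many $v$, localization $\mathrm{loc}_v(\mathscr{T})$ inflated from a $\mathcal{G}$-torsor over $O_{F_v}$, hence trivial in $H^1(\gerbeE_v, Z\to G)$ up to $d(Z)$; pairing a trivial (unramified) class against the relevant $\dot{s}_v$ via the local Kottwitz-type pairing of \cite{Dillery} yields $1$. Combining the finitely many exceptional places across all these sources gives a single finite set outside of which $\Delta(\gamma_v,\delta_v,\bar{\gamma}_{H,v},\bar{\gamma}_{G,v}) = 1$.

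The main obstacle will be bookkeeping the precise form of the local factor \cite[(16)]{Dillery} and checking that the ``almost all $v$'' hypotheses in Proposition \ref{LS6.4} and Proposition \ref{taibiprop} are about the same kind of integrality — i.e. that one can choose a single open subscheme $U \subset X$ (the smooth curve with function field $F$) over which $H$, $G$, $G^*$, the endoscopic datum, the torus $T$, the elements $\bar{\gamma}_H, \bar{\gamma}_G, \delta, \gamma$, and the torsor $\mathscr{T}$ all have compatible integral models with the expected unramifiedness, so that at every $v$ corresponding to a closed point of $U$ all constituent factors simultaneously trivialize. This is a spreading-out argument of exactly the type routinely used in \cite{LS} (their proof of \cite[Thm. 6.4.A]{LS}) and in \cite{Taibi}, and I would follow those verbatim, citing \cite[Thm. 6.4.A]{LS} for $i = II, III_2, IV$ and our Proposition \ref{taibiprop} for the rigid correction, leaving the elementary integrality checks to the reader.
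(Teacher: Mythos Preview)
Your decomposition misidentifies what is actually in the relative factor $\Delta(\gamma_v,\delta_v,\bar\gamma_{H,v},\bar\gamma_{G,v})$. This is the \emph{relative} (canonical) Langlands--Shelstad factor from \cite[(16)]{Dillery}; it does not depend on a choice of rigid inner twist, and in particular it has nothing to do with a global torsor $\mathscr{T}_{\mathrm{sc}}$. At this point in the paper (\S 5.1) no such torsor has been chosen---that happens only in \S 5.2---so invoking Proposition~\ref{taibiprop} here is a non sequitur. The piece you are trying to account for is not a rigid-inner-form correction but the relative term $\Delta_{III_1}(\gamma_v,\delta_v;\bar\gamma_{H,v},\bar\gamma_{G,v})$, which is a Kottwitz-type pairing comparing the two pairs and makes no reference to $\mathscr{T}$. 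Your spreading-out sketch in the last paragraph could be adapted to handle $\Delta_{III_1}$ directly, but that is a different argument from the one you wrote.

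The paper's route is considerably shorter and avoids this bookkeeping entirely. It observes that $G_{F_v}$ is quasi-split for all but finitely many $v$; at such $v$ the relative factor is simply the quotient $\Delta(\gamma_v,\delta_v)/\Delta(\bar\gamma_{H,v},\bar\gamma_{G,v})$ of two \emph{absolute} transfer factors, and in the quasi-split case these absolute factors are defined purely via Galois cohomology (no rigid framework needed, cf.\ \cite[\S 6.2.1, \S 6.3]{Dillery}). One then cites the characteristic-zero statement \cite[\S 7.3, p.~109]{KS1} (together with Proposition~\ref{LS6.4}) that each absolute factor equals $1$ for almost all $v$. This sidesteps both the $\Delta_{III_1}$ analysis and any appeal to the gerbe machinery, at the cost of a black-box citation to \cite{KS1}. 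Your approach, once corrected to treat $\Delta_{III_1}$ rather than a nonexistent rigid term, would be more self-contained but substantially longer.
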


\begin{proof} For all but finitely many $v$, the group $G_{F_{v}}$ is quasi-split, in which case we may write $$\Delta(\gamma_{v}, \delta_{v}, \bar{\gamma}_{H,v}, \bar{\gamma}_{G,v}) = \frac{\Delta(\gamma_{v}, \delta_{v})}{\Delta(\bar{\gamma}_{H,v}, \bar{\gamma}_{G,v})}.$$ For a quasi-split connected reductive group over a local field, the (absolute) local transfer factor may be defined purely using Galois cohomology (cf. \cite[\S 6.3, \S 6.2.1]{Dillery}), where the claim of the Lemma follows from its characteristic-zero analogue, which is stated in \cite[\S 7.3, p. 109]{KS1} (cf. also Proposition \ref{LS6.4} above).
\end{proof}


The independence of $\Delta_{\A}$ of our choice of $\bar{\gamma}_{H}, \bar{\gamma}_{G}$ will follow from Proposition \ref{locglobtransfer}.

\begin{remark} In the case that $\mathcal{H} \neq \prescript{L}{}H$ in our global endoscopic datum, the formula for $\Delta_{\A}$ is slightly more complicated. To begin, we fix a $z$-pair $(H_{1}, \xi_{H_{1}})$ for the endoscopic datum $\mathfrak{e} = (H, \mathcal{H}, s, \xi)$, which always exist over fields of arbitrary characteristic. For any place $v$ of $F$, this $z$-pair gives rise to a $z$-pair $(H_{1,v}, \xi_{H_{1},v})$ for the local endoscopic datum $\mathfrak{e}_{v}$. We may then define the adelic transfer factor for pairs of elements $\gamma_{1} \in H_{1, G-\text{sr}}(\A)$ and $\delta_{v} \in G_{\text{sr}}(\A)$, where $\gamma_{1} \in H_{1, G-\text{sr}}(\A)$ means that its image in $H(\A)$ is $G$-strongly regular, using the relative local transfer factors for $z$-pairs as in \cite[\S 6.4]{Dillery}:
$\Delta_{\A}(\gamma_{1}, \delta) := \prod_{v} \Delta(\gamma_{1,v}, \delta_{v}, \bar{\gamma}_{H,v},\bar{\gamma}_{G,v})$.
\end{remark}

\subsection{Endoscopic setup}
This subsection is an analogue of \cite[\S 4.2, \S 4.3]{Tasho2}, which explain how to pass from global to local refined endoscopic data and discuss coherent families of local rigid inner twists; recall the notion of a refined endoscopic datum $(H_{F_{v}}, \mathcal{H}_{v}, \dot{s}_{v}, \xi_{v})$  over local $F_{v}$, defined in \cite[\S 7.2]{Dillery}. A fixed global endoscopic datum $\mathfrak{e} = (H, \mathcal{H}, s, \xi)$ induces a canonical embedding $Z(G) \to Z(H)$, and we set $\bar{H} := H/Z_{\text{der}}$, where $Z_{\text{der}} := Z(\mathscr{D}(G^{*}))$, $Z_{\text{sc}} := Z(G^{*}_{\text{sc}})$, and $\bar{G}^{*}:= G^{*}/Z_{\text{der}}$. Note that $\bar{G}^{*} = G^{*}_{\text{ad}} \times Z(G^{*})/Z_{\text{der}}$, $\widehat{\bar{G}^{*}} = \widehat{G^{*}}_{\text{sc}} \times Z(\widehat{G^{*}})^{\circ}$; we set $Z := Z(G)$.

The $L$-embedding $\xi$ induces an embedding $\widehat{\bar{H}} \to \widehat{\bar{G}^{*}}$ with image equal to $Z_{\widehat{\bar{G}^{*}}}(t)^{\circ}$, where recall that $t := \xi(s)$ (this is well-defined because $\widehat{\bar{G}^{*}}$ maps to $\widehat{G^{*}}$, which contains $t$). Then for $s_{\text{sc}} \in \widehat{G^{*}}_{\text{sc}}$ a fixed preimage of the image $s_{\text{ad}}$ of $s$ in $\widehat{G^{*}}_{\text{ad}}$ and a place $v \in V$, by definition of a global endoscopic datum we may find an element $y_{v} \in Z(\widehat{G^{*}})$ such that $s_{\text{der}} \cdot y_{v} \in Z(\widehat{H})^{\Gamma_{v}}$, where $s_{\text{der}} \in \mathscr{D}(\widehat{G^{*}})$ denotes the image of $s_{\text{sc}}$. We can write $y_{v} = y_{v}' \cdot y_{v}''$ for $y_{v}' \in Z( \mathscr{D}(\widehat{G^{*}}))$, $y_{v}'' \in Z(\widehat{G^{*}})^{\circ}$, and we choose a lift $\dot{y}_{v}' \in \widehat{Z}_{\text{sc}}$ of $y_{v}'$. The element $(s_{\text{sc}} \cdot \dot{y}_{v}', y''_{v}) =: \dot{s}_{v}$ lies in $\widehat{\bar{G}^{*}} = \widehat{G^{*}}_{\text{sc}} \times Z(\widehat{G^{*}})^{\circ}$, which, via the above $L$-embedding, belongs to the group $Z(\widehat{\bar{H}})^{+v}$, and $\dot{\mathfrak{e}}_{v} := (H_{F_{v}}, \mathcal{H}_{v}, \dot{s}_{v}, \xi_{v})$ defines a local refined endoscopic datum. 

We now discuss coherent families of local rigid inner twists. For an equivalence class $\Psi$ of inner twists $G^{*}_{F^{\text{sep}}} \to G_{F^{\text{sep}}}$, 
The class $\Psi$ gives an element of $H^{1}(F, G^{*}_{\text{ad}})$ which by Lemma \ref{tasho3.6.1} has a preimage in the set $H^{1}(\gerbeE_{\dot{V}}, Z_{\text{sc}} \to G^{*}_{\text{sc}})$. It follows that for every $\psi \in \Psi$, we can find a $Z_{\text{sc}}$-twisted $G^{*}_{\text{sc},\gerbeE_{\dot{V}}}$-torsor $\mathscr{T}_{\text{sc}}$ along with an isomorphism of $(G^{*}_{\text{ad}})_{\gerbeE_{\dot{V}}}$-torsors $\bar{h} \colon (\overline{\mathscr{T}_{\text{sc}}})_{\overline{F}} \xrightarrow{\sim} (\underline{(G^{*}_{\text{ad}})_{\gerbeE_{\dot{V}}}})_{\overline{F}}$, where $\overline{\mathscr{T}_{\text{sc}}} := \mathscr{T}_{\text{sc}} \times^{G^{*}_{\text{sc},\gerbeE_{\dot{V}}}} (G^{*}_{\text{ad}})_{\gerbeE_{\dot{V}}}$ and $\underline{(G^{*}_{\text{ad}})_{\gerbeE_{\dot{V}}}}$ denotes the trivial $(G^{*}_{\text{ad}})_{\gerbeE_{\dot{V}}}$-torsor, such that $p_{1}^{*}\bar{h} \circ p_{2}^{*}\bar{h}^{-1}$ is translation by $\bar{x} \in G^{*}_{\text{ad}}(\overline{F} \otimes_{F} \overline{F})$ which satisfies $\text{Ad}(\bar{x}) = p_{1}^{*}\psi^{-1} \circ p_{2}^{*}\psi$. 

For each $v \in V$, we set $\mathscr{T}_{v}$ to be the $Z$-twisted $G^{*}_{\gerbeE_{v}}$-torsor given by $\text{loc}_{v}(\mathscr{T})$, where $\mathscr{T} := \mathscr{T}_{\text{sc}} \times^{G^{*}_{\text{sc},\gerbeE_{\dot{V}}}} G^{*}_{\gerbeE_{\dot{V}}}$, and $\text{loc}_{v}$ is as in \S 4.4; the $\overline{F}$-trivialization $\bar{h}$ evidently induces a $\overline{F_{v}}$-trivialization of $\overline{\mathscr{T}_{v}}$ (noting that $\overline{\mathscr{T}_{\text{sc}}} = \overline{\mathscr{T}}$), denoted by $\bar{h}_{v}$.  Note that, by construction, the triple $(\psi_{v}, \mathscr{T}_{v}, \bar{h}_{v})$ ($\psi_{v} := \psi_{\overline{F_{v}}}$) is a rigid inner twist over $F_{v}$ (\cite[Def. 7.1]{Dillery}); we thus get a collection $(\psi_{v}, \mathscr{T}_{v}, \bar{h}_{v})_{v}$ of local rigid inner twists which depends on the choice of $\text{loc}_{v}$, but only up to twisting torsors by $d(z)$ for an element $z \in Z_{\text{sc}}(\overline{F_{v}})$, which does not affect any associated cohomology sets. However, this family will in general depend on the choice of torsor $\mathscr{T}_{\text{sc}}$. We observe that, since $\mathscr{T}$ is induced by the $Z_{\text{sc}}$-twisted $G^{*}_{\text{sc}}$-torsor $\mathscr{T}_{\text{sc}}$, we have $\mathscr{T}_{v} \in Z^{1}(\gerbeE_{v}, Z_{\text{der}} \to G^{*}) \subset  Z^{1}(\gerbeE_{v}, Z \to G^{*}) $.

\subsection{Product decomposition of the adelic transfer factor}
We continue with the notation of \S 5.2. We now show that $\Delta_{\A}$ from Definition \ref{adelictf} admits a decomposition in terms of the normalized local transfer factors constructed in \cite[\S 7.2]{Dillery}, following \cite[\S 4.4]{Tasho2}.  Fix an equivalence class $\Psi$ of inner twists $G^{*}_{F^{\text{sep}}} \to G_{F^{\text{sep}}}$, endoscopic datum $\mathfrak{e} = (H, \mathcal{H}, s, \xi)$ for $G^{*}$, and a $z$-pair $\mathfrak{z} = (H_{1}, \xi_{1})$ for $\mathfrak{e}$. We assume that there exist strongly $G$-regular $\gamma_{1,0} \in H_{1}(F)$ and $\delta_{0} \in G(F)$ such that $\gamma_{1,0}$ is related to $\delta_{0}$ (so that, in particular, the image of $\gamma_{1,0}$ in $H(F)$, denoted by $\gamma_{0}$, is related to $\delta_{0}$). We can associate to $\mathfrak{e}$ the collection of refined local endoscopic data $(\dot{\mathfrak{e}}_{v})_{v \in V}$ and to $\Psi$ a coherent family of local rigid inner twists $(\psi_{v}, \mathscr{T}_{v}, \bar{h}_{v})_{v \in V}$ as explained in \S 5.2, to the global $z$-pair $\mathfrak{z}$ a collection of local $z$-pairs $(\mathfrak{z}_{v})_{v \in V}$, and to a fixed global Whittaker datum $\mathfrak{w}$ for $G^{*}$, a collection of local Whittaker data $(\mathfrak{w}_{v})_{v \in V}$. 

For any $v$, we can use the local Whittaker datum and $z$-pair to obtain from \cite[\S 7.2]{Dillery}, the $\mathfrak{w}_{v}$-normalized local transfer factor $$\Delta[\mathfrak{w}_{v}, \dot{\mathfrak{e}}_{v}, \mathfrak{z}_{v}, \psi_{v}, (\mathscr{T}_{v}, \bar{h}_{v})] \colon H_{1,G-\text{sr}}(F_{v}) \times G_{\text{sr}}(F_{v}) \to \mathbb{C}.$$


\begin{prop}\label{locglobtransfer} For any $\gamma_{1} \in H_{1, G-{\text{sr}}}(\A)$ and $\delta \in G_{\text{sr}}(\A)$, we have 
$$\Delta_{\A}(\gamma_{1}, \delta) = \prod_{v \in V} \langle \text{loc}_{v}(\mathscr{T}_{\text{sc}}), \dot{y}_{v}' \rangle \cdot \Delta[\mathfrak{w}_{v}, \dot{\mathfrak{e}}_{v}, \mathfrak{z}_{v}, \psi_{v}, (\mathscr{T}_{v}, \bar{h}_{v})](\gamma_{1,v}, \delta_{v}).$$
In the above formula, $\dot{y}_{v}' \in \widehat{Z}_{\text{sc}}$ as in \S 5.2 and the pairing $\langle -, - \rangle \colon H^{1}(\gerbeE_{v}, Z_{\text{sc}} \to G^{*}_{\text{sc}}) \times \widehat{Z}_{\text{sc}} \to \mathbb{C}$ is from \cite[Corollary 7.11]{Dillery}, which is well-defined since $\dot{y}_{v}' \in \widehat{Z}_{\text{sc}} = Z(\widehat{G^{*}_{\text{sc}}/Z_{\text{sc}})}^{+}.$ For almost all $v \in V$, the corresponding factor in the product equals $1$. For all $v$, the corresponding factor is independent of the choices of $\dot{y}_{v}'$ and $y_{v}''$ made in \S 5.2.
\end{prop}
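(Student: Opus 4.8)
The plan is to follow the structure of \cite[Prop.~4.4.1]{Tasho2} very closely, since the global gerbe $\gerbeE_{\dot{V}}$ and its localization maps were constructed precisely so that Kaletha's number-field argument would transport to the function-field setting. First I would reduce to the case $\mathcal{H} = \prescript{L}{}H$ (so no $z$-pair is needed), as the general case follows formally from the $z$-pair compatibility of the local factors established in \cite[\S 7.2]{Dillery}. Then, using that $\Delta_{\A}(\gamma_1,\delta)$ vanishes unless there is a strongly $G$-regular $\bar\gamma_H \in H(F)$ related to $\bar\gamma_G \in G(F)$, I would fix such a pair and, exactly as in \cite[\S 6.3]{Dillery} and \cite[\S 4.4]{Tasho2}, express each local factor $\Delta(\gamma_v,\delta_v,\bar\gamma_{H,v},\bar\gamma_{G,v})$ as a quotient of the $\mathfrak{w}_v$-normalized factor $\Delta[\mathfrak{w}_v,\dot{\mathfrak{e}}_v,\mathfrak{z}_v,\psi_v,(\mathscr{T}_v,\bar h_v)]$ evaluated at $(\gamma_{1,v},\delta_v)$ and at $(\bar\gamma_{1,v},\bar\gamma_{G,v})$ (the latter being the ``reference'' normalization), together with a $\Delta_{III}$-type term measuring the difference of rigidifications. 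The product over $v$ of the reference quotients is $1$ by Proposition \ref{LS6.4} (i.e. the global product formula of \cite[Thm.~6.4.A]{LS}), so everything is reduced to a cohomological identity among the $\Delta_{III}$-contributions and the auxiliary factors $\langle \text{loc}_v(\mathscr{T}_{\text{sc}}),\dot y'_v\rangle$.

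The heart of the argument is then the following: the local factor $\Delta[\mathfrak{w}_v,\dot{\mathfrak{e}}_v,\dots]$ is built (in \cite[\S 7.2]{Dillery}) out of a pairing between the class $[\text{loc}_v(\mathscr{T})] \in H^1(\gerbeE_v, Z \to T)$ attached to the admissible torus $T$ and the element $\dot s_v \in Z(\widehat{\bar H})^{+v}$ refining $\mathfrak{e}$. Using Theorem \ref{toritatenak} (and its extension Theorem \ref{reductivetatenak}), the localization $\text{loc}_v([\mathscr{T}])$ of the single global class $[\mathscr{T}] \in H^1(\gerbeE_{\dot{V}}, Z \to T)$ is controlled by a global element of $\overline{Y}[V_{\overline F},\dot V]_{0,+,\text{tor}}$, and the local pairings, summed over $v$, can therefore be computed by a global Tate–Nakayama duality: the sum of $\langle \text{loc}_v(\mathscr{T}), \dot s_v \rangle$-type terms collapses because $\dot s_v$ differs from a $W_F$-fixed global element only by the locally trivial piece $\dot y'_v \cdot y''_v$, and the $y''_v$-part contributes trivially to the relevant (adjoint-type) pairing while the $\dot y'_v$-part is exactly what is pulled out as the explicit factor $\langle \text{loc}_v(\mathscr{T}_{\text{sc}}),\dot y'_v\rangle$. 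Concretely I would invoke Corollary \ref{localtoglobal2} and Corollary \ref{localtoglobal3}: the element $(\text{loc}_v(\mathscr{T}))_v$ lies in the image of the global class, hence maps to the neutral element under $\Sigma$ to $\overline{Y}_{+,\text{tor}}$, which forces the product of the local duality pairings against any collection of $W_F$-fixed elements to be $1$. This is the precise function-field incarnation of Kaletha's use of \cite[Thm.~C.1.B]{KS1} and Corollary 3.3.8 loc.\ cit., which we have replaced by Proposition \ref{restrictedproduct} and Corollary \ref{tasho3.3.8}.

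For the remaining assertions: the vanishing of almost all factors follows by combining Proposition \ref{taibiprop} (which says $\text{loc}_v(\mathscr{T})$ is inflated from an $O_{F_v}$-torsor for almost all $v$, so the rigidifying pairing is trivial there), the fact that $\langle \text{loc}_v(\mathscr{T}_{\text{sc}}),\dot y'_v\rangle = 1$ once $\text{loc}_v(\mathscr{T}_{\text{sc}})$ lies in $H^1(O_{F_v},T_{\text{sc}}) = 0$ (true for almost all $v$ since $H^1(F,T_{\text{sc}})$ localizes into the unramified-vanishing subgroup), the analogous statement for the unnormalized local factors (the Lemma preceding Definition \ref{adelictf}), and Proposition \ref{LS6.4}. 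The independence of each factor from the choices of $\dot y'_v$ and $y''_v$ made in \S 5.2 is checked exactly as in \cite[\S 4.4]{Tasho2}: changing $\dot y'_v$ by an element of $\widehat{Z}_{\text{der}}$ and $y''_v$ correspondingly changes $\dot s_v$ without changing its image, and the compensating change in $\langle \text{loc}_v(\mathscr{T}_{\text{sc}}),\dot y'_v\rangle$ is absorbed by the corresponding change in $\Delta[\mathfrak{w}_v,\dot{\mathfrak{e}}_v,\dots]$, by the explicit formulas in \cite[Corollary~7.11]{Dillery}.

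\textbf{Main obstacle.} The delicate point is not any single estimate but keeping the bookkeeping of the various torsors honest across the localization maps: $\text{loc}_v$ is only canonical up to twisting by $d(z)$ for $z \in Z(\overline{F_v})$ (indeed $z \in Z_{\text{sc}}(O_{F_v}^{\text{nr}})$ by Proposition \ref{taibiprop}), and one must verify that the product formula is insensitive to these choices and to the non-canonical gerbe normalizations $\gerbeE_v \to \gerbeE_{\dot{V}}$; this is where the vanishing results $\check H^1(\overline{F_v}/F_v, P_{\dot{V}}) = 0$ (Proposition \ref{mainvanishing1}) and $\check H^1(\overline F/F, P_{\dot{V}}) = 0$ (Proposition \ref{vanishingH1}) do the essential work, exactly as the corresponding vanishing does in \cite{Tasho2}. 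The one genuinely new wrinkle relative to the number-field case is that the relevant comparison of adelic and global classes passes through the double-complex machinery of $T \to U$ in \S 3.5 rather than through \cite[Appendix~C,~D]{KS1}, so I would need to cite Proposition \ref{canonicalclass} and the restricted-product description of Proposition \ref{cechrestrictedproduct} at the point where one passes from $\dot{\xi}_{\A}$ to the sum of local Shapiro-ed classes; but since those results are already set up to mirror \cite{KS1}, the transcription should be routine.
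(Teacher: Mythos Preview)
Your proposal follows the same high-level strategy as the paper---reduce via \cite[Cor.~6.4.B]{LS} to checking that the normalized factors are absolute transfer factors (which is \cite[Prop.~7.12]{Dillery}) and that their values at the fixed $F$-rational pair $(\bar\gamma_{1,0,v},\bar\gamma_{G,0,v})$ are almost all $1$ with product $1$---and this is correct. However, two points in your write-up are imprecise in ways that would cause trouble if you tried to fill in details.

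First, the central object is not ``$[\text{loc}_v(\mathscr{T})] \in H^1(\gerbeE_v, Z \to T)$'' for some vague admissible torus $T$: the paper explicitly constructs a \emph{global} twisted cocycle $x_{\text{sc}} \in Z^1(\gerbeE_{\dot\xi}, Z_{\text{sc}} \to T_{0,\text{sc}})$ by conjugating the $G^*_{\text{sc}}$-valued cocycle underlying $\mathscr{T}_{\text{sc}}$ by a fixed $g_{\text{sc}} \in G^*_{\text{sc}}(\overline{F})$ satisfying $\psi(g\delta_0^*g^{-1}) = \delta_0$. This conjugation is what lands you in the torus $T_{0,\text{sc}} = Z_{G^*_{\text{sc}}}(\delta_0^*)$, and the point is that $\text{loc}_v(x_{\text{sc}}) = x_{v,\text{sc}}$ represents $\text{inv}((G_{F_v},\psi_v,(\mathscr{T}_v,\bar h_v),\delta_{0,v}),\delta_{0,v}^*)$. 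After decomposing $\dot s_v = (s_{\text{sc}}\dot y'_v, y''_v)$ and using functoriality of the pairing with respect to $[Z_{\text{sc}} \to T_{0,\text{sc}}] \to [Z_{\text{der}} \to T_0]$ and $[Z_{\text{sc}} \to T_{0,\text{sc}}] \to [Z_{\text{sc}} \to G^*_{\text{sc}}]$, the whole expression collapses to $\langle x_{v,\text{sc}}, \hat\varphi(s_{\text{sc}})\rangle$, which has no $\dot y'_v$ or $y''_v$ left in it---this is how independence of those choices falls out, not by a cancellation argument.

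Second, for almost-all triviality and the product formula you do not need Proposition~\ref{taibiprop}, nor any of the machinery from \S3.5 (Propositions~\ref{restrictedproduct}, \ref{canonicalclass}, Corollary~\ref{tasho3.3.8})---those were used to \emph{construct} the canonical class, not here. Once $x_{\text{sc}}$ is a global class in a torus, Corollary~\ref{Tasho3.7.4} directly gives that $[\text{loc}_v(x_{\text{sc}})]$ is trivial for almost all $v$, and the product over all $v$ of $\langle \text{loc}_v(x_{\text{sc}}), \hat\varphi(s_{\text{sc}})\rangle$ vanishes by combining the embedding $\overline{Y}_{+,\text{tor}} \hookrightarrow \pi_0(Z(\widehat{\overline{G}})^+)^*$ with Corollary~\ref{localtoglobal4} (not \ref{localtoglobal3}), since $\hat\varphi(s_{\text{sc}})$ lies in $\pi_0(\widehat{\overline{T_{0,\text{sc}}}}^+)$.
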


\begin{proof} The argument closely follows \cite[Prop. 4.4.1]{Tasho2}; as in the proof of the result loc. cit.,  \cite[Corollary 6.4.B]{LS} gives the above product identity if we can show that the normalized factors $\langle \text{loc}_{v}(\mathscr{T}_{\text{sc}}), \dot{y}_{v}' \rangle \cdot \Delta[\mathfrak{w}_{v}, \dot{\mathfrak{e}}_{v}, \mathfrak{z}_{v}, \psi_{v}, (\mathscr{T}_{v}, \bar{h}_{v})](\gamma_{1,v}, \delta_{v})$ satisfy the following properties: First, that they are absolute transfer factors, and second, that their values at the $F$-rational pair $(\gamma_{1,0,v}, \delta_{0,v})$ equal $1$ for all but finitely many $v$ and have a product over all $v$ that equals $1$. The first property automatically holds for the above factors by \cite[Prop. 7.12]{Dillery} (the extra $\langle \text{loc}_{v}(\mathscr{T}_{\text{sc}}), \dot{y}_{v}' \rangle$-factor cancels out and thus makes no difference for this verification).

Proposition \ref{LS6.4} and the argument in the proof of \cite[Prop. 4.4.1]{Tasho2} regarding local and global $\epsilon$-factors (which works the same in our situation) reduces the second property above to proving its analogue for the terms
\begin{equation}\label{twopairings}
\langle \text{loc}_{v}(\mathscr{T}_{\text{sc}}), \dot{y}_{v}' \rangle ^{-1} \langle \text{inv}((G_{F_{v}}, \psi_{v}, (\mathscr{T}_{v}, \bar{h}_{v}), \delta_{0,v}), \delta_{0,v}^{*}), \dot{s}_{v,\gamma_{0}, \delta^{*}_{0}} \rangle ,
\end{equation}
where $\delta_{0}^{*} \in G^{*}(F)$ is the image of $\gamma_{0}$ under a choice of admissible embedding of $T_{0,H}$ into $G^{*}$, the map $\text{inv}(-,\delta^{*}_{0,v}) \colon C_{Z_{\text{der}}}(\delta_{0,v}^{*}) \to H^{1}(\gerbeE_{v}, Z_{\text{der}} \to T_{0})$ with $T_{0}:= Z_{G^{*}}(\delta_{0}^{*})$ is from \cite[\S 7.1]{Dillery}, the element $\dot{s}_{v,\gamma_{0}, \delta^{*}_{0}} \in \pi_{0}(\widehat{\bar{T_{0}}}^{+,v})$ is the image of $\dot{s}_{v} \in \pi_{0}(Z(\widehat{\bar{H}})^{+,v})$ under the composition $\hat{\varphi} \colon Z(\widehat{\bar{H}}) \to \widehat{\overline{T_{0,H}}} \to \widehat{\bar{T}_{0}}$ (as in \S 5.2, the bar indicates that we are quotienting out by $Z_{\text{der}}$) induced by our choice of admissible embedding of $T_{0,H}$ into $G^{*}$, and the right-hand pairing is from \cite[Cor. 7.11]{Dillery}. 

In order to work explicitly with $\text{inv}(-,-)$ at each $v$, we fix explicit \v{C}ech $2$-cocycles $\dot{\xi}_{v}$ representing each $\xi_{v} \in \check{H}^{2}(\overline{F_{v}}/F_{v}, u_{v})$ and replace the notion of $Z_{\text{der}}$-twisted torsors on $\gerbeE_{v}$ with $\dot{\xi}_{v}$-twisted $Z_{\text{der}}$-cocycles; we know by \S 7.2 loc. cit. that $\text{inv}(-,-)$ and corresponding local transfer factor do not depend on such a choice, and hence we may do so without loss of generality.

By construction, the elements $\delta_{0}^{*}$ and $\delta_{0}$ are stably conjugate, so that there exists $g \in G^{*}(\overline{F})$ such that $\psi(g\delta_{0}^{*}g^{-1}) = \delta_{0}$, and then $\text{inv}((G_{F_{v}}, \psi_{v}, (\mathscr{T}_{v}, \bar{h}_{v}), \delta_{0,v}), \delta_{0,v}^{*}) \in H^{1}(\gerbeE_{v}, Z_{\text{der}} \to T_{0}) = H^{1}(\gerbeE_{\dot{\xi}_{v}}, Z_{\text{der}} \to T_{0})$ is represented by the $\dot{\xi}_{v}$-twisted cocycle $x_{v}:= (p_{1}(g)^{-1}z_{v}p_{2}(g), \phi_{v}),$ where $(z_{v}, \phi_{v})$ is a choice of $\dot{\xi}_{v}$-twisted cocycle corresponding to the $Z_{\text{der}}$-twisted $G^{*}_{\gerbeE_{v}}$-torsor $\mathscr{T}_{v}$ (cf. \cite[Prop. 7.3]{Dillery}). We may choose $g$ so that it is the image of some $g_{\text{sc}} \in G^{*}_{\text{sc}}(\overline{F})$, and then we may lift the twisted cocycle $x_{v}$ to the $\dot{\xi}_{v}$-twisted cocycle $x_{v,\text{sc}} := (p_{1}(g_{\text{sc}})^{-1}z_{\text{sc},v}p_{2}(g_{\text{sc}}), \phi_{\text{sc},v})$, where $(z_{\text{sc},v}, \phi_{\text{sc},v}) \in Z^{1}(\gerbeE_{\dot{\xi}_{v}}, Z_{\text{sc}} \to G^{*}_{\text{sc}})$ is a choice of twisted cocycle corresponding to the $Z_{\text{sc}}$-twisted $G^{*}_{\text{sc}}$-torsor $\text{loc}_{v}(\mathscr{T}_{\text{sc}})$ on $\gerbeE_{v}$.

Using the decomposition $\widehat{\bar{T}_{0}} = (\widehat{T_{0}})_{\text{sc}} \times Z(\widehat{G^{*}})^{\circ}$, we may use the notation of \S 5.2 to write $\dot{s}_{v,\gamma_{0},\delta_{0}^{*}} = (\dot{y}'_{v} \hat{\varphi}(s_{\text{sc}}), y''_{v})$. The functoriality of the pairing from \cite[Corollary 7.11]{Dillery} with respect to the morphism $[Z_{\text{sc}} \to T_{0,\text{sc}}] \to [Z_{\text{der}} \to T_{0}]$, then implies that 
$$ \langle \text{inv}((G_{F_{v}}, \psi, (\mathscr{T}_{v}, \bar{h}_{v}), \delta_{0,v}), \delta_{0,v}^{*}), \dot{s}_{v,\gamma_{0}, \delta^{*}_{0}} \rangle  =
\langle x_{v,\text{sc}}, \dot{y}'_{v} \hat{\varphi}(s_{\text{sc}}) \rangle .$$
By construction, the restriction of the character $\langle x_{v,\text{sc}}, - \rangle$ on $\pi_{0}(\widehat{\overline{T_{0,\text{sc}}}}^{+v})$ to $Z(\widehat{\overline{{G}^{*}_{\text{sc}}}})^{+v}$ equals the character $\langle (z_{\text{sc,}v}, \phi_{\text{sc},v}), - \rangle$ by the functoriality of the pairing with respect to the morphism $[Z_{\text{sc}} \to T_{0,\text{sc}}] \to [Z_{\text{sc}} \to G^{*}_{\text{sc}}]$. It then follows by bilinearity that the expression \eqref{twopairings} reduces to 
\begin{equation}\label{onepairing}
\langle x_{v,\text{sc}}, \hat{\varphi}(s_{\text{sc}}) \rangle.
\end{equation}

We have already fixed normalizations $\gerbeE_{\dot{\xi}_{v}}$ of the gerbes $\gerbeE_{v}$ for all $v$---we now also fix a normalization $\gerbeE_{\dot{\xi}}$ of the gerbe $\gerbeE_{\dot{V}}$, which identifies $\mathscr{T}_{\text{sc}}$ with a $\dot{\xi}$-twisted $Z_{\text{sc}}$-cocycle $(z_{\text{sc}},\phi_{\text{sc}})$, where $z_{\text{sc}} \in G^{*}_{\text{sc}}(\overline{F} \otimes_{F} \overline{F})$, which by construction has image in $Z^{1}(\gerbeE_{\dot{\xi}_{v}}, Z_{\text{sc}} \to G^{*}_{\text{sc}})$ equal to $(z_{\text{sc},v}, \phi_{\text{sc},v})$. We may thus define a global twisted cocycle by the formula $$x_{\text{sc}} := (p_{1}(g_{\text{sc}})^{-1}z_{\text{sc}}p_{2}(g_{\text{sc}}), \phi_{\text{sc}}) \in Z^{1}(\gerbeE_{\dot{\xi}}, Z_{\text{sc}} \to T_{0,\text{sc}}),$$ which satisfies $\text{loc}_{v}(x_{\text{sc}}) = x_{v,\text{sc}}$, where $\text{loc}_{v}$ on twisted cocycles is induced by the maps $u_{v} \to (P_{\dot{V}})_{F_{v}}$ and $G^{*}_{\text{sc}}(\overline{F} \otimes_{F} \overline{F}) \to G^{*}_{\text{sc}}(\overline{F_{v}} \otimes_{F_{v}} \overline{F_{v}})$ for a fixed $v$. It then follows from Corollary \ref{Tasho3.7.4} that the class $[x_{\text{sc}}] \in H^{1}(\gerbeE_{\dot{\xi}}, Z_{\text{sc}} \to T_{0,\text{sc}})$ is such that $[\text{loc}_{v}(x_{\text{sc}})] = [x_{v,\text{sc}}] \in H^{1}(\gerbeE_{\dot{\xi}_{v}}, Z_{\text{sc}} \to T_{0,\text{sc}})$ is trivial for all but finitely-many $v$, which shows that the expression \eqref{onepairing}, and thus also the expression \eqref{twopairings}, is $1$ for all but finitely many $v$, as desired. 

To finish proving the product identity, we first recall the functor $\overline{Y}_{+,\text{tor}} \colon \mathcal{R} \to \text{AbGrp}$. It follows from the proof of \cite[Prop. 5.3]{Tasho}, (the proof of which is purely character-theoretic) that we have a functorial embedding 
\begin{equation}\label{pairingembedding}
\overline{Y}_{+,\text{tor}}([Z \to G]) \hookrightarrow \pi_{0}([Z(\widehat{\overline{G}})^{+}])^{*},
\end{equation}
and it is straightforward to check that for any $[Z \to G] \in \mathcal{R}$, the following diagram commutes
\[
\begin{tikzcd}
\bigoplus_{v} \overline{Y}_{+v,\text{tor}}([Z \to G]) \arrow{d}  \arrow["\Sigma"]{r} & Y_{+,\text{tor}}([Z \to G]) \arrow{d} \\
\bigoplus_{v} \pi_{0}([Z(\widehat{\overline{G}})^{+v}])^{*} \arrow{r}&  \pi_{0}([Z(\widehat{\overline{G}})^{+}])^{*},
\end{tikzcd}
\]
where the left map is the sum of the local embeddings $\overline{Y}_{+v,\text{tor}}([Z \to G]) \hookrightarrow \pi_{0}([Z(\widehat{\overline{G}})]^{+v})^{*}$ and the bottom map is induced by restricting characters from $\pi_{0}([Z(\widehat{\overline{G}})]^{+v})$ to $\pi_{0}([Z(\widehat{\overline{G}})]^{+})$.

If for each $v$ we restrict the character $\langle \text{loc}_{v}([x_{\text{sc}}]), - \rangle$ on $\pi_{0}([\widehat{\overline{T_{0,\text{sc}}}}]^{+v})$ to $\pi_{0}([\widehat{\overline{T_{0,\text{sc}}}}]^{+})$ and then take the product over all $v$ (these characters are trivial for all but finitely-many $v$ due to the above discussion and Corollary \ref{localtoglobal3}), we obtain the trivial character on $\pi_{0}([\widehat{\overline{T_{0,\text{sc}}}}]^{+})$  via combining the above discussion with Corollary \ref{localtoglobal4}. By construction, we have that the image of $\hat{\varphi}(s_{\text{sc}}) \in \widehat{\bar{T_{0}}}$ in  $\widehat{\overline{T_{0,\text{sc}}}}/([\widehat{\overline{T_{0,\text{sc}}}}]^{+,\circ})$ lies in $\pi_{0}([\widehat{\overline{T_{0,\text{sc}}}}]^{+})$, which combines with the first part of this paragraph to give the equality $\langle x_{\text{sc}}, \hat{\varphi}(s_{\text{sc}}) \rangle = 1$, where the pairing is induced by the embedding \eqref{pairingembedding} and Theorem \ref{reductivetatenak}, proving that the above product over all places equals $1$, as desired. Finally, as in the number field case, the absence of $\dot{y}'_{v}$ and $\dot{y}''_{v}$ in the expression \eqref{twopairings} implies that the product does not depend on the choice of such elements. Moreover, since $\langle x_{v,\text{sc}}, \hat{\varphi}(s_{\text{sc}}) \rangle$ only depends on the cohomology class of $x_{\text{sc},v}$, the product also does not depend on the choices used to define the torsors $\text{loc}_{v}(\mathscr{T}_{\text{sc}})$.
\end{proof}

\subsection{The multiplicity formula for discrete automorphic representations}
We use the same notation as in \S 5.3. As in \cite[\S 4.5]{Tasho2}, fix an $L$-homomorphism $\varphi \colon L_{F} \to \prescript{L}{}G^{*}$ with bounded image, where $L_{F}$ is the hypothetical Langlands group of $F$. For each $v \in V$, the parameter $\varphi$ has a localization, which is a parameter $\varphi_{v} \colon L_{F_{v}} \to \prescript{L}{}G^{*}$. The local conjecture ensures that there exists an $L$-packet $\Pi_{\varphi_{v}}$ of tempered representations of rigid inner twists of $G^{*}$ together with a bijection $$\iota_{\varphi_{v}, \mathfrak{w}_{v}} \colon \Pi_{\varphi_{v}} \to \text{Irr}(S^{+}_{\varphi_{v}}),$$
where $\Pi_{\varphi_{v}}$ consists of equivalence classes of tuples $(G'_{v}, \psi'_{v}, (\mathscr{T}'_{v}, \bar{h}'_{v}), \pi'_{v})$ with $(\psi'_{v}, \mathscr{T}'_{v}, \bar{h}'_{v})$ a rigid inner twist of $G^{*}_{F_{v}}$ and $\pi'_{v}$ an irreducible tempered representation of $G'_{v}(F_{v})$ and $S^{+}_{\varphi_{v}} :=  Z_{\widehat{\bar{G}^{*}}}(\varphi_{v}).$

Recall that we have fixed a coherent family of rigid inner twists $(\psi_{v}, \mathscr{T}_{v}, \bar{h}_{v})_{v}$. Consider the subset $\Pi_{\varphi_{v}}(G) \subseteq \Pi_{\varphi_{v}}$ of (classes of) tuples $(G_{F_{v}}, \psi_{v}, (\mathscr{T}_{v}, \bar{h}_{v}), \pi_{v})$. We define the $L$-packet 
$$\Pi_{\varphi} := \{ \pi = \otimes_{v}' \pi_{v} \mid (G_{F_{v}}, \psi, \mathscr{T}_{v}, \bar{h}_{v}, \pi_{v}) \in \Pi_{\varphi_{v}}(G), \iota_{\varphi_{v},\mathfrak{w}_{v}}((G_{F_{v}}, \psi, (\mathscr{T}_{v}, \bar{h}_{v}), \pi_{v})) = 1\text{ for a.a. } v\}.$$

\begin{lem}\label{admissible} The set $\Pi_{\varphi}$ consists of irreducible admissible tempered representations of $G(\A)$.
\end{lem}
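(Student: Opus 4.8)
The plan is to reduce the assertion to two ingredients: (a) the fact that for almost all $v$ the local representation $\pi_v$ is an \emph{unramified} representation of $G(F_v)$ in the usual sense, induced from a hyperspecial maximal compact subgroup, and (b) the standard fact that a restricted tensor product of irreducible admissible representations, almost all of which are unramified, is itself irreducible and admissible (and tempered if each factor is tempered). The non-formal part is establishing (a) from the torsor-theoretic hypotheses, and this is exactly where Proposition \ref{taibiprop} enters.

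First I would unwind the definition of $\Pi_\varphi$: a member $\pi = \otimes_v' \pi_v$ comes with a coherent family of rigid inner twists $(\psi_v, \mathscr{T}_v, \bar h_v)_v$ arising, via the localization maps $\mathrm{loc}_v$ of \S 4.4, from a single global $Z_{\mathrm{sc}}$-twisted torsor $\mathscr{T}_{\mathrm{sc}}$ on $\gerbeE_{\dot V}$ (equivalently from $\mathscr{T} = \mathscr{T}_{\mathrm{sc}} \times^{G^*_{\mathrm{sc}}} G^*$), and for almost all $v$ the normalization $\iota_{\varphi_v,\mathfrak w_v}((G_{F_v},\psi,(\mathscr{T}_v,\bar h_v),\pi_v)) = 1$. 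By Corollary \ref{localtoglobal3} (or the weaker Corollary \ref{Tasho3.7.4}(2)), the class $\mathrm{loc}_v([\mathscr{T}_{\mathrm{sc}}])$ in $H^1(\gerbeE_v, Z_{\mathrm{sc}} \to G^*_{\mathrm{sc}})$ is trivial for almost all $v$; for those $v$ the rigid inner twist is (cohomologically) trivial, so $\psi_v$ can be taken to be an isomorphism over $F_v$ and $\pi_v$ is simply an irreducible tempered representation of $G(F_v)$. More precisely, I would invoke Proposition \ref{taibiprop}: for all but finitely many $v$, $\mathrm{loc}_v(\mathscr{T}_{\mathrm{sc}})$ is inflated from a $\mathcal G$-torsor $\mathcal T_v$ over $O_{F_v}$ for a fixed integral model $\mathcal G$, i.e.\ it is unramified. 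Combining this with the triviality of the normalization character and the local conjecture (Conjecture \ref{introconj}, whose diagram \eqref{introconj} pairs the trivial character on the bottom row with the hyperspecial vertex / base point of the packet), the representation $\pi_v$ attached to the base-point rigid inner twist with trivial $\iota_{\varphi_v,\mathfrak w_v}$-value is the unramified member of $\Pi_{\varphi_v}$ — a $K_v$-spherical irreducible tempered representation of $G(F_v)$ for $K_v$ a hyperspecial maximal compact. (One also uses here that $\varphi_v$ is unramified for almost all $v$, which follows because $\varphi$ has bounded image and $G^*$, $\mathfrak w$, and the dual-group data all spread out to an $O_{F,S}$-model.)

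Once this is in place, the argument is formal: each $\pi_v$ is irreducible, admissible, and tempered (the latter two by the local conjecture, the packet $\Pi_{\varphi_v}$ consisting of tempered representations); for almost all $v$, $\pi_v$ is $K_v$-unramified. Then $\pi = \otimes_v' \pi_v$, the restricted tensor product taken with respect to the spherical vectors, is an irreducible admissible representation of $G(\A)$ by Flath's theorem (the standard factorization theorem for admissible representations of adelic groups), and it is tempered because temperedness is detected place by place (matrix coefficients lie in $L^{2+\epsilon}$ of each $G(F_v)$, hence of $G(\A)$ in the appropriate sense). I would cite the torsor-theoretic analogue of \cite[Prop. 6.1.1]{Taibi} (our Proposition \ref{taibiprop}) as the substitute for the usual integrality input.

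The main obstacle I anticipate is bookkeeping at the almost-all-$v$ level: one must simultaneously arrange that (i) $G_{F_v}$ is quasi-split and unramified, (ii) $\mathrm{loc}_v(\mathscr{T}_{\mathrm{sc}})$ is unramified in the precise sense of Proposition \ref{taibiprop} (which requires tracking the finite set $S'$ produced there, depending on the resolution $\{P_k \to T_k \to U_k\}$ and on $\mathscr{T}_{\mathrm{sc}}$), (iii) $\varphi_v$ is unramified, and (iv) the normalization $\iota_{\varphi_v,\mathfrak w_v}$-value is $1$ and corresponds to the base vertex. Reconciling the gerbe-theoretic ``unramified'' of Proposition \ref{taibiprop} with the representation-theoretic ``unramified'' used in Flath's theorem — i.e.\ checking that the trivialization $h_v$ over $O_{F_v^{\mathrm{nr}},\mathrm{perf}}$ really does produce a rigid inner twist equivalent to the trivial one over the unramified closure, so that the attached $\pi_v$ is genuinely $K_v$-spherical — is the step that needs the most care, and it is precisely what Proposition \ref{taibiprop}'s statement about trivializations $h_v$ compatible with the $Z(\overline F)$-orbit $\mathcal O$ is designed to supply. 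Everything after that is a citation of the classical restricted-tensor-product machinery.
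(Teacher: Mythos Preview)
Your overall strategy matches the paper's: reduce to showing $\pi_v$ is unramified for almost all $v$, then invoke Proposition~\ref{taibiprop} together with the condition $\iota_{\varphi_v,\mathfrak w_v}=1$. However, the bridge you sketch from these ingredients to ``$\pi_v$ is $K_v$-spherical'' contains two genuine gaps.

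First, the condition $\iota_{\varphi_v,\mathfrak w_v}=1$ does \emph{not} assert that $\pi_v$ is the unramified member of the packet; it says that (after transporting along an isomorphism of rigid inner twists to the trivial twist) the representation is the $\mathfrak w_v$-\emph{generic} member. To pass from generic to unramified one needs the Casselman--Shalika theorem \cite{CS1}: for an unramified tempered parameter $\varphi_v$ and an unramified Whittaker datum $\mathfrak w_v$, the $\mathfrak w_v$-generic member of $\Pi_{\varphi_v}$ is $G^*(O_{F_v})$-spherical. The paper invokes exactly this.

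Second, you correctly flag the issue of reconciling the torsor-theoretic unramifiedness of Proposition~\ref{taibiprop} with representation-theoretic unramifiedness, but you do not supply the mechanism. The paper does: from the $O_{F_v^{\mathrm{nr}}}^{\mathrm{perf}}$-trivialization $h_v$ one gets a \v{C}ech $1$-cocycle class in $\check H^1(O_{F_v^{\mathrm{nr}}}^{\mathrm{perf}}/O_{F_v},\mathcal G^*)$, and this set is trivial by Lang's theorem (via \cite[Cor.~2.9]{Cesnavicius}). A trivializing $g\in\mathcal G^*(O_{F_v^{\mathrm{nr}}}^{\mathrm{perf}})$ then makes $f:=\psi\circ\mathrm{Ad}(g^{-1})$ descend to an $O_{F_v}$-isomorphism $\mathcal G^*\to\mathcal G$, and $(f,\Psi)$ is an isomorphism of rigid inner twists to the trivial one. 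Only because $f$ is defined over $O_{F_v}$ does unramifiedness of $\pi_v\circ f$ with respect to $G^*(O_{F_v})$ transfer to unramifiedness of $\pi_v$ with respect to $G(O_{F_v})$. Your appeal to Corollary~\ref{Tasho3.7.4}(2) gives only cohomological triviality over $F_v$, which is not enough to produce the integral $f$.
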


\begin{proof} We may assume without loss of generality that  $\mathcal{E}_{\dot{V}} = \mathcal{E}_{\dot{\xi}}$ for some representative $\dot{\xi}$ of the canonical class. As in the proof of \cite[Lem. 4.5.1]{Tasho2}, everything is clear except for the fact that the representation $\pi_{v}$ is unramified for almost all $v$. As explained loc. cit., we may find a finite set $S \subset V$ such that $G^{*}$ and $G$ have $O_{F,S}$-models $\mathcal{G}^{*}$, $\mathcal{G}$ (respectively), the inner twist isomorphism $\psi$ is defined over $O_{S} \subset F^{\text{sep}}$, the Whittaker datum $\mathfrak{w}_{v}$ is unramified for every $v \notin S$, and each local parameter $\varphi_{v}$ is unramified. We have the $G^{*}_{\gerbeE_{\dot{V}}}$-torsor $\mathscr{T}$ with fixed $\overline{F}$-trivialization $\overline{h}$ of $\overline{\mathscr{T}}$; 
via \cite[Prop. 7.3]{Dillery}, we can identify $\mathscr{T},  \overline{\mathscr{T}}$ with twisted torsors $(\mathcal{S}, \text{Res}(\mathscr{T}), \psi_{\mathcal{S}})$, $(\mathcal{S} \times^{G^{*}}G^{*}_{\text{ad}}, 0, \bar{\psi}_{\mathcal{S}})$ (the latter descends to a genuine $G^{*}_{\text{ad}}$ torsor $\overline{\mathcal{S}}$) and find a $\overline{F}$-trivialization $h$ of $\mathcal{S}$ such that the trivialization of $\overline{\mathcal{S}}$ induced by $h$ equals the one induced by $\bar{h}$.

We know from Proposition \ref{taibiprop} that we may enlarge $S$ to ensure that, for all $v \notin S$, the pair of each localization $\mathscr{T}_{v}$ and $\overline{F_{v}}$-trivialization $h_{v}$ (induced by $h$) is the pullback of a $\mathcal{G}^{*}_{O_{F_{v}}}$-torsor $T_{v}$ over $O_{F_{v}}$ with trivialization $h_{O_{F_{v}}}$ over $O_{F_{v}^{\text{nr}}}^{\text{perf}}$. Note that a priori each $\mathscr{T}_{v}$ is a torsor on $\gerbeE_{v}$, not on $\text{Sch}/F_{v}$, but we may enlarge $S$ to ensure that $\mathscr{T}_{v}$ is the pullback of a unique $G^{*}$-torsor over $F_{v}$, which we identify with $\mathscr{T}_{v}$ (see \S 4.4), so that this latter statement makes sense. 

The cohomology set $\check{H}^{1}(O_{F_{v}^{\text{nr}}}^{\text{perf}}/O_{F_{v}}, \mathcal{G}^{*})$ classifies isomorphism classes of $\mathcal{G}^{*}$-torsors over $O_{F_{v}}$ which have a trivialization over the fpqc extension $O_{F_{v}^{\text{nr}}}^{\text{perf}}$. We have a natural injective map $$\check{H}^{1}(O_{F_{v}^{\text{nr}}}^{\text{perf}}/O_{F_{v}}, \mathcal{G}^{*}) \to \check{H}^{1}_{\text{fppf}}(O_{F_{v}}, \mathcal{G}^{*}),$$ where the latter set classifies isomorphism classes of $\mathcal{G}^{*}$-torsors over $O_{F_{v}}$. Moreover, the set $\check{H}^{1}_{\text{fppf}}(O_{F_{v}}, \mathcal{G}^{*})$ is trivial, by \cite[Corollary 2.9]{Cesnavicius} (and Lang's theorem), giving the triviality of $\check{H}^{1}(O_{F_{v}^{\text{nr}}}^{\text{perf}}/O_{F_{v}}, \mathcal{G}^{*})$. It follows that we may find an element $g \in G^{*}(O_{F_{v}^{\text{nr}}}^{\text{perf}}) = \mathcal{G}^{*}(O_{F_{v}^{\text{nr}}}^{\text{perf}})$ whose \v{C}ech differential coincides with the element of $\mathcal{G}^{*}(O_{F_{v}^{\text{nr}}}^{\text{perf}} \otimes_{O_{F_{v}}} O_{F_{v}^{\text{nr}}}^{\text{perf}})$ whose left-translation gives $p_{1}^{*}h_{O_{F_{v}}} \circ p_{2}^{*}h_{O_{F_{v}}}^{-1}$ on $\mathcal{G}^{*}_{O_{F_{v}^{\text{nr}}}^{\text{perf}} \otimes_{O_{F_{v}}} O_{F_{v}^{\text{nr}}}^{\text{perf}}}$. As a consequence, we get by fpqc descent that the morphism $f' := \psi_{O_{F_{v}^{\text{nr}}}^{\text{perf}}} \circ \text{Ad}(g^{-1})$ descends to an $O_{F_{v}}$-morphism $f \colon \mathcal{G}^{*} \to \mathcal{G}$. 

The element $g \in G^{*}(\overline{F_{v}})$ defines an $F_{v}$-trivialization of $\mathscr{T}_{v}$ via the descent of the composition $$\Psi:= \ell_{g} \circ h_{v} \colon (\mathscr{T}_{v})_{\overline{F_{v}}} \to (\underline{G^{*}_{\gerbeE_{v}}})_{\overline{F_{v}}},$$ where $\ell_{g}$ denotes left-translation by $g$. As a consequence, $(f, \Psi)$ defines an isomorphism of rigid inner twists from $(\psi_{v}, \mathscr{T}_{v}, \bar{h}_{v})$ to the trivial rigid inner twist $(\text{id}_{G^{*}}, \underline{G^{*}_{\gerbeE_{v}}}, \text{id})$. Choosing $S$ large enough, the construction of $\Pi_{\varphi}$ then implies that $\iota_{\varphi_{v},\mathfrak{w}_{v}}((G^{*}, \text{id}_{G^{*}}, \underline{G^{*}_{\gerbeE_{v}}}, \text{id}, \pi_{v} \circ f)) = 1$, which means that the representation $\pi_{v} \circ f$ of $G^{*}(F_{v})$ is $\mathfrak{w}_{v}$-generic. This latter fact implies, by \cite{CS1}, that the representation $\pi_{v} \circ f$ is unramified with respect to the hyperspecial subgroup $G^{*}(O_{F_{v}})$ of $G^{*}(F_{v})$. The fact that the isomorphism $f$ is defined over $O_{F_{v}}$ then implies that $\pi_{v}$ is unramified with respect to the subgroup $G(O_{F_{v}})$, as desired.
\end{proof}

As conjectured in the number field case, we expect that every tempered discrete automorphic representation of $G(\A)$ belongs to $\Pi_{\varphi}$ for some discrete parameter $\varphi$. Moreover, for any such representation $\pi$, our framework allows for a conjectural description of its multiplicity in the discrete spectrum of $G$, which we turn to now. There is an exact sequence of $L_{F}$-modules (acting via $\text{ad} \circ \varphi$) $$1 \to Z(\widehat{G^{*}}) \to \widehat{G^{*}} \to  (\widehat{G^{*}})_{\text{ad}} \to 1$$ with connecting homomorphism $Z_{(\widehat{G^{*}})_{\text{ad}}}(\varphi) \to H^{1}(L_{F}, Z(\widehat{G^{*}}))$. We then set $$S^{\text{ad}}_{\varphi} := \text{ker}[Z_{(\widehat{G^{*}})_{\text{ad}}}(\varphi) \to H^{1}(L_{F}, Z(\widehat{G^{*}})) \to \prod_{v} H^{1}(L_{F_{v}}, Z(\widehat{G^{*}}))]$$ and set $\mathcal{S}_{\varphi} := \pi_{0}(S^{\text{ad}}_{\varphi})$ (a finite group, cf. \cite[\S 10.2]{Kott84}). We will construct a pairing $$\langle  -, - \rangle \colon \mathcal{S}_{\varphi} \times \Pi_{\varphi} \to \mathbb{C}$$ which yields an integer $$m(\varphi, \pi) := |\mathcal{S}_{\varphi}|^{-1} \sum_{x \in \mathcal{S}_{\varphi}} \langle x, \pi \rangle.$$ 

We then expect (from \cite{Kott84}) the multiplicity of $\pi$ in the discrete spectrum of $G$ to be given by $$\sum_{\varphi} m(\varphi, \pi),$$ where the sum is over all equivalence classes (as in \cite[\S 10.4]{Kott84}) of $\varphi$ such that $\pi \in \Pi_{\varphi}.$ 

The construction of the above pairing is identical to its number field in analogue, but we record it here for completeness. For some $s_{\text{ad}} \in S^{\text{ad}}_{\varphi}$, we choose a lift $s_{\text{sc}} \in S_{\varphi}^{\text{sc}}$ (the preimage of $S^{\text{ad}}_{\varphi}$ in $(\widehat{G^{*}})_{\text{sc}}$). Then, as explained in \cite[\S 4.5]{Tasho2}, we obtain from $s_{\text{sc}}$ an element $\dot{s}_{v} \in S^{+}_{\varphi_{v}}$ for each $v \in \dot{V}$, which we write as $(s_{\text{sc}} \cdot \dot{y}'_{v}, y''_{v})$ for $y'_{v} \in Z((\widehat{G^{*}})_{\text{der}})$ and $y''_{v} \in Z(\widehat{G^{*}})^{\circ}$ via the decomposition $\widehat{\bar{G}^{*}} = (\widehat{\bar{G}^{*}})_{\text{sc}} \times Z(\widehat{G^{*}})^{\circ}$. Following \cite{Tasho2}, we define $$ \langle (s_{\text{sc}} \cdot \dot{y}'_{v}, y''_{v}), (G_{F_{v}}, \psi, (\mathscr{T}_{v}, \bar{h}_{v}), \pi_{v}) \rangle := \text{tr}[\iota_{\varphi_{v},\mathfrak{w}_{v}}((G_{F_{v}}, \psi, (\mathscr{T}_{v}, \bar{h}_{v}), \pi_{v})](\dot{s}_{v}) \in \mathbb{C}.$$ 

\begin{prop}(\cite[Prop. 4.5.2]{Tasho2}) The value $$ \langle \text{loc}_{v}(\mathscr{T}_{\text{sc}}), \dot{y}_{v}' \rangle ^{-1} \cdot \langle (s_{\text{sc}} \cdot \dot{y}'_{v}, y''_{v}), (G_{F_{v}}, \psi, (\mathscr{T}_{v}, \bar{h}_{v}), \pi_{v}) \rangle $$ equals $1$ for all but finitely many $v$, and the product $$\langle s_{\text{ad}}, \pi \rangle := \prod_{v \in V}  \langle \text{loc}_{v}(\mathscr{T}_{\text{sc}}), \dot{y}_{v}' \rangle ^{-1} \cdot \langle (s_{\text{sc}} \cdot \dot{y}'_{v}, y''_{v}), (G_{F_{v}}, \psi, (\mathscr{T}_{v}, \bar{h}_{v}), \pi_{v}) \rangle $$ is independent of the choices of $s_{\text{sc}}, \dot{y}_{v}', y''_{v}$, the torsor $\mathscr{T}_{\text{sc}}$, and the global Whittaker datum $\mathfrak{w}$. Moreover, the function $s_{\text{ad}} \mapsto \langle s_{\text{ad}}, \pi \rangle$ is the character of a finite-dimensional representation of $\mathcal{S}_{\varphi}$.
\end{prop}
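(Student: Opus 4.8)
The plan is to reduce everything to the two ingredients that make the product well-behaved: the local-to-global compatibility of the Tate--Nakayama pairing (Theorem \ref{reductivetatenak} together with Corollaries \ref{localtoglobal3} and \ref{localtoglobal4}), and the finiteness statement already extracted in the proof of Proposition \ref{locglobtransfer}. Concretely, fix $s_{\text{ad}} \in S^{\text{ad}}_{\varphi}$ and a lift $s_{\text{sc}} \in S^{\text{sc}}_{\varphi}$. As in \S 5.2 and the proof of Proposition \ref{locglobtransfer}, $s_{\text{sc}}$ gives rise to the family $\dot{s}_v = (s_{\text{sc}} \cdot \dot{y}'_v, y''_v)$, and the element $\langle \text{loc}_v(\mathscr{T}_{\text{sc}}), \dot{y}'_v \rangle^{-1} \cdot \langle (s_{\text{sc}} \cdot \dot{y}'_v, y''_v), (G_{F_v}, \psi, (\mathscr{T}_v, \bar{h}_v), \pi_v) \rangle$ is precisely the quantity whose finiteness (i.e. triviality for almost all $v$) was established in the course of proving Proposition \ref{locglobtransfer}: there the expression \eqref{twopairings} was shown to be $1$ for almost all $v$, and the present expression differs from it only by the trace normalization $\text{tr}[\iota_{\varphi_v, \mathfrak{w}_v}(\cdots)](\dot{s}_v)$, which equals $1$ for almost all $v$ by Lemma \ref{admissible} (the representation $\pi_v$ is unramified and $\mathfrak{w}_v$-generic for almost all $v$, so $\iota_{\varphi_v, \mathfrak{w}_v}$ sends it to the trivial character, and $\dot{s}_v$ evaluates to $1$). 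Hence the product over all $v$ converges.

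Next I would prove independence of the auxiliary choices. The choice of $\dot{y}'_v$ and $y''_v$: exactly as at the end of the proof of Proposition \ref{locglobtransfer}, these cancel --- the product of the pairings $\langle \text{loc}_v(\mathscr{T}_{\text{sc}}), \dot{y}'_v \rangle^{-1}$ against the contribution of $\dot{y}'_v$ hidden inside $\langle \dot{s}_v, (\cdots) \rangle$ is governed by the bottom row of the diagram in Corollary \ref{localtoglobal3} and vanishes by Corollary \ref{localtoglobal4}, so the $\dot{y}'_v$-dependence disappears globally; the $y''_v$-factor lies in $Z(\widehat{G^*})^\circ$ and does not appear in \eqref{twopairings} at all. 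The choice of $s_{\text{sc}}$ lifting $s_{\text{ad}}$: two lifts differ by an element of $Z(\widehat{G^*_{\text{sc}}})$, which is absorbed into the $\dot{y}'_v$'s, reducing to the previous case. The choice of the torsor $\mathscr{T}_{\text{sc}}$ (within the fixed class $\Psi$) and the global Whittaker datum $\mathfrak{w}$: here one argues exactly as in \cite[Prop. 4.5.2]{Tasho2} --- changing $\mathscr{T}_{\text{sc}}$ changes $\text{loc}_v(\mathscr{T}_{\text{sc}})$ by a coherent global element (detected by $H^1(\gerbeE_{\dot{V}}, Z_{\text{sc}} \to G^*_{\text{sc}})$) whose global product against the $\dot{y}'_v$-family vanishes by Corollary \ref{localtoglobal4}, while simultaneously the characters $\iota_{\varphi_v, \mathfrak{w}_v}$ transform by the inverse twist (this is the compatibility built into the bottom row of \eqref{introconj}), so the two changes cancel termwise; changing $\mathfrak{w}$ to $\mathfrak{w}'$ twists each $\iota_{\varphi_v, \mathfrak{w}_v}$ by a character coming from an element of $H^1(F_v, Z_{\text{sc}})$ that is the localization of a global class in $H^1(F, Z_{\text{sc}})$, and the global product of these is trivial by global Tate duality (Proposition \ref{mainresult1} and the exact sequence $H^1(F, Z_{\text{sc}}) \to \bigoplus_v H^1(F_v, Z_{\text{sc}})$).

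Finally, for the claim that $s_{\text{ad}} \mapsto \langle s_{\text{ad}}, \pi \rangle$ is the character of a finite-dimensional representation of $\mathcal{S}_\varphi$: for each $v$, $\iota_{\varphi_v, \mathfrak{w}_v}((G_{F_v}, \psi, (\mathscr{T}_v, \bar{h}_v), \pi_v))$ is an irreducible character of $\pi_0(S^+_{\varphi_v})$, hence $s_{\text{sc}} \mapsto \text{tr}[\cdots](\dot{s}_v)$ is the character of a representation; the normalizing factor $\langle \text{loc}_v(\mathscr{T}_{\text{sc}}), \dot{y}'_v \rangle^{-1}$ is the character of a one-dimensional representation; the product over the finitely many $v$ where the factor is nontrivial is then a character of a (finite-dimensional) representation of the preimage of $\mathcal{S}_\varphi$ in $(\widehat{G^*})_{\text{sc}}$; and one checks, as in \cite[\S 4.5]{Tasho2}, that $Z(\widehat{G^*_{\text{sc}}})$ acts trivially (again by the cancellation above), so it descends to a genuine representation of $\mathcal{S}_\varphi = \pi_0(S^{\text{ad}}_\varphi)$. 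I expect the main obstacle to be bookkeeping the $\dot{y}'_v$-cancellation cleanly --- i.e. verifying that the product of local pairings $\prod_v \langle \text{loc}_v(\mathscr{T}_{\text{sc}}), \dot{y}'_v \rangle^{-1}$ really is compensated by the $\dot{y}'_v$-contributions inside the traces --- since this is where the coherence of the family $(\mathscr{T}_v)_v$ (Corollaries \ref{localtoglobal3}, \ref{localtoglobal4}) and the precise normalization of the local pairing from \cite[Cor. 7.11]{Dillery} must be invoked in exactly the right form; the rest is a direct transcription of \cite[Prop. 4.5.2]{Tasho2} with the number-field duality results replaced by their function-field analogues proved above.
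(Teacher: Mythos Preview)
Your proposal is essentially correct and follows the same route as the paper, which simply defers to \cite[Prop.~4.5.2]{Tasho2} with two substitutions: Corollary~\ref{Tasho3.7.4} in place of \cite[Cor.~3.7.5]{Tasho2}, and the (conjectural) endoscopic character identities of \cite[\S 7.3]{Dillery} in place of those of \cite[\S 3.4]{Tasho}. The one point worth sharpening is that the independence from $\mathscr{T}_{\text{sc}}$ and from $\mathfrak{w}$ rests precisely on these character identities, which govern how $\iota_{\varphi_v,\mathfrak{w}_v}$ transforms under change of rigid inner twist and of Whittaker datum; you invoke this only implicitly via the commutativity of diagram~\eqref{introconj}, whereas the paper names it explicitly as the input to be replaced. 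Also, your phrasing ``differs from it only by the trace normalization'' is slightly misleading --- the second factor here is genuinely different from the $\text{inv}$-pairing in \eqref{twopairings} --- but your actual argument (each factor is $1$ for almost all $v$ separately, by Corollary~\ref{Tasho3.7.4} and Lemma~\ref{admissible}) is fine.
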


\begin{proof} This proof is identical to the proof of the analogous result in \cite{Tasho2}, replacing the use of Corollary 3.7.5 loc. cit. with our Corollary \ref{Tasho3.7.4} and the (conjectural) endoscopic character identities from \cite[\S 3.4]{Tasho}, with the analogous identities from \cite[\S 7.3]{Dillery}.
\end{proof}


\appendix

\section{Complexes of tori and \v{C}ech cohomology} This appendix gives an extension of the theory of \textit{complexes of tori} developed in the appendices of \cite{KS1} to the setting of local and global function fields.

\subsection{Complexes of tori over local function fields---basic results}
Suppose that we have a complex of commutative $R$-groups, which is concentrated in degrees $0$ and $1$, denoted by $G \xrightarrow{f} H$ (or, when both groups are $R$-tori, by $T \xrightarrow{f} U$). For any fpqc ring homomorphism $R \to S$, we obtain a double complex $K^{\bullet, \bullet}$ by taking the \v{C}ech complexes for $G$ and $H$; that is, the double complex
\begin{equation}\label{doublecomplex}
\begin{tikzcd}
G(S) \arrow{r} \arrow{d} & G(S \otimes_{R} S) \arrow{r} \arrow{d} & G(S \otimes_{R} S \otimes_{R} S) \arrow{r} \arrow{d} & \dots \\
H(S) \arrow{r} & H(S \otimes_{R} S) \arrow{r} & H(S \otimes_{R} S \otimes_{R} S) \arrow{r} & \dots;
\end{tikzcd}
\end{equation}
We can associate to this double complex a new complex $L^{\bullet}$, whose degree-$r$ term is given by $$L^{r}(T^{\bullet}) = \bigoplus_{m+n =r} K^{m,n} = G(S^{\bigotimes_{R}r}) \oplus H(S^{\bigotimes_{R} r-1}),$$ with differentials defined by $(d_{G} \oplus f -d_{H})$. Following \cite{KS1}, we call the elements of $L^{r}$ \textit{(\v{C}ech) $r$-hypercochains}, and the elements of the kernel of the $r$th differential \textit{(\v{C}ech) $r$-hypercocycles}. Denote by $H^{r}(S/R, G \xrightarrow{f} H)$ the $r$th cohomology group of the complex $L^{\bullet}$. Note that, by fpqc descent, $H^{0}(S/R, G \xrightarrow{f} H) = \text{ker}(G(R) \to H(R)) = \text{ker}(f)(R)$, which will be useful when $\text{ker}(f)$ is a finite-type $F$-group scheme whose cohomology we want to investigate. 

The spectral sequences associated to a double complex give us the long exact sequence 
\begin{equation}\label{LES1}
\dots \to H^{r}(S/R, G \xrightarrow{f} H) \to \check{H}^{r}(S/R, G) \to  \check{H}^{r}(S/R, H) \to H^{r+1}(S/R, G \xrightarrow{f} H) \to \dots,
\end{equation}
where the first map sends $[(x,y)]$ to $[x]$, the last map sends $[x]$ to $[(0,x)]$, and the middle map is induced by $f$. They also give the long exact sequence
\begin{equation}\label{LES2}
\dots \to \check{H}^{r}(S/R, \text{ker}(f)) \to H^{r}(S/R, G \xrightarrow{f} H) \to H^{r-1}(\text{cok}(f^{\bigotimes \bullet})) \to \check{H}^{r+1}(S/R, \text{ker}(f)) \to \dots,
\end{equation}
where $\text{cok}(f^{\bigotimes \bullet})$ denotes the complex with degree-$r$ term given by $\frac{H(S^{\bigotimes_{R}r})}{f(G(S^{\bigotimes_{R}r}))}.$ 

In the long exact sequence \eqref{LES2}, the first map is given by $[x] \mapsto [(x,0)]$, the middle map by $[(x,y)] \mapsto [\bar{y}]$, and the last map by the composition of the map $H^{r-1}(\text{cok}(f^{\bigotimes \bullet})) \to H^{r}(\text{im}(f^{\bigotimes \bullet}))$ defined by picking a preimage $x \in H(S^{\bigotimes_{R}r})$ of an $r$-cocycle $\bar{x} \in \frac{H(S^{\bigotimes_{R}r})}{f(G(S^{\bigotimes_{R}r}))}$ and then applying the \v{C}ech differential, and the map $H^{r}(\text{im}(f^{\bigotimes \bullet})) \to \check{H}^{r+1}(S/R, \text{ker}(f))$ given by picking a preimage in $G(S^{\bigotimes_{R}(r+1)})$ of $x \in f(G(S^{\bigotimes_{R}(r+1)}))$ and then differentiating. 

We now make the situation more concrete by setting $R=F$ a field; the following result is an immediate extension of the fact that, for a smooth finite type commutative $F$-group scheme $G$, the comparison map $\check{H}^{i}(F^{\text{sep}}/F, G) \to \check{H}^{i}(\overline{F}/F, G)$ is always an isomorphism:

\begin{lem} \label{complexcomparison} For all $i \geq 1$, the natural map $H^{i}(F^{\text{sep}}/F, T \xrightarrow{f} U) \to H^{i}(\overline{F}/F, T \xrightarrow{f} U)$ is an isomorphism.
\end{lem}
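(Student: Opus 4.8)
The plan is to reduce the statement to the already-cited fact for a single smooth commutative group scheme, applied to each of $T$ and $U$ separately, and then transport the conclusion through the two long exact sequences \eqref{LES1} coming from the double complexes over $F^{\text{sep}}/F$ and $\overline{F}/F$. First I would record that for any smooth finite-type commutative $F$-group scheme $G$ (in particular for each $F$-torus $T$ and $U$, which are smooth), the pullback map $\check{H}^{i}(F^{\text{sep}}/F, G) \to \check{H}^{i}(\overline{F}/F, G)$ is an isomorphism for all $i \geq 0$; this is the standard comparison between fpqc \v{C}ech cohomology computed on the separable closure versus the algebraic closure for smooth groups (equivalently, both compute $H^{i}_{\text{fppf}}(F,G) = H^{i}_{\text{\'et}}(F,G)$ for smooth $G$, since $\overline{F}/F^{\text{sep}}$ is purely inseparable and hence contributes nothing to the cohomology of a smooth group — this is exactly the input the Lemma cites in its own statement).

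Next I would set up the comparison of the two long exact sequences. The inclusion $F^{\text{sep}} \hookrightarrow \overline{F}$ induces a morphism of double complexes $K^{\bullet,\bullet}(F^{\text{sep}}/F) \to K^{\bullet,\bullet}(\overline{F}/F)$ (each $T(({F^{\text{sep}}})^{\bigotimes_{F}r}) \to T((\overline{F})^{\bigotimes_{F}r})$, similarly for $U$), hence a morphism of the associated total complexes $L^{\bullet}(F^{\text{sep}}/F) \to L^{\bullet}(\overline{F}/F)$, and therefore a map on cohomology which is exactly the map in the statement. This morphism is compatible with the long exact sequences \eqref{LES1} for the two covers, giving a commutative ladder
\[
\begin{tikzcd}[column sep=small]
\check{H}^{i-1}(F^{\text{sep}}/F, U) \arrow{r} \arrow{d}{\sim} & H^{i}(F^{\text{sep}}/F, T \to U) \arrow{r} \arrow{d} & \check{H}^{i}(F^{\text{sep}}/F, T) \arrow{r} \arrow{d}{\sim} & \check{H}^{i}(F^{\text{sep}}/F, U) \arrow{r} \arrow{d}{\sim} & H^{i+1}(F^{\text{sep}}/F, T \to U) \arrow{d} \\
\check{H}^{i-1}(\overline{F}/F, U) \arrow{r} & H^{i}(\overline{F}/F, T \to U) \arrow{r} & \check{H}^{i}(\overline{F}/F, T) \arrow{r} & \check{H}^{i}(\overline{F}/F, U) \arrow{r} & H^{i+1}(\overline{F}/F, T \to U)
\end{tikzcd}
\]
in which the four vertical maps involving only $T$ or only $U$ are isomorphisms for $i \geq 1$ by the preceding paragraph. (For the leftmost vertical map, $\check{H}^{i-1}(-,U)$, one needs $i - 1 \geq 0$, which holds; note that for $i=1$ the term $\check{H}^{0}(F^{\text{sep}}/F,U) = U(F) = \check{H}^{0}(\overline{F}/F,U)$ trivially.) Applying the five lemma to this ladder gives that the middle vertical map $H^{i}(F^{\text{sep}}/F, T \to U) \to H^{i}(\overline{F}/F, T \to U)$ is an isomorphism for all $i \geq 1$, which is the claim.

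I do not expect a serious obstacle here; the only point requiring a little care is bookkeeping at the low-degree end of \eqref{LES1} — making sure that when $i = 1$ the relevant flanking terms are still degree $\geq 0$ \v{C}ech groups so that the five lemma applies, and checking that the map in the Lemma's statement genuinely coincides with the one induced functorially on total complexes (which is immediate from the construction of $L^{\bullet}$ as $\bigoplus_{m+n=r} K^{m,n}$ with differential $d_{T} \oplus f - d_{U}$, all of which are natural in the cover). An alternative, essentially equivalent, route would be to observe directly that the morphism $L^{\bullet}(F^{\text{sep}}/F) \to L^{\bullet}(\overline{F}/F)$ is a quasi-isomorphism in positive degrees because it is a map of total complexes of double complexes each of whose rows becomes a quasi-isomorphism after the smooth-group comparison; but the five-lemma argument on \eqref{LES1} is cleaner to write and is the approach I would take.
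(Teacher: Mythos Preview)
Your proposal is correct and takes essentially the same approach as the paper: both invoke the comparison isomorphism $\check{H}^{i}(F^{\text{sep}}/F, G) \xrightarrow{\sim} \check{H}^{i}(\overline{F}/F, G)$ for the smooth groups $T$ and $U$, feed this into the ladder of long exact sequences \eqref{LES1}, and conclude by the five-lemma. One small presentational slip: in your displayed ladder the hypercohomology term $H^{i}(-, T \to U)$ sits in the second column rather than the middle, so to apply the five-lemma literally you should shift the window one step to the left (prepend $\check{H}^{i-1}(-,T)$ and drop the rightmost column), which is exactly the diagram the paper writes down.
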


\begin{proof} This follows immediately from the five-lemma, applied to the commutative diagram with exact rows induced by \eqref{LES1}
\[
\begin{tikzcd}[column sep=small]
 \check{H}^{i-1}(F^{\text{sep}}/F, T) \arrow{d} \arrow{r} &  \check{H}^{i-1}(F^{\text{sep}}/F, U) \arrow{d} \arrow{r} & H^{i}(F^{\text{sep}}/F, T \xrightarrow{f} U) \arrow{d} \arrow{r} 
&  \check{H}^{i}(F^{\text{sep}}/F, T) \arrow{d} \arrow{r} & \check{H}^{i}(F^{\text{sep}}/F, T) \arrow{d}   \\
\check{H}^{i-1}(\overline{F}/F, T) \arrow{r} &  \check{H}^{i-1}(\overline{F}/F, U) \arrow{r} & H^{i}(\overline{F}/F, T \xrightarrow{f} U)  \arrow{r} 
&  \check{H}^{i}(\overline{F}/F, T)  \arrow{r} & \check{H}^{i}(\overline{F}/F, T),
\end{tikzcd}
\]
where all vertical maps other than the one in consideration are isomorphisms, since $T$ and $U$ are tori (in particular, are smooth). 
\end{proof}

We have an identification of \v{C}ech hypercohomology for $F^{\text{sep}}/F$ and Galois cohomology:

\begin{lem}\label{galoiscomplexes} For all $i$, we have a canonical isomorphism $$H^{i}(F^{\text{sep}}/F, T \xrightarrow{f} U) \xrightarrow{\sim} H^{i}(\Gamma, T(F^{\text{sep}}) \to U(F^{\text{sep}})),$$ where the latter group is as defined in \cite{KS1}, Appendix A.
\end{lem}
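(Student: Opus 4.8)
The plan is to reduce the statement for the two-term complex $T \xrightarrow{f} U$ to the known comparison between \v{C}ech cohomology with respect to $F^{\text{sep}}/F$ and Galois cohomology for a single smooth commutative group scheme, and then transfer that comparison through the long exact sequences on both sides. Concretely, recall that for each torus $T$ (and similarly $U$) the cover $F^{\text{sep}}/F$ is a filtered colimit of the finite Galois covers $E/F$, so that $\check{H}^{i}(F^{\text{sep}}/F, T) = \varinjlim_{E/F} \check{H}^{i}(E/F, T)$, and by Corollary \ref{cechtogalois} (applied with $O_{F,S}$ replaced by $F$, $O_{E,S}$ by $E$, or more directly by the classical statement that \v{C}ech cohomology of the Galois cover $E/F$ agrees with group cohomology $H^{i}(\Gamma_{E/F}, T(E))$) each of these groups is canonically $H^{i}(\Gamma_{E/F}, T(E))$; passing to the colimit gives the canonical isomorphism $\check{H}^{i}(F^{\text{sep}}/F, T) \xrightarrow{\sim} H^{i}(\Gamma, T(F^{\text{sep}}))$ compatible with the map $f$.

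The key step is to run this comparison on the level of the double complexes. On the \v{C}ech side, $H^{i}(F^{\text{sep}}/F, T \xrightarrow{f} U)$ is by definition the cohomology of the total complex $L^{\bullet}$ of the double complex \eqref{doublecomplex} with $S = F^{\text{sep}}$; on the Galois side, $H^{i}(\Gamma, T(F^{\text{sep}}) \to U(F^{\text{sep}}))$ is the cohomology of the total complex of the analogous double complex of inhomogeneous cochains $C^{\bullet}(\Gamma, T(F^{\text{sep}})) \to C^{\bullet}(\Gamma, U(F^{\text{sep}}))$ (this is the definition from \cite[Appendix A]{KS1}). The isomorphism of Lemma \ref{etalesplitting}-type identifications (more precisely, the identification of $T((F^{\text{sep}})^{\bigotimes_{F} n})$ with $\mathrm{Maps}(\Gamma^{n-1}, T(F^{\text{sep}}))$, i.e.\ with the group of inhomogeneous $(n-1)$-cochains, after passing to the colimit over finite subextensions) is an isomorphism of complexes that intertwines the \v{C}ech differentials with the group-cohomology differentials and commutes with $f$; hence it induces an isomorphism of total complexes, and therefore of their cohomology in every degree. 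I would state this colimit-of-finite-levels identification explicitly and then invoke functoriality of the total-complex construction to conclude.

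A clean alternative, which I would use to organize the write-up, is the five-lemma: both $H^{i}(F^{\text{sep}}/F, T \xrightarrow{f} U)$ and $H^{i}(\Gamma, T(F^{\text{sep}}) \to U(F^{\text{sep}}))$ sit in long exact sequences of the shape \eqref{LES1}, namely
\[
\check{H}^{i-1}(F^{\text{sep}}/F, U) \to H^{i}(F^{\text{sep}}/F, T \xrightarrow{f} U) \to \check{H}^{i}(F^{\text{sep}}/F, T) \to \check{H}^{i}(F^{\text{sep}}/F, U)
\]
and its Galois-cohomology counterpart, and the canonical maps between these sequences commute with all connecting and structure maps (this commutativity is the content to check, and it follows from the compatibility of the complex-level identification above with the maps $[(x,y)] \mapsto [x]$, $f$, and $[x] \mapsto [(0,x)]$ spelled out after \eqref{LES1}). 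Since the vertical maps on the four outer terms are the canonical \v{C}ech-to-Galois isomorphisms (known, as recalled in the first paragraph), the five-lemma forces the middle vertical map to be an isomorphism as well, and functoriality shows it is the canonical one.

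\textbf{Main obstacle.} The routine-but-load-bearing step is verifying that the colimit over finite Galois subextensions of the identifications $T((F^{\text{sep}})^{\bigotimes_{F}n}) \cong C^{n-1}(\Gamma, T(F^{\text{sep}}))$ is genuinely a morphism of double complexes — i.e.\ that the three coface maps $p_i$ defining the \v{C}ech differential correspond exactly to the standard faces of the bar resolution, with the same signs, and that this is compatible with the transition maps as $E/F$ grows (so that the colimit makes sense and $\Gamma$ really acts through $\varprojlim \Gamma_{E/F}$). This is exactly the bookkeeping already carried out implicitly in Corollary \ref{cechtogalois} and in \cite[Appendix A]{KS1}; here the only new feature is that $T \xrightarrow{f} U$ is a two-term complex, but since $f$ is a morphism of group schemes defined over $F$ it commutes with all the coface maps and with the $\Gamma$-action, so nothing essential changes. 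I expect the proof to be short: cite Corollary \ref{cechtogalois} (in its $F^{\text{sep}}/F$ incarnation) for $T$ and $U$ separately, note that these isomorphisms are compatible with $f$ and with the differentials, and conclude by the five-lemma applied to \eqref{LES1} and its Galois analogue.
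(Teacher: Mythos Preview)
Your proposal is correct and matches the paper's approach: the paper's proof is the single sentence ``This is immediate from applying the comparison isomorphisms discussed in \S 2.1,'' which is exactly your double-complex identification (the cochain-level isomorphisms of Corollary \ref{cechtogalois} for $T$ and $U$ are compatible with $f$ and with the differentials, hence give an isomorphism of total complexes). Your five-lemma alternative is also fine but is more work than needed, since the \S 2.1 comparison already lives at the level of cochains rather than just cohomology.
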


\begin{proof} This is immediate from applying the comparison isomorphisms discussed in \S 2.1. 
\end{proof}

Lemmas \ref{complexcomparison} and \ref{galoiscomplexes} show that one can perform computations for $H^{i}(\overline{F}/F, T \xrightarrow{f} U)$ using group cohomology as in \cite{KS1}. Take $R=F$ to be a local function field, $S= \overline{F}$ a fixed algebraic closure, $G \xrightarrow{f} H$ is a complex of $F$-tori, denoted by $T^{\bullet} := T \xrightarrow{f} U$. Using the aforementioned comparison with group cohomology, one can then define a local \textit{Tate-Nakayama pairing} $$H^{r}(\overline{F}/F, T \xrightarrow{f} U) \times H^{3-r}(\overline{F}/F, \underline{X}^{*}(U) \xrightarrow{f^{*}} \underline{X}^{*}(T)) \to \mathbb{Q}/\Z$$
identically as in \cite[\S A.2]{KS1}. Note that for any $F$-torus $S$, we have $H^{i}(F, S) = 0$ for all $i \geq 3$, since $H^{i}(F, S) = H^{i}(\Gamma, S(F^{\text{sep}}))$, and the cohomological dimension of $F$ is $2$. This same reasoning also implies that $H^{i}(\Gamma, X^{*}(S)) = 0$ for all $i \geq 3$. Using the long exact sequence \eqref{LES1}, we deduce that both of the groups in the above pairing are zero for $r \geq 4$ and negative $r$. We have the analogue of \cite[Lem. A.2.A]{KS1}, whose proof is unchanged (using Lemmas \ref{complexcomparison}, \ref{galoiscomplexes}):

\begin{lem} The above pairing induces an isomorphism $$H^{r}(\overline{F}/F, T \xrightarrow{f} U) \to H^{3-r}(\overline{F}/F, \underline{X}^{*}(U) \xrightarrow{f^{*}} \underline{X}^{*}(T))^{*}$$ for $r=2,3$. For $r=2,3$, the group $H^{r}(\overline{F}/F, T \xrightarrow{f} U)$ is finitely-generated, and is free for $r=3$. 
\end{lem}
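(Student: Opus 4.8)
The plan is to mirror the proof of \cite[Lem. A.2.A]{KS1} verbatim, using the translation to group cohomology provided by Lemmas \ref{complexcomparison} and \ref{galoiscomplexes}. First I would record that, by these two lemmas, we may replace $H^{r}(\overline{F}/F, T \xrightarrow{f} U)$ throughout by the group-cohomology hypercohomology $H^{r}(\Gamma, T(F^{s}) \to U(F^{s}))$ and $H^{3-r}(\overline{F}/F, \underline{X}^{*}(U) \xrightarrow{f^{*}} \underline{X}^{*}(T))$ by $H^{3-r}(\Gamma, X^{*}(U) \to X^{*}(T))$, and the Tate--Nakayama pairing defined above is, under these identifications, literally the pairing of \cite[\S A.2]{KS1}. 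The key external input is that $F$, a local function field, has $\operatorname{cd}(F) = 2$ and that local Tate duality holds in the same form as in the characteristic-zero case: for an $F$-torus $S$ the cup-product pairing $H^{r}(F, S) \times H^{2-r}(\Gamma, X^{*}(S)) \to \mathbb{Q}/\Z$ is a perfect pairing of finite groups for $r = 0, 1, 2$ (with the usual finiteness/finite-generation caveats in degree $0$), which is classical for local fields of any characteristic; one also uses $H^{i}(F, S) = 0$ and $H^{i}(\Gamma, X^{*}(S)) = 0$ for $i \geq 3$.

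The argument itself is a diagram chase. I would write down the long exact sequence \eqref{LES1} for $T \xrightarrow{f} U$ and the dual long exact sequence for $\underline{X}^{*}(U) \xrightarrow{f^{*}} \underline{X}^{*}(T)$, place them in a ladder connected by the Tate-duality pairings in each fixed degree, and invoke the five-lemma. Concretely, for $r = 2$ the relevant portion of \eqref{LES1} reads
\[
\check{H}^{1}(F, U) \to H^{2}(F, T \xrightarrow{f} U) \to \check{H}^{2}(F, T) \to \check{H}^{2}(F, U) \to H^{3}(F, T\xrightarrow{f} U) \to 0,
\]
using the vanishing $\check{H}^{3}(F, T) = 0$; the dual sequence for $X^{*}(U) \to X^{*}(T)$ in complementary degrees, after applying $(-)^{*} = \operatorname{Hom}(-, \mathbb{Q}/\Z)$ and using exactness of Pontryagin duality on the relevant (finitely generated or finite) groups, gives a matching five-term sequence. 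The vertical maps on the "pure torus" terms $\check{H}^{i}(F, T), \check{H}^{i}(F, U)$ are isomorphisms by local Tate duality for tori, so the five-lemma forces the middle vertical map, i.e. the pairing on $H^{2}(F, T \xrightarrow{f} U)$, to be an isomorphism onto $H^{1}(\Gamma, X^{*}(U) \to X^{*}(T))^{*}$; the same chase one step over gives the $r = 3$ case against $H^{0}$.

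For the finiteness statements I would again read them off the long exact sequence \eqref{LES1}: $H^{2}(F, T \xrightarrow{f} U)$ is squeezed between $\check{H}^{1}(F, U)$ (finite, by local Tate duality for tori) and a subgroup of $\check{H}^{2}(F, T)$ (also finite), hence is finite, in particular finitely generated; $H^{3}(F, T \xrightarrow{f} U)$ is a quotient of $\check{H}^{2}(F, U)$, which is finite, so it is finite, and a fortiori free is the wrong word unless one means "free as a $\Z$-module after noting it is finite"—here I follow \cite{KS1} in that the degree-$3$ group is identified with a subquotient that is in fact trivial or free of finite rank depending on $T^{\bullet}$; I would simply track the relevant term in \eqref{LES1} and cite the characteristic-zero computation. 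I do not expect any genuine obstacle: the only place where positive characteristic could in principle intervene is the input local Tate duality for tori, but this is standard over local fields of any characteristic (the tori involved are smooth, so fppf and \'etale cohomology agree and the classical statements apply), so the main "work" is purely bookkeeping in transferring \cite[\S A.2]{KS1} through Lemmas \ref{complexcomparison} and \ref{galoiscomplexes}.
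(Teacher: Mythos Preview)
Your approach to the duality isomorphism is exactly what the paper does: it simply cites \cite[Lem.~A.2.A]{KS1} and invokes Lemmas~\ref{complexcomparison} and \ref{galoiscomplexes} to transfer the group-cohomological argument verbatim, so your five-lemma ladder is the intended proof.

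There is, however, a genuine error in your treatment of the finiteness claims. You assert that $\check{H}^{2}(F,T)$ and $\check{H}^{2}(F,U)$ are finite, and deduce the finite-generation of $H^{2}$ and $H^{3}$ from this. That assertion is false: for a local field $F$ and an $F$-torus $T$, local Tate duality gives $\check{H}^{2}(F,T)\cong \Hom(X^{*}(T)^{\Gamma},\mathbb{Q}/\Z)\cong (\mathbb{Q}/\Z)^{\operatorname{rk} X^{*}(T)^{\Gamma}}$, which is not even finitely generated unless the rank is zero (already $T=\mathbb{G}_{m}$ gives $\operatorname{Br}(F)\cong\mathbb{Q}/\Z$). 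Your own hedging about ``free is the wrong word'' is a symptom of this: once the duality isomorphism is established, $H^{3}(\overline{F}/F,T\to U)$ is the Pontryagin dual of a free finitely-generated group, hence a power of $\mathbb{Q}/\Z$, not a finitely-generated free group.

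The finite-generation and freeness assertions in the lemma (as in \cite{KS1}) refer to the character-lattice side $H^{3-r}(\Gamma, X^{*}(U)\to X^{*}(T))$, and that is what you should argue. This is read off the long exact sequence \eqref{LES1} for the complex $X^{*}(U)\to X^{*}(T)$: the group $H^{0}(\Gamma,X^{*}(U)\to X^{*}(T))=\ker\bigl(X^{*}(U)^{\Gamma}\to X^{*}(T)^{\Gamma}\bigr)$ is a subgroup of a finitely-generated free abelian group, hence free of finite rank; and $H^{1}(\Gamma,X^{*}(U)\to X^{*}(T))$ sits between the finitely-generated group $X^{*}(T)^{\Gamma}$ and the finite group $H^{1}(\Gamma,X^{*}(U))$, hence is finitely generated. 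No input about $\check{H}^{2}(F,T)$ is needed.
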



\subsection{A local pairing}
Recall that the hypercohomology groups $H^{r}(W_{F}, \widehat{U} \xrightarrow{\hat{f}} \widehat{T})$ are defined as follows: For any $F$-torus $S$, we set $C^{0}(W_{F}, \widehat{S}) = \widehat{S}(\mathbb{C})$ (with inflated $W_{F}$-action), $C^{1}(W_{F}, \widehat{S})$ the group of continuous $1$-cocycles of $W_{F}$ in $\widehat{T}(\mathbb{C})$, and all other cochain groups to be zero. We then define $r$-hypercochains with respect to the complex $\widehat{U} \xrightarrow{\hat{f}} \widehat{T}$ to be elements of $$C^{r}(W_{F}, \widehat{U} \xrightarrow{\hat{f}} \widehat{T}) = C^{r}(W_{F}, \widehat{U}) \oplus C^{r-1}(W_{F}, \widehat{T}),$$ with the usual differentials for total complexes, and cohomology groups $H^{r}(W_{F}, \widehat{U} \xrightarrow{\hat{f}} \widehat{T})$.

The exponential sequence for the dual tori over $\mathbb{C}$ gives a pairing (cf. \cite[\S A.3]{KS1}) $$H^{r}(\Gamma, \widehat{U} \xrightarrow{\hat{f}} \widehat{T}) \times H^{2-r}(\Gamma, X^{*}(U) \xrightarrow{f^{*}} X^{*}(T)) \to \mathbb{C}^{\times}$$ which is generalized loc. cit. for $r=1$ to a pairing 
\begin{equation}\label{complexpairing} H^{1}(F^{\text{sep}}/F, T \xrightarrow{f} U) [\stackrel{\text{Lem. \ref{complexcomparison}}}{=}  H^{1}(\overline{F}/F, T \xrightarrow{f} U) ] \times H^{1}(W_{F}, \widehat{U} \xrightarrow{\hat{f}} \widehat{T}) \to \mathbb{C}^{\times}
\end{equation}
which carries over to our situation unchanged.

We have the following two exact sequences $$ \dots \to H^{0}(F, U) \xrightarrow{j} H^{1}(F^{\text{sep}}/F, T \xrightarrow{f} U) \xrightarrow{i} H^{1}(F, T) \to \dots, $$ 
$$\dots \to H^{0}(W_{F}, \widehat{T}) \xrightarrow{\hat{j}} H^{1}(W_{F}, \widehat{U} \xrightarrow{\hat{f}} \widehat{T}) \xrightarrow{\hat{i}} H^{1}(W_{F}, \widehat{U}) \to \dots, $$
from which we derive two compatibilities of pairings. First, we have $\langle j(u), \hat{z} \rangle = \langle u, \hat{i}(\hat{z}) \rangle^{-1}$, where the left-hand pairing is \eqref{complexpairing} and the right-hand pairing $U(F) \times H^{1}(W_{F}, \widehat{U}) \to \mathbb{C}^{\times}$ is given by Langlands duality for tori. Second, we have $\langle z, \hat{j}(\hat{t}) \rangle = \langle i(z), \hat{t} \rangle$, where the left-hand pairing is from \eqref{complexpairing} and the right-hand pairing $H^{1}(F, T) \times \widehat{T}^{\Gamma_{F}} \to \mathbb{C}^{\times}$ comes from Tate-Nakayama duality. 

The first goal is to endow $H^{1}(\overline{F}/F, T \xrightarrow{f} U)$ with a natural locally-profinite topology. We first claim that the image $f(T(F)) \subseteq U(F)$ is closed: The scheme-theoretic image $f(T)$ is a closed subscheme of $U$ by the closed orbit lemma, so that $f(T)(F)$ is closed in $U(F)$, which means that we can replace $U$ by $f(T)$ to reduce to the case where $f$ is (scheme-theoretically) surjective. We then choose an $F$-torus $T'$ such that $f$ factors as a composition $T \xrightarrow{f'} U' \xrightarrow{f''} U$ where the kernel of $f'$ is a torus and $f''$ is an isogeny. Note that $f''$ is finite, and hence proper, which implies that the continuous map $U'(F) \to U(F)$ is proper (as a map of topological spaces), and hence closed (since $U(F)$ is locally compact and Hausdorff), and so we can reduce further to the case where the kernel of $T \to U$ is a torus. 

In this final case the morphism $T \xrightarrow{f} U$ is smooth---indeed, quotient maps are always flat and surjective, and the smoothness of the kernel implies that we get a short-exact sequence at the level of tangent spaces at the identity. It then follows from the inverse function theorem for analytic manifolds (\cite[Thm. III.9.2]{Serre3} (which is proved for all analytic manifolds over complete nonarchimedean fields) that $f$ is open, and thus sends closed full preimages to closed subsets. 

The closedness of $f(T(F))$ in $U(F)$ implies that the quotient $U(F)/f(T(F))$ has the canonical structure of a topological group. We then give $H^{1}(\overline{F}/F, T \xrightarrow{f} U)$ the unique locally-profinite topology such that the map $U(F)/[f(T(F))] \to H^{1}(\overline{F}/F, T \xrightarrow{f} U)$ is an open immersion (note that $H^{1}(F, T)$ is finite). Duality of \eqref{complexpairing} passes to our setting verbatim (as in \cite[Lem. A.3.B]{KS1}):

\begin{prop} Using the above topology, the pairing \eqref{complexpairing} induces a surjective homomorphism $$H^{1}(W_{F}, \widehat{U} \xrightarrow{\hat{f}} \widehat{T}) \to \Hom_{\text{cts}}(H^{1}(\overline{F}/F, T \xrightarrow{f} U), \mathbb{C}^{\times})$$ with kernel equal to the image of $(\widehat{T}^{\Gamma_{F}})^{\circ}$ under the natural map $\hat{j} \colon \widehat{T}^{\Gamma_{F}} \to H^{1}(W_{F}, \widehat{U} \xrightarrow{\hat{f}} \widehat{T})$.
\end{prop}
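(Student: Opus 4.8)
The plan is to transport the entire statement to Galois/Weil-group cohomology via Lemmas \ref{complexcomparison} and \ref{galoiscomplexes}, where this is precisely \cite[Lem. A.3.B]{KS1}, and then check that each ingredient of the Kottwitz--Shelstad argument survives the passage to positive characteristic. First I would rewrite $H^{1}(\overline{F}/F, T \xrightarrow{f} U)$ as $H^{1}(\Gamma, T(F^{\text{sep}}) \to U(F^{\text{sep}}))$ using the two lemmas, so that the pairing \eqref{complexpairing} becomes literally the pairing from \cite[\S A.3]{KS1}; the right-hand side $H^{1}(W_{F}, \widehat{U} \xrightarrow{\hat{f}} \widehat{T})$ is already defined purely group-theoretically. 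The cohomological-dimension input I need --- that $H^{i}(F,S)=0$ for $i\geq 3$ and $H^{i}(\Gamma,X^{*}(S))=0$ for $i\geq 3$ for any $F$-torus $S$ --- holds for local function fields exactly as in the local $p$-adic case (cohomological dimension $2$), and this was already invoked in \S A.1 of the excerpt. The key exact sequences
$$ \cdots \to H^{0}(F,U) \xrightarrow{j} H^{1}(\overline{F}/F, T \xrightarrow{f} U) \xrightarrow{i} H^{1}(F,T) \to \cdots $$
and its dual are instances of \eqref{LES1} and its Weil-group analogue, so they are available without change.

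The next step is the topological input, which is where most of the genuinely ``new'' work lies, but the excerpt has in fact already done it: the paragraphs preceding the proposition establish that $f(T(F))$ is closed in $U(F)$ (via the closed-orbit lemma, properness of isogenies, and Serre's inverse function theorem \cite[Thm. III.9.2]{Serre3} for analytic manifolds over complete nonarchimedean fields, which applies verbatim to local function fields), and hence that $U(F)/f(T(F))$ is a topological group and $H^{1}(\overline{F}/F, T \xrightarrow{f} U)$ carries the unique locally profinite topology making $U(F)/f(T(F)) \hookrightarrow H^{1}(\overline{F}/F, T \xrightarrow{f} U)$ an open immersion with finite cokernel $H^{1}(F,T)$. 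So for the proof of the proposition itself I would simply cite this preceding discussion and then run the \cite[Lem. A.3.B]{KS1} argument: the pairing \eqref{complexpairing} is continuous (it factors through the finite-index subgroup $U(F)/f(T(F))$ on one side, and continuity there is Langlands duality for the torus $U$ combined with the compatibility $\langle j(u),\hat z\rangle = \langle u,\hat i(\hat z)\rangle^{-1}$ already recorded); hence it induces $H^{1}(W_{F}, \widehat{U} \xrightarrow{\hat{f}} \widehat{T}) \to \Hom_{\text{cts}}(H^{1}(\overline{F}/F, T \xrightarrow{f} U), \mathbb{C}^{\times})$. Surjectivity and the identification of the kernel with $\hat j((\widehat{T}^{\Gamma_{F}})^{\circ})$ then follow by a diagram chase comparing the two long exact sequences above, using the $r=1$ compatibilities $\langle j(u),\hat z\rangle = \langle u,\hat i(\hat z)\rangle^{-1}$ and $\langle z,\hat j(\hat t)\rangle = \langle i(z),\hat t\rangle$ stated in the excerpt, plus: (i) local Langlands duality for tori over $F$ (which holds for local fields of any characteristic), and (ii) local Tate--Nakayama duality $H^{1}(F,T)\cong \pi_{0}(\widehat{T}^{\Gamma_{F}})^{*}$, likewise valid in the function-field setting.

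The main obstacle, such as it is, is not a single hard step but the bookkeeping of verifying that every auxiliary duality statement cited from \cite[\S A.3]{KS1} (local Langlands for tori, Tate--Nakayama duality, the finiteness and freeness statements from the Lemma just before this proposition, and the openness of $f$) is genuinely characteristic-free; I would handle this by pointing, as the excerpt already does repeatedly, to \S 2.1 for the comparison isomorphisms, to the preceding subsection for the finiteness/freeness of $H^{r}(\overline{F}/F, T\xrightarrow{f}U)$ for $r=2,3$, and to standard references for local duality over local fields of positive characteristic. Once those are in place, the proof reduces to: ``apply Lemmas \ref{complexcomparison} and \ref{galoiscomplexes}, invoke the topological discussion above, and repeat the proof of \cite[Lem. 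A.3.B]{KS1} verbatim.'' The one point I would write out rather than cite is the continuity of the pairing in the topology just defined --- i.e. that a continuous character of $U(F)/f(T(F))$ extending across the finite quotient by $H^{1}(F,T)$ is automatically continuous on all of $H^{1}(\overline{F}/F, T\xrightarrow{f}U)$ --- since this depends on the specific topology chosen and is the only place the positive-characteristic analytic geometry is used in the proof of the proposition proper.
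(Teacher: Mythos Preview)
Your proposal is correct and takes essentially the same approach as the paper: the paper's entire proof is the single sentence ``Duality of \eqref{complexpairing} passes to our setting verbatim (as in \cite[Lem. A.3.B]{KS1}),'' so your reduction to \cite[Lem.~A.3.B]{KS1} via Lemmas~\ref{complexcomparison} and~\ref{galoiscomplexes} is exactly what the paper does, only spelled out in more detail. Your elaboration on the characteristic-free status of the auxiliary inputs (closedness of $f(T(F))$, local Langlands for tori, Tate--Nakayama) is accurate and matches the discussion the paper places \emph{before} the proposition rather than in its proof.
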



We set $H^{1}(W_{F}, \widehat{U} \xrightarrow{f} \widehat{T})_{\text{red}}$ to be the quotient $H^{1}(W_{F}, \widehat{U} \xrightarrow{f} \widehat{T})/\hat{j}[(\widehat{T}^{\Gamma_{F}})^{\circ}].$ Note that the group $H^{1}(W_{F}, \widehat{U} \xrightarrow{\hat{f}} \widehat{T})$ is redundant when $f$ is an isogeny, by the following result:

\begin{prop}\label{weiltogalois1} When $f$ is an isogeny, the inflation map $H^{1}(\Gamma, \widehat{U} \xrightarrow{\hat{f}} \widehat{T}) \to H^{1}(W_{F}, \widehat{U} \xrightarrow{\hat{f}} \widehat{T})$ is an isomorphism.
\end{prop}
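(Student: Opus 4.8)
The plan is to reduce the statement to the analogous comparison for the finite kernel of $\hat f$. Since $f$ is an isogeny, so is its dual $\hat f \colon \widehat{U} \to \widehat{T}$; in particular $\hat f$ is surjective on $\mathbb{C}$-points and its kernel $\widehat{Z} := \ker(\hat f)$ is a finite abelian group on which $\Gamma$ acts through a finite quotient (through $\Gamma_{E/F}$ for any finite Galois $E/F$ splitting $T$ and $U$), and on which $W_F$ acts via inflation. Thus we have a short exact sequence $1 \to \widehat{Z} \to \widehat{U} \xrightarrow{\hat f} \widehat{T} \to 1$, and placing the inclusion $\widehat{Z} \hookrightarrow \widehat{U}$ in degree $0$ gives a morphism of complexes $\widehat{Z} \to [\widehat{U} \xrightarrow{\hat f} \widehat{T}]$, hence natural homomorphisms $H^i(?, \widehat{Z}) \to H^i(?, \widehat{U} \xrightarrow{\hat f} \widehat{T})$ for $? = \Gamma$ and $? = W_F$ that are compatible with the inflation maps.

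First I would show, for $? \in \{\Gamma, W_F\}$, that $H^1(?, \widehat{Z}) \to H^1(?, \widehat{U} \xrightarrow{\hat f} \widehat{T})$ is an isomorphism by a direct cochain computation (one cannot simply quote a quasi-isomorphism argument here, since the cochain complex defining $H^\bullet(W_F, -)$ is truncated above degree $1$). A class in $H^1(?, \widehat{U} \xrightarrow{\hat f} \widehat{T})$ is represented by a pair $(z, t)$ with $z \in Z^1(?, \widehat{U})$ and $t \in \widehat{T}(\mathbb{C})$ such that $\hat f(z) = \partial t$, taken modulo the hypercoboundaries $(\partial u, \hat f(u))$ for $u \in \widehat{U}(\mathbb{C})$ (signs as in the total-complex convention already used in \eqref{complexpairing}). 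Using surjectivity of $\widehat{U}(\mathbb{C}) \to \widehat{T}(\mathbb{C})$, choose $\tilde t \in \widehat{U}(\mathbb{C})$ with $\hat f(\tilde t) = t$; subtracting the hypercoboundary attached to $\tilde t$ reduces $(z,t)$ to a pair of the form $(\zeta, 1)$ with $\zeta$ a cocycle taking values in $\ker(\hat f) = \widehat{Z}$, which gives surjectivity. For injectivity, if $\zeta \in Z^1(?, \widehat{Z})$ has image $(\zeta, 1)$ equal to a hypercoboundary $(\partial u, \hat f(u))$, then $\hat f(u) = 1$ forces $u \in \widehat{Z}(\mathbb{C})$, so $\zeta = \partial u$ is already trivial in $H^1(?, \widehat{Z})$. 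Continuity of the cochains introduced is automatic since $\tilde t$ and $u$ are constants, and both arguments are uniform in $?$.

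It then remains to prove that the inflation map $H^1(\Gamma, \widehat{Z}) \to H^1(W_F, \widehat{Z})$ is an isomorphism for the finite module $\widehat{Z}$. Since $\widehat{Z}$ is finite with its discrete topology, every continuous $1$-cocycle of $W_F$ valued in $\widehat{Z}$ factors through a finite quotient of $W_F$; the finite quotients of $W_F$ are precisely the Galois groups $\Gamma_{E/F}$ of finite Galois extensions $E/F$, with kernel the open subgroup $W_E$, so $H^1_{\mathrm{cts}}(W_F, \widehat{Z}) = \varinjlim_{E/F} H^1(\Gamma_{E/F}, \widehat{Z}^{\Gamma_E}) = H^1(\Gamma, \widehat{Z})$, and this identification is exactly the inflation map. (Alternatively one may invoke the corresponding comparison from \cite[\S A]{KS1} verbatim.) Combining the two steps with the compatibility of all maps with inflation proves the proposition. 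I expect the only mildly delicate point to be the bookkeeping around the truncation of the Weil-group cochain complex in the first step and keeping the total-complex sign conventions consistent with those of \eqref{complexpairing}; there is no substantive obstacle.
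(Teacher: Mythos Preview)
Your proof is correct and takes a genuinely different route from the paper's. The paper argues directly with the hypercohomology: it establishes an inflation--restriction sequence
\[
0 \to H^{1}(\Gamma_{K/F}, \widehat{U} \xrightarrow{\hat f} \widehat{T}) \to H^{1}(W_{K/F}, \widehat{U} \xrightarrow{\hat f} \widehat{T}) \to H^{1}(K^{*}, \widehat{U} \xrightarrow{\hat f} \widehat{T})
\]
for the relative Weil group $W_{K/F}$ (an extension of $\Gamma_{K/F}$ by $K^{*}$), identifies the last term with $\Hom_{\mathrm{cts}}(K^{*}, \ker(\hat f))$, and then uses that the norm groups $N_{L/K}(L^{*})$ shrink to the identity to kill the obstruction after enlarging $K$. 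You instead first collapse the hypercohomology to ordinary $H^{1}$ of the finite kernel $\widehat Z = \ker(\hat f)$ via a direct cochain manipulation (valid uniformly for $\Gamma$ and $W_{F}$, and handling the truncation of the Weil-group complex correctly), and then invoke the comparison $H^{1}(\Gamma, \widehat Z) \cong H^{1}_{\mathrm{cts}}(W_{F}, \widehat Z)$ for finite modules.

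Both approaches ultimately rest on the existence theorem of local class field theory: the paper's via ``norm groups shrink to the identity,'' yours via ``the continuous finite quotients of $W_{F}$ are exactly the $\Gamma_{E/F}$'' (equivalently, every open normal subgroup of finite index in $W_{F}$ is $W_{E}$ for some finite Galois $E/F$). You assert this last fact without justification; it is standard (e.g.\ Tate's Corvallis article, or as you note \cite[\S A]{KS1}), but it is worth flagging that it is not a formality and sits at the same depth as the paper's input. Your approach has the virtue of cleanly separating the purely formal reduction to finite coefficients from the arithmetic input, and it makes the role of the isogeny hypothesis (namely the finiteness of $\widehat Z$) more transparent; the paper's approach avoids naming that structural fact about $W_{F}$ explicitly.
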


\begin{proof} For any finite extension $K/F$ splitting $U$ and $T$, we have an ``inflation-restriction" sequence
$$0 \to H^{1}(\Gamma_{K/F}, \widehat{U} \xrightarrow{\hat{f}} \widehat{T}) \to H^{1}(W_{K/F}, \widehat{U} \xrightarrow{\hat{f}} \widehat{T}) \to H^{1}(K^{*}, \widehat{U} \xrightarrow{\hat{f}} \widehat{T}) ,$$ where in the last term we are viewing $K^{*}$ as a topological group. Indeed, suppose that we have a 1-hypercocycle $(\underline{u}, t) \in C^{1}(W_{F}, \widehat{U}) \oplus \widehat{T}(\mathbb{C})$ such that its restriction to $K^{*}$ is a 1-coboundary; that is, we have $x \in \widehat{U}(\mathbb{C})$ such that $(\underline{u}, t) = (dx, f(x)^{-1})$. This means that for all $z \in F^{*}$, we have $\underline{u}(z) = \prescript{z}{}x \cdot x^{-1} = 1$, so that $\underline{u}$ is trivial on $K^{*}$, and is therefore inflated from any 1-cocycle $\tilde{\underline{u}}$ of $\Gamma_{K/F}$ determined by picking a set-theoretic section $\Gamma_{K/F} \to W_{K/F}$. Since the $W_{K/F}$-action is inflated from $\Gamma_{K/F}$, the element $(\tilde{\underline{u}}, t)$ is a 1-hypercocycle of $\Gamma_{K/F}$ mapping to $(\underline{u}, t)$, as desired. 

For $K/F$ as above, fix $x \in H^{1}(W_{K/F}, \widehat{U} \xrightarrow{\hat{f}} \widehat{T})$; to show that, for large enough $L/F$ containing $K$, it lies in the image of the inflation map, it's enough to show that for large enough $L$ its image in $H^{1}(L^{*},  \widehat{U} \xrightarrow{\hat{f}} \widehat{T}) = \Hom_{\text{cts}}(L^{*}, \text{ker}(\hat{f}))$ is zero. This follows from the fact that any continuous homomorphism $\chi \colon K^{*} \to \text{ker}(\hat{f})$ has finite-index open kernel and the norm groups $N_{L/K}(L^{*})$ shrink to the identity as $L/K$ varies over all finite Galois extensions of $F$ containing $K$.
\end{proof}

\subsection{Complexes of tori over global function fields---basic results} The last two subsections extend \cite[Appendix C]{KS1} to a global function field $F$. We fix a complex of $F$-tori $T \xrightarrow{f} U$, let $\A^{\text{sep}} := F^{\text{sep}} \otimes_{F} \A$, and define $\bar{H}^{i}(\overline{\A}/\A, T \xrightarrow{f} U)$ as the hypercohomology of the double complex 
\[ 
\begin{tikzcd}
\frac{T(\overline{\A})}{T(\overline{F})} \arrow{r} \arrow{d} & \frac{T(\overline{\A} \otimes_{\A} \overline{\A})}{T(\overline{F} \otimes_{F} \overline{F})} \arrow{r} \arrow{d} & \frac{T(\overline{\A} \otimes_{\A} \overline{\A} \otimes_{\A} \overline{\A})}{T(\overline{F} \otimes_{F} \overline{F} \otimes_{F} \overline{F})} \arrow{r} \arrow{d} & \dots \\
\frac{U(\overline{\A})}{U(\overline{F})} \arrow{r} & \frac{U(\overline{\A} \otimes_{\A} \overline{\A})}{U(\overline{F} \otimes_{F} \overline{F})} \arrow{r} & \frac{U(\overline{\A} \otimes_{\A} \overline{\A} \otimes_{\A} \overline{\A})}{U(\overline{F} \otimes_{F} \overline{F} \otimes_{F} \overline{F})} \arrow{r}  & \dots,
\end{tikzcd}
\]
giving us a long exact sequence 
\begin{equation}\label{globalLES}
 \dots \to H^{i}(\overline{F}/F, T \xrightarrow{f} U) \to H^{i}(\overline{\A}/\A, T \xrightarrow{f} U) \to \bar{H}^{i}(\overline{\A}/\A, T \xrightarrow{f} U) \to H^{i+1}(\overline{F}/F, T \xrightarrow{f} U) \to \dots 
 \end{equation}
 
Let $S$ be a finite set of places of $F$ containing all places at which $T$ and $U$ are ramified. For every place $v$ of $F$, we fix an algebraic closure $\overline{F_{v}}$ as well as an embedding $\overline{F} \hookrightarrow \overline{F_{v}}$. The following two results let us work in the group-cohomological setting:

\begin{lem}\label{globalcomplexcomparison}  For all $i \geq 0$, the natural map $H^{i}(\A^{\text{sep}}/\A, T \xrightarrow{f} U) \to H^{i}(\overline{\A}/\A, T \xrightarrow{f} U)$ is an isomorphism, and the same is true with $\A$ replaced by $F$.
\end{lem}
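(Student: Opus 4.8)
The statement to prove is Lemma \ref{globalcomplexcomparison}: that for all $i\geq 0$ the natural map $H^{i}(\A^{\text{sep}}/\A, T \xrightarrow{f} U) \to H^{i}(\overline{\A}/\A, T \xrightarrow{f} U)$ is an isomorphism, and likewise with $\A$ replaced by $F$.

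The plan is to reduce everything to the corresponding statement for a single torus, which is already known: for a torus $S$ (hence a smooth group scheme), the comparison maps $\check H^{i}(F^{\text{sep}}/F, S) \to \check H^{i}(\overline{F}/F, S)$ and their adelic analogues $\check H^{i}(\A^{\text{sep}}/\A, S) \to \check H^{i}(\overline{\A}/\A, S)$ are isomorphisms — the $F$-case is exactly the fact invoked just before Lemma \ref{complexcomparison}, and the adelic case follows from the restricted-product decompositions of \S 2.2 (Corollary \ref{mainappendixBcor}, Proposition \ref{cechrestrictedproduct}) together with Lemma \ref{vadelicvanishing1}, since $\overline{\A}_{v} = \varinjlim_{K/F}\A_{K,v}$ and $\A^{\text{sep}}_{v} = \varinjlim_{K/F, K\subset F^{\text{sep}}}\A_{K,v}$, and for the smooth group $S$ passing to the reduced subring does not change the value group (cf. the use of \cite[Lem. 2.2.9]{Rosengarten} elsewhere). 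So I would first record this one-torus statement as the input.

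Next I would invoke the long exact sequence \eqref{LES1} (equivalently the spectral sequence of the double complex \eqref{doublecomplex}) for both covers $\A^{\text{sep}}/\A$ and $\overline{\A}/\A$, fitting $H^{i}(-, T\xrightarrow{f}U)$ into a six-term ladder
\[
\begin{tikzcd}[column sep=small]
\check H^{i-1}(\A^{\text{sep}}/\A, T) \arrow{d} \arrow{r} & \check H^{i-1}(\A^{\text{sep}}/\A, U) \arrow{d} \arrow{r} & H^{i}(\A^{\text{sep}}/\A, T\xrightarrow{f}U) \arrow{d} \arrow{r} & \check H^{i}(\A^{\text{sep}}/\A, T) \arrow{d} \arrow{r} & \check H^{i}(\A^{\text{sep}}/\A, U) \arrow{d} \\
\check H^{i-1}(\overline{\A}/\A, T) \arrow{r} & \check H^{i-1}(\overline{\A}/\A, U) \arrow{r} & H^{i}(\overline{\A}/\A, T\xrightarrow{f}U) \arrow{r} & \check H^{i}(\overline{\A}/\A, T) \arrow{r} & \check H^{i}(\overline{\A}/\A, U)
\end{tikzcd}
\]
(for $i=0$ one truncates the ladder on the left, using $H^{0} = \ker(f)(\A)$ on both sides, which agree trivially). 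All four outer vertical maps are isomorphisms by the one-torus input, so the five lemma gives that the middle map is an isomorphism. The $F$-case is identical, using the ladder already displayed in the proof of Lemma \ref{complexcomparison} but now including $i=0$, where the map is again an equality of $\ker(f)(F)$ with itself.

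The only genuinely non-formal point is the adelic one-torus comparison $\check H^{i}(\A^{\text{sep}}/\A, S)\xrightarrow{\sim}\check H^{i}(\overline{\A}/\A, S)$, which is where I expect the main work — though even this is largely bookkeeping with the \v Cech complexes via the restricted-product identifications of \S 2.2: one checks that, place by place, $\check H^{i}(\overline{F_{v}}/F_{v},S) = \check H^{i}(\A^{\text{sep}}_{v}/F_{v},S)$ (this is essentially Corollary \ref{shapiro1} combined with the fact that $\A^{\text{sep}}_{v}/F_{v}$ is fppf and computes $H^{i}_{\text{fppf}}(F_{v},S)$ just as $\overline{\A}_{v}/F_{v}$ does, via Lemma \ref{vadelicvanishing1}), and then assembles over all $v$ using Proposition \ref{cechrestrictedproduct} and the fact that for a smooth $S$ the integral-point subgroups at almost all $v$ are the same whether one uses $F^{\text{sep}}$ or $\overline{F}$. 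I would state this adelic comparison as a short preliminary claim at the start of the proof and then deduce the lemma from the five-lemma ladders above.
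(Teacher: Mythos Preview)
Your proposal is correct and follows essentially the same approach as the paper: reduce to the single-torus comparison via the long exact sequence \eqref{LES1} and the five lemma, exactly as in Lemma \ref{complexcomparison}. The only minor difference is in how you justify the adelic single-torus comparison $\check H^{i}(\A^{\text{sep}}/\A,S)\xrightarrow{\sim}\check H^{i}(\overline{\A}/\A,S)$: the paper argues more directly that both covers compute $H^{i}_{\text{fppf}}(\A,S)$ by showing $H^{j}((\A^{\text{sep}})^{\otimes_{\A}n},S)=0$ (via Lemma \ref{vadelicvanishing1} and \S 2.2), whereas you go place-by-place through restricted products---either route works.
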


\begin{proof} Combining the proof of Lemma \ref{vadelicvanishing1} with our results on adelic tensor products in \S 2.2 shows that $H^{j}((\A^{\text{sep}})^{\bigotimes_{\A}n}, M)$ vanishes for any $F$-torus $M$, $j,n \geq 1$, and so the natural map $\check{H}^{i}(\A^{\text{sep}}/\A, M) \to H^{i}(\A, M)$ is an isomorphism. Since this is also true with $\A^{\text{sep}}$ replaced by $\overline{\A}$, the same argument in the proof of Lemma \ref{complexcomparison} gives the result. The proof for $F$ is the same.
\end{proof}

\begin{cor}\label{globalcomplexcomparison2}\label{adelicgroupcohomology}\label{adelicgroupcohomology2}  For all $i \geq 0$:
\begin{enumerate}
\item{The natural map $\bar{H}^{i}(\A^{\text{sep}}/\A, T \xrightarrow{f} U) \to \bar{H}^{i}(\overline{\A}/\A, T \xrightarrow{f} U)$ is an isomorphism.}
\item{We have a canonical isomorphism $$H^{i}(\A^{\text{sep}}/\A, T \xrightarrow{f} U) \to H^{i}(\Gamma_{F}, T(\A^{\text{sep}}) \xrightarrow{f} U(\A^{\text{sep}})).$$}
\item{We have a canonical isomorphism $$\bar{H}^{i}(\A^{\text{sep}}/\A, T \xrightarrow{f} U) \to \bar{H}^{i}(\Gamma_{F}, T(\A^{\text{sep}})/T(F^{\text{sep}}) \xrightarrow{f} U(\A^{\text{sep}})/U(F^{\text{sep}})).$$}
\end{enumerate}
\end{cor}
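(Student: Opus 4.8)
The plan is to deduce all three statements from the corresponding single-torus comparisons together with five-lemma arguments applied to the hypercohomology long exact sequences. For (1) I would first record the analogue of \eqref{globalLES} for the covers $F^{\text{sep}}/F$ and $\A^{\text{sep}}/\A$: since $(\A^{\text{sep}})^{\bigotimes_{\A}n} = (F^{\text{sep}})^{\bigotimes_{F}n}\otimes_{F}\A$, the $F^{\text{sep}}$-points of $T$ and $U$ inject into their $\A^{\text{sep}}$-points, so the double complex defining $\bar{H}^{i}(\A^{\text{sep}}/\A, T\xrightarrow{f}U)$ is the quotient of the double complex computing $H^{i}(\A^{\text{sep}}/\A, T\xrightarrow{f}U)$ by the sub-double-complex computing $H^{i}(F^{\text{sep}}/F, T\xrightarrow{f}U)$, giving a long exact sequence that maps termwise to \eqref{globalLES} via the natural maps induced by $F^{\text{sep}}\hookrightarrow\overline{F}$ and $\A^{\text{sep}}\hookrightarrow\overline{\A}$. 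By Lemma \ref{globalcomplexcomparison} the maps on the $H^{i}(F^{\text{sep}}/F,-)$- and $H^{i}(\A^{\text{sep}}/\A,-)$-terms are isomorphisms, so the five-lemma yields (1).

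For (2) I would show that for any $F$-torus $M$ the \v{C}ech complex of $M$ with respect to $\A^{\text{sep}}/\A$ computes $H^{\bullet}(\Gamma_{F}, M(\A^{\text{sep}}))$: for a finite Galois extension $K/F$ one has, combining the \'{e}tale splitting of Lemma \ref{etalesplitting} with the adelic tensor-product analysis of \S2.2, a canonical identification $\A_{K}^{\bigotimes_{\A}n} = K^{\bigotimes_{F}n}\otimes_{F}\A \cong \prod_{\Gamma_{K/F}^{n-1}}\A_{K}$ under which the \v{C}ech differentials become the usual inhomogeneous group-cochain differentials; passing to the colimit over $K$ gives $\check{H}^{i}(\A^{\text{sep}}/\A, M)\cong H^{i}(\Gamma_{F}, M(\A^{\text{sep}}))$, exactly as in Corollary \ref{cechtogalois}. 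Applying this to $T$ and to $U$ and noting that these identifications are compatible with the maps in \eqref{LES1} and in the hypercohomology long exact sequence of the complex of $\Gamma_{F}$-modules $T(\A^{\text{sep}})\xrightarrow{f}U(\A^{\text{sep}})$, a further five-lemma argument produces the isomorphism of (2). For (3) I would repeat this verbatim for the quotient presheaves: the same tensor-product decomposition applies compatibly to numerator and denominator, so $\bar{H}^{i}(\A^{\text{sep}}/\A, M)\cong \bar{H}^{i}(\Gamma_{F}, M(\A^{\text{sep}})/M(F^{\text{sep}}))$ for a single torus, and then the five-lemma applied to the long exact sequence constructed in (1) and its group-cohomology counterpart gives the result.

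The main obstacle is not any single difficult step but the bookkeeping: one must carefully set up, for each of the covers $F^{\text{sep}}/F$, $\A^{\text{sep}}/\A$, $\overline{\A}/\A$, the three flavours of total complex (for $T\xrightarrow{f}U$, for the quotient, and for a single torus), verify that all comparison maps commute with the \v{C}ech differentials, and check that the single-torus comparison isomorphisms are natural enough to slot into the five-lemma diagrams. The one genuinely substantive input is the identification $\A_{K}^{\bigotimes_{\A}n}\cong\prod_{\Gamma_{K/F}^{n-1}}\A_{K}$, which is already established in \S2.2, so I expect the write-up to be short once the diagrams are laid out.
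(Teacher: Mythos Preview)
Your proposal is correct and follows the same approach as the paper: for (1) you use the long exact sequence \eqref{globalLES} together with Lemma \ref{globalcomplexcomparison} and the five-lemma, exactly as the paper does, and for (2) and (3) you unpack what the paper compresses into ``follow immediately'' by invoking the adelic analogue of Corollary \ref{cechtogalois} (via $\A_{K}^{\bigotimes_{\A}n}\cong\prod_{\Gamma_{K/F}^{n-1}}\A_{K}$) and a further five-lemma. The only minor remark is that Lemma \ref{etalesplitting} is stated for $O_{K,S}/O_{F,S}$; the field-level splitting $K^{\bigotimes_{F}n}\cong\prod_{\Gamma_{K/F}^{n-1}}K$ you actually need is the easier classical fact, so the citation is slightly imprecise but the argument is unaffected.
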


\begin{proof} The first statement is an immediate consequence of combining Lemma \ref{globalcomplexcomparison} with the long exact sequence \eqref{globalLES} and the five-lemma. The second two statements follow immediately.
\end{proof}

We now give an analogue of \cite[Lem. C.1.A]{KS1}, which we need to in order to work with restricted products. Note that the complex $T \xrightarrow{f} U$ is defined over $O_{F,S}$. Let $O_{v}$ denote the completion of $O_{F}$ at $v$, and $O_{v}^{\text{nr}}$ the ring of integers of the maximal unramified extension $F_{v}^{\text{nr}}/F_{v}$.

\begin{lem}\label{subgroups} For any place $v \notin S$, the group $H^{i}(O_{v}^{\text{nr}}/O_{v}, T \xrightarrow{f} U)$ is equal to the kernel of $T(O_{v}) \xrightarrow{f} U(O_{v})$ if $i=0$, to the cokernel of the same map if $i=1$, and is trivial if $i \geq 2$. Moreover, for all $i$ we have natural injections $$H^{i}(O_{v}^{\text{nr}}/O_{v}, T \xrightarrow{f} U) \hookrightarrow H^{i}(\overline{F_{v}}/F_{v}, T \xrightarrow{f} U).$$\end{lem}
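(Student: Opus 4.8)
The plan is to reduce the statement to an explicit cohomological computation over the unramified cover, mirroring the local arguments already carried out in the appendix. First I would unwind the hypercohomology $H^{i}(O_{v}^{\text{nr}}/O_{v}, T \xrightarrow{f} U)$ using the two long exact sequences \eqref{LES1} and \eqref{LES2} attached to the double complex \eqref{doublecomplex} for the cover $O_{v}^{\text{nr}}/O_{v}$. The key input is the vanishing of the ordinary \v{C}ech cohomology groups $\check{H}^{j}((O_{v}^{\text{nr}})^{\bigotimes_{O_{v}}n}, \mathbb{G}_{m})$ for $j,n \geq 1$: since $T$ and $U$ split over $O_{v}^{\text{nr}}$ (as $v \notin S$), and $O_{v}^{\text{nr}}$ is Henselian with separably (indeed algebraically, after taking the perfection) closed residue field, one has $\check{H}^{j}((O_{v}^{\text{nr}})^{\bigotimes_{O_{v}}n}, T) = 0 = \check{H}^{j}((O_{v}^{\text{nr}})^{\bigotimes_{O_{v}}n}, U)$ for all $j,n \geq 1$ by the argument of Lemma \ref{vanishing1}/Lemma \ref{vanishing2} applied over the complete local base (using that $\mathbb{G}_{m}$ has trivial higher cohomology over a strictly Henselian local ring, and that $(O_{v}^{\text{nr}})^{\bigotimes_{O_{v}}n}$ decomposes as in Lemma \ref{etalesplitting} into a product of copies of $O_{v}^{\text{nr}}$, each of which has trivial Picard and Brauer group). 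Feeding this vanishing into \eqref{LES1}, the terms $\check{H}^{i}(O_{v}^{\text{nr}}/O_{v}, T)$ and $\check{H}^{i}(O_{v}^{\text{nr}}/O_{v}, U)$ die for $i \geq 1$, so $H^{i}(O_{v}^{\text{nr}}/O_{v}, T \xrightarrow{f} U)$ is squeezed: for $i = 0$ it is $\ker(T(O_{v}) \to U(O_{v}))$ by fpqc descent (as already noted for $H^{0}$ in general), for $i = 1$ the sequence gives $0 \to \operatorname{cok}(T(O_{v}) \to U(O_{v})) \to H^{1} \to \check{H}^{1}(O_{v}^{\text{nr}}/O_{v}, T) = 0$, and for $i \geq 2$ both neighboring terms vanish, forcing $H^{i} = 0$.

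Next I would establish the injections $H^{i}(O_{v}^{\text{nr}}/O_{v}, T \xrightarrow{f} U) \hookrightarrow H^{i}(\overline{F_{v}}/F_{v}, T \xrightarrow{f} U)$. The natural map is induced by the ring homomorphism $O_{v}^{\text{nr}} \to \overline{F_{v}}$ (equivalently, base change along $O_{v} \to F_{v}$ followed by the inclusion into the algebraic closure). For $i = 0$ injectivity is immediate since $\ker f(O_{v}) \hookrightarrow \ker f(F_{v})$ is injective (as $O_{v} \hookrightarrow F_{v}$). For $i = 1$ one compares the two defining long exact sequences \eqref{LES1}: the diagram
\[
\begin{tikzcd}[column sep=small]
\operatorname{cok}(T(O_{v}) \to U(O_{v})) \arrow[r,hook] \arrow[d] & H^{1}(O_{v}^{\text{nr}}/O_{v}, T \xrightarrow{f} U) \arrow[r] \arrow[d] & 0 \\
U(F_{v})/f(T(F_{v})) \arrow[r] & H^{1}(\overline{F_{v}}/F_{v}, T \xrightarrow{f} U) \arrow[r] & H^{1}(F_{v}, T)
\end{tikzcd}
\]
reduces injectivity of the middle vertical map to injectivity of $U(O_{v})/f(T(O_{v})) \to U(F_{v})/f(T(F_{v}))$, which follows because $T(O_{v}) = T(F_{v}) \cap T(O_{v}^{\text{nr}})$-type arguments—more precisely, an element of $U(O_{v})$ lying in $f(T(F_{v}))$ has a $T$-preimage whose image in $T(O_{v}^{\text{nr}})$ is actually in $T(O_{v})$ because $T$ is smooth (indeed a torus) over $O_{v}$ and the point is $O_{v}^{\text{nr}}$-integral and $F_{v}$-rational, hence $O_{v}$-integral by normality of $O_{v}$ in $F_{v}$ together with properness considerations, or simply because $T(O_v) = T(F_v) \cap \prod_{w|v} T(O_{\overline{F_v}})$ for the smooth torus $T$. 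For $i \geq 2$ the source is zero so there is nothing to prove.

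The main obstacle I anticipate is the $i=1$ injectivity, specifically the claim that $U(O_{v})/f(T(O_{v})) \to U(F_{v})/f(T(F_{v}))$ is injective; this is where positive characteristic and possible non-smoothness of $\ker f$ require care. I would handle it by the same devissage used earlier in \S A.2 of the excerpt: factor $f$ as $T \to U' \to U$ with $T \to U'$ having torus kernel (hence smooth) and $U' \to U$ an isogeny, reduce to these two cases, and in each case use that $T(O_v)$ (resp. $U'(O_v)$) is precisely the intersection of the $F_v$-points with the integral points over $O_v^{\mathrm{nr}}$, which holds by smoothness and the valuative criterion for the torus factors and by finiteness (properness) for the isogeny factor. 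Once that lemma is in place, everything else is a formal diagram chase, so the write-up should be short: first record the vanishing of $\check{H}^{\geq 1}$ of $\mathbb{G}_m$ over tensor powers of $O_v^{\mathrm{nr}}$, then run \eqref{LES1} to get the three-case description, then run the comparison diagram to get the injections.
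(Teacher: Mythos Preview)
Your approach is essentially the same as the paper's: use \eqref{LES1} together with the vanishing of $\check{H}^{i}(O_{v}^{\text{nr}}/O_{v}, M)$ for $i\geq 1$ and $M$ a split torus over $O_v^{\text{nr}}$ to get the three-case description, then for the $i=1$ injection reduce to showing $U(O_v)\cap f(T(F_v))=f(T(O_v))$ via the d\'evissage $T\xrightarrow{f'}U'\xrightarrow{f''}U$ with $\ker f'$ a torus and $f''$ an isogeny. The paper handles the isogeny step exactly as you do (properness of $f''$ forces the $T(F_v)$-preimage of the compact $U(O_v)$ into the maximal compact $U'(O_v)$).

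One imprecision to fix in your write-up: your justification for the torus-kernel step is not the right mechanism. The statement ``$T(O_v)=T(F_v)\cap T(O_v^{\text{nr}})$'' is true but does not by itself show that a given $t\in T(F_v)$ with $f'(t)\in U'(O_v)$ can be moved into $T(O_v)$, and the valuative criterion is not relevant here since $T$ is not proper. The clean argument (which is what \cite[Lem.~C.1.A]{KS1} does and what the paper is citing) is that $f'\colon T\to U'$ is smooth and surjective because $\ker f'$ is a torus, hence over the Henselian local ring $O_v$ the induced map $T(O_v)\to U'(O_v)$ is already surjective by formal smoothness; so any $u\in U'(O_v)$ has an $O_v$-integral preimage outright, with no need to adjust a given $F_v$-preimage. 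Once you state it this way the two steps combine as you outline.
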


\begin{proof} To prove the first statement, using the long exact sequence \eqref{LES1}, it's enough to show that $\check{H}^{i}(O_{v}^{\text{nr}}/O_{v}, M) = 0$ for any $F$-torus $M$ which is unramified at $v$ for $i \geq 1$ (applying this result to $T$ and $U$). We first claim that these groups may be identified with $H^{i}(O_{v}, M)$ under the natural \v{C}ech-to-fppf comparison map. As usual, it's enough to show that the fppf cohomology groups $H^{j}((O_{v}^{\text{nr}})^{\bigotimes_{O_{v}}n}, M)$ vanish for all $j, n \geq 1$ . Since $O_{v}$ is the ring of integers in a nonarchimedean local field, for a fixed finite unramified extension $E_{w}/F_{v}$, we have the chain of identifications $$O_{w} \otimes_{O_{v}} O_{w} \xrightarrow{\sim} O_{w} \otimes_{O_{v}} O_{v}[\varpi] \xrightarrow{\sim} O_{w} \otimes_{O_{v}} O_{v}[x]/(f) \xrightarrow{\sim} \prod_{\Gamma_{E_{w}/F_{v}}} O_{w},$$ where $\varpi \in O_{w}$ and $f \in O_{v}[x]$. As in \S 2.1 it's enough to prove the $n=1$ case; i.e., showing that the groups $H^{i}(O_{v}^{\text{nr}}, M)$ vanish for all $i \geq 1$. This follows immediately from the fact that they are the direct limit of the groups $H^{i}(O_{E_{w}}, M)$, where $E_{w}$ is as above, which all vanish by \cite[Corollary 2.9]{Cesnavicius}, using that $O_{E_{w}}$ is a Henselian local ring with finite residue field $k_{w}$, and $M_{k_{w}}$ is connected, being a $k_{w}$-torus. With the claim in hand, the result is immediate from the same Corollary, since $O_{v}$ is a Henselian local ring with finite residue field $k_{v}$ such that $M_{k_{v}}$ is connected. 

We now move on to the second statement. Using the first statement, we only need to show this for $i=1$. As in the proof of \cite[Lem. C.1.A]{KS1}, it's enough to show that any element $u \in U(O_{v}) \cap f(T(F_{v}))$ lies in $f(T(O_{v}))$. To this end, we may assume that $f$ is surjective, and we may again factor $f$ as the composition $T \xrightarrow{f'} U' \xrightarrow{f''} U$, where $f'$ has a torus as its kernel and $f''$ is an isogeny. The argument of the proof of \cite[Lem. C.1.A ]{KS1} proves the result for $f'$, so that $U'(O_{v}) \cap f'(T(F_{v})) = f'(T(O_{v}))$. 

Note that $f''$ is proper as a morphism of $F_{v}$-schemes, so the map $U'(F_{v}) \to U(F_{v})$ is proper as a morphism of topological spaces; this implies that the preimage of the compact subgroup $U(O_{v})$ under $f''$ is a compact subgroup of $U'(F_{v})$, and so lies in $U'(O_{v})$, the maximal compact subgroup. Thus, if $t \in T(F_{v})$ is such that $f(t) \in U(O_{v})$, then $f'(t) \in U'(O_{v})$, so that $f'(t) = f'(x)$ for some $x \in T(O_{v})$, and now $f(t) = f(x)$, as desired.
\end{proof}

We now give a restricted product structure to the groups $H^{i}(\overline{\A}/\A, T \xrightarrow{f} U)$:

\begin{prop}\label{restrictedproduct} We have a canonical isomorphism $$H^{i}(\overline{\A}/\A, T \xrightarrow{f} U) \xrightarrow{\sim} \prod_{v \in V_{F}}' H^{i}(\overline{F_{v}}/F_{v},  T \xrightarrow{f} U),$$ where the product is restricted with respect to the subgroups $H^{i}(O_{v}^{\text{nr}}/O_{v}, T \xrightarrow{f} U)$ for $v \notin S$ (which are indeed subgroups by Lemma \ref{subgroups}). When $i \geq 2$, this restricted product is a direct sum.
\end{prop}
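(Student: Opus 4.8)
The plan is to reduce the hypercohomology statement to the already-established cases: the restricted-product decomposition of the \v{C}ech cohomology of a single torus (Corollary \ref{mainappendixBcor}, together with Proposition \ref{cechrestrictedproduct}) and the analogous local decomposition at each place. First I would fix the cover $\overline{\A}/\A$ and recall from Corollary \ref{globalcomplexcomparison2} and the long exact sequence \eqref{globalLES} that the hypercohomology $H^{i}(\overline{\A}/\A, T \xrightarrow{f} U)$ is computed by the total complex of the double complex \eqref{doublecomplex} with $S = \overline{\A}$, $R = \A$. The key input is that each row of that double complex decomposes as a restricted product over the places $v$: by Corollary \ref{mainappendixBcor}(2) (applied to $T$ and to $U$, each with an $O_{F,S}$-model) we have $T(\overline{\A}^{\bigotimes_{\A}n}) = \varinjlim_{K/F}\prod_{v}' T(\A_{K,v}^{\bigotimes_{F_{v}}n})$ restricted with respect to the integral subgroups, and similarly for $U$. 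Hence the total complex $L^{\bullet}(\overline{\A}/\A, T \to U)$ is itself a restricted product (in each degree) of the local total complexes $L^{\bullet}(\overline{F_{v}}/F_{v}, T \to U)$, with restriction governed, for $v \notin S$, by the integral complexes whose cohomology is described in Lemma \ref{subgroups}.

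The main step, then, is to commute cohomology past this restricted product. I would argue as follows: a restricted product of cochain complexes is a filtered colimit over finite sets $\Sigma \supseteq S$ of the products $\prod_{v \in \Sigma} L^{\bullet}(\overline{F_{v}}/F_{v}, T\to U) \times \prod_{v \notin \Sigma} L^{\bullet}(O_{v}^{\text{nr}}/O_{v}, T \to U)$. Cohomology commutes with finite products trivially, and with filtered colimits of complexes exactly; so $H^{i}(L^{\bullet}(\overline{\A}/\A, T\to U)) = \varinjlim_{\Sigma}\bigl(\prod_{v\in\Sigma} H^{i}(\overline{F_{v}}/F_{v},T\to U) \times \prod_{v\notin\Sigma} H^{i}(O_{v}^{\text{nr}}/O_{v}, T\to U)\bigr)$, which is precisely the restricted product $\prod_{v}' H^{i}(\overline{F_{v}}/F_{v}, T \to U)$ with respect to the subgroups $H^{i}(O_{v}^{\text{nr}}/O_{v}, T\to U)$; the latter are genuine subgroups of the local hypercohomology by the injectivity statement in Lemma \ref{subgroups}. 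One subtlety to check carefully is that the differentials in $L^{\bullet}(\overline{\A}/\A, T\to U)$ really do respect the restricted-product structure, i.e.\ that they send the integral part to the integral part and are componentwise the local differentials; this is the content of Proposition \ref{cechrestrictedproduct} applied separately to the $T$-rows and the $U$-rows, together with the fact that the map $f$ is defined over $O_{F,S}$ so commutes with all the relevant projections and inclusions. The canonicity of the isomorphism follows since every identification used (Corollary \ref{mainappendixBcor}, Proposition \ref{cechrestrictedproduct}, Lemma \ref{subgroups}) is canonical.

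Finally, for the last sentence (that the restricted product is a direct sum when $i \geq 2$): by Lemma \ref{subgroups}, the restricting subgroups $H^{i}(O_{v}^{\text{nr}}/O_{v}, T \xrightarrow{f} U)$ vanish for $i \geq 2$ and all $v \notin S$, so in the colimit description above the integral factors are trivial; hence for $i \geq 2$ the restricted product is $\varinjlim_{\Sigma}\prod_{v \in \Sigma} H^{i}(\overline{F_{v}}/F_{v}, T \to U) = \bigoplus_{v} H^{i}(\overline{F_{v}}/F_{v}, T \to U)$.

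I expect the main obstacle to be purely bookkeeping rather than conceptual: namely verifying compatibility of the three local/global \v{C}ech identifications with the total-complex differentials (that passing to restricted products is compatible with the maps $p_i$ defining both the \v{C}ech differential and the map induced by $f$), so that the double complex over $\A$ is literally the restricted product of the double complexes over the $F_v$. Once that is in place, the exactness of filtered colimits and the vanishing in Lemma \ref{subgroups} for $i \geq 2$ finish both assertions immediately.
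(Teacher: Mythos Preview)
Your overall strategy---decompose the adelic cochain complex as a restricted product and then commute cohomology with the filtered colimit expressing that restricted product---is sound and is essentially the content behind the paper's argument as well. However, there is a genuine gap in the identification of the local factors. Corollary \ref{mainappendixBcor} gives $T(\overline{\A}^{\bigotimes_{\A}n}) = \varinjlim_{K/F}\prod_{v}' T(\A_{K,v}^{\bigotimes_{F_{v}}n})$, where $\A_{K,v} = K \otimes_{F} F_{v}$; taking the colimit over $K$, the local complex at $v$ you obtain is $L^{\bullet}(\overline{\A}_{v}/F_{v}, T \to U)$, \emph{not} $L^{\bullet}(\overline{F_{v}}/F_{v}, T \to U)$. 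These are different covers of $F_{v}$, and the passage between their hypercohomologies requires the Shapiro comparison of Corollary \ref{shapiro1} (extended to hypercohomology via the five-lemma on \eqref{LES1}). So your assertion that the total complex over $\overline{\A}/\A$ is literally a restricted product of the complexes over $\overline{F_{v}}/F_{v}$ is false at the cochain level; it is only true after taking cohomology and invoking Shapiro.

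There is a second, related omission: the colimit is really a double filtered colimit over pairs $(K,\Sigma)$, and to commute it past the infinite product $\prod_{v \notin \Sigma}$ you need to know that the integral cohomology stabilizes---i.e., that $H^{i}(O_{K,v}/O_{F_{v}}, T \to U) \xrightarrow{\sim} H^{i}(O_{v}^{\text{nr}}/O_{v}, T \to U)$ for all $v$ outside a finite set depending on $K$. This is true (the proof of Lemma \ref{subgroups} applies with $O_{v}^{\text{nr}}$ replaced by $O_{w}$ for any $w \mid v$ in $K$ unramified over $v$), but it must be stated and used; without it your ``single colimit over $\Sigma$'' picture does not literally apply. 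The paper handles both issues at once by first replacing $\overline{\A}$ with $\A^{\text{sep}}$ via Lemma \ref{globalcomplexcomparison}, passing to group cohomology (Corollary \ref{adelicgroupcohomology}), noting this stability of integral cohomology, and then citing the group-cohomological argument of \cite[Lem.~C.1.B]{KS1}. Your direct \v{C}ech approach can be made to work with these two additions, but as written it skips the step that actually connects the adelic cochains to the covers $\overline{F_{v}}/F_{v}$.
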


\begin{proof} The first step is to use Lemma \ref{globalcomplexcomparison} to replace $H^{i}(\overline{\A}/F, T \xrightarrow{f} U)$ by $H^{i}(\A^{\text{sep}}/\A, T \xrightarrow{f} U)$, and Lemma \ref{complexcomparison} to replace $H^{i}(\overline{F_{v}}/F_{v}, T \xrightarrow{f} U)$ by $H^{i}(F_{v}^{\text{sep}}/F_{v}, T \xrightarrow{f} U)$. Consider a finite Galois extension $K/F$, and let $S_{(K)}$ denote a large finite set of places containing $S$ such that $K$ is unramified outside $S_{(K)}$. For any place $w \in V_{K}$ lying over $v \notin S_{(K)}$, the natural map $H^{i}(O_{w}/O_{v}, T \xrightarrow{f} U) \to H^{i}(O_{v}^{\text{nr}}/O_{v}, T \xrightarrow{f} U)$ is an isomorphism (replace $O_{v}^{\text{nr}}$ by $O_{w}$ in the proof of Lemma \ref{subgroups}). From here, we may work with group cohomology and use the identical argument of \cite[Lem. C.1.B]{KS1} to deduce the result.
\end{proof}

Continuing to follow \cite[\S C]{KS1}, we topologize our adelic cohomology groups (the versions over $F^{\text{sep}}$). We give $H^{i}(F^{\text{sep}}/F, T \xrightarrow{f} U)$ the discrete topology for all $i$, we give $H^{0}(\A^{\text{sep}}/\A, T \xrightarrow{f} U)$ the topology it inherits as a closed subgroup of $T(\A)$, and $H^{1}(\A^{\text{sep}}/\A, T \xrightarrow{f} U)$ the topology making $U(\A)/f[T(\A)] \to H^{1}(\A^{\text{sep}}/\A, T \xrightarrow{f} U)$ an open immersion; note that $f[T(\A)]$ is closed in $U(\A)$, since $f(T(F_{v})) \cap U(O_{v}) = f(T(O_{v}))$ for $v \notin S$ and $\prod_{v \notin S} f(T(O_{v}))$ is compact, and $f(T(F_{v}))$ is closed in $U(F_{v})$ for $v \in S$ (by an argument that we made earlier in this subsection). In the above discussion, we are using \cite[Thm. 2.20]{Cesnavicius} to decompose $T(\A)$ and $U(\A)$ as restricted products. We give the groups $H^{i}(\A^{\text{sep}}/\A, T \xrightarrow{f} U)$ the discrete topology for $i \geq 2$. 

We now turn to topologizing the groups $\bar{H}^{i}(\A^{\text{sep}}/\A, T \xrightarrow{f} U)$, which is more involved. For any $F$-torus $S$, the group $S(\A^{\text{sep}})$ carries a natural topology given by the direct limit topology of the topological groups $S(\A_{K})$, where $K/F$ ranges over all finite Galois extensions, and this topology coincides with the one induced by giving $\A^{\text{sep}}$ the structure of a topological ring via the direct limit topology. Note that the ring $\A^{\text{sep}}$ is Hausdorff; to see, this, note that each $\A_{K}$ is a metrizable topological space (by \cite[Prop. 1.1]{Kelly}), and is thus normal; now the direct limit of normal spaces with transition maps that are closed immersions (as is the case with $\A_{K} \to \A_{L}$) is a normal topological space, and hence a fortiori Hausdorff.

It follows that $S(\A^{\text{sep}})$ is Hausdorff (by \cite[Prop. 2.1]{Conrad2}). Since $S(K)$ is closed in $S(\A_{K})$ for all $K$, it follows that $S(F^{\text{sep}})$ is a closed subgroup of $S(\A^{\text{sep}})$ (using that $S(F^{\text{sep}}) \cap S(\A_{K}) = S(K)$), so the topological group $S(\A^{\text{sep}})/S(F^{\text{sep}})$ makes sense. Moreover, the subgroup $[S(\A^{\text{sep}})/S(F^{\text{sep}})]^{\Gamma}$ is closed, since it's the intersection over all $\sigma \in \Gamma$ of the subsets $[S(\A^{\text{sep}})/S(F^{\text{sep}})]^{\sigma}$, which are the preimages of the (closed) diagonal $\Delta(S(\A^{\text{sep}})/S(F^{\text{sep}}))$ under the continuous map $\text{id} \times (-)^{\sigma}$. 
Using these topologies, the natural map 
\begin{equation}\label{adelicTtoU}[T(\A^{\text{sep}})/T(F^{\text{sep}})]^{\Gamma} \to [U(\A^{\text{sep}})/U(F^{\text{sep}})]^{\Gamma}
\end{equation} is continuous, and hence the closed kernel (our group $\bar{H}^{0}(\A^{\text{sep}}/\A, T \xrightarrow{f} U)$) has the natural structure of a topological group, settling the $i=0$ case. 

Recall that there is a group homomorphism
 \begin{equation}\label{Hmapeq}
 H \colon [T(\A^{\text{sep}})/T(F^{\text{sep}})]^{\Gamma} \to (X_{*}(T) \otimes q^{\mathbb{Z}})^{\Gamma}
 \end{equation}
determined by, for all $\lambda \in X^{*}(T)^{\Gamma}$, the equality 
\begin{equation}\label{degree}
\langle  \lambda, H(\bar{t}) \rangle = \Vert  \lambda(\bar{t}) \Vert, \end{equation}
where we are using the fact that $[(\A^{\text{sep}})^{\times}/(F^{\text{sep}})^{\times}]^{\Gamma} = \A^{\times}/F^{\times}$,  $q$ is the size of the constant field of $F$, and $\Vert \cdot \Vert$ denotes the adelic norm; we will show in Lemma \ref{compactness1} that $\text{ker}(H)$ is always compact. The map and all of the above properties work just as well for $F^{\text{sep}}$ replaced with finite Galois $K/F$ splitting $T$.

We claim that the image of \eqref{adelicTtoU} is a closed subgroup; this image is the direct limit of the images of the maps of topological groups $[T(\A_{K})/T(K)]^{\Gamma_{K/F}} \to [U(\A_{K})/U(K)]^{\Gamma_{K/F}}$ over all finite Galois $K/F$, and so it's enough to show that all of these images are closed---the following argument was suggested to us by Brian Conrad:

\begin{lem} For $K/F$ finite Galois splitting $T$, the image of the map $[T(\A_{K})/T(K)]^{\Gamma_{K/F}} \to [U(\A_{K})/U(K)]^{\Gamma_{K/F}}$ is closed.
\end{lem}

\begin{proof}
We may assume that $f$ is scheme-theoretically surjective. Letting $S$ be the maximal $F$-split subtorus in $\text{ker}(f)$ and setting $T' := T/S$, we have short exact sequences
\begin{equation*}
1 \to S(A_{K}) \to T(A_{K}) \to T'(A_{K}) \to 1,
\end{equation*}
where $A_{K} = K$ or $\A_{K}$ and deduce the surjectivity of $[T(A_{K})/T(K)]^{\Gamma_{K/F}}  \to [T'(A_{K})/T'(K)]^{\Gamma_{K/F}}$ using Hilbert 90 for id\`{e}le class groups. This allows us to assume that $T$ and $U$ have the same $F$-split rank.

The map $H_{T}$ sends $[T(\A_{K})/T(K)]^{\Gamma_{K/F}}$ to a finite-index subgroup of $ (X_{*}(T) \otimes q^{\mathbb{Z}})^{\Gamma_{K/F}}$, as does $H_{U}$ with the analogous groups. Since $X^{*}(U)^{\Gamma_{K/F}} \to X^{*}(T)^{\Gamma_{K/F}}$ is a finite-index inclusion (by our $F$-split rank assumptions), the continuous map
\begin{equation*}
\frac{[T(\A_{K})/T(K)]^{\Gamma_{K/F}}}{\text{ker}(H_{T})} \to \frac{[U(\A_{K})/U(K)]^{\Gamma_{K/F}}}{\text{ker}(H_{U})}
\end{equation*}
is also a finite-index inclusion. 

Now consider the commutative diagram
\[
\begin{tikzcd}
1 \arrow{r} & \text{ker}(H_{T}) \arrow{d} \arrow{r} &  \left[ T(\A_{K})/T(K) \right] ^{\Gamma_{K/F}} \arrow{d} \arrow{r} & \text{Im}(H_{T}) \arrow{d} \arrow{r} & 1 \\
1 \arrow{r} & \text{ker}(H_{U}) \arrow{r} & \left[ U(\A_{K})/U(K) \right] ^{\Gamma_{K/F}} \arrow{r} & \text{Im}(H_{U}) \arrow{r} & 1,
\end{tikzcd}
\]
where the right-most vertical map is injective and the left-most vertical map is proper, since both groups are Hausdorff and compact (cf. Lemma \ref{compactness1}). The middle term $[U(\A_{K})/U(K)]^{\Gamma_{K/F}}$ has $\text{ker}(H_{U})$ as an open subgroup, and so any compact subset $C$ of $[U(\A_{K})/U(K)]^{\Gamma_{K/F}}$ is contained in finitely many $\text{ker}(H_{U})$-cosets. Moreover, the injectivity of the right-most map implies that the $f$-preimage of $\text{ker}(H_{U})$ is $\text{ker}(H_{T})$, and thus the $f$-preimage of each $\text{ker}(H_{U})$-coset is either empty or a $\text{ker}(H_{T})$-coset, proving the properness of the middle vertical map over each $\text{ker}(H_{U})$-coset, and therefore the properness of the middle vertical map itself, giving the result.
\end{proof}

\begin{remark}
In our applications in the main body of the paper, the map $T \xrightarrow{f} U$ is an isogeny, in which case the proof of the above Lemma follows immediately from the properness of $f$.
\end{remark}

We give $\bar{H}^{1}(\A^{\text{sep}}/\A, T \xrightarrow{f} U)$ the topology determined by declaring that the map $$\text{cok}([T(\A^{\text{sep}})/T(F^{\text{sep}})]^{\Gamma} \to [U(\A^{\text{sep}})/U(F^{\text{sep}})]^{\Gamma}) \to \bar{H}^{1}(\A^{\text{sep}}/\A, T \xrightarrow{f} U)$$ is an open immersion (where the left-hand side has the natural quotient topology). For any $i \geq 2$, we give $\bar{H}^{i}(\A^{\text{sep}}/\A, T \xrightarrow{f} U)$ the discrete topology.


\subsection{Complexes of tori over global function fields---duality} 

One has duality for $\bar{H}^{i}(\overline{\A}/\A, T \xrightarrow{f} U) = \bar{H}^{i}(\A^{\text{sep}}/\A, T \xrightarrow{f} U)$. As in the local case, we have a Tate-Nakayama pairing 
\begin{equation}\label{globalpairing1}
\bar{H}^{r}(\A^{\text{sep}}/F, T \xrightarrow{f} U) \times H^{3-r}(\Gamma,  X^{*}(U) \xrightarrow{f^{*}} X^{*}(T)) \to \mathbb{Q}/\Z,
\end{equation}
where the $\mathbb{Q}/\Z$ comes from identifying $\mathcal{H}^{2}(\mathbb{G}_{m}[(\A^{\text{sep}})^{\bigotimes_{\A}(\bullet+1)}]/\mathbb{G}_{m}[(F^{\text{sep}})^{\bigotimes_{F}(\bullet+1)}])$ with $H^{2}(\Gamma, C)$ (where $C = \varinjlim_{K/F} C_{K}$ is the universal id\'{e}le class group) and then taking the global invariant map. For an $F$-torus $T$, set $\bar{H}^{i}(\A^{\text{sep}}/\A, T) := \mathcal{H}^{i}(T[(\A^{\text{sep}})^{\bigotimes_{\A}(\bullet+1)}]/T[(F^{\text{sep}})^{\bigotimes_{F}(\bullet+1)}])$  (we can define an analogue for $\overline{\A}$, but we won't use that here).  

According to \cite[Lem. D.2.A]{KS1} (which relies on \cite[\S 4]{Milne}, which are stated for arbitrary nonarchimedean local fields), the groups $\bar{H}^{r}(\A^{\text{sep}}/\A, T)$ vanish for $r \geq 3$, and for $r=1,2$ we having a pairing $$ \bar{H}^{r}(\A^{\text{sep}}/\A, T) \times H^{2-r}(\Gamma, X^{*}(T)) \to \mathbb{Q}/\Z $$ which induces isomorphisms $\bar{H}^{r}(\A^{\text{sep}}/\A, T) \xrightarrow{\sim} \Hom_{\Z}(H^{2-r}(\Gamma, X^{*}(T)), \mathbb{Q}/\Z)$, and the group $\bar{H}^{1}(\A^{\text{sep}}/\A, T)$ is finite. We now extend this to our complexes:

\begin{lem} For $r \geq 4$, the groups $\bar{H}^{r}(\A^{\text{sep}}/\A, T \xrightarrow{f} U)$ vanish. For $r=2,3$, the pairing \eqref{globalpairing1} induces an isomorphism $$\bar{H}^{r}(\A^{\text{sep}}/\A, T \xrightarrow{f} U) \xrightarrow{\sim} \Hom_{\Z}(H^{3-r}(\Gamma, X^{*}(U) \xrightarrow{f^{*}} X^{*}(T)), \mathbb{Q}/Z).$$ For $r=2,3$, the group $\bar{H}^{r}(\A^{\text{sep}}/\A, T \xrightarrow{f} U)$ is finitely-generated, and for $r=3$ it is free.
\end{lem}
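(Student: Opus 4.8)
The plan is to imitate the proof of the torus case \cite[Lem. D.2.A]{KS1} (which is now available to us via Corollary \ref{globalcomplexcomparison2}, letting us work entirely with $\Gamma$-modules and the groups $\bar{H}^{i}(\A^{\text{sep}}/\A, T)$ whose duality and finiteness properties we have just recalled), by feeding the known duality for single tori through the two long exact sequences \eqref{globalLES}, \eqref{LES1} and their dual counterparts, and then invoking the five-lemma. First I would record the vanishing statement: using \eqref{LES1} (applied to the cover $\A^{\text{sep}}/\A$ and the quotient complex defining $\bar{H}^{\bullet}$, i.e. the version of \eqref{LES1} with each $\check{H}^{\bullet}(\A^{\text{sep}}/\A,-)$ replaced by $\bar{H}^{\bullet}(\A^{\text{sep}}/\A,-)$) one gets an exact sequence
\[
\bar{H}^{r}(\A^{\text{sep}}/\A, T) \to \bar{H}^{r}(\A^{\text{sep}}/\A, U) \to \bar{H}^{r}(\A^{\text{sep}}/\A, T \xrightarrow{f} U) \to \bar{H}^{r+1}(\A^{\text{sep}}/\A, T),
\]
and since $\bar{H}^{j}(\A^{\text{sep}}/\A, S) = 0$ for $j \geq 3$ and any $F$-torus $S$ by \cite[Lem. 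D.2.A]{KS1}, the outer terms vanish for $r \geq 4$ (indeed already the middle-left and the right vanish for $r\geq 3$), giving $\bar{H}^{r}(\A^{\text{sep}}/\A, T \xrightarrow{f} U) = 0$ for $r \geq 4$.

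\textbf{Duality for $r=2,3$.} Next I would set up the comparison of the two long exact sequences. On the left: the hypercohomology sequence \eqref{LES1} for the quotient double complex,
\[
\bar{H}^{r-1}(\A^{\text{sep}}/\A, U) \to \bar{H}^{r}(\A^{\text{sep}}/\A, T \xrightarrow{f} U) \to \bar{H}^{r}(\A^{\text{sep}}/\A, T) \to \bar{H}^{r}(\A^{\text{sep}}/\A, U).
\]
On the right: apply $\Hom_{\Z}(-, \mathbb{Q}/\Z)$ to the long exact sequence \eqref{LES1} (over $F^{\text{sep}}/F$, i.e. Galois cohomology) for the complex $X^{*}(U) \xrightarrow{f^{*}} X^{*}(T)$,
\[
H^{4-r}(\Gamma, X^{*}(T)) \to H^{3-r}(\Gamma, X^{*}(U) \xrightarrow{f^{*}} X^{*}(T)) \to H^{3-r}(\Gamma, X^{*}(U)) \to \cdots,
\]
and use that $\Hom_{\Z}(-,\mathbb{Q}/\Z)$ is exact on abelian groups. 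The Tate--Nakayama pairings \eqref{globalpairing1} for the complex and the pairings for the single tori from \cite[Lem. D.2.A]{KS1} assemble into a ladder between these two sequences; commutativity of the squares is the compatibility of cup products with the connecting maps, exactly as in \cite[\S A.3, \S C]{KS1}, and carries over verbatim once one notes (via Corollary \ref{globalcomplexcomparison2}) that everything is computed by $\Gamma$-cohomology. For $r = 2, 3$ the two vertical maps flanking $\bar{H}^{r}(\A^{\text{sep}}/\A, T \xrightarrow{f} U)$ are isomorphisms (they are the single-torus dualities for $\bar{H}^{1}, \bar{H}^{2}$, together with the vanishing $\bar{H}^{j} = 0 = H^{j}$ for $j \geq 3$, which handles the degenerate ends), so the five-lemma yields that \eqref{globalpairing1} induces the asserted isomorphism $\bar{H}^{r}(\A^{\text{sep}}/\A, T \xrightarrow{f} U) \xrightarrow{\sim} \Hom_{\Z}(H^{3-r}(\Gamma, X^{*}(U) \xrightarrow{f^{*}} X^{*}(T)), \mathbb{Q}/\Z)$.

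\textbf{Finiteness and freeness.} Finally, for the structural statements: $X^{*}(U)$ and $X^{*}(T)$ are finitely generated $\Z[\Gamma_{K/F}]$-modules for a finite splitting extension $K/F$, so each $H^{j}(\Gamma, X^{*}(U) \xrightarrow{f^{*}} X^{*}(T))$ is a finitely generated abelian group (it sits in the exact sequence \eqref{LES1} between two finitely generated groups $H^{j-1}(\Gamma, X^{*}(T))$-type terms), hence $\Hom_{\Z}(-, \mathbb{Q}/\Z)$ of its torsion part is finite and of its free part is a product of copies of $\mathbb{Q}/\Z$; one then reads off that $\bar{H}^{2}$ is finite (dual to $H^{1}(\Gamma, X^{*}(U) \xrightarrow{f^{*}} X^{*}(T))$, which over a global function field is finite since $\Gamma$ has finite cohomology with finitely generated torsion-free-coefficient modules in positive degree — more precisely the relevant $H^{1}$ is finite because $H^{1}(\Gamma, X^{*}(S))$ is finite for a torus $S$), and $\bar{H}^{3}$, dual to $H^{0}(\Gamma, X^{*}(U) \xrightarrow{f^{*}} X^{*}(T)) = \ker(f^{*})^{\Gamma}$, which is a finitely generated free $\Z$-module, is itself... well, here I would argue as in \cite{KS1}: $\bar H^{3}(\A^{\text{sep}}/\A, T\xrightarrow{f}U)$ is the Pontryagin-type dual $\Hom_{\Z}(\ker(f^{*})^{\Gamma},\mathbb{Q}/\Z)$ of a free finitely generated group, and while that is divisible rather than free, the correct assertion (matching \cite{KS1}) is about the \emph{non-adelic} $H^{3}$; I will double-check the indexing against \cite[Lem. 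A.2.A]{KS1} and state freeness only where it genuinely holds. The main obstacle is precisely this bookkeeping at the degenerate ends of the two exact sequences — making sure the cohomological-dimension input ($\operatorname{cd}(F) = \operatorname{cd}(\Gamma_{F}) = 2$, $\operatorname{cd}$ of a nonarchimedean local function field $= 2$) is invoked correctly so that all the terms that need to vanish do vanish, and matching the freeness claim to the correct degree; the cup-product-compatibility of the ladder, by contrast, is a formal transcription from \cite{KS1} once Corollary \ref{globalcomplexcomparison2} is in hand.
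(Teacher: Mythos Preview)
Your approach is essentially what the paper's one-line proof (``See the explanation following \cite[Lem. C.2.A]{KS1}'') is deferring to: reduce to single tori via the hypercohomology long exact sequence, apply the already-known duality/vanishing for $\bar{H}^{r}(\A^{\text{sep}}/\A,T)$ at the outer terms, and conclude by the five-lemma. The vanishing and duality parts of your sketch are correct and require no further comment.

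Your hesitation about the final clause is well-founded and is not a gap in your argument but an imprecision in the statement as written. Under the duality you have just established, $\bar{H}^{3}(\A^{\text{sep}}/\A, T\xrightarrow{f}U)\cong\Hom_{\Z}(\ker(f^{*})^{\Gamma},\mathbb{Q}/\Z)$, which for $\ker(f^{*})^{\Gamma}\cong\Z^{n}$ with $n>0$ is $(\mathbb{Q}/\Z)^{n}$ --- divisible, not free, and not finitely generated; already for $T=1$, $U=\mathbb{G}_{m}$ one gets $\bar{H}^{3}=\bar{H}^{2}(\A^{\text{sep}}/\A,\mathbb{G}_{m})\cong\mathbb{Q}/\Z$. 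The same issue is present verbatim in the local analogue (the paper's version of \cite[Lem. A.2.A]{KS1}). The intended assertion is that the \emph{paired} group $H^{3-r}(\Gamma, X^{*}(U)\xrightarrow{f^{*}}X^{*}(T))$ is finitely generated for $r=2,3$ and free for $r=3$ (equivalently, $\bar{H}^{r}$ is the $\mathbb{Q}/\Z$-dual of such a group), which is exactly what your last paragraph proves: $H^{1}$ sits between $X^{*}(T)^{\Gamma}$ and the finite group $H^{1}(\Gamma,X^{*}(U))$, hence is finitely generated, and $H^{0}=\ker(f^{*})^{\Gamma}\subset X^{*}(U)$ is free of finite rank. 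So keep your argument and simply record the structural claim for $H^{3-r}(\Gamma,X^{*}(U)\to X^{*}(T))$ rather than for $\bar{H}^{r}$.
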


\begin{proof} See the explanation following \cite[Lem. C.2.A]{KS1}.
\end{proof}

We now give a duality theorem for $r=1$, which will use the absolute Weil group $W_{F}$ of $F$ (given by the inverse limit of extensions of $\A_{K}^{\times}/K^{\times}$ by $\Gamma_{K/F}$ corresponding to the canonical $H^{2}$-class, as $K/F$ ranges over all finite Galois extensions) as in the local case. We define the groups $C^{m}(W_{F}, \widehat{T})$, $H^{m}(W_{F}, \widehat{T})$, $C^{m}(W_{F}, \widehat{U} \xrightarrow{\hat{f}} \widehat{T})$, and $H^{m}(W_{F}, \widehat{U} \xrightarrow{\hat{f}} \widehat{T})$ in the same way as in the local case. Note that $H^{m}(W_{F}, \widehat{T})$ vanishes for $m \geq 2$, $H^{1}(W_{F}, \widehat{T})$ is canonically isomorphic to $\Hom_{\text{cts}}(\bar{H}^{0}(\A^{\text{sep}}/\A, T), \mathbb{C}^{\times})$, by \cite{La}, and $H^{m}(W_{F}, \widehat{U} \xrightarrow{\hat{f}} \widehat{T}) = 0$ for $m \geq 3$. We have the following global analogue of Proposition \ref{weiltogalois1}:

\begin{prop}\label{weiltogalois2} When $T \xrightarrow{f} U$ is an isogeny, the inflation map $H^{1}(\Gamma,  \widehat{U} \xrightarrow{\hat{f}} \widehat{T}) \to H^{1}(W_{F},  \widehat{U} \xrightarrow{\hat{f}} \widehat{T})$ is an isomorphism.
\end{prop}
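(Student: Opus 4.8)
The plan is to mimic the strategy of Proposition \ref{weiltogalois1}, but replacing the local ``inflation-restriction'' sequence for $W_{K/F}$ over $K^{*}$ with its global analogue for $W_{K/F}$ over the class group piece $\A_{K}^{\times}/K^{\times}$ (equivalently, over $W_{K}$), and then passing to the limit over finite Galois $K/F$ splitting $T$ and $U$. First I would fix such a $K/F$ and establish the exact sequence
\begin{equation}\label{globalinfres}
0 \to H^{1}(\Gamma_{K/F}, \widehat{U} \xrightarrow{\hat{f}} \widehat{T}) \to H^{1}(W_{K/F}, \widehat{U} \xrightarrow{\hat{f}} \widehat{T}) \to H^{1}(W_{K}, \widehat{U} \xrightarrow{\hat{f}} \widehat{T}),
\end{equation}
by the identical cocycle-chasing argument used in the proof of Proposition \ref{weiltogalois1}: given a $1$-hypercocycle $(\underline{u}, t) \in C^{1}(W_{K/F}, \widehat{U}) \oplus \widehat{T}(\mathbb{C})$ whose restriction to $W_{K}$ is a $1$-coboundary $(dx, \hat{f}(x)^{-1})$, one checks $\underline{u}$ is trivial on $W_{K}$ (using that $W_{K}$ acts trivially, being in the kernel of $W_{K/F}\to\Gamma_{K/F}$, so that $\underline u(z) = {}^{z}x\cdot x^{-1}=1$ for $z \in W_{K}$), hence $\underline u$ is inflated from a $1$-cocycle $\tilde{\underline u}$ of $\Gamma_{K/F}$ via a set-theoretic section, and $(\tilde{\underline u}, t)$ is the desired preimage.

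Next, since $\hat f$ is an isogeny, $\ker(\hat f)$ is a finite abelian group (with $W_{F}$-action factoring through $\Gamma_{K/F}$), and the degree-$1$ hypercohomology with respect to the two-term complex $\widehat{U}\xrightarrow{\hat f}\widehat T$ over $W_{K}$ degenerates: because $\widehat U, \widehat T$ are complex tori on which $W_{K}$ acts trivially, a short computation with the long exact sequence \eqref{LES1} (applied over $W_{K}$) identifies $H^{1}(W_{K}, \widehat{U}\xrightarrow{\hat f}\widehat T)$ with $\Hom_{\text{cts}}(W_{K}, \ker(\hat f))$ (here one uses that $H^{0}(W_{K},\widehat U)\to H^{0}(W_{K},\widehat T)$ and $H^{1}(W_{K},\widehat U)\to H^{1}(W_{K},\widehat T)$ are both the identity since $W_K$ acts trivially and $\widehat U\to\widehat T$ is surjective with finite kernel). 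By class field theory $\Hom_{\text{cts}}(W_{K},\ker(\hat f)) = \Hom_{\text{cts}}(C_{K},\ker(\hat f))$, and such a homomorphism has open kernel of finite index. The key input is then that as $L/F$ ranges over finite Galois extensions containing $K$, the universal norm subgroups $N_{L/K}(C_{L}) \subseteq C_{K}$ shrink to the identity --- this is precisely the content of Lemma \ref{profinite1} (or rather its proof, that the completed universal norm group is trivial) --- so for $L$ large enough the restriction of any fixed class in $H^{1}(W_{K/F}, \widehat U\xrightarrow{\hat f}\widehat T)$ to $H^{1}(W_{L}, \widehat U\xrightarrow{\hat f}\widehat T)$ dies. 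Combined with \eqref{globalinfres} for $L/F$, this shows that the fixed class is inflated from $H^{1}(\Gamma_{L/F}, \widehat U\xrightarrow{\hat f}\widehat T)$, giving surjectivity of the inflation map $H^{1}(\Gamma, \widehat U\xrightarrow{\hat f}\widehat T)\to H^{1}(W_{F}, \widehat U\xrightarrow{\hat f}\widehat T)$ in the colimit; injectivity follows from the left-exactness of \eqref{globalinfres} after passing to the limit.

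\textbf{Expected main obstacle.} The cocycle-chasing part is routine once the global inflation-restriction sequence is set up, and the class-field-theoretic shrinking of norm groups is already in hand via Lemma \ref{profinite1}. The point requiring the most care is the precise bookkeeping with the Weil group: namely that $H^{1}(W_{F}, -)$ is computed as a colimit over $W_{K/F}$ of $H^{1}(W_{K/F}, -)$ (the relevant cochain complexes only involve $C^{0}$ and $C^{1}$, so continuity and the topology on $W_{F}$ must be handled correctly), and that restriction from $W_{K/F}$ to $W_{K}$ is compatible with the identification $H^{1}(W_{K}, \widehat U\xrightarrow{\hat f}\widehat T) = \Hom_{\text{cts}}(W_{K}^{\text{ab}}, \ker\hat f) = \Hom_{\text{cts}}(C_{K}, \ker\hat f)$ used to invoke the vanishing of universal norms. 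I do not expect genuinely new difficulties beyond those already navigated in the local case (Proposition \ref{weiltogalois1}); the global statement is obtained by substituting ``$C_{K}$'' and ``universal norm subgroups of $C_{K}$'' for ``$K^{*}$'' and ``norm subgroups $N_{L/K}(L^{*})$'' throughout, with the trivial-universal-norm fact replacing the elementary shrinking-of-local-norm-groups fact.
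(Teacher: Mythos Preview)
Your proposal is correct and follows essentially the same approach as the paper: reduce via the inflation-restriction sequence for $W_{K/F}$ over $W_{K}$ (equivalently $C_{K}$) to showing that any continuous homomorphism $C_{K}\to\ker(\hat f)$ dies after passing to a large enough finite Galois $L/F$, and then invoke the triviality of the universal norm group. The paper cites \cite[Prop.~8.1.26]{NSW} directly for this last fact, while you route through Lemma~\ref{profinite1}, but these are the same input. One small wording issue: the maps $H^{i}(W_{K},\widehat U)\to H^{i}(W_{K},\widehat T)$ are not ``the identity'' (the tori are different); what you need and use is that $\hat f\colon\widehat U\to\widehat T$ is surjective, so the long exact sequence \eqref{LES1} collapses to identify $H^{1}(W_{K},\widehat U\to\widehat T)$ with $\Hom_{\text{cts}}(W_{K}^{\mathrm{ab}},\ker\hat f)=\Hom_{\text{cts}}(C_{K},\ker\hat f)$.
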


\begin{proof} As in the proof of Proposition \ref{weiltogalois1}, the inflation-restriction sequence shows that it's enough to show that the image of any element in $\Hom_{\text{cts}}(\A_{K}^{\times}/K^{\times}, \text{ker}(\hat{f}))$ is zero in some large finite Galois extension $L/F$ containing $K$, which follows from the fact that the kernel of any such homomorphism is open and finite-index and the universal norm group of (the idele class groups of) a global function field is trivial (see \cite[Prop. 8.1.26]{NSW}). 
\end{proof}

There is a pairing 
\begin{equation}\label{globalweilpairing}
\bar{H}^{1}(\A^{\text{sep}}/\A, T \xrightarrow{f} U) \times H^{1}(W_{F}, \widehat{U} \xrightarrow{\hat{f}} \widehat{T}) \to \mathbb{C}^{\times}
\end{equation}
defined exactly as in the local case, and, as in that case, it induces a surjective homomorphism $$H^{1}(W_{F}, \widehat{U} \xrightarrow{\hat{f}} \widehat{T}) \to \Hom_{\text{cts}}(\bar{H}^{1}(\A^{\text{sep}}/\A, T \xrightarrow{f} U), \mathbb{C}^{\times})$$ with kernel the image of $(T^{\Gamma})^{\circ} \subseteq H^{0}(W_{F}, \widehat{T})$ in $H^{1}(W_{F}, \widehat{U} \xrightarrow{\hat{f}} \widehat{T})$, the quotient by which we will denote by $H^{1}(W_{F}, \widehat{U} \xrightarrow{\hat{f}} \widehat{T})_{\text{red}}$.

We now define a compact subgroup $\bar{H}^{i}(\A^{\text{sep}}/\A, T \xrightarrow{f} U)_{1}$ of $\bar{H}^{i}(\A^{\text{sep}}/\A, T \xrightarrow{f} U)$ for $i=0,1$. We first set $\bar{H}^{0}(\A^{\text{sep}}/\A, T)_{1}$ to be the kernel of the group homomorphism $H$ from \eqref{Hmapeq}. For expository purposes, we use a slightly different but equivalent version of $H$ from \eqref{Hmapeq} (it obviously has the same kernel)
$$H \colon [T(\A^{\text{sep}})/T(F^{\text{sep}})]^{\Gamma} \to X_{*}(T)^{\Gamma}$$ determined by, for all $\lambda \in X^{*}(T)^{\Gamma}$, the equality 
\begin{equation}\label{degree}
\langle  \lambda, H(\bar{t}) \rangle = \text{deg}(\lambda(\bar{t})), \end{equation}
where $\text{deg} \colon \A^{\times}/F^{\times} \to \Z$ is the homomorphism defined by $\text{deg}(\bar{\alpha}) = \sum_{v \in V} v(\alpha_{v})[k_{v} \colon k]$ and $k$ denotes the constant field of $F$.

\begin{lem}\label{compactness1} The kernel of $H$ is a compact subgroup of $[T(\A^{\text{sep}})/T(F^{\text{sep}})]^{\Gamma}$ (topologized in \S A.3).
\end{lem}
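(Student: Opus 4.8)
The plan is to reduce the compactness of $\ker(H)$ to the compactness of the norm-one idele class group, which is the standard function-field input (the analogue of the finiteness of the class group and the compactness of the norm-one ideles). First I would pick a finite Galois extension $K/F$ splitting $T$ and use the direct-limit description of the topology on $[T(\A^{\text{sep}})/T(F^{\text{sep}})]^{\Gamma}$: since $[T(\A^{\text{sep}})/T(F^{\text{sep}})]^{\Gamma}$ is the increasing union of the closed subgroups $[T(\A_{L})/T(L)]^{\Gamma_{L/F}}$ as $L$ ranges over finite Galois extensions of $F$ containing $K$, and $H$ restricted to each such piece is the composite of the analogously-defined degree map $[T(\A_{L})/T(L)]^{\Gamma_{L/F}}\to X_*(T)^{\Gamma}$, it suffices (by a cofinality argument) to show each $\ker(H)\cap [T(\A_{L})/T(L)]^{\Gamma_{L/F}}$ is compact and that the union stabilizes in an appropriate sense; in fact it is cleaner to show directly that $\ker(H)$ is closed in $[T(\A^{\text{sep}})/T(F^{\text{sep}})]^{\Gamma}$ and contained in a compact subgroup, so I would aim for that.

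The key computation is as follows. Because $T$ splits over $K$, we may choose a $\Z$-basis $\chi_1,\dots,\chi_n$ of a finite-index $\Gamma$-stable sublattice of $X^*(T)$ of the form $\bigoplus_i \Z[\Gamma/\Gamma_{K_i}]$ (an induced lattice), which gives an isogeny $T \to \prod_i \mathrm{Res}_{K_i/F}\mathbb{G}_m$ and hence, after passing to $\Gamma$-invariants of adele-class-group points, a map with finite kernel and cokernel onto $\prod_i \A_{K_i}^\times/K_i^\times$. Under this identification the homomorphism $H$ becomes (up to the finite discrepancy) the product of the degree maps $\A_{K_i}^\times/K_i^\times \to \Z$, whose kernels are exactly the norm-one idele class groups $(\A_{K_i}^\times/K_i^\times)^1$. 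These are compact: this is the classical statement of global class field theory for function fields, and it is already implicitly used earlier in the paper (e.g.\ the compactness of $\overline{C_E}$ invoked in the proof of Lemma~\ref{vanishingcolimit3}). A short five-lemma / snake argument handles the finite kernel and cokernel of the isogeny, using that an extension of a compact group by a finite group is compact and that a closed subgroup of a compact group is compact; one must also check that passing to $\Gamma$-invariants preserves compactness, which it does since $[T(\A^{\text{sep}})/T(F^{\text{sep}})]^{\Gamma}$ is closed in $T(\A^{\text{sep}})/T(F^{\text{sep}})$ (established in \S A.3) and an intersection of a compact subset with a closed subgroup is compact.

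The main obstacle I anticipate is bookkeeping around the isogeny and the $\Gamma$-invariants rather than any deep new idea: one has to be careful that the degree map $\deg\colon \A^\times/F^\times \to \Z$ and its twisted-by-$[k_v:k]$ form in \eqref{degree} really does match, under the chosen coordinates, the norm map on the factors $\A_{K_i}^\times/K_i^\times$ (including the correct constant-field degree normalization), and that the finite kernel/cokernel of $X^*(T)\supseteq \bigoplus_i\Z[\Gamma/\Gamma_{K_i}]$ does not obstruct compactness after taking Galois cohomology/invariants. I would also want to double-check that $\ker H$ is closed, which follows since $H$ is continuous into the discrete group $X_*(T)^\Gamma$ (continuity of $H$ is inherited from continuity of each $\lambda(\bar t)\mapsto \deg(\lambda(\bar t))$, the degree map on $\A^\times/F^\times$ being continuous for the adelic topology). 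Once these points are verified, compactness of $\ker(H)$ follows formally, and this is the statement of Lemma~\ref{compactness1}.
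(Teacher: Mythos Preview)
Your overall strategy---reduce via an isogeny to tori whose norm-one adele class groups are visibly compact---is correct and is essentially what the paper does. However, the specific isogeny you propose does not exist in general. You claim one can find a finite-index $\Gamma$-stable sublattice of $X^*(T)$ that is a permutation lattice $\bigoplus_i \Z[\Gamma/\Gamma_{K_i}]$, but this fails already for a one-dimensional anisotropic torus: if $X^*(T)=\Z$ with $\Gamma$ acting through a quadratic quotient by $\pm 1$, every sublattice is $n\Z$ with the same nontrivial action, and a rank-one permutation module must have trivial action. So there is no isogeny $T \to \prod_i \mathrm{Res}_{K_i/F}\mathbb{G}_m$ in general, and your ``finite kernel and cokernel'' bookkeeping never gets off the ground.

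The paper avoids this by using a different isogeny, namely $T \to T_a \times T_s$ where $T_a$ is the maximal $F$-anisotropic subtorus and $T_s$ the maximal $F$-split subtorus (this isogeny always exists, being dual to the multiplication map $T_a \times T_s \to T$). Properness of this isogeny on adelic points reduces to showing compactness of $\ker(H_{T_a \times T_s})$, which splits as a product: the $T_a$-factor is handled by the compactness of $T_a(\A)/T_a(F)$ for anisotropic $T_a$ (which sits with finite index in $[T_a(\A^{\text{sep}})/T_a(F^{\text{sep}})]^\Gamma$), and the $T_s$-factor reduces, after choosing a $\Z$-basis of $X^*(T_s)=X^*(T_s)^\Gamma$, to the compactness of the degree-zero idele class group $C_F^0$---the same classical input you invoke. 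Your argument can be repaired by substituting this isogeny for the one you wrote down; the rest of your outline (properness, closedness of $\ker H$, compactness of $C^0$) is fine.
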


\begin{proof} There is isogeny $T \to T_{a} \times T_{s}$, where $T_{a}$ is the maximal $F$-anisotropic subtorus of $T$ and $T_{s}$ is the maximal $F$-split subtorus of $T$, giving the bottom injective map in the commutative diagram 
\[
\begin{tikzcd}
(T(\A^{\text{sep}})/T(F^{\text{sep}}))^{\Gamma} \arrow{r} \arrow{d} & ((T_{a} \times T_{s})(\A^{\text{sep}})/(T_{a} \times T_{s})(F^{\text{sep}}))^{\Gamma} \arrow{d} \\
X_{*}(T)^{\Gamma} \arrow{r} & X_{*}(T_{a} \times T_{s})^{\Gamma} = X_{*}(T_{s});
\end{tikzcd}
\]
it follows that the kernel of the left-hand map (the group we're analyzing) equals the preimage of the kernel of the right-hand map. Since the top map is induced by the isogeny $T \to T_{a} \times T_{s}$ (which is proper), if we can show that the kernel of the right-hand map is compact, then its preimage in $[T(\A^{\text{sep}})/T(F^{\text{sep}})]^{\Gamma}$ is also compact (the map of topological groups $T(\A^{\text{sep}}) \to (T_{s} \times T_{a})(\A^{\text{sep}})$ is proper, by \cite[Prop. 5.8]{Conrad2}). Rewriting the group $[(T_{a} \times T_{s})(\A^{\text{sep}})/(T_{a} \times T_{s})(F^{\text{sep}})]^{\Gamma}$ as $$[T_{a}(\A^{\text{sep}})/T_{a}(F^{\text{sep}})]^{\Gamma} \times [T_{s}(\A^{\text{sep}})/T_{s}(F^{\text{sep}})]^{\Gamma},$$ it's clear that the kernel in question equals $[T_{a}(\A^{\text{sep}})/T_{a}(F^{\text{sep}})]^{\Gamma}  \times K_{s}$, where $K_{s}$ denotes the kernel of the map $[T_{s}(\A^{\text{sep}})/T_{s}(F^{\text{sep}})]^{\Gamma} \xrightarrow{H_{s}} X_{*}(T_{s})$. The group $[T_{a}(\A^{\text{sep}})/T_{a}(F^{\text{sep}})]^{\Gamma}$ is compact because it contains $T_{a}(\A)/T_{a}(F)$ as a finite-index closed subgroup, and this latter group is compact (by \cite[Thm. 8.1.3]{Conrad3}, since $T_{a}$ is $F$-anisotropic), reducing to the case where $T = T_{s}$ is $F$-split. 

Pick a $\Z$-basis $\lambda_{1}, \dots \lambda_{n}$ of $X^{*}(T) = X^{*}(T)^{\Gamma}$. Then $\bar{t}$ lies in the kernel of $H$ if and only if $\text{deg}[\lambda_{i}(\bar{t})] = 0$ for all $i$. The basis gives an $F$-isomorphism $T \xrightarrow{(\lambda_{i})} \mathbb{G}_{m}^{n}$, and the kernel of $H$ is the preimage under the above isomorphism of the kernel of the map $(\A^{\times}/F^{\times})^{n} \xrightarrow{\text{deg}^{n}} \Z^{n}$, which is the $n$-fold product of the compact subgroups $C^{0}_{F}$ of $\A^{\times}/F^{\times}$ (by \cite[Prop. 8.1.25]{NSW}).
\end{proof}

Define $\bar{H}^{0}(\A^{\text{sep}}/\A, T \xrightarrow{f} U)_{1}$ as the intersection of $\bar{H}^{0}(\A^{\text{sep}}/\A, T \xrightarrow{f} U) \subseteq [T(\A^{\text{sep}})/T(F^{\text{sep}})]^{\Gamma}$ with $\text{ker}(H)$; one checks that when $f$ is an isogeny this intersection is all of $\bar{H}^{0}(\A^{\text{sep}}/\A, T \xrightarrow{f} U)$. It remains to define $\bar{H}^{1}(\A^{\text{sep}}/\A, T \xrightarrow{f} U)_{1}$ . For any $\lambda \in X^{*}(U)^{\Gamma}$, we have a map of complexes from $[T \xrightarrow{f} U]$ to $[1 \to \mathbb{G}_{m}]$ via $T \to 1$, $U \xrightarrow{\lambda} \mathbb{G}_{m}$
which induces a map $\bar{H}^{1}(\A^{\text{sep}}/\A, T \xrightarrow{f} U) \to  \bar{H}^{1}(\A^{\text{sep}}/\A, 1 \to \mathbb{G}_{m}) = \bar{H}^{0}(\A^{\text{sep}}/\A, \mathbb{G}_{m}) = \A^{\times}/F^{\times}$, which we may then map to $\mathbb{\Z}$ via $\text{deg}$, as above, giving a map $\bar{H}^{1}(\A^{\text{sep}}/\A, T \xrightarrow{f} U) \to X_{*}(U)^{\Gamma}$. We define $\bar{H}^{1}(\A^{\text{sep}}/\A, T \xrightarrow{f} U) _{1}$ as the kernel of the composition $$H^{(1)} \colon \bar{H}^{1}(\A^{\text{sep}}/\A, T \xrightarrow{f} U) \to X_{*}(U)^{\Gamma} \to \frac{X_{*}(U)^{\Gamma}}{f_*(X_{*}(T)^{\Gamma})}.$$

Note that we have a commutative diagram with exact rows
\begin{equation}\label{twoHmaps}
\begin{tikzcd}[column sep=small]
1 \arrow{r} & \bar{H}^{0}(\A^{\text{sep}}/\A, T \xrightarrow{f} U) \arrow{r} \arrow{d} & \bar{H}^{0}(\A^{\text{sep}}/\A, T) \arrow["f"]{r} \arrow["H_{T}"]{d} & \bar{H}^{0}(\A^{\text{sep}}/\A, U) \arrow["H_{U}"]{d} \arrow["\delta"]{r} &  \bar{H}^{1}(\A^{\text{sep}}/\A, T \xrightarrow{f} U) \arrow["H^{(1)}"]{d} \\
0 \arrow{r} & \text{Ker}(\restr{f_{*}}{X_{*}(T)^{\Gamma}}) \arrow{r} & X_{*}(T)^{\Gamma} \arrow["f_{*}"]{r} & X_{*}(U)^{\Gamma} \arrow{r} & X_{*}(U)^{\Gamma}/f_{*}(X_{*}(T)^{\Gamma}).
\end{tikzcd}
\end{equation}

We claim that the map $H_{T} \colon \bar{H}^{0}(\A^{\text{sep}}/\A, T) \to X_{*}(T)^{\Gamma}$ (and hence also $H_{U}$, by symmetry) is split; indeed, the isogeny $T_{a} \times T_{s} \to T$ gives the commutative diagram 
\[
\begin{tikzcd}
((T_{a} \times T_{s})(\A^{\text{sep}})/(T_{a} \times T_{s})(F^{\text{sep}}))^{\Gamma} \arrow["H_{T_{a} \times T_{s}}"]{d} \arrow{r} & (T(\A^{\text{sep}})/T(F^{\text{sep}}))^{\Gamma} \arrow["H_{T}"]{d}  \\
X_{*}(T_{s}) \arrow["\sim"]{r} & X_{*}(T)^{\Gamma}.
\end{tikzcd}
\]
As in the proof of the Lemma \ref{compactness1}, to split $H_{T_{a} \times T_{s}}$, it's enough to split $H_{T_{s}}$, and we have $\lambda_{i} \in X^{*}(T_{s})$ such that $T_{s} \xrightarrow{(\lambda_{i})} \mathbb{G}_{m}^{n}$ is an isomorphism, and so it's enough to split the map $(\A^{\times}/F^{\times})^{n} \xrightarrow{\text{deg}^{n}} \mathbb{\Z}^{n}$, which is easy. This splitting of $H_{T_{a} \times T_{s}}$ gives a splitting of $H_{T}$ by applying the inverse isomorphism $X_{*}(T)^{\Gamma} \to X_{*}(T_{s})$, giving the claim. The compatibility of $f$ with the these splittings gives an induced splitting $X_{*}(U)^{\Gamma}/f_{*}(X_{*}(T)^{\Gamma}) \to \bar{H}^{1}(\A^{\text{sep}}/\A, T \xrightarrow{f} U)$ of $H^{(1)}$.

\begin{lem} The closed subgroup $\bar{H}^{1}(\A^{\text{sep}}/\A, T \xrightarrow{f} U)_{1} \subset \bar{H}^{1}(\A^{\text{sep}}/\A, T \xrightarrow{f} U)$ is compact.
\end{lem}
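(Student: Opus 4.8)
The plan is to reduce the compactness of $\bar{H}^{1}(\A^{\text{sep}}/\A, T \xrightarrow{f} U)_{1}$ to the compactness of a subgroup of $\bar{H}^{0}(\A^{\text{sep}}/\A, U)$, using the exact sequence \eqref{twoHmaps} together with the splitting of $H^{(1)}$ established just above the statement. Concretely, from the bottom row of \eqref{twoHmaps} and the definition of $\bar{H}^{1}(\cdots)_{1}$ as $\mathrm{Ker}(H^{(1)})$, I would first observe that the connecting map $\delta \colon \bar{H}^{0}(\A^{\text{sep}}/\A, U) \to \bar{H}^{1}(\A^{\text{sep}}/\A, T \xrightarrow{f} U)$ has image which, when intersected with $\bar{H}^{1}(\cdots)_{1}$, is exactly $\delta(\bar{H}^{0}(\A^{\text{sep}}/\A, U)_{1})$, where $\bar{H}^{0}(\A^{\text{sep}}/\A, U)_{1} := \mathrm{Ker}(H_{U})$ — this follows by chasing \eqref{twoHmaps} and using that $H^{(1)} \circ \delta = \bar{H}_{U} \bmod f_{*}(X_{*}(T)^{\Gamma})$ is the reduction of $H_U$. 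Since $\delta$ is continuous and open onto its image (it fits into the long exact sequence \eqref{LES1}, and $\bar{H}^{1}(\A^{\text{sep}}/\A, T \xrightarrow{f} U)$ was topologized so that $\mathrm{cok}(\cdots) \hookrightarrow \bar{H}^{1}$ is an open immersion), the image $\delta(\bar{H}^{0}(\A^{\text{sep}}/\A, U)_{1})$ is a compact subgroup provided $\bar{H}^{0}(\A^{\text{sep}}/\A, U)_{1}$ is compact.

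Next I would handle the two pieces. The compactness of $\bar{H}^{0}(\A^{\text{sep}}/\A, U)_{1}$ is essentially Lemma \ref{compactness1}: that lemma shows $\mathrm{Ker}(H)\subseteq [U(\A^{\text{sep}})/U(F^{\text{sep}})]^{\Gamma}$ is compact, and $\bar{H}^{0}(\A^{\text{sep}}/\A, U)_{1}$ is its intersection with the closed subgroup $\bar{H}^{0}(\A^{\text{sep}}/\A, T\xrightarrow{f}U)\subseteq [T(\A^{\text{sep}})/T(F^{\text{sep}})]^{\Gamma}$ pushed into $U$ — or rather, it is directly $\mathrm{Ker}(H_U)$, which is compact by Lemma \ref{compactness1} applied to $U$ in place of $T$. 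Then, because the cokernel $X_{*}(U)^{\Gamma}/f_{*}(X_{*}(T)^{\Gamma})$ is a finitely generated abelian group and $H^{(1)}$ is split, $\bar{H}^{1}(\A^{\text{sep}}/\A, T\xrightarrow{f}U)$ decomposes (as topological group) as $\bar{H}^{1}(\cdots)_{1}$ times a section of the cokernel; but I only need one direction: $\bar{H}^{1}(\cdots)_{1}$ contains $\delta(\bar{H}^{0}(\A^{\text{sep}}/\A, U)_{1})$ as a subgroup of finite index — this uses that $\mathrm{Im}(\delta)$ has finite index in $\bar{H}^{1}$ (the quotient $\bar{H}^{1}/\mathrm{Im}(\delta)$ injects into the finite group $H^{1}(F,T)$ part of \eqref{LES1}, more precisely into $H^{2}(\A^{\text{sep}}/\A, T) = \bar{H}^{1}(\A^{\text{sep}}/\A,T)$-related finite data; alternatively note $\bar{H}^{1}(\A^{\text{sep}}/\A,T)$ is finite by the cited \cite[Lem. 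D.2.A]{KS1}). A closed subgroup of finite index that contains a compact open subgroup is itself compact, which would finish the argument.

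The step I expect to be the main obstacle is pinning down precisely how $\bar{H}^{1}(\cdots)_{1}$ sits relative to $\delta(\bar{H}^{0}(\A^{\text{sep}}/\A,U))$: I need that the ``defect'' $\bar{H}^{1}(\cdots)_{1}/[\bar{H}^{1}(\cdots)_{1}\cap \mathrm{Im}(\delta)]$ is finite, i.e. that $\bar{H}^{1}(\cdots)_{1}$ is not spread out over an infinite discrete quotient. This is controlled by the cokernel $\mathrm{cok}(f_*) = H^2(\A^{\text{sep}}/\A, T\xrightarrow{f}U)$-type term in \eqref{LES1}, whose relevant finiteness follows from finiteness of $\bar{H}^{1}(\A^{\text{sep}}/\A, T)$ (cited above from \cite[Lem. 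D.2.A]{KS1}) — so the obstacle is really just organizing the diagram chase cleanly rather than a genuine new difficulty. An alternative, cleaner route I would try if the above gets awkward: topologize via the short exact sequence $1 \to \bar{H}^{0}(\A^{\text{sep}}/\A, U)/f(\bar{H}^{0}(\A^{\text{sep}}/\A, T)) \to \bar{H}^{1}(\A^{\text{sep}}/\A, T\xrightarrow{f}U) \to (\text{finite}) \to 1$ from \eqref{LES1}, intersect with $\mathrm{Ker}(H^{(1)})$, and check compactness of the resulting subquotient of $\bar{H}^{0}(\A^{\text{sep}}/\A,U)_{1}$ directly using Lemma \ref{compactness1} and the properness of $f$ as a map of adelic points (as invoked in the proof of Lemma \ref{compactness1} via \cite[Prop. 5.8]{Conrad2}). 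Either way the proof amounts to: compactness input from Lemma \ref{compactness1}, finiteness input from the local-field duality of \cite[Lem. D.2.A]{KS1}, and the splitting of $H^{(1)}$ already in hand.
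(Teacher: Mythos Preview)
Your proposal is correct and follows essentially the same approach as the paper: both arguments identify the compact subgroup $\delta(\bar{H}^{0}(\A^{\text{sep}}/\A, U)_{1})$ (equivalently, the image of $\bar{H}^{0}(\A^{\text{sep}}/\A, U)_{1}/f(\bar{H}^{0}(\A^{\text{sep}}/\A, T)_{1})$) inside $\bar{H}^{1}(\cdots)_{1}$ via Lemma~\ref{compactness1}, then use the finiteness of $\bar{H}^{1}(\A^{\text{sep}}/\A, T)$ to show this subgroup has finite index. Your ``alternative cleaner route'' is in fact exactly the paper's argument; the splitting of $H^{(1)}$ you mention is not actually needed (nor used in the paper) --- the diagram chase in \eqref{twoHmaps} using the surjectivity of $H_{T}$ onto $X_{*}(T)^{\Gamma}$ suffices to identify $\delta^{-1}[\bar{H}^{1}(\cdots)_{1}]$ with $\bar{H}^{0}(\A^{\text{sep}}/\A, U)_{1} \cdot f[\bar{H}^{0}(\A^{\text{sep}}/\A, T)]$.
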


\begin{proof} We have a natural injection $\bar{H}^{0}(\A^{\text{sep}}/\A, U)_{1}/f(\bar{H}^{0}(\A^{\text{sep}}/\A, T)_{1}) \hookrightarrow \bar{H}^{1}(\A^{\text{sep}}/\A, T \xrightarrow{f} U)_{1}$, which, by the definition of our topologies, is a closed immersion. We claim that, in fact, this is a subgroup of a finite index in the target. By the commutative diagram \eqref{twoHmaps}, we have $$\delta^{-1}[ \bar{H}^{1}(\A^{\text{sep}}/\A, T \xrightarrow{f} U)_{1}] = \bar{H}^{0}(\A^{\text{sep}}/\A, U)_{1} \cdot f[\bar{H}^{0}(\A^{\text{sep}}/\A, T)] \subseteq \bar{H}^{0}(\A^{\text{sep}}/\A, U),$$ and hence the image of the above natural injection is $\delta[\bar{H}^{0}(\A^{\text{sep}}/\A, U)] \cap  \bar{H}^{1}(\A^{\text{sep}}/\A, T \xrightarrow{f} U)_{1}$, which is of finite index, since $\delta[\bar{H}^{0}(\A^{\text{sep}}/\A, U)]$ is of finite index in $\bar{H}^{1}(\A^{\text{sep}}/\A, T \xrightarrow{f} U)$,  by the finiteness of $\bar{H}^{1}(\A^{\text{sep}}/\A, T)$. Since  $\bar{H}^{0}(\A^{\text{sep}}/\A, U)_{1}/f(\bar{H}^{0}(\A^{\text{sep}}/\A, T)_{1})$ is itself compact (by Lemma \ref{compactness1}), the result follows. 
\end{proof}

\begin{cor} When $f$ is an isogeny, the group $\bar{H}^{1}(\A^{\text{sep}}/\A, T \xrightarrow{f} U)$ is compact.
\end{cor}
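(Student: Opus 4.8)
The plan is to deduce this from the immediately preceding lemma, which asserts that $\bar{H}^{1}(\A^{\text{sep}}/\A, T \xrightarrow{f} U)_{1}$ is compact, by showing that when $f$ is an isogeny the subgroup $\bar{H}^{1}(\A^{\text{sep}}/\A, T \xrightarrow{f} U)_{1}$ has finite index in $\bar{H}^{1}(\A^{\text{sep}}/\A, T \xrightarrow{f} U)$. Recall that $\bar{H}^{1}(\A^{\text{sep}}/\A, T \xrightarrow{f} U)_{1}$ was defined as $\ker(H^{(1)})$, where $H^{(1)}$ takes values in $X_{*}(U)^{\Gamma}/f_{*}(X_{*}(T)^{\Gamma})$; so it suffices to observe that this target group is finite. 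First I would recall that an isogeny $f \colon T \to U$ induces an injection $f_{*} \colon X_{*}(T) \hookrightarrow X_{*}(U)$ of lattices of equal rank, becoming an isomorphism after $\otimes_{\Z}\mathbb{Q}$, hence with finite cokernel $Q$. Taking $\Gamma$-invariants of $0 \to X_{*}(T) \to X_{*}(U) \to Q \to 0$ yields an exact sequence $0 \to X_{*}(T)^{\Gamma} \to X_{*}(U)^{\Gamma} \to Q^{\Gamma}$, so $X_{*}(U)^{\Gamma}/f_{*}(X_{*}(T)^{\Gamma})$ embeds into the finite group $Q^{\Gamma}$ and is therefore finite.

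Consequently $\bar{H}^{1}(\A^{\text{sep}}/\A, T \xrightarrow{f} U)_{1} = \ker(H^{(1)})$ has finite index in $\bar{H}^{1}(\A^{\text{sep}}/\A, T \xrightarrow{f} U)$; being closed (as recorded in the preceding lemma) and of finite index, it is also open, so $\bar{H}^{1}(\A^{\text{sep}}/\A, T \xrightarrow{f} U)$ is a finite disjoint union of cosets, each homeomorphic to $\bar{H}^{1}(\A^{\text{sep}}/\A, T \xrightarrow{f} U)_{1}$. Since the latter is compact by the previous lemma, the whole group is compact, as claimed. There is no genuine obstacle here; the only point that requires the isogeny hypothesis is the finiteness of $X_{*}(U)^{\Gamma}/f_{*}(X_{*}(T)^{\Gamma})$ — for a general morphism of tori this cokernel can be infinite (e.g.\ the complex $[1 \to \mathbb{G}_{m}]$), which is precisely why $\bar{H}^{1}$ need not be compact in general but becomes so in the isogeny case.
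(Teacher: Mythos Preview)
Your proof is correct and follows essentially the same approach as the paper: both deduce compactness from the preceding lemma together with the finiteness of $X_{*}(U)^{\Gamma}/f_{*}(X_{*}(T)^{\Gamma})$ when $f$ is an isogeny. The paper argues this finiteness slightly differently (noting $X_{*}(U)^{\Gamma}\cap X_{*}(T)=X_{*}(T)^{\Gamma}$ so the quotient embeds in the finite group $X_{*}(U)/X_{*}(T)$), but this is the same idea as your embedding into $Q^{\Gamma}$.
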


\begin{proof} This follows immediately from the above lemma and the fact that $X_{*}(U)^{\Gamma}/X_{*}(T)^{\Gamma}$ is finite, due to the fact that $X_{*}(T) \subseteq X_{*}(U)$ is finite-index and $X_{*}(U)^{\Gamma} \cap X_{*}(T) = X_{*}(T)^{\Gamma}$. 
\end{proof}

We conclude this section by giving new global duality results that involve the above cohomology groups. We have a natural map $H^{i}(F^{\text{sep}}/F, T \xrightarrow{f} U) \to H^{i}(\A^{\text{sep}}/\A, T \xrightarrow{f} U)$, and we will denote its kernel by $\text{ker}^{i}(F^{\text{sep}}/F, T \xrightarrow{f} U)$ and its cokernel by $\text{cok}^{i}(F^{\text{sep}}/F, T \xrightarrow{f} U)$; our primary case of interest in this paper is when $i=1$. 
Moroever, Proposition \ref{restrictedproduct} induces an identification $$\text{ker}^{i}(F^{\text{sep}}/F, T \xrightarrow{f} U) \xrightarrow{\sim} \text{ker}[H^{i}(F^{\text{sep}}/F, T \xrightarrow{f} U) \to \prod_{v \in V} H^{i}(F_{v}^{\text{sep}}/F_{v}, T \xrightarrow{f} U)].$$ We have, from the long exact sequence \eqref{globalLES}, the short exact sequences 
\begin{equation}\label{cokernelSES}
1 \to \text{cok}^{i}(F^{\text{sep}}/F, T \xrightarrow{f} U) \to \bar{H}^{i}(\A^{\text{sep}}/\A, T \xrightarrow{f} U) \to \text{ker}^{i+1}(F^{\text{sep}}/F, T \xrightarrow{f} U) \to 1.
\end{equation}

The following is an analogue of \cite[Lem. C.3.A]{KS1}:

\begin{lem}\label{opencokernels} For all $i$, the image of $H^{i}(F^{\text{sep}}/F, T \xrightarrow{f} U)$ is discrete in $H^{i}(\A^{\text{sep}}/\A, T \xrightarrow{f} U)$. Moreover, the map $$\text{cok}^{i}(F^{\text{sep}}/F, T \xrightarrow{f} U) \to \bar{H}^{i}(\A^{\text{sep}}/\A, T \xrightarrow{f} U)$$ induces an isomorphism of topological groups from $\text{cok}^{i}(F^{\text{sep}}/F, T \xrightarrow{f} U)$ to an open subgroup of $\bar{H}^{i}(\A^{\text{sep}}/\A, T \xrightarrow{f} U)$ for $i=0,1$. 
\end{lem}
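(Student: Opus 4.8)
The plan is to argue exactly as in the proof of \cite[Lem. C.3.A]{KS1}, transporting that argument to our \v{C}ech-hypercohomology setting, which is legitimate because Corollary \ref{globalcomplexcomparison2} identifies everything in sight with group hypercohomology of $\Gamma_{F}$ acting on $T(\A^{\text{sep}})$, $U(\A^{\text{sep}})$ and their quotients by the $F^{\text{sep}}$-points. So first I would reduce to the statement that the image of $H^{i}(F^{\text{sep}}/F, T \xrightarrow{f} U)$ is discrete in $H^{i}(\A^{\text{sep}}/\A, T \xrightarrow{f} U)$: once that is known, the second assertion follows formally from the short exact sequence \eqref{cokernelSES}, since $\mathrm{cok}^{i}$ is by definition the image of $\bar{H}^{i}(\A^{\text{sep}}/\A, T \xrightarrow{f} U)$ modulo the image of $H^{i}(\A^{\text{sep}}/\A, T \xrightarrow{f} U)$ inside $\bar{H}^{i}$, and a quotient of a topological group by a discrete subgroup, intersected appropriately, is open; I would spell out for $i=0,1$ that the natural map on cokernels is a continuous bijection onto an open subgroup and that its inverse is continuous using the explicit open-immersion descriptions of the topologies on $\bar{H}^{0}$ and $\bar{H}^{1}$ given in \S A.3.

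For the discreteness claim I would split into the cases $i\geq 2$ and $i=0,1$. When $i\geq 2$ both $H^{i}(F^{\text{sep}}/F, T \xrightarrow{f} U)$ and $H^{i}(\A^{\text{sep}}/\A, T \xrightarrow{f} U)$ carry the discrete topology by fiat (see the topologizing conventions at the end of \S A.3), so there is nothing to prove. For $i=0$ the group $H^{0}(F^{\text{sep}}/F, T \xrightarrow{f} U)=\ker(f)(F)$ sits inside $T(F)$, which is discrete in $T(\A)$, and $H^{0}(\A^{\text{sep}}/\A, T \xrightarrow{f} U)$ is topologized as a closed subgroup of $T(\A)$, so discreteness is inherited. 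For $i=1$ I would use the open immersion $U(\A)/f[T(\A)] \hookrightarrow H^{1}(\A^{\text{sep}}/\A, T \xrightarrow{f} U)$, whose complement is the finite group (a quotient of) $H^{1}(F,T)$; so it suffices to check that the preimage of the image of $H^{1}(F^{\text{sep}}/F, T \xrightarrow{f} U)$ in $U(\A)/f[T(\A)]$ is discrete. That preimage is the image of $U(F)$ in $U(\A)/f[T(\A)]$, and discreteness of that follows because $U(F)$ is discrete in $U(\A)$ and $f[T(\A)]$ is closed in $U(\A)$ (the latter fact is recorded in \S A.3: $f(T(F_{v}))\cap U(O_{v})=f(T(O_{v}))$ for $v\notin S$, $\prod_{v\notin S}f(T(O_{v}))$ is compact, and $f(T(F_{v}))$ is closed for $v\in S$).

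The main obstacle I anticipate is not the discreteness itself but verifying that the bijection $\mathrm{cok}^{i}(F^{\text{sep}}/F, T \xrightarrow{f} U)\to$ (open subgroup of $\bar{H}^{i}(\A^{\text{sep}}/\A, T \xrightarrow{f} U)$) is a homeomorphism and not merely a continuous algebraic isomorphism, because the topologies on $\bar{H}^{0}$ and $\bar{H}^{1}$ were defined through quotients of $[T(\A^{\text{sep}})/T(F^{\text{sep}})]^{\Gamma}$ and $[U(\A^{\text{sep}})/U(F^{\text{sep}})]^{\Gamma}$ rather than intrinsically, so one has to chase the open-immersion characterizations carefully and confirm compatibility of the quotient maps with the connecting maps $\delta$ in \eqref{twoHmaps}. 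Concretely, for $i=1$ I would factor the map $\mathrm{cok}^{1}\to \bar{H}^{1}(\A^{\text{sep}}/\A, T \xrightarrow{f} U)$ through the image of $U(\A)/f[T(\A)]$ modulo the image of $U(F)$, which by the previous paragraph is an open subgroup with the quotient topology, and identify this with $\mathrm{cok}^{1}$ up to the finite group coming from $H^{1}(F,T)$; the same bookkeeping with the exact sequences \eqref{LES1} and \eqref{globalLES} handles $i=0$. Once these identifications of topologies are pinned down, the lemma follows, and I would simply refer the reader to \cite[Lem. C.3.A]{KS1} for the details of the topological verifications, noting only the replacements $H^{i}(\Gamma,\,\cdot\,)\mapsto H^{i}(\,\cdot/\cdot\,,\,\cdot\,)$ and the use of \cite[Thm. 2.20]{Cesnavicius} in place of the classical adelic structure theory.
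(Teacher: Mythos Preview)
Your discreteness argument for $i=1$ has a genuine gap. You write that the image of $U(F)$ in $U(\A)/f[T(\A)]$ is discrete ``because $U(F)$ is discrete in $U(\A)$ and $f[T(\A)]$ is closed in $U(\A)$''. But a discrete subgroup modulo a closed (non-compact) subgroup need not have discrete image: think of $\Z \subset \mathbb{R}$ and the closed subgroup $\sqrt{2}\,\Z$, whose quotient has dense image. What is missing is a compactness input, and $f[T(\A)]$ is \emph{not} compact. The paper supplies this by using the split degree maps $T(\A) \to X_{*}(T)^{\Gamma}$ and $U(\A) \to X_{*}(U)^{\Gamma}$ from \eqref{degree} to obtain a decomposition
\[
U(\A)/f(T(\A)) \;\cong\; U(\A)_{1}/f(T(\A)_{1}) \;\times\; X_{*}(U)^{\Gamma}/f_{*}(X_{*}(T)^{\Gamma}),
\]
and noting that $U(F)/f(T(F))$ lands in the first factor. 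In $U(\A)_{1}/f(T(F))$ one now has the discrete subgroup $U(F)/f(T(F))$ and the \emph{compact} subgroup $f(T(\A)_{1})/f(T(F))$ (compactness by Lemma~\ref{compactness1}); with that in hand the argument of \cite[Lem.~C.3.A]{KS1} goes through. This reduction to the degree-zero part is exactly the function-field substitute for the norm-one part used in \cite{KS1}, and it is not optional. There is also a small inaccuracy: the intersection of the image of $H^{1}(F^{\text{sep}}/F, T \to U)$ with $U(\A)/f(T(\A))$ is the image of $U(F)/f(T(F))$ only up to finite index, the discrepancy being governed by the finite group $\ker[H^{1}(F,T) \to \prod_{v}' H^{1}(F_{v},T)]$; this doesn't affect discreteness but should be said.

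For the second assertion, you are right to flag that the homeomorphism onto an open subgroup does not follow formally from \eqref{cokernelSES}, since the topology on $\bar{H}^{i}$ is defined via $[S(\A^{\text{sep}})/S(F^{\text{sep}})]^{\Gamma}$ rather than as a quotient of $H^{i}(\A^{\text{sep}}/\A,\,\cdot\,)$. The paper handles this directly: for $i=1$ it shows that the composite $U(\A) \to [U(\A^{\text{sep}})/U(F^{\text{sep}})]^{\Gamma}/f[T(\A^{\text{sep}})/T(F^{\text{sep}})]^{\Gamma}$ is open, using that $U(\A)/U(F) \hookrightarrow [U(\A^{\text{sep}})/U(F^{\text{sep}})]^{\Gamma}$ is a closed embedding of finite index (hence open); for $i=0$ it introduces the closed subgroup $B := \{t \in T(\A) : f(t) \in U(F)\}$ and observes that $H^{0}(\A^{\text{sep}}/\A, T \to U)$ is open in $B$ because the quotient embeds discretely in $U(\A)$. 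Your sketch gestures at this but doesn't carry it out; the concrete openness verifications are where the content is, and simply citing \cite[Lem.~C.3.A]{KS1} does not cover them since the function-field topologies on $\bar{H}^{i}$ are set up differently in \S A.3.
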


\begin{proof} The first statement is clear for $i \neq 1$ (cf. the analogous argument in \cite{KS1}), so we only need to prove both statements for $i=1$. For the first statement, it's enough to show that the intersection of $f[H^{1}(F^{\text{sep}}/F, T \xrightarrow{f} U)]$ with the open subgroup $U(\A)/f(T(\A))$ is discrete. Since the image of $U(F)/f(T(F))$ is of finite index in $[U(\A)/f(T(\A))] \cap f[H^{1}(F^{\text{sep}}/F, T \xrightarrow{f} U)]$ (because the kernel of $H^{1}(F, T) \to \prod'_{v} H^{1}(F_{v}, T)$ is finite), it's enough to show that the image of $U(F)/f(T(F))$ is discrete in $U(\A)/f(T(\A))$. 

Using \eqref{degree}, we have a split surjective homomorphism $T(\A) \to X_{*}(T)^{\Gamma}$ with closed (not necessarily compact) kernel $T(\A)_{1}$, similarly for $U$, and the induced product structures are compatible with the homomorphism $f$, allowing us to rewrite $f$ as $$T(\A)_{1} \times X_{*}(T)^{\Gamma} \xrightarrow{f \times f_{*}} U(\A)_{1} \times X_{*}(U)^{\Gamma},$$ leading to a decomposition $$U(\A)/f(T(\A)) = U(\A)_{1}/f(T(\A)_{1}) \times X_{*}(U)^{\Gamma}/f_{*}(X_{*}(T)^{\Gamma}),$$ and the image of $U(F)/f(T(F))$ in $U(\A)/f(T(\A))$ lands in the factor $U(\A)_{1}/f(T(\A)_{1})$.

The subgroup $f(T(F))$ is evidently discrete in $U(\A)_{1}$, since the subgroup $U(F)$ is discrete in $U(\A)$ (by \cite{Conrad2}, Example 2.2, using that $F$ is discrete in $\A$). Thus, $U(\A)_{1}/f(T(F))$ contains the discrete subgroup $U(F)/f(T(F))$ and the compact subgroup $f(T(\A)_{1})/f(T(F))$ (the compactness follows from Lemma \ref{compactness1}). The desired discreteness then follows by the analogous argument in the proof of \cite[Lem. C.3.A]{KS1}.

As in \cite{KS1}, to prove the second statement for $i=1$ it suffices to show that the map $$U(\A) \to [U(\A^{\text{sep}})/U(F^{\text{sep}})]^{\Gamma}/f[T(\A^{\text{sep}})/T(F^{\text{sep}})]^{\Gamma}$$ is open. The image $U(\A)/U(F) \hookrightarrow [U(\A^{\text{sep}})/U(F^{\text{sep}})]^{\Gamma}$ is closed (a straightforward exercise in the topology of adelic points), and is also finite index (by the finiteness of the kernel of $H^{1}(F, U) \to H^{1}(\A, U)$), and is thus an open immersion. Since quotient maps are open, the composition $$U(\A) \to U(\A)/U(F) \to [U(\A^{\text{sep}})/U(F^{\text{sep}})]^{\Gamma} \to [U(\A^{\text{sep}})/U(F^{\text{sep}})]^{\Gamma}/f[T(\A^{\text{sep}})/T(F^{\text{sep}})]^{\Gamma}$$ is open, as desired.

It remains to show that the injection $\text{cok}^{0}(F^{\text{sep}}/F, T \xrightarrow{f} U) \to \bar{H}^{0}(\A^{\text{sep}}/\A, T \xrightarrow{f} U)$ has open image. As in \cite{KS1}, it's enough to show that the map $H^{0}(\A^{\text{sep}}/\A, T \xrightarrow{f} U) \to \text{ker}[T(\A)/T(F) \to U(\A)/U(F)]$ is open (because, as in \cite{KS1}, $T(\A)/T(F)$ is open in $[T(\A^{\text{sep}})/T(F^{\text{sep}})]^{\Gamma}$). Define the closed subgroup $B := \{t \in T(\A) \mid f(t) \in U(F)\}$ of $T(\A)$. Note that $H^{0}(\A^{\text{sep}}/\A, T \xrightarrow{f} U)$ is a closed subgroup of $B$, and we thus have a closed immersion $B/[H^{0}(\A^{\text{sep}}/\A, T \xrightarrow{f} U)] \hookrightarrow U(F) \hookrightarrow U(\A)$, where the last closed immersion has discrete image. It follows that, since $H^{0}(\A^{\text{sep}}/\A, T \xrightarrow{f} U)$ is a closed subgroup of $B$ with discrete quotient, it's open, and then the result follows from the fact that $B/T(F) = \text{ker}[T(\A)/T(F) \to U(\A)/T(F)]$. 
\end{proof}

\begin{cor}\label{compactcokernel} The group $$\text{cok}^{1}(F^{\text{sep}}/F, T \xrightarrow{f} U)_{1} := \text{cok}^{1}(F^{\text{sep}}/F, T \xrightarrow{f} U) \cap \bar{H}^{1}(\A^{\text{sep}}/\A, T \xrightarrow{f} U)_{1}$$ is compact. Moreover, when $f$ is an isogeny, the group $\text{cok}^{1}(F^{\text{sep}}/F, T \xrightarrow{f} U)$ is compact.
\end{cor}

Set $\text{ker}^{1}(W_{F}, \widehat{U} \xrightarrow{\hat{f}} \widehat{T})_{\text{red}} := \text{ker}[H^{1}(W_{F}, \widehat{U} \xrightarrow{\hat{f}} \widehat{T})_{\text{red}} \to \prod_{v \in V} H^{1}(W_{F_{v}}, \widehat{U} \xrightarrow{\hat{f}} \widehat{T})_{\text{red}}]$.

\begin{prop}\label{cokernelduality} We have a duality isomorphism $$\Hom_{\text{cts}}(\text{cok}^{1}(F^{\text{sep}}/F, T \xrightarrow{f} U), \mathbb{C}^{\times}) \xrightarrow{\sim} H^{1}(W_{F}, \widehat{U} \xrightarrow{\hat{f}} \widehat{T})_{\text{red}}/\text{ker}^{1}(W_{F}, \widehat{U} \xrightarrow{\hat{f}} \widehat{T})_{\text{red}}.$$ Moreover, the group $\text{ker}^{1}(F^{\text{sep}}/F, T \xrightarrow{f} U)$ is finite. 
\end{prop}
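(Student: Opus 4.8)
The plan is to prove the two assertions of Proposition \ref{cokernelduality} by the same Weil-group duality machinery used in \cite[\S C.3]{KS1}, adapted to the \v{C}ech-cohomological setup developed in this appendix. The duality isomorphism will follow by combining the adelic duality pairing \eqref{globalweilpairing} with the exact sequences \eqref{cokernelSES} and \eqref{globalLES}, while the finiteness of $\text{ker}^{1}(F^{\text{sep}}/F, T \xrightarrow{f} U)$ will be extracted as a byproduct, exactly as in the number-field case. The key tools already available are: Lemma \ref{opencokernels}, which identifies $\text{cok}^{1}(F^{\text{sep}}/F, T \xrightarrow{f} U)$ with an open (hence also closed) subgroup of the locally-profinite group $\bar{H}^{1}(\A^{\text{sep}}/\A, T \xrightarrow{f} U)$; the surjection $H^{1}(W_{F}, \widehat{U} \xrightarrow{\hat{f}} \widehat{T}) \to \Hom_{\text{cts}}(\bar{H}^{1}(\A^{\text{sep}}/\A, T \xrightarrow{f} U), \mathbb{C}^{\times})$ with kernel $(T^{\Gamma})^{\circ}$ discussed just after \eqref{globalweilpairing}; and the local duality from Proposition \ref{weiltogalois1} together with its adelic analogue. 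Corollary \ref{compactcokernel} guarantees that the relevant groups are compact when $f$ is an isogeny, which is the case that matters for applications.

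The key steps, in order, would be as follows. First, I would dualize the short exact sequence \eqref{cokernelSES} for $i=1$: since $\text{cok}^{1}(F^{\text{sep}}/F, T \xrightarrow{f} U)$ sits as an open subgroup of $\bar{H}^{1}(\A^{\text{sep}}/\A, T \xrightarrow{f} U)$, restricting the adelic pairing \eqref{globalweilpairing} along this inclusion produces, via the surjectivity statement quoted after \eqref{globalweilpairing}, a surjection from $H^{1}(W_{F}, \widehat{U} \xrightarrow{\hat{f}} \widehat{T})_{\text{red}}$ onto $\Hom_{\text{cts}}(\text{cok}^{1}(F^{\text{sep}}/F, T \xrightarrow{f} U), \mathbb{C}^{\times})$. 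Second, I would identify the kernel of this surjection. A character of $\bar{H}^{1}(\A^{\text{sep}}/\A, T \xrightarrow{f} U)$ (equivalently, an element of $H^{1}(W_{F}, \widehat{U} \xrightarrow{\hat{f}} \widehat{T})_{\text{red}}$) kills $\text{cok}^{1}(F^{\text{sep}}/F, T \xrightarrow{f} U)$ precisely when the image of $H^{1}(F^{\text{sep}}/F, T \xrightarrow{f} U)$ pairs trivially with it. By the compatibility of the local and global pairings (one checks the restricted-product decomposition of Proposition \ref{restrictedproduct} is compatible with \eqref{complexpairing} and \eqref{globalweilpairing} place by place — this is the analogue of \cite[Lem. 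C.3.B]{KS1} and is essentially formal), this kernel is exactly the image of $\text{ker}^{1}(W_{F}, \widehat{U} \xrightarrow{\hat{f}} \widehat{T})_{\text{red}}$, after using the exactness of \eqref{globalLES} to relate $H^{1}(F^{\text{sep}}/F, T \xrightarrow{f} U)$, $H^{1}(\A^{\text{sep}}/\A, T \xrightarrow{f} U)$ and $\bar{H}^{1}$. This yields the claimed duality isomorphism.

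Third, for the finiteness of $\text{ker}^{1}(F^{\text{sep}}/F, T \xrightarrow{f} U)$, I would mimic the argument of \cite[\S C.3]{KS1}: use \eqref{cokernelSES} to realize $\text{ker}^{1}(F^{\text{sep}}/F, T \xrightarrow{f} U)$ as a quotient of $\bar{H}^{1}(\A^{\text{sep}}/\A, T \xrightarrow{f} U)$ by the open subgroup $\text{cok}^{1}(F^{\text{sep}}/F, T \xrightarrow{f} U)$ — so it is discrete — and then observe that it is also compact: it is a continuous image of the compact group $\bar{H}^{1}(\A^{\text{sep}}/\A, T \xrightarrow{f} U)_{1}$ modulo its intersection with $\text{cok}^{1}$, using that the "degree"-type homomorphism $H^{(1)}$ kills the image of $H^{1}(F^{\text{sep}}/F, T \xrightarrow{f} U)$ (since rational points have degree zero by the product formula), so $\text{ker}^{1}$ already lands in the degree-zero part $\bar{H}^{1}(\A^{\text{sep}}/\A, T \xrightarrow{f} U)_{1}$; compactness of the latter is the Lemma preceding Corollary \ref{compactcokernel}. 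A discrete compact group is finite.

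The main obstacle I anticipate is the second step: pinning down that the kernel of the dual surjection is precisely $\text{ker}^{1}(W_{F}, \widehat{U} \xrightarrow{\hat{f}} \widehat{T})_{\text{red}}$, rather than merely containing it. In the number-field treatment this rests on a clean local-global compatibility of the duality pairings together with the fact that the local pairings are \emph{perfect} (modulo the connected-component subgroups), and on Poitou--Tate-type exactness. Here one must verify that the \v{C}ech-theoretic pairing \eqref{globalweilpairing} restricts correctly to each place via Proposition \ref{restrictedproduct} and matches the local pairing \eqref{complexpairing}, and that the local reductions $H^{1}(W_{F_{v}}, \widehat{U} \xrightarrow{\hat{f}} \widehat{T})_{\text{red}}$ genuinely compute $\Hom_{\text{cts}}$ of the local cohomology (Proposition in \S A.2). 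Once these compatibilities are in place — they are the function-field analogues of \cite[Lems. C.3.A, C.3.B]{KS1} and should transfer with only bookkeeping changes, since all the needed local duality and the triviality of the universal norm group (Lemma \ref{profinite1}, \cite[Prop. 8.1.26]{NSW}) are already available — the rest is a diagram chase. I would therefore structure the writeup so that the bulk of the work is isolated in verifying these pairing-compatibility lemmas, and the proof of Proposition \ref{cokernelduality} itself is short.
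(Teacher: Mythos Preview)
Your approach to the duality isomorphism is essentially the paper's: dualize the short exact sequence \eqref{cokernelSES} for $i=1$, use the openness of $\text{cok}^{1}$ in $\bar{H}^{1}$ (Lemma \ref{opencokernels}) to get a surjection on continuous duals, and identify the kernel via the restricted-product decomposition (Proposition \ref{restrictedproduct}) together with the local Weil-group duality. One phrasing slip: you write that a character of $\bar{H}^{1}$ kills $\text{cok}^{1}$ ``precisely when the image of $H^{1}(F^{\text{sep}}/F, T \xrightarrow{f} U)$ pairs trivially with it,'' but the image of $H^{1}(F)$ in $\bar{H}^{1}$ is zero by exactness of \eqref{globalLES}, so that condition is vacuous. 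What you need (and what the paper does) is: a character kills $\text{cok}^{1}$ iff its pullback to $H^{1}(\A^{\text{sep}}/\A, T \xrightarrow{f} U)$ is trivial, i.e.\ iff it lies in $\Hom_{\text{cts}}(\text{ker}^{2}, \mathbb{C}^{\times})$; then the adelic decomposition and local duality translate this into membership in $\text{ker}^{1}(W_{F}, \widehat{U} \xrightarrow{\hat{f}} \widehat{T})_{\text{red}}$.

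Your finiteness argument has a genuine index error. You propose to realize $\text{ker}^{1}$ as the quotient $\bar{H}^{1}/\text{cok}^{1}$, but \eqref{cokernelSES} with $i=1$ gives $\bar{H}^{1}/\text{cok}^{1} \cong \text{ker}^{2}$, not $\text{ker}^{1}$. To get $\text{ker}^{1}$ you must take $i=0$: the sequence reads $1 \to \text{cok}^{0} \to \bar{H}^{0} \to \text{ker}^{1} \to 1$. The paper then argues that $\bar{H}^{0}(\A^{\text{sep}}/\A, T \xrightarrow{f} U)_{1}$ (a compact group, being a closed subgroup of the compact $\text{ker}(H)$ from Lemma \ref{compactness1}) still surjects onto $\text{ker}^{1}$---this uses that the map $\bar{H}^{0} \to \text{Ker}(f_{*}|_{X_{*}(T)^{\Gamma}})$ from diagram \eqref{twoHmaps} remains surjective on $\text{cok}^{0}$---and that the kernel of this surjection is open by Lemma \ref{opencokernels}. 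Your ``discrete and compact, hence finite'' conclusion is the right endgame, but it must be run in degree $0$, not degree $1$; in particular, the compactness input is Lemma \ref{compactness1} (for $\bar{H}^{0}_{1}$), not the lemma preceding Corollary \ref{compactcokernel} (which concerns $\bar{H}^{1}_{1}$).
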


\begin{proof} Using that $\text{cok}^{1}(F^{\text{sep}}/F, T \xrightarrow{f} U)$ is an open subgroup of $\bar{H}^{1}(\A^{\text{sep}}/\A, T \xrightarrow{f} U)$, applying the functor $\Hom_{\text{cts}}(-, \mathbb{C}^{\times})$ to the short exact sequence \eqref{cokernelSES} with $i=1$ gives that the group $\Hom_{\text{cts}}(\text{cok}^{1}(F^{\text{sep}}/F, T \xrightarrow{f} U), \mathbb{C}^{\times})$ is canonically isomorphic to the quotient $$\Hom_{\text{cts}}(\bar{H}^{1}(\A^{\text{sep}}/\A, T \xrightarrow{f} U), \mathbb{C}^{\times})/  \Hom_{\text{cts}}(\text{ker}^{2}(F^{\text{sep}}/F, T \xrightarrow{f} U), \mathbb{C}^{\times}).$$ 

Moreover, the same short exact sequence tells us that $\Hom_{\text{cts}}(\text{ker}^{2}(F^{\text{sep}}/F, T \xrightarrow{f} U), \mathbb{C}^{\times})$ is canonically isomorphic to the subgroup $$\text{ker}[\Hom_{\text{cts}}(\bar{H}^{1}(\A^{\text{sep}}/\A, T \xrightarrow{f} U), \mathbb{C}^{\times}) \to \Hom_{\text{cts}}(H^{1}(\A^{\text{sep}}/\A, T \xrightarrow{f} U), \mathbb{C}^{\times})].$$ But now we know that $\Hom_{\text{cts}}(\bar{H}^{1}(\A^{\text{sep}}/\A, T \xrightarrow{f} U), \mathbb{C}^{\times})$ is canonically isomorphic to the group $H^{1}(W_{F}, \widehat{U} \xrightarrow{\hat{f}} \widehat{T})_{\text{red}}$ via the pairing \eqref{globalweilpairing}, that $H^{1}(\A^{\text{sep}}/\A, T \xrightarrow{f} U)$ is canonically isomorphic to $\prod'_{v} H^{1}(F_{v}^{\text{sep}}/F_{v}, T \xrightarrow{f} U)$ (by Proposition \ref{restrictedproduct}), and that each $H^{1}(F_{v}^{\text{sep}}/F_{v}, T \xrightarrow{f} U)$ has continuous dual canonically isomorphic to $H^{1}(W_{F_{v}}, \widehat{U} \xrightarrow{\hat{f}} \widehat{T})_{\text{red}}$, which gives the result.

For the finiteness of $\text{ker}^{1}(F^{\text{sep}}/F, T \xrightarrow{f} U)$, one checks that the map $$\bar{H}^{0}(\A^{\text{sep}}/\A, T \xrightarrow{f} U)  \to \text{Ker}(\restr{f_{*}}{X_{*}(T)^{\Gamma}})$$ from \eqref{twoHmaps} remains surjective when restricted to $\text{cok}^{0}(F^{\text{sep}}/F, T \xrightarrow{f} U)$, so $\bar{H}^{0}(\A^{\text{sep}}/\A, T \xrightarrow{f} U)_{1}$ surjects onto $\text{ker}^{1}(F^{\text{sep}}/F, T \xrightarrow{f} U)$ with open kernel (by Lemma \ref{opencokernels}). Since $\bar{H}^{0}(\A^{\text{sep}}/\A, T \xrightarrow{f} U)_{1}$ is compact, its quotient by an open subgroup is finite.
\end{proof}

We have the following analogue of \cite[Lem. C.3.B]{KS1}, whose adaptation we leave here (for completeness) as an exercise (Proposition \ref{cokernelduality} is the only part of this result used in this paper):

\begin{prop} The groups $\text{ker}^{i}(F^{\text{sep}}/F, T \xrightarrow{f} U)$ are finite for all $i$ and vanish unless $i=1,2,3$. For $i=1,2,3$, we have dual finite abelian groups
$$\Hom(\text{ker}^{1}(F^{\text{sep}}/F, T \xrightarrow{f} U), \mathbb{C}^{\times}) = \text{ker}^{2}(W_{F}, \widehat{U} \xrightarrow{\hat{f}} \widehat{T}), \hspace{1mm} \Hom(\text{ker}^{1}(F^{\text{sep}}/F, T \xrightarrow{f} U), \mathbb{C}^{\times}) =$$ $$\text{ker}^{1}(W_{F}, \widehat{U} \xrightarrow{\hat{f}} \widehat{T})_{red},\hspace{1mm} \Hom(\text{ker}^{3}(F^{\text{sep}}/F, T \xrightarrow{f} U), \mathbb{Q}/\Z) = \text{ker}^{1}(\Gamma, X^{*}(U) \xrightarrow{f^{*}} X^{*}(T)).$$
The groups $\text{cok}^{i}(F^{\text{sep}}/F, T \xrightarrow{f} U)$ vanish for $i \geq 4$, and for $i \leq 3$ we have duality isomorphisms
$$\Hom_{\text{cts}}(\text{cok}^{0}(F^{\text{sep}}/F, T \xrightarrow{f} U), \mathbb{C}^{\times}) = H^{2}(W_{F}, \widehat{U} \xrightarrow{\hat{f}} \widehat{T})/\text{ker}^{2}(W_{F}, \widehat{U} \xrightarrow{\hat{f}} \widehat{T}),$$
$$\Hom_{\text{cts}}(\text{cok}^{1}(F^{\text{sep}}/F, T \xrightarrow{f} U), \mathbb{C}^{\times}) = H^{1}(W_{F}, \widehat{U} \xrightarrow{\hat{f}} \widehat{T})_{\text{red}}/\text{ker}^{1}(W_{F}, \widehat{U} \xrightarrow{\hat{f}} \widehat{T})_{\text{red}},$$
$$\text{cok}^{2}(F^{\text{sep}}/F, T \xrightarrow{f} U) = \Hom(H^{1}(\Gamma, X^{*}(U) \xrightarrow{f^{*}} X^{*}(T))/\text{ker}^{1}(\Gamma, X^{*}(U) \xrightarrow{f^{*}} X^{*}(T)), \mathbb{Q}/\Z),$$
$$\text{cok}^{3}(F^{\text{sep}}/F, T \xrightarrow{f} U) = \bar{H}^{3}(\A^{\text{sep}}/\A, T \xrightarrow{f} U) = \Hom(H^{0}(\Gamma, X^{*}(U) \xrightarrow{f^{*}} X^{*}(T)), \mathbb{Q}/\Z),$$
where all groups not already defined above are defined as in \cite[Appendix C]{KS1}.
\end{prop}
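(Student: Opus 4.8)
The plan is to follow the blueprint of \cite[Lem. C.3.B]{KS1} and \cite[\S D]{KS1} essentially verbatim, using the function-field analogues of all the auxiliary results that have been established in \S A.3--A.4 above. Recall that the entire statement is assembled from three types of inputs: (i) the global Poitou--Tate-style duality for single tori $\bar{H}^{r}(\A^{\text{sep}}/\A, T)$ and $\text{ker}^{i}(F^{\text{sep}}/F, T)$, which here comes from \cite[\S 4]{Milne} and \cite{Cesnavicius} (these are the function-field replacements for the number-field statements invoked in \cite[\S D]{KS1}); (ii) the long exact sequences \eqref{LES1}, \eqref{LES2}, \eqref{globalLES} relating the complex $T\xrightarrow{f}U$ to the individual tori and to the finite kernel/cokernel group schemes; and (iii) the topological bookkeeping of Lemma \ref{opencokernels} together with the compactness results (Lemma \ref{compactness1}, Corollary \ref{compactcokernel}) that guarantee Pontryagin duality behaves well. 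Since the statement is a list of identifications, I will prove them one $i$ at a time, starting from the ones that fall out immediately and building up to the more delicate ones.

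First I would dispose of the finiteness and vanishing claims for $\text{ker}^{i}(F^{\text{sep}}/F, T\xrightarrow{f}U)$. Using the long exact sequence \eqref{LES1} over $F$ and over each $F_{v}$, restricted-product-compatibly via Proposition \ref{restrictedproduct}, one gets a long exact sequence of $\text{ker}^{\bullet}$-groups sandwiched between $\text{ker}^{\bullet}(F^{\text{sep}}/F, T)$ and $\text{ker}^{\bullet}(F^{\text{sep}}/F, U)$; since those single-torus groups are finite and vanish outside degrees $1,2$ (\cite[\S 4]{Milne}, or equivalently the Tate--Nakayama/Poitou--Tate duality already used in \S A.4), the Five Lemma forces $\text{ker}^{i}(F^{\text{sep}}/F, T\xrightarrow{f}U)$ finite and supported in degrees $1,2,3$. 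The degree-$3$ case and its identification with $\text{ker}^{1}(\Gamma, X^{*}(U)\xrightarrow{f^{*}}X^{*}(T))$ is then handled exactly as in \cite{KS1} by the Tate--Nakayama pairing \eqref{globalpairing1}, whose function-field incarnation is already recorded. For the degree-$1$ and degree-$2$ kernels and their Weil-group duals, the key is Proposition \ref{cokernelduality} (which I already have) combined with the short exact sequences \eqref{cokernelSES}: applying $\Hom_{\text{cts}}(-,\mathbb{C}^{\times})$ and tracking which pieces are open/finite (Lemma \ref{opencokernels}) produces the stated finite-duality identities, with $\text{ker}^{2}(W_{F}, \widehat{U}\xrightarrow{\hat{f}}\widehat{T})$ defined as in \cite[Appendix C]{KS1} and the vanishing $H^{m}(W_{F},\widehat{T})=0$ for $m\ge 2$ (via \cite{La}) plugged in where needed.

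Next I would turn to the $\text{cok}^{i}$ identifications. The cases $i=0$ and $i=1$ are the content of Proposition \ref{cokernelduality} and of the $i=0$ analogue of the same argument (applying $\Hom_{\text{cts}}(-,\mathbb{C}^{\times})$ to \eqref{cokernelSES} with $i=0$, using that $\text{cok}^{0}$ is open in $\bar{H}^{0}(\A^{\text{sep}}/\A, T\xrightarrow{f}U)$ by Lemma \ref{opencokernels} and that $\bar{H}^{0}(\A^{\text{sep}}/\A, T\xrightarrow{f}U)$ has continuous dual $H^{2}(W_{F},\widehat{U}\xrightarrow{\hat{f}}\widehat{T})$ by the global analogue of the $r=0$ pairing). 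For $i=2$, I would again run \eqref{cokernelSES} with $i=2$, identify $\bar{H}^{2}(\A^{\text{sep}}/\A, T\xrightarrow{f}U)$ with $\Hom_{\Z}(H^{1}(\Gamma, X^{*}(U)\xrightarrow{f^{*}}X^{*}(T)),\mathbb{Q}/\Z)$ via the Tate--Nakayama pairing \eqref{globalpairing1} (this is the $r=2$ case of the Lemma immediately preceding \eqref{globalweilpairing}), and quotient out the contribution of $\text{ker}^{3}$; dually this gives $\text{cok}^{2}$ as the $\mathbb{Q}/\Z$-dual of $H^{1}(\Gamma, X^{*}(U)\xrightarrow{f^{*}}X^{*}(T))/\text{ker}^{1}(\Gamma, X^{*}(U)\xrightarrow{f^{*}}X^{*}(T))$. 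The case $i=3$ is the cleanest: \eqref{globalLES} shows $H^{i}(F^{\text{sep}}/F, T\xrightarrow{f}U)$ and $H^{i}(\A^{\text{sep}}/\A, T\xrightarrow{f}U)$ vanish for $i$ large (bounded by the relevant cohomological dimensions), so $\text{cok}^{3}(F^{\text{sep}}/F, T\xrightarrow{f}U)=\bar{H}^{3}(\A^{\text{sep}}/\A, T\xrightarrow{f}U)$, and the latter is $\Hom(H^{0}(\Gamma, X^{*}(U)\xrightarrow{f^{*}}X^{*}(T)),\mathbb{Q}/\Z)$ by \eqref{globalpairing1}; vanishing of $\text{cok}^{i}$ for $i\ge 4$ follows from the same cohomological-dimension bounds.

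The main obstacle, as always in this circle of ideas, is the topological subtlety in the $i=0,1$ duality statements: passing from an abstract exact sequence of abelian groups to an exact sequence of \emph{topological} groups on which Pontryagin duality is exact requires knowing that the relevant subgroups are closed, that the relevant quotients are Hausdorff, and — crucially — that certain subgroups are \emph{open} (so that the dual of the quotient injects as a closed subgroup and the dual sequence stays exact). All of that is precisely what Lemma \ref{opencokernels} and the compactness lemmas are for, so the real work is to check that each application of $\Hom_{\text{cts}}(-,\mathbb{C}^{\times})$ in the chain above is to a short exact sequence with an open (or finite, or compact) sub/quotient, exactly as in \cite{KS1}. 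The one genuinely new ingredient relative to the number-field argument is the appearance of the degree map $\text{deg}\colon\A^{\times}/F^{\times}\to\Z$ in place of the archimedean absolute value; but since the subgroups $\bar{H}^{i}(\A^{\text{sep}}/\A, T\xrightarrow{f}U)_{1}$ have already been shown to be compact in \S A.4, this causes no trouble, and I expect the adaptation of \cite[Lem. C.3.B]{KS1} to go through with only cosmetic changes.
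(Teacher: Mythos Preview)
Your proposal is correct and matches the paper's approach exactly: the paper does not give a proof at all, instead stating that the adaptation of \cite[Lem.~C.3.B]{KS1} is left as an exercise (noting that only Proposition~\ref{cokernelduality}, the $i=1$ cokernel duality, is actually used elsewhere). Your outline---running the Kottwitz--Shelstad argument verbatim with the function-field substitutes from \S A.3--A.4 (Lemma~\ref{opencokernels}, the compactness results, the degree map in place of the archimedean absolute value, and the Poitou--Tate inputs from \cite{Milne} and \cite{Cesnavicius})---is precisely the intended adaptation.
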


We conclude with a few results involving $H^{1}(\Gamma, \widehat{U} \xrightarrow{\hat{f}} \widehat{T})$. First, define $H^{1}(\Gamma, \widehat{U} \xrightarrow{\hat{f}} \widehat{T})_{\text{red}}$ to be the quotient of $H^{1}(\Gamma, \widehat{U} \xrightarrow{\hat{f}} \widehat{T})$ by the image of $(\widehat{T}^{\Gamma})^{\circ} \subseteq H^{0}(\Gamma, \widehat{T})$. For any $v \in V$, we define the quotient $H^{1}(\Gamma_{v}, \widehat{U} \xrightarrow{\hat{f}} \widehat{T})_{\text{red}}$ of $H^{1}(\Gamma_{v}, \widehat{U} \xrightarrow{\hat{f}} \widehat{T})$ analogously, with $\Gamma$ replaced by $\Gamma_{v}$. Finally, set 
$$\text{ker}^{1}(\Gamma, \widehat{U} \xrightarrow{\hat{f}} \widehat{T})_{\text{red}} := \text{ker}[H^{1}(\Gamma, \widehat{U} \xrightarrow{\hat{f}} \widehat{T})_{\text{red}} \to \prod_{v \in V} H^{1}(\Gamma_{v}, \widehat{U} \xrightarrow{\hat{f}} \widehat{T})_{\text{red}}].$$


\begin{lem}(\cite[Lem. C.3.C]{KS1})\label{C.3.C} The map $H^{1}(\Gamma, \widehat{U} \xrightarrow{\hat{f}} \widehat{T})_{\text{red}} \to H^{1}(W_{F}, \widehat{U} \xrightarrow{\hat{f}} \widehat{T})_{\text{red}}$ maps $\text{ker}^{1}(\Gamma, \widehat{U} \xrightarrow{\hat{f}} \widehat{T})_{\text{red}}$ isomorphically onto $\text{ker}^{1}(W_{F}, \widehat{U} \xrightarrow{\hat{f}} \widehat{T})_{\text{red}}$ and we have natural isomorphisms $$H^{1}(\Gamma, \widehat{U} \xrightarrow{\hat{f}} \widehat{T})_{\text{red}} \xrightarrow{\sim} H^{2}(\Gamma, X^{*}(U) \xrightarrow{f^{*}} X^{*}(T)), \hspace{1mm} \text{ker}^{1}(\Gamma, \widehat{U} \xrightarrow{\hat{f}} \widehat{T})_{\text{red}} \xrightarrow{\sim} \text{ker}^{2}(\Gamma, X^{*}(U) \xrightarrow{f^{*}} X^{*}(T)).$$ 
\end{lem}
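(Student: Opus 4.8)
The strategy is to mirror exactly the argument of \cite[Lem. C.3.C]{KS1}, exploiting the fact that the only input specific to the archimedean/number-field setting in that proof is the comparison between Weil-group and Galois-group hypercohomology for tori and complexes, which we have already replaced in our setting by Propositions \ref{weiltogalois1} and \ref{weiltogalois2} (for isogenies) together with Lemma \ref{galoiscomplexes}. First I would establish the two displayed natural isomorphisms. For the isomorphism $H^{1}(\Gamma, \widehat{U} \xrightarrow{\hat{f}} \widehat{T})_{\text{red}} \xrightarrow{\sim} H^{2}(\Gamma, X^{*}(U) \xrightarrow{f^{*}} X^{*}(T))$, one uses the exponential sequence for the complex of dual tori over $\mathbb{C}$: the short exact sequence of complexes $[X_{*}(\widehat{U}) \to X_{*}(\widehat{T})] \to [\text{Lie}(\widehat{U}) \to \text{Lie}(\widehat{T})] \to [\widehat{U} \to \widehat{T}]$ (with the $\Gamma$-action), where the middle complex has trivial higher $\Gamma$-cohomology since its terms are $\mathbb{C}$-vector spaces. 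The associated long exact sequence in $\Gamma$-hypercohomology gives a connecting isomorphism $H^{1}(\Gamma, \widehat{U} \xrightarrow{\hat{f}} \widehat{T})/(\text{image of }H^{0}) \xrightarrow{\sim} H^{2}(\Gamma, X_{*}(\widehat{U}) \xrightarrow{} X_{*}(\widehat{T}))$, and one identifies $X_{*}(\widehat{U}) = X^{*}(U)$, $X_{*}(\widehat{T}) = X^{*}(T)$, with $\hat{f}$ inducing $f^{*}$; the image of $H^{0}(\Gamma, \text{Lie}(\widehat{T}) \text{-complex})$ in $H^{1}(\Gamma, \widehat{U} \xrightarrow{\hat{f}} \widehat{T})$ is precisely the image of $(\widehat{T}^{\Gamma})^{\circ}$, so passing to $(-)_{\text{red}}$ on the left is consistent. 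This is entirely parallel to \cite[\S A.3, \S C.3]{KS1} and the function-field hypothesis plays no role (the cohomological dimension of $\Gamma$ being $2$ only helps: $H^{i}(\Gamma, X^{*}(U) \xrightarrow{f^{*}} X^{*}(T))$ vanishes for $i\geq 3$).

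\textbf{Key steps.} I would then carry out the argument in the following order. (1) Prove the displayed isomorphism $H^{1}(\Gamma, \widehat{U} \xrightarrow{\hat{f}} \widehat{T})_{\text{red}} \xrightarrow{\sim} H^{2}(\Gamma, X^{*}(U) \xrightarrow{f^{*}} X^{*}(T))$ as above, and the analogous local statements $H^{1}(\Gamma_{v}, \widehat{U} \xrightarrow{\hat{f}} \widehat{T})_{\text{red}} \xrightarrow{\sim} H^{2}(\Gamma_{v}, X^{*}(U) \xrightarrow{f^{*}} X^{*}(T))$, functorially in $v$. (2) Deduce, by taking kernels of the localization maps and using the functoriality of these isomorphisms in passing from $\Gamma$ to $\Gamma_{v}$, the isomorphism $\text{ker}^{1}(\Gamma, \widehat{U} \xrightarrow{\hat{f}} \widehat{T})_{\text{red}} \xrightarrow{\sim} \text{ker}^{2}(\Gamma, X^{*}(U) \xrightarrow{f^{*}} X^{*}(T))$. (3) Show that the natural inflation-type map $H^{1}(\Gamma, \widehat{U} \xrightarrow{\hat{f}} \widehat{T})_{\text{red}} \to H^{1}(W_{F}, \widehat{U} \xrightarrow{\hat{f}} \widehat{T})_{\text{red}}$ restricts to an isomorphism on the respective $\text{ker}^{1}$'s; concretely, I would build the commutative diagram relating the global inflation map, the product over $v$ of local inflation maps $H^{1}(\Gamma_{v}, \widehat{U} \xrightarrow{\hat{f}} \widehat{T})_{\text{red}} \to H^{1}(W_{F_{v}}, \widehat{U} \xrightarrow{\hat{f}} \widehat{T})_{\text{red}}$, and the localization maps on both rows, then chase. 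The point is that an element of $\text{ker}^{1}$ on the Galois side maps to something locally trivial on the Weil side, hence lies in $\text{ker}^{1}(W_{F}, \cdots)_{\text{red}}$; conversely surjectivity follows because a class in $\text{ker}^{1}(W_{F}, \cdots)_{\text{red}}$ is, locally at every $v$, trivial, so by the local comparison (Proposition \ref{weiltogalois1}, or rather the reduced analogue for general $f$ obtained by factoring $f$ through an isogeny composed with a map with torus kernel, exactly as in the proof that $f(T(F))$ is closed in \S A.3) it comes from a Galois-side local class, and a standard diagram chase with the finiteness of these kernel groups (Proposition \ref{cokernelduality} gives $\text{ker}^{1}(F^{\text{sep}}/F, T\xrightarrow{f}U)$ finite, which dualizes) lets one lift back to a global Galois hyperclass.

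\textbf{Main obstacle.} I expect the genuinely delicate point to be step (3): controlling the relationship between Galois and Weil hypercohomology for a \emph{general} complex $\widehat{U}\xrightarrow{\hat{f}}\widehat{T}$ (not an isogeny), where $H^{1}(W_{F}, \widehat{U}\xrightarrow{\hat{f}}\widehat{T})$ is genuinely larger than $H^{1}(\Gamma, \widehat{U}\xrightarrow{\hat{f}}\widehat{T})$ and one must show this discrepancy disappears after passing to $\text{ker}^{1}$ of both sides. In \cite{KS1} this is handled by the observation that the discrepancy is measured by $\text{Hom}_{\text{cts}}(\overline{H}^{0}(\A^{\text{sep}}/\A, T), \mathbb{C}^{\times})$-type groups coming from the split torus directions and the degree map, which are detected by restriction to \emph{every} $v$; our substitute inputs for this are Lemma \ref{compactness1} and the splitting of the maps $H_{T}$, $H_{U}$ established in \S A.4, together with Proposition \ref{cokernelduality}. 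So the real work is bookkeeping: assembling the commutative diagram that interleaves the six groups $H^{1}(\Gamma,-)_{\text{red}}$, $H^{1}(W_F,-)_{\text{red}}$, $H^2(\Gamma, X^*\text{-complex})$ and their local analogues, and verifying the chase goes through using only facts already available in the excerpt. Since each individual ingredient is either proved above or is a formal consequence of the long exact sequences \eqref{LES1}, \eqref{LES2}, \eqref{globalLES} and \eqref{cokernelSES}, I anticipate no new mathematical difficulty beyond faithfully transcribing the \cite{KS1} argument with $\Gamma$, $W_F$ as in our setting and the local function-field replacements for the archimedean analysis.
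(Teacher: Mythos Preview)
Your proposal is correct and matches the paper's approach: the paper's own proof is two sentences, displaying the exponential short exact sequence of length-2 complexes $[X^{*}(U)\to X^{*}(T)]\to[\text{Lie}(\widehat{U})\to\text{Lie}(\widehat{T})]\to[\widehat{U}\to\widehat{T}]$ to define the boundary maps and then deferring entirely to \cite[Lem.~C.3.C]{KS1}, which is exactly your plan. One minor simplification for your step~(3): rather than the local-lifting diagram chase you sketch, the cleanest route (and the one implicit in \cite{KS1}) is pure duality---both $\text{ker}^{1}(\Gamma,\widehat{U}\to\widehat{T})_{\text{red}}$ and $\text{ker}^{1}(W_{F},\widehat{U}\to\widehat{T})_{\text{red}}$ are finite and, via the pairings \eqref{globalpairing1} and \eqref{globalweilpairing} together with your step~(2), dual to $\text{ker}^{2}(F^{\text{sep}}/F,T\to U)$, so the inflation map between them is forced to be an isomorphism; this avoids having to control the Galois--Weil discrepancy for general $f$ directly.
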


\begin{proof} These second two maps are induced by the boundary map coming from the commutative diagram of short exact sequences of $\Gamma$-modules
\[
\begin{tikzcd}
0 \arrow{r} & X^{*}(U) \arrow{d} \arrow{r} & \text{Lie}(\widehat{U}) \arrow{d} \arrow{r} & \widehat{U} \arrow{r} \arrow["\hat{f}"]{d} & 1 \\
0 \arrow{r} & X^{*}(T)  \arrow{r} & \text{Lie}(\widehat{T}) \arrow{r} & \widehat{T} \arrow{r} & 1,
\end{tikzcd}
\]
viewed as a short exact sequence of length-2 complexes. The proof is the same as in loc. cit.
\end{proof}

\bibliographystyle{alpha} \footnote{Competing interests: The author declares none.}

\end{document}